\title[Error estimates for DeepONets]{Error estimates for DeepONets: A deep learning framework in infinite dimensions
}
\author{Samuel Lanthaler}
\thanks{\emph{E-Mail}: samuel.lanthaler@math.ethz.ch }
\address[S. Lanthaler]{
Seminar for Applied Mathematics (SAM), \\
  Eidgen\"ossische Technische Hochschule Z\"urich (ETHZ), \\
  R\"amistrasse 101, 8092 Z\"urich, Switzerland
  }
\author{Siddhartha Mishra}
\address[S. Mishra]{
Seminar for Applied Mathematics (SAM), \\
  Eidgen\"ossische Technische Hochschule Z\"urich (ETHZ), \\
  R\"amistrasse 101, 8092 Z\"urich, Switzerland}
\author{George Em Karniadakis}
\address[G.E. Karniadakis]{
Division of Applied Mathematics and School of Engineering, 
\\
Brown University, \\
Providence, RI 02912, USA
}
\date{\today}
\newcommand{\size}{\mathrm{size}}
\newcommand{\depth}{\mathrm{depth}}
\newcommand{\unif}{\mathrm{Unif}}
\newcommand{\Unif}{\unif}
\renewcommand{\Re}{\mathrm{Re}}
\renewcommand{\Im}{\mathrm{Im}}
\newcommand{\fb}{\bm{\mathrm{e}}}
\renewcommand{\tilde}{\widetilde}
\renewcommand{\hat}{\widehat}
\newcommand{\Id}{\mathrm{Id}}
\newcommand{\opt}{\mathrm{opt}}
\DeclareMathOperator*{\essinf}{ess\,inf}
\DeclareMathOperator*{\esssup}{ess\,sup}
\DeclareMathOperator*{\argmin}{argmin}
\newcommand{\erfc}{\mathrm{erfc}}
\newcommand{\Tr}{{\mathrm{Tr}}}
\newcommand{\Err}{\widehat{\mathscr{E}}}
\newcommand{\Fourier}{\mathrm{Fourier}}
\newcommand{\proj}{\mathrm{Proj}}
\newcommand{\Proj}{\mathrm{Proj}}
\newcommand{\Span}{\mathrm{span}}
\newcommand{\im}{\mathrm{Im}}
\newcommand{\embeds}{{\hookrightarrow}}
\renewcommand{\bar}{\overline}
\newcommand{\explain}[2]{\overset{\mathclap{\underset{\downarrow}{#2}}}{#1}}
\newcommand{\slot}{{\,\cdot\,}}
\newcommand{\supp}{\mathrm{supp}}
\newcommand{\T}{\mathbb{T}}
\newcommand{\R}{\mathbb{R}}
\newcommand{\C}{\mathbb{C}}
\newcommand{\E}{\mathbb{E}}
\newcommand{\N}{\mathbb{N}}
\newcommand{\Z}{\mathbb{Z}}
\newcommand{\Prob}{\mathrm{Prob}}
\newcommand{\G}{\mathcal{G}}
\newcommand{\Lip}{\mathrm{Lip}}
\newcommand{\BV}{\mathrm{BV}}
\newcommand{\Normal}{\mathcal{N}}
\renewcommand{\P}{\mathcal{P}}
\newcommand{\cA}{\mathcal{A}}
\newcommand{\cC}{\mathcal{C}}
\newcommand{\cD}{\mathcal{D}}
\newcommand{\cE}{\mathcal{E}}
\newcommand{\cF}{\mathcal{F}}
\newcommand{\cG}{\mathcal{G}}
\newcommand{\cI}{\mathcal{I}}
\newcommand{\cJ}{\mathcal{J}}
\newcommand{\cK}{\mathcal{K}}
\newcommand{\cL}{\mathcal{L}}
\newcommand{\cM}{\mathcal{M}}
\newcommand{\cN}{\mathcal{N}}
\newcommand{\cP}{\mathcal{P}}
\newcommand{\cR}{\mathcal{R}}
\newcommand{\cS}{\mathcal{S}}
\newcommand{\cT}{\mathcal{T}}
\newcommand{\cY}{\mathcal{Y}}
\newcommand{\tE}{\tilde{\cE}}
\newcommand{\tF}{\tilde{\cF}}
\newcommand{\tG}{\tilde{\cG}}
\newcommand{\tN}{\tilde{\cN}}
\newcommand{\tR}{\tilde{\cR}}
\newcommand{\tP}{\tilde{\cP}}
\newcommand{\hL}{\widehat{\cL}}
\newcommand{\hN}{\widehat{\cN}}
\newcommand{\eps}{\epsilon}
\newcommand{\e}{\eps}
\newcommand{\define}{\textbf}
\renewcommand{\hat}{\widehat}
\newcommand{\set}[2]{{\left\{ #1 \,\middle|\, #2 \right\}}}
\newcommand{\shrink}{\mathrm{shrink}}
\newcommand{\tr}{\tau}
\newcommand{\br}{\beta}
\newcommand{\enum}{{\upkappa}}
\newcommand{\map}{\EuScript{L}}
\declaretheoremstyle[
  headfont=\normalfont\bfseries\itshape,
  numbered=unless unique,
  bodyfont=\normalfont,
  spaceabove=1em plus 0.75em minus 0.25em,
  spacebelow=1em plus 0.75em minus 0.25em,
  qed={},
]{deflt}
\theoremstyle{deflt}
\newtheorem{theorem}{Theorem}[section]
\newtheorem{assumption}[theorem]{Assumption}
\newtheorem{setup}[theorem]{Setup}
\newtheorem{example}[theorem]{Example}
\newtheorem{remark}[theorem]{Remark}
\newtheorem{definition}[theorem]{Definition}
\newtheorem{lemma}[theorem]{Lemma}
\newtheorem{proposition}[theorem]{Proposition}
\newtheorem{corollary}[theorem]{Corollary}
\numberwithin{equation}{section}
\numberwithin{theorem}{section}
\newtheorem*{claim}{Claim}
\tikzset{every picture/.style={line width=0.75pt}} 
\newcommand{\rev}[1]{{\color{black} #1}}
\begin{document}

\maketitle
\begin{abstract}
DeepONets have recently been proposed as a framework for learning nonlinear operators mapping between infinite dimensional Banach spaces. We analyze DeepONets and prove estimates on the resulting approximation and generalization errors. In particular, we extend the universal approximation property of DeepONets to include measurable mappings in non-compact spaces. By a decomposition of the error into encoding, approximation and reconstruction errors, we prove both lower and upper bounds on the total error, relating it to the spectral decay properties of the covariance operators, associated with the underlying measures. We derive almost optimal error bounds with very general affine reconstructors and with random sensor locations as well as bounds on the generalization error, using covering number arguments. 

We illustrate our general framework with four prototypical examples of nonlinear operators, namely those arising in a nonlinear forced ODE, an elliptic PDE with variable coefficients and nonlinear parabolic and hyperbolic PDEs. \rev{
While the approximation of \emph{arbitrary} Lipschitz operators by DeepONets to accuracy $\epsilon$ is argued to suffer from a ``curse of dimensionality'' (requiring a neural networks of exponential size in $1/\epsilon$), in contrast, for all the above \emph{concrete} examples of interest, we rigorously prove that DeepONets can \emph{break this curse of dimensionality} (achieving accuracy $\epsilon$ with neural networks of size that can grow algebraically in $1/\epsilon$)}. Thus, we demonstrate the efficient approximation of a potentially large class of operators with this machine learning framework.

\end{abstract}

\section{Introduction}
\emph{Deep neural networks} \cite[]{DLbook} have been very successfully used for a diverse range of regression and classification learning tasks in science and engineering in recent years \cite[]{DL-nat}. These include image and text classification, computer vision, text and speech recognition, natural language processing, autonomous systems and robotics, game intelligence and protein folding \cite[]{deepfold}.

As deep neural networks are \emph{universal approximators}, i.e., they can approximate any continuous (even measurable) finite-dimensional function to arbitrary accuracy \cite[]{BAR1,HOR1,Cy1,tpchen}, it is natural to use them as ansatz spaces for the solutions of partial differential equations (PDEs). They have been used for solving high-dimensional parabolic PDEs by emulating explicit representations such as the Feynman-Kac formula as in \cite{HEJ1,E1,Jent1} and references therein, and as \emph{physics informed neural networks} (PINNs) for solving both forward problems \cite[]{KAR1,KAR2,KAR3,MM1,MM3}, as well as inverse problems \cite[]{KAR2,KAR4,MM2,lu2021} for a variety of linear and non-linear PDEs. 

Deep neural networks are also being widely used in the context of \emph{many query} problems for PDEs, such as  uncertainty quantification (UQ) \cite[see e.g.][]{UQ1,UQ2,UQ3}, optimal control (design), deterministic and Bayesian inverse problems \cite[]{Inverse1,Inverse2} and PDE constrained optimization \cite[]{Optim1,LMRP1}. In such \emph{many query} problems, the inputs are functions such as the initial and boundary data, source terms and/or coefficients in the underlying differential operators. The outputs are either the solution field (in space-time or at fixed time instances) or possibly \emph{observables} (functionals of the solution field). Thus, the input to output map is, in general, a (possibly) non-linear \emph{operator}, mapping one function space to another. 

Currently, it is standard to approximate the underlying input function with a finite, but possibly very high-dimensional, \emph{parametric} representation. Similarly, the resulting output function is approximated by a finite dimensional representation, for instance, values on a grid or coefficients of a suitable basis. Thus, the underlying \emph{operator, mapping infinite dimensional spaces,} is approximated by a function that maps a finite but high dimensional input spaces into another finite-dimensional output space. Consequently, this finite-dimensional map for the resulting \emph{parametric PDE} can be \emph{learned} with \emph{standard} deep neural networks, as for elliptic and parabolic PDEs in \cite{SchwabZech2019,OSZ2019, OSZ2020, Kuty}, for transport PDEs \cite[]{PP1} and for hyperbolic and related PDEs \cite[][and references therein]{DRM1,UQ3,LMRP1}.

However, this finite dimensional parametrization of the underlying infinite dimensional problem is subject to the inherent and non-vanishing error both at the input end, due to the finite dimensional representation as well as at the output end, on account of numerical errors at finite resolution. More fundamentally, a parametric representation requires explicit knowledge of the underlying measure on input space such that a finite dimensional approximation of inputs can be performed. Such explicit knowledge may not always be available. Finally, the parametric approach does not cover a large number of situations where the underlying physics, in the form of governing PDEs, may not even be known explicitly, yet large amounts of (possibly noisy) data for the input-output mapping is available. It is not obvious how such a learning task can be performed with standard neural networks.

Hence, \emph{operator learning}, i.e. learning nonlinear operators mapping one infinite-dimensional Banach space to another, from data, is increasingly being investigated in the contexts of PDEs and possibly other fields. 
One research direction has focused largely on operators which can be expressed as solution operators to a suitable PDE/ODE; examples of this approach include the identification of individual terms of the underlying differential equation from data, expressed in terms of non-local integral operators \citep[and references therein]{OpRegression1,OpRegression3}, the identification of suitable closure models for turbulent flows \citep[and references therein]{closure1,closure2} or the discovery of the governing equations of an underlying dynamical system, expressed in terms of an ODE (or PDE), \cite[][and references therein]{Sindy}. In a different research direction, the aim is to use deep neural networks to directly learn the \emph{underlying (solution--)operator}, itself. Several frameworks have been proposed for this task; we refer to \cite{li2020multipole} and \cite{li2020neural} for graph kernel operators, \cite{STU3} for a recent approach based on principal component analysis, and \cite{STU2} and references therein on \emph{Fourier neural operators}.
A different approach was proposed by \cite{ChenChen1995}, where they presented a neural network architecture, termed as \emph{operator nets}, to approximate a non-linear operator $\G: K \to K'$, where $K, K'$ are compact subsets of infinite dimensional Banach spaces, $K\subset C(D)$, $K' \subset C(U)$ with $D, \, U$ compact domains in $\R^d$, $\R^n$, respectively. Then, an operator net can be formulated in terms of two shallow, i.e., one hidden layer, neural networks. The first is the so-called \emph{branch net} $\bm{\br}(u) = (\br_1(u), \dots, \br_p(u))$, defined for $1 \leq k \leq p$ as,
\begin{equation}
    \label{eq:bnet1}
    \br_k(u)
=
\sum_{i=1}^\ell c_k^i \sigma\left( \sum_{j=1}^m \xi_{ki}^j u(x_j) + \theta_k^i \right).
\end{equation}
Here, $\{x_j\}_{1 \leq j \leq m} \subset D$, are the so-called \emph{sensors} and $c^i_k,\xi^j_{ki}$ are weights and $\theta_k^i$ are biases of the neural network.  

The second neural network is the so-called \emph{trunk net} $\bm{\tr}(y) = (\tr_1(y),\dots, \tr_p(y))$, defined as, 
\begin{equation}
    \label{eq:tnet1}
    \tr_k(y) 
=
\sigma(w_k \cdot y + \zeta_k), \quad 1 \leq k \leq p,
\end{equation}
for any $y \in U$ and with weights $w_k$ and biases $\zeta_k$. Here, $\sigma$ is a non-linear activation function in the branch net \eqref{eq:bnet1} and (a possibly different one) in the trunk net \eqref{eq:tnet1}. The branch and trunk nets are then combined to approximate the underlying non-linear operator in the \emph{operator net}
\begin{equation}
    \label{eq:donet1}
\G(u)(y) \approx \sum_{k=1}^p \br_k(u) \tr_k(y), 
\quad u \in K, \; y \in U.
\end{equation}
More recently, \cite{deeponets} replace the shallow branch and trunk nets in the operator net \eqref{eq:donet1} with deep neural networks to propose   \emph{deep operator nets} (\emph{DeepONets} in short), which are expected to be more expressive than shallow operator nets and have already been successfully applied to a variety of problems with differential equations. These include learning linear and non-linear dynamical systems and reaction-diffusion PDEs with source terms \cite[]{deeponets}, learning the PDEs governing electro-convection \cite[]{dnet2}, Navier-Stokes equations in hypersonics with chemistry \cite[]{dnet3} and the dynamics of bubble growth \cite[]{dnet4}, among others. A simple example that illustrates DeepONets \eqref{eq:donet1} and their ability to learn an operator efficiently is included in Appendix \ref{app:deeponet-linear} (cf. Figure \ref{fig:quadrature}).

Why are DeepONets able to approximate operators mapping infinite dimensional spaces, efficiently? A first answer to this question lies in a remarkable \emph{universal approximation theorem} for the operator network \eqref{eq:donet1} first proved by \cite{ChenChen1995}, and extended to DeepONets by \cite{deeponets}, where it is shown that as long as the underlying operator $\G$ is \emph{continuous} and maps a \emph{compact subset} of the infinite-dimensional space into another Banach space, there always exists an operator network of the form \eqref{eq:donet1}, that approximates $\G$ to arbitrary precision, i.e. to any given error tolerance. However, the assumptions on continuity and in particular, compactness of the input space, in the universal approximation theorem do not cover most examples of practical interest, such as many of the operators considered by \cite{deeponets}. Moreover, this universal approximation property does not provide any explicit information on the computational complexity of the operator network, i.e. no explicit knowledge of the number of sensors $m$, number of branch and trunk nets $p$ and the sizes (number of weights and biases) as well as  depths (for DeepONets) of these neural networks  can be inferred from the universal approximation property. 

Given the infinite dimensional setting, it could easily happen that the computational complexity of the DeepONet for attaining a given tolerance $\epsilon$ scales \emph{exponentially} in $1/\epsilon$. In fact, we will provide a heuristic argument strongly suggesting that such exponential scaling can not be overcome in the approximation of \emph{general} Lipschitz continuous operators (cp. Remark \ref{rem:cod}; and Thm. 2.2 of \cite{mhaskar1997neural} for related work on rigorous lower bounds). This (worst-case) scaling will be referred to as the \emph{curse of dimensionality} and can severely inhibit the efficiency of DeepONets at realistic learning tasks. Although numerical experiments presented in \cite{deeponets, dnet3,dnet2,dnet4} strongly indicate that DeepONets may not suffer from this curse of dimensionality for many cases of interest, no rigorous results to this end are available currently.  Moreover, no rigorous results on the DeepONet \emph{generalization error}, i.e., the error due to finite sampling of the input space, are available currently.

The above considerations motivate our current paper where we seek to provide rigorous and explicit bounds on the error incurred by DeepONets in approximating nonlinear operators on Banach spaces. As a first step, we extend the universal approximation theorem from continuous to \emph{measurable} operators, while removing the compactness requirements of \cite{ChenChen1995}. Next, using a very natural decomposition of DeepONets (cf Figure \ref{fig:2}) into an \emph{encoder} that maps the infinite-dimensional input space into a finite-dimensional space, an \emph{approximator} neural network that maps one finite-dimensional space into another and a trunk net induced affine \emph{reconstructor}, that maps a finite dimensional space into the infinite dimensional output space, we decompose the total DeepONet approximation error in terms of the resulting \emph{encoding, approximation and reconstruction errors} and estimate each part separately. This allows us to derive rigorous \emph{upper} as well as \emph{lower} bounds on the DeepONet error, under very general hypotheses on the underlying nonlinear operator and underlying measures on input Hilbert spaces. In particular, optimal bounds on the encoding and reconstruction errors stem from a careful analysis of the eigensystem for the covariance operators, associated with the underlying input measure. 

A similar error decomposition has been employed in \cite{STU3}, to analyze an operator learning architecture combining principal components analysis (PCA) autoencoders for the encoding and reconstruction with a neural network for the non-linear approximation step. In particular, the authors derive a quantitative error estimate for the \emph{empirical} PCA autoencoder, which is based on a \emph{finite number} of input/output samples $(u_j, \G(u_j))$, $j=1,\dots,N$. However, we need significant additional efforts to translate "PCA" based ideas into quantitative error and complexity estimates for the point-evaluation encoder and the neural network reconstruction of DeepONets (even in the limit of infinite data). Another key distinction of the present work with \cite{STU3} is a detailed discussion of the \emph{efficiency} of the DeepONet approximation, providing quantitative error and complexity bounds not only for the encoding and reconstruction steps, but also for the approximator network.

In addition to our analysis of the encoding, approximation and reconstruction errors, 
we illustrate these abstract error estimates with four prototypical differential equations, namely a nonlinear ODE with a forcing term, a linear elliptic PDE with variable diffusion coefficients, a semi-linear parabolic PDE (Allen-Cahn equation) and a quasi-linear hyperbolic PDE (scalar conservation law), thus covering a wide spectrum of differential equations with different types of inputs and different levels of Sobolev regularity of the resulting solutions. For each of these four problems, we rigorously prove that the underlying operators possess additional structure, through which DeepONets can achieve an approximation accuracy $\epsilon$ with a size that scales only algebraically in $1/\epsilon$, i.e. \emph{DeepONets can break the curse of dimensionality}, associated with the approximation of the infinite-dimensional input-to-output map. Thus providing the first rigorous proofs of their possible efficiency at operator approximation. An appropriate notion of the curse of dimensionality in the DeepONet context will be given in Definition \ref{def:cod} (cp. also Remark \ref{rem:cod}). Finally, we also provide a rigorous bound for the \emph{generalization error} of DeepONets and show that, despite the underlying infinite dimensional setting, the estimate on generalization error scales (asymptotically) as $1/\sqrt{N}$ with $N$ being the number of training samples (up to log terms), which is consistent with the standard finite dimensional bound with statistical learning theory techniques. 

The rest of the paper is organized as follows: In section \ref{sec:2}, we formulate the underlying operator learning problem and introduce DeepONets. The abstract error estimates are presented in section \ref{sec:3} and are illustrated on four concrete model problems in section \ref{sec:4}. Sections \ref{sec:3} and \ref{sec:4} focus on quantitative bounds for the \emph{best-approximation error} that is achievable, in principle, by the given DeepONet architecture; additional (generalization) errors due to the availability of only a finite number of training samples, are discussed in section \ref{sec:5}, where estimates on the DeepONet generalization error are derived. The proofs of our theoretical results are presented in the appendix. Other error sources, e.g. due to (imperfect) training algorithms such as stochastic gradient descent, errors due to uncertain and noisy data, or errors due to a mismatch between the training and evaluation data, will not be discussed in the present work. The analysis of such errors represent avenues for extensive future work. Moreover, for simplicity of the exposition, the results of the present work are formulated for neural networks with \emph{ReLU activation function}; this particular choice of activation function is, however, not essential to reach the main conclusions.

\section{Deep Operator Networks}
\label{sec:2}
Our main aim in this section is to follow \cite{deeponets} and introduce DeepONets, i.e., deep version of the shallow operator network \eqref{eq:donet1} for approximating operators. To this end, we start with a brief recapitulation of what a neural network is.
\subsection{Neural Networks.}
\label{sec:NN}
Let $\R^{d_{in}}$ and $\R^{d_{out}}$ denote the input and output spaces, respectively. Given any input vector $z \in \R^{d_{in}}$, a feedforward neural network (also termed as a multi-layer perceptron), transforms it to an output through layers of units (neurons) consisting of either affine-linear maps between units (in successive layers) or scalar non-linear activation functions within units \cite{DLbook}, resulting in the representation,
\begin{equation}
\label{eq:ann1}
\map_{\theta}(y) = C_K \circ\sigma \circ C_{K-1}\ldots \ldots \ldots \circ\sigma \circ C_2 \circ \sigma \circ C_1(y).
\end{equation} 
Here, $\circ$ refers to the composition of functions and $\sigma$ is a scalar (non-linear) activation function. A large variety of activation functions have been considered in the machine learning literature \cite{DLbook}, including adaptive activation functions in \cite{adaptive_activation}. Popular choices for the activation function $\sigma$ in \eqref{eq:ann1} include the sigmoid function, the $\tanh$ function and the \emph{ReLU} function defined by,
\begin{equation}
\label{eq:relu}
\sigma(z) = \max(z,0).
\end{equation}
In the present work, we will only consider neural networks with ReLU activation function, i.e., the term ``neural network'' should be understood synonymous with ``ReLU neural network''.

For any $1 \leq k \leq K$, we define
\begin{equation}
\label{eq:C}
C_k z_k = W_k z_k + b_k, \quad \text{for} ~ W_k \in \R^{d_{k+1} \times d_k}, z_k \in \R^{d_k}, b_k \in \R^{d_{k+1}}.
\end{equation}
For consistency of notation, we set $d_1 = d_{in}$ and $d_{K+1} = d_{out}$. 

Thus in the terminology of machine learning, the neural network \eqref{eq:ann1} consists of an input layer, an output layer and $(K-1)$ hidden layers for some $1 < K \in \N$. The $k$-th hidden layer (with $d_{k+1}$ neurons) is given an input vector $z_k \in \R^{d_k}$ and transforms it first by an affine linear map $C_k$ \eqref{eq:C} and then by a nonlinear (component wise) activation $\sigma$. A straightforward addition shows that our network contains $\left(d_{in} + d_{out} + \sum\limits_{k=2}^{K} d_k\right)$ neurons. 
We also denote, 
\begin{equation}
\label{eq:theta}
\theta = \{W_k, b_k\},
\end{equation} 
to be the concatenated set of (tunable) weights and biases for our network. It is straightforward to check that $\theta \in \Theta \subset \R^M$ with
\begin{equation}
\label{eq:ns}
M = \sum_{k=1}^{K} (d_k +1) d_{k+1}.
\end{equation}
We also introduce the following nomenclature for a deep neural network $\map$,
\begin{equation}
    \label{eq:nom}
    \size(\map_\theta):= \Vert \theta \Vert_{\ell^0}, \quad \depth(\map_\theta)= K-1,
\end{equation}
with $\Vert \theta \Vert_{\ell^0} = \#\{\theta_k \ne 0\}$ denoting the total number of \emph{non-zero} tuning parameters (weights and biases) of the neural network and $K-1$ being the number of hidden layers of the network. Henceforth, the explicit $\theta$-dependence is suppressed for notational convenience and we denote the neural network \eqref{eq:ann1} as $\map$.

\subsection{DeepONets}

A DeepONet, as proposed in \cite{deeponets} is a deep neural network extension of the operator network \eqref{eq:donet1}. Roughly speaking, the shallow branch and trunk nets in \eqref{eq:bnet1} and \eqref{eq:tnet1} are replaced by deep neural networks of the form \eqref{eq:ann1}. However, we present a slightly more general form of DeepONets in this paper, as compared to the DeepONets of \cite{deeponets}. To this end, we recall that $D \subset \R^d$ and $U \subset \R^n$ are compact domains (e.g. with Lipschitz boundary) and introduce the following \emph{operators} (cp. Figure \ref{fig:2}):
\begin{itemize}
    \item {\bf Encoder.} Given a set of \emph{sensor} points $x_j \in D$, for $1 \leq j \leq m$, we define the linear mapping,
    \begin{align} \label{eq:encoder}
\cE: C(D) \to \R^m, \quad \cE(u) = (u(x_1), \dots, u(x_m)),
\end{align}
as the \emph{encoder} mapping. Note that the encoder $\cE$ is well-defined as one can evaluate continuous functions pointwise.
\item {\bf Approximator.} Given the above sensor points $\{x_j\}$ for $1 \leq j \leq m$, the \emph{approximator} is a deep neural network of the form \eqref{eq:ann1} and defined as,
\begin{align} \label{eq:approximator}
\cA: \R^m \to \R^p, \; \{u_j\}_{j=1}^m \mapsto \{\cA_k\}_{k=1}^p,
\end{align}
Note that $d_{in}=m$ and $d_{out} = p$ in the approximator neural network $\cA$ of form \eqref{eq:ann1}. Given the encoder and approximator, we define the \emph{branch net}  $\bm{\br}: C(D) \to \R^p$   as the composition $\bm{\br}(u) = \cA\circ \cE(u)$.
\item {\bf Reconstructor.} First, we denote a \emph{trunk net}  $\bm{\tr}$ as a neural network
\[
\bm{\tr}: \R^n \to \R^{p+1}, \; y = (y_1, \dots, y_n) \mapsto \{\tr_k(y)\}_{k=0}^p,
\]
with each $\tr_k$ of the form \eqref{eq:ann1}, with $d_{in}=n$ and $d_{out}=1$ and for any $y \in U \subset \R^n$. 

Then, we define a $\bm{\tr}$-induced \emph{reconstructor} as \begin{align} \label{eq:reconstruction}
\cR = \cR_{\bm{\tr}}: \R^p \to C(U), 
\quad 
\cR_{\bm{\tr}}(\alpha_k) := \tr_0(y) + \sum_{k=1}^p \alpha_k \tr_k(y).
\end{align}

Henceforth for notational convenience, we will suppress the $\bm{\tr}$-dependence of the $\bm{\tr}$-induced reconstructor and simply label it as $\cR$. Note that the reconstructor is well-defined as the activation function $\sigma$ in \eqref{eq:ann1} is at least continuous.  
\end{itemize}

\begin{figure}
\includegraphics[width=\textwidth]{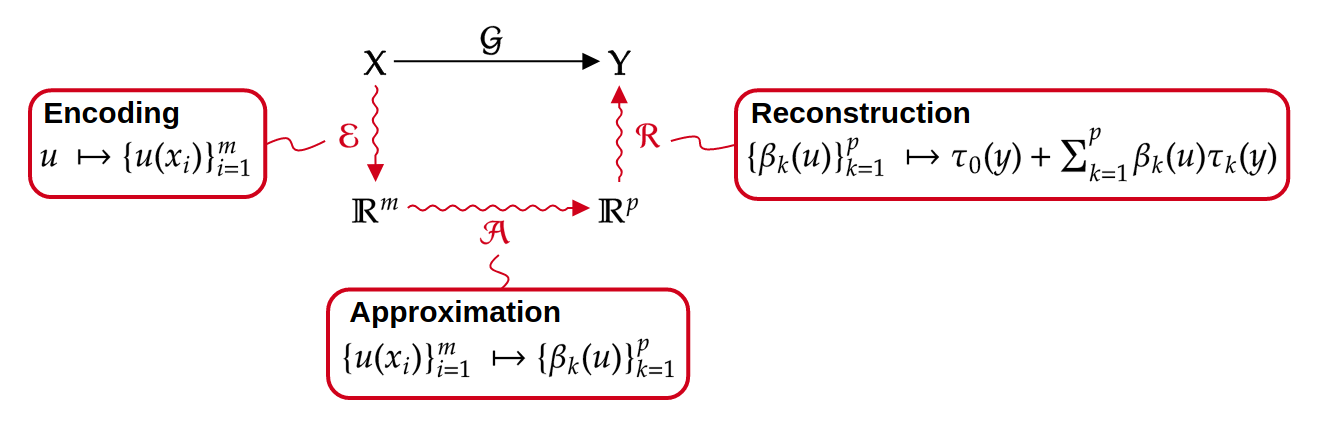}
\caption{Schematic illustration of the decomposition of a DeepONet into the encoder $\cE$, approximator $\cA$ and reconstructor $\cR$.}
\label{fig:2}
\end{figure}

Given the above ingredients, we combine them into a DeepONet as,
\begin{equation}
    \label{eq:donet}
\cN: C(D) \to C(U), \quad \cN(u)= 
(\cR \circ \cA \circ \cE)(u).
\end{equation}
I.e. a DeepONet is composed of three components:
\begin{enumerate}
\item Encoding: The encoder mapping $\cE: C(D) \to \R^m$, $u \mapsto \{u(x_j)\}_{j=1}^m$,
\item Approximation: The encoded (finite-dimensional) data is approximated by a neural network mapping $\cA: \R^m \to \R^p$,
\item Reconstruction: The result is decoded by $\cR: \R^p \to C(U)$, $\{\cA_k\}_{k=1}^p \mapsto \tr_0 + \sum_{k=1}^p \cA_k \tr_k$, with $\bm{\tr}$ being the trunk net. 
\end{enumerate}

A graphical depiction of the constituent parts of a DeepONet is shown in figure \ref{fig:2}. 
The only difference between our version of the DeepONet \eqref{eq:donet} and the version presented in the recent paper \cite{deeponets}, lies in the fact that we use a more general affine reconstruction step. In other words, setting $\tr_0(y) \equiv \tr_0$, for some $\tr_0 \in \R$ in \eqref{eq:reconstruction} recovers the DeepONet of \cite{deeponets}. We remark in passing that although the above formulation assumes a mapping between (scalar) functions, $\cN: C(D) \to C(U)$, all results in this work extend trivially to the more general case of DeepONet approximations for \emph{systems} $\cN: C(D;\R^{d_u}) \to C(U;\R^{d_v})$. For clarity of the exposition and simplicity of notation, we will focus on the case $d_u=d_v=1$, in the following.

We recall that the DeepONet \eqref{eq:donet} contains parameters corresponding to the weights and biases of the approximator neural network $\cA$ and the trunk net $\bm{\tr}$, that need to be tuned (\emph{trained}) such that the DeepONet \eqref{eq:donet} approximates the underlying operator $\G: X \to Y$. To this end, we need to define a distance between $\G$ and the DeepONet $\cN$. A natural way to do this, is to fix a probability measure $\mu \in \P(X)$, and to consider the following error, measured in the $L^2(\mu)$-\emph{norm}:
\begin{align} \label{eq:approxerr}
\Err
=
\left(
\int\limits_{X} \int\limits_{U}
\left|
\G(u)(y) 
-
\cN(u)(y)
\right|^2 
\, dy
\, d\mu(u)
\right)^{1/2},
\end{align}
with $\cN$ being the DeepONet \eqref{eq:donet}. Note that we have replaced the function spaces $C(D)$ and $C(U)$ by more general function spaces $X$ and $Y$, for which we will assume that there exists an embedding $X \embeds L^2(D)$, $Y\embeds L^2(U)$. In particular, for the error \eqref{eq:approxerr} to be well-defined, it suffices that \begin{itemize}
\item there exists a Borel set $A \subset X$, such that $\mu(A) = 1$, and $A \subset C(D)$ so that $\cE(u) = (u(x_1),\dots,u(x_m))$ is well-defined on $A$, 
\item The mapping
\[
\G: X \to Y,
\]
maps given data $u\in X$ to a $L^2(U)$ function $v(y) = \G(u)(y)$ defined on $U$, and $\G\in L^2(\mu) := L^2(\mu; \Vert \slot \Vert_{L^2(U)})$, in the sense that 
\[
\int_{X} \Vert \G(u) \Vert_{L^2(U)}^2 \, d\mu(u) 
< \infty.
\]
\end{itemize}

We formalize these concepts with the following definition:

\begin{definition}[Data for DeepONet approximation] 
\label{def:data}
Let $D \subset \R^d$, $U \subset \R^n$ be bounded domains. Let $X$, $Y$ be separable Banach spaces with a continuous embedding $\iota: \, X\embeds L^2(D)$ and $\bar{\iota}: \, Y\embeds L^2(U)$. We call $\mu$, $\G$ \define{data for the DeepONet approximation problem}, provided $\mu \in \P_2(X)$ is a Borel probability measure on $X$, there exists a Borel set $A \subset X$, such that $\mu(A) = 1$ and $A$ consists of continuous functions, and $\G: X \to Y$ is a Borel measurable mapping, such that $\G \in L^2(\mu)$, i.e. $\int_X \Vert \G(u) \Vert_{L^2(U)}^2 \, d\mu(u) < \infty$. Here, $\P_2(X)$ is the set of probability measures with finite second moments $\int_X \Vert u \Vert_X^2 \, d\mu(u) < \infty$.
\end{definition}

\begin{remark}
The setting considered in Definition \ref{def:data} corresponds to a ``perfect data setting'', in which the input/output pairs $(u,\G(u))$, $u\sim \mu$ are provided exactly, i.e. in the absence of measurement noise and uncertainty. Furthermore, the main focus of this work will be on the problem of finding complexity bounds on the best-approximation provided by the DeepONet architecture \eqref{eq:encoder}--\eqref{eq:reconstruction}. Concerning the \emph{finite data setting}, i.e. when only a finite number of input/output pairs $(u_1,\G(u_1)), \dots, (u_N,\G(u_N))$, $u_j \sim \mu$ are available, first results on the generalization error will be presented in Section \ref{sec:5}.
\end{remark}

In the framework of nonlinear operators that arise in differential equations, the Banach spaces $X$ and $Y$ will be function spaces on $D$ and $U$, respectively; a typical example is $X$, $Y = H^s(D)$ for some $s\ge 0$, where $H^s(D)$ denotes the $L^2$-based Sobolev space on $D$. The embeddings $\iota: \, X \embeds L^2(D)$, $\bar{\iota}: \, Y \embeds L^2(U)$ will thus be canonical, and henceforth, we will identify $X \simeq \iota(X)$, $Y\simeq \bar{\iota}(Y)$ as subsets of $L^2(D)$ and $L^2(U)$, respectively.

\begin{remark}
A technical difficulty associated with DeepONets arises due to the specific form of the encoder \eqref{eq:encoder}, which is defined via point-wise evaluations. In principle, the components of this encoder could easily be replaced by more general functionals of $u$, and in fact, this might be more natural in certain settings (e.g. to model physical measurements; or for mathematical reasons, see Section \ref{sec:scl}). For our general discussion, we will focus instead on encoders of the particular form \eqref{eq:encoder}. The main reason for this choice is the possibility for direct comparison with the numerical experiments of \cite{deeponets}, which are based on the DeepONet architecture \eqref{eq:encoder}--\eqref{eq:reconstruction}. Furthermore, fixing a particular choice will allow us to analyse the encoding error associated with $\cE$ in great detail in Section \ref{sec:encoding} (see Section \ref{sec:over-enc} for an overview).
\end{remark}

Since the point-wise encoder $\cE(u)= (u(x_1),\dots, u(x_m))$ is not well-defined on spaces such as $L^2(D)$, we first show that \eqref{eq:approxerr} is nevertheless well-defined. From Lemma \ref{lem:app1}, we infer that if $A\subset X$ is a Borel measurable set such that $A \subset C(D)$, then $\cN = \cR\circ \cA \circ \cE: A \to L^2(U)$ is measurable, and possesses a measurable extension $\cR\circ \cA \circ \bar{\cE}: L^2(D) \to L^2(U)$. Clearly, since $\mu(A) = 1$, we then have
\begin{align*}
\int_{X} \int_{U}
\big|
\G(u)(y) 
-
&(\cR \circ \cA \circ \bar{\cE})(u)(y)
\big|^2 
\, dy
\, d\mu(u)
\\
&=
\int_{A} \int_{U}
\left|
\G(u)(y) 
-
(\cR \circ \cA \circ \cE)(u)(y)
\right|^2 
\, dy
\, d\mu(u),
\end{align*}
for any extension $\bar{\cE}$. This allows us to define the error \eqref{eq:approxerr} uniquely and allows to formulate the following precise definition of DeepONets,
\begin{definition}[DeepONet]
Let $\mu$, $\G$ be given data for the DeepONet approximation problem (see Definition \ref{def:data}). A \define{DeepONet} $\cN$, approximating the nonlinear operator $\G$, is a mapping $\cN: C(D) \to L^2(U)$ of the form $\cN = \cR \circ \cA \circ \cE$, where $\cE: (X,\Vert \slot \Vert_X) \to (\R^m, \Vert \slot \Vert_{\ell^2})$ denotes the encoder given by \eqref{eq:encoder}, $\cA: (\R^m, \Vert \slot \Vert_{\ell^2}) \to (\R^p, \Vert \slot \Vert_{\ell^2})$ denotes the approximator network \eqref{eq:approximator}, and $\cR: (\R^p, \Vert \slot \Vert_{\ell^2}) \to (L^2(U), \Vert \slot \Vert_{L^2(U)})$ denotes the reconstruction of the form \eqref{eq:reconstruction}, induced by the trunk net $\bm{\tr}$. 
\end{definition}
\section{Error bounds for DeepONets}
\label{sec:3}

Our aim in this section is to derive bounds on the error \eqref{eq:approxerr} incurred by the DeepONet \eqref{eq:donet} in approximating the underlying nonlinear operator $\G$.

\subsection{A universal approximation theorem} 
As a first step in showing that the DeepONet error \eqref{eq:approxerr} can be small, we have the following \emph{universal approximation theorem}, that generalizes the universal approximation property of \cite{ChenChen1995} to significantly more general nonlinear operators,
\begin{theorem} \label{thm:uni}
Let $\mu \in \P(C(D))$ be a probability measure on $C(D)$. Let $\G: C(D) \to L^2(D)$ be a Borel measurable mapping, with $\G\in L^2(\mu)$, then for every $\epsilon > 0$, there exists an operator network $\cN = \cR \circ \cA \circ \cE$, such that
\[
\Vert \G - \cN \Vert_{L^2(\mu)}
=
\left(
\int_X \Vert \G(u) - \cN(u) \Vert_{L^2(U)}^2 \, d\mu(u) \right)^{1/2} < \epsilon.
\]
\end{theorem}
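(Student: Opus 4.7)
The plan is to reduce this general $L^2(\mu)$ measurable setting to the continuous, compact setting covered by the Chen--Chen universal approximation theorem, and then control the $L^2(\mu)$ error on the complement via tightness of $\mu$. I would proceed by a standard three-stage reduction: truncation, Lusin-type continuity on a compact subset, and extension plus approximation.

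First, since $\G\in L^2(\mu)$, dominated convergence furnishes $M>0$ such that the truncation $\G_M$, defined by $\G_M(u):=\G(u)$ when $\Vert \G(u)\Vert_{L^2(U)}\le M$ and $\G_M(u):=0$ otherwise, satisfies $\Vert \G-\G_M\Vert_{L^2(\mu)}<\epsilon/4$. As $\mu$ is a Borel probability measure on the Polish space $C(D)$, Ulam's theorem yields a compact $K\subset C(D)$ with $\mu(K^c)<\delta$ for a $\delta>0$ to be fixed later. Lusin's theorem, applied to the Borel measurable map $\G_M$ with values in the separable Banach space $L^2(U)$, supplies a compact subset $\tilde K\subset K$ on which $\G_M|_{\tilde K}$ is continuous and $\mu(K\setminus \tilde K)<\delta$. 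Dugundji's extension theorem (the Tietze theorem for continuous maps into a locally convex space) then extends $\G_M|_{\tilde K}$ to a globally continuous $\tilde\G:C(D)\to L^2(U)$ whose range lies in the closed convex hull of $\G_M(\tilde K)$; in particular $\Vert \tilde\G(u)\Vert_{L^2(U)}\le M$ for every $u$.

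Next I invoke the Chen--Chen universal approximation theorem on the compact set $\tilde K\subset C(D)$ and the continuous operator $\tilde\G$, producing an operator network $\cN=\cR\circ\cA\circ\cE$ such that $\sup_{u\in \tilde K}\Vert \tilde\G(u)-\cN(u)\Vert_{L^2(U)}<\epsilon/4$. To prevent blow-up off $\tilde K$, where no approximation guarantee is available, I would further arrange a uniform a priori bound $\Vert \cN(u)\Vert_{L^2(U)}\le 2M$ on all of $C(D)$; this can be done by composing the reconstructor with a coordinate-wise ReLU clipping absorbed into the trunk net, without changing its approximation on $\tilde K$. The triangle inequality in $L^2(\mu)$ now yields
\begin{align*}
\Vert \G-\cN\Vert_{L^2(\mu)}
\le \Vert \G-\G_M\Vert_{L^2(\mu)}
+\Vert \G_M-\tilde\G\Vert_{L^2(\mu)}
+\Vert \tilde\G-\cN\Vert_{L^2(\mu)},
\end{align*}
where the first term is $<\epsilon/4$, the second vanishes on $\tilde K$ and contributes at most $2M\sqrt{\mu(\tilde K^c)}$, and the third is $<\epsilon/4$ on $\tilde K$ plus at most $3M\sqrt{\mu(\tilde K^c)}$ off $\tilde K$. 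Choosing $\delta$ so that $5M\sqrt{2\delta}<\epsilon/2$ closes the argument.

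The main obstacle in my view is the interplay of two points that are absent from the original Chen--Chen result: Lusin's theorem must be applied in a Banach-valued setting, which requires separability of $L^2(U)$ together with Borel measurability of $\G_M$; and the operator network produced by Chen--Chen, being a priori unbounded on $C(D)\setminus \tilde K$, must be made uniformly bounded so that the $L^2(\mu)$ contribution from $\tilde K^c$ is controlled solely by $\mu(\tilde K^c)$. The latter is not automatic, but can be enforced by a ReLU-based clipping appended to the trunk/reconstructor at the cost of a modest increase of its size and depth. With these ingredients in place, the rest of the proof is a bookkeeping exercise.
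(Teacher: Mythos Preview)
Your overall strategy mirrors the paper's proof: truncate $\G$, apply Lusin's theorem on a compact set, invoke Chen--Chen, clip the resulting network so the off-compact contribution is controlled, and assemble via the triangle inequality. The Dugundji extension is a reasonable device but is actually unnecessary here, since the only place you use $\tilde\G$ off $\tilde K$ is for a crude $2M$ bound, which you already have for $\G_M$ itself; the paper works directly with $\G_M$ on $K$ and never extends.

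There are, however, two genuine technical gaps. First, the Chen--Chen theorem approximates continuous operators $K\to C(U)$ in the uniform norm, not operators into $L^2(U)$. Your $\tilde\G$ takes values in $L^2(U)$, so the conclusion $\sup_{u\in\tilde K}\Vert\tilde\G(u)-\cN(u)\Vert_{L^2(U)}<\epsilon/4$ does not follow by a direct citation. The paper fills this gap by first projecting onto a finite orthonormal system $\phi_1,\dots,\phi_\kappa\subset C(U)$: since $\G_M(\tilde K)$ is compact in $L^2(U)$, one can choose $\kappa$ so that $\Vert v-P_\kappa v\Vert_{L^2}$ is uniformly small on this image, and then $P_\kappa\circ\G_M:\tilde K\to C(U)$ lands in the right target space for Chen--Chen.

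Second, your clipping step does not, as written, yield $\Vert\cN(u)\Vert_{L^2(U)}\le 2M$. Coordinate-wise clipping of the branch outputs $\beta_k(u)$ controls $\Vert\beta(u)\Vert_{\ell^\infty}$, not $\Vert\cN(u)\Vert_{L^2(U)}=\Vert\sum_k\beta_k(u)\tau_k\Vert_{L^2(U)}$, and the clipping certainly cannot be ``absorbed into the trunk net'', since it must depend on $u$. The paper first modifies the linear output layers so that the trunk functions $\tau_k$ become \emph{orthonormal} in $L^2(U)$, whence $\Vert\cN(u)\Vert_{L^2(U)}=\Vert\beta(u)\Vert_{\ell^2}$; it then composes the \emph{branch} net with a radial ReLU clipping map $\gamma:\R^p\to\R^p$ (Lemma~\ref{lem:clipping}) satisfying $\Vert\gamma(x)-x\Vert_{\ell^2}<\epsilon$ on $\{\Vert x\Vert_{\ell^2}\le R_1\}$ and $\Vert\gamma(x)\Vert_{\ell^2}\le R_2$ everywhere. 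With these two fixes your argument is essentially the paper's proof.
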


The proof of this theorem is based on an application of Lusin's Theorem to approximate measurable maps by continuous maps on compact subsets and then using the universal approximation theorem of \cite{ChenChen1995}. It is presented in detail in Appendix \ref{app:pf31}. 

\begin{remark}
The universal approximation theorem of \cite{ChenChen1995} states that DeepONets can approximate continuous operators $\G$ uniformly over compact subsets $K\subset X$. In contrast, the above theorem removes both the compactness constraint, as well as the continuity assumption on $\G$ and paves the way for the theorem to be applied in realistic settings, for instance in the approximation of nonlinear operators that arise when considering differential equations such as those of \cite{deeponets} and later in this paper. However, this extension comes at the expense of considering a weaker distance ($L^2(\mu \otimes dy)$ vs. $L^\infty(\mu \otimes dy)$) than in \cite{ChenChen1995}. In practice, it is indeed the $L^2$-distance that is minimized during the training process. Moreover, the above theorem also allows us to consider cases of practical interest where $\mu$ is supported on an \emph{unbounded} subset, as is e.g. the case when $\mu$ is a non-degenerate Gaussian measure. Indeed, in most of the numerical examples in \cite{deeponets}, the underlying measure $\mu$ is a Gaussian measure given by the law of a Gaussian random field.
\end{remark}
The universal approximation theorem \ref{thm:uni} shows that for any given tolerance $\epsilon$, there exists a DeepONet of the form \eqref{eq:donet} such that the resulting approximation error \eqref{eq:approxerr} is smaller than this tolerance. However, this theorem does not provide any explicit information about the number of sensors $m$, the number of branch and trunk net outputs $p$ or the hyperparameters of the approximator neural network $\cA$ and the trunk net $\bm{\tr}$. As discussed in the introduction, these numbers specify the complexity of a DeepONet and we would like to obtain explicit bounds (information) on the computational complexity of a DeepONet for achieving a given error tolerance and ascertain whether DeepONets are efficient at approximating a given nonlinear operator $\G$. In practice, we are thus interested in deriving \emph{quantitative} error and complexity bounds for the DeepONet approximation of operators. This will be the focus of the remainder of the present section.

\subsection{Overview of quantitative error bounds}
\label{sec:overview}

We will first provide an overview of the main results on quantitative error bounds derived in the present work.  An extended discussion of these results can be found in the following subsections, which include detailed derivations and proofs.

\subsubsection{Error decomposition and the curse of dimensionality}

Given the decomposition of the DeepONet \eqref{eq:donet} into an encoder $\cE$, approximator $\cA$ and reconstructor $\cR$, it is natural to expect that the total error \eqref{eq:approxerr} also decomposes into errors associated with them. For a given encoder $\cE$ and reconstructor $\cR$, we can define (approximate) inverses $\cD$ (the \emph{decoder}) and $\cP$ (the \emph{projector}), which are required to satisfy the following relations exactly
\[
\cE \circ \cD = \Id: \, {\R^m \to \R^m}, \quad
\cP \circ \cR = \Id: \, {\R^p \to \R^p},
\]
and should satisfy
\[
\cD \circ \cE \approx \Id: \, {X\to X},
\quad
\cR \circ \cP \approx \Id: \, {Y\to Y}.
\]
We note that $\cD$ and $\cP$ are not necessarily unique, and need to be chosen. All mappings are illustrated in the following diagram:
\[
\begin{tikzcd}
L^2(D) 
\arrow[r, "\G"] 
\arrow[d, shift right=0ex, "\cE" left, bend right=30] 
& L^2(U) 
\arrow[d, shift right=0ex, "\cP" left, bend right=30] 
\\
\R^m 
\arrow[u, shift right=0ex, "\cD" right, bend right=30] 
\arrow[r, "\cA"]   
& \R^p 
\arrow[u, shift right=0ex, "\cR" right, bend right=30]
\end{tikzcd}
\]
Given choices for the decoder $\cD$ and the projector $\cP$, we can now define the \define{encoding error} $\Err_{\cE}$, the \define{approximation error} $\Err_{\cA}$, and the \define{reconstruction error} $\Err_{\cR}$, respectively, as follows:
\begin{align}
\Err_{\cE}
&:=
\left(\int_{X}
\Vert \cD \circ \cE(u) - u \Vert_{X}^2 
\, d\mu(u)\right)^{\frac{1}{2}}, 
\label{eq:encoding}
\\
\Err_{\cA}
&:=
\left(\int_{\R^m}
\Vert \cA(\bm{u}) - \cP \circ \G \circ \cD(\bm{u}) \Vert_{\ell^2(\R^p)}^2 
\, d(\cE_\#\mu)(\bm{u})\right)^{\frac{1}{2}}
\label{eq:approximation}
\\
\Err_{\cR}
&:= 
\left(\int_{L^2(U)}
\Vert \cR \circ \cP(u) - u \Vert_{L^2(U)}^2 
\, d(\G_\#\mu)(u)\right)^{\frac{1}{2}}.
\label{eq:decoding}
\end{align}
We could also have written these errors as $\Err_{\cE} = \left\Vert \cD \circ \cE - \Id \right\Vert_{L^2(\mu)}$, $\Err_{\cA} = \Vert \cA - \cP \circ \G \circ \cD \Vert_{L^2(\cE_\#\mu)}$, and $\Err_{\cR} = \left\Vert \cR \circ \cP - \Id \right\Vert_{L^2(\G_\#\mu)}$.
Intuitively, the encoding and reconstruction errors, $\Err_{\cE}$ and $\Err_{\cR}$, measure the loss of information by the DeepONet's finite-dimensional encoding of the underlying infinite-dimensional spaces; these error sources are weighted by the input measure $\mu$ on the input side, and the push-forward measure $\G_\#\mu$ on the output side, respectively. The approximation error $\Err_{\cA}$ measures the error due to the approximation by the approximator network $\cA: \R^m \to \R^p$ of the ``encoded/projected'' operator, $G = \cP\circ \G \circ \cD: \R^{m} \to \R^{p}$. 

To state our main result on the error decomposition of $\Err$ in terms of $\Err_{\cE}$, $\Err_{\cA}$ and $\Err_{\cR}$, we first recall the following notation for a mapping $\cF: X \to Y$ between arbitrary Banach spaces $X,Y$:
\[
\Lip_\alpha(\cF: X\to Y) 
:=
\sup_{u,u'\in X}
\frac{\Vert \cF(u) - \cF(u') \Vert_{Y}}{\Vert u - u' \Vert_{X}^\alpha},
\quad
\Lip(\cF) := \Lip_1(\cF).
\]
We then have the following error estimate, whose proof can be found in Appendix \ref{app:errdec}:

\begin{theorem} 
\label{thm:err-upper-bound}
Consider the setting of Definition \ref{def:data}. Let the nonlinear operator $\G: A \subset X \to Y$ be $\alpha$-H\"older continuous (or Lipschitz continuous if $\alpha = 1$), where $X\embeds L^2(D)$, $Y\embeds L^2(U)$. Choose an arbitrary encoder $\cE: C(D) \to \R^m$, approximator $\cA: \R^m \to \R^p$ and an arbitrary reconstruction $\cR: \R^p \to L^2(U)$, of the form \eqref{eq:reconstruction}. Then the error \eqref{eq:approxerr} associated with the DeepONet $\cN = \cR \circ \cA \circ \cE$ satisfies the following upper bound,
\begin{equation}
    \label{eq:donetbd}
    \Err 
    \leq 
    \Lip_\alpha(\G) \Lip(\cR\circ \cP) (\Err_{\cE})^\alpha +  \Lip(\cR) \Err_{\cA} + \Err_{\cR}.
\end{equation}
\end{theorem}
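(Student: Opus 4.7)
The plan is to insert two intermediate operators into the target $\G$, so that the difference $\G - \cR\circ\cA\circ\cE$ telescopes into three pieces, each of which matches one of the errors $\Err_{\cR}$, $\Err_{\cE}$, $\Err_{\cA}$. Concretely, I would write
\begin{align*}
\G - \cR\circ\cA\circ\cE
= \bigl(\G - \cR\circ\cP\circ\G\bigr)
&+ \bigl(\cR\circ\cP\circ\G - \cR\circ\cP\circ\G\circ\cD\circ\cE\bigr) \\
&+ \bigl(\cR\circ\cP\circ\G\circ\cD\circ\cE - \cR\circ\cA\circ\cE\bigr),
\end{align*}
and then estimate the $L^2(\mu; L^2(U))$-norm of each summand separately. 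Before doing this pointwise in $u$, I would invoke the measurability statement from Lemma \ref{lem:app1} (and the remark following it) to confirm that every composition above admits a measurable extension to all of $X$, so that the integral against $\mu$ is unambiguous; since $\mu(A)=1$ and $A\subset C(D)$, the value of the integral is independent of the chosen extension.

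For the first summand, the relation $(\G - \cR\circ\cP\circ\G)(u) = (\Id - \cR\circ\cP)(\G(u))$ combined with the change-of-variables formula $\int_X f(\G(u))\,d\mu(u) = \int_Y f(v)\,d(\G_\# \mu)(v)$ identifies its $L^2(\mu)$-norm exactly with $\Err_{\cR}$. For the third summand, factoring out $\cR$ via its Lipschitz bound and using $\int_X g(\cE(u))\,d\mu(u) = \int_{\R^m} g(\bm u)\,d(\cE_\# \mu)(\bm u)$ yields the contribution $\Lip(\cR)\,\Err_{\cA}$.

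The middle summand is the one requiring slightly more care, since the Hölder exponent $\alpha$ interacts nontrivially with the $L^2$-integration. Pointwise, applying $\Lip(\cR\circ\cP)$ and $\Lip_\alpha(\G)$ yields
\[
\bigl\|\cR\circ\cP\circ\G(u) - \cR\circ\cP\circ\G\circ\cD\circ\cE(u)\bigr\|_{L^2(U)}
\le \Lip(\cR\circ\cP)\,\Lip_\alpha(\G)\,\|u - \cD\circ\cE(u)\|_X^{\alpha}.
\]
To then pass from $\|\cdot\|_X^{\alpha}$ to $(\Err_{\cE})^{\alpha} = \bigl(\int\|u-\cD\circ\cE(u)\|_X^2\,d\mu\bigr)^{\alpha/2}$, I would apply Jensen's inequality with the concave map $x\mapsto x^{\alpha}$ (valid for $\alpha\in(0,1]$ on the probability space $(X,\mu)$):
\[
\int_X \|u-\cD\circ\cE(u)\|_X^{2\alpha}\,d\mu(u)
= \int_X \bigl(\|u-\cD\circ\cE(u)\|_X^{2}\bigr)^{\alpha}\,d\mu(u)
\le \Bigl(\int_X \|u-\cD\circ\cE(u)\|_X^{2}\,d\mu\Bigr)^{\alpha}.
\]
Taking the square root and combining with the two Lipschitz factors produces the desired term $\Lip_\alpha(\G)\,\Lip(\cR\circ\cP)\,(\Err_{\cE})^{\alpha}$.

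The main obstacle I anticipate is the Jensen step for the $\alpha<1$ (genuinely Hölder) case: one must be careful that $\mu$ is a probability measure and that $\cD\circ\cE(u)\in X$ is well-defined $\mu$-a.e., both of which are guaranteed by Definition \ref{def:data}. Once the three bounds are in hand, a single application of the triangle inequality in $L^2(\mu; L^2(U))$ assembles them into the claimed estimate \eqref{eq:donetbd}.
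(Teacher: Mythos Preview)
Your proposal is correct and follows essentially the same approach as the paper: the same three-term telescoping decomposition (up to sign and ordering), the same use of push-forward measures for the reconstruction and approximation terms, and the same Jensen-inequality argument to handle the H\"older exponent $\alpha\in(0,1]$ in the encoding term. The only addition is your explicit invocation of Lemma~\ref{lem:app1} for measurability, which the paper establishes earlier but does not restate in the proof itself.
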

The last theorem shows that the total error $\Err$ is indeed controlled by $\Err_{\cE}$, $\Err_{\cA}$ and $\Err_{\cR}$. 
Furthermore, this theorem provides us with a clear strategy for estimating the DeepONet error \eqref{eq:approxerr} for a concrete operator $\G$ of interest: 
\begin{itemize}
\item First, we will bound the encoding/reconstruction errors, providing suitable estimates for 
\[
\cD \circ \cE \approx \Id, 
\quad
\cR \circ \cP \approx \Id.
\]
In this step, we need to choose  $\cD$, $\cE$, $\cR$, $\cP$ in order to minimize the resulting encoding and reconstruction errors. 
\item In a second step, we estimate the approximation error 
\[
\Vert \cA - \cP\circ \G \circ \cD \Vert_{L^2(\cE_\#\mu)},
\]
for fixed projector $\cP$ and decoder $\cD$. The second step boils down to the conventional approximation of a function $G: \R^m \to \R^p$ by neural networks.
\end{itemize}
Our goal will be to analyze the total error $\Err$ in terms of this decomposition, with the aim of showing the \emph{efficiency} of the DeepONet approximation for a wide range of operators of interest. To this end, we will first need to discuss a suitable notion of ``efficiency'', which is motivated by the following remark.

\begin{remark} \label{rem:cod}
As shown above, the error introduced by the approximation step in the DeepONet decomposition is naturally related to the error in the approximation of a high-dimensional mapping $G = \cP \circ \G \circ \cD: \R^m \to \R^p$ by the neural network $\cA: \R^m \to \R^p$. The relevant function $G$ can be thought of as a finite-dimensional projection of the operator $\G$. In particular, $G$ inherits regularity properties of $\G$ such as Lipschitz continuity. As shown in \cite[Theorem 1]{yarotskynew}, the approximation of a \emph{general} Lipschitz continuous function to accuracy $\sim \epsilon$, requires a ReLU network of size $\gtrsim \epsilon^{-m/2}$, and hence suffers from the \emph{curse of dimensionality} in high dimensions, $m\gg 1$. In the context of DeepONets, we recall that $m$ is the number of sensors used in the encoding step $u \mapsto \cE(u) = (u(x_1),\dots, u(x_m))$ of the DeepONet architecture. Achieving a small error of order $\sim \epsilon$ in this encoding step requires $m = m(\epsilon)$ to depend on $\epsilon$, with $m(\epsilon) \to \infty$ as $\epsilon \to 0$. Therefore, general neural network approximation results indicate that the required DeepONet complexity for the approximation $\cN \approx \G$ of an \emph{arbitrary} Lipschitz continuous operator $\G$ to a given accuracy $\epsilon$ requires at least $\size(\cN) \gtrsim \epsilon^{-m(\epsilon)/2}$. In particular, this scaling is faster than \emph{any algebraic rate} in $\epsilon^{-1}$. This connection between the curse of dimensionality, as commonly understood in the literature \cite[]{cod1,CDS2011,cod2}, and this worse-than-algebraic asymptotic growth of DeepONet size in $\epsilon^{-1}$ provides an appropriate notion of the curse of dimensionality in the infinite-dimensional DeepONet context.
\end{remark}
Given Remark \ref{rem:cod} above, we can say that ``DeepONets break the curse of dimensionality'' in the approximation of a given operator $\G$, if, for any accuracy $\epsilon > 0$, there exists a DeepONet which achieves an approximation error $\Err\le \epsilon$ (cp. \eqref{eq:approxerr}), with a complexity which scales \emph{at most algebraically} in $\epsilon^{-1}$. 
This key concept with respect to computational complexity and efficiency of DeepONet approximation is made precise below:
\begin{definition}
[Curse of Dimensionality for DeepONets]
\label{def:cod}
 Given a DeepONet $\cN$ \eqref{eq:donet}, we define the \emph{size} of the DeepONet as the sum of the sizes of the approximator neural network $\cA$ and the trunk net $\bm{\tr}$, i.e. $\size(\cN) = \size(\cA) + \size(\bm{\tr})$ (cp. \eqref{eq:nom}). For a given tolerance $\epsilon > 0$, let $\cN_{\epsilon}$ be a DeepONet such that the error $\Err$ \eqref{eq:approxerr} is less than $\epsilon$, and
\begin{equation}
    \label{eq:cod1}
    \size\left(\cN_{\epsilon}\right) \sim {\mathcal O}\left(\epsilon^{-\vartheta_{\epsilon}}\right),
\end{equation}
for some $\vartheta_{\epsilon} \geq 0$. Note that the universal approximation theorem \ref{thm:uni} guarantees the existence of such a $\cN_{\epsilon}$ and $\vartheta_{\epsilon}$ for every $\epsilon > 0$.

The DeepONet approximation of a nonlinear operator $\G$, with underlying measure $\mu$ (check from Definition \ref{def:data}) is said to incur a \emph{curse of dimensionality}, if
\begin{equation}
    \label{eq:cod}
    \lim\limits_{\epsilon \to 0} \vartheta_{\epsilon} = +\infty.
\end{equation}
On the other hand, the DeepONet approximation is said to \emph{break the curse of dimensionality} if there exist DeepONets $\cN_{\epsilon}$ such,
\begin{equation}
    \label{eq:bcod}
    \lim\limits_{\epsilon \to 0} \vartheta_{\epsilon} = \overline{\vartheta} < +\infty.
\end{equation}
\end{definition}

This definition emphasizes the fundamental role played by bounds on the size of the DeepONet for obtaining a certain level of error tolerance. We provide such explicit bounds later in this paper. In the following subsections, we survey our main results on the reconstruction error $\Err_{\cR}$, the encoding error $\Err_{\cE}$ and the approximation error $\Err_{\cA}$.

\subsubsection{Reconstruction error}
\label{sec:over-rec}

The reconstruction error $\Err_{\cR}$ \eqref{eq:reconstruction} is intimately related to the eigenfunctions and eigenvalues of the covariance operator $\Gamma_{\G_\#\mu}$ of the push-forward measure $\G_\#\mu$,
\begin{align} \label{eq:covGmu}
\Gamma_{\G_\#\mu} 
=
\int_{Y} (v-\E[v])\otimes (v-\E[v]) \, d(\G_\#\mu)(v),
\end{align}
where $\E[v] = \int_{Y} v \, d(\G_\#\mu)(v)$ denotes the mean of $\G_\#\mu$. General results on the relation between the eigenstructure of covariance operators and optimal projections onto finite-dimensional linear and affine subspace are presented in Section \ref{sec:projection}, below. Based on these results, it will be shown that 
\begin{enumerate}
\item for any affine reconstruction $\cR: \R^p \to L^2(U)$, there exists a (unique) optimal projection $\cP: L^2(U) \to \R^p$, and that this optimal $\cP$ is itself \emph{affine} (cp. Lemma \ref{lem:opt-proj}), 
\item among all affine reconstructions $\cR: \R^p \to L^2(U)$, there exists an optimal choice $\cR_\opt: \R^p \to L^2(U)$, which achieves the minimum reconstruction error: $\Err_{\cR_\opt} = \min_{\cR} \Err_{\cR}$ (cp. Theorem \ref{thm:rec-opt}).
\end{enumerate}
As a consequence of the analysis of the optimal reconstruction, we can then derive the following \emph{lower bound} on the reconstruction and total approximation errors:
\begin{theorem} \label{thm:lowerbd}
Consider the setting of Definition \ref{def:data}, let $\G: X\to L^2(U)$ be an operator. Let $\cN = \cR\circ \cA \circ \cE: X \to L^2(U)$ be an arbitrary DeepONet approximation of $\cG$, with encoder $\cE: C(D)\to \R^m$, approximator $\cA: \R^m\to \R^p$ and reconstruction $\cR: \R^p \to L^2(U)$. Let $\cP: L^2(U) \to \R^p$ be the optimal affine projection associated with $\cR$. Then the total error $\Err$ and the reconstruction error $\Err_{\cR}$ can be estimated \emph{from below} by
\begin{align} \label{eq:donetlbd}
\sqrt{\sum_{k>p} \lambda_k} \le \Err_{\cR} \le \Err,
\end{align}
in terms of the eigenvalues $\lambda_1\ge \lambda_2 \ge \dots$ of the covariance operator $\Gamma_{\G_\#\mu}$ associated with the push-forward measure $\G_\#\mu$.
\end{theorem}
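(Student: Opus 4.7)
The plan is to prove the two inequalities of \eqref{eq:donetlbd} separately: first the right inequality $\Err_{\cR} \le \Err$, which is a comparison between the reconstruction error and the total DeepONet error for a fixed architecture; then the left inequality $\sqrt{\sum_{k>p} \lambda_k} \le \Err_{\cR}$, which is a lower bound for $\Err_{\cR}$ in terms of the spectral data of $\Gamma_{\G_\#\mu}$ that holds for any affine reconstruction of output dimension $p$. Both steps rely on the facts already announced in Section~\ref{sec:over-rec}: (a) the optimal affine projection $\cP$ associated with $\cR$ acts as orthogonal projection onto the affine image $\cR(\R^p) \subset L^2(U)$ (Lemma~\ref{lem:opt-proj}), and (b) the minimum reconstruction error over all affine reconstructions of rank $\le p$ equals $\sqrt{\sum_{k>p} \lambda_k}$ (Theorem~\ref{thm:rec-opt}).

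For the right inequality, I would argue pointwise in $u$. By the optimality property of $\cP$, for every $v\in L^2(U)$ and every $\alpha\in\R^p$,
\[
\|v - \cR\circ\cP(v)\|_{L^2(U)} \le \|v - \cR(\alpha)\|_{L^2(U)}.
\]
Applying this with $v = \G(u)$ and $\alpha = \cA\circ\cE(u)$, squaring, integrating over $u\sim\mu$, and then rewriting the left-hand side via the change of variables $v = \G(u)$ (so $v\sim\G_\#\mu$) yields exactly $\Err_{\cR}^2 \le \Err^2$.

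For the left inequality, the key observation is that $\cR(\R^p)$ is an affine subspace of $L^2(U)$ of dimension at most $p$, and $\cR\circ\cP$ is the orthogonal projector onto this subspace. Hence
\[
\Err_{\cR}^2 = \int_{L^2(U)} \mathrm{dist}\bigl(v,\,\cR(\R^p)\bigr)^2 \, d(\G_\#\mu)(v) \;\ge\; \inf_{V} \int_{L^2(U)} \mathrm{dist}(v,V)^2 \, d(\G_\#\mu)(v),
\]
where the infimum runs over all affine subspaces $V\subset L^2(U)$ of dimension at most $p$. By the classical Karhunen--Lo\`eve/PCA characterization of optimal affine approximation in Hilbert space (established in the paper as Theorem~\ref{thm:rec-opt}), this infimum equals $\sum_{k>p} \lambda_k$, where $\lambda_1\ge\lambda_2\ge\cdots$ are the eigenvalues of $\Gamma_{\G_\#\mu}$; the optimum is attained when $V$ is the translate by the mean $\E_{v\sim\G_\#\mu}[v]$ of the span of the top $p$ eigenfunctions of $\Gamma_{\G_\#\mu}$. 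Taking square roots gives the desired bound.

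The main (mildly technical) obstacle is the PCA-optimality step in the infinite-dimensional setting, since one must justify the existence of the spectral decomposition of $\Gamma_{\G_\#\mu}$ and the attainment of the infimum by an affine subspace of dimension $p$. This is precisely what is covered in Section~\ref{sec:projection} under the standing assumption $\G\in L^2(\mu)$ from Definition~\ref{def:data}, which guarantees that $\Gamma_{\G_\#\mu}$ is a symmetric, non-negative trace-class operator on $L^2(U)$ with summable eigenvalues. Given that machinery, both parts of the argument reduce to short manipulations and the theorem follows directly.
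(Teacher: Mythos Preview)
Your proposal is correct and essentially identical to the paper's proof: both use Lemma~\ref{lem:opt-proj} to identify $\cR\circ\cP$ with the orthogonal projection onto the affine image $\im(\cR)$, establish $\Err_{\cR}\le\Err$ by the pointwise inequality $\Vert \G(u)-\cR\circ\cP\circ\G(u)\Vert\le\Vert\G(u)-\cR(\alpha)\Vert$ with $\alpha=\cA\circ\cE(u)$, and then invoke Theorem~\ref{thm:rec-opt} for the spectral lower bound $\sqrt{\sum_{k>p}\lambda_k}\le\Err_{\cR}$.
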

Theorem \ref{thm:lowerbd} provides a definite, a priori limitation on the best error which can be achieved by a DeepONet approximation. The proof of this theorem can be found on page \pageref{pf:lowerbd}. 

The next goal is to provide \emph{upper} bounds on $\Err_{\cR}$ for a suitably chosen trunk net reconstruction $\cR$ \eqref{eq:reconstruction}. As mentioned in point (2) above, in principle, there exists a provably \emph{optimal} choice $\cR_\opt$ among all affine reconstructions. However, given that the trunk net basis functions $\tr_0,\dots, \tr_p$ are represented by \emph{neural networks}, this optimal choice $\cR_\opt$ cannot in general be represented exactly by the trunk net reconstruction $\cR$, leading to an additional contribution to the reconstruction error, which depends on how well the eigenfunctions of the covariance operator can be approximated by neural networks (cp. Proposition \ref{prop:err-rec-approx}). 

As will be discussed in Section \ref{sec:decayspec}, due to the distortion by $\G$, the eigenstructure of the push-forward measure $\G_\#\mu$ can be very different from that of $\mu$. In fact, even if $\mu$ has exponentially decaying spectrum of the covariance operator $\Gamma_\mu$, the push-foward under $\G$ can destroy such high rates of decay of the eigenvalues of $\Gamma_{\G_\#\mu}$ (cp. Proposition \ref{prop:cexample}). Thus, with the exception of \emph{linear} operators (cp. Proposition \ref{prop:lin}), the eigenstructure of $\G_\#\mu$ may to depend in a very complicated way on both $\G$ and $\mu$, for \emph{non-linear} $\G$, making it difficult to analyze $\Gamma_{\G_\#\mu}$.

As it can be very difficult to obtain the necessary information on the eigenfunctions needed to \emph{quantify} their approximability by the trunk net $\bm{\tr}$, we will discuss an alternative way to obtain estimates on $\Err_{\cR}$, in Section \ref{sec:decoding}. This alternative relies on a comparison principle with a given (non-optimal) reconstructor $\tR: \R^p \to L^2(U)$ (cp. Lemma \ref{lem:err-rec-comp}). As one concrete application, this general comparison principle is applied to the reconstructor $\tR = \cR_\Fourier$, obtained by expansion in the standard Fourier basis. This allows us to derive the following quantitative upper reconstruction error and complexity estimate, which depends only on the \emph{average smoothness} of the output functions $\G(u) \in L^2(U)$ (stated in the periodic setting, for simplicity):
\begin{theorem}
\label{thm:smoothness-NN}
If $\G$ defines a Lipschitz mapping $\G: X \to H^s(\T^n)$, for some $s>0$, with
\[
\int_{X} \Vert \G(u) \Vert_{H^s}^2 \, d\mu(u) \le M < \infty,
\]
then there exists a constant $C=C(n,s,M)>0$, such that for any $p\in \N$, there exists a trunk net $\bm{\tr}: \R^n \to \R^p$ (with bias term $\tr_0\equiv 0$), with 
\begin{gather} \label{eq:smoothness-NN-size}
\begin{gathered}
\size(\bm{\tr}) \le Cp(1+\log(p)^2), \\
\depth(\bm{\tr}) \le C(1+\log(p)^2),
\end{gathered}
\end{gather}
and such that the associated reconstruction $\cR: \R^p\to L^2(\T^n)$, 
$\cR(\alpha) = \sum_{k=1}^p \alpha_k \tr_k$ satisfies
\begin{align} \label{eq:smoothness-NN}
\Err_{\cR} \le C p^{-s/n}.
\end{align}
Furthermore, the reconstruction $\cR$ and the associated optimal projection $\cP$ satisfy $\Lip(\cR), \, \Lip(\cP) \le 2$.
\end{theorem}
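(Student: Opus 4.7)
The plan is to invoke the comparison principle (Lemma \ref{lem:err-rec-comp}) with the reference reconstructor $\tR = \cR_\Fourier$ (the truncated-Fourier reconstruction referenced in Section \ref{sec:decoding}), and to construct a trunk net whose outputs are ReLU neural-network surrogates of the first $\sim p$ real Fourier basis functions on $\T^n$.

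For the Fourier projection, I would choose $K\in\N$ with $(2K+1)^n \sim p$, i.e.\ $K\sim p^{1/n}$, and take $\{e_k\}_{|k|_\infty\le K}$ to be the real-valued Fourier basis on $\T^n$ (the constant, together with $\cos(k\cdot y)$ and $\sin(k\cdot y)$ for $k$ in a suitable index set). Standard Sobolev--Fourier theory gives, for any $f\in H^s(\T^n)$,
\[
\Big\Vert f - \sum_{|k|_\infty\le K} \ip{f}{e_k}\, e_k \Big\Vert_{L^2(\T^n)}^2 \le K^{-2s}\Vert f\Vert_{H^s}^2.
\]
Integrating over $\mu$ with $f=\G(u)$ and using the hypothesis $\int \Vert \G(u)\Vert_{H^s}^2\,d\mu(u)\le M$ immediately yields $\Err_{\tR}\le C(n,s,M)\,p^{-s/n}$ for the ideal Fourier reconstructor, which realizes the claimed rate \eqref{eq:smoothness-NN}.

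To turn $\tR$ into a neural trunk-net reconstruction $\cR$, I would approximate each basis function $e_k$ by a ReLU network $\tr_k$. Since each $e_k$ is a fixed univariate smooth function evaluated at the scalar $k\cdot y$, classical Yarotsky-type results for ReLU approximation of smooth univariate functions yield, for any target accuracy $\epsilon>0$, a network of size and depth $C\log(|k|+1/\epsilon)^2$. Stacking the $p$ such networks in parallel produces a single trunk net $\bm{\tr}$ of size $\le Cp\log(p/\epsilon)^2$ and depth $\le C\log(p/\epsilon)^2$. Choosing $\epsilon=c\,p^{-1/2-s/n}$ for a sufficiently small constant $c$, one has $\log(p/\epsilon)\le C\log(p)$, which produces exactly the size and depth bounds \eqref{eq:smoothness-NN-size}. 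With this choice of $\epsilon$, the Gram matrix of $\{\tr_k\}$ differs from the identity by at most $O(p\epsilon)\le 1/4$ in operator norm, so the span of $\{\tr_k\}$ remains almost orthonormal; Lemma \ref{lem:opt-proj} together with the explicit form of the optimal projection $\cP$ then yields $\Lip(\cR),\,\Lip(\cP)\le 2$. Inserting this into Lemma \ref{lem:err-rec-comp} gives $\Err_{\cR} \le \Err_{\tR} + O(\sqrt{p}\,\epsilon)\le Cp^{-s/n}$, as required.

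The main technical obstacle is the simultaneous control of three competing quantities: the basis-approximation precision $\epsilon$ must be small enough that the perturbation from $\tR$ to $\cR$ is subleading compared to $p^{-s/n}$ \emph{and} that the Gram matrix of the neural basis stays within $\tfrac{1}{4}$ of the identity (to bound the Lipschitz constants of $\cR$ and $\cP$), while the network size is required to remain polylogarithmic in $1/\epsilon$. Verifying that $\epsilon \sim p^{-1/2-s/n}$ indeed preserves $\log(p/\epsilon)\sim\log(p)$, and pinning down the quantitative ReLU approximation of $\cos(k\cdot y)$ and $\sin(k\cdot y)$ with constants depending only on $n$ and $s$, is the delicate bookkeeping; the rest is a direct assembly of Sobolev--Fourier truncation estimates, the comparison principle, and standard smooth-function approximation by ReLU networks.
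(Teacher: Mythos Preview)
Your approach is essentially the same as the paper's: the proof in the paper simply combines Proposition~\ref{prop:rec-smoothness} (your Sobolev--Fourier truncation estimate, giving $\Err_{\cR_\Fourier}\le CMp^{-s/n}$) with Lemma~\ref{lem:Fourier-trunk} (which packages Lemma~\ref{lem:Fourier-NN} and the comparison principle Lemma~\ref{lem:err-rec-comp}), choosing the comparison parameter $\epsilon=p^{-s/n}$.

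There is one quantitative slip in your bookkeeping. Lemma~\ref{lem:err-rec-comp} requires the per-function error to satisfy $\max_k\Vert\tr_k-\tilde\tr_k\Vert_{L^2}\le \epsilon/p^{3/2}$ in order to conclude $\Err_\cR\le\Err_{\tR}+C\epsilon$; equivalently, if your per-function accuracy is $\delta$, the perturbation in the reconstruction error is $O(p^{3/2}\delta)$, not $O(\sqrt{p}\,\delta)$. Consequently you need $\delta\sim p^{-3/2-s/n}$ rather than $p^{-1/2-s/n}$. This does not affect anything structurally: you still have $\log(p/\delta)\sim\log(p)$, so the size and depth bounds \eqref{eq:smoothness-NN-size} are unchanged, and your Gram-matrix argument for $\Lip(\cR),\Lip(\cP)\le 2$ (which is exactly what the proof of Lemma~\ref{lem:err-rec-comp} establishes) goes through with the corrected exponent. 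The paper avoids this minor trap by stating the comparison at the level of the already-scaled $\epsilon$ in Lemma~\ref{lem:Fourier-trunk}, so that one can set $\epsilon=p^{-s/n}$ directly.
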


Theorem \ref{thm:smoothness-NN} follows from the discussion in Section \ref{sec:over-rec-smooth}. Thus, \eqref{eq:smoothness-NN} provides us with a quantitative algebraic rate of decay for the reconstruction error \eqref{eq:decoding} as long as the size of the trunk net scales as in \eqref{eq:smoothness-NN-size} and the nonlinear operator $\G$ maps onto the Sobolev space $H^s$. Such nonlinear operators arise frequently in PDEs as we will see in Section \ref{sec:4}. For further details and extended analysis of the reconstruction error, we refer to Section \ref{sec:over-rec}.

\subsubsection{Encoding error}
\label{sec:over-enc}

Next, we aim to bound the encoding error \eqref{eq:encoding}, associated with the DeepONet \eqref{eq:donet}. Full details and an extended discussion will be given in Section \ref{sec:encoding}. 

We observe from \eqref{eq:encoding} that the encoding error \emph{does not depend} on the nonlinear operator $\G$, but only depends on the underlying probability measure $\mu$. Following the architecture used by \cite{deeponets}, we have fixed the form of the encoder $\cE$ \eqref{eq:encoder} to be the point-wise evaluation of the input functions at \emph{sensors}, i.e. points $\{x_j\} \subset D$ with $1 \leq j \leq m$. Thus, key objectives of our analysis are to determine suitable choices of sensors for a fixed $m$, as well as to find the appropriate form of a decoder $\cD$ in order to minimize the encoding error \eqref{eq:encoding}. To this end, we start with a result that provides a lower bound on the encoding error:
\begin{theorem}
\label{thm:enlb}
Let $\mu$ be a probability measure on $X = L^2(D)$ with $\int_X \Vert u \Vert_{L^2_x}^2 \, d\mu(u) < \infty$ and $\int_X u \, d\mu(u) = 0$. If $\cE: X \to \R^m$ and $\cD: \R^m \to X$ are any encoder/decoder pair with a \emph{linear} decoder $\cD$, then
\[
\int_X 
\Vert \cD \circ \cE - \Id \Vert_{L^2_x}^2 
\, 
d\mu
\ge 
\sum_{k>m} \lambda_k.
\]
Here, $\lambda_k$ refers the $k$-th eigenvalue of the covariance operator $\Gamma_\mu$ associated with the measure $\mu$. 
In particular, we then have the lower bound
\begin{equation}
    \label{eq:enlb}
\Err_{\cE} \ge \sqrt{\sum_{k>m} \lambda_k}.
\end{equation}
\end{theorem}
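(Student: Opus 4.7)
The plan is to reduce the problem to a statement about orthogonal projections onto $m$-dimensional subspaces, and then invoke a classical eigenvalue extremal principle (Ky Fan / Courant–Fischer) for the trace-class covariance operator $\Gamma_\mu$.

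First, since $\cD: \R^m \to X = L^2(D)$ is linear, its image $V := \cD(\R^m)$ is a closed linear subspace of $X$ with $\dim(V) \le m$. For any $u\in X$, the element $(\cD\circ\cE)(u) \in V$, and the orthogonal projection $P_V u$ is, by definition, the element of $V$ closest to $u$ in $L^2(D)$. Hence pointwise
\[
\Vert (\cD\circ\cE)(u) - u \Vert_{L^2_x}^2 \;\ge\; \Vert P_V u - u \Vert_{L^2_x}^2,
\]
and integrating against $\mu$ gives
\[
\int_X \Vert (\cD\circ\cE)(u) - u \Vert_{L^2_x}^2 \, d\mu(u) \;\ge\; \int_X \Vert u - P_V u \Vert_{L^2_x}^2 \, d\mu(u).
\]
Thus it suffices to establish the lower bound on the right-hand side, uniformly over all subspaces $V\subset X$ with $\dim V \le m$.

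Next, I would rewrite the integrated projection error in terms of $\Gamma_\mu$. Choose any orthonormal basis $e_1,\dots, e_r$ of $V$ (where $r = \dim V \le m$). Using $\Vert u\Vert^2 = \Vert u-P_V u\Vert^2 + \sum_{i=1}^r |\langle u, e_i\rangle|^2$ and the zero-mean assumption, Fubini plus the definition $\Gamma_\mu = \int_X u\otimes u\, d\mu(u)$ yield
\begin{align*}
\int_X \Vert u - P_V u\Vert_{L^2_x}^2 \, d\mu(u)
&= \int_X \Vert u \Vert_{L^2_x}^2 \, d\mu(u) - \sum_{i=1}^{r} \int_X |\langle u, e_i\rangle|^2 \, d\mu(u) \\
&= \Tr(\Gamma_\mu) - \sum_{i=1}^{r} \langle \Gamma_\mu e_i, e_i\rangle.
\end{align*}
Since $\Gamma_\mu$ is a positive, self-adjoint, trace-class operator, $\Tr(\Gamma_\mu) = \sum_{k\ge 1} \lambda_k < \infty$.

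It remains to prove the Ky Fan–type bound
\[
\sum_{i=1}^{r} \langle \Gamma_\mu e_i, e_i\rangle \;\le\; \sum_{k=1}^{m} \lambda_k,
\]
for every orthonormal system $e_1,\dots,e_r$ with $r\le m$. This follows from the spectral theorem: expanding each $e_i$ in the eigenbasis $\{\phi_k\}$ of $\Gamma_\mu$, writing $e_i = \sum_k c_{ik}\phi_k$ with $\sum_k |c_{ik}|^2 = 1$, gives $\sum_{i=1}^r \langle \Gamma_\mu e_i, e_i\rangle = \sum_k \lambda_k a_k$, where $a_k := \sum_{i=1}^r |c_{ik}|^2$ satisfies $0\le a_k \le 1$ (Bessel) and $\sum_k a_k = r \le m$. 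Since $\lambda_1\ge\lambda_2\ge\dots\ge 0$, the linear functional $(a_k)\mapsto \sum_k \lambda_k a_k$ on this polytope is maximized by $a_1=\dots=a_m=1$, giving the claimed bound. Combining the two displays yields
\[
\int_X \Vert u - P_V u\Vert_{L^2_x}^2\, d\mu(u) \;\ge\; \Tr(\Gamma_\mu) - \sum_{k=1}^m \lambda_k \;=\; \sum_{k>m}\lambda_k,
\]
and \eqref{eq:enlb} then follows by taking square roots in the definition of $\Err_{\cE}$.

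The main obstacle is the Ky Fan inequality step: while it is classical for self-adjoint compact operators, one must take care that $\Gamma_\mu$ is indeed well-defined and trace-class under the stated moment assumption $\int_X \Vert u\Vert_{L^2}^2\, d\mu < \infty$. The linearity of $\cD$ is crucial—without it the image of $\cD$ would not be a linear subspace and the orthogonal-projection lower bound would fail; this is precisely why the theorem is stated for linear decoders only.
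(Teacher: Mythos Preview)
Your proof is correct and follows essentially the same approach as the paper: both reduce to bounding the projection error onto the (at most) $m$-dimensional image of $\cD$, and then appeal to the eigenvalue extremal principle for $\Gamma_\mu$. The only cosmetic difference is that the paper packages the Ky Fan step as a citation of Theorem~\ref{thm:opt-linear}, whereas you prove it inline via the $a_k\in[0,1]$, $\sum_k a_k\le m$ polytope argument---which is precisely the content of the ``Claim'' inside the paper's proof of that theorem.
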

The proof is presented in Appendix \ref{app:pf37}. The bound \eqref{eq:enlb} provides a lower bound on the encoding error and connects this error to the spectral decay of the underlying covariance operator, at least for linear decoders. 

Our next aim is to derive upper bounds on the encoding error. To illustrate our main ideas, we will restrict our discussion to the case $X=L^2(D)$. The results are readily extended to more general spaces, such as Sobolev spaces $X = H^s(D)$ for $s>0$. We fix a probability measure $\mu$ on $L^2(D)$, and write the covariance operator $\Gamma_\mu$ as an eigenfunction decomposition,
\[
\Gamma_\mu = \sum_{\ell=1}^\infty \lambda_\ell (\phi_\ell \otimes \phi_\ell),
\]
where $\lambda_1 \ge \lambda_2 \ge \dots$ are the decreasing eigenvalues, and such that the $\phi_\ell$ are an orthonormal basis of $L^2(D)$. We will assume that all $\phi_\ell$ are continuous functions, so that point-wise evaluation of $\phi_\ell$ makes sense. In this case, it can be shown (cp. Lemma \ref{lem:aliasing-general}) that the encoding error is composed of the optimal lower bound \eqref{eq:enlb}, and an additional \emph{aliasing} contribution, due to the encoding in terms of point-evaluations:
\[
(\Err_{\cE})^2 = \sum_{\ell>m} \lambda_\ell + (\Err_{\mathrm{aliasing}})^2 .
\]
An explicit expression for $\Err_{\mathrm{aliasing}}$ can be given (cp. \eqref{eq:aliaserr}), but finding sensor locations $x_1,\dots, x_m$ which minimize this aliasing error contribution appears to be very difficult, in general. We therefore propose to replace the sensors $x_1,\dots, x_m$ by \emph{random} sensor locations $X_1, \dots, X_M$ (iid, uniformly distributed over the domain $D$), and study the corresponding \emph{random encoder},
\[
\cE(u) = (u(X_1), \dots, u(X_M)).
\]
Surprisingly, it can be shown that this random encoder can be close to optimal, as is made precise by the following theorem:
\begin{theorem}
\label{thm:random-enc}
If the eigenbasis of the uncentered covariance operator $\overline{\Gamma}$ \eqref{eq:covb}, associated with the underlying measure $\mu$, is bounded in $L^\infty$, then there exists a constant $C\ge 1$, depending only on $\sup_{\ell\in \N} \Vert \phi_\ell\Vert_{L^\infty}$ and $|D|$, such that the encoding error \eqref{eq:encoding} with $M$ random sensors satisfies (for almost all $M \in \N$):
\[
\Err_{\cE}(X_1,\dots, X_M)
\le C \sqrt{\sum_{\ell > M/C\log(M)} \lambda_\ell},
\quad
(\text{as } M \to \infty),
\]
with probability $1$ in the iid random sensors $X_1,X_2,X_3,\dots \sim \Unif(D)$. 
\end{theorem}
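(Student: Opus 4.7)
\emph{Strategy.} The plan is to exhibit an explicit sensor-dependent (affine, least-squares) decoder, control its expected encoding error, and then upgrade to an almost-sure statement via Borel--Cantelli. Fix a cutoff $K = \lfloor M/(C\log M)\rfloor$ (the constant $C$ to be chosen) and let $V_K = \Span(\phi_1,\dots,\phi_K)$ denote the span of the top $K$ eigenfunctions of $\overline{\Gamma}$. Given point values $y = \cE(u) = (u(X_1),\dots,u(X_M))$, define $\cD(y) = \sum_{k\le K}\hat c_k \phi_k \in V_K$ by the weighted least-squares fit $\hat c = G^{-1}b$, where
\[
G_{jk} = \tfrac{|D|}{M}\sum_{i=1}^M\phi_j(X_i)\phi_k(X_i),\qquad b_k = \tfrac{|D|}{M}\sum_{i=1}^M y_i\,\phi_k(X_i).
\]
Orthonormality of $\{\phi_\ell\}$ in $L^2(D)$ and the uniform law of $X_i\sim\Unif(D)$ give $\E[G]=I_K$ and $\E[b_k]=\langle u,\phi_k\rangle$, so heuristically $\cD\circ\cE$ should behave like the $L^2$-projection onto $V_K$.

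\emph{Conditioning the Gram matrix.} The first technical ingredient is a matrix Chernoff/Bernstein bound for $G$. Writing $G$ as the average of $M$ i.i.d.\ rank-one matrices $|D|\Phi(X_i)\Phi(X_i)^T$ with $\Phi=(\phi_1,\dots,\phi_K)$, the $L^\infty$-boundedness of the eigenbasis gives the uniform bound $\||D|\Phi(X_i)\Phi(X_i)^T\|\le|D|KB^2$, with $B=\sup_\ell\|\phi_\ell\|_{L^\infty}$. Matrix Chernoff then yields
\[
\Prob\bigl(\|G - I_K\|\ge 1/2\bigr)\le 2K\exp\!\bigl(-cM/(KB^2|D|)\bigr).
\]
Taking $C = C(B,|D|)$ large enough so that the exponent is $\le -2\log M$ makes the right-hand side summable in $M$, and Borel--Cantelli gives $\|G^{-1}\|\le 2$ for all but finitely many $M$, almost surely.

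\emph{Error decomposition and aliasing.} On this event, decompose $u = c_{\le} + c_{>}$ along $V_K \oplus V_K^\perp$, so that $b = G c_{\le} + A c_{>}$ with the aliasing matrix $A_{k\ell}=\tfrac{|D|}{M}\sum_i\phi_k(X_i)\phi_\ell(X_i)$ indexed by $k\le K<\ell$. Then $\hat c - c_{\le} = G^{-1}A c_{>}$ and, by orthonormality of $\{\phi_\ell\}$,
\[
\|u-\cD\circ\cE(u)\|_{L^2}^2 = \|G^{-1}A c_{>}\|_{\ell^2}^2 + \|c_{>}\|_{\ell^2}^2.
\]
The second term integrates to $\sum_{\ell>K}\lambda_\ell$, the desired dominant contribution. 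For the first, using $\E_\mu[c_\ell c_{\ell'}]=\lambda_\ell\delta_{\ell\ell'}$ and $\|G^{-1}\|\le 2$,
\[
\int_X\|G^{-1}A c_{>}\|_{\ell^2}^2\,d\mu(u) \le 4\sum_{\ell>K}\lambda_\ell\sum_{k\le K}A_{k\ell}^2.
\]
Each $A_{k\ell}$ (with $k\ne\ell$) is an empirical mean of bounded centred random variables, so a routine variance estimate gives $\E_{\mathrm{sensors}}\bigl[\sum_{k\le K}A_{k\ell}^2\bigr]\lesssim KB^2|D|/M = O(1/\log M)$ uniformly in $\ell$, yielding in expectation the bound $(1+O(1/\log M))\sum_{\ell>K}\lambda_\ell$.

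\emph{Main obstacle.} The delicate point is to upgrade the preceding expectation estimate to an almost-sure bound, uniformly in $M$ and in the tail index $\ell$. I would address this in two steps. First, for a fixed scale $M$, combine a scalar Bernstein bound on each $A_{k\ell}$ with a truncation argument: handle indices $\ell$ with $\lambda_\ell$ not too small via a union bound over a polynomially growing set, and treat the far tail using the crude deterministic bound $|A_{k\ell}|\le|D|B^2$ together with the summability $\sum_\ell\lambda_\ell<\infty$. Second, choose $C$ large enough that the per-scale failure probability is summable in $M$, so that Borel--Cantelli applied jointly with the Gram-matrix event yields an almost-sure bound for all but finitely many $M$. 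Absorbing universal constants into the prefactor then produces
\[
\Err_{\cE}^2 \le C^2\sum_{\ell> M/(C\log M)}\lambda_\ell,
\]
as claimed.
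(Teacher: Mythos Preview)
Your overall architecture mirrors the paper's: the same least-squares decoder, the same Pythagorean decomposition of the error into the projection tail $\sum_{\ell>K}\lambda_\ell$ plus an aliasing term, concentration for the Gram matrix, and Borel--Cantelli to pass from high probability to almost sure. Your use of matrix Chernoff is in fact slightly cleaner than the paper's entrywise Hoeffding plus Frobenius-norm argument (which incurs a $K^2$ rather than $K$ prefactor), but both yield the same $\log$-loss in the final bound.

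The one substantive gap is in your treatment of the ``main obstacle''. Your truncation-plus-concentration plan for the aliasing column sums $\sum_{k\le K} A_{k\ell}^2$ does not close under the stated hypotheses. For the far tail $\ell>L$ you invoke the crude bound $|A_{k\ell}|\le |D|B^2$, which gives a contribution $K(|D|B^2)^2\sum_{\ell>L}\lambda_\ell$; absorbing this into $C\sum_{\ell>K}\lambda_\ell$ forces $\sum_{\ell>L}\lambda_\ell \lesssim K^{-1}\sum_{\ell>K}\lambda_\ell$, and for slowly decaying spectra (e.g.\ $\lambda_\ell \sim \ell^{-1}\log^{-2}\ell$) this requires $L$ to be super-polynomial in $M$, which then breaks the union bound in your near-tail step.

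The obstacle is, however, illusory. On the very event $\|G-I_K\|\le 1/2$ you have already established, the aliasing columns admit a \emph{deterministic}, $\ell$-uniform bound: since $A_{:,\ell} = \tfrac{|D|}{M}\Phi_M\,\phi_\ell(\bm X)$ and the nonzero eigenvalues of $\Phi_M^T\Phi_M$ and $\Phi_M\Phi_M^T$ agree,
\[
\sum_{k\le K} A_{k\ell}^2
= \tfrac{|D|^2}{M^2}\,\phi_\ell(\bm X)^T\Phi_M^T\Phi_M\,\phi_\ell(\bm X)
\le \tfrac{|D|}{M}\,\|G\|\,\|\phi_\ell(\bm X)\|_{\ell^2}^2
\le \tfrac{3}{2}\,|D|\,B^2,
\]
using $\|G\|\le 3/2$ and $\|\phi_\ell(\bm X)\|_{\ell^2}^2\le MB^2$. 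This is exactly how the paper proceeds (phrased there as $\|\Phi_M^\dagger\phi_\ell(\bm X)\|_{\ell^2}^2 \le \|\Phi_M^\dagger\|^2\,\|\phi_\ell(\bm X)\|_{\ell^2}^2$): no second layer of concentration is needed, and the aliasing contribution is bounded by a constant multiple of $\sum_{\ell>K}\lambda_\ell$ on the Gram-matrix event. With this replacement your argument goes through.
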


Thus, for an underlying measure $\mu$ whose covariance operator has a bounded eigenbasis, then as the number of randomly chosen sensors increases, the resulting encoding error goes to zero almost surely. The rate of decay only depends on the spectral decay of the covariance operator. Moreover, given the lower bound \eqref{eq:enlb} on the encoding error, we have the surprising result that \emph{randomly chosen sensor points} lead to an optimal (up to a $\log$) decay of the encoding error \eqref{eq:encoding}, corresponding to the DeepONet \eqref{eq:donet}. We refer the interested reader to the discussion leading to Lemma \ref{lem:aliasing-gen}, page \pageref{lem:aliasing-gen}, for precise details of the derivation of Theorem \ref{thm:random-enc}.

In addition to the above general results, we also consider the specific case of an input measure $\mu \in \cP(L^2(\T^d))$, which is given as the \emph{law} of a random field of the form
\[
u(x;Y) = \bar{u}(x) + \sum_{k\in \Z^d} Y_k \alpha_k \fb_k(x),
\]
where $\{\fb_k(x)\}_{k\in \Z^d}$ denotes the trigonometric basis (cp. Appendix \ref{app:Fourier}), $Y_k \in [-1,1]$ are centered random variables, and the coefficients $\alpha_k\ge 0$ satisfy a decay of the form $\alpha_k \lesssim \exp(-\ell |k|_\infty)$, for all $k\in \Z^d$, for a fixed ``length-scale'' $\ell>0$. In this case, we study the encoder $\cE$ obtained by evaluation at sensor locations on an \emph{equidistant grid} on $\T^d$. Given this setting, we show that there exists a decoder $\cD$, such that the corresponding encoding error $\Err_{\cE}$ can be estimated by an exponential upper bound, $\Err_{\cE} \lesssim \exp(-c\ell m^{1/d})$ (cp. Theorem \ref{thm:holomorphic-encoding}). We refer to Section \ref{sec:parametrized-mu} for the precise details.

\subsubsection{Approximation error}
\label{sec:over-approx}
Given a particular choice of encoder/decoder and reconstruction/projection pairs $(\cE,\cD)$ and $(\cR,\cP)$, the approximation error $\Err_{\cA}$ \eqref{eq:approximation} for the approximator $\cA: \R^m \to \R^p$ in the DeepONet \eqref{eq:donet} is a measure for the non-commutativity of the following diagram:
\[
\begin{tikzcd}
L^2(D) 
\arrow[r, "\G"] 
\arrow[d, shift right=0ex, "\cE" left, bend right=30, dashrightarrow] 
& L^2(U) 
\arrow[d, shift right=0ex, "\cP" left] 
\\
\R^m 
\arrow[u, shift right=0ex, "\cD" right] 
\arrow[r, rightsquigarrow, "\cA"]   
& \R^p 
\arrow[u, shift right=0ex, "\cR" right, dashrightarrow, bend right=30]
\end{tikzcd}
\]
I.e., it measures the error in the approximation $\cA \approx G = \cP\circ\G\circ \cD$. Thus, bounding the approximation error $\Err_{\cA}$ can be viewed as a special instance of the general problem of the neural network approximation of a high-dimensional mapping $G: \R^m \to \R^p$. As already pointed out in Remark \ref{rem:cod}, relying on ``mild'' regularity properties of $G$ (or $\G$), such as Lipschitz continuity, leads to complexity bounds which suffer from the curse of dimensionality (cp. Definition \ref{def:cod}, and Section \ref{sec:regular-high-dim}, below). 

Given this possible curse of dimensionality in bounding the approximation error \eqref{eq:approximation} for Lipschitz continuous maps, we seek to find a class of nonlinear operators $\cG$ for which this curse of dimensionality can be avoided:
\begin{itemize}
\item One possible class is the class of \emph{holomorphic mappings} $[-1,1]^\N \to V$, $\bm{y} \mapsto \cF(\bm{y})$, with $V$ an arbitrary Banach space: Such operators have been shown in recent papers  to be efficiently approximated by ReLU neural networks, \emph{breaking the curse of dimensionality}.
\item Another strategy relies on the use of additional structure of the underlying operator $\G$, which is \emph{not captured} by its smoothness properties. In this direction, we show that many PDE operators may possess such internal structure, making them not only amenable to approximation by classical numerical methods, but also enabling DeepONets to \emph{break the curse of dimensionality}.
\end{itemize}

The first approach relying on holomorphy is discussed at an abstract level in Section \ref{sec:be-holomorphy}, where we apply the main results of \cite{SchwabZech2019,OSZ2019,OSZ2020} to DeepONets. The discussion of this specific parametrized setting ultimately leads to Theorem \ref{thm:holomorphic-approx}, which provides quantitative estimates on the approximation error for the DeepONet approximation of holomorphic operators. This abstract result is applied to two concrete examples of holomorphic operators in Sections \ref{sec:pendulum} and \ref{sec:elliptic}. 

In contrast to the holomorphic case, generally applicable results which rely on internal structure of $\G$ \emph{other than smoothness}, appear to be much more difficult to state at an abstract level; therefore, in this case, we instead focus on a case-by-case discussion of the approximation error for concrete operators of interest, which we defer to Section \ref{sec:parabolic}, for a parabolic PDE, and in Section \ref{sec:scl}, for a hyperbolic PDE.

The remaining subsections of the present Section \ref{sec:3} provide the details as well as an extended discussion of the error associated with projections onto linear and affine subspaces (Section \ref{sec:projection}), bounds on the reconstruction error (Section \ref{sec:decoding}), bounds on the encoding error (Section \ref{sec:encoding}), and bounds on the approximation error (Section \ref{sec:approximation}).

\subsection{On the error due to projections of Hilbert spaces onto linear and affine subspaces.}
\label{sec:projection}
We start with the observation that the encoder $\cE$ is a linear mapping from $X$ to $\R^m$. As long as we choose the decoder $\cD$ to also be a linear mapping from $\R^m$ to $X$, we see that the encoding error $\Err_{\cE}$ \eqref{eq:encoding} can be bounded from below by the following \emph{projection error}:
\begin{equation}
    \label{eq:eeproj}
\Err_\Proj(\hat{V};\mu) 
=
\int_{X}
\inf_{\hat{v} \in \hat{V}}
\Vert v - \hat{v} \Vert^2_{X} \, d\mu(v),
\end{equation}
where $\hat{V} = \im(\cD)$. 

Hence, we need to study general properties of the projection error $\Err_\Proj(\hat{V};\nu)$ onto a finite-dimensional linear subspace $\hat{V} \subset H$, for an arbitrary Hilbert space $H$, and given a probability measure $\nu \in \P(H)$ with finite second moment $\int_H \Vert v \Vert^2_H \, d\nu(v)$. This study results in the following theorem that characterizes \emph{optimal finite-dimensional subspaces} of the Hilbert space $H$,
\begin{theorem} \label{thm:opt-linear}
Let $\nu \in \P_2(H)$ be a probability measure on a separable Hilbert space $H$. For any $p\in \N$, there exists an optimal $p$-dimensional subspace $V_p \subset H$, such that 
\[
\Err_{\Proj}(V_p;\nu) = 
\inf_{\substack{\hat{V}\subset V,\\ \dim(\hat{V})=p}} \Err_{\Proj}(\hat{V}; \nu).
\]
Furthermore, we can characterize the set of optimal subspaces $\hat{V}\subset H$ as follows:
Let $\overline{\lambda_1} > \overline{\lambda_2} > \dots$ denote the distinct eigenvalues of the operator 
\begin{align} \label{eq:covb}
\overline{\Gamma} = \int_{H} (v \otimes v) \, d\nu(v).
\end{align}
Let $E_k = \set{ v \in H }{ \overline{\Gamma} v = \overline{\lambda}_k v}$, $k\in \N$ denote the corresponding eigenspaces. Choose $n\in \N$, such that 
\[
\sum_{k=1}^{n-1} \dim(E_k) 
< p 
\le \sum_{k=1}^n \dim(E_k).
\]
A $p$-dimensional subspace $\hat{V} \subset H$ is optimal for $\Err_{\Proj}(\hat{V};\nu)$, if and only if,
\[
\bigoplus_{k=1}^{n-1} E_k
\subset
\hat{V}
\subset
\bigoplus_{k=1}^n E_k.
\]
For any $n\in \N$, there exists a \emph{unique} optimal subspace $V_{p_n}\subset H$, of dimension $p_n = \sum_{k=1}^n\dim(E_k)$. For any optimal subspace $\hat{V}\subset H$, the resulting projection error is given by 
\begin{equation}
    \label{eq:projerr1}
\Err_{\proj}(\hat{V})
=
\sum_{j>p} \lambda_j,
\end{equation}
where 
\[
\underbrace{
\lambda_1 = \dots = \lambda_{n_1}
}_{=\overline{\lambda}_1} 
> 
\underbrace{
\lambda_{n_1+1} = \dots = \lambda_{n_1 + n_2}
}_{=\overline{\lambda}_2} 
> 
\dots,
\]
are the eigenvalues of $\overline{\Gamma}$ repeated according to multiplicity, with $n_j = \dim(E_j)$.
\end{theorem}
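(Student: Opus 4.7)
The plan is to reduce the statement to a known variational characterization of partial traces of compact self-adjoint operators on Hilbert space. First, I would rewrite the projection error in a form that makes the operator $\overline{\Gamma}$ appear explicitly. If $\{e_1,\ldots,e_p\}$ is an orthonormal basis of an arbitrary $p$-dimensional subspace $\hat{V}\subset H$, then $\inf_{\hat{v}\in\hat{V}} \|v-\hat{v}\|_H^2 = \|v\|_H^2 - \sum_{k=1}^p \langle v,e_k\rangle^2$, so integrating against $\nu$ and using the definition \eqref{eq:covb} of $\overline{\Gamma}$ gives
\[
\Err_{\Proj}(\hat{V};\nu) \;=\; \Tr(\overline{\Gamma}) \;-\; \sum_{k=1}^p \langle \overline{\Gamma} e_k, e_k\rangle .
\]
The hypothesis $\nu \in \P_2(H)$ ensures $\Tr(\overline{\Gamma}) = \int_H \|v\|_H^2\, d\nu(v) < \infty$, so $\overline{\Gamma}$ is a positive, self-adjoint, trace-class (in particular compact) operator. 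Minimizing $\Err_{\Proj}(\hat{V};\nu)$ over $p$-dimensional subspaces is thus equivalent to \emph{maximizing} $\sum_{k=1}^p \langle \overline{\Gamma} e_k, e_k\rangle$ over orthonormal $p$-tuples in $H$.

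Next I would invoke the spectral theorem for compact self-adjoint operators to write $\overline{\Gamma} = \sum_{j} \lambda_j\, \phi_j \otimes \phi_j$, where $\lambda_1 \ge \lambda_2 \ge \dots \ge 0$ are the eigenvalues counted with multiplicity and $\{\phi_j\}$ is an orthonormal eigenbasis. The key analytic input is then the Ky Fan maximum principle (the infinite-dimensional analogue of Eckart--Young--Mirsky): for any orthonormal $p$-tuple $\{e_1,\dots,e_p\}$,
\[
\sum_{k=1}^p \langle \overline{\Gamma} e_k, e_k\rangle
\;\le\; \sum_{j=1}^p \lambda_j,
\]
with equality if and only if $\Span(e_1,\ldots,e_p)$ is an invariant subspace of $\overline{\Gamma}$ spanned by eigenvectors corresponding to the top $p$ eigenvalues. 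This yields existence of an optimizer (take $\hat{V}=\Span(\phi_1,\ldots,\phi_p)$) and the formula $\Err_{\Proj}(\hat{V};\nu) = \Tr(\overline{\Gamma}) - \sum_{j=1}^p \lambda_j = \sum_{j>p}\lambda_j$, which is \eqref{eq:projerr1}. I would give a short self-contained proof of Ky Fan's inequality by an induction / Rayleigh--Ritz argument, which is essentially the only technical point; the main obstacle is keeping track of infinite-dimensional subtleties (such as ensuring a maximizer exists and not merely a supremum), but this is handled by the compactness of $\overline{\Gamma}$ and the fact that its non-zero spectrum accumulates only at zero.

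Finally, to characterize all optimizers, I would analyze the equality case of Ky Fan's inequality in terms of the distinct eigenvalues $\overline{\lambda}_1 > \overline{\lambda}_2 > \dots$ and the eigenspaces $E_k$. If $\sum_{k<n}\dim(E_k) < p \le \sum_{k\le n}\dim(E_k)$, then any optimal $\hat{V}$ must entirely contain all eigenspaces strictly above $\overline{\lambda}_n$ (otherwise one could strictly increase $\sum \langle \overline{\Gamma} e_k,e_k\rangle$ by swapping in such an eigenvector) and can only pick up further dimensions from within $E_n$, since any component orthogonal to $\bigoplus_{k\le n} E_k$ would contribute with weight strictly less than $\overline{\lambda}_n$. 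This proves the sandwich characterization $\bigoplus_{k<n}E_k \subset \hat{V} \subset \bigoplus_{k\le n}E_k$. Uniqueness for $p=p_n = \sum_{k\le n}\dim(E_k)$ is immediate since both inclusions force $\hat{V}=\bigoplus_{k\le n}E_k$. I expect the main subtlety to be the strictness of the comparison in the equality case, which I would verify by explicitly exhibiting a rank-one swap that strictly improves any non-optimal $\hat{V}$.
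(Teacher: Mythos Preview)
Your proposal is correct and follows essentially the same route as the paper: both rewrite $\Err_{\Proj}(\hat{V};\nu) = \Tr(\overline{\Gamma}) - \sum_{k=1}^p \langle \overline{\Gamma} e_k, e_k\rangle$, establish compactness of $\overline{\Gamma}$, and then prove the Ky Fan maximum principle together with its equality case to obtain the sandwich characterization. The only minor technical difference is that the paper proves the Ky Fan inequality via the reformulation $\alpha_k := \Vert P_{\hat{V}}\phi_k\Vert^2 \in [0,1]$, $\sum_k \alpha_k = p$, and a direct telescoping estimate for $\sum_k \lambda_k \alpha_k \le \sum_{k\le p}\lambda_k$, rather than the Rayleigh--Ritz induction you suggest; both arguments are standard and yield the same equality analysis.
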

The proof of this theorem is based on a series of highly technical lemmas and is presented in detail in Appendix \ref{app:pf32}. The essence of the above theorem is the connection between optimal linear subspaces (that minimize projection errors) of a Hilbert space and the eigensystem of its (uncentered) covariance operator \eqref{eq:covb}. Thus given \eqref{eq:projerr1}, the study of projection errors with respect to finite-dimensional linear subspaces requires a careful investigation into the decay of the eigenvalues of the operator \eqref{eq:covb} and will be instrumental in providing bounds on the encoding error \eqref{eq:encoding} and enable us to identify suitable sensors $\{x_j\}$ for defining the encoder $\cE$.

\begin{remark}
We would like to point out that the main observations of Theorem \ref{thm:opt-linear}, and in particular, the important identity \eqref{eq:projerr1}  for the minimal projection error have previously been observed in \cite{STU3}. In the finite-dimensional case, the underlying ideas are well-known in principal component analysis. While the basic ideas are not new, we nevertheless include Theorem \ref{thm:opt-linear} in the present work for completeness, due to its central importance to our discussion.
\end{remark}

Similarly, we observe that the trunk-net induced reconstructor $\cR$ \eqref{eq:reconstruction} is an affine mapping between $\R^p$ and the output Banach space $Y$. As will be shown in Lemma \ref{lem:opt-proj}, the reconstruction error \eqref{eq:decoding} for any $\cR$ can be bounded from below by the error with respect to projection onto affine subspaces of the output Hilbert space. We formalize this notion below.

Given a separable Hilbert space $H$, let $\hat{V}_0$ now denote an affine subspace of the form
\[
\hat{V}_0
=
\set{
\hat{v} = \hat{v}_0 + \sum_{j=1}^p \alpha_j \hat{v}_j
}{
\alpha_1, \dots, \alpha_p \in \R
}.
\]
for $\hat{v}_0, \dots, \hat{v}_p \in H$. Note that for \emph{any} $\hat{v}'\in \hat{V}_0$, the set 
\[
\hat{V}_0 - \hat{v}' = \set{ \hat{v} - \hat{v}' }{ \hat{v}\in \hat{V} },
\]
is a vector space $\hat{V}$, spanned by $\hat{v}_1, \dots, \hat{v}_p$. It is easy to see that the vector space $\hat{V}$, associated with the affine space $\hat{V}_0$ is unique and only depends on $\hat{V}_0$, and not on a particular choice of the $\hat{v}_0, \dots, \hat{v}_p$. 

The following theorem provides a complete characterization of finite-dimensional optimal affine subspaces of $H$ and the resulting projection error.
\begin{theorem} \label{thm:opt-proj}
Let $H$ be a separable Hilbert space and $\nu\in \P_2(H)$ be a probability measure with finite second moment. Let $p\in \N$. Let $\hat{V}_0$ be an affine subspace with associated vector space $\hat{V}$ such that $\dim(\hat{V}) = p$. Then there exists a unique element $\hat{v}_0 \in \hat{V}_0$ such that 
\[
\Vert \E[v] -\hat{v}_0 \Vert 
=
\inf_{\hat{v}\in \hat{V}_0}
\Vert  \E[v] -\hat{v} \Vert,
\]
and the projection error given by,
\begin{equation}
    \label{eq:perr}
\Err_{\Proj}(\hat{V}_0)
=
\int_H 
\inf_{\hat{v}\in \hat{V}_0}
\Vert 
{v} - \hat{v}
\Vert^2
\, 
d\nu(v)
\end{equation}
can be written as,
\begin{align} \label{eq:affine}
\Err_{\Proj}(\hat{V})
=
\Vert \E[v]  - \hat{v}_0 \Vert^2
+
\int_H
\inf_{\hat{v} \in \hat{V}}
\Vert 
v - \E[v] - \hat{v}
\Vert^2
\, d\nu(v),
\end{align}
Furthermore, the affine space $\hat{V}_0$ is a minimizer of $\Err_{\Proj}(\hat{V}_0)$ (in the class of affine subspaces of dimension $p$), if and only if,
\[
\hat{v}_0 = \E[v],
\quad
\text{and}
\quad
\bigoplus_{j=1}^{n-1} E_j 
\subset
\hat{V}
\subset 
\bigoplus_{j=1}^n E_j.
\]
Here $E_j$ denote the eigenspaces of the covariance operator 
\begin{equation}
\label{eq:cov}
\Gamma = \int_H (v-\E[v])\otimes (v-\E[v]) \, d\nu(v),
\end{equation}
associated with the distinct eigenvalues $\overline{\lambda}_1 > \overline{\lambda}_2 > \dots$, and $n\in N$ is chosen such that
\[
\sum_{j=1}^{n-1} \dim(E_j) < p \le \sum_{j=1}^n \dim(E_j).
\]
In this case, the projection error is given by 
\[
\Err_{\proj}(\hat{V}_0)
=
\sum_{j>p} \lambda_j,
\]
where 
\[
\underbrace{
\lambda_1 = \dots = \lambda_{n_1}
}_{=\overline{\lambda}_1} 
> 
\underbrace{
\lambda_{n_1+1} = \dots = \lambda_{n_1 + n_2}
}_{=\overline{\lambda}_2} 
> 
\dots,
\]
are the eigenvalues of $\Gamma$ repeated according to multiplicity, with $n_j = \dim(E_j)$.
\end{theorem}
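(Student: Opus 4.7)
The plan is to reduce the affine case to the linear case already established in Theorem \ref{thm:opt-linear}. I will proceed in three steps: (i) establish existence and uniqueness of the best approximation $\hat{v}_0$ to the mean $\E[v]$ in $\hat{V}_0$; (ii) decompose the projection error via an orthogonality argument, isolating the contribution of the translation from that of the associated vector space $\hat{V}$; and (iii) minimize the two contributions independently and invoke Theorem \ref{thm:opt-linear}.

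First, since $\hat{V}$ is $p$-dimensional and hence closed in $H$, the affine subspace $\hat{V}_0 = \hat{v}' + \hat{V}$ is also closed. The Hilbert space projection theorem, applied to the closed convex set $\hat{V}_0$, produces a unique minimizer $\hat{v}_0\in\hat{V}_0$ of $\hat{v}\mapsto \|\E[v]-\hat{v}\|$, and the characterizing variational inequality reduces (by affine-linearity of $\hat{V}_0$) to the orthogonality relation $\E[v]-\hat{v}_0 \perp \hat{V}$.

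Second, any $\hat{v}\in\hat{V}_0$ can be written $\hat{v}=\hat{v}_0+w$ with $w\in\hat{V}$. I then expand
\[
v-\hat{v} = (v-\E[v]) + (\E[v]-\hat{v}_0) - w,
\]
and square the norm. Because $\E[v]-\hat{v}_0 \perp w$, the cross term between $-w$ and $\E[v]-\hat{v}_0$ vanishes, leaving only a cross term of the form $2\langle v-\E[v],\,\E[v]-\hat{v}_0\rangle$ which is $w$-independent. Taking the infimum over $w\in\hat{V}$ (which occurs in a single term) and then integrating against $\nu$, this remaining cross term integrates to zero since $\int_H(v-\E[v])\,d\nu(v)=0$. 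This yields the decomposition \eqref{eq:affine} with the integral term depending only on the vector space $\hat{V}$ and the squared norm term depending only on the translation.

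Third, to minimize over all $p$-dimensional affine subspaces, the two summands in \eqref{eq:affine} can be minimized \emph{independently}. The first, $\|\E[v]-\hat{v}_0\|^2\ge 0$, vanishes precisely when $\hat{V}_0$ contains $\E[v]$, in which case $\hat{v}_0=\E[v]$. The second is the linear projection error
\[
\int_H \inf_{\hat{v}\in\hat{V}} \|(v-\E[v])-\hat{v}\|^2\,d\nu(v)
=\int_H \inf_{\hat{v}\in\hat{V}} \|w-\hat{v}\|^2\,d\nu'(w),
\]
where $\nu' = T_\#\nu$ is the pushforward under the translation $T:v\mapsto v-\E[v]$. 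Since $\nu'$ is centered, its uncentered covariance $\overline{\Gamma}_{\nu'}=\int_H w\otimes w\,d\nu'(w)$ coincides exactly with the centered covariance $\Gamma$ of $\nu$ defined in \eqref{eq:cov}. Applying Theorem \ref{thm:opt-linear} to $\nu'$ immediately identifies the optimal $\hat{V}$ as any $p$-dimensional subspace sandwiched between $\bigoplus_{j=1}^{n-1}E_j$ and $\bigoplus_{j=1}^n E_j$, with optimal value $\sum_{j>p}\lambda_j$.

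The argument is essentially bookkeeping; the only nontrivial point is the orthogonality $\E[v]-\hat{v}_0\perp \hat{V}$, which is what decouples translation from direction and makes the independent minimization possible. Once this is in hand, the reduction to the centered measure $\nu'$ and invocation of Theorem \ref{thm:opt-linear} is routine.
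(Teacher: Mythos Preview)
Your proof is correct and follows essentially the same route as the paper's: establish $\E[v]-\hat{v}_0\perp\hat{V}$ via the Hilbert projection theorem, use this orthogonality to decompose $\Err_{\Proj}(\hat{V}_0)$ into a translation term plus a linear projection error against the centered measure, then minimize the two pieces independently and invoke the linear result (the paper cites Lemma~\ref{lem:proj-minimizer} directly, you cite Theorem~\ref{thm:opt-linear}, which amounts to the same thing). The argument and the key orthogonality observation are identical.
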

The above theorem is proved in Appendix \ref{app:pf-opt-proj}. It is the extension of Theorem \ref{thm:opt-linear} to affine subspaces of a Hilbert space. It serves to relate the projection error to the decay of eigenvalues of the associated covariance operator \eqref{eq:cov} and will be the key to proving bounds on the reconstruction error \eqref{eq:decoding} and in the identification of the optimal trunk network for the DeepONet \eqref{eq:donet}. 
\subsection{Bounds on the reconstruction error \eqref{eq:decoding}}
\label{sec:decoding}
In this section, we will apply results from the previous sub-section to bound the reconstruction error $\Err_{\cR}$ \eqref{eq:decoding}. We start by recalling that the reconstructor $\cR$ \eqref{eq:reconstruction} in the DeepONet \eqref{eq:donet} is affine. The following lemma, whose proof is provided in Appendix \ref{app:pf34}, identifies the optimal projector $\cP$ for a given reconstructor $\cR$.
\begin{lemma} \label{lem:opt-proj}
Let $\cR = \cR_{\bm{\tr}}: \R^p \to L^2(U)$ be an affine reconstructor of the form \eqref{eq:reconstruction}, for $\tr_0 \in L^2(U)$ and linearly independent $\tr_1,\dots,\tr_p\in L^2(U)$. Let $\nu\in \P_2(L^2(U))$ be a probability measure with finite second moments. Then, the reconstruction error \eqref{eq:decoding} is minimized in the class of Borel measurable projectors $\cP: L^2(U) \to \R^p$, for
\begin{align} \label{eq:opt-proj}
\cP(u) := 
\left(
\langle u - \tr_0, \tr^\ast_1 \rangle,
\dots,
\langle u - \tr_0, \tr^\ast_p \rangle
\right),
\end{align}
where $\tr^\ast_1,\dots, \tr^\ast_p \in \Span(\tr_1,\dots, \tr_p)$ denotes the dual basis of $\tr_1, \dots, \tr_p$, i.e. such that 
\[
\langle \tr_\ell, \tr^\ast_k \rangle
=
\delta_{\ell k},
\quad
\forall \, k,\ell \in \{1,\dots, p\}.
\]
In this case, we have
\begin{align} \label{eq:RP-opt}
\cR\circ \cP(u) 
=
\tr_0^\perp 
+
\sum_{k=1}^p
\langle u, \tr^\ast_k\rangle \tr_k
\end{align}
where 
\[
\tr_0^\perp
=
\tr_0
-
\sum_{k=1}^p 
\langle \tr_0, \tr^\ast_k \rangle \tr_k,
\]
is the projection of $\tr_0$ onto the orthogonal complement of $\Span(\tr_1,\dots, \tr_p) \subset L^2(U)$.
\end{lemma}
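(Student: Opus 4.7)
The plan is to reduce the problem to a pointwise minimization: since $\cR \circ \cP(u)$ lies in the affine subspace $\tr_0 + V$, where $V = \Span(\tr_1,\dots,\tr_p) \subset L^2(U)$, for each fixed $u$ the best achievable value of $\Vert \cR\circ\cP(u) - u\Vert_{L^2(U)}^2$ is attained when $\cR\circ\cP(u)$ is the orthogonal projection of $u$ onto $\tr_0 + V$. Because $\tr_1,\dots,\tr_p$ are linearly independent, the parametrization $\alpha \mapsto \tr_0 + \sum_{k=1}^p \alpha_k \tr_k$ is a bijection from $\R^p$ onto $\tr_0 + V$, so this pointwise minimum uniquely determines the optimal value of $\cP(u)$. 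Integrating the pointwise inequality against $\nu$ then yields the claim for the integrated reconstruction error.

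Concretely, I would first verify that the dual basis $\tr_1^\ast,\dots,\tr_p^\ast \in V$ exists and is unique by noting that the Gram matrix $G_{ij} = \langle \tr_i, \tr_j\rangle$ is invertible under the linear-independence hypothesis, so one can set $\tr_k^\ast = \sum_j (G^{-1})_{jk}\tr_j \in V$; symmetry of $G$ then gives that the $\tr_k^\ast$ are exactly the coefficient functionals of the orthogonal projection $P_V: L^2(U) \to V$, i.e.\ $P_V(w) = \sum_{k=1}^p \langle w,\tr_k^\ast\rangle \tr_k$ for all $w \in L^2(U)$. The orthogonal projection onto the affine subspace $\tr_0 + V$ is then $u \mapsto \tr_0 + P_V(u - \tr_0)$, so setting
\[
\cP(u) := \bigl(\langle u - \tr_0, \tr_1^\ast\rangle,\dots,\langle u-\tr_0,\tr_p^\ast\rangle\bigr)
\]
achieves $\cR\circ\cP(u) = \tr_0 + P_V(u-\tr_0)$, the pointwise minimizer. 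Note that this $\cP$ is affine and hence Borel measurable, so it lies in the admissible class.

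For the optimality statement, I would observe that for any Borel measurable projector $\tP: L^2(U) \to \R^p$ and any $u \in L^2(U)$, one has $\cR(\tP(u)) \in \tr_0 + V$, hence by the defining property of orthogonal projection,
\[
\Vert \cR(\tP(u)) - u\Vert_{L^2(U)}^2
\ge \Vert \cR(\cP(u)) - u\Vert_{L^2(U)}^2,
\]
with equality if and only if $\tP(u) = \cP(u)$ (using again the bijectivity of $\alpha \mapsto \tr_0 + \sum_k \alpha_k \tr_k$). Integrating this pointwise bound against $\nu$ gives $\Err_{\cR}(\tP) \ge \Err_{\cR}(\cP)$, establishing optimality.

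Finally, for the simplification \eqref{eq:RP-opt}, I would split
\[
\cR\circ\cP(u)
= \tr_0 + \sum_{k=1}^p \langle u,\tr_k^\ast\rangle \tr_k
- \sum_{k=1}^p \langle \tr_0,\tr_k^\ast\rangle \tr_k,
\]
and identify the combination $\tr_0 - \sum_k \langle \tr_0,\tr_k^\ast\rangle \tr_k = \tr_0^\perp$ as the component of $\tr_0$ orthogonal to $V$, which is immediate from $\langle \tr_0^\perp,\tr_\ell\rangle = \langle \tr_0,\tr_\ell\rangle - \sum_k \langle \tr_0,\tr_k^\ast\rangle\langle \tr_k,\tr_\ell\rangle$ together with the fact that $\{\tr_k^\ast\}$ is the Riesz dual to $\{\tr_k\}$ inside $V$ (so that $\sum_k \langle w,\tr_k^\ast\rangle \tr_k = P_V w$ for any $w \in L^2(U)$). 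The only subtle point in the whole argument is the identification of the dual basis with the coefficients of the orthogonal projection $P_V$, which rests on the symmetry of $G^{-1}$; everything else reduces to standard Hilbert-space geometry and a pointwise-to-integral comparison. I do not expect genuine obstacles beyond carefully setting up the dual basis correctly.
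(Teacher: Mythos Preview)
Your proposal is correct and follows essentially the same route as the paper's proof: both reduce to a pointwise minimization, observing that $\cR\circ\cP(u)$ ranges over the affine subspace $\tr_0 + V$ and that the squared distance to $u$ is minimized by the orthogonal projection onto that subspace, which is encoded via the dual basis. The paper carries this out by directly decomposing $\cR\circ\cP(u) - u$ along $V\oplus V^\perp$ and setting the $V$-component to zero; you phrase the same computation in the language of affine orthogonal projection and are somewhat more explicit about the existence of the dual basis (via the Gram matrix) and the measurability of the resulting $\cP$, points the paper leaves implicit.
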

Next, we can directly apply Theorem \ref{thm:opt-proj} to identify an \emph{optimal} reconstructor $\cR$, and apply Lemma \ref{lem:opt-proj} to identify the associated \emph{optimal} projector $\cP$, in the following theorem,
\begin{theorem} \label{thm:rec-opt}
Denote $\nu := \G_\#\mu$. If the reconstruction $\cR = \cR_{\bm{\tr}}: \R^p \to L^2(U)$ is fixed to be of the form \eqref{eq:reconstruction} with $\tr_j\in L^2(U)$ for $j=0,\dots, p$, 
then the reconstruction error \eqref{eq:decoding}, in the class of affine reconstructions $\cR$ and \emph{arbitrary measurable} projections $\cP: L^2(U) \to \R^p$, is minimized by the choice
\begin{equation}
    \label{eq:orec}
\cR_\opt = \cR_{\hat{\bm{\tr}}}(\alpha_1, \dots, \alpha_p)
=
\hat{\tr}_0 +  \sum_{j=1}^p \alpha_j \hat{\tr}_j,
\end{equation}
where 
\[
\hat{\tr}_0 = \E_{\nu}[v]= \int_{L^2(U)} v \, d\nu,
\]
is the mean and $\hat{\tr}_j$, $j=1,\dots, p$ are $p$ eigenvectors of the covariance operator 
\[
\Gamma = \int_Y (v-\E_{\nu}[v])\otimes(v-\E_{\nu}[v]) \, d\nu(v),
\]
corresponding to the $p$ largest eigenvalues $\lambda_1 \ge \lambda_2 \ge \dots \ge \lambda_p \ge \dots$. Furthermore, the optimal reconstruction error satisfies the lower bound
\begin{align}\label{eq:recerr-lower}
\Err_{\cR_\opt} \ge \sqrt{\sum_{k > p} \lambda_k},
\end{align}
in terms of the spectrum $\lambda_1 \ge \lambda_2 \ge \dots $ of $\Gamma$ (eigenvalues repeated according to their multiplicity). Given $\cR_\opt$, the corresponding optimal measurable projection $\cP: L^2(U) \to \R^p$ is affine and is given by the orthogonal projection
\begin{equation}
    \label{eq:oproj}
\cP(v) 
=
\left( \langle (v - \hat{\tr}_0), \hat{\tr}_1 \rangle, \dots, \langle (v - \hat{\tr}_0), \hat{\tr}_p \rangle \right).
\end{equation}
\end{theorem}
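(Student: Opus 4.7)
The plan is to reduce the optimization over pairs $(\cR,\cP)$ to an optimization over affine subspaces of $L^2(U)$ of dimension at most $p$, and then to invoke Theorem~\ref{thm:opt-proj} on the measure $\nu = \G_\#\mu$. Throughout I will use that, for any reconstruction of the form~\eqref{eq:reconstruction}, the image $\tr_0 + \Span(\tr_1,\dots,\tr_p)$ is an affine subspace $\hat V_0 \subset L^2(U)$ of dimension at most $p$; conversely, every such affine subspace arises this way.

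First, fix an arbitrary affine reconstruction $\cR$ with basis $\tr_0,\tr_1,\dots,\tr_p$ as in~\eqref{eq:reconstruction}. By Lemma~\ref{lem:opt-proj}, the reconstruction error $\Err_{\cR}$, minimized over all Borel measurable projections $\cP\colon L^2(U)\to\R^p$, is attained by the projection~\eqref{eq:opt-proj}, and the resulting map $\cR\circ\cP$ given by~\eqref{eq:RP-opt} is the orthogonal projection of $v$ onto the affine subspace $\hat V_0 = \tr_0^\perp + \Span(\tr_1,\dots,\tr_p)$ (this subspace equals $\tr_0 + \Span(\tr_1,\dots,\tr_p)$ as a set, since it differs only by translation by an element of $\Span(\tr_1,\dots,\tr_p)$). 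Hence
\[
\inf_{\cP}\Err_{\cR}^2
=
\int_{L^2(U)} \inf_{\hat v\in \hat V_0}\Vert v - \hat v\Vert_{L^2(U)}^2\, d\nu(v)
=
\Err_\Proj(\hat V_0;\nu).
\]

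Second, I would conclude that the joint minimization of $\Err_{\cR}$ over affine reconstructions $\cR$ of the form~\eqref{eq:reconstruction} and measurable projections $\cP$ is equivalent to the minimization of $\Err_\Proj(\hat V_0;\nu)$ over all affine subspaces $\hat V_0\subset L^2(U)$ with $\dim(\hat V_0)\le p$. Applying Theorem~\ref{thm:opt-proj} to the push-forward measure $\nu = \G_\#\mu$ (which lies in $\P_2(L^2(U))$ by the data assumption $\G\in L^2(\mu)$) then shows that the optimal affine subspace contains $\hat v_0 = \E_\nu[v]$ and that its associated vector space is spanned by orthonormal eigenvectors $\hat\tr_1,\dots,\hat\tr_p$ of the centered covariance operator $\Gamma$, corresponding to the $p$ largest eigenvalues (with the standard eigenspace-inclusion condition). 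The resulting optimal error is $\sum_{k>p}\lambda_k$, which immediately yields the lower bound~\eqref{eq:recerr-lower}.

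Third, I realize this optimum inside the class~\eqref{eq:reconstruction} by setting $\hat\tr_0 = \E_\nu[v]$ and $\hat\tr_1,\dots,\hat\tr_p$ equal to the above eigenvectors, giving $\cR_\opt$ as in~\eqref{eq:orec}; the affine image is $\E_\nu[v]+\Span(\hat\tr_1,\dots,\hat\tr_p)$, matching the optimal affine subspace from Theorem~\ref{thm:opt-proj}. Finally, since $\{\hat\tr_k\}_{k=1}^p$ is orthonormal, the dual basis coincides with the basis itself, and Lemma~\ref{lem:opt-proj} specializes to give the orthogonal projection formula~\eqref{eq:oproj} as the optimal $\cP$.

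The bookkeeping step I expect to be the main nuisance, rather than a real obstacle, is the non-uniqueness in the parametrization of $\hat V_0$: the same affine subspace can be written as $\tr_0+\Span(\tr_1,\dots,\tr_p)$ in many ways, and Theorem~\ref{thm:opt-proj} characterizes the optimal affine subspace via a distinguished reference point $\hat v_0 = \E_\nu[v]$. The cleanest way to handle this is to use the identity $\tr_0 + \Span(\tr_1,\dots,\tr_p) = \tr_0^\perp + \Span(\tr_1,\dots,\tr_p)$ explicitly, so that the $\tr_0^\perp$ appearing in Lemma~\ref{lem:opt-proj} plays the role of the reference point and the resulting map $\cR\circ\cP$ is genuinely the orthogonal projection; the freedom in choosing $\tr_0$ within its coset then drops out of the error, and the optimal choice is naturally fixed to $\tr_0 = \E_\nu[v]$ by Theorem~\ref{thm:opt-proj}.
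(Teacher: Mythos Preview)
Your proposal is correct and follows exactly the route the paper indicates: it reduces the joint optimization over $(\cR,\cP)$ to the affine projection problem via Lemma~\ref{lem:opt-proj}, then invokes Theorem~\ref{thm:opt-proj} on $\nu=\G_\#\mu$ to identify the optimal affine subspace and the value $\sum_{k>p}\lambda_k$, and finally specializes Lemma~\ref{lem:opt-proj} with the orthonormal eigenbasis to obtain~\eqref{eq:oproj}. The paper does not spell out a proof beyond ``directly apply Theorem~\ref{thm:opt-proj}\dots and Lemma~\ref{lem:opt-proj}'', so your write-up is in fact more detailed than what appears there.
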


Given the existence of an \emph{optimal} affine reconstructor \eqref{eq:orec} and the associated optimal projector \eqref{eq:oproj}, the proof of Theorem \ref{thm:lowerbd} is now straightforward.

\begin{proof}[Proof of Theorem \ref{thm:lowerbd}]
\label{pf:lowerbd}
Let $\cN = \cR\circ \cA \circ \cE$ be a DeepONet approximation of $\G$. We aim to show that 
\[
\sqrt{\sum_{k>p} \lambda_k } \le \Err_{\cR} \le \Err,
\]
where $\lambda_1\ge \lambda_2\ge \dots$ denote the eigenvalues of $\Gamma_{\G_\#\mu}$.

By Lemma \ref{lem:opt-proj}, the optimal projector $\cP: L^2(U) \to \R^p$ for a the given reconstructor $\cR$ is such that $\cR\circ \cP = \Pi_{V_0}: L^2(U) \to L^2(U)$ is the orthogonal projection onto the affine subspace $V_0 = \im(\cR) \subset L^2(U)$. To prove the lower bound $\Err_{\cR} \le \Err$, we observe that for any $u\in X$, we have
\begin{align*}
\Vert \G(u) - \cN(u) \Vert_{L^2(U)}
&=
\Vert \G(u) - \cR \circ \cA \circ \cE(u) \Vert_{L^2(U)}
\\
&\ge 
\inf_{v \in V_0} \Vert \G(u) - v \Vert_{L^2(U)}
\\
&=
\Vert \G(u) - \Pi_{V_0} \G(u) \Vert_{L^2(U)}
\\
&= 
\Vert \G(u) - \cR \circ \cP \circ \G(u) \Vert_{L^2(U)},
\end{align*}
and hence,
\begin{align*}
\Err
=
\Vert \G - \cR \circ \cA \circ \cE \Vert_{L^2(\mu)}
\ge 
\Vert \G - \cR \circ \cP \circ \G \Vert_{L^2(\mu)}
=
\Vert \Id - \cR \circ \cP \Vert_{L^2(\G_\#\mu)}
=
\Err_{\cR}.
\end{align*}
Furthermore, by Theorem \ref{thm:rec-opt}, we thus have
\begin{align}\label{eq:lbd2}
\sqrt{\sum_{k>p} \lambda_k} = \Err_{\cR_\opt} \le \Err_{\cR} \le \Err,
\end{align}
where $\lambda_1\ge \lambda_2 \ge \dots$ denote the eigenvalues of the covariance operator $\Gamma_{\G_\#\mu}$ of the push-forward measure $\G_\#\mu$. This proves the claim.
\end{proof}

The lower bound in \eqref{eq:lbd2} is fundamental, as it reveals that the spectral decay rate for the  operator $\Gamma_{\G_\#\mu}$ of the push-forward measure essentially determines how low the approximation error of DeepONets can be for a given output dimension $p$ of the trunk nets. 

After establishing the lower bound on the reconstruction error, we seek to derive upper bounds on this error. We observe that the optimal reconstructor is given by eigenfunctions of the covariance operator $\Gamma$ \eqref{eq:cov}. In general, these eigenfunctions are not neural networks of the form \eqref{eq:ann1}. However, given the fact that neural networks are universal approximators of functions on finite dimensional spaces, one can expect that the trunk nets in \eqref{eq:reconstruction} will approximate these underlying eigenfunctions to high accuracy. This is indeed established in the following,
\begin{proposition} \label{prop:err-rec-approx}
Let $\nu = \G_\#\mu \in \P_2(Y)$ be a probability measure with finite second moments. Write the covariance operator in the form 
\[
\Gamma_{\nu} = \sum_{k=1}^\infty \lambda_k (\phi_k \otimes \phi_k),
\]
with $\lambda_1 \ge \lambda_2 \ge \dots$ and orthonormal eigenbasis $\phi_k$. Let $\hat{\tr}_k$, $k=0,1,\dots, p$, denote the optimal choice for an affine reconstruction $\cR_{\opt}$, as in Theorem \ref{thm:rec-opt}, more precisely, let $\hat{\tr}_0 = \E_\nu[v]$, $\hat{\tr}_k = \phi_k$ for $k=1,\dots, p$.  Let $\tr_0, \tr_1, \dots, \tr_p$ be the trunk net functions of an arbitrary DeepONet \eqref{eq:donet}. The reconstruction error for the reconstruction $\cR = \cR_{\bm{\tr}}$:
\[
\cR(\alpha_1, \dots, \alpha_p) = \tr_0 + \sum_{k=1}^p \alpha_k \tr_k,
\]
with corresponding (unique optimal) projection $\cP$ \eqref{eq:opt-proj} satisfies
\begin{align} \label{eq:err-rec-approx}
\Err_{\cR}
\le
\sqrt{ 1 + \Tr(\Gamma_{\nu}) } \max_{k=0,\dots, p} \Vert \hat{\tr}_k - \tr_k \Vert_{L^2_y}
+ 
\sqrt{\sum_{k > p} \lambda_k}.
\end{align}
\end{proposition}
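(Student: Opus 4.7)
The plan is to compare the given reconstruction $\cR$ against the optimal reconstruction $\cR_\opt$ from Theorem \ref{thm:rec-opt}, and exploit the orthogonality properties of its associated projection $\cP_\opt$. More precisely, let $\cP$ be the optimal projection associated with $\cR$ as in Lemma \ref{lem:opt-proj}, and let $\cP_\opt$ be the optimal (orthogonal) projection associated with $\cR_\opt$, given by $\cP_\opt(v) = (\langle v-\hat{\tr}_0,\hat{\tr}_1\rangle,\ldots,\langle v-\hat{\tr}_0,\hat{\tr}_p\rangle)$. Since $\cP$ minimizes $\Vert \cR\circ \tilde{\cP}-\Id\Vert_{L^2(\nu)}$ over all measurable $\tilde{\cP}$, I would first write
\[
\Err_{\cR} = \Vert \cR\circ\cP - \Id\Vert_{L^2(\nu)} \le \Vert \cR\circ\cP_\opt - \Id\Vert_{L^2(\nu)},
\]
and then apply the triangle inequality to split
\[
\Vert \cR\circ \cP_\opt - \Id\Vert_{L^2(\nu)}
\le
\Vert \cR\circ \cP_\opt - \cR_\opt\circ \cP_\opt\Vert_{L^2(\nu)}
+
\Vert \cR_\opt\circ \cP_\opt - \Id\Vert_{L^2(\nu)}.
\]
The second term is exactly $\Err_{\cR_\opt} = \sqrt{\sum_{k>p}\lambda_k}$ by Theorem \ref{thm:rec-opt}, so it remains to control the first term by $\sqrt{1+\Tr(\Gamma_\nu)}\,\max_k\Vert \tr_k-\hat{\tr}_k\Vert_{L^2_y}$.

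For the first term, I would introduce the ``error functions'' $e_k := \tr_k - \hat{\tr}_k$, $k=0,\ldots,p$, and the random coefficients $\alpha_k(v) := \langle v-\hat{\tr}_0,\hat{\tr}_k\rangle$. A direct expansion then gives
\[
\cR\circ \cP_\opt(v) - \cR_\opt\circ \cP_\opt(v) = e_0 + \sum_{k=1}^p \alpha_k(v)\, e_k.
\]
Here is the crucial observation: because $\hat{\tr}_0=\E_\nu[v]$ and $\hat{\tr}_k=\phi_k$ are eigenfunctions of the covariance operator $\Gamma_\nu$, the coefficients $\alpha_k(v)$ are \emph{centered and pairwise uncorrelated} under $\nu$, with $\E_\nu[\alpha_k]=0$ and $\E_\nu[\alpha_j\alpha_k]=\lambda_k\delta_{jk}$. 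Expanding the $L^2_y$-norm squared and integrating against $\nu$, the cross terms involving $e_0$ vanish and the off-diagonal terms in $\sum_{j,k}\alpha_j\alpha_k\langle e_j,e_k\rangle$ vanish as well, leaving
\[
\E_\nu\bigl[\Vert e_0+\textstyle\sum_k \alpha_k e_k\Vert_{L^2_y}^2\bigr]
= \Vert e_0\Vert_{L^2_y}^2 + \sum_{k=1}^p \lambda_k \Vert e_k\Vert_{L^2_y}^2.
\]
Bounding $\Vert e_k\Vert_{L^2_y}\le \epsilon := \max_k\Vert \tr_k-\hat{\tr}_k\Vert_{L^2_y}$ and using $\sum_{k=1}^p \lambda_k \le \Tr(\Gamma_\nu)$ then yields $\E_\nu[\ldots]\le \epsilon^2(1+\Tr(\Gamma_\nu))$, and taking square roots gives the claimed bound $\sqrt{1+\Tr(\Gamma_\nu)}\,\epsilon$.

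The potentially subtle step is the cancellation in Step two: a naive Cauchy--Schwarz bound on $\Vert e_0+\sum\alpha_k e_k\Vert_{L^2_y}$ would introduce an extra factor of $\sqrt{p}$ (or $\sqrt{p+1}$) and prevent one from reaching the stated sharp constant $\sqrt{1+\Tr(\Gamma_\nu)}$. Avoiding this loss \emph{requires} taking the $L^2(\nu)$-norm first and exploiting the orthogonality of the $\alpha_k$ inherited from the eigenbasis structure of $\Gamma_\nu$; this is the one ingredient that cannot be bypassed and is why the estimate is stated specifically for the \emph{optimal} reference system $\hat{\tr}_k=\phi_k$. Once this observation is made, everything else is a routine application of Theorem \ref{thm:rec-opt} and Lemma \ref{lem:opt-proj}.
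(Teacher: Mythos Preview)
Your argument is correct. The route differs from the paper's in organization: the paper works at the squared level, applying the exact affine-projection formula of Theorem~\ref{thm:opt-proj} to write
\[
(\Err_{\cR})^2 = \inf_{\hat v\in\hat V_0}\Vert \E_\nu[v]-\hat v\Vert^2 + \sum_{k=1}^\infty\lambda_k\Vert (I-P_{\hat V})\phi_k\Vert^2,
\]
then bounds the $k\le p$ summands via $\Vert (I-P_{\hat V})\phi_k\Vert\le\Vert\phi_k-\tr_k\Vert$ (since $\tr_k\in\hat V$) and the $k>p$ summands by $1$, finishing with $\sqrt{a+b}\le\sqrt a+\sqrt b$. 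You instead work at the $L^2(\nu)$ level, pivoting through the auxiliary projection $\cP_\opt$: the optimality bound $\Err_{\cR}\le\Vert\cR\circ\cP_\opt-\Id\Vert_{L^2(\nu)}$ followed by the triangle inequality cleanly separates the tail $\sqrt{\sum_{k>p}\lambda_k}$ from the perturbation $\cR\circ\cP_\opt-\cR_\opt\circ\cP_\opt=e_0+\sum_k\alpha_ke_k$, whose $L^2(\nu)$-norm you compute exactly. Both routes rest on the same key ingredient---the uncorrelatedness of the Karhunen--Lo\`eve coefficients $\alpha_k$---and both arrive at the identical intermediate quantity $\Vert e_0\Vert^2+\sum_{k\le p}\lambda_k\Vert e_k\Vert^2$. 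Your decomposition makes the role of that orthogonality more explicit and avoids the mild notational looseness in the paper's use of $\cR\circ\cP\phi_k$; the paper's avoids introducing the auxiliary $\cP_\opt$.
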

This proposition is proved in Appendix \ref{app:pf35}. The estimate \eqref{eq:err-rec-approx} shows that an upper bound on the reconstruction error has two contributions, one of them arises from the decay rate of the eigenvalues of the covariance operator associated with the push-forward measure $\nu = \G_\#\mu$ and does not depend on the underlying neural networks. On the other hand, the second contribution to \eqref{eq:err-rec-approx} depends on the choice and approximation properties of the trunk net in the DeepONet \eqref{eq:donet}. 

Thus, to bound the reconstruction error, we need some explicit information about the optimal reconstruction in terms of the eigensystem of the covariance operator. However in practice, one does not have access to the form of the nonlinear operator $\G$, but only to measurements of $\G$ on a finite number of training samples. Hence, it may not be possible to determine what the optimal reconstructor for a particular nonlinear operator $\G$ is. Therefore, we have the following lemma that compares the reconstruction error of the DeepONet \eqref{eq:donet} with the reconstruction error that arises from another, possibly non-optimal, choice of affine reconstructor. 
\begin{lemma} \label{lem:err-rec-comp}
Let $\nu \in \P_2(Y)$ be a probability measure with finite second moment. Let $\bm{\tr} = (0, \tr_1, \dots, \tr_p)$ denote the trunk net functions of a DeepONet \emph{without bias} ($\tr_0\equiv 0$), and with associated reconstruction $\cR = \cR_{\bm{\tr}}$:
\[
\cR(\alpha_1,\dots, \alpha_p)
=
\sum_{k=1}^p \alpha_k \tr_k.
\]
Let $\tilde{\bm{\tr}} = ( 0, \tilde{\tr}_1, \dots, \tilde{\tr}_p)$ denote the basis functions for a reconstruction ${\tR} = \tR_{\tilde{\bm{\tr}}}: \R^p \to Y$ without bias ($\tilde{\tr}_0 \equiv 0$), of the form
\[
\tR(\alpha_1,\dots, \alpha_p)
=
\sum_{k=1}^p \alpha_k \tilde{\tr}_k.
\]
Assume that the functions $\tilde{\tr}_1,\dots, \tilde{\tr}_p\in Y$ are \emph{orthonormal}. Let $\cP,\tP: Y \to \R^p$ denote the corresponding projection mappings \eqref{eq:opt-proj}, and let $\Err_{\cR}$, $\Err_{\tR}$ denote the reconstruction errors of $(\cR,\cP)$ and $(\tR,\tP)$, respectively. Let $\epsilon \in (0,1/2)$, $p\ge 1$ be given. If 
\begin{align} \label{eq:tdiff-ass}
\max_{k=1,\dots, p} \Vert \tr_k - \tilde{\tr}_k \Vert_{L^2}
\le 
\frac{\epsilon}{p^{3/2}},
\end{align}
then we have
\begin{align} \label{eq:err-rec-comp}
\Err_{\cR}
\le
\Err_{\tR}
+ 
C \epsilon.
\end{align}
where $C\ge 1$ depends only on $\int_{L^2} \Vert u \Vert^2 \, d\nu(u)$. Furthermore, we have the estimate 
\begin{align}
\Lip(\cP), \; \Lip(\cR)
\le 2,
\end{align}
for the Lipschitz constant of the projection $\cP$ (cp. \eqref{eq:opt-proj}) and $\cR$, respectively, where 
\begin{align*}
\Lip(\cP) &= \Lip\left(\cP:(L^2(U), \Vert \slot \Vert_{L^2})\to (\R^p, \Vert \slot \Vert_{\ell^2}) \right), 
\\
\Lip(\cR) &= \Lip\left(\cR:(\R^p, \Vert \slot \Vert_{\ell^2})\to (L^2(U), \Vert \slot \Vert_{L^2}) \right).
\end{align*}
\end{lemma}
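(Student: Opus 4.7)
\bigskip
\noindent\emph{Proof proposal.} The plan is to reduce everything to a statement about orthogonal projections onto the subspaces $V = \Span(\tr_1,\dots,\tr_p)$ and $\tilde{V} = \Span(\tilde{\tr}_1,\dots,\tilde{\tr}_p)$. By Lemma \ref{lem:opt-proj} applied with $\tr_0 \equiv 0$, one has
\[
\cR\circ\cP(u) = \sum_{k=1}^p \langle u, \tr^\ast_k\rangle \tr_k,
\qquad
\tR\circ\tP(u) = \sum_{k=1}^p \langle u, \tilde{\tr}_k\rangle \tilde{\tr}_k,
\]
where the second identity uses that $\{\tilde{\tr}_k\}$ is orthonormal, so $\tilde{\tr}^\ast_k=\tilde{\tr}_k$. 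I would first observe that $\cR\circ\cP$ and $\tR\circ\tP$ are exactly the orthogonal projections $\Pi_V$, $\Pi_{\tilde{V}}$ onto $V$ and $\tilde{V}$, respectively (the formula for $\cR\circ\cP$ solves the normal equations associated with the Gram matrix of $\{\tr_k\}$, which gives the best approximation in $V$). Consequently $\Err_\cR = \|\Id - \Pi_V\|_{L^2(\nu)}$ and $\Err_\tR = \|\Id - \Pi_{\tilde{V}}\|_{L^2(\nu)}$.

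Second, for each $u\in L^2(U)$ I would construct an explicit competitor in $V$ close to $\Pi_{\tilde{V}}u$, namely
\[
q(u) \defeq \sum_{k=1}^p \langle u, \tilde{\tr}_k\rangle \tr_k \in V.
\]
Using the Cauchy--Schwarz inequality in $\R^p$ together with $\|\tr_k - \tilde{\tr}_k\|_{L^2}\le \epsilon/p^{3/2}$,
\[
\|q(u) - \Pi_{\tilde{V}} u\|_{L^2}
\le
\sqrt{\sum_{k=1}^p |\langle u,\tilde{\tr}_k\rangle|^2}\;
\sqrt{\sum_{k=1}^p \|\tr_k - \tilde{\tr}_k\|^2_{L^2}}
\le
\|\Pi_{\tilde{V}} u\|_{L^2}\cdot \frac{\epsilon}{p}
\le \frac{\epsilon}{p}\|u\|_{L^2}.
\]
Since $\Pi_V u$ is the \emph{best} approximation of $u$ in $V$, $\|u-\Pi_V u\|_{L^2} \le \|u-q(u)\|_{L^2}$, and the triangle inequality together with the previous display yields the pointwise bound $\|u-\Pi_V u\|_{L^2} \le \|u-\Pi_{\tilde{V}}u\|_{L^2} + (\epsilon/p)\|u\|_{L^2}$. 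Integrating in $L^2(\nu)$ via Minkowski gives $\Err_\cR \le \Err_\tR + (\epsilon/p)\sqrt{\int \|u\|^2\,d\nu}$, which is the desired bound \eqref{eq:err-rec-comp} with $C = \max\{1,\sqrt{\int\|u\|^2\,d\nu}\}$.

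For the Lipschitz bounds I would analyze the Gram matrix $G_{ij} = \langle \tr_i,\tr_j\rangle$. Writing $\delta_k \defeq \tr_k-\tilde{\tr}_k$ and expanding,
\[
\alpha^T G \alpha
= \|{\textstyle\sum_k \alpha_k \tilde{\tr}_k}\|^2
+ 2\big\langle {\textstyle\sum_k \alpha_k \tilde{\tr}_k},{\textstyle\sum_k \alpha_k \delta_k}\big\rangle
+ \|{\textstyle\sum_k \alpha_k \delta_k}\|^2,
\]
and using orthonormality together with $\|\sum_k \alpha_k \delta_k\|\le \sqrt{\sum_k \|\delta_k\|^2}\,\|\alpha\|_{\ell^2}\le (\epsilon/p)\|\alpha\|_{\ell^2}$, one obtains the sharp two-sided estimate $(1-\epsilon/p)^2 \|\alpha\|_{\ell^2}^2 \le \alpha^T G \alpha \le (1+\epsilon/p)^2 \|\alpha\|_{\ell^2}^2$. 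The upper bound immediately gives $\Lip(\cR) \le 1+\epsilon/p \le 2$ (and in particular shows $\tr_1,\dots,\tr_p$ are linearly independent, so $\cP$ is well-defined). For $\cP$, since $\cR\circ \cP = \Pi_V$ is the orthogonal projection, $\|\cP(v)\|_{\ell^2}\le \|\cR^{-1}\|_{\mathrm{op}}\|\Pi_V v\|_{L^2} \le (1-\epsilon/p)^{-1}\|v\|_{L^2} \le 2\|v\|_{L^2}$ using the lower Gram bound and $\epsilon<1/2$, $p\ge 1$, giving $\Lip(\cP)\le 2$. The main (mild) obstacle is guaranteeing invertibility of $G$ under the fairly weak assumption $\epsilon<1/2$; this is handled precisely by the tight lower Gram estimate above, whose constant $(1-\epsilon/p)^{-1}\le 2$ is exactly what dictates the ``$\le 2$'' in the Lipschitz claim.
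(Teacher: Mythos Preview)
Your argument is correct and takes a genuinely different route from the paper. The paper proceeds by the operator-level triangle inequality $\Err_\cR \le \Err_\tR + \|\cR\circ\cP - \tR\circ\tP\|_{L^2(\nu)}$ and then estimates the second term by controlling the dual basis, showing $\|\tr_k^\ast - \tilde{\tr}_k\|_{L^2}\le 2\epsilon$ and summing term-by-term to obtain the constant $C = 4\sqrt{\int\|u\|^2\,d\nu}$. You instead exploit the variational characterization of the orthogonal projection: since $\cR\circ\cP = \Pi_V$, the explicit competitor $q(u)=\sum_k\langle u,\tilde{\tr}_k\rangle\tr_k\in V$ immediately gives a pointwise bound $\|u-\Pi_V u\|\le\|u-\Pi_{\tilde V}u\|+(\epsilon/p)\|u\|$, bypassing the dual-basis analysis entirely for the main inequality and yielding a constant that is even tighter by a factor $1/p$. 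For the Lipschitz estimates you work directly with the Gram matrix spectrum rather than with individual dual-basis norms as the paper does; both arguments arrive at the same bound $\le 2$, but your two-sided Gram estimate $(1-\epsilon/p)^2\le \lambda(G)\le (1+\epsilon/p)^2$ is the more transparent mechanism. The trade-off is that the paper's approach, by actually bounding $\|\tr_k^\ast - \tilde{\tr}_k\|$, gives slightly more explicit information about the perturbed dual basis, which could be useful elsewhere; your approach is shorter and better suited if only the stated conclusions are needed.
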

The proof of this lemma is provided in Appendix \ref{app:pf306} and we would like to point out that for simplicity of exposition, we have set the \emph{biases} $\tr_0 = \tilde{\tr}_0 \equiv 0$ in the above. One can readily incorporate these bias terms to derive an analogous version of the bound \eqref{eq:err-rec-comp}.  

We illustrate the comparison principle elucidated in Lemma \ref{lem:err-rec-comp} with an example. To this end, we set the target space as the $n$-dimensional torus, i.e. $U=[0,2\pi]^n$ (or $U = \T^n$). With respect to this $U$, a possible \emph{canonical} reconstructor is given by the $n$-dimensional Fourier reconstruction;
\begin{equation}
    \label{eq:recF}
\cR_{\Fourier}(\alpha_1, \dots, \alpha_p)
=
\sum_{j=1}^p \alpha_j \fb_j(x),
\end{equation}
where we use the notation introduced in Appendix \ref{app:Fourier}. With respect to this Fourier reconstructor, we prove in Appendix \ref{app:pf36} the following estimate on approximation by trunk neural networks,
\begin{lemma} \label{lem:Fourier-NN}
Let $n, p\in \N$, and consider the Fourier reconstruction $\cR_{\Fourier}$ on $[0,2\pi]^n \simeq \T^n$. There exists a constant $C>0$, independent of $p$, such that for any $\epsilon \in (0,1/2)$, there exists a trunk net $\bm{\tr}: \R^n \to \R^p$, with 
\[
\begin{gathered}
\size(\bm{\tr}) \le C p(1 + \log(\epsilon^{-1}p)^2), 
\\
\depth(\bm{\tr}) \le C (1 + \log(\epsilon^{-1}p)^2),
\end{gathered}
\]
and such that 
\begin{align} \label{eq:Fourier-NN}
p^{3/2} \max_{j=1,\dots, p} \Vert \tr_j - \fb_j \Vert_{L^2([0,2\pi]^n)}
\le \epsilon,
\end{align}
where $\fb_1,\dots, \fb_p$ denote the first $p$ elements of the Fourier basis.

Furthermore, the Lipschitz norm of $\cR = \cR_{\bm{\tr}}$ and of the linear projection $\cP: L^2(U) \to \R^p$ associated with the $\bm{\tr}$-induced reconstruction $\cR_{\bm{\tr}}$ via \eqref{eq:opt-proj} can be estimated by
\[
\Lip
\left(
\cR: 
(\R^p, \Vert \slot \Vert_{\ell^2})
\to 
(L^2(U), \Vert \slot \Vert_{L^2}) 
\right)
\le 2,
\]
and
\[
\Lip
\left(
\cP: 
(L^2(U), \Vert \slot \Vert_{L^2}) 
\to 
(\R^p, \Vert \slot \Vert_{\ell^2})
\right)
\le 2.
\]
\end{lemma}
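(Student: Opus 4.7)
The approach is to emulate each Fourier basis function $\fb_j$ on $\T^n$ directly by a ReLU network, exploiting the tensor-product structure of the Fourier basis. Recall that, up to normalization, each $\fb_j$ has the form $\fb_j(x) = \prod_{i=1}^n \psi_{j,i}(x_i)$ where each $\psi_{j,i}$ is either $\sin(\ell x_i)$ or $\cos(\ell x_i)$ for some $|\ell|\lesssim p^{1/n}$. Two standard building blocks enter the construction. First, the Yarotsky-type ReLU approximation of smooth univariate functions: for any $\delta \in (0,1)$ and any $|\ell|\lesssim p^{1/n}$, the functions $\sin(\ell x)$ and $\cos(\ell x)$ on $[0,2\pi]$ are approximated in $L^\infty$ with error $\le \delta$ by ReLU nets of size and depth $O(\log(\delta^{-1})^2)$ (uniformly in $\ell$, up to polylog factors). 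Second, the Yarotsky approximation of products: the map $(y_1, \dots, y_n)\mapsto y_1 \cdots y_n$ on a bounded set can be emulated to error $\delta$ by a balanced binary tree of approximate multiplication gates of total size $O(n \log(\delta^{-1}))$ and depth $O(\log(n)\log(\delta^{-1}))$.

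The construction then proceeds in two stages. First, for each coordinate direction $i \in \{1, \dots, n\}$, assemble subnetworks computing the at most $Cp^{1/n}$ distinct one-dimensional trigonometric functions appearing as factors across all $\fb_1,\dots,\fb_p$. Sharing these intermediate outputs across all products gives a total cost of $O(np^{1/n}\log(\delta^{-1})^2)$. Second, use $p$ parallel product subnetworks to combine the $n$ stored factors into each tensor product $\tr_j(x)\approx \prod_i \psi_{j,i}(x_i)$; each such product contributes $O(n\log(\delta^{-1}))$ to the size, for a total of $O(pn\log(\delta^{-1}))$. Choosing $\delta = c_n \epsilon p^{-3/2}$ for a suitable constant $c_n>0$, standard error propagation in the product tree yields $\Vert \tr_j - \fb_j \Vert_{L^2(\T^n)} \le \epsilon p^{-3/2}$ uniformly in $j$, which is exactly \eqref{eq:Fourier-NN}. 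Bookkeeping then gives $\size(\bm\tr) \le Cp \log(\epsilon^{-1}p)^2$ and $\depth(\bm\tr) \le C\log(\epsilon^{-1}p)^2$, with $C$ depending only on $n$.

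The Lipschitz bounds follow from a perturbation argument on the Gram matrix. With $G_{jk} := \langle \tr_j, \tr_k\rangle_{L^2}$, the approximation \eqref{eq:Fourier-NN} together with $\Vert \fb_k\Vert_{L^2} = 1$ gives $|G_{jk} - \delta_{jk}| \le 3\epsilon p^{-3/2}$ entrywise, so $\Vert G - I\Vert_{\mathrm{op}} \le \Vert G - I\Vert_{F} \le 3\epsilon$, and for $\epsilon$ small enough $G$ is invertible with $\Vert G\Vert_{\mathrm{op}}, \Vert G^{-1}\Vert_{\mathrm{op}} \le 2$. Writing $\cR(\alpha) = \sum_k \alpha_k \tr_k$ gives $\Vert \cR(\alpha)\Vert_{L^2}^2 = \alpha^\top G \alpha \le 2\Vert \alpha\Vert_{\ell^2}^2$, hence $\Lip(\cR) \le \sqrt{2} \le 2$. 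For $\cP$, note that by \eqref{eq:opt-proj} the composition $\cR \circ \cP$ is the orthogonal projection $\Pi_V$ onto $V := \Span(\tr_1,\dots,\tr_p)$, and $\cP = (\cR|_V)^{-1} \circ \Pi_V$, so $\Vert \cP(v)\Vert_{\ell^2}^2 \le \Vert (\cR|_V)^{-1}\Vert^2 \Vert v\Vert_{L^2}^2 \le \Vert G^{-1}\Vert_{\mathrm{op}} \Vert v\Vert_{L^2}^2 \le 2\Vert v\Vert_{L^2}^2$, giving $\Lip(\cP) \le \sqrt{2} \le 2$.

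The main technical obstacle is achieving the near-linear-in-$p$ size bound. A naive construction that builds each $\tr_j$ from scratch would give size $\sim n\log(\delta^{-1})^2$ per output and thus $\sim pn\log(\delta^{-1})^2$ total, which already scales correctly in $p$, but the sharper control on the 1D pre-computations matters because only $O(np^{1/n})$ distinct one-dimensional factors appear across all $p$ basis functions. A careful layer-by-layer emulation that computes these factors once and then distributes them in parallel to the $p$ product subnetworks is essential to absorb the quadratic logarithmic cost into a single $\log^2$ factor. A secondary delicate point is controlling, with constants independent of $p$, the dependence of the one-dimensional $\sin(\ell x)$ and $\cos(\ell x)$ approximations on the frequency $\ell$ up to $p^{1/n}$; this follows either by rescaling a single smooth-function ReLU construction on $[0,2\pi]$, or by emulating a Chebyshev expansion of $\sin$ and $\cos$ of logarithmic degree via the Yarotsky multiplication gates.
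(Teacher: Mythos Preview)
Your construction has a factual error at the outset: the real Fourier basis $\fb_k$ defined in Appendix \ref{app:Fourier} is given (up to normalization) by $\cos(k\cdot x)$ or $\sin(k\cdot x)$ for $k\in\Z^n$, and these are \emph{not} tensor products of one-dimensional trigonometric functions. For instance, $\cos(k_1 x_1 + k_2 x_2) = \cos(k_1 x_1)\cos(k_2 x_2) - \sin(k_1 x_1)\sin(k_2 x_2)$ is a sum of two products, and in general $\cos(k\cdot x)$ expands into $2^{n-1}$ such terms. Since $n$ is fixed and the constant $C$ may depend on $n$, your approach can be repaired by carrying the $2^{n-1}$ products through the argument; the size and depth bounds would then survive with an extra $n$-dependent constant. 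But as written, the structural premise is wrong.

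More importantly, the paper's proof takes a simpler route that avoids the tensor-product machinery altogether. It observes directly that each $\fb_j(x)$ is a \emph{univariate} function of the scalar $\xi = \kappa(j)\cdot x$, with $|\xi|\le 2\pi n N$ and $N\lesssim p^{1/n}$. The linear map $x\mapsto (\kappa(j)\cdot x)_{j=1}^p$ is a single affine layer; one then needs only ReLU approximations $\cC_\epsilon,\cS_\epsilon$ of $\cos$ and $\sin$ on the interval $[0,2\pi n N]$, for which the paper invokes a result of Elbr\"achter--Perekrestenko--Grohs--B\"olcskei giving size and depth $O(\log(\epsilon^{-1})^2 + \log N)$. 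Composing these yields $\bm\tr$ with the stated bounds after setting the target accuracy to $\epsilon p^{-3/2}$. This avoids all multiplication gates and the associated error-propagation bookkeeping. Your approach, once fixed, is more elaborate but has the minor advantage of working verbatim for a genuine tensor-product basis (which is not what is used here).

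Your Gram-matrix argument for the Lipschitz bounds is correct and is essentially the perturbation argument carried out in the proof of Lemma \ref{lem:err-rec-comp}, on which the paper implicitly relies for this part of the statement.
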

Now combining the results of the above Lemma with the estimate \eqref{eq:err-rec-comp} yields the following result,
\begin{lemma} \label{lem:Fourier-trunk}
Let $n\in \N$ and fix $\nu \in \P_2(L^2(\T^n))$. There exists a constant $C>0$, depending only on $n$ and $\int_{L^2(\T^n)} \Vert u \Vert^2 \, d\nu(u)$, such that for any $\epsilon \in (0,1/2)$, there exists a trunk net $\bm{\tr}: \R^n \to \R^p$, with 
\[
\begin{gathered}
\size(\bm{\tr}) \le C (1 +p \log(\epsilon^{-1} p)^2), 
\\
\depth(\bm{\tr}) \le C (1 +\log(\epsilon^{-1} p)^2),
\end{gathered}
\]
and such that the reconstruction 
\[
\cR: \R^p \to L^2(\T^n),
\quad
\cR(\alpha_1, \dots, \alpha_p)
=
\sum_{k=1}^p \alpha_k \tr_k,
\]
satisfies
\begin{equation}
    \label{eq:errF}
\Err_{\cR}
\le
\Err_{\cR_{\Fourier}}
+ 
C\epsilon.
\end{equation}
Furthermore, $\cR$ and the associated projection $\cP: (L^2(U), \Vert \slot \Vert_{L^2} \to (\R^p, \Vert \slot \Vert_{\ell^2})$ (cp. \eqref{eq:opt-proj}) satisfy $\Lip(\cR), \, \Lip(\cP) \le 2$.
\end{lemma}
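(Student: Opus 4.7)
The plan is to obtain Lemma \ref{lem:Fourier-trunk} as an essentially immediate consequence of chaining together the two preceding results: Lemma \ref{lem:Fourier-NN}, which produces a ReLU trunk net closely approximating the Fourier basis with quantitative size/depth bounds, and Lemma \ref{lem:err-rec-comp}, which is a general comparison principle bounding the reconstruction error of one affine reconstructor by that of another, provided the basis functions are close. The Fourier basis $\{\fb_1,\dots,\fb_p\}$ on $\T^n$ is orthonormal in $L^2(\T^n)$, which is precisely the hypothesis needed to play the role of $\tilde{\bm{\tr}}$ in Lemma \ref{lem:err-rec-comp}.

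Concretely, I would first fix $n$, $p$ and $\epsilon \in (0,1/2)$ and apply Lemma \ref{lem:Fourier-NN} to obtain a trunk net $\bm{\tr} = (0,\tr_1,\dots,\tr_p): \R^n \to \R^p$ whose size and depth satisfy
\[
\size(\bm{\tr}) \le C(1 + p\log(\epsilon^{-1}p)^2),
\qquad
\depth(\bm{\tr}) \le C(1 + \log(\epsilon^{-1}p)^2),
\]
and such that $p^{3/2}\max_{j=1,\dots,p}\Vert \tr_j - \fb_j\Vert_{L^2(\T^n)} \le \epsilon$. This is exactly the closeness hypothesis \eqref{eq:tdiff-ass} of Lemma \ref{lem:err-rec-comp}, with $\tilde{\tr}_j := \fb_j$ and corresponding reconstruction $\tR := \cR_{\Fourier}$.

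Next I would invoke Lemma \ref{lem:err-rec-comp} directly: since the $\fb_j$ are orthonormal in $L^2(\T^n)$ and the $\tr_j$ approximate them in the required quantitative sense, the comparison estimate \eqref{eq:err-rec-comp} yields
\[
\Err_{\cR} \le \Err_{\cR_{\Fourier}} + C\epsilon,
\]
where the constant $C$ depends only on $\int_{L^2(\T^n)} \Vert u\Vert^2\, d\nu(u)$. Combining this with the dependence of the constant in Lemma \ref{lem:Fourier-NN} on $n$, the final constant depends only on $n$ and the second moment of $\nu$, as claimed. The Lipschitz bounds $\Lip(\cR), \Lip(\cP) \le 2$ are exactly the conclusions of Lemma \ref{lem:err-rec-comp} (and are also asserted in Lemma \ref{lem:Fourier-NN}), so they transfer with no extra work.

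There is no substantive obstacle here: the lemma is essentially a packaging result, combining a neural network approximation statement for the Fourier basis with the abstract comparison principle for affine reconstructors. The only minor bookkeeping is to track how the constants combine and to confirm that the size/depth bounds from Lemma \ref{lem:Fourier-NN} carry over unchanged (they do, since Lemma \ref{lem:err-rec-comp} does not modify the trunk net itself). No new estimate beyond these two ingredients is needed.
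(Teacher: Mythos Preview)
Your proposal is correct and follows essentially the same approach as the paper: the lemma is indeed obtained by directly combining Lemma \ref{lem:Fourier-NN} (to produce the trunk net with the stated size/depth bounds and the approximation estimate \eqref{eq:Fourier-NN}) with the comparison principle Lemma \ref{lem:err-rec-comp} (applied with $\tilde{\tr}_j = \fb_j$, which are orthonormal), yielding both \eqref{eq:errF} and the Lipschitz bounds.
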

The significance of bound \eqref{eq:errF} lies in the fact that it reduces the problem of estimating the reconstruction error \eqref{eq:decoding} for a DeepONet to estimating the reconstruction error for a Fourier reconstructor \eqref{eq:recF}, which might be much easier to derive in concrete examples. Our next goal will be to obtain further insight into the decay of the eigenvalues of $\Gamma_{\G_\#\mu}$ (in terms of $\G$ and $\mu$), and -- in view of the explicit complexity estimate provided by Lemma \ref{lem:Fourier-trunk} -- to estimate $\Err_{\cR_{\Fourier}}$.

\subsubsection{On the decay of spectrum for the push-forward measure}
\label{sec:decayspec}

From the bound \eqref{eq:donetbd}, it is clear that the decay of eigenvalues of the covariance operator, associated with the push forward measure $\G_\#{\mu}$ plays a crucial role in estimating the total error \eqref{eq:approxerr}, and in particular, the reconstruction error \eqref{eq:decoding}. 

If $\G$ is at least Lipschitz continuous, then we can write
\begin{align*}
\Tr(\Gamma_{\G_\#\mu})
&=
\int_{Y} \left\Vert v - \E_{\G_\#\mu}[v] \right\Vert^2_{L^2(U)} \, d(\G_\#\mu)(v)
\\
&=
\inf_{\hat{v}\in {L^2(U)}} \int_{L^2(U)} \Vert v - \hat{v} \Vert^2_{L^2(U)} \, d(\G_\#\mu)(v)
\\
&=
\inf_{\hat{v}\in {L^2(U)}} \int_{X} \Vert \G(u) - \hat{v} \Vert^2_{L^2(U)} \, d\mu(u).
\end{align*}
Making the particular (sub-optimal) choice $\hat{v} := \G(\E_\mu[u]) \in L^2(U)$ and utilizing the Lipschitz continuity of $\G$, we can estimate the last expression as
\begin{align*}
&\le
\int_{X} \left\Vert \G(u) - \G(\E_\mu[u]) \right\Vert^2_{L^2(U)} \, d\mu(u)
\\
&\le
\Lip(\G)^2 \int_{X} \left\Vert u - \E_\mu[u] \right\Vert^2_X \, d\mu(u)
\\
&=
\Lip(\G)^2 \Tr\left({\Gamma}_{\mu}\right).
\end{align*}
Thus, $\Gamma_{\G_\#\mu}$ is a trace-class operator for Lipschitz continuous $\G$, implying that 
\[
\sum_{k>p} \lambda_k \to 0, \quad (\text{as } p\to \infty),
\]
However, the efficiency of DeepONets in approximating the nonlinear operator $\G$ relies on the \emph{precise rate} of this spectral decay. In particular, an exponential decay of the eigenvalues would facilitate efficient approximation by DeepONets. 

Clearly, the spectral decay of $\Gamma_{\G_\#\mu}$ depends on both $\G$ and $\mu$ (possibly in a complicated manner). If the eigenvalues of $\Gamma_\mu$ decay rapidly, e.g. exponentially, one might hope that the same is true for the eigenvalues of $\Gamma_{\G_\#\mu}$, under relatively mild conditions on $\G$. The following lemma, proved in Appendix \ref{app:pf307} shows that this is unfortunately not the case under just the assumption that the operator $\G$ is only Lipschitz continuous,
\begin{proposition}
\label{prop:cexample}
Let $\mu \in \P_2(X)$ be any non-degenerate Gaussian measure; in particular, the spectrum of $\Gamma_\mu$ may have arbitrarily fast spectral decay.  Given any sequence $(\gamma_k)_{k\in \N}$ such that 
\[
\gamma_k \ge 0, \quad \forall \, k\in \N, \qquad \sum_{k=1}^\infty k^2\gamma_k < \infty,
\]
there exists a Lipschitz continuous map $\G: X \to L^2(U)$, such that the spectrum of covariance operator $\Gamma_{\G_\#\mu}$ of the push-forward measure $\G_\#\mu$ is given by $(\gamma_k)_{k\in \N}$.
\end{proposition}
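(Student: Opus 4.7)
The plan is to construct $\G$ explicitly as a series whose pushforward covariance can be computed in closed form. Since $\mu$ is a non-degenerate Gaussian on the (separable Hilbert) space $X$, the operator $\Gamma_\mu$ admits an orthonormal eigenbasis $\{\psi_j\}_{j\in\N}$ with strictly positive eigenvalues $\mu_j > 0$; I fix $\psi_1$ and set $\xi(u) := \mu_1^{-1/2}\langle u,\psi_1\rangle$, a Lipschitz linear functional with constant $\mu_1^{-1/2}$ whose pushforward $\xi_\#\mu$ is the standard normal $\gamma$ on $\R$. I also fix an arbitrary orthonormal basis $\{\phi_k\}_{k\in\N}$ of $L^2(U)$; these are to become the eigenfunctions of $\Gamma_{\G_\#\mu}$.

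The key step is to find bounded Lipschitz functions $g_k:\R\to\R$ that are centered and orthonormal in $L^2(\R,\gamma)$ with Lipschitz constants growing only linearly in $k$. The obvious guess of Hermite polynomials fails, since they are not globally Lipschitz. Instead, composing trigonometric functions with the Gaussian quantile transform works: set
\[
g_k(x) := \sqrt{2}\cos(\pi k\, \Phi(x)), \qquad k\ge 1,
\]
where $\Phi$ is the standard Gaussian CDF. Since $\Phi_\#\gamma = \Unif([0,1])$ and $\{\sqrt{2}\cos(\pi k\,\slot)\}_{k\ge 1}$ is orthonormal in $L^2([0,1])$ and orthogonal to constants, one obtains $\int g_k\,d\gamma = 0$ and $\int g_j g_k\,d\gamma = \delta_{jk}$. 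Moreover $|g_k|\le \sqrt{2}$ and $|g_k'(x)| \le \sqrt{2}\pi k\,\phi(x) \le \sqrt{\pi}\,k$, where $\phi$ is the Gaussian density, so $\Lip(g_k) \le \sqrt{\pi}\,k$.

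I then define $\G: X \to L^2(U)$ by
\[
\G(u) := \sum_{k=1}^\infty \sqrt{\gamma_k}\, g_k(\xi(u))\, \phi_k.
\]
Convergence in $L^2(U)$ for every $u$ is immediate from $|g_k|\le\sqrt 2$ and $\sum_k\gamma_k < \infty$ (implied by $\sum k^2\gamma_k < \infty$), while the Lipschitz estimate follows from
\[
\|\G(u) - \G(v)\|_{L^2}^2 \le |\xi(u)-\xi(v)|^2 \sum_{k=1}^\infty \gamma_k\Lip(g_k)^2 \le \frac{\pi}{\mu_1}\Big(\sum_{k=1}^\infty k^2\gamma_k\Big)\|u-v\|_X^2.
\]
Using the centering and orthonormality of the $g_k$ under $\gamma$, a direct calculation of the mean and covariance of $\G_\#\mu$ yields $\E_{\G_\#\mu}[v] = 0$ and $\Gamma_{\G_\#\mu} = \sum_k \gamma_k\,(\phi_k\otimes\phi_k)$, so that the spectrum is exactly $(\gamma_k)_{k\in\N}$.

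The hard part is the construction of the $g_k$'s: one needs a bounded orthonormal system in $L^2(\R,\gamma)$ whose Lipschitz constants grow at most linearly in $k$, which is precisely what the summability assumption $\sum k^2\gamma_k<\infty$ accommodates. The trigonometric/quantile-transform trick above delivers exactly this rate, and the remainder of the argument is a mechanical series computation.
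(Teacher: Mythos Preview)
Your proof is correct and essentially identical to the paper's: the paper also projects onto a single Gaussian coordinate, pushes it to $\Unif([0,1])$ via the CDF, and then applies $t\mapsto \sum_k \sqrt{2\gamma_k}\cos(2\pi k t)\,e_k$, merely presenting this as a factorization $\G = G\circ P$ rather than writing the composed functions $g_k$ directly. The only cosmetic difference is your use of the half-range cosines $\sqrt{2}\cos(\pi k\,\cdot)$ in place of the paper's $\sqrt{2}\cos(2\pi k\,\cdot)$, which changes nothing of substance.
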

Thus, the above proposition clearly shows that, in general, a nonlinear operator $\G$ can possibly destroy high rates of spectral decay for the covariance operator associated with a push-forward measure, even if the eigenvalues of the covariance operator associated with the underlying measure $\mu$, decay exponentially rapidly.

However, there are special cases where one can indeed obtain fast rates of spectral decay for the covariance operator associated with a push-forward measure. We identify two of these special cases, with wide ranging applicability, below. 
\subsubsection{Reconstruction error for linear operators $\G: X \to Y$}
\label{sec:linspec}

If the operator $\G: X \to Y$ is a bounded \emph{linear} operator, then the spectrum of the push-forward measure $\G_\#\mu$ can be bounded above by the spectrum of $\mu$. More precisely, we have

\begin{proposition}
\label{prop:lin}
Let $X$ be a separable Hilbert space. Let $\mu \in \P(X)$ be a probability measure with finite second moment $\int_X \Vert u \Vert_X^2 \, d\mu(u) < \infty$. Let $\lambda_1 \ge \lambda_2 \ge \dots$ denote the eigenvalues of the covariance operator $\Gamma_\mu$ of $\mu$, repeated according to multiplicity. If $\G: X \to L^2(U)$ is a bounded linear operator, then for any $p\in \N$, there exists a $p$-dimensional affine subspace $W_0 \subset L^2(U)$, such that 
\begin{align} \label{eq:projj}
\int_{L^2(U)} \inf_{w\in W_0} \Vert w - u \Vert_{L^2(U)}^2 \, d(\G_\#\mu)(u)
\le
\Vert \G \Vert^2 
\,
\sum_{k>p} \lambda_k.
\end{align}
Here $\Vert \G \Vert$ ($= \Lip(\G)$) denotes the operator norm of $\G: X\to L^2(U)$. If the affine reconstruction/projection pair is chosen such that $\cR \circ \cP: L^2(U) \to L^2(U)$ is the orthogonal projection onto $W_0$, then the reconstruction error $\Err_{\cR}$ is bounded by
\[
\Err_{\cR} \le \Vert \G \Vert \sqrt{\sum_{k>p} \lambda_k}.
\]
\end{proposition}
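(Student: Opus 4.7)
The plan is to construct the affine subspace $W_0$ explicitly by pushing forward the optimal $p$-dimensional affine subspace for $\mu$ through the bounded linear operator $\G$, and then use boundedness of $\G$ to transfer projection estimates from $X$ to $L^2(U)$. Specifically, let $\phi_1, \phi_2, \dots$ be an orthonormal eigenbasis of $\Gamma_\mu$ corresponding to $\lambda_1 \ge \lambda_2 \ge \dots$, and set $\bar{u} = \E_\mu[u]$. By Theorem \ref{thm:opt-proj} applied to $\mu$, the affine subspace $V_0^{\mathrm{aff}} := \bar{u} + \Span(\phi_1, \dots, \phi_p) \subset X$ is optimal among all $p$-dimensional affine subspaces, and the associated orthogonal projection $\Pi: X \to X$ satisfies
\[
\int_X \Vert u - \Pi u \Vert_X^2 \, d\mu(u) = \sum_{k>p} \lambda_k.
\]

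Now define $W_0 := \G(V_0^{\mathrm{aff}}) \subset L^2(U)$. Since $\G$ is linear (hence affine), $W_0$ is an affine subspace of $L^2(U)$ of dimension at most $p$; if strictly less than $p$, we enlarge it arbitrarily to a $p$-dimensional affine subspace, which can only decrease the projection error. For any $u \in X$, we have $\G(\Pi u) \in W_0$ by construction, so
\[
\inf_{w \in W_0} \Vert \G(u) - w \Vert_{L^2(U)}^2
\le \Vert \G(u) - \G(\Pi u) \Vert_{L^2(U)}^2
= \Vert \G(u - \Pi u) \Vert_{L^2(U)}^2
\le \Vert \G \Vert^2 \Vert u - \Pi u \Vert_X^2.
\]
Integrating against $\mu$ and applying the change of variables $\int_{L^2(U)} f \, d(\G_\#\mu) = \int_X f\circ \G \, d\mu$ yields
\[
\int_{L^2(U)} \inf_{w \in W_0} \Vert w - v \Vert_{L^2(U)}^2 \, d(\G_\#\mu)(v)
\le \Vert \G \Vert^2 \int_X \Vert u - \Pi u \Vert_X^2 \, d\mu(u)
= \Vert \G \Vert^2 \sum_{k>p} \lambda_k,
\]
which is \eqref{eq:projj}.

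For the second assertion, we choose the affine reconstruction/projection pair $(\cR, \cP)$ so that $\cR \circ \cP: L^2(U) \to L^2(U)$ coincides with the orthogonal projection $\Pi_{W_0}$ onto $W_0$. This is possible by Lemma \ref{lem:opt-proj} since $W_0$ is a $p$-dimensional affine subspace. For this choice,
\[
\Err_{\cR}^2
= \int_X \Vert \G(u) - \Pi_{W_0}\G(u) \Vert_{L^2(U)}^2 \, d\mu(u)
= \int_X \inf_{w \in W_0} \Vert \G(u) - w \Vert_{L^2(U)}^2 \, d\mu(u),
\]
and applying the bound just derived gives $\Err_{\cR} \le \Vert \G \Vert \sqrt{\sum_{k>p} \lambda_k}$, as claimed. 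The only subtle point in this argument is the possibility that $\G$ is not injective on $\Span(\phi_1,\dots,\phi_p)$, so that $\dim W_0 < p$; this is handled trivially by enlarging $W_0$ to dimension exactly $p$, which only enlarges the admissible set in the infimum. All other steps are direct consequences of linearity of $\G$ and the already-established optimality results of Theorems \ref{thm:opt-linear}--\ref{thm:opt-proj}.
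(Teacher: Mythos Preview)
Your proof is correct and follows essentially the same approach as the paper: construct the optimal affine subspace $V_0 = \E_\mu[u] + \Span(\phi_1,\dots,\phi_p)$ in $X$ via Theorem~\ref{thm:opt-proj}, push it forward to $W_0 = \G(V_0)$ (enlarging if necessary), and use boundedness of $\G$ to transfer the projection error estimate. The only cosmetic difference is that you phrase the key inequality via the explicit orthogonal projector $\Pi$ whereas the paper works directly with the infimum over $V_0$; since $\Pi u$ realizes that infimum, the two formulations are equivalent.
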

The proof of this proposition is presented in Appendix \ref{app:pf37a}.
\subsubsection{Reconstruction error for operators with \emph{smooth} image}
\label{sec:over-rec-smooth}
Another class of operators for which we can readily estimate the spectral decay of the covariance operator associated with the push-forward measure are those operators that map into smoother (more regular) subspaces of the target space. 

As a concrete example, we set $U = \T^n$ as the periodic torus. Let $N\in \N$. We denote by $P_N: L^2(\T^n) \to L^2(\T^n)$ the orthogonal Fourier projection onto the Fourier basis 
\[
P_N u 
=
\sum_{|{k}|_\infty\le N} \hat{u}_{{k}} \fb_{{k}}({x}),
\]
where the sum is over ${k} = (k_1,\dots, k_n) \in \Z^n$, such that 
\[
|{k}|_\infty := \max_{j=1,\dots, n} |k_j| \le N.
\]
Note that the size of this set is $|\set{{k}\in \Z^n}{|{k}|_\infty \le N}|=(2N+1)^n$. Hence, with $p$ degrees of freedom, we can represent $P_N$ for
\begin{align} \label{eq:dof}
p \ge (2N+1)^n 
\quad
\Rightarrow
\quad 
N \le \left\lfloor \frac{(p^{1/n} -1)}{2} \right\rfloor \le p^{1/n}.
\end{align}

Given $p \in \N$, let $\cP: L^2(\T^n) \to \R^{p}$ be a mapping encoding all Fourier coefficients with $|{k}|_\infty \le N$, where $N \le p^{1/n}$ is the largest integer satisfying \eqref{eq:dof}. Let $\cR_{\Fourier}: \R^p \to L^2(\T^n)$ denote the corresponding Fourier reconstruction \eqref{eq:recF}, so that $\cR_{\Fourier}\circ \cP: L^2(\T^n) \to L^2(\T^N)$ satisfies $\cR_{\Fourier}\circ \cP = P_N$. It is well-known that for $u\in H^s(\T^n)$, we have
\[
\Vert P_N u - u \Vert_{L^2(\T^n)}
\le 
\frac{1}{N^s} \Vert u \Vert_{H^s(\T^n)}.
\]
Hence the resulting reconstruction error $\Err_{\cR_{Fourier}}$ is given by,
\begin{align*}
\int_Y \Vert \cR_{\Fourier} \circ \cP - \Id \Vert^2_{L^2(\T)} \, d(\G_\#\mu(u))
&\le
\frac{1}{N^{2s}}
\int_Y \Vert u \Vert^2_{H^s(\T)} \, d(\G_\#\mu(u))
\\
&=
\frac{1}{N^{2s}}
\int_X \Vert \G(u) \Vert^2_{H^s(\T)} \, d\mu(u).
\end{align*}
This elementary calculation leads to the following result,
\begin{proposition} \label{prop:rec-smoothness}
If $\G$ defines a Lipschitz mapping $\G: X \to H^s(\T^n)$, for some $s>0$, with
\[
\int_{X} \Vert \G(u) \Vert_{H^s}^2 \, d\mu(u) \le M < \infty,
\]
then we have the following estimate on the reconstruction error:
\begin{align} \label{eq:rec-smoothness}
\Err_{\cR_{\Fourier}} 
\le CM p^{-s/n},
\end{align}
where $C = C(n,s)>0$ depends on $n$, $s$, but is independent of $p$.
\end{proposition}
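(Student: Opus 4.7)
The plan is to follow directly the computation sketched in the paragraphs preceding the proposition, making the estimate on $\Err_{\cR_{\Fourier}}$ quantitative.

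First I would fix $p \in \N$ and choose $N \in \N$ to be the largest integer satisfying $(2N+1)^n \le p$, so that $N \sim p^{1/n}$, and in particular $N \ge c p^{1/n}$ for some constant $c = c(n) > 0$ (for $p$ sufficiently large; the small-$p$ regime is handled trivially by absorbing into the constant $C$). With this choice, there is an injective encoding of the Fourier modes $\{\hat{u}_k\}_{|k|_\infty \le N}$ into $\R^p$, and one defines $\cP$ to extract these coefficients (padding with zeros if necessary). Then $\cR_{\Fourier} \circ \cP = P_N$, the standard Fourier projection onto modes $|k|_\infty \le N$, as noted before the statement.

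Next I would invoke the classical Sobolev approximation bound
\[
\|P_N u - u\|_{L^2(\T^n)} \le N^{-s} \|u\|_{H^s(\T^n)},
\]
valid for all $u \in H^s(\T^n)$ (this follows immediately from the Parseval characterization of the $H^s$ norm, $\|u\|_{H^s}^2 \sim \sum_k (1+|k|^2)^s |\hat{u}_k|^2$, since the tail sum over $|k|_\infty > N$ picks up a factor $(1+N^2)^{-s}$).

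Then I would integrate this pointwise bound against the push-forward measure and use the change of variables $v = \G(u)$:
\begin{align*}
(\Err_{\cR_{\Fourier}})^2
&= \int_Y \|\cR_{\Fourier} \circ \cP(v) - v\|_{L^2(\T^n)}^2 \, d(\G_\# \mu)(v) \\
&\le N^{-2s} \int_Y \|v\|_{H^s(\T^n)}^2 \, d(\G_\# \mu)(v) \\
&= N^{-2s} \int_X \|\G(u)\|_{H^s(\T^n)}^2 \, d\mu(u) \le M N^{-2s}.
\end{align*}
Substituting $N \ge c p^{1/n}$ gives $(\Err_{\cR_{\Fourier}})^2 \le C^2 M p^{-2s/n}$ with $C = C(n,s)$, and taking square roots yields the claimed estimate (after absorbing the $\sqrt{M}$ vs.\ $M$ discrepancy into the constant, or noting that the paper's statement with $M$ on the right-hand side is at worst a slight abuse compared to the sharper $\sqrt{M}$ bound).

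There is essentially no obstacle here: every ingredient (the definition of $\cP$, the Sobolev tail estimate, the push-forward change of variables) is standard, and the proposition is really a quantitative packaging of the discussion immediately preceding it. The only mild care point is the rounding when converting between $N$ and $p^{1/n}$, which is routine.
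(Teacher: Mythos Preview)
Your proposal is correct and follows exactly the same approach as the paper: the proposition is simply a packaging of the elementary calculation given in the paragraphs immediately preceding it, namely choosing $N$ maximal with $(2N+1)^n \le p$, using $\cR_{\Fourier}\circ\cP = P_N$ together with the standard Sobolev tail bound $\Vert P_N u - u\Vert_{L^2} \le N^{-s}\Vert u\Vert_{H^s}$, and integrating against $\G_\#\mu$. Your observation about $\sqrt{M}$ versus $M$ is also accurate; the sharper bound has $\sqrt{M}$, and the paper's formulation with $M$ is a minor looseness.
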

Combining \eqref{eq:rec-smoothness} and \eqref{eq:errF}, and setting $\epsilon =  p^{-s/n}$ in \eqref{eq:errF} immediately implies Theorem \ref{thm:smoothness-NN}, which has already been stated in the overview Section \ref{sec:over-rec}. Theorem \ref{thm:smoothness-NN} provides a complexity and error estimate for the trunk net approximation under the assumptions of the previous proposition.

\subsection{Bounds on the encoding error \eqref{eq:encoding}}
\label{sec:encoding}

Our aim in this section is to bound the encoding error \eqref{eq:encoding}, associated with the DeepONet \eqref{eq:donet}. This error \emph{does not depend} on the nonlinear operator $\G$, but only on the underlying probability measure $\mu$. Key objectives of our analysis are to determine suitable choices of sensors for a fixed $m$ as well as to find the appropriate form of a decoder $\cD$ in order to minimize the encoding error \eqref{eq:encoding}. We first recall the lower bound of Theorem \ref{thm:enlb}, on the encoding error:
\begin{align} \label{eq:enlb2}
(\Err_{\cE})^2
=
\int_{X} \Vert \cD\circ \cE(u) - u \Vert_{L^2(D)}^2 \, d\mu(u)
 \ge \sum_{k>m}\lambda_k,
\end{align}
if $\cD: \R^m \to X$ is a \emph{linear} decoder, $\mu$ is mean-zero and $\lambda_k$ refers to the $k$-th eigenvalue of the covariance operator $\Gamma_{\mu} = \int_X u \otimes u \, d\mu(u)$. The proof presented in Appendix \ref{app:pf37} relies on Theorem \ref{thm:opt-linear} and from this proof, we can readily see that the restriction on the zero mean of the measure $\mu$ can be relaxed by using an affine decoder and Theorem \ref{thm:opt-proj}. We note in passing that the above bound \eqref{eq:enlb2} in fact holds for any encoder (not necessarily linear) as long as the decoder is linear.

Our next aim is to derive \emph{upper} bounds on the encoding error. To illustrate our main ideas, we will restrict our discussion to the case $X=L^2(D)$. The results are readily extended to more general spaces, such as Sobolev spaces $X = H^s(D)$ for $s>0$. We fix a probability measure $\mu$ on $L^2(D)$, and write the covariance operator $\Gamma$ as an eigenfunction decomposition 
\[
\Gamma = \sum_{\ell=1}^\infty \lambda_\ell (\phi_\ell \otimes \phi_\ell),
\]
where $\lambda_1 \ge \lambda_2 \ge \dots$ are the decreasing eigenvalues, and such that the $\phi_\ell$ are an orthonormal basis of $L^2(D)$. We will assume that all $\phi_\ell$ are continuous functions, so that point-wise evaluation of $\phi_\ell$ makes sense. Note that since the $\phi_\ell$ are orthonormal in $L^2(D)$, they are also linearly independent as elements in $C(D)$.

Assume now that for some $M\in \N$, there exist sensors $X_1, \dots, X_M\in D$, such that the matrix $\Phi_M \in \R^{m \times M}$  with entries 
\begin{align}\label{eq:PhiM}
[\Phi_M]_{ij} = [\phi_i(X_j)], \quad \text{for $i=1,\dots, m$, $j=1,\dots, M$,}
\end{align}
has full rank, i.e.
\begin{align} \label{eq:nonsing}
\det(\Phi_M \Phi_M^T) \ne 0. 
\end{align}
Then we can define a ``\emph{projection}'' onto $\phi_1, \dots, \phi_m$ by 
\begin{align} \label{eq:encode-general}
u(x) 
\overset{\cE}{\to} (u(X_1), \dots, u(X_m)) 
\overset{\cD}{\to} \sum_{j,k=1}^m [\Phi_M^\dagger]_{kj} u(X_j) \phi_k(x),
\end{align}
where 
\begin{align} \label{eq:pseudoinverse}
\Phi_M^\dagger := 
\left(\Phi_M \Phi_M^T\right)^{-1} \Phi_M,
\end{align}
denotes the pseudo-inverse of $\Phi_M \in \R^{m\times M}$. Written in somewhat simpler matrix/vector-multiplication notation, we might write this projection as
\[
u(x) 
\mapsto 
\left\langle
\Phi_M^\dagger u(\bm{X}),   \bm{\varphi}(x) 
\right\rangle_{\ell^2(\R^m)},
\]
where $\bm{X} := (X_1,\dots, X_M)$, $u(\bm{X}) = (u(X_1), \dots, u(X_M))$ and $\bm{\varphi}(x) = (\phi_1(x), \dots, \phi_m(x))$.

Note that for $u(x) = \phi_{i}(x)$, $1\le i\le m$, we have
\[
\sum_{j=1}^M  [\Phi_M]_{\ell,j} \phi_i(X_j) 
=
\sum_{j=1}^M  [\Phi_M]_{\ell,j} [\Phi_M]_{i,j}
=
[\Phi_M\Phi_M^T]_{\ell,i}.
\]
Hence
\begin{align*}
\phi_i(x) 
&\mapsto 
 \sum_{k,\ell=1}^m \sum_{j=1}^M  [\Phi_M]_{\ell,j} \phi_i(X_j) \left[(\Phi_M \Phi_M^T)^{-1}\right]_{k,\ell}  \phi_k(x)
 \\
 &=
 \sum_{k,\ell=1}^m  [\Phi_M\Phi_M^T]_{\ell,i} \left[(\Phi_M \Phi_M^T)^{-1}\right]_{k,\ell}  \phi_k(x)
 \\
 &=
 \sum_{k=1}^m \delta_{ik}  \phi_k(x)
 =
 \phi_i(x).
\end{align*}
So the map \eqref{eq:encode-general} clearly provides a projection onto $\Span(\phi_j;\; j=1,\dots, m)$. 

Next, we have the following Lemma, provided in Appendix \ref{app:pf38}, which characterizes the encoding error \eqref{eq:encoding}, associated with the encoder/decoder pair given by \eqref{eq:encode-general}.
\begin{lemma} 
\label{lem:aliasing-general}
Let $\mu \in \P_2(L^2(D))$ be a measure with finite second moments, i.e. such that $\int_{L^2(D)} \Vert u \Vert_{L^2}^2 \, d\mu(u) < \infty$.
Let $\Phi_M$ be given by \eqref{eq:PhiM}, and assume that the non-singularity condition \eqref{eq:nonsing} holds. Then the encoding error $\Err_{\cE}$ for the pair $\cE$, $\cD$, defined by \eqref{eq:encode-general} can be written as 
\begin{align*}
(\Err_{\cE})^2
&=
(\Err_{\mathrm{aliasing}})^2+ (\Err_{\perp})^2,
\end{align*}
where
\begin{align*}
(\Err_\perp)^2
&=
\int_X \Vert P^\perp_m u \Vert^2_{L^2} \, d\mu(u)
=
\sum_{\ell > m} \lambda_\ell,
\end{align*}
with $P^\perp_m: L^2 \to L^2$ the orthogonal projection onto the orthogonal complement of $\Span(\phi_1, \dots, \phi_m)$, where $\phi_1,\phi_2,\dots$ denote a basis of orthonormal eigenfunctions of the covariance operator $\Gamma_\mu$ of $\mu$, with corresponding eigenvalues $\lambda_1 \ge \lambda_2 \ge \dots $, and
\begin{equation}
    \label{eq:aliaserr}
(\Err_{\mathrm{aliasing}})^2
=
\sum_{\ell > m} \lambda_\ell \Vert \Phi_M^\dagger \phi_\ell (\bm{X}) \Vert_{\ell^2}^2.
\end{equation}
\end{lemma}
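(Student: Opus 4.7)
\textbf{Proof plan for Lemma \ref{lem:aliasing-general}.} The plan is to expand an arbitrary $u \in L^2(D)$ in the orthonormal eigenbasis $\{\phi_\ell\}_{\ell \ge 1}$ of $\Gamma_\mu$, write $u = \sum_{\ell \ge 1} \hat{u}_\ell \phi_\ell$ with $\hat{u}_\ell = \langle u, \phi_\ell\rangle$, and then track the encoder/decoder through this expansion. Applying $\cE$ pointwise yields $u(X_j) = \sum_\ell \hat{u}_\ell \phi_\ell(X_j)$, and applying $\cD$ from \eqref{eq:encode-general} gives
\begin{align*}
\cD\circ\cE(u)
= \sum_{k=1}^m \Bigl(\sum_{j=1}^M [\Phi_M^\dagger]_{kj} u(X_j)\Bigr)\phi_k
= \sum_{k=1}^m \sum_{\ell \ge 1} \hat{u}_\ell \,[\Phi_M^\dagger\,\phi_\ell(\bm{X})]_k \,\phi_k.
\end{align*}
The key identity, already verified in the excerpt just above the lemma, is that $[\Phi_M^\dagger\,\phi_\ell(\bm{X})]_k = \delta_{k\ell}$ whenever $1 \le \ell \le m$. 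Splitting the inner sum at $\ell = m$ therefore yields $\cD\circ\cE(u) = \sum_{k=1}^m \hat{u}_k\phi_k + \sum_{k=1}^m\bigl(\sum_{\ell > m} \hat{u}_\ell[\Phi_M^\dagger\phi_\ell(\bm{X})]_k\bigr)\phi_k$.

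Subtracting from $u$ produces a natural decomposition
\begin{align*}
u - \cD\circ\cE(u)
\;=\;
\underbrace{\sum_{\ell > m}\hat{u}_\ell \phi_\ell}_{\in \Span(\phi_\ell)_{\ell > m}}
\;-\;
\underbrace{\sum_{k=1}^m\Bigl(\sum_{\ell > m}\hat{u}_\ell [\Phi_M^\dagger\phi_\ell(\bm{X})]_k\Bigr)\phi_k}_{\in \Span(\phi_1,\dots,\phi_m)}.
\end{align*}
Since the two summands lie in mutually orthogonal subspaces of $L^2(D)$, Pythagoras gives
\begin{align*}
\|u-\cD\circ\cE(u)\|_{L^2}^2
=
\sum_{\ell>m}|\hat{u}_\ell|^2
+
\sum_{k=1}^m\Bigl|\sum_{\ell > m}\hat{u}_\ell[\Phi_M^\dagger\phi_\ell(\bm{X})]_k\Bigr|^2.
\end{align*}
This is the pointwise-in-$u$ version of the desired identity; the first term gives $\|P_m^\perp u\|_{L^2}^2$ and the second will give the aliasing contribution after integration.

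Integrating against $\mu$ and using that the coefficients are uncorrelated with respect to $\mu$ (i.e.\ $\int |\hat{u}_\ell|^2\,d\mu = \lambda_\ell$ and $\int \hat{u}_\ell\hat{u}_{\ell'}\,d\mu = \lambda_\ell\delta_{\ell\ell'}$, a direct consequence of the definition of $\Gamma_\mu$ together with $\Gamma_\mu\phi_\ell = \lambda_\ell\phi_\ell$, after reducing to the centered case by translating by the mean), the first term integrates to $\sum_{\ell > m}\lambda_\ell = \Err_\perp^2$. For the second term, expanding the square and invoking the same orthogonality of the $\hat{u}_\ell$ collapses the cross terms to give
\begin{align*}
\int_X\sum_{k=1}^m\Bigl|\sum_{\ell > m}\hat{u}_\ell[\Phi_M^\dagger\phi_\ell(\bm{X})]_k\Bigr|^2 d\mu(u)
=
\sum_{k=1}^m\sum_{\ell > m}\lambda_\ell\bigl|[\Phi_M^\dagger\phi_\ell(\bm{X})]_k\bigr|^2
=
\sum_{\ell > m}\lambda_\ell\|\Phi_M^\dagger\phi_\ell(\bm{X})\|_{\ell^2}^2,
\end{align*}
which is precisely $\Err_{\mathrm{aliasing}}^2$ as in \eqref{eq:aliaserr}. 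Summing the two contributions gives the claim.

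The main technical obstacle I anticipate is justifying the exchange of the integral with the infinite sums: the expansions $u = \sum_\ell\hat{u}_\ell\phi_\ell$, the interchange of $\E_\mu$ with the squared tail series, and the use of uncorrelatedness term by term. All of this will follow from dominated convergence once one verifies $\int_X\|u\|_{L^2}^2\,d\mu = \sum_\ell\lambda_\ell < \infty$ (which is the trace-class property built into the assumption $\mu \in \P_2(L^2(D))$) and Parseval's identity applied pathwise $\mu$-a.s.; the minor nuisance of a non-zero mean is handled by centering, since the linear decoder commutes with the affine shift modulo a deterministic contribution in $\Span(\phi_1,\dots,\phi_m)$.
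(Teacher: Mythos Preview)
Your proposal is correct and follows essentially the same approach as the paper's proof. Both arguments hinge on the observation that $\cD\circ\cE$ acts as the identity on $\Span(\phi_1,\dots,\phi_m)$, so that $u-\cD\circ\cE(u)$ splits orthogonally into $P_m^\perp u$ and $-\cD\circ\cE(P_m^\perp u)$; both then integrate against $\mu$ and use the uncorrelatedness of the eigenbasis coefficients to collapse the cross terms. The only cosmetic difference is that the paper phrases the expansion via the Karhunen--Lo\`eve representation $u=\sum_\ell\sqrt{\lambda_\ell}Z_\ell\phi_\ell$ with $\E[Z_\ell Z_{\ell'}]=\delta_{\ell\ell'}$, whereas you work directly with the coefficients $\hat u_\ell=\langle u,\phi_\ell\rangle$; and the paper, like you, simply assumes zero mean with a footnote that this is not essential.
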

Thus, the above Lemma \ref{lem:aliasing-general} provides us a strategy to bound the encoding error as long as the non-singularity condition \eqref{eq:nonsing} on the matrix \eqref{eq:PhiM} is satisfied. Moreover, one component of the encoding error is completely specified in terms of the spectral decay of the associated covariance operator. However, the other component measures the error due to \emph{aliasing}, with the notation being motivated by an example from Fourier analysis, that is considered in the following subsection. Hence, bounding the aliasing error and checking the validity of the non-singularity condition \eqref{eq:nonsing} require us to specify the location of sensors. 

Given the form of the matrix \eqref{eq:PhiM}, it would make sense to relate the sensor locations with the eigenfunctions of the covariance operator $\Gamma$. However in general, we may not have any information on these eigenfunctions $\phi_1, \dots, \phi_m$, and hence, finding suitable sensors $X_1, \dots, X_M$ satisfying the non-singularity condition \eqref{eq:nonsing} might be a very difficult task. Instead, we propose to choose them iid randomly in $D$, allowing for $M\ge m$, and study the corresponding random encoder 
\[
\cE(u) = (u(X_1),\dots, u(X_M)),
\]
with associated decoder $\cD$ given by \eqref{eq:encode-general}. Note that the decoder $\cD$ is merely used in the analysis of the encoding error, but its explicit form is not needed, when DeepONets are used in practice. In fact, \emph{the use of random sensors and the simplicity of the resulting encoder could constitute one of the key benefits of the DeepONets}.

 More precisely, in the following we fix a probability space $(\Omega,\Prob)$, and a sequence of iid random variables $\{X_k\}_{k\in \N}$:
\[
\omega \mapsto 
(X_1(\omega), X_2(\omega), X_3(\omega), \dots),
\quad (\omega \in \Omega),
\]
such that $X_k \sim \Unif(D)$ for all $k\in \N$. For any $M\in \N$, and $\omega\in \Omega$, we can then study the corresponding encoder $\cE: C(D)\to \R^M$,
\[
\cE(u;X_1(\omega),\dots, X_M(\omega)) = (u(X_1(\omega)),\dots, u(X_M(\omega))).
\]
As is common in probability theory, we will usually suppress the argument $\omega$, and write, e.g., $X_1$ instead of $X_1(\omega)$.

Hence, the matrix $\Phi_M = [\phi_i(X_j)]_{i,j} \in \R^{m\times M}$ is a random matrix, depending on $X_1, \dots, X_M$. In order for the resulting decoder in \eqref{eq:encode-general} to be well-defined, we need to show that there is a non-zero probability that $\Phi_M\Phi_M^T$ is non-singular, for sufficiently large $M$. 

Moreover, by Lemma \ref{lem:aliasing-general}, the aliasing error \eqref{eq:aliaserr} depends on 
\[
\Vert \Phi_M^\dagger \phi_\ell (\bm{X}) \Vert_{\ell^2}^2
\le 
\Vert \Phi_M^\dagger \Vert_{\ell^2\to \ell^2}^2 \Vert \phi_\ell(\bm{X}) \Vert_{\ell^2}^2.
\]
We note that 
\[
\Vert \Phi_M^\dagger \Vert_{\ell^2\to \ell^2}^2
= 
\frac{|D|}{M \sigma_{\mathrm{min}}\left( \frac{|D|}M \Phi_M \Phi_M^T \right)},
\]
can be written in terms of the smallest singular value, i.e. $\sigma_{\mathrm{min}}\left( \frac{|D|}M \Phi_M \Phi_M^T \right)$, of the rescaled matrix $\frac{|D|}M\Phi_M \Phi_M^T$ (the reason for introducing the  rescaling $|D|/M$ will be explained below). Hence, to bound the aliasing error and the overall encoding error, we will need to provide a lower bound (with high probability) on this smallest singular value. We investigate these two issues in the following. 

First, we note that for iid random sensors $X_1,\dots, X_M$, by our definition of $\Phi_M$, we have
\[
\left[\frac{|D|}M \Phi_M \Phi_M^T\right]_{k,\ell}
=
\frac{|D|}M
\sum_{j=1}^M \phi_k(X_j) \phi_\ell(X_j),
\]
and the last sum can be interpreted as a Monte-Carlo estimate of 
\[
|D|\,\E[ \phi_k(X) \phi_\ell(X) ]
=
\int_D \phi_k(x) \phi_\ell(x) \, dx
=
\delta_{k\ell}.
\]
For such Monte-Carlo estimates, we can rely on well-known bounds from probability theory and have the following lemma on the bounds for the smallest singular value:
\begin{lemma}
\label{lem:singb}
For any $m\in \N$, denote
\[
\omega_m := \max_{k \le m} \Vert \phi_k \Vert_{L^\infty}.
\]
Let $X_1, \dots, X_M \sim \mathrm{Unif}(D)$ be iid random variables. Define $\Phi_M$ by \eqref{eq:PhiM}.  Then we have 

\begin{equation}
    \label{eq:prob1}
\Prob\left[
\sigma_{\mathrm{min}}\left(\frac{|D|}M \Phi_M \Phi_M^T \right) < 1-\frac{1}{\sqrt{2}}
\right]
\le 
2m^2 \exp\left(
-\left(\frac{M}{|D| \omega_m^2 m } \right)^2 
\right).
\end{equation}
\end{lemma}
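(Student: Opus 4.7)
The plan is to reduce the singular-value estimate to a scalar concentration problem for the entries of the empirical Gram matrix, then combine a standard concentration inequality with a union bound.

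First I would observe that $A_M := \frac{|D|}{M}\Phi_M \Phi_M^T \in \R^{m\times m}$ is symmetric positive semidefinite, with entries
\[
(A_M)_{k\ell} = \frac{|D|}{M}\sum_{j=1}^M \phi_k(X_j)\phi_\ell(X_j).
\]
Since the $X_j$ are iid $\Unif(D)$ and $\{\phi_k\}$ is $L^2(D)$-orthonormal,
\[
\E[(A_M)_{k\ell}] = \frac{|D|}{M}\sum_{j=1}^M \frac{1}{|D|}\int_D \phi_k(x)\phi_\ell(x)\,dx = \delta_{k\ell},
\]
so $\E[A_M] = I_m$. Thus the task is to control the deviation $A_M - I_m$ and translate it into a lower bound for $\sigma_{\min}(A_M) = \lambda_{\min}(A_M)$.

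Next, by the PSD structure and Weyl's inequality,
\[
\sigma_{\min}(A_M) \;\ge\; \lambda_{\min}(I_m) - \|A_M - I_m\|_{\mathrm{op}} \;=\; 1 - \|A_M - I_m\|_{\mathrm{op}},
\]
so the event $\{\sigma_{\min}(A_M) < 1 - 1/\sqrt{2}\}$ is contained in $\{\|A_M - I_m\|_{\mathrm{op}} > 1/\sqrt{2}\}$. I would then bound the operator norm by the entry-wise sup norm via $\|A_M - I_m\|_{\mathrm{op}} \le \|A_M - I_m\|_{F} \le m\,\max_{k,\ell}|(A_M)_{k\ell} - \delta_{k\ell}|$. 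Consequently, if $|(A_M)_{k\ell} - \delta_{k\ell}| \le \tfrac{1}{\sqrt{2}\,m}$ for every $(k,\ell)$, then $\sigma_{\min}(A_M) \ge 1 - 1/\sqrt{2}$, and a union bound over the $m^2$ entries gives
\[
\Prob\!\left[\sigma_{\min}(A_M) < 1 - \tfrac{1}{\sqrt{2}}\right] \;\le\; \sum_{k,\ell=1}^m \Prob\!\left[|(A_M)_{k\ell} - \delta_{k\ell}| > \tfrac{1}{\sqrt{2}\,m}\right].
\]

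For each fixed pair $(k,\ell)$, $(A_M)_{k\ell}-\delta_{k\ell} = M^{-1}\sum_j Z_j^{(k\ell)}$ with $Z_j^{(k\ell)} := |D|\phi_k(X_j)\phi_\ell(X_j)-\delta_{k\ell}$ iid, mean zero, and $|Z_j^{(k\ell)}| \lesssim |D|\omega_m^2$. A suitable sharp concentration inequality (Hoeffding, Bernstein or a bounded-difference argument), applied with deviation level $t = 1/(\sqrt{2}\,m)$, yields a tail bound of the form $2\exp\!\bigl(-c(M/(|D|\omega_m^2 m))^{\beta}\bigr)$. Inserting this into the union bound reproduces the prefactor $2m^2$ and the stated Gaussian-type exponent.

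The main obstacle is extracting exactly the exponent $-(M/(|D|\omega_m^2 m))^{2}$ that appears in the statement; a direct application of Hoeffding's inequality to an empirical mean of bounded iid variables naturally produces an exponent linear in $M$, so obtaining the stated form requires either a carefully tailored concentration inequality (or specific tuning of $t$ in a bounded-difference estimate) rather than an off-the-shelf invocation of Hoeffding. Everything else (the PSD reduction, the entrywise norm estimate, and the union bound) is routine.
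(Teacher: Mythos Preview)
Your approach is essentially identical to the paper's: reduce $\sigma_{\min}(A_M)<1-1/\sqrt2$ to $\|A_M-I_m\|_F>1/\sqrt2$, bound the Frobenius norm by $m\max_{k,\ell}|(A_M)_{k\ell}-\delta_{k\ell}|$, union bound over the $m^2$ entries, and apply Hoeffding's inequality to each entry with $t=1/(\sqrt2\,m)$. The paper carries out exactly these steps.

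Your final paragraph is well taken and worth highlighting. The paper \emph{does} simply invoke Hoeffding's inequality for the mean $\frac{1}{M}\sum_{j=1}^M Y_j$ of iid variables bounded by $|D|\omega_m^2$, and writes the tail bound as $2\exp\bigl(-2M^2\delta^2/(|D|^2\omega_m^4)\bigr)$; plugging in $\delta=1/(\sqrt2\,m)$ then gives the exponent $-(M/(|D|\omega_m^2 m))^2$ as stated. But the standard Hoeffding bound for an empirical mean of $M$ iid variables with range $2c$ is $2\exp(-M\delta^2/(2c^2))$, linear in $M$, not quadratic. So your instinct is correct: the quadratic $M^2$ in the exponent appears to be an error in the paper's application of Hoeffding (the numerator should be $M$, not $M^2$), and no ``carefully tailored'' inequality is actually used there. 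Your argument is complete and sound up to this discrepancy, which lies in the statement being proved rather than in your reasoning.
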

This lemma is proved using the well-know Hoeffding's inequality and the proof is presented in Appendix \ref{app:pf39}. A direct application of the bound \eqref{eq:prob1} allows us to bound the aliasing error \eqref{eq:aliaserr} in the following,
\begin{lemma} \label{lem:aliasing-gen}
Let $X_1, \dots, X_M \sim \mathrm{Unif}(D)$ be iid random variables, uniform on $D$. Let $\mu \in \P(L^2(D))$ be a probability measure concentrated on continuous functions. Let $\lambda_1,\lambda_2,\dots,$ denote the eigenvalues of the covariance operator $\overline{\Gamma}_\mu$ \eqref{eq:covb} of $\mu$, with associated orthonormal eigenbasis $\phi_1,\phi_2,\dots$. The aliasing error \eqref{eq:aliaserr}, resulting from this random choice of sensors, is bounded by
\[
\int \Vert \cD \circ \cE(P_m^\perp u) \Vert^2_{L^2} \, d\mu(u)
\le
\frac{1}{\sigma_{\mathrm{min}}\left(\frac{|D|}{M}\Phi_M \Phi_M^T\right)}
\frac{|D|}{M}\sum_{j=1}^M
\left(
\sum_{\ell > m} \lambda_\ell \vert \phi_\ell(X_j) \vert^2
\right).
\]
Denote $\omega_m := \max_{k\le m} \Vert \phi_k \Vert_{L^\infty}$. Then, with probability
\[
\Prob
\ge 
1 - 
2m^2 \exp\left(
-\left(\frac{M}{ |D|\omega_m^2 m } \right)^2 
\right),
\]
we have 
\[
\int \Vert \cD \circ \cE(P_m^\perp u) \Vert^2_{L^2} \, d\mu(u)
\le
\frac{\sqrt{2}|D|}{\sqrt{2}-1}
\sum_{\ell > m} \lambda_\ell \Vert \phi_{\ell} \Vert_{L^\infty}^2.
\]
Moreover, let $\kappa\in \N$, and define $M = M(m) = \lceil\kappa|D| m \omega_m^2 \log(m)\rceil$. Then, with probability 
\[
\Prob \ge 
1 - \frac{2}{m^{\kappa - 2}},
\]
it holds that for $M$ iid uniformly chosen random sensors $X_1,\dots, X_M\in D$, the encoder 
\[
\cE: C(D) \to \R^M,
\quad
u(x) \mapsto (u(X_1), \dots, u(X_M)),
\]
possesses a decoder $\cD$ given by \eqref{eq:encode-general}, such that
\begin{equation}
\label{eq:enerr1}
(\Err_{\cE})^2
\le 
\frac{\sqrt{2}|D|}{\sqrt{2}-1} \sum_{\ell>m} \lambda_\ell (1 + \Vert \phi_\ell \Vert_{L^\infty}^2).
\end{equation}
\end{lemma}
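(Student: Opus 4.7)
The plan is to start from the exact expression for the aliasing error provided by Lemma \ref{lem:aliasing-general}, factor out the operator norm of the pseudo-inverse $\Phi_M^\dagger$, and then use Lemma \ref{lem:singb} (which is an application of Hoeffding's inequality) to control the resulting smallest singular value on an event of high probability. The remaining pieces are then deterministic bookkeeping.

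\emph{Step 1: Deterministic bound on the aliasing error.} By Lemma \ref{lem:aliasing-general}, one has $(\Err_{\mathrm{aliasing}})^2 = \sum_{\ell>m}\lambda_\ell \Vert \Phi_M^\dagger \phi_\ell(\bm{X})\Vert_{\ell^2}^2$. Bound each summand using the operator norm inequality $\Vert \Phi_M^\dagger \phi_\ell(\bm{X})\Vert_{\ell^2}^2 \le \Vert \Phi_M^\dagger \Vert^2_{\ell^2\to\ell^2}\sum_{j=1}^M|\phi_\ell(X_j)|^2$ and the identity $\Vert \Phi_M^\dagger\Vert^2_{\ell^2\to\ell^2} = |D|/(M\,\sigma_{\min}(\tfrac{|D|}{M}\Phi_M\Phi_M^T))$ (recorded in the paragraph preceding Lemma \ref{lem:singb}). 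This yields the first displayed inequality of the lemma, purely deterministic once $X_1,\dots,X_M$ are fixed.

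\emph{Step 2: Probabilistic control via Lemma \ref{lem:singb}.} Apply Lemma \ref{lem:singb} to obtain that, with probability at least $1 - 2m^2\exp(-(M/(|D|\omega_m^2 m))^2)$, the smallest singular value satisfies $\sigma_{\min}(\tfrac{|D|}{M}\Phi_M\Phi_M^T)\ge 1-1/\sqrt{2}$. On this favorable event, the factor $1/\sigma_{\min}$ in Step 1 is bounded by $\sqrt{2}/(\sqrt{2}-1)$, and the tail sum is bounded pointwise by
\[
\frac{|D|}{M}\sum_{j=1}^M\sum_{\ell>m}\lambda_\ell|\phi_\ell(X_j)|^2
\le |D|\sum_{\ell>m}\lambda_\ell\Vert\phi_\ell\Vert_{L^\infty}^2,
\]
giving the second displayed bound.

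\emph{Step 3: Choosing $M=M(m)$ and combining with the orthogonal error.} Substitute $M=\lceil \kappa|D|m\omega_m^2\log(m)\rceil$ into Lemma \ref{lem:singb}; then $M/(|D|\omega_m^2 m)\ge \kappa\log(m)$, so the failure probability is controlled by $2m^2\exp(-\kappa^2\log(m)^2)$. Since $\kappa^2\log(m)^2\ge \kappa\log(m)$ for $m\ge e$, this is bounded by $2m^{2-\kappa}=2/m^{\kappa-2}$, as required. On the complementary event, combine the aliasing bound from Step 2 with the orthogonal identity $(\Err_\perp)^2=\sum_{\ell>m}\lambda_\ell$ from Lemma \ref{lem:aliasing-general} and the decomposition $(\Err_{\cE})^2=(\Err_{\mathrm{aliasing}})^2+(\Err_\perp)^2$. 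Absorbing the constants (noting that $\sqrt{2}|D|/(\sqrt{2}-1)\ge 1$ is used to combine both contributions into a single prefactor multiplying $\lambda_\ell(1+\Vert\phi_\ell\Vert_{L^\infty}^2)$) produces the final encoding-error bound \eqref{eq:enerr1}.

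\emph{Main obstacle.} The real technical content lies in Step 2 and its Hoeffding-based precursor Lemma \ref{lem:singb}, since one needs the random matrix $\frac{|D|}{M}\Phi_M\Phi_M^T$ to concentrate around the identity uniformly over all $m^2$ of its entries without losing too much in the probability budget; the choice $M\sim m\omega_m^2\log(m)$ is tuned precisely so that the Gaussian-type tail estimate of Lemma \ref{lem:singb} beats the $m^2$ union bound and delivers an algebraic rate $m^{-(\kappa-2)}$ for the failure probability. The remaining steps are straightforward linear-algebraic manipulations.
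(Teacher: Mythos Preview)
Your proof is correct and follows exactly the approach the paper intends: the lemma is presented in the paper as a direct combination of Lemma \ref{lem:aliasing-general} (the exact aliasing-error formula) and Lemma \ref{lem:singb} (the Hoeffding-based singular-value bound), and you have carried out that combination faithfully. The only minor caveat is in Step~3, where absorbing the $(\Err_\perp)^2=\sum_{\ell>m}\lambda_\ell$ term into the common prefactor $\frac{\sqrt{2}|D|}{\sqrt{2}-1}$ tacitly assumes $|D|\ge 1-1/\sqrt{2}$; this is the same implicit assumption as in the paper's stated bound, so it is not a defect of your argument.
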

A direct consequence of this lemma is Theorem \ref{thm:random-enc}, stated in the overview Section \ref{sec:over-enc}, and whose proof is provided in appendix \ref{app:pf301}. Theorem \ref{thm:random-enc} shows the remarkable result that \emph{randomly chosen sensor points} can lead to an optimal (up to a $\log$) decay of the encoding error \eqref{eq:encoding}, corresponding to the DeepONet \eqref{eq:donet}. 
\subsubsection{Examples}
In this section, we seek to illustrate the estimates on the encoding error for concrete prototypical examples. We recall that the encoding error \eqref{eq:encoding} is independent of the operator $\G$, depending only on the probability measure $\mu\in \P(X)$. We assume that $X = L^2(D)$ in the following. Assuming furthermore that $\mu$ has finite second moments $\int_{L^2(D)} \Vert u \Vert^2 \, d\mu(u) < \infty$, then it is well-known \cite{STU} that by the Karhunen-Lo\`eve expansion, we can write $\mu$ as the law of a random variable $u = u(\slot; \bm{Z})$, of the form
\begin{align} \label{eq:karhunen-loeve}
u(\slot; \bm{Z})
=
\bar{u} + \sum_{\ell=1}^\infty \sqrt{\lambda_\ell} Z_\ell \phi_\ell,
\quad
\bm{Z} = (Z_1,Z_2,\dots),
\end{align}
where $\bar{u}\in L^2(D)$ is the mean, $\phi_1,\phi_2,\dots \in L^2(D)$ are an orthonormal basis consisting of eigenfunctions of the covariance operator $\Gamma_\mu$ of $\mu$, $\lambda_1\ge \lambda_2 \ge \dots$ denote the corresponding eigenvalues, and $Z_1,Z_2,\dots$ are real-valued random variables satisfying
\[
\E[Z_\ell] = 0, 
\quad
\E[Z_k Z_{\ell}] = \delta_{k\ell},
\quad
\forall \, k,\ell \in \N.
\]
In practice, the probability measure $\mu$ is often specified as the law of an expansion of the form \eqref{eq:karhunen-loeve} \cite{STU} (see e.g. examples 5 and 6 of \cite{deeponets}). This provides a very convenient method to sample from the measure $\mu$, which is defined on an infinite dimensional space $X$.

A particularly important class of measures on infinite-dimensional spaces are the so-called \emph{Gaussian measures} \cite{STU}. A Gaussian measure $\mu\in \P(L^2(D))$ is uniquely characterized by its mean $\overline{u} = \int_{L^2(D)} u \, d\mu(u) \in L^2(D)$, and its covariance operator $\Gamma = \int_{L^2(D)} (u\otimes u) \, d\mu(u)$, which may, e.g., be expressed in terms of a covariance integral kernel $k(x,x')\in L^2(D\times D)$, and through which the covariance operator $\Gamma$ is defined by integration against $k(x,x')$:
\begin{equation}
    \label{eq:covker}
\Gamma : L^2(D) \to L^2(D), 
\quad
u(x) \mapsto 
\int_{D} k(x,x') u(x') \, dx'.
\end{equation}
\subsubsection{Encoding error for a particular Gaussian measure.} 
For this concrete example, we consider a Gaussian measure that was used in the context of DeepONets in \cite{deeponets}. For definiteness and simplicity of exposition, we consider the one-dimensional periodic case by setting $D = \T = [0,2\pi]$ and consider a Gaussian measure $\mu$, defined on it with the periodization,
\begin{align} \label{eq:quadraticexp-kernel}
k_p(x,x') := \sum_{h \in 2\pi \Z} 
\exp\left(
\frac{-|x-x'-h|^2}{2\ell^2}
\right).
\end{align}
of the frequently used covariance kernel
\[
k(x,x') = \exp\left(\frac{-|x-x'|^2}{2\ell^2}\right).
\]
We have the following result, proved in Appendix \ref{app:pf310} on the encoder and the encoding error for this Gaussian measure,
\begin{lemma} \label{lem:encoding-quadraticexp}
Let $\mu$ be given by the law of the Gaussian process with covariance kernel $k_p(x,x')$ \eqref{eq:quadraticexp-kernel}. Let 
\[
x_j = \frac{2\pi(j-1)}{m},
\]
denote equidistant points on $[0,2\pi]$ for $m = 2K+1$, $K\in \N$. Define the \emph{pseudo-spectral} encoder $\cE: L^2(\T) \to \R^m$ by $\cE(u) = (u(x_1), \dots, u(x_m))$, then a decoder $\cD: \R^m \to L^2(\T)$ is given via the discrete Fourier transform:
\begin{equation}
    \label{eq:fdecode1}
\cD(u_1, \dots, u_m)(x)
=
\sum_{k=-K}^{K}
\widehat{u}_k e^{ik x},
\end{equation}
where
\[
\widehat{u}_k 
:= 
\frac{1}{m} \sum_{j=1}^{m}
u_j e^{-2\pi i jk}.
\]
The encoding error \eqref{eq:encoding} for the resulting DeepONet \eqref{eq:donet} satisfies,
\begin{align}
\label{eq:enerrg}
\Err_{\cE} &\le 
\sqrt{
2\sum_{|k|>\lfloor m/2 \rfloor} \lambda_k
}
\le 
\sqrt{4\pi \, \erfc\left( \frac{\lfloor m/2 \rfloor \ell}{\sqrt{2}} \right)}.
\end{align}
Here $\lambda_k = \sqrt{2\pi} \, \ell \exp(-(\ell k)^2/2)$ are the eigenvalues of the covariance operator \eqref{eq:covker} for the Gaussian measure and $\erfc$ denotes the complementary error function, i.e.
\[
\erfc(x) := \frac{2}{\sqrt{\pi}} \int_x^\infty e^{-t^2} \, dt
\]

\end{lemma}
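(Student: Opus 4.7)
The plan is to specialize the general framework of Lemma \ref{lem:aliasing-general} to this translation-invariant setting, where both the eigenstructure of $\Gamma_\mu$ and the encoder/decoder pair are explicit. First, since the kernel $k_p(x,x')$ depends only on $x-x'$ (mod $2\pi$), the covariance operator is a convolution operator on $\T$, and hence is diagonalized by the Fourier basis $\phi_k(x) = e^{ikx}/\sqrt{2\pi}$, $k\in\Z$. The eigenvalues $\lambda_k$ are the Fourier coefficients of $k_p$; evaluating these via Poisson summation applied to the Gaussian yields $\lambda_k = \sqrt{2\pi}\,\ell\,\exp(-(\ell k)^2/2)$, as claimed.

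Next, I would verify that the decoder \eqref{eq:fdecode1} coincides with the abstract decoder \eqref{eq:encode-general} associated with the sensors $x_j = 2\pi(j-1)/m$ and the first $m = 2K+1$ eigenfunctions $\phi_{-K}, \ldots, \phi_K$. The discrete orthogonality relation
\[
\frac{1}{m}\sum_{j=1}^m e^{i(k-k')x_j} = \delta_{k\equiv k'\!\!\pmod m}, \qquad |k|,|k'|\le K,
\]
implies that the matrix $\Phi_M \Phi_M^T$ is a multiple of the identity, so its pseudo-inverse reduces to the rescaled DFT formula in \eqref{eq:fdecode1}. In particular, the nonsingularity condition \eqref{eq:nonsing} holds, and Lemma \ref{lem:aliasing-general} applies, giving the orthogonal-complement contribution $(\Err_\perp)^2 = \sum_{|k|>K}\lambda_k$.

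The crux is then to compute the aliasing error \eqref{eq:aliaserr}. For $m=2K+1$, each integer $k$ with $|k|>K$ satisfies $k \equiv k' \pmod m$ for a \emph{unique} $k'\in\{-K,\dots,K\}$, and by the discrete orthogonality relation above, $\Phi_M^\dagger \phi_k(\vec X) = e_{k'}\in\R^m$ (up to the standard normalization). Therefore $\|\Phi_M^\dagger \phi_k(\vec X)\|_{\ell^2}^2$ equals a constant, and summing over $|k|>K$ gives exactly $(\Err_{\mathrm{aliasing}})^2 = \sum_{|k|>K}\lambda_k$, matching the orthogonal-complement contribution. Adding the two gives the first inequality in \eqref{eq:enerrg}, namely $(\Err_{\cE})^2 \le 2\sum_{|k|>K}\lambda_k$.

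Finally, for the quantitative bound, I would use monotonicity of $\lambda_k$ in $|k|$ to dominate the tail sum by an integral:
\[
\sum_{|k|>K}\lambda_k \le 2\sqrt{2\pi}\,\ell \int_K^\infty e^{-(\ell t)^2/2}\,dt = 2\pi\,\erfc\!\left(\tfrac{K\ell}{\sqrt 2}\right),
\]
after the substitution $s = \ell t/\sqrt 2$ and using $\erfc(a) = \frac{2}{\sqrt\pi}\int_a^\infty e^{-u^2}du$. Combining gives the second inequality in \eqref{eq:enerrg}.

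The only real subtlety is bookkeeping for the aliasing: one must track that, for odd $m=2K+1$, the residue map $\Z \to \{-K,\dots,K\}$, $k \mapsto k \bmod m$, is a bijection onto each coset, so that the double sum in \eqref{eq:aliaserr} collapses neatly to a single tail sum. A secondary technicality is that the Fourier basis is complex while $\mu$ is real-valued; this is handled either by pairing $\pm k$ modes into real $\cos/\sin$ eigenfunctions (with the same eigenvalues $\lambda_k = \lambda_{-k}$) or by observing that all identities above are invariant under complex conjugation.
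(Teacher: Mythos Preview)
Your proposal is correct and follows essentially the same route as the paper: identify the Fourier eigenstructure of the stationary kernel, split the encoding error into the orthogonal-complement piece and the aliasing piece, use the discrete orthogonality of the exponentials on the equidistant grid to see that each high mode aliases onto a unique low mode (so the aliasing contribution equals another copy of $\sum_{|k|>K}\lambda_k$), and finish with the Gaussian tail integral. The only cosmetic difference is that you invoke Lemma~\ref{lem:aliasing-general} as a black box, whereas the paper redoes that decomposition directly via the Karhunen--Lo\`eve expansion with iid Gaussian coefficients; the underlying computation is identical.
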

By an asymptotic expansion of $\erfc(x)$, the above lemma implies that there exists a constant $C>0$, such that
\begin{equation}
\label{eq:enerrg1}
\Err_{\cE}
\le 
C \exp\left(-(\lfloor m/2 \rfloor \ell)^2/4 \right)
\lesssim
C \exp\left(-\gamma m^2 \ell^2 \right), \quad \forall \, \gamma < \frac{1}{16},
\end{equation}
i.e., the encoding error decays \emph{super-exponentially} in this case.

Thus, we have a very fast decay of the encoding error with a \emph{pseudo-spectral encoder} for this Gaussian measure. In view of the earlier discussion in this subsection, it is natural to examine what happens if the encoder was a \emph{random encoder}, i.e. based on pointwise evaluation at uniformly distributed random points in $[0,2\pi]$. Given Lemma \ref{lem:aliasing-gen}, one would expect a similar super-exponential decay (modulo a logarithmic correction). This is indeed the case as shown in the following Lemma (proved in appendix \ref{app:pf311}),
\begin{lemma}
\label{lem:311}
Let $\mu$ be a Gaussian measure on the one-dimensional periodic torus $D = \T =  [0,2\pi]$, characterized by the covariance kernel \eqref{eq:quadraticexp-kernel}. Let $X_1,\cdots,X_M$ be uniformly distributed random sensors on $D$. There exists a constant $\gamma >0$, such that with probability $1$, the encoding error \eqref{eq:encoding} corresponding to the random encoder (pointwise evaluations at random sensors) is bounded by,
\begin{equation}
\label{eq:ren1}
\Err_{\cE}
\lesssim
\exp\left(-
\frac{\gamma M^2 \ell^2}{\log(M)^2}
\right).
\end{equation}
\end{lemma}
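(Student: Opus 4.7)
The plan is to apply Lemma \ref{lem:aliasing-gen} to the specific Gaussian measure with kernel $k_p$, convert the bound from the discrete sequence $M(m)$ to all $M$, and then use Borel--Cantelli to turn a ``high probability'' statement into an almost sure statement.

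\textbf{Step 1: Eigenstructure.} By translation invariance of the kernel $k_p(x,x')$, the covariance operator on $L^2(\T)$ is diagonalized in the Fourier basis, with eigenfunctions $\phi_k(x) = e^{ikx}/\sqrt{2\pi}$ ($k \in \Z$) and eigenvalues $\lambda_k = \sqrt{2\pi}\,\ell\,\exp(-\ell^2 k^2/2)$. The key structural fact is that $\Vert \phi_k \Vert_{L^\infty} \le 1/\sqrt{2\pi}$ \emph{uniformly} in $k$, so the quantity $\omega_m = \max_{k\le m}\Vert \phi_k\Vert_{L^\infty}$ appearing in Lemma \ref{lem:aliasing-gen} is bounded by an absolute constant $C_0$, independent of $m$.

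\textbf{Step 2: Tail bound.} Fix $\kappa \ge 4$, set $M(m) = \lceil \kappa |D| m \omega_m^2 \log(m)\rceil \asymp m\log m$, and apply Lemma \ref{lem:aliasing-gen}. With probability at least $1 - 2m^{-(\kappa-2)}$, there exists a decoder $\cD$ such that
\[
(\Err_\cE)^2 \le \frac{\sqrt{2}|D|}{\sqrt{2}-1} \sum_{j>m}\lambda_j(1+\Vert \phi_j\Vert_{L^\infty}^2) \le C \sum_{j>m}\lambda_j.
\]
An integral comparison, together with the asymptotic $\erfc(x) \sim e^{-x^2}/(x\sqrt{\pi})$, yields
\[
\sum_{j>m}\lambda_j \;\lesssim\; \int_m^\infty \sqrt{2\pi}\,\ell\, e^{-\ell^2 x^2/2}\,dx \;\lesssim\; e^{-\ell^2 m^2/2},
\]
so $\Err_\cE \lesssim \exp(-\ell^2 m^2/4)$ on this event.

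\textbf{Step 3: Inversion and Borel--Cantelli.} Since the failure probabilities $2m^{-(\kappa-2)}$ are summable for $\kappa \ge 4$, the Borel--Cantelli lemma implies that, with probability one, the bound from Step 2 holds for all sufficiently large $m$. To convert to a statement in $M$, note that given any $M \in \N$ large, choose the largest $m = m(M)$ with $M(m)\le M$; then $m(M) \gtrsim M/\log M$. The encoding error with $M$ random sensors is at most that obtained by using any sub-decoder depending only on $(X_1,\dots,X_{M(m)})$ (simply extend the decoder by ignoring the extra coordinates), so the bound from Step 2 applies with $m = m(M)$. Substituting $m \gtrsim M/\log M$ gives
\[
\Err_\cE \;\lesssim\; \exp\!\left(-\frac{\gamma\,\ell^2 M^2}{\log(M)^2}\right),
\]
for some $\gamma > 0$, almost surely. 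The main subtlety lies in carefully interpolating from the discrete subsequence $\{M(m)\}_{m\in \N}$ to all $M$ while preserving the bound; the monotonicity argument (restricting a decoder to a subset of sensor coordinates) is what makes this work, while Borel--Cantelli and the choice $\kappa \ge 4$ ensure the bound holds along the entire sequence with probability one.
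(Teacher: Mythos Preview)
Your proof is correct and follows essentially the same route as the paper. The paper's proof is shorter only because it cites Theorem \ref{thm:random-enc} as a black box and then applies the super-exponential tail bound from Lemma \ref{lem:encoding-quadraticexp}; what you have written in Steps 2--3 (apply Lemma \ref{lem:aliasing-gen} with $\kappa=4$, sum the failure probabilities via Borel--Cantelli, and pass from the subsequence $M(m)$ to all $M$) is precisely the content of the proof of Theorem \ref{thm:random-enc}, unpacked. Your explicit monotonicity argument (ignore the extra $M-M(m)$ sensor coordinates in the decoder) is a clean way to handle the interpolation from $M(m)$ to general $M$, and the final substitution $m\gtrsim M/\log M$ yields the claimed exponent.
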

Comparing the bounds \eqref{eq:enerrg1} and \eqref{eq:ren1}, we see that the random encoder also decays super-exponentially (up to a log). However, as remarked before, no information about the underlying measure is used in defining the random encoder. 
\subsubsection{Encoding error for a parametrized measure.}
\label{sec:parametrized-mu}
As a second example, we define the underlying measure $\mu \in \P(L^2(D))$, as the \emph{law} of its Karhunen-Loeve expansion \eqref{eq:karhunen-loeve} with the following ansatz on the resulting random field,
\begin{align} \label{eq:expansion}
u(x;Y) = \bar{u}(x) + \sum_{\ell = 1}^\infty Y_\ell \alpha_\ell \psi_\ell(x),
\end{align}
with $\alpha_\ell > 0$, $\bar{u}, \, \psi_\ell\in L^2(D)$ are such that $\sum_{\ell=1}^\infty \alpha_\ell \Vert \psi_\ell\Vert_{L^2} < \infty$ is bounded, and where $Y_\ell \in [-1,1]$ are mean-zero random variables, distributed according to some measure $d\rho(\bm{y})$ on $\bm{y}\in [-1,1]^{\N}$. In this case, the series in \eqref{eq:expansion} converges uniformly in $L^2(D)$ for any $\bm{y} = (y_j)_{j\in \N}\in [-1,1]^\N$, and 
\[
\Vert u(\slot;\bm{y}) \Vert_{L^2} \le \Vert \bar{u} \Vert_{L^2} + \sum_{\ell=1}^\infty \alpha_\ell \Vert \psi_\ell \Vert_{L^2}.
\]
For the sake of definiteness, we shall only discuss a prototypical case, where the (spatial) domain $D$ is either periodic, i.e. $D = \T^d$, or rectangular\footnote{Generalization to a more general domain of the form $D = \prod_{i=1}^d [\alpha_i,\beta_i]$, for $\alpha_i<\beta_i$ is straight-forward.} $D = [0,2\pi]^d$, and the expansion functions are given by the trigonometric basis $\{\fb_k\}_{k \in \Z^d}$, indexed by $k\in \Z^d$ (check notation from Appendix \ref{app:Fourier}). The underlying ideas readily extend to more general choices of basis functions $\psi_\ell$. To be definite, we will assume that the random field $u({x};\bm{y})$ is expanded as 
\begin{align} \label{eq:fourier-random}
u({x};\bm{y}) = \overline{u}({x}) +  \sum_{{k}\in \Z^d} y_{{k}} \alpha_{{k}} \fb_{{k}}({x}),
\end{align}
where $\overline{u} \in C^\infty(D)$, and such that there exist constants $C_\alpha>0$, $\ell>0$, such that 
\begin{align} \label{eq:law-decay}
|\alpha_k|
\le 
C_\alpha \exp(-\ell|k|_\infty), \quad \forall \, k\in \Z^d.
\end{align}
We furthermore assume that the $Y_k \in [-1,1]$, $k\in \Z^d$, are centered random variables, implying that $\E[u] = \overline{u}$. We let $\mu \in \P(L^2(D))$ denote the law of the random variable $u(\slot; (Y_k)_{k\in \Z^d})$. By the assumed decay \eqref{eq:law-decay}, we have $\supp(\mu) \subset C^\infty(D)$. We remark that one can also readily consider the setup where the coefficients $\alpha_k$ decay at an algebraic rate. Note that the expansion \eqref{eq:fourier-random} appears to be similar to that of the Karhunen-Loeve expansion of the Gaussian measure, considered in the last section. The main difference lies in the fact that $Y_k$ are no longer assumed to be normally distributed, nor necessarily iid. 

Given $\mu$ as the law of random field \eqref{eq:fourier-random} as the underlying measure, we need to construct a suitable encoder and decoder and then estimate the resulting encoding error \eqref{eq:encoding}. To this end, we adapt the pseudo-spectral encoder and the resulting discrete Fourier transform based decoder \eqref{eq:fdecode1} to the current setup. For multi-indices $i = (i_1, \dots, i_d) \in \{0,\dots, 2N\}^d$, let 
\[
x_{i}
:=
\frac{
2\pi \bm{i}
}{
2N+1
}
=
\left(
\frac{2\pi i_1}{2N+1}, \frac{2\pi i_2}{2N+1}, \dots, \frac{2\pi i_d}{2N+1} 
\right),
\]
denote the $(2N+1)^d$ points on $[0,2\pi]^d$ on an equidistant cartesian grid with grid size $2\pi /(2N+1)$. In the following, we will denote by $\cI_N$ the index set 
\[
\cI_N
:=
\set{
i = (i_1, \dots, i_d)
}{
i_r \in \{0,\dots, 2N\}, \; \forall r=1,\dots, d
}.
\]
Define the encoder $\cE: C(D) \to \R^{m}$, for $m=(2N+1)^d=|\cI_N|$, by 
\begin{align} \label{eq:holomorphic-encoder}
\cE(u) 
=
(u(x_{i}))_{i \in \cI_N}.
\end{align}

To construct a suitable decoder $\cD: \R^{m}\to L^2(D)$ corresponding to the encoder $\cE$ above, we will first recover (an approximation of) the coefficients $y_k$ from the encoded values $\cE(u(\slot;\bm{y}))$. This can be achieved by the following sequence of mappings:

\begin{enumerate}
\item Subtract the mean $\overline{u}$: Define 
\begin{align} \label{eq:map-mean}
\cM: \R^m \to \R^m, 
\quad
(u_i)_{i\in \cI_N}
\mapsto
(u_{i} - \overline{u}(x_{i}))_{i\in \cI_N}.
\end{align}
\item Discrete Fourier transform: Define
\begin{align} \label{eq:map-fourier}
\cF\cT: \R^m \to \R^m,
\quad
(\tilde{u}_{i})_{i\in \cI_N}
\mapsto
\left(
\frac{(2\pi)^d}{|\cI_N|}
\sum_{i\in \cI_N}
\tilde{u}_{i}\, \fb_{k}(x_{i})
\right)_{k \in \cK_N},
\end{align}
where the set of Fourier wavenumbers $k \in \cK_N$ is given by 
\begin{align*} 
\cK_N = 
\set{
k = (k_1,\dots, k_d)
}{
k \in \Z^d, \; -N \le k_j \le N \; \forall j
}.
\end{align*}
Note that we have $\cF\cT(\fb_{k}) = \fb_{k}$ for all $k\in \cK_N$.
\item Approximation of $(y_1,\dots, y_m)$: Given the discrete Fourier coefficients $\hat{u}_{k} = \cF\cT(\tilde{u})_{k}$, $k\in \cK_N$, we define (with $\cJ = \Z^d$)
\begin{gather} \textbf{\label{eq:map-coeff}}
\left\{
\begin{aligned}
&\cY: \; \R^m \to [-1,1]^{\cJ},
\\
&\left(\hat{u}_{k} \right)_{k\in \cK_N}
\mapsto 
(\hat{Y}_j)_{j\in \cJ} := \left(\shrink\left( \tilde{Y}_j \right)\right)_{j\in \cJ}, 
\end{aligned}
\right.
\end{gather}
where 
\begin{align} \label{eq:Ytilde}
\tilde{Y}_j := 
\begin{cases}
\hat{u}_{k} / \alpha_{{k}}
, & (j = k \in \cK_N), \\
0, &(\text{otherwise}),
\end{cases}
\end{align}
and the shrink-operator $\shrink: \R \to [-1,1]$ ensures that $\hat{Y}_k \in [-1,1]$ for all $k$:
\[
\shrink(Y) 
=
\begin{cases}
Y, & |Y|\le 1, \\
Y/|Y|, & |Y|>1.
\end{cases}
\]
\end{enumerate}

\noindent
Finally, the decoder $\cD: \R^m \to L^2(\T^d)$ is defined by the composition 
\begin{align} \label{eq:holomorphic-decoder}
\cD(u_{i})
:=
u(\slot, \hat{Y}(u_{i})),
\end{align}
where $u(\slot; Y)$ is given by \eqref{eq:fourier-random} and
\begin{align} \label{eq:Yhat}
\hat{Y}(u_{i}) := (\cY\circ\cF\cT\circ \cM)(u_{i}).
\end{align}
It is easy to see that the main difference between the above encoder/decoder pair and the pseudo-spectral encoder/decoder of \eqref{eq:fdecode1} is the non-linear shrink operation in \eqref{eq:map-coeff}, which we introduce to ensure that $\hat{Y} \in [-1,1]^\N$. As a consequence of this observation, we find that the usual error estimates for pseudo-spectral methods imply similar error estimates for the encoding error $\Vert \cD \circ \cE(u) - u \Vert_{L^2}$. For instance, we have the following proposition (proved in Appendix \ref{app:pf312}),
\begin{proposition} \label{prop:ps-spectral}
Let $s>d/2$, and assume that $u\in H^s(D)$. There exists a constant $C=C(s,d)>0$, such that the encoder/decoder pair $(\cE,\cD)$ defined by \eqref{eq:holomorphic-encoder} and \eqref{eq:holomorphic-decoder} satisfy the estimate 
\begin{equation}
\label{eq:enerrsobo}
\Vert \cD\circ \cE(u) - u \Vert_{L^2}
\le
C N^{-s} \Vert u \Vert_{H^s}.
\end{equation}
\end{proposition}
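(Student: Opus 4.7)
The plan is to compare the decoder output $\cD\circ\cE(u)$ with the classical pseudospectral (trigonometric) interpolant of $u$ on the equidistant grid $\{x_i\}_{i\in\cI_N}$, and to invoke the standard aliasing estimate on the torus. Set $v:=u-\bar u$. The composition $\cF\cT\circ\cM\circ\cE$ computes, for each $k\in\cK_N$, the discrete Fourier coefficient $\hat v_k$ of $v$ on the grid. If the nonlinear shrink step in \eqref{eq:map-coeff} were omitted, the decoder would return the linear reconstruction $\bar u+\cI_N v$, where $\cI_N v:=\sum_{k\in\cK_N}\hat v_k\,\fb_k$ is the trigonometric interpolant of $v$. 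The classical pseudospectral estimate (valid for $s>d/2$) then gives
\[
\|\bar u+\cI_N v-u\|_{L^2}=\|\cI_N v-v\|_{L^2}\le C(s,d)\,N^{-s}\,\|v\|_{H^s}.
\]

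The main additional work is to absorb the contribution of the nonlinear shrink operator. Writing $f_k:=\alpha_k\,\shrink(\hat v_k/\alpha_k)$, the actual decoder output is $\bar u+\sum_{k\in\cK_N}f_k\fb_k$, so by the triangle inequality and Parseval
\[
\|\cD\circ\cE(u)-u\|_{L^2}\le\Bigl(C\sum_{k\in\cK_N}|f_k-\hat v_k|^2\Bigr)^{1/2}+\|\cI_N v-v\|_{L^2}.
\]
A direct case analysis, using that $\shrink$ is the nearest-point projection of $\R$ onto $[-1,1]$, shows $|f_k-\hat v_k|=(|\hat v_k|-\alpha_k)_+$. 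For $u$ in the support of $\mu$ described by \eqref{eq:fourier-random} with $|Y_k|\le 1$, the continuous Fourier coefficients satisfy $|\tilde v_k|\le\alpha_k$, and the aliasing identity $\hat v_k-\tilde v_k=\sum_{m\ne 0}\tilde v_{k+(2N+1)m}$ yields $(|\hat v_k|-\alpha_k)_+\le|\hat v_k-\tilde v_k|$. Hence the shrink contribution is pointwise dominated by the aliasing error, and its $\ell^2$-sum is again of order $N^{-s}\|v\|_{H^s}$ by the usual pseudospectral aliasing estimate. Combining the two pieces and using $\|v\|_{H^s}\le\|u\|_{H^s}+\|\bar u\|_{H^s}$, with $\|\bar u\|_{H^s}$ absorbed into the constant (since $\bar u\in C^\infty$ is fixed), gives the claim.

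The main technical obstacle is handling the nonlinear shrink step, which cannot be controlled by purely linear estimates: a naive bound such as $|f_k-\hat v_k|\le|\hat v_k|$ would only yield an $O(\|u\|_{L^2})$ error via Parseval, with no decay in $N$. The key is to exploit the structural assumption implicit in the setup of Section \ref{sec:parametrized-mu}, namely that the ``true'' coefficients $Y_k$ already lie in $[-1,1]$, together with the fact that $\shrink$ is a 1-Lipschitz projection onto $[-1,1]$. These two facts together ensure that the shrink step can only bring the recovered coefficients \emph{closer} to the truth, allowing the associated error to be absorbed into the standard pseudospectral aliasing estimate.
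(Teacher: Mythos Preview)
Your proposal is correct and rests on the same key observation as the paper: the true coefficients $Y_k$ lie in $[-1,1]$ and $\shrink$ is the nearest-point projection onto $[-1,1]$, so the shrink step can only move the recovered coefficients closer to the truth, allowing the nonlinear correction to be absorbed into the standard pseudospectral aliasing bound.

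The paper organizes the same idea more directly and avoids your triangle-inequality splitting. Writing $u=u(\slot;Y)$, $\cD\circ\cE(u)=u(\slot;\hat{Y})$ with $\hat{Y}_j=\shrink(\tilde{Y}_j)$, and noting that $u(\slot;\tilde{Y})=\bar u+\cI_N v$ is precisely the linear pseudospectral reconstruction, it uses the single inequality $|Y_j-\shrink(\tilde{Y}_j)|\le|Y_j-\tilde{Y}_j|$ (valid since $Y_j\in[-1,1]$) to obtain in one step
\[
\Vert u(\slot;Y)-u(\slot;\hat{Y})\Vert_{L^2}^2=\sum_j\alpha_j^2|Y_j-\hat{Y}_j|^2\le\sum_j\alpha_j^2|Y_j-\tilde{Y}_j|^2=\Vert u(\slot;Y)-u(\slot;\tilde{Y})\Vert_{L^2}^2,
\]
after which the standard pseudospectral estimate applies directly. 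Your route via $|f_k-\hat v_k|=(|\hat v_k|-\alpha_k)_+\le|\hat v_k-\tilde v_k|$ expresses the same contraction property from the ``other side'' (distance from $\tilde{Y}_k$ to its projection, rather than from $Y_k$), at the cost of bounding two terms instead of one. Both arguments implicitly require $u\in\supp(\mu)$ so that $|Y_k|\le 1$; the statement should be read in that context.
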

On the other hand, if the coefficients $\alpha_k$ decay exponentially as in the random field \eqref{eq:expansion}, one can expect exponential decay rates for the encoding error as in the following Theorem,
\begin{theorem} \label{thm:holomorphic-encoding}
Let $\mu \in \P(L^2(D))$, with $D = \T^d$ or $D = [0,2\pi]^d$, denote the law of the random field $u(\slot;Y)$ defined by \eqref{eq:fourier-random}, with random variables $Y = (Y_j)_{j\in \cJ} \in [-1,1]^\cJ$, $\cJ = \Z^d$, and with $\alpha_k$ satisfying the decay assumption \eqref{eq:law-decay}. Given $N\in \N$, consider the encoder/decoder pair $(\cE,\cD)$ based on the discrete Fourier transformation on a regular grid with grid size $m = (2N+1)^d$ on $D$. Then there exists constants $C, c > 0$, independent of $m$, such that the encoding error $\Err_{\cE}$ for the encoder/decoder pair $\cE$, $\cD$ defined by \eqref{eq:holomorphic-encoder} and \eqref{eq:holomorphic-decoder}, can be bounded by
\begin{align} \label{eq:holomorphic-encoding}
\Err_{\cE} \le C \exp(-c\ell m^{1/d}).
\end{align}
Furthermore, if $\G: L^2(D)\to L^2(U)$ is an operator, and $\cF: [-1,1]^\cJ \to L^2(U)$, $\bm{y}\mapsto \cF(\bm{y})$ is defined by 
\[
\cF(\bm{y}) := \G(u(\slot;\bm{y})),
\]
then we have the identity
\begin{align} \label{eq:GD-FY}
\G \circ \cD\left( (u_{i})_{i\in \cI_N} \right)
=
\cF(\hat{Y}\left( (u_{i})_{i\in \cI_N} \right),
\quad
\forall \,  (u_{i})_{i\in \cI_N} \in \R^{\cI_N} \simeq \R^m.
\end{align}
\end{theorem}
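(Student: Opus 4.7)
The plan is to directly compute the encoding error for an arbitrary realization $u = u(\slot;\bm y)$ of the random field \eqref{eq:fourier-random} and then integrate over $\mu$. Since $\cD((u_i)_{i\in\cI_N}) = \bar u + \sum_{k\in\cK_N}\hat Y_k \alpha_k \fb_k$ while $u = \bar u + \sum_{k\in\Z^d} y_k\alpha_k\fb_k$, orthonormality of the trigonometric basis yields the clean decomposition
\begin{equation*}
\|\cD\circ\cE(u) - u\|_{L^2}^2 \;=\; \sum_{k\in\cK_N} |\alpha_k|^2 |\hat Y_k - y_k|^2 \;+\; \sum_{|k|_\infty > N} |\alpha_k|^2 |y_k|^2,
\end{equation*}
isolating an aliasing contribution from a truncation tail. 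The truncation tail is immediate from \eqref{eq:law-decay} and $|y_k|\le 1$: summing over shells $|k|_\infty = n > N$ (each of cardinality $O(n^{d-1})$) gives $\sum_{|k|_\infty>N}|\alpha_k|^2 \lesssim N^{d-1}e^{-2\ell N}$.

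For the aliasing contribution, I would invoke the standard aliasing identity for the DFT on the $(2N+1)^d$ equispaced grid: for $k\in\cK_N$,
\begin{equation*}
\tilde Y_k \;=\; \frac{1}{\alpha_k}\sum_{j\in\Z^d}\alpha_{k+(2N+1)j}\, y_{k+(2N+1)j} \;=\; y_k \;+\; \sum_{j\neq 0}\frac{\alpha_{k+(2N+1)j}}{\alpha_k}\, y_{k+(2N+1)j}.
\end{equation*}
The key observation is that $\shrink$ is the nearest-point projection onto $[-1,1]$ and $y_k\in[-1,1]$, so $|\hat Y_k - y_k| \le |\tilde Y_k - y_k|$; multiplying by $|\alpha_k|^2$ \emph{cancels} the $1/\alpha_k$ denominator, giving the $\alpha_k$-free estimate
\begin{equation*}
|\alpha_k|^2|\hat Y_k - y_k|^2 \;\le\; \Bigl(\sum_{j\neq 0}|\alpha_{k+(2N+1)j}|\Bigr)^{2}.
\end{equation*}
This elementary trick is what I expect to be the crux of the argument, since without a matching lower bound on $|\alpha_k|$ a naive estimate would be useless.

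It remains to bound the inner sum. For $k\in\cK_N$ and $j\neq 0$ the triangle inequality gives $|k+(2N+1)j|_\infty \ge (2N+1)|j|_\infty - N$, so \eqref{eq:law-decay} yields $|\alpha_{k+(2N+1)j}|\le C_\alpha e^{\ell N}e^{-\ell(2N+1)|j|_\infty}$. Summing over shells in $j\in\Z^d\setminus\{0\}$ (a convergent geometric series once $\ell N$ is moderately large) produces a uniform bound of order $e^{-\ell(N+1)}$ for each $k$. Adding over the $(2N+1)^d$ indices $k\in\cK_N$ gives an aliasing contribution of order $N^d e^{-2\ell N}$, which combined with the truncation tail yields $\|\cD\circ\cE(u)-u\|_{L^2}\lesssim N^{d/2}e^{-\ell N}$ uniformly in $\bm y$. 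Integrating against $\mu$ and absorbing the polynomial prefactor into the exponent (at the cost of any constant $c<1$) produces \eqref{eq:holomorphic-encoding} with $m = (2N+1)^d$, i.e.\ $N\sim m^{1/d}$.

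Finally, the identity \eqref{eq:GD-FY} is immediate from the definitions: by \eqref{eq:holomorphic-decoder}, $\cD((u_i)_{i\in\cI_N}) = u(\slot;\hat Y((u_i)_{i\in\cI_N}))$, and applying $\G$ while invoking the definition $\cF(\bm y) := \G(u(\slot;\bm y))$ closes the argument with no further work.
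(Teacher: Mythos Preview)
Your proof is correct and follows essentially the same route as the paper. Both arguments hinge on the shrink inequality $|\hat Y_k - y_k|\le|\tilde Y_k - y_k|$ to reduce the decoder error to a pseudo-spectral projection error, and then exploit the exponential decay \eqref{eq:law-decay}; the paper simply invokes Proposition~\ref{prop:ps-spectral} and asserts that exponential Fourier decay yields exponential convergence of the pseudo-spectral projection, whereas you unpack that black box by writing out the aliasing identity explicitly and bounding the tail $\sum_{j\ne 0}|\alpha_{k+(2N+1)j}|$ shell by shell. Your explicit cancellation of the $1/\alpha_k$ denominator (which also silently handles the degenerate case $\alpha_k=0$) is precisely what makes the pseudo-spectral estimate go through here, so the two arguments coincide in substance; yours is simply more self-contained.
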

This theorem, proved in Appendix \ref{app:pf313}, shows that the encoding error for a very general form of the underlying measure $\mu$, decays exponentially in the number of sensors and suggests that DeepONets will have a small encoding error with a few sensors. The map $\bm{u} \mapsto \hat{Y}(\bm{u})$ defined by \eqref{eq:Yhat}, plays a key role in defining the decoder \eqref{eq:holomorphic-decoder} as well as the action of operator $\G$ on the decoder. It turns out that this map can be efficiently approximated by a neural network of moderate size as follows:
\begin{lemma} \label{lem:Yhat-NN}
Let $N\in \N$, and denote $m := (2N+1)^d = |\cK_N|$. Let $\kappa: \{1,\dots, m\} \to \cK_N$ be a bijection. There exists a constant $C>0$, independent of $N$, $m$, such that for every $N$ there exists a ReLU neural network $\cN: \R^m \to \R^m$, with 
\[
\size(\cN) \le C(1+m\log(m)), 
\quad
\depth(\cN) \le C(1+\log(m)),
\]
and such that $\cN(\bm{u}) = (\hat{Y}_{\kappa(1)}(\bm{u}),\dots, \hat{Y}_{\kappa(m)}(\bm{u}))$, for all $\bm{u}\in \R^m$.
\end{lemma}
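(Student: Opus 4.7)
The plan is to exploit the explicit layered structure of the map $\hat{Y} = \cY\circ\cF\cT\circ\cM$: with the exception of the shrink operator, every step is affine, so the entire map is of the form ``affine--linear--affine--shrink'' with an FFT hidden inside. I would realize each affine/linear piece exactly by a ReLU sub-network via the identity $z = \sigma(z) - \sigma(-z)$, compose them, and exploit the FFT butterfly factorization to obtain the claimed complexity bounds of $O(m\log m)$ in size and $O(\log m)$ in depth.

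First, the shrink nonlinearity admits the exact closed form
\begin{equation*}
\shrink(y) = \sigma(y+1) - \sigma(y-1) - 1,
\end{equation*}
so applied componentwise to a vector in $\R^m$ it is a ReLU network of depth $1$ and size $O(m)$. The mean-subtraction $\cM$ is an affine map with $O(m)$ non-zero entries, and the rescaling $\hat{u}_k \mapsto \hat{u}_k/\alpha_k$ (with the output set to zero for $k\notin \cK_N$) is diagonal with $O(m)$ non-zero entries, so each is trivial to absorb into a single layer. These three pieces together already have acceptable size and depth.

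The crux of the argument is to realize the discrete Fourier transform $\cF\cT$ on the grid of size $m=(2N+1)^d$ by a linear subnetwork of depth $O(\log m)$ and size $O(m\log m)$. Here I would invoke the multidimensional FFT: the $d$-dimensional DFT factorizes as $d$ successive one-dimensional DFTs of length $n=2N+1$ along each coordinate axis, so it suffices to express a length-$n$ DFT as a composition of $O(\log n)$ sparse ``butterfly'' linear maps each with $O(n)$ non-zero entries, which is classical (Cooley--Tukey for composite $n$; Rader's or Bluestein's construction when $2N+1$ has large prime factors). To compose these linear layers inside a ReLU network without altering their action, I insert the identity $z\mapsto \sigma(z)-\sigma(-z)$ between successive butterflies: this doubles the width but preserves both depth and exactness. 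Summing the work across the $d$ axes gives $O(d\cdot m\log n)=O(m\log m)$ active weights and $O(d\log n)=O(\log m)$ layers, and splitting into real and imaginary parts inflates the width only by a universal constant.

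The step I anticipate as the main obstacle is this FFT bookkeeping, namely verifying that for the specific odd length $n=2N+1$ one genuinely has a butterfly factorization of depth $O(\log n)$ with $O(n)$ non-zero entries per layer (when $2N+1$ is prime, one falls back on Bluestein's algorithm, which embeds the transform into a convolution of length comparable to $n$ but a power of $2$, introducing a fixed multiplicative constant) and that the complex twiddle factors can be absorbed into real weights of the butterfly without inflating the sparsity. Once this is in place, concatenating the sub-networks for $\cM$, $\cF\cT$, the diagonal rescaling $\hat u_k/\alpha_k$, and the componentwise $\shrink$ (in that order), and relabelling the output coordinates according to $\kappa$, yields a ReLU network $\cN:\R^m\to\R^m$ with $\size(\cN)\le C(1+m\log m)$, $\depth(\cN)\le C(1+\log m)$, and $\cN(\bm u)_i = \hat{Y}_{\kappa(i)}(\bm u)$ for every $i\in\{1,\dots,m\}$ and every $\bm u\in\R^m$, which is exactly the claim.
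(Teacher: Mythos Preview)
Your proposal is correct and follows essentially the same approach as the paper: decompose $\hat{Y}=\cY\circ\cF\cT\circ\cM$, represent the affine pieces trivially, realize the DFT by an FFT-style factorization into $O(\log m)$ sparse linear layers of $O(m)$ nonzeros each, and express $\shrink$ exactly by ReLUs. The only cosmetic differences are that the paper writes $\shrink(Y)=1-\sigma(2-\sigma(1+Y))$ (depth two) rather than your $\sigma(y+1)-\sigma(y-1)-1$ (depth one), and the paper does not spell out the odd-length FFT issue (Bluestein/Rader) or the $z=\sigma(z)-\sigma(-z)$ trick, which you handle more carefully; neither difference affects the claimed bounds.
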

The proof is based on a simple observation that,
$\hat{Y} = \cY \circ \cF \cT \circ \cM$, with
\begin{enumerate}
\item $\cM$ an affine mapping, introducing a bias,
\item $\cF\cT$ a linear mapping, implementing the discrete Fourier transform
\item $\cY$ a linear scaling followed by a shrink operation.
\end{enumerate}
The map $\cM$ can evidently be represented by a neural network of $\depth = \mathcal{O}(1)$, and $\size = \mathcal{O}(m)$. The discrete Fourier transform can be efficiently computed using the fast Fourier transform (FFT) algorithm in $\mathcal{O}(N^d \log(N)) = \mathcal{O}(m\log(m))$ operations. We note that each step of this recursive algorithm is linear, and requires $\mathcal{O}(m)$ multiplications and $\mathcal{O}(\log(m))$ recursive steps to compute the FFT. Each step in the recursion can be represented exactly by a finite number $\mathcal{O}(1)$ of ReLU neural network layers of size $\mathcal{O}(m)$. The $\mathcal{O}(\log(m))$ steps in the recursion can thus be represented by a composition of $\mathcal{O}(\log(m))$ neural network layers. Thus, the whole algorithm can be represented by a neural network of $\size = \mathcal{O}(m\log(m))$ and depth $\mathcal{O}(\log(m))$.  Finally, the linear scaling step can clearly be represented by a neural network of $\depth=\mathcal{O}(1)$ and $\size = \mathcal{O}(m)$, and the shrink operation can be written in the form
\[
\shrink(Y) 
=
1 - \max(0,2-\max(0,1+Y))
=
1-\sigma( 2-\sigma(1+Y)),
\]
where $\sigma(x) = \max(0,x)$ denotes the ReLU activation function. Hence, $\cY$ can be represented by a neural network of $\size = \mathcal{O}(m)$ and $\depth = \mathcal{O}(1)$. Combining these three steps, we conclude that also the composition $\hat{Y}$ can be represented by a neural network of $\size = \mathcal{O}(m \log(m))$ and $\depth=\mathcal{O}(\log(m))$. Similarly, any smooth activation function such as sigmoid or $\tanh$ can be used to define a neural network of the same size to approximate $\hat{Y}$.
\subsection{Bounds on the approximation error \eqref{eq:approximation}}
\label{sec:approximation}

Given a particular choice of encoder/decoder and reconstruction/projection pairs $(\cE,\cD)$ and $(\cR,\cP)$, the approximation error $\Err_{\cA}$ \eqref{eq:approximation} for the approximator $\cA: \R^m \to \R^p$ in the DeepONet \eqref{eq:donet} is a measure for the non-commutativity of the following diagram:
\[
\begin{tikzcd}
L^2(D) 
\arrow[r, "\G"] 
\arrow[d, shift right=0ex, "\cE" left, bend right=30, dashrightarrow] 
& L^2(U) 
\arrow[d, shift right=0ex, "\cP" left] 
\\
\R^m 
\arrow[u, shift right=0ex, "\cD" right] 
\arrow[r, rightsquigarrow, "\cA"]   
& \R^p 
\arrow[u, shift right=0ex, "\cR" right, dashrightarrow, bend right=30]
\end{tikzcd}
\]
i.e., it measures the error in the approximation $\cA \approx \cP\circ\G\circ \cD$. Thus, bounding the approximation error $\Err_{\cA}$ can be viewed as a special instance of the general problem of the neural network approximation of high-dimensional mappings $\R^m \to \R^p$. Our aim in this section is to review some of the available results on neural network approximation in a finite-dimensional setting and relate it to the problem of deriving bounds on the approximation error \eqref{eq:approximation} for the DeepONet \eqref{eq:donet}. We start by considering the neural network approximation for regular high-dimensional mappings.
\subsubsection{Regular high-dimensional mappings}
\label{sec:regular-high-dim}

Evidently, the neural network approximation of a mapping $G: \R^m \to \R^p$, $x \mapsto G(x) = (G_1(x), \dots, G_p(x))$ can be carried out by independently approximating each of the components $G_j: \R^m \to \R$ by a neural network $\cA_j: \R^m \to \R$, and then combining these individual approximations to a single neural network $\cA: \R^m \to \R^p$, $x\mapsto \cA(x) = (\cA_1(x), \dots, \cA_p(x))$, with
\[
\size(\cA) 
= 
\sum_{j=1}^p \size(\cA_j)
\le
p \max_{j=1,\dots, p} \size(\cA_j)
.
\]
The approximation of $G_j: \R^m \to \R$ (over a bounded domain $K \subset \R^m$) by neural networks is a fundamental approximation theoretic problem. One approach for deriving general approximation results relies on the Sobolev regularity of $G_j$. As a prototype of the available results in this direction, we cite the following result due to Yarotsky \cite{Yarotsky2017}, for $G_j \in W^{k,\infty}([0,1]^m)$,
\begin{theorem}[{\cite[Theorem 1]{Yarotsky2017}}] \label{thm:yarotsky}
Let $m$, $k \in \N$ be given. There exists a constant $C = C(m,k)>0$, such that for any $\e\in (0,1)$ and $G_j: [0,1]^m \to \R$ with $\Vert G_j \Vert_{W^{k,\infty}} \le 1$, there exists a ReLU neural network $\cA_j: [0,1]^m \to \R$, with
\[
\depth(\cA_j) \le C (1+\log(\e^{-1})),
\quad
\size(\cA_j) \le C \e^{-m/k} (1+\log(\e^{-1})),
\]
such that 
\[
\Vert G_j(x) - \cA_j(x) \Vert_{L^\infty([0,1]^m)} \le \epsilon.
\]
\end{theorem}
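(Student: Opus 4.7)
The plan is to reproduce Yarotsky's construction, which rests on three ingredients: a localization via a smooth partition of unity on $[0,1]^m$, a local Taylor polynomial approximation exploiting the $W^{k,\infty}$-regularity, and an efficient ReLU-network implementation of polynomials using the telescoping sawtooth construction for $x^2$. First I would fix an integer $N\in\N$ (to be chosen) and introduce the uniform grid $\{\nu/N : \nu \in \{0,1,\dots,N\}^m\}$ on $[0,1]^m$, together with a tensorized partition of unity $\{\phi_\nu\}$, where each $\phi_\nu$ is a product $\prod_{i=1}^m \psi(N x_i - \nu_i)$ of shifted 1D hat functions and $\sum_\nu \phi_\nu \equiv 1$ on $[0,1]^m$. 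Each 1D hat $\psi$ is exactly representable by a small fixed-size ReLU block.

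Next I would define the local approximation $\tilde G_j(x) = \sum_\nu \phi_\nu(x)\, P_\nu(x)$, where $P_\nu$ is the degree-$(k-1)$ Taylor polynomial of $G_j$ about $\nu/N$. Since $\Vert G_j\Vert_{W^{k,\infty}}\le 1$ and $\phi_\nu$ is supported in a cube of side $\tfrac{2}{N}$ around $\nu/N$, the standard Taylor remainder bound combined with $\sum_\nu \phi_\nu=1$ yields $\Vert G_j - \tilde G_j\Vert_{L^\infty([0,1]^m)} \lesssim N^{-k}$. Choosing $N \sim \epsilon^{-1/k}/C_1$ with $C_1=C_1(m,k)$ sufficiently large then forces this part of the error below $\epsilon/2$.

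It remains to replace $\tilde G_j$ by a ReLU network $\cA_j$ and control the resulting error and complexity. Here I would invoke Yarotsky's key lemma: the map $x\mapsto x^2$ can be approximated on $[0,1]$ to accuracy $\delta$ by a ReLU network of depth and size $O(\log(1/\delta))$, built from a telescoping sum of dyadic sawtooth functions. Polarization gives a multiplication network for $(x,y)\mapsto xy$ with the same complexity, and composing $O(m+k)$ such multiplications allows each product $\phi_\nu(x) P_\nu(x)$ to be emulated to accuracy $\delta$ by a network of depth $O(\log(1/\delta))$ and fixed size (independent of $N$). Summing the $(N+1)^m$ terms (most of which need not even be computed, since on a given $x$ only $2^m$ of the $\phi_\nu$ are nonzero, but this refinement is not needed for the stated bound) yields a network $\cA_j$ with $\size(\cA_j) \lesssim N^m \log(1/\delta)$ and $\depth(\cA_j) \lesssim \log(1/\delta)$. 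Choosing $\delta\sim \epsilon/((N+1)^m\cdot \max_\nu\Vert P_\nu\Vert_\infty)$, the emulation error is at most $\epsilon/2$, so by the triangle inequality $\Vert G_j-\cA_j\Vert_{L^\infty}\le \epsilon$. The complexity bounds advertised in the theorem then follow upon substituting $N\sim \epsilon^{-1/k}$.

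The main obstacle is the efficient ReLU emulation of multiplication with logarithmic cost: without the sawtooth trick one obtains only a polynomial-in-$\epsilon^{-1}$ factor for each multiplication and the final size scales as $\epsilon^{-m/k-O(1)}$, losing the sharp exponent $m/k$. A secondary technical nuisance is bookkeeping: one must track how the multiplication error propagates through the product $\phi_\nu(x) P_\nu(x)$ (this requires bounding the $L^\infty$-norms of the intermediate factors) and verify that summing $O(N^m)$ such terms does not cause an exponential blow-up, which is why the tolerance $\delta$ must be chosen slightly smaller than $\epsilon$ divided by the number of summands.
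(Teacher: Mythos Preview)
The paper does not prove this theorem; it is quoted verbatim as \cite[Theorem~1]{Yarotsky2017} and used as a black box. Your sketch faithfully reproduces Yarotsky's original construction (partition of unity on a grid of spacing $1/N$, local Taylor polynomials of degree $k-1$, and the sawtooth-based ReLU emulation of multiplication with logarithmic cost), so there is nothing to compare against in the present paper.
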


In the particular case of a Lipschitz mapping $G: \R^m\to \R^p$, the upper limit on the required size of the approximating neural network $\cA$ is thus $\mathcal{O}(p\e^{-m})$. In particular, this size scales exponentially in the input dimension (=number of DeepONet sensors) $m$. By Definition \ref{def:cod}, this constitutes a \emph{curse of dimensionality} as the number of sensors $m$ need to grow as $\e \to 0$ from bounds such as \eqref{eq:enerrsobo} on the encoding error \eqref{eq:encoding}. Thus, the above Theorem of Yarotsky may not suffice to find optimal sizes of the approximator network $\cA$ in the DeepONet \eqref{eq:donet}. 
\subsubsection{Holomorphic infinite-dimensional mappings} \label{sec:be-holomorphy}
Given this possible curse of dimensionality in bounding the approximation error \eqref{eq:approximation} for Lipschitz continuous maps, we seek to find a class of nonlinear operators $\cG$, mapping infinite-dimensional Banach spaces, for which this curse of dimensionality can be avoided. One possible class is the class of \emph{holomorphic mappings} $\bm{y} \mapsto \cF(\bm{y})$, which has been shown in recent papers \cite{SchwabZech2019,OSZ2019,OSZ2020} to be efficiently approximated by ReLU neural networks, \emph{breaking the curse of dimensionality}. For instance, the class of mappings considered in \cite{OSZ2020} are infinite dimensional mappings 
\begin{align*} 
\cF: [-1,1]^\N \to V,
\quad 
\bm{y} = (y_j)_{j\in \N} 
\to 
\cF(\bm{y}),
\end{align*}
where $V$ is a Banach space. To simplify notation, we shall replace $\N$ by an arbitrary countable index set $\cJ$ and consider mappings
\begin{align} \label{eq:param-F}
\cF: [-1,1]^{\cJ} \to V,
\quad 
\bm{y} = (y_j)_{j\in \cJ} 
\to 
\cF(\bm{y}),
\end{align}
in the following. In this context, we require the following definition \cite{OSZ2020}:

\begin{definition}[{$(b,\epsilon)$-admissibility}] \label{def:be-admissibility}
Let $V$ be a Banach space. Let $\bm{b} = (b_j)_{j\in \N}$ be a given sequence of monotonically decreasing positive reals $b_j>0$ such that $\bm{b}\in \ell^p(\N)$ for some $p\in (0,1]$. Let $\kappa: \N \to \cJ$ be an enumeration of the index set $\cJ$. A poly-radius $\bm{\rho} = (\rho_j)_{j\in \cJ} \in (1,\infty)^\cJ$ is called $(\bm{b},\e;\kappa)$-admissible for some $\e > 0$, if 
\[
\sum_{j\in \N} b_j (\rho_{\kappa(j)}-1) \le \e.
\]
\end{definition}
We further recall that for a radius $\rho>1$, the Bernstein ellipse $\cE_{\rho} \subset \C$ is defined by
\begin{align} \label{eq:bernstein-ellipse}
\cE_{\rho} 
:=
\set{
\frac{z+z^{-1}}{2}
}{
0\le |z| < \rho
}.
\end{align}
We define holomorphy, following \cite[Definition 3.3]{OSZ2020} as:
\begin{definition}[{$(b,\epsilon)$-holomorphy}] \label{def:be-holomorphy}
Let $V$ be a Banach space. Let $\kappa: \N \to \cJ$ be an enumeration of the index set $\cJ$. A continuous mapping $\cF: [-1,1]^\cJ \to V$ is called \define{$(\bm{b},\epsilon,\kappa)$-holomorphic}, if there exists a constant $C = C(\cF)$, such that the following holds: For every $(\bm{b},\epsilon,\kappa)$-admissible $\bm{\rho}$ with $\cE_{\bm{\rho}} := \prod_{j\in \cJ} \cE_{\rho_j}\subset \C^\cJ$, there exists an extension $\tF: \cE_{\bm{\rho}} \to V_{\C}$, such that 
\begin{align} \label{eq:be-holomorphy}
\bm{z} \mapsto \tF(\bm{z}) \text{ is holomorphic} 
\end{align}
as a function of each $z_j\in E_{\rho_j}$, $j\in \cJ$, and such that 
\begin{align} \label{eq:be-boundedness}
\sup_{\bm{z}\in E_{\bm{\rho}}} \left\Vert \tF(\bm{z}) \right\Vert_{V_{\C}} \le C(\cF).
\end{align}
Here $V_\C$ denotes the complexification of the (real) Banach space $V$.
\end{definition}
We remark that such \emph{holomorphic operators} arise naturally in the context of elliptic and parabolic PDEs, for instance as the data to solution map of diffusion equations with random coefficients \cite{SchwabZech2019} and references therein. We see further examples of these operators in the next section. 

We can now state the following result which follows from \cite[Theorem 4.11]{OSZ2020} (our statement here is closer to the formulation of \cite[Theorem 3.9]{SchwabZech2019}):
\begin{proposition} \label{prop:be-expansion}
Let $V$ be a Banach space. Let $\cF: [-1,1]^\cJ \to V$ be a $(\bm{b},\epsilon,\kappa)$-holomorphic map for some $\bm{b}\in \ell^q(\N)$ and $q\in (0,1)$, and an enumeration $\kappa: \N \to \cJ$. Then there exists a constant $C>0$, such that for every $N\in \N$, there exists an index set 
\[
\Lambda_N
\subset
\set{
\bm{\nu} = (\nu_1,\nu_2,\dots) \in \textstyle{\prod}_{j\in \cJ} \N_0
}{
\nu_j\ne 0 \; \text{for finitely many $j\in \cJ$}
},
\]
with $|\Lambda_N|=N$, a finite set of coefficients $\{c_{\bm{\nu}}\}_{\bm{\nu}\in \Lambda_N} \subset V$, and a ReLU network $\cN: \R^N \to \R^{\Lambda_N}$, $y\mapsto \{\cN_{\bm{\nu}}(y)\}_{\bm{\nu}\in \Lambda_N}$ with
\begin{gather*}
\size(\cN) \le C(1+N\log(N) \log\log(N)), 
\\
\depth(\cN) \le C(1+\log(N) \log\log(N)),
\end{gather*}
and such that
\begin{align} \label{eq:be-expansion}
\sup_{\bm{y}\in [-1,1]^\cJ} 
\left\Vert
\cF(\bm{y}) - \sum_{\bm{\nu}\in \Lambda_N} c_{\bm{\nu}} \cN_{\bm{\nu}}(y_{\kappa(1)},\dots, y_{\kappa(N)})
\right\Vert_{V} 
\le C N^{1-1/q}.
\end{align}
\end{proposition}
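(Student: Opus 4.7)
The plan is to invoke the constructive polynomial approximation results for $(\bm{b},\epsilon,\kappa)$-holomorphic maps together with the ReLU emulation theory of multivariate polynomials developed in \cite{SchwabZech2019,OSZ2019,OSZ2020}, and to package them in the form of the stated proposition. Concretely, I would proceed as follows.

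First, I would use the $(\bm{b},\epsilon,\kappa)$-holomorphy of $\cF$ to obtain a convergent generalized polynomial chaos expansion of the form $\cF(\bm{y}) = \sum_{\bm{\nu}} c_{\bm{\nu}} L_{\bm{\nu}}(\bm{y})$, where $L_{\bm{\nu}}$ is a tensorized Legendre (or Taylor) basis indexed by finitely-supported multi-indices $\bm{\nu}$, and $c_{\bm{\nu}}\in V$ are the expansion coefficients. The holomorphic extension to Bernstein polyellipses $\cE_{\bm{\rho}}$ together with the Cauchy-type estimates in \cite{OSZ2020} yield weighted summability bounds on the coefficient norms $\Vert c_{\bm{\nu}} \Vert_{V}$ in terms of the admissible radii $\bm{\rho}$ and hence in terms of $\bm{b}$. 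Minimizing over $(\bm{b},\epsilon)$-admissible $\bm{\rho}$ and applying Stechkin's lemma then gives, for any $N\in\N$, a set $\Lambda_N$ of cardinality $N$ consisting of the indices $\bm{\nu}$ with the largest (weighted) contributions, such that the best $N$-term truncation error is bounded by $C N^{1-1/q}$ in $V$, uniformly over $\bm{y}\in[-1,1]^\cJ$. This reproduces the algebraic convergence rate claimed on the right-hand side of \eqref{eq:be-expansion}.

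Next, I would replace the exact polynomials $L_{\bm{\nu}}$ restricted to $\Lambda_N$ by ReLU neural network surrogates $\cN_{\bm{\nu}}$. Here the key technical input is the ReLU emulation of products and of univariate polynomials in the spirit of \cite{Yarotsky2017}, which in the multivariate setting was refined in \cite{SchwabZech2019,OSZ2020} to show that the full collection $\{L_{\bm{\nu}}\}_{\bm{\nu}\in \Lambda_N}$ can be realized simultaneously by a single ReLU network $\cN:\R^N\to \R^{\Lambda_N}$ whose size and depth are controlled by the combined complexity of the index set $\Lambda_N$. Since $\Lambda_N$ consists of multi-indices with controlled support and degree (by the $\ell^q$-summability of $\bm{b}$), one obtains the bounds $\size(\cN)\le C(1+N\log N\log\log N)$ and $\depth(\cN)\le C(1+\log N\log\log N)$ with constants independent of $N$. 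Only the first $N$ coordinates $y_{\kappa(1)},\dots,y_{\kappa(N)}$ enter because the support of any $\bm{\nu}\in \Lambda_N$ (after enumeration by $\kappa$) is contained in $\{1,\dots,N\}$, which follows from the monotone decrease of $\bm{b}$.

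Finally, I would combine the two steps via the triangle inequality: the best $N$-term polynomial error plus the emulation error, the latter being absorbed into the constant by choosing the ReLU accuracy in each coefficient small enough (e.g.\ exponentially in $N$, which costs only logarithmic factors in the network size). The main obstacle in this programme is precisely the bookkeeping of the joint ReLU emulation of the multivariate Legendre polynomials over $\Lambda_N$ with the near-linear size and near-logarithmic depth stated above; this is the technical heart of \cite[Theorem 4.11]{OSZ2020}, and I would essentially quote it rather than redo its proof. All remaining estimates are routine consequences of Stechkin's lemma and the Cauchy integral bounds on $\Vert c_{\bm{\nu}}\Vert_V$ already established in the references.
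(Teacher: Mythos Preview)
Your proposal is correct and matches the paper's approach: the paper does not give an independent proof but simply states that the proposition follows from \cite[Theorem 4.11]{OSZ2020} (with formulation close to \cite[Theorem 3.9]{SchwabZech2019}), which is precisely the reference you identify as the technical heart of the argument. Your outline of the underlying machinery (gpc expansion via holomorphy and Cauchy estimates, Stechkin best $N$-term truncation, and joint ReLU emulation of the Legendre polynomials over $\Lambda_N$) is an accurate summary of what those cited results contain.
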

The following corollary of the above proposition, proved in Appendix \ref{app:pf314}, enables us to apply the result of the theorem to the specific structure of the approximator neural network $\cA$ and the resulting approximation error \eqref{eq:approximation} for our DeepONet \eqref{eq:donet}.
\begin{corollary} \label{cor:holomorphic-NN}
Let $V$ be a Banach space. Let $\cF: [-1,1]^\cJ \to V$ be a $(\bm{b},\epsilon,\kappa)$-holomorphic map for some $\bm{b}\in \ell^q(\N)$ and $q\in (0,1)$, where $\kappa: \N\to \cJ$ is an enumeration of $\cJ$. In particular, it is assumed that $\{b_{j}\}_{j\in \N}$ is a monotonically decreasing sequence. If $\cP: V \to \R^p$ is a continuous linear mapping, then there exists a constant $C>0$, such that for every $m\in \N$, there exists a ReLU network $\cN: \R^m \to \R^p$, with
\begin{gather*}
\size(\cN) \le C(1+pm\log(m) \log\log(m)), 
\\
\depth(\cN) \le C(1+\log(m) \log\log(m)),
\end{gather*}
and such that
\[
\sup_{\bm{y}\in [-1,1]^\cJ} \Vert \cP \circ \cF(\bm{y}) - \cN(y_{\kappa(1)},\dots, y_{\kappa(m)})\Vert_{\ell^2(\R^p)} \le C \Vert \cP \Vert \,  m^{-s}, 
\]
where $s := q^{-1} - 1 > 0$ and $\Vert \cP \Vert = \Vert \cP \Vert_{V\to \ell^2}$ denotes the operator norm.
\end{corollary}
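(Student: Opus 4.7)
The proof is essentially a direct reduction to Proposition \ref{prop:be-expansion}, so my plan is to set $N = m$ in that proposition and then post-compose with the bounded linear map $\cP$ to turn the $V$-valued approximation into an $\R^p$-valued one.

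First, I would apply Proposition \ref{prop:be-expansion} with $N = m$. This yields an index set $\Lambda_m$ with $|\Lambda_m| = m$, coefficients $\{c_{\bm{\nu}}\}_{\bm{\nu}\in \Lambda_m}\subset V$, and a ReLU network $\cN^{(0)}: \R^m \to \R^{\Lambda_m}$, $\bm{z}\mapsto \{\cN^{(0)}_{\bm{\nu}}(\bm{z})\}_{\bm{\nu}\in \Lambda_m}$, satisfying
\[
\size(\cN^{(0)}) \le C_1(1 + m\log(m)\log\log(m)), \qquad \depth(\cN^{(0)})\le C_1(1+\log(m)\log\log(m)),
\]
and such that, with $\bm{z} = (y_{\kappa(1)},\dots, y_{\kappa(m)})$,
\[
\sup_{\bm{y}\in [-1,1]^\cJ} \left\Vert \cF(\bm{y}) - \sum_{\bm{\nu}\in \Lambda_m} c_{\bm{\nu}}\, \cN^{(0)}_{\bm{\nu}}(\bm{z}) \right\Vert_V \le C_2 m^{1-1/q} = C_2 m^{-s}.
\]

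Next, I would exploit linearity of $\cP$. Defining the matrix $W\in \R^{p\times m}$ whose $\bm{\nu}$-th column is the vector $\cP(c_{\bm{\nu}})\in \R^p$, and setting $\cN(\bm{z}) := W\cdot \cN^{(0)}(\bm{z})$, boundedness of $\cP$ gives
\[
\left\Vert \cP\circ\cF(\bm{y}) - \cN(\bm{z}) \right\Vert_{\ell^2(\R^p)} = \left\Vert \cP\!\left(\cF(\bm{y}) - \sum_{\bm{\nu}\in \Lambda_m} c_{\bm{\nu}}\, \cN^{(0)}_{\bm{\nu}}(\bm{z})\right) \right\Vert_{\ell^2} \le \Vert \cP\Vert\, C_2 m^{-s},
\]
uniformly in $\bm{y}\in [-1,1]^\cJ$, which is the desired error bound with $C := \max(C_1, C_2)$.

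Finally, I would verify the complexity bounds. Since $\cN$ is obtained from $\cN^{(0)}$ by appending a single affine output layer $W: \R^m \to \R^p$ (with no activation), $\cN$ is itself a ReLU network with $\depth(\cN) \le \depth(\cN^{(0)}) + 1 \le C(1+\log(m)\log\log(m))$ and $\size(\cN) \le \size(\cN^{(0)}) + pm \le C(1 + pm\log(m)\log\log(m))$, after possibly enlarging the constant $C$. Absorbing $\Vert \cP\Vert$ into the rate in the error bound completes the argument. The only mild subtlety is that the constants from Proposition \ref{prop:be-expansion} depend on $q$ and the $(\bm{b},\epsilon,\kappa)$-holomorphy data of $\cF$ but not on $\cP$, so $\Vert \cP\Vert$ appears as a clean prefactor in the final estimate; no further work is required, since the bulk of the analytic content (the $N\log(N)\log\log(N)$ network emulation of the holomorphic truncated expansion) is already packaged in Proposition \ref{prop:be-expansion}.
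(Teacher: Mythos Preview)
Your proof is correct and follows essentially the same approach as the paper: apply Proposition \ref{prop:be-expansion} with $N=m$, push the linear map $\cP$ through the coefficients $c_{\bm{\nu}}$ to obtain an extra linear output layer of size $\mathcal{O}(pm)$, and use $\Vert \cP\Vert$ to pass from the $V$-norm error to the $\ell^2(\R^p)$-norm error. The paper's proof is virtually identical in structure and detail.
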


With the above setup, we recall that maps of the form \eqref{eq:param-F} arise naturally from general nonlinear operators $\G: X \to L^2(U)$, $u\mapsto \G(u)$, when the probability measure $\mu \in \P(X)$ is given in the parametrized form \eqref{eq:expansion}. As in the subsection \ref{sec:parametrized-mu}, we will focus on the case where either $D=\T^d$ is periodic, or $D = [0,2\pi]^d$ is a rectangular domain and an expansion \eqref{eq:expansion} in terms of the standard Fourier basis $\{\fb_k\}_{k\in \Z^d}$. If the underlying measure $\mu$ can be written as the law of a random field in the parametrized form \eqref{eq:expansion} with exponentially decaying coefficients \eqref{eq:law-decay}, then we can define a ``parametrized version'' of the operator $\G$ by the following mapping $\cF: [-1,1]^\cJ \to L^2(U)$, $\cJ = \Z^d$, defined by 
\begin{align} \label{eq:G-parametrized}
\cF(\bm{y}) := \G\left(u(\slot; \bm{y})\right),
\end{align}
for $\bm{y} = (y_k)_{k\in \Z^d}\in [-1,1]^\cJ$.

To show how the neural network approximation of such $\cF: [-1,1]^\cJ \to L^2(U)$ relates to the approximator network $\cA: \R^m \to \R^p$, we recall that the encoder/decoder pair $(\cE,\cD)$ of section \ref{sec:parametrized-mu} was constructed in terms of an approximate sensor data-to-parameter mapping $\bm{u} := (u_{i})_{i\in \cI_N} \mapsto \hat{Y}(\bm{u})$ such that (cf. Theorem \ref{thm:holomorphic-encoding})
\begin{align} \label{eq:GD-FY1}
(\cG\circ \cD)(\bm{u}) = \cF(\hat{Y}(\bm{u})),
\quad
\forall \, \bm{u} \in \R^m.
\end{align}
Let $\cP: L^2(U) \to \R^p$ be the projection mapping corresponding to a reconstruction $\cR: \R^p \to L^2(U)$, and let the neural network $\cN$ be constructed as in Corollary \ref{cor:holomorphic-NN}, corresponding to an enumeration $\kappa: \N \to \Z^d$, which enumerates $k\in \Z^d$ with increasing $|k|_\infty$, i.e., such that $j \mapsto |\kappa(j)|_\infty$ is monotonically increasing. Then, it is straightforward to see that for $m=(2N+1)^d$, the Fourier wavenumbers $\kappa(1), \dots, \kappa(m) \in \Z^d$ correspond precisely to the Fourier wavenumbers in
\[
\cK_N = 
\set{k\in \Z^d}{|k|_\infty \le N}.
\]
In this case, we define
\begin{align} \label{eq:holomorphic-approx}
\cA(\bm{u}) := \cN(\hat{Y}_{\kappa(1)}(\bm{u}), \dots, \hat{Y}_{\kappa(m)}(\bm{u})),
\end{align}
which we shall also denote more compactly as $\cA(\bm{u}) = \cN(\hat{Y}_\kappa(\bm{u}))$, where $\hat{Y}_\kappa = (\hat{Y}_{\kappa(1)}, \dots, \hat{Y}_{\kappa(m)})$. We now note the following theorem, proved in Appendix \ref{app:pf315}, on the approximation of $\cA$:
\begin{theorem} \label{thm:holomorphic-approx}
Let $\G: X \to L^2(U)$ be a non-linear operator. Assume that the parametrized mapping $\cF$ given by \eqref{eq:G-parametrized}, defines a $(\bm{b},\epsilon,\kappa)$-holomorphic mapping $\cF: [-1,1]^\cJ \to L^2(U)$, with $\bm{b}\in \ell^q(\N)$ and $\kappa: \N \to \cJ$ an enumeration. Assume that $\mu\in \cP(X)$ is given as the law of the random field \eqref{eq:expansion}. Let the encoder/decoder pair be constructed as in section \ref{sec:parametrized-mu}, so that \eqref{eq:GD-FY1} holds. Given an affine reconstruction $\cR: \R^p\to L^2(U)$, let $\cP: L^2(U) \to \R^p$ denote the corresponding optimal linear projection \eqref{eq:opt-proj}. Then given $k\in\N$, there exists a constant $C_k>0$, independent of $m$, $p$, such that the approximator $\cA: \R^m \to \R^p$ defined by \eqref{eq:holomorphic-approx} can be represented by a neural network with
\[
\begin{gathered}
\size(\cA) 
\le
C_k(1+mp \log(m) \log\log(m)), 
\\
\depth(\cA) \le C_k(1+m\log(m)\log\log(m)).
\end{gathered}
\]
and such that the approximation error $\Err_{\cA}$ can be estimated by
\[
\Err_{\cA} 
\le
C_k \Vert \cP \Vert \, m^{-k},
\]
where $\Vert \cP \Vert =  \Vert \cP \Vert_{L^2(U) \to \R^p}$ is the operator norm of $\cP$.
\end{theorem}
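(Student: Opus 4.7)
\emph{Proof plan for Theorem \ref{thm:holomorphic-approx}.} The plan is to combine the identity \eqref{eq:GD-FY1}, which reduces the approximation of $\cP\circ \G\circ \cD$ to the approximation of the parametrized holomorphic mapping $\cP\circ \cF$, with the quantitative ReLU approximation result provided by Corollary \ref{cor:holomorphic-NN}, and finally to realize the sensor-data-to-parameter mapping $\hat{Y}_\kappa$ via the compact network supplied by Lemma \ref{lem:Yhat-NN}. Concretely, substituting the definition \eqref{eq:holomorphic-approx} of $\cA$ and the identity $(\G\circ\cD)(\bm{u})=\cF(\hat{Y}(\bm{u}))$ into \eqref{eq:approximation} gives
\[
(\Err_{\cA})^2
=
\int_{\R^m}
\left\Vert \cN\!\left(\hat{Y}_\kappa(\bm{u})\right) - \cP\circ \cF\!\left(\hat{Y}(\bm{u})\right) \right\Vert_{\ell^2(\R^p)}^2
\, d(\cE_\#\mu)(\bm{u}).
\]
The crucial point is that, by the shrink operation built into the decoder \eqref{eq:map-coeff}, $\hat{Y}(\bm{u})\in[-1,1]^{\cJ}$ pointwise, and $\hat{Y}_j(\bm{u})=0$ for $j\notin \cK_N$, so that $\hat{Y}_\kappa(\bm{u})$ collects precisely the nontrivial parameter components under the enumeration $\kappa$ that orders $k\in\Z^d$ by increasing $|k|_\infty$.

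Next, I would invoke Corollary \ref{cor:holomorphic-NN} with $V=L^2(U)$ and the fixed linear projection $\cP$. This produces, for every $m\in\N$, a ReLU network $\cN:\R^m\to\R^p$ satisfying the uniform bound
\[
\sup_{\bm{y}\in[-1,1]^{\cJ}}
\left\Vert \cP\circ\cF(\bm{y}) - \cN(y_{\kappa(1)},\dots,y_{\kappa(m)})\right\Vert_{\ell^2(\R^p)}
\le C\,\Vert\cP\Vert\,m^{-s},
\]
with $s=q^{-1}-1$ and with $\size(\cN)\lesssim 1+pm\log(m)\log\log(m)$, $\depth(\cN)\lesssim 1+\log(m)\log\log(m)$. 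Specializing to $\bm{y}=\hat{Y}(\bm{u})\in[-1,1]^{\cJ}$ and integrating against the probability measure $\cE_\#\mu$ immediately yields $\Err_{\cA}\le C\Vert\cP\Vert m^{-s}$. To upgrade the exponent $s$ to an arbitrary $k\in\N$, I would exploit the fact that the exponential decay of the coefficients $\alpha_k$ assumed in \eqref{eq:law-decay} forces the admissibility sequence $\bm{b}$ associated to the $(\bm{b},\epsilon,\kappa)$-holomorphy of $\cF$ to lie in $\ell^q(\N)$ for \emph{every} $q\in(0,1)$. Thus, for any prescribed $k$, one chooses $q<1/(k+1)$, obtaining a constant $C_k>0$ for which $\Err_{\cA}\le C_k\Vert\cP\Vert m^{-k}$.

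Finally, to assemble $\cA$ as a single ReLU network and derive its size and depth, I would compose $\cN$ from the previous step with the network of Lemma \ref{lem:Yhat-NN} that realizes $\hat{Y}_\kappa$ exactly, observing that this auxiliary network has $\size=\mathcal{O}(m\log m)$ and $\depth=\mathcal{O}(\log m)$. Standard additivity of $\size$ under composition, together with additivity of $\depth$, yields the claimed bounds
\[
\size(\cA)\lesssim 1+mp\log(m)\log\log(m),
\qquad
\depth(\cA)\lesssim 1+m\log(m)\log\log(m),
\]
with the implicit constants absorbed into $C_k$.

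The routine part is the algebraic composition and the pointwise-to-$L^2$ upgrade, which is immediate because the bound from Corollary \ref{cor:holomorphic-NN} is uniform on $[-1,1]^{\cJ}$ and $\hat{Y}(\bm{u})$ always lies in this set thanks to the shrink operator. The main technical obstacle, and the step most worth double-checking, is the passage from the exponential decay \eqref{eq:law-decay} of the Karhunen--Lo\`eve coefficients $\alpha_k$ to the statement that the $(\bm{b},\epsilon,\kappa)$-admissibility sequence $\bm{b}$ lies in $\ell^q$ for \emph{every} $q>0$, since the constant $C$ appearing in Corollary \ref{cor:holomorphic-NN} depends on $q$ and degenerates as $q\downarrow 0$. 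One needs to make this dependence quantitative enough to extract a finite $C_k$ for each fixed $k$, which in turn relies on a careful choice of $\bm{\rho}$ of the form $\rho_{\kappa(j)}-1\sim \alpha_{\kappa(j)}^{-\theta}$ for a suitable $\theta\in(0,1)$ to optimize the admissibility sum $\sum_j b_j(\rho_{\kappa(j)}-1)\le\epsilon$ while keeping the bound \eqref{eq:be-boundedness} finite.
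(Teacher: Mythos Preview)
Your proposal is correct and follows essentially the same route as the paper: reduce via \eqref{eq:GD-FY1}, apply Corollary \ref{cor:holomorphic-NN} uniformly on $[-1,1]^{\cJ}$, compose with the $\hat{Y}_\kappa$-network of Lemma \ref{lem:Yhat-NN}, and add sizes/depths under composition.

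One remark: the ``main technical obstacle'' you flag in the last paragraph is not an obstacle at all, and the paper dispatches it in one line. For each fixed $k\in\N$ you simply pick a single $q<1/(k+1)$, observe that the exponentially decaying sequence $\bm{b}$ of \eqref{eq:law-decay} lies in $\ell^q$, and take $C_k$ to be the constant that Corollary \ref{cor:holomorphic-NN} produces for that value of $q$. There is no need to track how this constant degenerates as $q\downarrow 0$, and no need to revisit the choice of poly-radii $\bm{\rho}$: that analysis is already encapsulated in Proposition \ref{prop:be-expansion} and Corollary \ref{cor:holomorphic-NN}, which you are entitled to use as black boxes here.
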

Thus, the above theorem shows that an approximator neural network $\cA$ of log-linear size in the product of the number of sensors $m$ and number of trunk nets $p$ can lead to very small approximation error if the underlying operator $\G$ yields a holomorphic reduction \eqref{eq:G-parametrized}. This will allow us to overcome the curse of dimensionality for the approximation error \eqref{eq:approximation} for the DeepONet \eqref{eq:donet} in many cases. 

\section{Error bounds on DeepONets in concrete examples.}
\label{sec:4}

In the last section, we decomposed the error \eqref{eq:approxerr} that a DeepONet \eqref{eq:donet} incurs in approximating a nonlinear operator $\G: X \to Y$, with an underlying measure $\mu \in \P(X)$, into the encoding error \eqref{eq:encoding}, the reconstruction error \eqref{eq:decoding} and approximation error \eqref{eq:approximation}. We provided explicit bounds on each of these three errors. In particular, the encoding error was estimated in terms of the spectral decay of the underlying covariance operator of the measure $\mu$ and it was shown that under a boundedness assumption on the eigenfunctions, even a random choice of sensor points provided an optimal encoding error (modulo a log term). More judicious choices of sensor points, for instance with the pseudo-spectral encoder \eqref{eq:fdecode1}, allowed us to recover optimal (up to constants) encoding error. Similarly, we showed that the reconstruction error \eqref{eq:decoding} relies on the spectral decay properties of the covariance operator with respect to the push-forward measure $\G_\#{\mu}$ and can be bounded by using the smoothness of the operator $\G$ as in \eqref{eq:smoothness-NN}. Finally, estimating the approximation error \eqref{eq:approximation} boils down to a neural network approximation of finite, but high, dimensional mappings and one can use either Sobolev regularity or if available, holomorphy, of the map $\cP \circ \cG \circ \cD$ to bound this component. The above discussion provides the following workflow to bound the DeepONet error \eqref{eq:approxerr} in concrete cases, i.e., for concrete instances of the operator $\G$ and the underlying measure $\mu$:
\begin{itemize}
    \item For a given measure $\mu$, estimate the spectral decay rate for the associated covariance operator. Use random sensors in the case of no further information about the measure to obtain almost optimal bounds on the encoding error. If more information is available, one can use bespoke sensor points to obtain optimal bounds on the encoding error.
    \item For a given operator $\G$, use smoothness of the operator to estimate the reconstruction error \eqref{eq:decoding} as in \eqref{eq:smoothness-NN}. 
    \item For the approximation error \eqref{eq:approximation}, use the regularity, in particular possible holomorphy, of the operator $\G$ to estimate this error.
\end{itemize}
The simplest examples for $\G$ correspond to those cases where it is  a \emph{bounded linear operator}. In this case, the above workflow is carried out in Appendix \ref{app:deeponet-linear}, where we show that DeepONet approximation of these general linear operators depends on the spectral decay of the underlying measure $\mu$ and on the approximation property of the trunk net $\tau$. However for nonlinear operators $\G$, one has to carry out the above workflow in each specific case. To this end, we will illustrate this workflow for four concrete examples of \emph{nonlinear operators}, that are chosen to represent different types of differential equations, namely, a nonlinear ODE, an elliptic PDE, a nonlinear parabolic and a nonlinear hyperbolic PDE. Within each class, we choose a concrete example that is widely agreed as a prototype for this class of problems. 
\subsection{A nonlinear ODE: Gravity pendulum with external force.}
\label{sec:pendulum}
\subsubsection{Problem formulation.}
We consider the following nonlinear ODE system, already considered in the context of approximation by DeepONets in \cite{deeponets}:
\begin{gather} \label{eq:pendulum0}
\left\{
\begin{aligned}
\frac{dv_1}{dt} &= v_2,
\\
\frac{dv_2}{dt} &= -\gamma\sin(v_1) + u(t).
\end{aligned}
\right.
\end{gather}
with  initial condition $v(0) = 0$ and where $\gamma>0$ is a parameter. Let us denote $v = (v_1, v_2)$, 
\[
g(v) := 
\begin{pmatrix}
v_2 \\
-\gamma\sin(v_1)
\end{pmatrix},
\quad
U(t) := 
\begin{pmatrix}
0 \\
u(t)
\end{pmatrix},
\]
so that equation \eqref{eq:pendulum0} can be written in the form
\begin{align} \label{eq:pendulum}
\frac{dv}{dt} = g(v) + U, \quad v(0) = 0.
\end{align}
In \eqref{eq:pendulum}, $v_1,v_2$ are the angle and angular velocity of the pendulum and the constant $\gamma$
denotes a frequency parameter. The dynamics of the pendulum is driven by an external force $u= u(t)$. It is straightforward to see that for each $r > 0$, there exists a constant $C_r>0$, such that
\begin{align} \label{eq:gderiv}
\Vert g^{(r)} \Vert_{L^\infty(\R)} \le C_r,
\end{align}
where $g^{r}$ denotes the $r$-th derivative.

With the external force $u$ as the input, the output of the system is the solution vector $v(t)$ and the underlying nonlinear operator is given by $\G: L^2([0,T]) \to L^2([0,T])$, $u \mapsto \cG(u) = v$. The following Lemma, proved in Appendix \ref{app:pf41}, provides a precise characterization of this operator.
\begin{lemma}
\label{lem:pend1}
There exists a constant $C = C(\Vert g^{(1)}\Vert_{L^\infty}, T)>0$, such that for any two $u,u'\in L^2([0,T])$, we have
\[
\Vert \G(u) - \G(u') \Vert_{L^2([0,T])}
\le
C\Vert u-u' \Vert_{L^2([0,T])}.
\]
In particular, $\G: L^2([0,T]) \to L^2([0,T])$, mapping $u(t) \to v(t)$, with $v$ being the solution of the ODE \eqref{eq:pendulum}, is Lipschitz continuous.
\end{lemma}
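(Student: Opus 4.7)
The plan is to reduce the claim to a standard Grönwall estimate exploiting the global Lipschitz bound $\Lip(g) \le \Vert g^{(1)}\Vert_{L^\infty}$ which is finite by \eqref{eq:gderiv} with $r=1$. First I would address well-posedness of \eqref{eq:pendulum} for merely $L^2$-forcings: since $g:\R^2 \to \R^2$ is globally Lipschitz continuous, for any $u\in L^2([0,T])$ the integral equation $v(t) = \int_0^t (g(v(s)) + U(s))\,ds$ admits a unique Carathéodory solution $v\in C([0,T];\R^2)$, so the map $\G: u\mapsto v$ is well-defined from $L^2([0,T])$ into $C([0,T];\R^2)\hookrightarrow L^2([0,T];\R^2)$.

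Next, I would introduce $w(t) := v(t) - v'(t) = \G(u)(t) - \G(u')(t)$ and $U(t)-U'(t) = (0, u(t)-u'(t))$. Subtracting the two integral equations yields
\begin{equation*}
w(t) = \int_0^t \bigl( g(v(s)) - g(v'(s)) \bigr)\,ds + \int_0^t (U(s)-U'(s))\,ds, \qquad w(0) = 0.
\end{equation*}
Using $|g(v(s))-g(v'(s))| \le L |w(s)|$ with $L := \Vert g^{(1)}\Vert_{L^\infty}$ and Cauchy--Schwarz on the forcing integral, I would obtain $|w(t)|^2 \le 2L T \int_0^t |w(s)|^2\,ds + 2T \int_0^T |u(s)-u'(s)|^2\,ds$ (or more efficiently, differentiate $|w|^2$ directly and apply Young's inequality to get $\frac{d}{dt}|w|^2 \le (2L+1)|w|^2 + |u-u'|^2$).

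Then Grönwall's inequality gives a pointwise bound
\begin{equation*}
|w(t)|^2 \le e^{(2L+1)T} \Vert u-u'\Vert_{L^2([0,T])}^2 \qquad \text{for all } t\in[0,T],
\end{equation*}
and integrating this estimate over $[0,T]$ yields the desired $L^2$ bound with $C = \sqrt{T}\, e^{(L+1/2)T}$, which depends only on $\Vert g^{(1)}\Vert_{L^\infty}$ and $T$ as claimed.

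I do not anticipate a serious obstacle: the only subtlety is working in the $L^2$-in-time setting rather than with continuous forcings, but since $g$ is globally Lipschitz the Carathéodory theory applies verbatim, and Grönwall takes care of the rest. No use of the specific sinusoidal structure of $g$ beyond \eqref{eq:gderiv} for $r=1$ is needed.
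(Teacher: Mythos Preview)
Your proposal is correct and follows essentially the same approach as the paper: subtract the two ODEs, use the global Lipschitz bound $\Lip(g)\le\Vert g^{(1)}\Vert_{L^\infty}$ on the nonlinearity, apply Gr\"onwall to the differential inequality for $|w|^2$, and then integrate the resulting pointwise bound over $[0,T]$. The paper writes $g(v)-g(v')=G(v,v')(v-v')$ via the mean-value integral, but this is exactly your Lipschitz step; your additional remark on Carath\'eodory well-posedness for $L^2$-forcings is a nice touch that the paper leaves implicit.
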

Next, in order to define the data for the DeepONet approximation (see Definition \ref{def:data}), we need to specify an underlying measure $\mu \in \P(L^2([0,T]))$. Following the discussion in the previous section and for the sake of definiteness, we choose a parametrized measure $\mu$, as considered in section \ref{sec:parametrized-mu}, as a law of a random field $u$, that can be expanded in the form 
\begin{align} \label{eq:pendulum-law}
u(t; Y) 
=
\sum_{k\in \Z} Y_k \alpha_k \fb_k\left(\frac{2\pi t}{T}\right),
\quad
t \in [0,T],
\end{align}
where $\fb_k(x)$, $k\in \Z$, denotes the one-dimensional standard Fourier basis on $[0,2\pi]$ (with notation of Appendix \ref{app:Fourier}) and the coefficients $\alpha_k\ge 0$ decay to zero. We will assume that there exist constants $C_\alpha,\ell > 0$, such that 
\[
\alpha_k \le C_\alpha \exp(-|k|\ell).
\]
Furthermore, we assume that the $\{Y_k\}_{k\in \Z}$ are iid random variables on $[-1,1]$.

With the above data for the DeepONet approximation problem, we will provide explicit bounds on the total error \eqref{eq:approxerr} for a DeepONet \eqref{eq:donet} approximating the operator $\G$. Following the workflow outlined above, we proceed to bound the following sources of error.
\subsubsection{Bounds on the encoding error \eqref{eq:encoding}.}
Given the underlying measure $\mu$ defined as the law of the random field \eqref{eq:pendulum-law}, we choose the encoder-decoder pair as used in section \ref{sec:parametrized-mu}, i.e., the encoder is the pointwise evaluation \eqref{eq:holomorphic-encoder} on equidistant points $t_1,\dots, t_m$ on $[0,T]$ and the corresponding decoder is given by \eqref{eq:holomorphic-decoder}. A direct application of Theorem \ref{thm:holomorphic-encoding} yields the following bound on the encoding error,
\begin{proposition}\label{prop:pendulum-enc}
Let $\mu\in \P(L^2([0,T]))$ denote the law of the random field $u(\slot;Y)$ defined by \eqref{eq:pendulum-law}. Given $N \in \N$ consider the encoder/decoder pair $(\cE,\cD)$ with $m = 2N+1$ grid points and given by \eqref{eq:holomorphic-encoder} and \eqref{eq:holomorphic-decoder}, respectively. Then, there exists a constant $C>0$, independent of $m$, such that the encoding error $\Err_{\cE}$ \eqref{eq:encoding} can be bounded by 
\[
\Err_{\cE} \le C \exp(-\ell \lfloor m/2 \rfloor).
\]
Furthermore, denoting by $\cF: [-1,1]^\Z \to L^2([0,T])$, $\bm{y}\mapsto \cF(\bm{y})$, the mapping
\begin{align}\label{eq:F-rep}
\cF(\bm{y}) := \G(u(\slot; \bm{y})),
\end{align}
we have the identity $\G \circ \cD(\bm{u}) = \cF(\hat{Y}(\bm{u}))$, for all $\bm{u} \in \R^m$.
\end{proposition}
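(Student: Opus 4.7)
The plan is to derive Proposition \ref{prop:pendulum-enc} as a direct specialization of the general encoding bound in Theorem \ref{thm:holomorphic-encoding}. First I would verify that the random field \eqref{eq:pendulum-law} fits the template \eqref{eq:fourier-random}: set the spatial dimension to $d=1$, identify $[0,T]$ with $[0,2\pi]$ via the rescaling $t\mapsto 2\pi t/T$, take $\bar u \equiv 0$, index set $\cJ = \Z$, and note that the assumed bound $\alpha_k \le C_\alpha \exp(-\ell |k|)$ coincides with the decay hypothesis \eqref{eq:law-decay} because $|k|_\infty = |k|$ for $k\in\Z$. The encoder obtained from the $m = 2N+1$ equispaced grid points and the decoder built through $\hat{Y} = \cY\circ\cF\cT\circ\cM$ (cf.~\eqref{eq:holomorphic-encoder}--\eqref{eq:holomorphic-decoder}) are then precisely the encoder/decoder pair analyzed in Section \ref{sec:parametrized-mu}.

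Having verified the hypotheses, Theorem \ref{thm:holomorphic-encoding} immediately gives an encoding bound of the form $\Err_{\cE}\le \tilde C\exp(-c\ell m)$ for some $c>0$. To upgrade this to the sharper statement $\Err_{\cE}\le C\exp(-\ell\lfloor m/2\rfloor)$ I would revisit the mechanism underlying that theorem in the one-dimensional setting: with $2N+1$ sensors the discrete Fourier transform \eqref{eq:map-fourier} recovers the coefficients $Y_k\alpha_k$ \emph{exactly} for $|k|\le N$, so the shrink operation acts as the identity on these modes; the only contribution to $\Vert \cD\circ\cE(u)-u\Vert_{L^2}$ is then the untruncated tail. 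Using $|Y_k|\le 1$, the orthonormality of $\{\fb_k\}$, and the decay \eqref{eq:law-decay}, one obtains
\begin{align*}
\Vert \cD\circ\cE(u) - u\Vert_{L^2}^2
\;\lesssim\;
\sum_{|k|>N}\alpha_k^2
\;\le\;
C_\alpha^2 \sum_{|k|>N} e^{-2\ell |k|}
\;\lesssim\;
e^{-2\ell N},
\end{align*}
uniformly in $\bm y\in [-1,1]^\Z$. Integrating against $\mu$, taking the square root and substituting $N = \lfloor m/2\rfloor$ yields the claimed estimate. The only mild obstacle in the argument is this constant tracking; the rest is bookkeeping.

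The second assertion $\G\circ\cD(\bm u) = \cF(\hat Y(\bm u))$ is the identity \eqref{eq:GD-FY} of Theorem \ref{thm:holomorphic-encoding} transcribed to the present notation: by \eqref{eq:holomorphic-decoder} we have $\cD(\bm u) = u(\slot;\hat Y(\bm u))$, and applying $\G$ together with the definition \eqref{eq:F-rep} of $\cF$ gives $\G\circ\cD(\bm u) = \G(u(\slot;\hat Y(\bm u))) = \cF(\hat Y(\bm u))$. No additional work is required.
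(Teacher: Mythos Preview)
Your overall strategy---specialize Theorem \ref{thm:holomorphic-encoding} to $d=1$---is exactly what the paper does, and your derivation of the identity $\G\circ\cD(\bm u)=\cF(\hat Y(\bm u))$ from \eqref{eq:holomorphic-decoder} and \eqref{eq:F-rep} is correct.

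However, your argument for the sharper exponent contains a genuine error. You claim that with $m=2N+1$ sensors the discrete Fourier transform \eqref{eq:map-fourier} recovers the coefficients $Y_k\alpha_k$ \emph{exactly} for $|k|\le N$, so that the only contribution to $\Vert\cD\circ\cE(u)-u\Vert_{L^2}$ is the truncation tail. This is false: the DFT applied to the point values of a function with infinitely many active Fourier modes suffers from aliasing. Concretely, for $|k|\le N$ one has
\[
\hat u_k \;=\; Y_k\alpha_k \;+\; \sum_{q\in\Z\setminus\{0\}} Y_{k+qm}\,\alpha_{k+qm},
\]
so $\tilde Y_k - Y_k = \alpha_k^{-1}\sum_{q\ne 0} Y_{k+qm}\alpha_{k+qm}$, and the shrink operation need not act as the identity. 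The correct bound (cp.\ the proof of Theorem \ref{thm:holomorphic-encoding} via Proposition \ref{prop:ps-spectral}, or the explicit aliasing computation in the proof of Lemma \ref{lem:encoding-quadraticexp}) uses $|Y_k-\hat Y_k|\le |Y_k-\tilde Y_k|$ together with $\alpha_k^2|Y_k-\tilde Y_k|^2 = \big|\sum_{q\ne 0} Y_{k+qm}\alpha_{k+qm}\big|^2$, so that the aliasing contribution is itself controlled by a sum over the tail modes $|k|>N$. The net effect is a harmless multiplicative constant (a factor $2$ in the Gaussian case of Lemma \ref{lem:encoding-quadraticexp}), and the final bound $\Err_\cE\le C\exp(-\ell\lfloor m/2\rfloor)$ is unchanged; but your stated mechanism is incorrect and should be replaced by the pseudo-spectral/aliasing estimate.
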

Moreover, as in Lemma \ref{lem:Yhat-NN}, $\hat{Y}$ in \eqref{eq:F-rep} can be represented by a neural network, in the sense that $\hat{Y}_k \equiv 0$, for $|k|>N$, and there exists a neural network $\cN$ with 
\[
\size(\cN) = \mathcal{O}(m\log(m)), 
\quad
\depth(\cN) = \mathcal{O}(\log(m)),
\]
and $\cN(\bm{u}) = (\hat{Y}_{-N}(\bm{u}),\dots, \hat{Y}_0(\bm{u}), \dots, \hat{Y}_N(\bm{u}))$, for all $\bm{u}\in \R^m$.
\subsubsection{Bounds on the reconstruction error \eqref{eq:decoding}}
Following our program outlined above, we will bound the reconstruction error \eqref{eq:decoding} for a DeepONet approximation the operator $\G$ for the forced pendulum by appealing to the smoothness of the image $\im(\G)$ of $\G$. To this end, we have the following lemma, proved in Appendix \ref{app:pf42},
\begin{lemma}
\label{lem:pend2}
Let $T>0$, and consider the solution $v(t)$ of \eqref{eq:pendulum} for $t\in [0,T]$, where $g(v)$ satisfies $L^\infty$-bound \eqref{eq:gderiv} for all $k\in \N$, $k\ge 1$. Then for any $k\in \N$, there exists a constant $A_k>0$ (possibly depending on $g$ and $T$, in addition to $k$, but independent of $u$), such that
\[
\Vert v^{(k)} \Vert_{L^\infty} 
\le 
A_k \left( 1 + \Vert u \Vert_{H^k}^{k}\right).
\]
Here 
\[
\Vert u \Vert_{H^k} 
:= 
\sum_{\ell=0}^k \Vert u^{(\ell)} \Vert_{L^2([0,T])}.
\]
\end{lemma}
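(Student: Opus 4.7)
The plan is to prove the bound by induction on $k$, exploiting the ODE \eqref{eq:pendulum} to express $v^{(k)}$ in terms of lower-order derivatives of $v$ and derivatives of $u$, and then to use the one-dimensional Sobolev embedding $H^1([0,T]) \embeds L^\infty([0,T])$ to move from $L^2$-based norms on $u$ to $L^\infty$-based bounds.

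For the base case $k=1$, one simply reads off $v^{(1)} = g(v) + U$ from \eqref{eq:pendulum}, so that $\Vert v^{(1)} \Vert_{L^\infty} \le \Vert g \Vert_{L^\infty} + \Vert u \Vert_{L^\infty} \le C + C\Vert u \Vert_{H^1}$ by assumption \eqref{eq:gderiv} (with $r=0$) and the Sobolev embedding. For the inductive step, one differentiates the ODE $k-1$ times to obtain
\[
v^{(k)}(t) = \frac{d^{k-1}}{dt^{k-1}} g(v(t)) + u^{(k-1)}(t),
\]
and then expands the first term by the Fa\`a di Bruno formula as a finite sum of products of the form $g^{(r)}(v)\prod_{j=1}^{k-1} (v^{(j)})^{m_j}$, with multi-indices satisfying $\sum_j j\, m_j = k-1$ and $r = \sum_j m_j \le k-1$. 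Since each $g^{(r)}$ is uniformly bounded by \eqref{eq:gderiv}, one obtains a pointwise estimate
\[
\left\Vert \tfrac{d^{k-1}}{dt^{k-1}} g(v) \right\Vert_{L^\infty}
\le C_k \sum \prod_{j=1}^{k-1} \Vert v^{(j)} \Vert_{L^\infty}^{m_j}.
\]

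At this point the inductive hypothesis $\Vert v^{(j)} \Vert_{L^\infty} \le A_j(1+\Vert u \Vert_{H^j}^j)$ for $j\le k-1$ is inserted, together with the monotonicity $\Vert u \Vert_{H^j} \le \Vert u \Vert_{H^{k-1}}$. Using $\sum_j j\,m_j = k-1$, each product contributes at most a constant times $(1+\Vert u \Vert_{H^{k-1}})^{k-1} \le C(1+\Vert u \Vert_{H^{k-1}}^{k-1})$. For the remaining term $u^{(k-1)}$, the one-dimensional embedding $H^1([0,T])\embeds L^\infty([0,T])$ applied to $u^{(k-1)}$ gives $\Vert u^{(k-1)} \Vert_{L^\infty} \le C \Vert u \Vert_{H^k}$. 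Combining and using the elementary bounds $\Vert u \Vert_{H^{k-1}}^{k-1} \le \Vert u \Vert_{H^k}^{k-1}$ and $\Vert u \Vert_{H^k} \le 1 + \Vert u \Vert_{H^k}^k$ (which absorb the $(k-1)$-th power and the linear power into the target $1+\Vert u \Vert_{H^k}^k$ up to a constant), the inductive step closes with a suitable $A_k$.

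The only place where there is anything to watch is the bookkeeping in the Fa\`a di Bruno expansion: one needs to verify that the number of factors and the total order $\sum_j j\, m_j = k-1$ really do combine to a polynomial bound of degree $k-1$ in $1+\Vert u \Vert_{H^{k-1}}$, so that the ``generous'' $k$-th power on the right-hand side of the stated bound is sufficient. All constants are allowed to depend on $g$ (through the uniform bounds \eqref{eq:gderiv}), on $T$ (through the Sobolev embedding constant), and on $k$, which matches the statement of the lemma.
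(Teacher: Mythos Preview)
Your inductive step via Fa\`a di Bruno is sound and is a genuinely different route from the paper's argument: the paper differentiates the ODE one order further, isolates the term linear in $v^{(k)}$, and closes each step by Gronwall's inequality together with $\int_0^T |u^{(k)}|\,ds \le \sqrt{T}\,\Vert u\Vert_{H^k}$; you instead stop one derivative earlier, bound $[g(v)]^{(k-1)}$ pointwise by the inductive hypothesis, and control $u^{(k-1)}$ via the one-dimensional Sobolev embedding $H^1\embeds L^\infty$. Both approaches require the same regularity $u\in H^k$, and your bookkeeping with $\sum_j j\,m_j = k-1$ correctly yields a polynomial of degree $k-1$ in $1+\Vert u\Vert_{H^{k-1}}$, which is absorbed into $1+\Vert u\Vert_{H^k}^k$.

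However, your base case has a genuine gap. You invoke \eqref{eq:gderiv} with $r=0$ to bound $\Vert g\Vert_{L^\infty}$, but that hypothesis is only assumed for $r\ge 1$; for the pendulum, $g(v)=(v_2,-\gamma\sin v_1)$ is \emph{not} bounded, so $\Vert g\Vert_{L^\infty}=\infty$ and the estimate $\Vert v'\Vert_{L^\infty}\le \Vert g\Vert_{L^\infty}+\Vert u\Vert_{L^\infty}$ fails. What is missing is a preliminary bound on $\Vert v\Vert_{L^\infty}$ itself: the paper obtains this first, via Gronwall, using $|g(v)|=|g(v)-g(0)|\le \Vert g^{(1)}\Vert_{L^\infty}|v|$ (which uses $g(0)=0$, true for the pendulum) and $\int_0^t|u|\le \sqrt{T}\Vert u\Vert_{L^2}$, giving $\Vert v\Vert_{L^\infty}\le C\Vert u\Vert_{L^2}$. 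Once you have this $k=0$ estimate, your $k=1$ step becomes $\Vert v'\Vert_{L^\infty}\le \Vert g'\Vert_{L^\infty}\Vert v\Vert_{L^\infty}+\Vert u\Vert_{L^\infty}\le C(1+\Vert u\Vert_{H^1})$, and from there your induction proceeds as written.
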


Given the desired smoothness of the image of the operator $\G$, we need to find a suitable reconstructor \eqref{eq:reconstruction}. To this end, we will use Legendre polynomials to build our reconstructor and have the following result, proved in Appendix \ref{app:pf43}, 
\begin{lemma} \label{lem:Legendre-rec}
If $\tilde{\tr}_k$, $k=1,\dots,p$ are the first $p$ Legendre polynomials, then the reconstruction error $\Err_{\tilde{\cR}}$ for the reconstruction mapping
\[
\alpha = (\alpha_1, \dots, \alpha_p)
\mapsto 
\tilde{\cR}(\alpha) 
=
\sum_{k=1}^p \alpha_k \tilde{\tr}_k,
\]
induced by the trunk net $\tilde{\bm{\tr}} = (0,\tilde{\tr}_1,\dots, \tilde{\tr}_p)$, satisfies 
\[
\Err_{\tilde{\cR}}
\le 
\frac{C}{p^{k}}
\left(
\int_{X} (1+\Vert u \Vert_{H^{k}([0,T])})^{2k} \, d\mu(u) 
\right)^{1/2}.
\]
For some constant $C = C(k,T) > 0$. In particular, if $\mu \in \P(L^2(D))$ is concentrated on $H^k(D)$ and $\int_{L^2(D)} \Vert u \Vert_{H^k}^{2k} \, d\mu(u) < \infty$, then there exists a constant $C = C(T,k,\mu)>0$ independent of $p$, such that
\[
\Err_{\tilde{\cR}}
\le
Cp^{-k}.
\]
\end{lemma}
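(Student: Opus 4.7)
The plan is to reduce the reconstruction error for the Legendre trunk $\tilde{\bm{\tr}}$ to a standard polynomial best-approximation error in $L^2([0,T])$, and then to push this bound back from $v = \G(u)$ to the input $u$ using the smoothing estimate provided by Lemma \ref{lem:pend2}.

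First, I would rescale the Legendre polynomials on $[-1,1]$ so that $\tilde{\tr}_1,\dots,\tilde{\tr}_p$ form an orthonormal basis of $\mathbb{P}_{p-1}([0,T]) \subset L^2([0,T])$, which does not change $\tilde{\cR}$ up to a change of coordinates in $\R^p$. Since the $\tilde{\tr}_k$ are orthonormal and the bias is zero, Lemma \ref{lem:opt-proj} identifies the optimal projector $\tilde{\cP}$ as the $L^2$-orthogonal projection onto $V_p := \Span(\tilde{\tr}_1,\dots,\tilde{\tr}_p) = \mathbb{P}_{p-1}([0,T])$. Consequently, for $\nu = \G_\#\mu$,
\[
(\Err_{\tilde{\cR}})^2
=
\int_{L^2([0,T])} \Vert v - \Pi_{V_p} v \Vert_{L^2([0,T])}^2 \, d\nu(v)
=
\int_X \inf_{q\in \mathbb{P}_{p-1}} \Vert \G(u) - q \Vert_{L^2([0,T])}^2 \, d\mu(u).
\]

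Next, I would invoke the classical Jackson-type estimate for polynomial approximation of $H^k$-functions on a bounded interval: there is a constant $C_1 = C_1(k,T)$ such that for every $v\in H^k([0,T])$,
\[
\inf_{q\in \mathbb{P}_{p-1}} \Vert v - q \Vert_{L^2([0,T])} \le \frac{C_1}{p^k} \Vert v \Vert_{H^k([0,T])}.
\]
Applying this pointwise in $u$ to $v = \G(u)$ yields
\[
(\Err_{\tilde{\cR}})^2 \le \frac{C_1^2}{p^{2k}} \int_X \Vert \G(u) \Vert_{H^k([0,T])}^2 \, d\mu(u).
\]

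The key step is now to translate the Sobolev regularity of $v = \G(u)$ back to a norm of $u$ using Lemma \ref{lem:pend2}. For $0\le \ell \le k$, $\Vert v^{(\ell)} \Vert_{L^2([0,T])} \le \sqrt{T}\,\Vert v^{(\ell)} \Vert_{L^\infty} \le \sqrt{T}\,A_\ell(1 + \Vert u \Vert_{H^\ell}^\ell)$ (treating $\ell=0$ directly by Lemma \ref{lem:pend1}). Since $\Vert u \Vert_{H^\ell} \le \Vert u \Vert_{H^k}$ for $\ell\le k$, summing over $\ell$ gives a constant $C_2 = C_2(k,T)$ with
\[
\Vert \G(u) \Vert_{H^k([0,T])} \le C_2\bigl(1 + \Vert u \Vert_{H^k([0,T])}\bigr)^{k}.
\]
Plugging this into the previous display yields
\[
(\Err_{\tilde{\cR}})^2 \le \frac{C_1^2 C_2^2}{p^{2k}} \int_X \bigl(1 + \Vert u \Vert_{H^k([0,T])}\bigr)^{2k} \, d\mu(u),
\]
and taking square roots gives the claimed estimate. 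The second statement then follows by noting that under the assumption $\int \Vert u \Vert_{H^k}^{2k}\,d\mu(u) < \infty$, the integral on the right is finite and can be absorbed into a single $\mu$-dependent constant.

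I expect the only mildly delicate point to be the last step of pushing $\Vert v\Vert_{H^k}$ back to $\Vert u\Vert_{H^k}$ with the correct power: Lemma \ref{lem:pend2} provides $L^\infty$-bounds on derivatives of $v$ that grow like $\Vert u\Vert_{H^\ell}^\ell$, so one must be careful that the resulting exponent of $\Vert u \Vert_{H^k}$ is exactly $k$ inside the $H^k$-norm (and hence $2k$ after squaring), matching the exponent in the statement. The Jackson inequality and orthogonality of Legendre polynomials are standard, and should not present any real obstacle.
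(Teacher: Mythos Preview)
Your proposal is correct and follows essentially the same approach as the paper: identify $\tilde{\cR}\circ\tilde{\cP}$ as the $L^2$-orthogonal projection onto polynomials of degree $\le p-1$, apply the standard $H^k$ approximation estimate for Legendre projection (the paper cites Canuto--Quarteroni for this), and then use Lemma~\ref{lem:pend2} to bound $\Vert \G(u)\Vert_{H^k}$ by $C(1+\Vert u\Vert_{H^k})^k$. Your extra care about summing $\Vert v^{(\ell)}\Vert_{L^2}$ over $0\le \ell\le k$ and matching the exponent $k$ is exactly the point the paper handles in one line.
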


From \cite[Proposition 2.10]{OSZ2019}, we have the following result; for any $p\in \N$, $\delta \in (0,1)$, there exists a ReLU neural network $\bm{L}_\delta: [0,T] \to \R^p$, $t\mapsto \bm{L}_\delta(t) = (L_{1,\delta}(t),\dots, L_{p,\delta}(t))$, which approximates the first $p$ Legendre polynomials $L_1(t),\dots, L_p(t)$ with
\[
\max_{j=1,\dots, p}
\Vert L_{j} - L_{j,\delta} \Vert_{L^\infty} \le \delta,
\]
and for a constant $C>0$, independent of $\delta$ and $p$, it holds
\[
\size(\bm{L}_\delta)
\le 
Cp^3 + p^2 \log(\delta^{-1}).
\quad
\depth(\bm{L}_\delta)
\le 
C(1+\log(p))(p + \log(\delta^{-1})),
\]
We will leverage the above neural network to find a suitable trunk net for the DeepONet approximation of the operator $\G$ for the forced pendulum. To this end, let $\tR$ denote the reconstruction
\[
\tR(\alpha) = \sum_{k=1}^p \alpha_k \tilde{\tr}_k,
\]
where $\tilde{\tr}_k = L_k(t)$ denotes the $k$-th Legendre polynomial. By Lemma \ref{lem:Legendre-rec}, for any $r\in \N$, there exists $C >0$, depending on $r$ but independent of $p$, such that we have $\Err_{\tR} \le C p^{-r}$, for all $p \in \N$.
Choosing $\bm{\tr}=\bm{L}_{\delta}$ as above with $\delta = p^{-r-3/2}$, it follows that for any $p\in \N$, there exists a trunk net $\bm{\tr}$ with 
\[
p^{3/2} \max_{k=1,\dots, p} \Vert \tr_k - \tilde{\tr}_k\Vert_{L^2([0,T])} \le p^{-r},
\]
and
\[
\size(\bm{\tr}) = \mathcal{O}(p^3).
\quad
\depth(\bm{\tr}) = \mathcal{O}(p\log(p)).
\]
From Lemma \ref{lem:err-rec-comp}, it follows that for the reconstruction $\cR: \R^p \to L^2([0,T])$, 
\[
\cR(\alpha) = \sum_{k=1}^p \alpha_k \tr_k,
\]
induced by the trunk net $\bm{\tr} = (0,\tr_1,\dots, \tr_p)$, we have
\begin{align*}
\Err_{\cR} 
&\le 
\Err_{\tR} + p^{3/2}\max_{k=1,\dots, p} \Vert \tr_k - \tilde{\tr}_k \Vert_{L^2([0,T])}
\le C p^{-r}.
\end{align*}

We summarize these observations in the following proposition,
\begin{proposition} \label{prop:pendulum-rec}
Let $\mu$ denote the law of the random field $u(\slot;Y)$ given by \eqref{eq:pendulum-law}. For any $r\in \N$, there exists a constant $C >0$, depending on $T$, $\mu$ and $r$, but independent of $p$, such that for any $p\in \N$, there exists a trunk neural network $\bm{\tr}: [0,T] \to \R^{p+1}$, $s \mapsto (\tr_0(s),\dots, \tr_p(s))$, with
\[
\size(\bm{\tr}) \le C (1+p^3),
\quad
\depth(\bm{\tr}) \le C(1+p\log(p))),
\]
such that the reconstruction error $\Err_{\cR}$ with corresponding projection $\cP$, satisfies the bound
\begin{equation}
    \label{eq:pendulum-rec-err}
\Err_{\cR} \le C p^{-r}, \quad (p\in \N).
\end{equation}
Furthermore, for any $p\in \N$, the reconstruction $\cR$ and the projection satisfy the uniform bound $\Lip(\cR), \, \Lip(\cP) \le 2$, where 
\begin{align*}
\Lip(\cR) &= 
\Lip
\left(
\cR:
(\R^p, \Vert \slot \Vert_{\ell^2})
\to 
(L^2([0,T]),\Vert \slot \Vert_{L^2})
\right)
\\
\Lip(\cP) &= 
\Lip
\left(
\cP: (L^2([0,T]),\Vert \slot \Vert_{L^2})
\to (\R^p, \Vert \slot \Vert_{\ell^2})
\right).
\end{align*}
\end{proposition}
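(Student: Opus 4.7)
The plan is to assemble three ingredients already prepared in the surrounding discussion: the Legendre-polynomial reconstruction bound Lemma \ref{lem:Legendre-rec}, the efficient ReLU emulation of Legendre polynomials from \cite[Proposition 2.10]{OSZ2019}, and the comparison principle Lemma \ref{lem:err-rec-comp}. The trunk net will be built as a ReLU approximation of the first $p$ Legendre polynomials on $[0,T]$, and the task is to tune the approximation tolerance so the size/depth budgets and the reconstruction bound \eqref{eq:pendulum-rec-err} both hold simultaneously.

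First, I would verify the moment hypothesis required by Lemma \ref{lem:Legendre-rec}. Since $\mu$ is the law of the random field \eqref{eq:pendulum-law} with $|Y_k|\le 1$ and $\alpha_k \lesssim e^{-\ell|k|}$, Parseval applied term-by-term to each derivative shows that $\int_{L^2} \Vert u \Vert_{H^k}^{2k}\, d\mu(u) < \infty$ for every $k\in \N$. Thus Lemma \ref{lem:Legendre-rec} applies with exponent $r$ (in place of $k$) and yields an ``ideal'' reconstruction $\tR$ with Legendre basis $\tilde{\tr}_k = L_k$ that satisfies $\Err_{\tR} \le C_r p^{-r}$, where $C_r$ depends on $T$, $\mu$ and $r$ but not on $p$. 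This handles the reconstruction error for the exact Legendre basis.

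Next, I invoke \cite[Proposition 2.10]{OSZ2019} to produce a ReLU network $\bm{L}_\delta:[0,T]\to \R^p$ approximating the first $p$ Legendre polynomials uniformly within $\delta$, with
\[
\size(\bm{L}_\delta) \le C(p^3 + p^2 \log(\delta^{-1})),
\quad
\depth(\bm{L}_\delta) \le C(1+\log p)(p+\log(\delta^{-1})).
\]
The critical choice is $\delta = p^{-r-3/2}/\sqrt{T}$, which via boundedness of $[0,T]$ converts the $L^\infty$-tolerance into the $L^2$-closeness hypothesis
\[
p^{3/2}\max_{k=1,\dots,p}\Vert \tr_k - \tilde{\tr}_k\Vert_{L^2([0,T])} \le p^{-r}
\]
of Lemma \ref{lem:err-rec-comp}. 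Since $\log(\delta^{-1}) = \mathcal{O}(\log p)$ for this choice, the size reduces to $\mathcal{O}(p^3)$ and the depth to $\mathcal{O}(p\log p)$, matching the proposition's complexity budget.

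Finally, applying Lemma \ref{lem:err-rec-comp} to the pair $(\cR,\tR)$ yields $\Err_{\cR} \le \Err_{\tR} + Cp^{-r} \le C_r p^{-r}$ after relabeling constants, while the uniform bounds $\Lip(\cR), \Lip(\cP)\le 2$ are inherited directly from the same lemma. I anticipate no serious obstacle, since the heavy lifting was done upstream; the main subtleties are purely technical: tracking the $r$-dependence in the constant $C_r$ (unavoidable, since $r$ is arbitrary), checking that moments of $\Vert u\Vert_{H^k}$ under $\mu$ propagate correctly through $\G$ via Lemma \ref{lem:pend2}, and ensuring the chosen $\delta = p^{-r-3/2}/\sqrt{T}$ lies in $(0,1/2)$ for $p$ large enough so that Lemma \ref{lem:err-rec-comp} is genuinely applicable (handled for small $p$ by absorbing into $C_r$).
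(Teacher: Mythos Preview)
Your proposal is correct and follows essentially the same approach as the paper: use Lemma \ref{lem:Legendre-rec} for the Legendre reconstruction error, invoke \cite[Proposition 2.10]{OSZ2019} with $\delta \sim p^{-r-3/2}$ to build the trunk net, and apply Lemma \ref{lem:err-rec-comp} to transfer the bound and obtain the Lipschitz estimates. Your treatment is in fact slightly more careful than the paper's in verifying the moment hypothesis and in making the $L^\infty$-to-$L^2$ conversion factor $\sqrt{T}$ explicit.
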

Thus, we observe from \eqref{eq:pendulum-rec-err} that the reconstruction error for a DeepONet can be made very small for a moderate number of trunk nets. Moreover, the corresponding size of the trunk net is moderate in the number of trunk nets. 
\subsubsection{Bounds on the approximation error \eqref{eq:approximation}}
Following our workflow, we will derive bounds on the approximation error \eqref{eq:approximation} for a DeepONet \eqref{eq:donet} approximating the operator $\G$ for the forced gravity pendulum by showing that the corresponding operator is holomorphic in the sense of Definition \ref{def:be-holomorphy}. 

To this end, we assume that 
\begin{align} \label{eq:pendulum-param}
u(t) = u(t;\bm{y}) = \sum_{k=1}^\infty y_k \alpha_k \fb_k(t), \quad (\bm{y} = (y_j)_{j\in \N}),
\end{align}
can be expanded by the Fourier basis functions $\fb_k \in L^2([0,T])$, such that 
\begin{align}
\alpha_k\Vert \fb_k\Vert_{L^\infty}
\le 
C_\alpha e^{-|k|\ell} \le 1,
\quad \forall \,  k \in \Z.
\end{align}
Letting $\kappa: \N \to \Z$ denote the enumeration of appendix \ref{app:Fourier}, with $j \mapsto |\kappa(j)|$ increasing, we define a monotonically decreasing sequence $(b_j)_{j\in \N}$ by
\begin{align} \label{eq:pendulum-bk}
b_j := C_\alpha e^{-|\kappa(j)| \ell}.
\end{align}
We will show that that the mapping
\[
\cF: 
[-1,1]^\N \to L^2([0,T]),
\quad
\bm{y} \mapsto v(\bm{y}),
\]
where $v({\bm{y}})$ solves the gravity pendulum equation \eqref{eq:pendulum} with forcing $U(t; \bm{y}) = (0,u(t;\bm{y}))$ can be extended to a holomorphic mapping on suitable (admissible) poly-ellipses $E_{\bm{\rho}} = \prod_{j=1}^\infty E_{\rho_j} \subset \C^\N$, 
\[
\cF:
\prod_{j=1}^\infty E_{\rho_j} \to L^2_{\C}([0,T]),
\quad
\bm{z} \mapsto v(\bm{z}),
\]
where $\bm{\rho} = (\rho_j)_{j\in \N}$, $\rho_j > 1$ for all $j\in \N$. For $\rho > 1$, $E_{\rho}\subset \C$ denotes the (interior of the) Bernstein ellipse (cp. \eqref{eq:bernstein-ellipse}). It is straightforward to see that,
\begin{align}
E_{\rho}
\subset 
\set{z \in \C
}{
|\Re(z)| < \rho, \, |\Im(z)| < \rho-1
}
.
\end{align}

To prove the existence of an analytic continuation to suitable poly-ellipses $E_{\bm{\rho}}$, our first goal is to show that there exists a $\delta > 0$, such that the ODE-system \eqref{eq:pendulum} has a complex-valued solution $v: [0,T]\to \C^2$ for any forcing $U: [0,T]\to \C^2$ with $|\Im(U(t))| \le \delta$. To this end, we first note that the complex-valued ODE \eqref{eq:pendulum} is equivalent to the following system for $v(t) = v_r(t) + i v_i(t)$, with $v_r = (v_{r,1},v_{r,2}),v_i = (v_{i,1}, v_{i,2}): [0,T]\to \R^2$:
\begin{gather} \label{eq:pendulum-cplx}
\left\{
\begin{aligned}
\frac{d v_r}{dt} &= g_r(v_r,v_i) + \Re(U), \quad v_r(0) = 0,\\
\frac{d v_i}{dt} &= g_i(v_r,v_i) + \Im(U), \quad v_i(0) = 0,
\end{aligned}
\right.
\end{gather}
where $U(t) = (0,u(t))$, $u:[0,T]\to \C$ and 
\[
g_r(v_r,v_i) = 
\begin{pmatrix}
v_{r,2} \\
-\gamma \sin(v_{r,1})\cosh(v_{i,1})
\end{pmatrix}
,
\quad
g_i(v_r,v_i) = 
\begin{pmatrix}
v_{i,2} \\
-\gamma \cos(v_{r,1})\sinh(v_{i,1})
\end{pmatrix}.
\]

We note that the second equation of \eqref{eq:pendulum-cplx} implies that
\begin{align*}
|v_i(t)| 
&\le 
\int_0^t |g_i(v_r(s),v_i(s))| \, ds
+
\int_0^t |\Im(U(s))| \, ds
\\
&\le
\int_0^t 
\left(
|v_i(s)| + \gamma|\sinh(v_{i,1}(s))|
\right)
\, ds
+
\int_0^t |\Im(u(s))| \, ds.
\end{align*}
Furthermore, we have
\[
|\sinh(x)|
=
\left|
\int_0^x \cosh(\xi) \, d\xi
\right|
\le
|x| \sup_{|\xi|\le |x|} \cosh(\xi)
\le
|x| e^{|x|},
\]
for any $x \in \R$. We thus conclude that 
\begin{align} \label{eq:pendulum-cplx-estimate}
|v_i(t)|
\le
\int_0^t  |v_i(s)| \left(1+\gamma e^{|v_i(s)|}\right) \, ds
+
\int_0^t |\Im(u(s))| \, ds,
\end{align}
for all $t\in [0,T]$.

The above estimate paves the way for the following lemma, proved in Appendix \ref{app:pf44} on the boundedness of the imaginary part of the solution of the complex extension of the forced gravity pendulum \eqref{eq:pendulum-cplx}:
\begin{lemma} \label{lem:pendulum-impart}
Let $u(t)\in L^\infty([0,T])$. Assume that the solution of \eqref{eq:pendulum-cplx} exists on $[0,T_0]$ for some $0<T_0 \le T$. There exists a constant $\delta > 0$, such that if $\sup_{s \in [0,T]} |\Im(u(s))| \le \delta$, then $\sup_{s \in [0,T_0]} |v_i(s)| \le 1$.
\end{lemma}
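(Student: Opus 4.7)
The plan is to exploit the integral inequality \eqref{eq:pendulum-cplx-estimate} together with a continuity (bootstrap) argument and Gronwall's lemma to show that $|v_i|$ cannot escape the ball of radius $1$ provided the forcing has a sufficiently small imaginary part.

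First, I would define the ``exit time''
\[
\tau := \sup\set{t \in [0,T_0]}{|v_i(s)|\le 1 \; \forall\, s\in [0,t]}.
\]
Since $v_i$ is continuous on $[0,T_0]$ (it solves the ODE system \eqref{eq:pendulum-cplx}) and $v_i(0)=0$, we have $\tau > 0$, and if $\tau < T_0$ then necessarily $|v_i(\tau)| = 1$ by continuity. The goal is to derive a contradiction under a suitable smallness assumption on $\delta$, thereby forcing $\tau = T_0$.

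Next, on the interval $[0,\tau]$ the bound $|v_i(s)|\le 1$ gives $e^{|v_i(s)|}\le e$, so the key inequality \eqref{eq:pendulum-cplx-estimate} simplifies to
\[
|v_i(t)| \le (1+\gamma e)\int_0^t |v_i(s)|\,ds + \delta T,
\qquad t\in [0,\tau],
\]
whenever $\sup_{s\in[0,T]}|\Im(u(s))|\le \delta$. Applying Gronwall's inequality yields
\[
|v_i(t)| \le \delta T \, e^{(1+\gamma e)T}, \qquad t\in [0,\tau].
\]

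Finally, I choose $\delta := \tfrac{1}{2} T^{-1} e^{-(1+\gamma e)T}$ (or any constant making the right-hand side strictly less than $1$); this $\delta$ depends only on $T$ and $\gamma$. Then $|v_i(t)| \le 1/2$ for all $t\in[0,\tau]$, which contradicts $|v_i(\tau)|=1$ if $\tau<T_0$. Therefore $\tau = T_0$, and $\sup_{s\in [0,T_0]}|v_i(s)|\le 1/2 \le 1$, completing the proof. I do not anticipate a serious obstacle here: the main subtlety is the standard bootstrap step (arguing that the \emph{a priori} bound $|v_i|\le 1$ improves itself to $|v_i|\le 1/2$), which closes cleanly due to the super-linear, but locally Lipschitz, nonlinearity $x\mapsto xe^x$ in \eqref{eq:pendulum-cplx-estimate}.
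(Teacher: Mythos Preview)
Your proof is correct and follows essentially the same approach as the paper: a continuity/bootstrap argument combined with Gronwall's inequality on the interval where $|v_i|\le 1$, choosing $\delta$ so that the resulting bound is strictly below $1$ to force a contradiction. The only cosmetic difference is that you phrase it via an exit time $\tau$ while the paper argues with the infimum of the set $\{t:|v_i(t)|\ge 1\}$; the choice of $\delta$ and the Gronwall step are the same.
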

This lemma allows us to prove the following lemma (see the detailed proof in Appendix \ref{app:pf45}), which establishes global solutions for the complex extension of the forced pendulum \eqref{eq:pendulum-cplx},
\begin{lemma} \label{lem:pendulum-cplx-existence}
If $\delta$ is chosen as in Lemma \ref{lem:pendulum-impart}, and if $\Vert \Im(u)\Vert_{L^\infty([0,T])} \le \delta$, then the maximal existence interval of the solution of the ODE system \eqref{eq:pendulum-cplx} contains $[0,T]$, and $\sup_{t\in [0,T]} |v_i(t)|\le 1$.
\end{lemma}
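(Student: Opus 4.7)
The plan is to argue by a standard continuation/blow-up dichotomy for ODEs: since the right-hand side in \eqref{eq:pendulum-cplx} is smooth (hence locally Lipschitz) in $(v_r, v_i)$, the Picard-Lindel\"of theorem guarantees a unique solution on some maximal existence interval $[0, T^\ast)$ with $T^\ast \in (0,\infty]$; by the standard extension criterion, if $T^\ast \le T$ then $\limsup_{t\uparrow T^\ast}(|v_r(t)| + |v_i(t)|) = \infty$. The goal is therefore to derive a uniform a priori bound on $(v_r, v_i)$ on $[0, T^\ast)$ which contradicts blow-up, forcing $T^\ast > T$.

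For the imaginary part, I would apply Lemma \ref{lem:pendulum-impart} directly: on any subinterval $[0, T_0] \subset [0, T^\ast)$ on which the solution exists, the hypothesis $\Vert \Im(u) \Vert_{L^\infty([0,T])}\le \delta$ yields $\sup_{s\in [0,T_0]} |v_i(s)| \le 1$. Taking the supremum over $T_0 \uparrow T^\ast$ gives the uniform bound $\sup_{s\in [0,T^\ast)}|v_i(s)| \le 1$, and in particular $|v_{i,1}(s)|\le 1$ throughout.

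With $v_i$ controlled, bounding $v_r$ is immediate from the first equation of \eqref{eq:pendulum-cplx}. Since $|\sin(v_{r,1})\cosh(v_{i,1})|\le \cosh(1)$, the second component satisfies $|v_{r,2}(t)| \le \gamma \cosh(1)\, t + \int_0^t |\Re(u(s))|\,ds \le T(\gamma \cosh(1) + \Vert u\Vert_{L^\infty})$, and then $|v_{r,1}(t)|\le \int_0^t |v_{r,2}(s)|\,ds$ is likewise bounded on $[0,T^\ast)$ by a constant depending only on $T$, $\gamma$, and $\Vert u\Vert_{L^\infty}$. Combined with $|v_i|\le 1$, this yields a uniform a priori bound on $(v_r, v_i)$, contradicting the blow-up criterion; hence $T^\ast > T$, and the claimed bound $\sup_{t\in [0,T]} |v_i(t)|\le 1$ follows by applying Lemma \ref{lem:pendulum-impart} one last time with $T_0 = T$.

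The argument is essentially routine once Lemma \ref{lem:pendulum-impart} is available; the only subtlety worth care is that Lemma \ref{lem:pendulum-impart} is stated as a conditional bound (\emph{assuming} existence on $[0,T_0]$), so it must be applied along an increasing family of subintervals $T_0 \uparrow T^\ast$ rather than at $T_0 = T^\ast$ directly. This is a standard bootstrap and presents no real obstacle. The bound on $v_r$ is independent of any smallness condition on $u$, using only that $\cosh$ evaluated at $|v_{i,1}|\le 1$ is finite, so the explicit size $\cosh(1)$ enters only through the final constant.
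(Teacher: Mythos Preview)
Your proposal is correct and follows essentially the same approach as the paper: both use the blow-up criterion for ODEs, invoke Lemma \ref{lem:pendulum-impart} to bound $v_i$ on the existence interval, and then argue that $v_r$ cannot blow up once $v_i$ is controlled. The paper phrases the last step more tersely (observing that the right-hand side is uniformly Lipschitz in $v_r$, so blow-up must come from $v_i$), while you give the explicit integral bound on $v_r$; these are equivalent, and your version is arguably cleaner since it makes the dependence on $\cosh(1)$ explicit rather than leaving it implicit in a Lipschitz constant.
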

Now assume that $u(t) = u(t;\bm{z})$ is parametrized as in \eqref{eq:pendulum-param}. If $\bm{z} \in E_{\bm{\rho}}$ belongs to a poly-ellipse in $\C^\N$, then clearly
\begin{align} \label{eq:pendulum-adm}
\Vert \Im(u) \Vert_{L^\infty}
\le
\sum_{k=1}^\infty |\Im(z_k)| \underbrace{\alpha_k \Vert \fb_k\Vert_{L^\infty}}_{\le b_{j} \, (k=\kappa(j))}
\le
\sum_{j=1}^\infty (\rho_{\kappa(j)} - 1) b_j.
\end{align}
We recall (cp. Definition \ref{def:be-admissibility}), that a sequence $\bm{\rho} = (\rho_j)_{j\in \N}$, with $\rho_j > 1$ for all $j\in \N$ is called {$(\bm{b},\delta,\kappa)$-admissible}, if 
\begin{align} \label{eq:pendulum-admissible}
\sum_{j=1}^\infty b_j (\rho_{\kappa(j)} - 1) < \delta,
\end{align}
where $b_j$ is defined by \eqref{eq:pendulum-bk}, and where $\delta > 0$ is chosen as in Lemma \ref{lem:pendulum-cplx-existence}. It is now clear from Lemma \ref{lem:pendulum-cplx-existence}, that for any $\bm{z}\in E_{\bm{\rho}}$ with $(\bm{b},\delta,\kappa)$-\emph{admissible} $\bm{\rho}$, the solution $v(\bm{z})\in C([0,T];\C^2)$ of \eqref{eq:pendulum-cplx} is well-defined. Hence, we can define a mapping 
\begin{align} \label{eq:pendulum-holomorphic}
\cF: E_{\bm{\rho}} = \prod_{j=1}^\infty E_{\rho_j} \to L^2_{\C}([0,T]), 
\quad
\cF(\bm{z}) := v(\bm{z}),
\end{align}
where $v(\bm{z})$ is the solution of \eqref{eq:pendulum-cplx} with forcing $U(t) = (u(t),0)$, where $u(t) = u(t;\bm{z})$ is given by \eqref{eq:pendulum-param}. This allows us to prove in Appendix \ref{app:pf46} the following lemma on the holomorphy of the map $\cF$ \eqref{eq:pendulum-holomorphic},

\begin{lemma} \label{lem:pendulum-differentiable}
The mapping $\cF$ defined by \eqref{eq:pendulum-holomorphic} is $(\bm{b},\delta,\kappa)$-holomorphic, according to Definition \ref{def:be-holomorphy}; i.e., for each index $j\in \N$, the componentwise mapping 
\[
E_{\rho_j} \mapsto L^2_{\C}([0,T]), 
\quad 
z_j \mapsto \cF(\bm{z}),
\]
where the $z_k \in E_{\rho_k}$ for $k\ne j$ are held fixed, is (complex-)differentiable. Moreover, there exists a constant $C>0$, such that for any admissible $\bm{\rho}$, we have
\[
\sup_{\bm{z} \in E_{\bm{\rho}}} \Vert \cF(\bm{z}) \Vert_{L^2_\C([0,T])} \le C.
\]
\end{lemma}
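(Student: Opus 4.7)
The plan is to verify the two defining properties of $(\bm{b},\delta,\kappa)$-holomorphy for $\cF$: a uniform $L^2_\C$-bound over admissible poly-ellipses $E_{\bm{\rho}}$, and separate complex differentiability in each coordinate $z_j$. For the uniform bound, Lemma \ref{lem:pendulum-cplx-existence} already supplies $\sup_{t\in[0,T]}|v_i(t)|\le 1$. To control $v_r$, I would integrate the first block of \eqref{eq:pendulum-cplx}, using the trivial estimates $|\sin(v_{r,1})|\le 1$ and $\cosh(v_{i,1})\le \cosh(1)$ together with the fact that combining \eqref{eq:pendulum-adm} with admissibility \eqref{eq:pendulum-admissible} yields
\begin{equation*}
\Vert \Re(u(\slot;\bm{z}))\Vert_{L^\infty([0,T])} \le \sum_{j\in\N}(\rho_{\kappa(j)}-1)b_j + \sum_{j\in\N} b_j \le \delta + \Vert\bm{b}\Vert_{\ell^1}.
\end{equation*}
A direct Gronwall/integration argument then yields $|v_{r,1}(t)|,|v_{r,2}(t)|\le C$ uniformly on $E_{\bm{\rho}}$, with $C$ depending only on $T,\gamma,\delta,\Vert\bm{b}\Vert_{\ell^1}$. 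Hence $\Vert\cF(\bm{z})\Vert_{L^2_\C([0,T])}\le \sqrt{T}\,\Vert v\Vert_{L^\infty([0,T];\C^2)}\le C'$, giving the required uniform bound.

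For separate complex differentiability, I fix $j\in\N$ and all $z_k\in E_{\rho_k}$ for $k\ne j$. I view \eqref{eq:pendulum-cplx} as the complex-analytic ODE $\dot v = G(v) + U(t;\bm{z})$ on $\C^2$, with \emph{entire} right-hand side $G(v_1,v_2)=(v_2,-\gamma\sin(v_1))$ and a forcing $U(t;\bm{z})$ that is linear (hence entire) in $z_j$ through \eqref{eq:pendulum-param}. For $h\in \C$ small enough that $z_j + h \in E_{\rho_j}$, denote by $\bm{z}^{(h)}$ the parameter obtained from $\bm{z}$ by replacing $z_j$ with $z_j+h$, and form the difference quotient $w_h := h^{-1}(v(\slot;\bm{z}^{(h)}) - v(\slot;\bm{z}))$. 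Using the uniform bound above and Gronwall applied to the difference equation, $v(\slot;\bm{z}^{(h)})\to v(\slot;\bm{z})$ in $C([0,T];\C^2)$ as $h\to 0$. Writing
\begin{equation*}
\dot w_h = \left(\int_0^1 DG\bigl(v(\slot;\bm{z}) + s(v(\slot;\bm{z}^{(h)})-v(\slot;\bm{z}))\bigr)\,ds\right)w_h + \alpha_j\fb_j(t)\begin{pmatrix}0\\1\end{pmatrix},
\end{equation*}
the integrand converges uniformly in $t$ to $DG(v(\slot;\bm{z}))$, so standard ODE stability gives $w_h \to w$ in $C([0,T];\C^2)\hookrightarrow L^2_\C([0,T])$, where $w$ is the unique solution of the linearized equation with source $\alpha_j\fb_j(t)(0,1)^T$. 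This produces the complex partial derivative $\partial_{z_j}\cF(\bm{z})\in L^2_\C([0,T])$.

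The one subtlety worth highlighting is ensuring that the limit $w_h \to w$ delivers a genuine \emph{complex} derivative rather than a real Gateaux derivative: this is secured because $G$ is entire in $v$ and $U$ is linear in $z_j$, so the argument above carries through for arbitrary complex $h\to 0$, yielding a $\C$-linear derivative on $E_{\rho_j}$. Together with the uniform bound, this establishes the $(\bm{b},\delta,\kappa)$-holomorphy of $\cF$ in the sense of Definition \ref{def:be-holomorphy}.
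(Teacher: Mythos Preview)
Your proposal is correct and follows essentially the same route as the paper: first bound $|v_i|\le 1$ via Lemma \ref{lem:pendulum-cplx-existence}, then control $v_r$ using $\Vert\Re(u)\Vert_{L^\infty}\le \Vert\bm{b}\Vert_{\ell^1}+\delta$ and Gronwall, and finally obtain separate complex differentiability from the analytic dependence of the right-hand side on $(v,z_j)$. The only cosmetic difference is that the paper dispatches the last step by invoking the general theory of parametric ODEs (writing down the variational equation directly), whereas you spell out the difference-quotient limit explicitly; your added remark that the limit is genuinely $\C$-linear because $G$ is entire and the forcing is affine in $z_j$ is exactly the content of that invocation.
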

As a consequence of Lemma \ref{lem:pendulum-differentiable}, we can now state the following approximation result, which follows from the general approximation result for $(\bm{b},\e,\kappa)$-holomorphic mappings, Theorem \ref{thm:holomorphic-approx}:
\begin{proposition} \label{prop:pendulum-approx}
Let $(\cE,\cD)$ denote the encoder/decoder pair \eqref{eq:holomorphic-encoder}, \eqref{eq:holomorphic-decoder} with $m$ sensors, let $(\cR,\cP)$ denote the reconstruction/projection pair, constructed in Proposition \ref{prop:pendulum-rec}, for a given $p\in \N$. For any $k \in \N$, there exists a constant $C>0$, depending on $k$, the final time $T$ and the probability measure $\mu$, but independent of $m$ and $p$, such that there exists a neural approximator network $\cA: \R^m \to \R^p$ with 
\begin{gather*}
\size(\cA) \le C(1+pm\log(m) \log\log(m)),
\\
\depth(\cA) \le C(1+\log(m) \log\log(m)),
\end{gather*}
and such that the approximation error
\[
\Err_{\cA}
=
\left(
\int_{\R^m}
\Vert \cA(\bm{u}) - \cP \circ \G \circ \cD(\bm{u}) \Vert^2_{\ell^2} \, d(\cE_\#\mu)(\bm{u})
\right)^{1/2},
\]
can be estimated by 
\[
\Err_{\cA} 
\le 
C m^{-k}.
\]
\end{proposition}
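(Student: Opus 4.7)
The plan is to invoke Theorem \ref{thm:holomorphic-approx} directly, since all its hypotheses have been established in the preceding lemmas of this subsection. Concretely, the encoder/decoder pair $(\cE,\cD)$ from Proposition \ref{prop:pendulum-enc} is precisely the one from Section \ref{sec:parametrized-mu}, the identity $\G\circ\cD(\bm u) = \cF(\hat Y(\bm u))$ of \eqref{eq:GD-FY1} holds (as recorded in Proposition \ref{prop:pendulum-enc}), and by Lemma \ref{lem:pendulum-differentiable} the parametrized operator $\cF$ is $(\bm b,\delta,\kappa)$-holomorphic with
\[
b_j = C_\alpha e^{-|\kappa(j)|\ell}, \qquad j\in \N.
\]

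The first main step is to verify the summability hypothesis $\bm{b}\in\ell^q(\N)$ required in Theorem \ref{thm:holomorphic-approx}. Since $\kappa: \N \to \Z$ is the enumeration of the Fourier wavenumbers with $|\kappa(j)|\sim j/2$ as $j\to \infty$, the sequence $(b_j)$ decays exponentially, and consequently $\bm b \in \ell^q(\N)$ for \emph{every} $q \in (0,1]$, with the $\ell^q$-norm controlled by a constant depending only on $C_\alpha$, $\ell$ and $q$.

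The second main step exploits this flexibility in $q$ to achieve an arbitrary algebraic rate. Fixing $k\in \N$, I would choose $q = q(k) \in (0,1)$ small enough so that $s := q^{-1} - 1 \ge k$. Applying Theorem \ref{thm:holomorphic-approx} with this $q$ then produces a neural network $\cA: \R^m \to \R^p$ obeying the stated size and depth bounds, and satisfying
\[
\Err_{\cA} \le C_k \, \Vert \cP\Vert \, m^{-s} \le C_k \, \Vert \cP\Vert \, m^{-k}.
\]
The uniform Lipschitz estimate $\Vert \cP\Vert = \Lip(\cP) \le 2$ from Proposition \ref{prop:pendulum-rec}, valid independently of $p$, then lets me absorb this factor into the constant, yielding the claim.

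I do not anticipate a genuine obstacle here; the work is essentially bookkeeping, tracking that the constant depends only on $k$, $T$ and $\mu$ (through $C_\alpha$, $\ell$, and the derivative bounds \eqref{eq:gderiv}), and not on $p$ or $m$. The one point that merits attention is that the choice $q = q(k)$ — and hence the implicit constant from Theorem \ref{thm:holomorphic-approx} — does depend on $k$, but this dependence is harmless because $k$ is fixed before $m$, $p$ are chosen, and all the underlying constants in Lemma \ref{lem:pendulum-differentiable} and Proposition \ref{prop:pendulum-rec} are independent of $m$ and $p$.
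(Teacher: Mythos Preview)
Your proposal is correct and follows essentially the same approach as the paper: a direct application of Theorem \ref{thm:holomorphic-approx}, combined with the uniform bound $\Vert \cP \Vert = \Lip(\cP) \le 2$ from Proposition \ref{prop:pendulum-rec}. Your explicit verification that $\bm{b}\in\ell^q(\N)$ for every $q\in(0,1)$ and your choice of $q=q(k)$ simply unpack what is already absorbed into the statement and proof of Theorem \ref{thm:holomorphic-approx} (which itself uses the exponential decay of the coefficients to obtain the rate $m^{-k}$ for any $k$), so there is no substantive difference.
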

The proof follows from a direct application of Theorem \ref{thm:holomorphic-approx}, with the observation that $\Vert \cP \Vert = \Lip(\cP) \le 2$ in Proposition \ref{prop:pendulum-rec} is bounded independently of $m$ and $p$.
\subsubsection{Bounds on the DeepONet approximation error \eqref{eq:approxerr}}
We combine propositions \ref{prop:pendulum-enc}, \ref{prop:pendulum-rec}, \ref{prop:pendulum-approx} to state the following theorem on the DeepONet error \eqref{eq:approxerr} for the forced gravity pendulum;
\begin{theorem} \label{thm:pendulum-error}
Consider the DeepONet approximation problem for the gravity pendulum \eqref{eq:pendulum0}, where the forcing $u(t)$ is distributed according to a probability measure $\mu \in \P(L^2([0,T]))$ given as the law of the random field \eqref{eq:pendulum-law}. For any $k,r\in \N$, there exists a constant $C = C(k,r)>0$, and a constant $c>0$, independent of $m$, $p$, such that for any $m, \, p \in \N$, there exists a DeepONet \eqref{eq:donet} with trunk net $\bm{\tr}$ and branch net $\bm{\br}$, such that 
\begin{gather*}
\size(\bm{\tr}) \le C(1+p^3),
\quad
\depth(\bm{\tr}) \le C(1+p\log(p)),
\end{gather*}
and
\begin{gather*}
\size(\bm{\br}) \le C(1+pm \log(m) \log\log( m)), 
\\
\depth(\bm{\br}) \le C(1+ \log(m) \log\log(m)),
\end{gather*}
and such that the DeepONet approximation error \eqref{eq:approxerr} is bounded by 
\begin{equation}
\label{eq:pendulum-err}
\Err 
\le 
C e^{-c\ell m}
+
C m^{-k} 
+
C p^{-r}.
\end{equation}
\end{theorem}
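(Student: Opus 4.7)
The plan is to combine the general error decomposition of Theorem \ref{thm:err-upper-bound} with the three quantitative bounds already established in Propositions \ref{prop:pendulum-enc}, \ref{prop:pendulum-rec}, and \ref{prop:pendulum-approx}. The statement is really a clean bookkeeping corollary of those three results, so the proof reduces to verifying that the Lipschitz constants appearing in the decomposition are controlled uniformly in $m$ and $p$, and that the three individual error contributions add up to the asserted bound \eqref{eq:pendulum-err}.

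First, I would invoke Lemma \ref{lem:pend1} to note that $\G : L^2([0,T]) \to L^2([0,T])$ is Lipschitz continuous, so Theorem \ref{thm:err-upper-bound} applies with $\alpha = 1$ and a finite $\Lip(\G)$. I would then take the encoder/decoder pair $(\cE,\cD)$ from \eqref{eq:holomorphic-encoder}--\eqref{eq:holomorphic-decoder} with $m = 2N+1$ equidistant sensors on $[0,T]$, the trunk-net-induced reconstruction $\cR$ together with its optimal linear projection $\cP$ supplied by Proposition \ref{prop:pendulum-rec}, and the approximator neural network $\cA$ supplied by Proposition \ref{prop:pendulum-approx}. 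The resulting DeepONet $\cN = \cR \circ \cA \circ \cE$ then satisfies
\[
\Err \;\le\; \Lip(\G)\,\Lip(\cR\circ\cP)\,\Err_{\cE} \,+\, \Lip(\cR)\,\Err_{\cA} \,+\, \Err_{\cR}.
\]

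Second, I would insert the three quantitative bounds. Proposition \ref{prop:pendulum-enc} gives $\Err_{\cE} \le C\exp(-\ell\lfloor m/2\rfloor)$, which for a suitable constant $c>0$ (any $c<\ell/2$ works, after absorbing the floor into the constant) is bounded by $C e^{-c\ell m}$. Proposition \ref{prop:pendulum-rec} gives $\Err_{\cR}\le C p^{-r}$ together with the \emph{uniform} Lipschitz bounds $\Lip(\cR),\Lip(\cP)\le 2$, hence $\Lip(\cR\circ\cP)\le 4$, both constants independent of $p$. Proposition \ref{prop:pendulum-approx} provides $\cA$ with $\Err_{\cA}\le C m^{-k}$. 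Summing the three contributions yields \eqref{eq:pendulum-err}.

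Finally, the complexity bounds in the statement follow by simply reading off the trunk net sizes from Proposition \ref{prop:pendulum-rec} and the branch net sizes from Proposition \ref{prop:pendulum-approx}: the branch net is $\bm{\br} = \cA\circ\cE$, but since the encoder $\cE$ is a pure point-evaluation map (no tunable parameters), $\size(\bm{\br})$ and $\depth(\bm{\br})$ coincide with the corresponding quantities for $\cA$. There is no substantive obstacle at this stage; all the genuine work -- the $(\bm{b},\delta,\kappa)$-holomorphy analysis of $\cF$ underlying both the exponential encoding bound and the algebraic approximation bound via Theorem \ref{thm:holomorphic-approx}, together with the Legendre-polynomial-based trunk net construction via Lemma \ref{lem:err-rec-comp} -- has already been carried out. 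The only point requiring a brief check is that $\Lip(\cP)$ entering Proposition \ref{prop:pendulum-approx} (through $\Vert\cP\Vert$) is bounded independently of $m$ and $p$, which is exactly the uniform bound $\Lip(\cP)\le 2$ furnished by Proposition \ref{prop:pendulum-rec}.
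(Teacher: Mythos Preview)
Your proposal is correct and follows exactly the paper's approach: the paper states Theorem \ref{thm:pendulum-error} immediately after Proposition \ref{prop:pendulum-approx} with the single sentence ``We combine propositions \ref{prop:pendulum-enc}, \ref{prop:pendulum-rec}, \ref{prop:pendulum-approx}'', leaving the bookkeeping implicit. Your write-up makes the implicit use of Theorem \ref{thm:err-upper-bound} (with $\alpha=1$ via Lemma \ref{lem:pend1}) and of the uniform Lipschitz bounds $\Lip(\cR),\Lip(\cP)\le 2$ from Proposition \ref{prop:pendulum-rec} explicit, which is precisely what is needed.
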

\begin{remark}
\label{rem:pendulum-comp}
Theorem \ref{thm:pendulum-error} guarantees that for $\epsilon > 0$, a DeepONet approximation error of $\Err \sim \epsilon$ can be achieved provided that $m \gtrsim \max(\ell^{-1} \log(\epsilon^{-1}), \epsilon^{-1/k})$ and $p \gtrsim \epsilon^{-1/r}$. As long as the intuitively obvious restriction that $m\ell \gg 1$ is satisfied, i.e., that the sensors can resolve the typical length scale $\ell$, these requirements can be achieved provided that $m \gtrsim \epsilon^{-1/k}$ and $p \gtrsim \epsilon^{-1/r}$.
In this case, an error $\Err \lesssim \epsilon$ can be achieved by a DeepONet with size of the order of
\begin{equation}
\label{eq:pendulum-comp}
\size(\bm{\tr},\bm{\br}) \sim \epsilon^{-1/r}\left(\epsilon^{-2/r} + \epsilon^{-1/k} \log(\epsilon^{-1})\log\log(\epsilon^{-1})\right).
\end{equation}
As the $k,r\in \N$ were arbitrary, this shows that the required size only grows \emph{sub-algebraically} with $\epsilon^{-1}\to \infty$, i.e. the required size scales asymptotically $\ll\epsilon^{-1/s}$, as $\epsilon\to 0$, for any $s>0$. Given Definition \ref{def:cod}, this clearly implies that the DeepONet approximation for this problem does not suffer from the curse of dimensionality.

We however point out that the implied constants in these asymptotic estimates depend on $k,r$. In particular, these constants might deteriorate as $k,r\to \infty$.  On the other hand, we can fix $k,r$ to be large and conclude from the complexity estimate \eqref{eq:pendulum-comp} that the complexity of the DeepONet in this case can only grow algebraically with the error tolerance $\epsilon$. Given Definition \ref{def:cod} (cp. equation \eqref{eq:bcod}), this suffices to prove that a DeepONet can approximate the operator $\G$ in the forced gravity pendulum problem with the measure $\mu$ \eqref{eq:pendulum-law} by \emph{breaking the curse of dimensionality}.
\end{remark}
\begin{remark}
In \cite{deeponets}, the authors used a Gaussian measure, with a covariance kernel \eqref{eq:quadraticexp-kernel}, as the underlying measure $\mu$ for the forced gravity pendulum. An estimate, analogous to \eqref{eq:pendulum-err} can be proved in this case, but with a super-exponential decay of the encoding error with respect to the number of sensors $m$, given the bound \eqref{eq:enerrg1}. However, the overall complexity still has sub-algebraic asymptotic growth in $\e^{-1}$, as the other error terms scale exactly as in \eqref{eq:pendulum-comp}.

A different underlying measure $\mu$ results from choosing an algebraic decay of the coefficients $\alpha_k$ in \eqref{eq:pendulum-law}. Clearly, the bound \eqref{eq:pendulum-err} will also hold in this case but with an algebraic decay in the encoding error. Nevertheless, the total complexity of the problem still scales algebraically (polynomially) with the error tolerance $\epsilon$ and shows that the resulting DeepONet breaks the curse of dimensionality also in this case.

\end{remark}

\begin{remark}
The estimates on the sizes of the trunk and branch nets rely on expressivity results for ReLU deep neural networks in approximating holomorphic functions \cite{OSZ2019}. These in turn, depend on the results of \cite{Yarotsky2017} for approximation of functions using ReLU networks. These approximation results may not necessarily be optimal and might suggest networks of larger size as well as depth, than what is needed in practice \cite{deeponets}. Moreover, the constants in complexity estimate \eqref{eq:pendulum-comp} depend on the final time $T$ and can grow exponentially in $T$ (with a familiar argument based on the Gr\"onwall's inquality). 
\end{remark}

\subsection{An elliptic PDE: Multi-d diffusion with variable coefficients.}
\label{sec:elliptic}
\subsubsection{Problem formulation.} 
We will consider the following very popular model problem for elliptic PDEs with unknown diffusion coefficients \cite{CDS2011} and references therein. For the sake of definiteness and simplicity, we shall assume a periodic domain $D = \T^d$ in the following. We consider an elliptic PDE with variable coefficients $a$:
\begin{align} \label{eq:random-elliptic}
-\nabla \cdot (a(x) \nabla u(x)) = f(x),
\end{align}
for $u\in H^1(D)$ with suitable boundary conditions, and for fixed $f \in H^{-1}(D)$. 

We also fix a probability measure $\mu$ on the coefficients $a$ on $L^2(D)$, such that $\supp(\mu) \subset L^\infty(D)$. To ensure coercivity of the problem \eqref{eq:random-elliptic}, we will assume that
\begin{align} \label{eq:cond-on-mu}
\mu(\set{a \in L^2(D)}{0 < \lambda(a) \le \Lambda(a) < \infty}) = 1,
\end{align}
where 
\begin{align}
\lambda(a) &:= \essinf_{x\in D} a(x),
\\
\Lambda(a) &:= \esssup_{x \in D} a(x)
\end{align}
denote the essential infimum and supremum of $a$, respectively. To ensure uniqueness of solutions to \eqref{eq:random-elliptic}, we require that $u \in H^1_0(\T^d)$ have zero mean, i.e., that $\int_{\T^d} u(x) \, dx = 0$.  We note that the condition \eqref{eq:cond-on-mu} on $\mu$ is for example satisfied by the log-Gaussian measures commonly employed in hydrology \cite[]{charrier}.

We note that the variable coefficient $a$ can model the rock permeability in a Darcy flow with $u$ modeling the pressure and $f$ a source/injection term. Similarly, $a$ may model variable conductivity in a medium with $u$ modeling the temperature. In many applications, one is interested in inferring the solution $u$ for a given coefficient $a$ as the input. Thus, the \emph{nonlinear} operator $\cG$ maps the input coefficient $a$ into the solution field $u$ of the PDE \eqref{eq:random-elliptic}. The well-definedness of this operator is given in the following Lemma (proved in appendix \ref{app:pf47}),
\begin{lemma}
\label{lem:elliptic1}
Assume that the coefficients $a,a' \in C(\T^d)$ satisfy the uniform coercivity assumption
\[
0< \lambda \le \inf_{x\in \T^d} a(x), \; \inf_{x\in \T^d} a'(x).
\]
Let $u,u'\in H^1_0(\T^d)$ denote the solution to \eqref{eq:random-elliptic} with coefficients $a,a'$, respectively, and with the same right-hand side $f\in L^2(\T^d)$. There exists a constant $C = C(\lambda, \T^d,\Vert f \Vert_{L^2(\T^d)})$, such that
\[
\Vert u -u' \Vert_{L^2(\T^d)}
\le
C\Vert a -a'\Vert_{L^\infty(\T^d)}.
\]
In particular the operator $\G: X \to L^2(\T^d)$, $a \mapsto u$, is Lipschitz continuous for any set $X \subset C(\T^d)$, and satisfying the coercivity assumption.
\end{lemma}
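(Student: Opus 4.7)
The plan is to run a classical energy/duality argument on the difference $w := u - u'$. Subtracting the two weak formulations of \eqref{eq:random-elliptic} (both with the same right-hand side $f$) gives, for every test function $\varphi \in H^1(\T^d)$ with zero mean,
\[
\int_{\T^d} a \nabla w \cdot \nabla \varphi \, dx
=
-\int_{\T^d} (a-a') \nabla u' \cdot \nabla \varphi \, dx.
\]
Since both $u,u'$ have zero mean by our normalization, so does $w$, and hence $w$ itself is an admissible test function.

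Testing with $\varphi = w$ and using the coercivity hypothesis $a(x) \ge \lambda > 0$ together with Cauchy--Schwarz yields
\[
\lambda \|\nabla w\|_{L^2}^2
\le
\int_{\T^d} a |\nabla w|^2 \, dx
\le
\|a-a'\|_{L^\infty} \|\nabla u'\|_{L^2} \|\nabla w\|_{L^2},
\]
so that $\|\nabla w\|_{L^2} \le \lambda^{-1} \|a-a'\|_{L^\infty} \|\nabla u'\|_{L^2}$. To control the last factor independently of $a'$, I would perform the analogous energy estimate on the equation for $u'$ alone: testing against $u'$ and using Poincar\'e's inequality on the zero-mean subspace of $H^1(\T^d)$ (with constant $C_P$),
\[
\lambda \|\nabla u'\|_{L^2}^2
\le
\int_{\T^d} a' |\nabla u'|^2 \, dx
=
\int_{\T^d} f\, u' \, dx
\le
C_P \|f\|_{L^2} \|\nabla u'\|_{L^2},
\]
which gives $\|\nabla u'\|_{L^2} \le C_P \|f\|_{L^2}/\lambda$, a bound depending only on $\lambda$, $f$ and the domain.

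Combining the two estimates and applying Poincar\'e's inequality to $w$ (which, crucially, has zero mean) yields
\[
\|u - u'\|_{L^2}
= \|w\|_{L^2}
\le C_P \|\nabla w\|_{L^2}
\le \frac{C_P^2\,\|f\|_{L^2}}{\lambda^2}\,\|a-a'\|_{L^\infty},
\]
which is the claimed Lipschitz bound with $C = C_P^2 \|f\|_{L^2}/\lambda^2$, a constant depending only on $\lambda$, $\T^d$ and $\|f\|_{L^2}$ as required. The final statement about $\G: X \to L^2(\T^d)$ being Lipschitz follows immediately by taking the supremum over $X \subset C(\T^d)$ satisfying the uniform coercivity bound.

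There is no substantial obstacle: the argument is a textbook Strang-type perturbation estimate. The only points that require care are (i) ensuring the zero-mean normalization so that Poincar\'e's inequality is available on $\T^d$ (otherwise the constant pressure mode destroys uniqueness and coercivity), and (ii) keeping track that the a priori bound on $\|\nabla u'\|_{L^2}$ depends on $a'$ only through the coercivity constant $\lambda$, so that the resulting Lipschitz constant $C$ is uniform over the admissible class of coefficients.
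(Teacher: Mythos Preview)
Your proof is correct and follows essentially the same Strang-type perturbation argument as the paper: derive an equation for $w=u-u'$ with right-hand side $\nabla\cdot((a'-a)\nabla u')$, bound $w$ via the energy/coercivity estimate, and control $\|\nabla u'\|_{L^2}$ by the a priori bound in terms of $\|f\|_{L^2}$ and $\lambda$. The paper phrases the middle step as an $H^{-1}$ elliptic regularity bound while you unpack it explicitly via testing and Poincar\'e, but the content is identical.
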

By Sobolev embedding, Lemma \ref{lem:elliptic1} ensures the well-definedness, and Lipschitz continuity, of the operator $\G: H^s(\T^d) \to L^2(\T^d)$ for any $s > d/2$. We complete the data for a DeepONet approximation problem (Definition \ref{def:data}) by specifying an underlying measure $\mu$. Following standard practice \cite{CDS2011} and references therein, and aided by the fact that we enforce periodic boundary conditions, the underlying measure $\mu$ is the law of a random field $a$, that is expanded in terms of the Fourier basis. More precisely, we assume that we can write $a$ in the following form
\begin{align} \label{eq:law-elliptic}
a(x,Y) 
=
\overline{a}(x) 
+
\sum_{{k}\in \Z^d}
\alpha_{k} Y_k \fb_k(x),
\end{align}
with notation from Appendix \ref{app:Fourier}, and where for simplicity $\bar{a}(x) \equiv 1$ is assumed to be constant. Furthermore, we will consider the case of smooth coefficients $x \mapsto a(x;Y)$, which is ensured by requiring that there exist constants $C_\alpha>0$ and $\ell > 1$, such that  
\begin{align} \label{eq:law-decay1}
|\alpha_k|
\le
C_\alpha \exp(-\ell |k|_\infty),
\quad
\forall \; k \in \Z^d.
\end{align}
Let us define $\bm{b} = (b_1,b_2,\dots)\in \ell^1(\N)$ by 
\begin{align} \label{eq:elliptic-b}
b_j := C_\alpha \exp(-\ell |\kappa(j)|_\infty),
\end{align}
where $\kappa: \N \to \Z^d$ is the enumeration for the standard Fourier basis, defined in appendix \ref{app:Fourier}. Note that by assumption on the enumeration $\kappa$, we have that $b_1\ge b_2 \ge \dots$ is a monotonically decreasing sequence. In the following, we will assume throughout that $\Vert \bm{b} \Vert_{\ell^1} < 1$, ensuring the uniform coercivity condition $\lambda(a) \ge \lambda$ for some $\lambda>0$ and all random coefficients $a = a(\slot; Y)$ in \eqref{eq:random-elliptic}. In \eqref{eq:law-decay1}, the parameter $\ell>0$ can be interpreted as the correlation length scale of the random coefficients. We furthermore assume that the $Y_j \in [-1,1]$ are centered random variables, implying that $\E[a] = \overline{a}$. We let $\mu \in \P(L^2(\T^d))$ denote the law of the random coefficient \eqref{eq:law-elliptic}. By the assumed decay \eqref{eq:law-decay1}, we have $\supp(\mu) \subset C^\infty(\T^d)$. 

Given the above setup, our aim is to find a DeepONet \eqref{eq:donet} which approximates the underlying $\cG$, corresponding to the elliptic PDE \eqref{eq:random-elliptic} efficiently. To this end, we will follow the program outlined at the beginning of this section, and bound the encoding, reconstruction and approximation errors, separately in the following sections. 
\subsubsection{Bounds on the encoding error \eqref{eq:encoding}}
\label{sec:elliptic-encoding}
We are almost in the setup that was already considered in section \ref{sec:parametrized-mu}, with the only difference that we now consider $X = H^s(\T^d)$ for a fixed $s>d/2$, instead of $X=L^2(\T^d)$. We can again consider the Fourier based encoder/decoder pair $(\cE,\cD)$ given by \eqref{eq:holomorphic-encoder} and \eqref{eq:holomorphic-decoder}, respectively. Applying a straightforward extension of Theorem \ref{thm:holomorphic-encoding} to $H^s(\T^d)$, we observe that due to the exponential decay of the (Fourier) coefficients $\alpha_k$, the exponential decay of the pseudo-spectral projection continues to hold also in the $H^s(\T^d)$-norm, yielding the following error estimate for the encoding error $\Err_{\cE}$: 

\begin{proposition} \label{prop:elliptic-encoding}
Given $m \in \N$, let $(\cE, \cD)$ denote the Fourier based encoder/decoder pair $(\cE,\cD)$ given by \eqref{eq:holomorphic-encoder} and \eqref{eq:holomorphic-decoder}, respectively.
There exists a constant $C>0$, depending on $C_\alpha, \,\ell>0$ and on $s>d/2$,  but independent of $m$, and a universal constant $c>0$, independent of $m$ such that 
\begin{align} \label{eq:elliptic-encoding}
\Err_{\cE}
=
\left(
\int_{X}
\Vert 
\cD\circ \cE(u)
- u
\Vert_{H^s(\T^d)}^2
\, d\mu(u)
\right)^{1/2}
\le
C \exp(-c\ell m^{1/d}).
\end{align}
\end{proposition}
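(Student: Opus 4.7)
The strategy is to adapt the proof of Theorem \ref{thm:holomorphic-encoding} from the $L^2(\T^d)$-norm to the $H^s(\T^d)$-norm. The key observation is that the exponential decay of the coefficients $\alpha_k$ assumed in \eqref{eq:law-decay1} dominates the polynomial factor $(1+|k|^2)^{s/2}$ appearing in the $H^s$-norm, so the exponential rate of the encoding error is preserved at the cost of a larger multiplicative constant.

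First, I would express the error pointwise in $Y$ as a Fourier series. Since $\cD \circ \cE(a(\slot;Y)) = \bar a + \sum_{k \in \cK_N} \alpha_k \hat Y_k \fb_k$ is a trigonometric polynomial (from \eqref{eq:holomorphic-decoder}, \eqref{eq:fourier-random}, \eqref{eq:Yhat}), and $a(\slot;Y) = \bar a + \sum_{k\in \Z^d} \alpha_k Y_k \fb_k$ lies in $C^\infty(\T^d)$ $\mu$-almost surely, Parseval's identity gives
\[
\Vert \cD \circ \cE(a) - a \Vert_{H^s}^2
=
\sum_{k \in \cK_N} (1+|k|^2)^s |\alpha_k|^2 |\hat Y_k - Y_k|^2
+
\sum_{k \notin \cK_N} (1+|k|^2)^s |\alpha_k|^2 |Y_k|^2.
\]
The second (tail) sum is controlled by $|Y_k|\le 1$ and \eqref{eq:law-decay1}: for any fixed $c \in (0,2)$ one has
\[
\sum_{|k|_\infty > N} (1+|k|^2)^s \exp(-2\ell|k|_\infty) \le C_s \exp(-c\ell N),
\]
since a fraction of the exponential absorbs the polynomial prefactor.

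Second, for the aliasing term on $k \in \cK_N$ I would use two structural facts: (i) the discrete Fourier transform aliases wavenumbers, so $\hat u_k = \sum_{k' \equiv k \bmod (2N+1)} \alpha_{k'} Y_{k'}$ for $k \in \cK_N$, whence $\alpha_k \tilde Y_k - \alpha_k Y_k = \sum_{k'\equiv k,\, k'\ne k} \alpha_{k'} Y_{k'}$; and (ii) the shrink operator from \eqref{eq:Ytilde} is the metric projection onto $[-1,1]$ and thus $1$-Lipschitz, and since $Y_k \in [-1,1]$ one has $|\hat Y_k - Y_k| = |\mathrm{shrink}(\tilde Y_k) - Y_k| \le |\tilde Y_k - Y_k|$. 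Combining, and using $|Y_{k'}|\le 1$,
\[
|\alpha_k(\hat Y_k - Y_k)|
\le
\Big|\!\sum_{\substack{k' \equiv k \bmod (2N+1)\\ k' \ne k}}\!\! \alpha_{k'} Y_{k'}\Big|
\le
\sum_{\substack{k'\equiv k\\ k'\ne k}} |\alpha_{k'}|.
\]
Any such $k'$ satisfies $|k'|_\infty \ge N+1$, so \eqref{eq:law-decay1} yields the uniform bound $|\alpha_k(\hat Y_k - Y_k)| \le C \exp(-\ell(N+1))$ for $k\in\cK_N$, and summing $(1+|k|^2)^s$ over $\cK_N$ only contributes a polynomial factor in $N$, which is again absorbed by a fraction of the exponential.

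Combining the two bounds and using $N = \Theta(m^{1/d})$ yields a pointwise estimate $\Vert \cD\circ \cE(a) - a \Vert_{H^s}^2 \le C \exp(-c'\ell m^{1/d})$ \emph{uniform in $Y$}, and integration against $\mu$ gives the claim. The only genuine technical subtlety, identical to that in the $L^2$ case of Theorem \ref{thm:holomorphic-encoding}, lies in combining the DFT aliasing identity with the nonexpansivity of $\mathrm{shrink}$ to control the low-wavenumber error by the high-wavenumber tail of $a$; the extension from $L^2$ to $H^s$ then reduces to absorbing the $(1+|k|^2)^{s/2}$ factor into the exponential decay, which is routine and only affects the constant $C$ and $c$.
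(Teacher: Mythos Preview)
Your proposal is correct and follows precisely the approach the paper indicates: the paper's proof of this proposition is simply the remark that a ``straightforward extension of Theorem~\ref{thm:holomorphic-encoding} to $H^s(\T^d)$'' applies because the exponential decay of the Fourier coefficients $\alpha_k$ absorbs the polynomial weight $(1+|k|^2)^{s/2}$, and you have made exactly this argument explicit (splitting into tail and aliasing contributions, using the non-expansivity of $\mathrm{shrink}$ as in Proposition~\ref{prop:ps-spectral}, and bounding the aliased wavenumbers by $|k'|_\infty \ge N+1$). There is nothing to add.
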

\subsubsection{Bounds on the reconstruction error \eqref{eq:decoding}}
We follow the program outlined at the beginning of this section and bound the reconstruction error via smoothness of the image of the operator $\G$ for the elliptic PDE \eqref{eq:random-elliptic}. To this end, we have the following Lemma (proved in Appendix \ref{app:pf48}),
\begin{lemma}
\label{lem:elliptic2}
Let $k\in \N$. Let $u$ be a solution of \eqref{eq:random-elliptic}, with coefficient $a\in C^\infty(\T^d)$, right-hand side $f \in H^k(\T^d)$ and $\lambda = \min_{x\in \T^d} a(x) > 0$. Then for any $k\in \N$, there exists a constant $C>0$, depending only on $k$ and $\lambda$, such that
\[
\Vert u \Vert^2_{H^{k+1}}
\le
C \Vert f \Vert_{H^k}^2 \left( 1 + \Vert a \Vert^{2k}_{C^k} \right).
\]
\end{lemma}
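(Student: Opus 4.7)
The plan is to prove this standard elliptic regularity statement by induction on $k$, bootstrapping in the familiar way but keeping careful track of how the coefficient norm $\|a\|_{C^k}$ enters. The exponent $2k$ appearing on the right-hand side already suggests the structure: each layer of differentiation of the equation produces one extra factor of derivatives of $a$, so after $k$ bootstraps we pick up $k$ such factors, giving $\|a\|_{C^k}^k$ (squared to obtain $\|a\|_{C^k}^{2k}$).

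For the base case $k=0$ the claim reduces to $\|u\|_{H^1}^2\le C\|f\|_{L^2}^2$. This follows by testing the weak form $\int a\nabla u\cdot \nabla \varphi = \int f\varphi$ against $\varphi = u$, using coercivity $a\ge \lambda$ and the Poincar\'e inequality on $\mathbb{T}^d$ (valid because we require $u\in H^1_0(\mathbb{T}^d)$ to have zero mean), giving $\|\nabla u\|_{L^2}\le C(\lambda)\|f\|_{L^2}$.

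For the induction step, assume the estimate holds at level $k-1$. The main computation is to differentiate the PDE: writing $Lu:=-\nabla\cdot(a\nabla u)$ and using the Leibniz rule componentwise, for any multi-index $\alpha$ with $|\alpha|\le k$ the function $w=\partial^{\alpha}u$ satisfies
\[
Lw = \partial^{\alpha}f \;+\; \sum_{\substack{\beta\le \alpha \\ |\beta|\ge 1}}\binom{\alpha}{\beta}\nabla\!\cdot\!\bigl(\partial^{\beta}a\,\nabla \partial^{\alpha-\beta}u\bigr),
\]
i.e. $w$ solves the same elliptic problem with modified right-hand side. The periodic zero-mean setting lets us test this equation against $w$ (after adjusting $w$ by its mean when $|\alpha|=0$, trivially zero for $|\alpha|\ge 1$), integrate by parts in the commutator terms to move one derivative off $\partial^{\alpha-\beta}\nabla u$ onto $\nabla w$, and apply Cauchy--Schwarz together with coercivity. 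This yields, schematically,
\[
\|\nabla\partial^{\alpha}u\|_{L^2}\;\le\; C_\lambda\|\partial^{\alpha}f\|_{L^2} \;+\; C_\lambda\sum_{|\beta|\ge 1}\|\partial^{\beta}a\|_{L^\infty}\,\|\nabla \partial^{\alpha-\beta}u\|_{L^2}.
\]
Summing over $|\alpha|\le k$ and bounding each $\|\partial^{\beta}a\|_{L^\infty}\le \|a\|_{C^k}$ produces $\|u\|_{H^{k+1}}\le C_\lambda\|f\|_{H^k} + C_\lambda\|a\|_{C^k}\|u\|_{H^k}$. Plugging the induction hypothesis $\|u\|_{H^k}\le C\|f\|_{H^{k-1}}(1+\|a\|_{C^{k-1}}^{k-1})$ into this inequality, and using $\|f\|_{H^{k-1}}\le\|f\|_{H^k}$, $\|a\|_{C^{k-1}}\le\|a\|_{C^k}$, gives $\|u\|_{H^{k+1}}\le C\|f\|_{H^k}(1+\|a\|_{C^k}^{k})$; squaring and absorbing cross-terms via $(1+x^k)^2\lesssim 1+x^{2k}$ yields the claimed bound.

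The only real obstacle is the bookkeeping: one must verify that the Sobolev indices in the commutator match (each term in the sum has $|\beta|\ge 1$ derivatives on $a$ and at most $|\alpha|-1+1=|\alpha|$ derivatives on $u$, hence is controlled by $\|a\|_{C^k}\|u\|_{H^k}$) and that the resulting power of $\|a\|_{C^k}$ after $k$ iterations is exactly $k$ and not larger. A convenient alternative would be to rewrite \eqref{eq:random-elliptic} in non-divergence form $-\Delta u = (f+\nabla a\cdot\nabla u)/a$ and iterate $\Delta^{-1}$ on the torus, absorbing factors of $\|1/a\|_{C^k}\lesssim_\lambda 1+\|a\|_{C^k}^k$ via the Fa\`a di Bruno formula; this gives the same exponent with essentially the same bookkeeping.
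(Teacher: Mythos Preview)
Your proposal is correct and follows essentially the same route as the paper: differentiate the equation with a multi-index, test against $\partial^\alpha u$, use coercivity $a\ge\lambda$ to control $\|\nabla\partial^\alpha u\|_{L^2}$ by $\|f\|_{H^k}$ plus a commutator term bounded by $\|a\|_{C^k}\|u\|_{H^k}$, and then iterate this one-step estimate to accumulate the factor $\|a\|_{C^k}^{k}$. The paper carries out the iteration with squared norms and Young's inequality (writing $\|u\|_{H^{\ell+1}}^2\le A\|u\|_{H^\ell}^2+\|f\|_{H^k}^2$ with $A\sim 1+\|a\|_{C^k}^2$ and summing the geometric series), while you phrase it as a direct induction on $k$ with unsquared norms; these are cosmetic differences and the bookkeeping of exponents matches.
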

Given the above smoothness estimate, one can directly apply Proposition \ref{prop:rec-smoothness} and Theorem \ref{thm:smoothness-NN} to obtain the following bound on the reconstruction error,
\begin{proposition} \label{prop:elliptic-reconstruction}
Let $\mu \in \P(L^2(\T^d))$ be a probability measure with $\supp(\mu) \subset C^k(\T^d) \cap L^2(\T^d)$. Assume that there exists $\lambda_0>0$, such that $\mu(\lambda(a) \ge \lambda_0) = 1$, and that $\int_{L^2} \Vert a \Vert_{C^k(\T^d)}^{2k} \, d\mu(a) < \infty$. Define an operator $\G: C(\T^d) \to L^2(\T^d)$ by $a \mapsto u = \G(a)$, where $u$ is the solution of \eqref{eq:random-elliptic} with a smooth right-hand side $f\in C^k(\T^d)$. Then there exists a constant $C(k,\Vert f \Vert_{C^k},\mu)>0$, depending only on $\Vert f \Vert_{C^k}$, $k$, and $\mu$, such that for any $p\in \N$, there exists a trunk net $\bm{\tr}: U \subset \R^n \to \R^p$, $y \mapsto \bm{\tr}(y) = (0,\tr_1(y),\dots, \tr_p(y))$ with
\[
\size(\bm{\tr}) \le Cp(1+\log(p)^2), 
\quad
\depth(\bm{\tr}) \le C(1+\log(p)^2),
\]
such that the corresponding reconstruction 
\[
\cR:\R^p \to L^2(\T^d) \simeq L^2([0,2\pi]^d),
\quad
\cR(\alpha) := \sum_{j=1}^p \alpha_j \tr_j,
\]
 satisfies the following reconstruction error bound:
\begin{align} \label{eq:elliptic-rec}
\Err_{\cR} 
\le C(k,\Vert f \Vert_{C^k},\mu) \, p^{-k/d}.
\end{align}
Furthermore, the reconstruction $\cR$ and the associated projection $\cP: L^2(\T^d) \to \R^p$ given by \eqref{eq:opt-proj} satisfy $\Lip(\cR), \, \Lip(\cP) \le 2$.
\end{proposition}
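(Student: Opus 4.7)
The plan is to reduce Proposition~4.8 to a direct application of Theorem~\ref{thm:smoothness-NN} (the trunk-net reconstruction theorem for operators with smooth image) with $s=k$ and $n=d$. That theorem already delivers all three conclusions of Proposition~4.8—the trunk-net size and depth bounds, the rate $p^{-k/d}$ on $\Err_{\cR}$, and the Lipschitz bounds $\Lip(\cR), \Lip(\cP) \le 2$—once we verify its two hypotheses: that $\G: X \to H^k(\T^d)$ is Lipschitz, and that $\int_X \Vert \G(a)\Vert_{H^k}^2 \, d\mu(a) \le M$ for a finite constant $M = M(k,\Vert f\Vert_{C^k},\mu)$.

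First I would verify the moment bound. Apply Lemma~\ref{lem:elliptic2} with its parameter shifted down by one (i.e.\ with ``$k$'' replaced by $k-1$): for every $a\in \supp(\mu)$, which by hypothesis satisfies $\lambda(a)\ge \lambda_0>0$ and $a\in C^{k}(\T^d)\subset C^{k-1}(\T^d)$, the solution $u=\G(a)$ obeys
\[
\Vert \G(a)\Vert_{H^k}^2 \le C(k,\lambda_0)\,\Vert f\Vert_{H^{k-1}}^2 \bigl(1 + \Vert a\Vert_{C^{k-1}}^{2(k-1)}\bigr).
\]
Integrating against $\mu$ and noting that $\Vert a\Vert_{C^{k-1}}\le \Vert a\Vert_{C^k}$, Hölder's inequality yields $\int \Vert a\Vert_{C^{k-1}}^{2(k-1)}\, d\mu(a) \le \bigl(\int \Vert a\Vert_{C^k}^{2k}\, d\mu(a)\bigr)^{(k-1)/k}<\infty$ by the assumed moment condition. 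This produces the required $M = M(k,\Vert f\Vert_{C^k},\mu)$. (A minor technical step, not a genuine obstacle: Lemma~\ref{lem:elliptic2} is stated for $a\in C^\infty$, but its proof is an elliptic-regularity induction that extends by density to $a\in C^{k-1}$ with the same constants; this should be noted or folded into the statement of Lemma~\ref{lem:elliptic2}.)

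Next I would record the Lipschitz property: Lemma~\ref{lem:elliptic1} gives Lipschitz continuity of $\G: C(\T^d)\to L^2(\T^d)$ uniformly on any set where $\lambda(a)$ is bounded below by $\lambda_0$, hence on $\supp(\mu)$. Theorem~\ref{thm:smoothness-NN} only uses Lipschitz continuity of $\G$ into $L^2$ (the $H^k$ smoothness enters through the moment bound, not through the Lipschitz constant), so this suffices.

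With both hypotheses in hand, Theorem~\ref{thm:smoothness-NN} applied with $s=k$, $n=d$ produces, for every $p\in \N$, a trunk net $\bm{\tr}$ with $\size(\bm{\tr})\le Cp(1+\log(p)^2)$, $\depth(\bm{\tr})\le C(1+\log(p)^2)$, and associated reconstruction/projection pair $(\cR,\cP)$ satisfying $\Err_{\cR}\le C p^{-k/d}$ with $\Lip(\cR),\Lip(\cP)\le 2$, exactly as claimed in Proposition~\ref{prop:elliptic-reconstruction}. The only genuinely interesting input is thus Lemma~\ref{lem:elliptic2}: the operator-to-trunk-net pipeline of Section~\ref{sec:decoding} handles everything else automatically once sufficient (average) Sobolev smoothness of $\G(a)$ is established.
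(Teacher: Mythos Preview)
Your proposal is correct and follows essentially the same route as the paper, which simply records that the result follows by combining Lemma~\ref{lem:elliptic2} (the elliptic regularity estimate) with Proposition~\ref{prop:rec-smoothness} and Theorem~\ref{thm:smoothness-NN}. One minor simplification: rather than applying Lemma~\ref{lem:elliptic2} with the shifted exponent $k-1$ and then invoking H\"older, you can apply it directly at level $k$ to obtain $\Vert \G(a)\Vert_{H^{k+1}}^2 \le C\Vert f\Vert_{H^k}^2(1+\Vert a\Vert_{C^k}^{2k})$, after which the moment hypothesis $\int \Vert a\Vert_{C^k}^{2k}\,d\mu<\infty$ yields the required bound on $\int \Vert \G(a)\Vert_{H^k}^2\,d\mu \le \int \Vert \G(a)\Vert_{H^{k+1}}^2\,d\mu$ immediately.
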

\subsubsection{Bounds on the approximation error \eqref{eq:approximation}}
For choices of the encoder $\cE$ in Proposition \ref{prop:elliptic-encoding}, and the reconstructor $\cR$ in Proposition \ref{prop:elliptic-reconstruction}, the approximation error with approximator network $\cA$ is given by 
\[
\int_{\R^m} \Vert \cA(a_{i}) - \cP \circ \G \circ \cD(a_{i}) \Vert^2 \, d(\cE_\#\mu)(a_{i}),
\]
where the decoder $\cD$ is given by \eqref{eq:holomorphic-decoder}, and the projector $\cP$ has a Lipschitz constant bounded by $\Lip(\cP)\le 2$. 

From \cite[Theorem 1.3]{CDS2011} (see also \cite[Example 2.2]{SchwabZech2019} for a more detailed discussion relevant to the present setting), it follows that the mapping 
\[
\cF: [-1,1]^\N \to L^2(\T^d),
\quad
Y \mapsto \G(a(\slot;Y)),
\]
is $(\bm{b}, \epsilon, \kappa)$-holomorphic according to our Definition \ref{def:be-holomorphy}, with $\bm{b}$ defined by \eqref{eq:elliptic-b}, provided that $\epsilon < 1 - \Vert \bm{b}\Vert_{\ell^1}$, and where $\kappa: \N \to \Z^d$ denotes the enumeration of the standard Fourier basis (cp. appendix \ref{app:Fourier}). Following the discussion in section \ref{sec:be-holomorphy}, such $\cF$ can be efficiently approximated by neural networks. In particular, we can directly apply Theorem \ref{eq:holomorphic-approx},  together with the observation that $\Vert \cP \Vert = \Lip(\cP) \le 2$ is bounded independently of $m$ and $p$, to conclude the following bound on the approximation error \eqref{eq:approximation},
\begin{proposition} \label{prop:elliptic-approx}
Let the operator $\G$ be defined as mapping the coefficient $a$ to the solution $u$ of the elliptic PDE \eqref{eq:random-elliptic} and the measure $\mu$  be the law of the random field \eqref{eq:law-elliptic}. Let the encoder $\cE: C(\T^d) \to \R^m$ be given by \eqref{eq:holomorphic-encoder} and the decoder $\cD: \R^m \to L^2(\T^d)$ be given by \eqref{eq:holomorphic-decoder}. Let the reconstruction/projection pair $(\cR,\cP)$ be given as in Proposition \ref{prop:elliptic-reconstruction} for given $p\in \N$. Then for any $k\in \N$, there exists a constant $C>0$, depending on $k$, but independent of the trunk net size $p$ and number of sensors $m$, such that for any $m, \, p\in \N$, there exists an approximator network $\cA$, with 
\begin{gather}
\begin{aligned}
\mathrm{size}(\cA) &\le C \left(1 + pm \log(m) \log\log(m)\right)
,
\\
\mathrm{depth}(\cA) &\le C \left( 1 + \log(m) \log\log(m)\right)
\end{aligned}
\end{gather}
and approximation error
\begin{align} \label{eq:elliptic-approx}
\cE_\cA
\le
C m^{-k}.
\end{align}
\end{proposition}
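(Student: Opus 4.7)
The plan is to apply the abstract DeepONet approximation result for holomorphic operators, Theorem \ref{thm:holomorphic-approx}, to the present setting. The three ingredients that have to be checked are: (i) compatibility of the encoder/decoder with the identity \eqref{eq:GD-FY1}, which is automatic since we use the pseudo-spectral pair from \eqref{eq:holomorphic-encoder}--\eqref{eq:holomorphic-decoder} by Theorem \ref{thm:holomorphic-encoding}; (ii) the $(\bm{b},\epsilon,\kappa)$-holomorphy of the parametrized operator
\[
\cF: [-1,1]^{\N} \to L^2(\T^d), \qquad \bm{y} \mapsto \G(a(\slot;\bm{y})),
\]
where $a(\slot;\bm{y})$ is the Karhunen--Lo\`eve--type expansion \eqref{eq:law-elliptic}; (iii) a bound on the operator norm $\Vert \cP \Vert$ of the projector associated to the reconstruction $\cR$ of Proposition \ref{prop:elliptic-reconstruction}.

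First, for the sequence $b_j = C_\alpha \exp(-\ell |\kappa(j)|_\infty)$ defined in \eqref{eq:elliptic-b}, the exponential decay assumption \eqref{eq:law-decay1} implies $\bm{b} \in \ell^q(\N)$ for \emph{every} $q\in(0,1)$. Consequently, Theorem \ref{thm:holomorphic-approx}, once applicable, will yield an algebraic approximation rate $m^{-s}$ with $s = q^{-1}-1$ arbitrarily large, i.e.\ for every $k\in\N$ one can choose $q$ small enough that $s\ge k$.

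Second, to establish $(\bm{b},\epsilon,\kappa)$-holomorphy I would invoke \cite[Theorem 1.3]{CDS2011} (see also \cite[Example 2.2]{SchwabZech2019}) in the form adapted to Definition \ref{def:be-holomorphy}. The only subtlety is uniform coercivity along complex parameters: for any $(\bm{b},\epsilon,\kappa)$-admissible poly-radius $\bm{\rho}$, every $\bm{z} \in \cE_{\bm{\rho}} = \prod_j \cE_{\rho_j}$ produces a coefficient $a(\slot;\bm{z})$ whose real part satisfies
\[
\Re\, a(x;\bm{z}) \ge 1 - \sum_{j} b_j\, \rho_{\kappa(j)} \ge 1 - \Vert \bm{b}\Vert_{\ell^1} - \epsilon,
\]
which is bounded away from zero provided $\epsilon < 1 - \Vert \bm{b}\Vert_{\ell^1}$ (such $\epsilon>0$ exists by our standing assumption $\Vert \bm{b}\Vert_{\ell^1} < 1$). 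Coercivity then gives a well-defined solution $u(\slot;\bm{z})\in H^1_{0,\C}(\T^d)$ to the complexified PDE, depending holomorphically on each $z_j$ separately (by the complex implicit function theorem), and uniformly bounded in $L^2_{\C}(\T^d)$ by a constant depending only on $\Vert f\Vert_{H^{-1}}$ and the coercivity constant. This verifies the two conditions \eqref{eq:be-holomorphy}--\eqref{eq:be-boundedness} of Definition \ref{def:be-holomorphy}.

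Third, Theorem \ref{thm:holomorphic-approx} then yields, for each $k\in \N$ and any $m,p\in\N$, an approximator network $\cA: \R^m\to \R^p$ of the form \eqref{eq:holomorphic-approx} satisfying the claimed size and depth bounds, and such that
\[
\Err_{\cA} \le C_k \Vert \cP \Vert \, m^{-k}.
\]
By Proposition \ref{prop:elliptic-reconstruction}, the projection $\cP$ associated to the trunk-net-induced reconstruction satisfies $\Vert \cP \Vert = \Lip(\cP) \le 2$, uniformly in $p$ and $m$, so this constant can be absorbed to obtain the stated estimate \eqref{eq:elliptic-approx}. The only conceptual step beyond a direct invocation of the abstract machinery is the holomorphy verification in step two; since this is a standard result in the parametric UQ literature, it should present no genuine obstacle in this context.
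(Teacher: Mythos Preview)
Your proposal is correct and follows essentially the same approach as the paper: both invoke \cite[Theorem 1.3]{CDS2011} and \cite[Example 2.2]{SchwabZech2019} to establish $(\bm{b},\epsilon,\kappa)$-holomorphy of $\cF$ under the condition $\epsilon < 1 - \Vert \bm{b}\Vert_{\ell^1}$, then apply Theorem \ref{thm:holomorphic-approx} directly, absorbing the factor $\Vert \cP \Vert = \Lip(\cP) \le 2$ from Proposition \ref{prop:elliptic-reconstruction}. Your write-up is in fact slightly more detailed than the paper's, spelling out the coercivity bound on $\Re\, a(x;\bm{z})$ that underlies the cited holomorphy result.
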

\subsubsection{Bounds on the DeepONet approximation error \eqref{eq:approxerr}}
Finally, combining the results of Propositions \ref{prop:elliptic-encoding}, \ref{prop:elliptic-reconstruction}, \ref{prop:elliptic-approx}, we conclude that 

\begin{theorem}
For any $k,r \in \N$, there exists a constant $C>0$, such that for any $m,p\in \N$, there exists a DeepONet $\cN = \cR \circ \cA \circ \cE$ with $m$ sensors, a trunk net $\bm{\tr} = (0,\tr_1, \dots, \tr_p)$ with $p$ outputs and branch net $\bm{\br} = (0,\br_1,\dots, \br_p)$, such that 
\begin{gather*}
\size(\bm{\br}) \le C(1+pm\log(m)\log\log(m)), \\
\depth(\bm{\br}) \le C(1+\log(m) \log\log(m)), 
\end{gather*}
and
\begin{gather*}
\size(\bm{\tr}) \le C(1 + p\log(p)^2) \\
\depth(\bm{\tr}) \le C(1+\log(p)^2)
\end{gather*}
such that the DeepONet approximation error \eqref{eq:approxerr} satisfies
\begin{equation}
\label{eq:elliptic-err}
\Err
\le
C e^{-c\ell m^{\frac{1}{d}}}
+
C m^{-k}
+
C p^{-r}.
\end{equation}
\end{theorem}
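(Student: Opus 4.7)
The plan is a direct application of the error decomposition established in Theorem \ref{thm:err-upper-bound}, combined with the three component bounds proved in Propositions \ref{prop:elliptic-encoding}, \ref{prop:elliptic-reconstruction}, and \ref{prop:elliptic-approx}. Since $s>d/2$ implies the Sobolev embedding $H^s(\T^d)\embeds L^\infty(\T^d)$, Lemma \ref{lem:elliptic1} shows that $\G: H^s(\T^d)\to L^2(\T^d)$ is Lipschitz continuous on $\supp(\mu)$, so the H\"older exponent in Theorem \ref{thm:err-upper-bound} is $\alpha = 1$ and
\[
\Err
\le
\Lip(\G)\,\Lip(\cR\circ\cP)\,\Err_{\cE}
+ \Lip(\cR)\,\Err_{\cA}
+ \Err_{\cR}.
\]

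I would proceed in three steps. First, fix the encoder/decoder pair $(\cE,\cD)$ of \eqref{eq:holomorphic-encoder}--\eqref{eq:holomorphic-decoder} with $m = (2N+1)^d$ sensors; Proposition \ref{prop:elliptic-encoding} then supplies the exponential encoding bound $\Err_{\cE}\le C\exp(-c\ell m^{1/d})$. Second, apply Proposition \ref{prop:elliptic-reconstruction} with smoothness index chosen so that the exponent satisfies $k/d\ge r$; this constructs the trunk net $\bm{\tr}$ (with the claimed size and depth estimates $\size(\bm{\tr})\le C(1+p\log(p)^2)$, $\depth(\bm{\tr})\le C(1+\log(p)^2)$), inducing a reconstruction $\cR$ and associated optimal projection $\cP$ with $\Err_{\cR}\le Cp^{-r}$ and $\Lip(\cR),\Lip(\cP)\le 2$. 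Third, with this particular $(\cE,\cD)$ and $(\cR,\cP)$ in hand, I invoke Proposition \ref{prop:elliptic-approx} to obtain an approximator network $\cA$ with the stated size/depth and with $\Err_{\cA}\le Cm^{-k}$.

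Plugging these three estimates into the error decomposition yields
\[
\Err
\le
C_1 e^{-c\ell m^{1/d}} + C_2 m^{-k} + C_3 p^{-r},
\]
where the constants $C_1,C_2,C_3$ collect the Lipschitz prefactors $\Lip(\G)$, $\Lip(\cR\circ\cP)\le \Lip(\cR)\Lip(\cP)\le 4$, and $\Lip(\cR)\le 2$, all of which are bounded independently of $m$ and $p$. Absorbing these into a single constant $C$ gives the claimed estimate \eqref{eq:elliptic-err}, while the total DeepONet $\cN = \cR\circ\cA\circ\cE$ automatically inherits the branch net size/depth bounds from $\cA$ and the trunk net size/depth bounds from $\bm{\tr}$.

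There is no real obstacle here beyond careful bookkeeping of function spaces: the encoding error must be measured in $H^s(\T^d)$ (rather than $L^2$) in order to match the regularity required by Lemma \ref{lem:elliptic1}; the reconstruction error is measured in $L^2(\T^d)$ as required by Theorem \ref{thm:err-upper-bound}; and the approximation error is measured in $\ell^2(\R^p)$ between $\cA$ and $\cP\circ\G\circ\cD$. All three norms are compatible by the construction of the projection $\cP$ and the Sobolev embedding above, so the assembly step is essentially automatic once the three propositions are in place.
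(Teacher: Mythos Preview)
Your proposal is correct and follows exactly the approach the paper takes: the paper's proof of this theorem consists of a single sentence stating that the result follows by combining Propositions \ref{prop:elliptic-encoding}, \ref{prop:elliptic-reconstruction}, and \ref{prop:elliptic-approx}, which is precisely the assembly you carry out via Theorem \ref{thm:err-upper-bound}. Your careful bookkeeping of the Lipschitz constants and the choice of smoothness index in Proposition \ref{prop:elliptic-reconstruction} to achieve the exponent $r$ is in fact more explicit than the paper itself.
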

As the bound \eqref{eq:elliptic-err} is very similar to the bound \eqref{eq:pendulum-err}, we can directly apply the discussion in Remark \ref{rem:pendulum-comp} to derive the following complexity estimate 
\begin{equation}
\label{eq:elliptic-comp}
\size(\bm{\tr},\bm{\br}) \sim \epsilon^{-1/r}\left(\epsilon^{-2/r} + \epsilon^{-1/k} \log(\epsilon^{-1})\log\log(\epsilon^{-1})\right).
\end{equation}
To derive \eqref{eq:elliptic-comp}, we choose $m \sim \epsilon^{-1/k}$ sensors, and we assume that we are in the regime, where $e^{-c\ell m^{1/d}} \ll m^{-k} \sim \epsilon$. This clearly requires that $m \gg \ell^{-d} |\log(\epsilon)|^d$, i.e., that the $m$ sensors resolve scales of length $\ell>0$ in the $d$-dimensional domain $D=[0,2\pi]^d$. This is a reasonable assumption in low dimensions $d\in \{1,2,3\}$ of the domain $D$, where this is practically feasible.

As the $k,r\in \N$ were arbitrary, this shows that the required size only grows \emph{sub-algebraically} with $\epsilon^{-1}\to \infty$. Thus from Definition \ref{def:cod}, we can conclude that there exists a DeepONet \eqref{eq:donet}, which breaks the curse of dimensionality in approximating the nonlinear operator $\G$ mapping the coefficient $a$ to the solution $u$ of the elliptic PDE \eqref{eq:random-elliptic}.

\begin{remark}
We emphasize that the above operator provides a mapping $\G: C(\T^d) \to L^2(\T^d)$ in \emph{low spatial dimensions $d\in \{1,2,3\}$}, in which case we show that $\G$ can be efficiently approximated by a DeepONet $\cN$. Thus, the ``curse of dimensionality'' here refers to the approximation problem for the high-dimensional (in fact, $\infty$-dimensional) input-to-output mapping $\G$, in accordance with our Definition \ref{def:cod} (cp. also Remark \ref{rem:cod}). Whether DeepONets also provide an efficient approximation for similar operators with large \emph{spatial dimension $d \gg 1$} will be the subject of future research.
\end{remark}

\subsection{Nonlinear parabolic PDE: A reaction-diffusion equation.}
\label{sec:parabolic}
As a prototype for nonlinear reaction-diffusion parabolic type PDEs, we consider the following version of the well-known Allen-Cahn equation which arises in the study of phase transitions in materials such as alloys, 
\begin{gather} \label{eq:allen-cahn}
\left\{
\begin{gathered}
\frac{\partial v}{\partial t} = \Delta v + f(v),
\\
v(t=0) = u,
\end{gathered}
\right.
\end{gather}
where the non-linearity is given by $f(v) = v - v^3$. For the sake of simplicity, the Allen-Cahn PDE \eqref{eq:allen-cahn} is supplemented with periodic boundary conditions on the space-time domain $[0,T]\times \T^d$. For initial data $u = u(x)$ drawn from a probability measure $\mu \in \P(L^2(\T^d))$, our aim in this section  will be to consider the DeepONet approximation of the data-to-solution mapping $\G: L^2(\T^d) \to L^2(\T^d)$, $u \mapsto \G(u) := v(t=T)$, where $v$ solves \eqref{eq:allen-cahn}. 

As a first step, we need to show that the operator $\G$ is well-defined. To this end, we recall some well-known existence and boundedness results for the Allen-Cahn equation \eqref{eq:allen-cahn},
\begin{theorem}[see e.g. { \cite[Cor. 1]{Yang2018}}] \label{thm:ac-boundedness}
Let $v(x,t)$ solve \eqref{eq:allen-cahn}, with initial data $u$. If the initial data satisfies $\Vert u \Vert_{L^\infty_x} \le 1$, then the solution of \eqref{eq:allen-cahn} satisfies $\Vert v(t) \Vert_{L^\infty_x} \le 1$ for all $t\in [0,T]$.
\end{theorem}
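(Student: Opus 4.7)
The plan is to prove Theorem \ref{thm:ac-boundedness} by invoking the \emph{weak maximum principle} for the scalar parabolic equation \eqref{eq:allen-cahn}, exploiting the sign structure of the reaction term. Writing $f(v) = v - v^3 = -v(v-1)(v+1)$, one sees that $f(\pm 1) = 0$, $f(v) \le 0$ for $v\ge 1$, and $f(v) \ge 0$ for $v\le -1$. Hence the constants $v\equiv 1$ and $v\equiv -1$ are, respectively, a stationary supersolution and subsolution of \eqref{eq:allen-cahn}, and the interval $[-1,1]$ is an invariant region under the reaction dynamics. Combined with the diffusive smoothing, this should force the $L^\infty_x$ bound to be preserved by the flow.

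For the upper bound $v(t,\cdot)\le 1$, I would use a truncated energy (Stampacchia) argument. Define
\[
w_+(t) := \tfrac{1}{2}\int_{\T^d} \bigl( v(t,x)-1 \bigr)_+^2 \, dx,
\]
where $(\cdot)_+ = \max(\cdot,0)$. By the initial bound $\Vert u\Vert_{L^\infty}\le 1$, we have $w_+(0) = 0$. Differentiating in $t$, using the equation, and integrating by parts (the boundary terms vanish by periodicity on $\T^d$), yields
\[
\frac{dw_+}{dt}
= -\int_{\T^d} \bigl|\nabla (v-1)_+\bigr|^2 \, dx
+ \int_{\T^d} (v-1)_+\, f(v)\, dx.
\]
The first term is non-positive. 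For the second, note that on the set $\{v > 1\}$ one has $f(v) = -v(v-1)(v+1) < 0$, so the integrand $(v-1)_+ f(v)$ is pointwise non-positive. Consequently $\frac{d}{dt}w_+(t)\le 0$, which combined with $w_+(0)=0$ gives $w_+\equiv 0$, i.e.\ $v(t,x)\le 1$ almost everywhere. The lower bound $v(t,x)\ge -1$ follows from the entirely symmetric argument applied to $w_-(t) := \tfrac{1}{2}\int (v+1)_-^2\,dx$, using $f(v)\ge 0$ on $\{v<-1\}$.

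The main technical obstacle will be justifying the differentiation under the integral sign and the integration by parts at the natural regularity of solutions. For merely $L^\infty$ initial data, $v$ need not be classical, and one works within the standard weak solution framework $v\in L^2(0,T;H^1(\T^d))\cap C([0,T];L^2(\T^d))$ with $\partial_t v\in L^2(0,T;H^{-1}(\T^d))$. The function $(v-1)_+\in L^2(0,T;H^1(\T^d))$ is an admissible test function (by the chain rule for Sobolev functions applied to the Lipschitz truncation $s\mapsto (s-1)_+$), and the standard identity $\langle \partial_t v,(v-1)_+\rangle = \tfrac{d}{dt}w_+$ then justifies the computation rigorously. Alternatively, one can argue by approximation, solving \eqref{eq:allen-cahn} with smoothed initial data and smoothed truncation, deriving the bound at the regularized level, and passing to the limit; or one may simply cite the invariant region theorem for reaction–diffusion equations as in \cite{Yang2018}.
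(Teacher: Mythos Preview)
Your argument is correct: the Stampacchia truncation $w_+(t)=\tfrac12\int_{\T^d}(v-1)_+^2\,dx$ together with the sign condition $f(v)\le 0$ on $\{v>1\}$ yields $\tfrac{d}{dt}w_+\le 0$ and hence $w_+\equiv 0$, and the lower bound is symmetric. The regularity remarks you make about weak solutions and admissibility of $(v-1)_+$ as a test function are the right caveats, and the approximation alternative you mention is equally standard.

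There is nothing to compare against, however, because the paper does not supply its own proof of this statement. Theorem~\ref{thm:ac-boundedness} is stated with an explicit external citation (``see e.g.\ \cite[Cor.~1]{Yang2018}'') and is used as a black-box input to the subsequent regularity and DeepONet approximation analysis for the Allen--Cahn equation. Your write-up therefore provides a self-contained argument where the paper only invokes the literature; either the energy/truncation method you give or a direct comparison-principle argument (observing that $v\equiv \pm 1$ are stationary super/subsolutions) would serve equally well as a substitute for the citation.
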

Note that the Allen-Cahn equation \eqref{eq:allen-cahn} is the $L^2$-gradient flow of a Ginzburg-Landau energy functional with a double-well potential. Normalizing the wells, it makes sense to consider the initial data such that $-1 \leq u(x) \leq 1$ and the above theorem guarantees that the maximum principle holds. Using standard parabolic regularity theory, in appendix \ref{app:pf49}, we prove the following regularity result for the Allen-Cahn equation,
\begin{theorem} \label{thm:ac-regularity}
There exists an increasing function $\eta: [0,\infty) \to [0,\infty)$, $s \mapsto \eta(s)$, with the following property: If $u\in C^{4,\alpha}(\T^d)$, $\alpha \in (0,1)$, is initial data for the Allen-Cahn equation \eqref{eq:allen-cahn} with $\alpha$-H\"older continuous 4th derivatives, and such that $\Vert u \Vert_{L^\infty_x} \le 1$, then the solution $v$ of \eqref{eq:allen-cahn} has H\"older continuous partial derivatives 
\[
\frac{\partial^{k+\ell}v}{\partial t^k \partial x_{i_1}\dots \partial x_{i_\ell}},
\quad
\forall\, i_1,\dots, i_\ell \in \{1,\dots, d\}, \; 2k+\ell \le 4,
\]
and $\Vert v \Vert_{C^{(2,4)}([0,T]\times \T^d)}$ defined by 
\begin{align} \label{eq:mixed-derivatives}
\Vert v \Vert_{C^{(2,4)}([0,T]\times \T^d)}
:=
\max_{x\in \T^d} \Vert v(\slot, x) \Vert_{C^2([0,T])}
+
\max_{t\in [0,T]} \Vert v(t,\slot) \Vert_{C^4(\T^d)},
\end{align}
can be bounded from above
\begin{align}\label{eq:allen-cahn-estimate}
\Vert v \Vert_{C^{(2,4)}([0,T]\times \T^d)}
\le
\eta\left(
\Vert u \Vert_{C^{4,\alpha}(\T^d)}
\right).
\end{align}
\end{theorem}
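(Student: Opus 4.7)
The plan is to combine the maximum principle from Theorem \ref{thm:ac-boundedness} with classical parabolic Schauder estimates and a bootstrap argument. Since $\|u\|_{L^\infty} \le 1$ implies $\|v(t)\|_{L^\infty} \le 1$ for all $t\in[0,T]$, the nonlinearity $f(v) = v - v^3$ and all its derivatives $f^{(k)}(v)$ are uniformly bounded along the solution. This will be the driving observation: the PDE $\partial_t v = \Delta v + f(v)$ is a linear heat equation with a smooth forcing term whose size is controlled by Sobolev/Hölder norms of $v$ itself, opening the door to a bootstrap in regularity.

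First, I would establish spatial regularity by induction on the order of differentiation. Applying $\partial_{x_i}$ to the equation yields $\partial_t(\partial_{x_i} v) = \Delta(\partial_{x_i} v) + f'(v)\partial_{x_i}v$, which is again a linear parabolic equation for $\partial_{x_i} v$, with bounded coefficient $f'(v)$ and initial data $\partial_{x_i} u \in C^{3,\alpha}(\T^d)$. Interior/global parabolic Schauder estimates on the torus (no boundary conditions are needed in the periodic setting) give a bound on $\|\partial_{x_i}v\|_{C^{2+\alpha,1+\alpha/2}}$ in terms of $\|f'(v)\partial_{x_i}v\|_{C^{\alpha,\alpha/2}}$ and $\|\partial_{x_i} u\|_{C^{2,\alpha}}$. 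Iterating this procedure for multi-indices up to order $4$ (using the chain-rule bounds on $f^{(k)}(v)$ in terms of already-controlled lower derivatives) yields a bound of the form
\[
\max_{t\in [0,T]} \Vert v(t,\slot) \Vert_{C^4(\T^d)} \le \eta_1(\Vert u \Vert_{C^{4,\alpha}(\T^d)}),
\]
where $\eta_1$ is increasing (the nonlinear dependence comes from compositions like $f''(v)(\partial v)^2$, each factor already bounded by the previous step).

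Second, I would read off time regularity directly from the PDE. From $\partial_t v = \Delta v + f(v)$, the control on $\|v(t,\slot)\|_{C^2}$ (hence on $\|\Delta v\|_{C^0}$) and on $\|f(v)\|_{C^0}$ gives a pointwise bound on $\partial_t v$. Differentiating the equation once in time yields $\partial_t^2 v = \Delta(\partial_t v) + f'(v)\partial_t v$, and since $\partial_t v = \Delta v + f(v)$ is already controlled in $C^2_x$ by the first step (thanks to the $C^4_x$ bound), we obtain a pointwise bound on $\partial_t^2 v$ as well. This yields $\max_x \|v(\slot, x)\|_{C^2([0,T])} \le \eta_2(\|u\|_{C^{4,\alpha}})$, again with $\eta_2$ increasing.

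The main obstacle is bookkeeping: ensuring the Schauder constants can be assembled into a single monotone function $\eta$ of $\|u\|_{C^{4,\alpha}}$, and that at each bootstrap step the nonlinear forcing $f^{(k)}(v)\cdot(\text{derivatives of } v)$ lies in the required Hölder class. This is where the $L^\infty$ bound from Theorem \ref{thm:ac-boundedness} is essential, as it removes any dependence on the size of $v$ from the prefactors, leaving a controlled polynomial dependence on spatial derivatives. Taking $\eta(s) := \eta_1(s) + \eta_2(s)$ then gives the desired estimate \eqref{eq:allen-cahn-estimate}.
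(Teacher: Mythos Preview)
Your proposal and the paper's proof share the same overall strategy: the maximum principle (Theorem \ref{thm:ac-boundedness}) gives $\Vert v\Vert_{L^\infty}\le 1$, hence $f(v)$ and its derivatives are uniformly bounded along the solution, and one then bootstraps via parabolic Schauder estimates. The paper organizes the bootstrap somewhat differently---it applies Schauder directly to $\partial_t v-\Delta v=f(v)$ and iterates on the regularity of the forcing $f(v)$, rather than differentiating the equation in $x$ and reading off time derivatives afterwards---but that is largely a matter of taste.

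There is, however, a genuine gap at the \emph{base} of your bootstrap. Schauder estimates require the right-hand side (or the lower-order coefficient) to lie in a parabolic H\"older class, not merely in $L^\infty$. In your first step you write the bound on $\Vert\partial_{x_i}v\Vert_{C^{2+\alpha,1+\alpha/2}}$ ``in terms of $\Vert f'(v)\partial_{x_i}v\Vert_{C^{\alpha,\alpha/2}}$''; but this is circular (the unknown $\partial_{x_i}v$ appears on the right), and in any case requires $v$ itself to be H\"older in space-time so that $f'(v)\in C^{\alpha,\alpha/2}$. At this stage the only a priori information available is $\Vert v\Vert_{L^\infty}\le 1$, which is not enough to launch Schauder. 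The paper closes exactly this gap by inserting an $L^p$ step before the Schauder iteration: since $f(v)\in L^\infty$, a strong $L^p$ estimate for the heat equation (Lemma \ref{lem:strongLp}) gives $v\in W^{1,p}$ for large $p$, and Morrey embedding (Corollary \ref{cor:strongLp}) then yields $v\in \cC^{0,\alpha}$ for some $\alpha>0$. Once this kickstart is in place, either your differentiated-equation scheme or the paper's direct iteration $v\in\cC^{\alpha}\Rightarrow f(v)\in\cC^{\alpha}\Rightarrow v\in\cC^{2,\alpha}\Rightarrow f(v)\in\cC^{2,\alpha}\Rightarrow v\in\cC^{4,\alpha}$ goes through and delivers the bound \eqref{eq:allen-cahn-estimate}.
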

As a corollary of the above theorems, we can show in Appendix \ref{app:pf410}, the following Lipschitz continuity of the solution mapping at time $T$: $u \mapsto v(T)$.

\begin{corollary} \label{cor:ac-continuity}
Let $\alpha \in (0,1)$. Let $u,u'\in C^{4,\alpha}(\T^d)$ be such that $\Vert u\Vert_{L^\infty}, \Vert u' \Vert_{L^\infty} \le 1$. Let $v,v'\in C^{(2,4)}(\T)$ denote the solution of \eqref{eq:allen-cahn} with initial data $u,u'$, respectively. There exists a constant $C = C(T)>0$, such that
\[
\Vert v(T) - v'(T) \Vert_{L^2(\T^d)}
\le
C\Vert u - u' \Vert_{L^2(\T^d)}.
\]
\end{corollary}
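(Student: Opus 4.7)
The plan is to reduce the claim to a standard energy estimate for the difference $w := v - v'$. Subtracting the Allen--Cahn equations \eqref{eq:allen-cahn} for $v$ and $v'$, the difference $w$ satisfies
\begin{equation*}
\partial_t w = \Delta w + f(v) - f(v'), \qquad w(0) = u - u',
\end{equation*}
with periodic boundary conditions, where $f(s) = s - s^3$. On the range relevant for the problem, $f$ is globally Lipschitz: by Theorem \ref{thm:ac-boundedness}, the assumption $\Vert u \Vert_{L^\infty}, \Vert u' \Vert_{L^\infty} \le 1$ propagates to give $\Vert v(t) \Vert_{L^\infty_x}, \Vert v'(t) \Vert_{L^\infty_x} \le 1$ for all $t\in [0,T]$. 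Since $|f'(s)| = |1 - 3s^2| \le 2$ for $|s|\le 1$, we obtain the pointwise bound $|f(v) - f(v')| \le 2 |w|$ on $[0,T] \times \T^d$.

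Next I would perform the standard $L^2$-energy estimate. Multiplying the equation for $w$ by $w$, integrating over $\T^d$, and integrating by parts in the Laplacian term (all of which is justified by the regularity supplied by Theorem \ref{thm:ac-regularity}, so that $w \in C^{(2,4)}([0,T]\times \T^d)$), gives
\begin{equation*}
\tfrac{1}{2} \tfrac{d}{dt} \Vert w(t) \Vert_{L^2(\T^d)}^2
=
-\Vert \nabla w(t) \Vert_{L^2(\T^d)}^2
+
\int_{\T^d} (f(v) - f(v')) w \, dx
\le
2 \Vert w(t) \Vert_{L^2(\T^d)}^2,
\end{equation*}
where the Dirichlet term is dropped by nonnegativity and the remaining inner product is controlled via Cauchy--Schwarz together with the Lipschitz bound on $f$.

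Finally, Grönwall's inequality applied to $t \mapsto \Vert w(t) \Vert_{L^2}^2$ yields
\begin{equation*}
\Vert v(T) - v'(T) \Vert_{L^2(\T^d)}^2
\le e^{4T} \Vert u - u' \Vert_{L^2(\T^d)}^2,
\end{equation*}
so the claim holds with $C(T) = e^{2T}$. I expect no real obstacle: the only subtlety is making sure that the nonlinearity $f$ is treated as Lipschitz on the \emph{range} of the solutions rather than globally (since $f$ is not globally Lipschitz on $\R$), which is made possible precisely by the $L^\infty$ maximum principle of Theorem \ref{thm:ac-boundedness}. The $C^{(2,4)}$ regularity in Theorem \ref{thm:ac-regularity} is only used to rigorously justify the differentiation of $\Vert w(t) \Vert_{L^2}^2$ and the integration by parts.
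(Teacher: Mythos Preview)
Your proposal is correct and follows essentially the same approach as the paper: both derive an $L^2$-energy inequality for $w=v-v'$ using the maximum principle (Theorem \ref{thm:ac-boundedness}) to control the cubic nonlinearity, then apply Gr\"onwall. The only cosmetic difference is that the paper factors $f(v)-f(v') = (1-v^2-vv'-v'^2)\,w$ and bounds the prefactor by $4$, yielding $C(T)=e^{4T}$, whereas your mean-value-theorem bound $|f'(s)|\le 2$ on $[-1,1]$ gives the sharper $C(T)=e^{2T}$.
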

It follows from Corollary \ref{cor:ac-continuity} that the mapping $C^{4,\alpha}(\T^d) \cap B^\infty_1 \to L^2(\T^d)$, $u \mapsto v(T)$, where $B^\infty_1 = \set{u \in C^{4,\alpha}(\T^d)}{\Vert u \Vert_{L^\infty}\le 1}$ admits a unique Lipschitz continuous extension 
\[
\G: L^2(\T^d) \cap B^\infty_1 \to L^2(\T^d), \quad u \mapsto \G(u) = v(T),
\]
with $\Lip(\G) \le C = e^{4T}$. In the following, we will discuss the approximation of this mapping $\G$ by DeepONets \eqref{eq:donet}. To this end, we assume that the underlying measure $\mu$ satisfies
\begin{align}
\supp(\mu) 
\subset 
\set{
u \in L^2(\T^d)
}{
\Vert u \Vert_{L^\infty} \le 1
}
\subset L^2_x \cap L^\infty_x.
\end{align}

Motivated by the a priori estimates of Theorems \ref{thm:ac-boundedness} and \ref{thm:ac-regularity}, we shall assume that the initial measure $\mu$ is the law of a random field $u$ of the form 
\begin{align} \label{eq:mu-allen-cahn}
u(x) 
=
\sum_{k\in \Z^d} \alpha_k Y_k \fb_k(x),
\end{align}
in terms of the Fourier basis $\fb_k$ (cf. Appendix \ref{app:Fourier}) and with $Y_k \in [-1,1]$ are iid random variables. To ensure sufficient smoothness of the solutions, we further assume that 
\begin{align} \label{eq:ac-coeff-decay}
|\alpha_k| \le C\exp\left(-\ell|k|_\infty\right), \quad \forall \, k\in \Z^d,
\end{align}
for some $C>0$, and length scale $\ell > 0$; Note that \eqref{eq:ac-coeff-decay} in particular implies that $u \in C^{4,\alpha}(\T^d)$ for some $\alpha > 0$. Furthermore, we shall assume that 
\begin{align} \label{eq:ac-boundedness}
\sum_{k\in \Z^d} |\alpha_k| \le 1,
\end{align}
so that $|u(x)| \le 1$ for all $x \in \T^d$.
\subsubsection{Bounds on the encoding error \eqref{eq:encoding}}
With measure $\mu$ defined by \eqref{eq:mu-allen-cahn}, we can readily apply Theorem \ref{thm:holomorphic-encoding} to obtain the following bounds on the encoding error:
\begin{proposition} \label{prop:ac-encoding-err}
Let $d\in \{2,3\}$. Let $\mu \in \P(L^2(\T^d))$ denote the law of the random field \eqref{eq:mu-allen-cahn} with coefficients $\alpha_k$ satisfying the decay and boundedness assumptions \eqref{eq:ac-coeff-decay}, \eqref{eq:ac-boundedness}. For $N \in \N$, let $x_i$, $i=1,\dots, m=(2N+1)^d$ be an enumeration of the grid points of a regular cartesian grid on $\T^d$. Define the encoder $\cE: C(\T^d) \to \R^m$ by 
\[
\cE(u) = (u(x_1), \dots, u(x_m)),
\]
and define the corresponding decoder $\cD: \R^m \to L^2(\T^d)$ by Fourier interpolation onto Fourier modes $|k|_\infty \le N$. Then the encoding error for the encoder/decoder pair $(\cE,\cD)$ can be bounded by \[
\Err_{\cE} 
\le
C \exp(-c\ell m^{1/d}),
\]
for some constants $C,c>0$, independent of $N$.
\end{proposition}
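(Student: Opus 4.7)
The plan is to observe that this proposition is essentially a direct specialization of Theorem \ref{thm:holomorphic-encoding}, so the main task is simply to verify that the Allen--Cahn initial-data measure $\mu$ fits into the abstract framework of Section \ref{sec:parametrized-mu}, and to invoke that theorem.

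First, I would match notation: the random field \eqref{eq:mu-allen-cahn} is of exactly the form \eqref{eq:fourier-random} with $\bar{u}(x) \equiv 0$, index set $\cJ = \Z^d$, centered random variables $Y_k \in [-1,1]$ (iid by assumption, in particular mean-zero), and coefficients $\alpha_k$ that satisfy the exponential decay \eqref{eq:law-decay} because of \eqref{eq:ac-coeff-decay}, with the same length scale $\ell$ and some constant $C_\alpha$ depending on the constant in \eqref{eq:ac-coeff-decay}. The encoder $\cE$ in the statement, defined by evaluation at the $m=(2N+1)^d$ grid points $x_i$ on the regular Cartesian grid in $\T^d$, coincides with \eqref{eq:holomorphic-encoder}, and the decoder obtained by discrete Fourier interpolation onto the frequencies $|k|_\infty \le N$ coincides with the decoder \eqref{eq:holomorphic-decoder} (where the shrink operator in \eqref{eq:map-coeff} ensures that the reconstructed parameters lie in $[-1,1]^\cJ$, so that the reconstructed field still lies in the support of $\mu$).

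With these identifications, Theorem \ref{thm:holomorphic-encoding} applies verbatim and yields the existence of constants $C,c>0$, independent of $m$, such that
\[
\Err_{\cE}
=
\left(
\int_{L^2(\T^d)}
\Vert \cD\circ\cE(u) - u \Vert_{L^2(\T^d)}^2
\, d\mu(u)
\right)^{1/2}
\le C\exp(-c\ell m^{1/d}),
\]
which is the claimed bound.

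The only point that requires a brief comment is the dimensional restriction $d \in \{2,3\}$ appearing in the statement: this restriction is not needed for the encoding error alone (Theorem \ref{thm:holomorphic-encoding} is stated in arbitrary dimension) and is inherited from the broader Allen--Cahn setting, where $d \in \{2,3\}$ is used in the smoothness/reconstruction steps that follow. Since no step of the present argument is a genuine obstacle, the main care is to check that the boundedness assumption \eqref{eq:ac-boundedness} (which is used to guarantee that $\supp(\mu)$ lies in the admissible set $\{\Vert u \Vert_{L^\infty}\le 1\}$ where the operator $\G$ is well-defined) does not conflict with the Fourier-based encoding; indeed, it does not, since \eqref{eq:ac-boundedness} is a stronger summability hypothesis that is compatible with \eqref{eq:law-decay} and is not used at all in bounding the encoding error.
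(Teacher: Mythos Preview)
Your proposal is correct and follows exactly the approach taken in the paper: the proposition is stated immediately after the sentence ``we can readily apply Theorem \ref{thm:holomorphic-encoding} to obtain the following bounds on the encoding error,'' so the paper's proof is precisely the direct specialization you give. Your additional remarks on the role of \eqref{eq:ac-boundedness} and the dimensional restriction $d\in\{2,3\}$ are accurate and go slightly beyond what the paper spells out.
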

\subsubsection{Bounds on the reconstruction error \eqref{eq:decoding}}
To bound the reconstruction error, we recall that for $u\in C^{4,\alpha}(\T^d)$, we have $\G(u) = v(T) \in C^{4}(\T^d)$, by Theorem \ref{thm:ac-regularity}, with 
\[
\Vert \G(u) \Vert_{C^{4}(\T^d)}
\le
\eta\left(
\Vert u \Vert_{C^{4,\alpha}(\T^d)}
\right)
\le
\eta \left(
\sup_{u\in \supp(\mu)}\Vert u \Vert_{C^{4,\alpha}(\T^d)}
\right)
< \infty,
\]
uniformly bounded from above. It follows that $\Vert \G(u) \Vert_{H^4} \le M$ is uniformly bounded, and by \eqref{eq:rec-smoothness}, the reconstruction error for the Fourier reconstructor/projector pair onto the standard Fourier basis $\fb_1,\dots, \fb_p$, satisfies
\[
\Err_{\cR_{\Fourier}} \le C p^{-4/d}.
\]
From Lemma \ref{lem:Fourier-NN}, we obtain:
\begin{proposition} \label{prop:ac-rec-err}
There exists a constant $C>0$, independent of $p$, such that for any $p\in \N$, there exists a trunk net $\bm{\tr}: U = \T^d \subset \R^d \to \R^p$, with 
\[
\begin{gathered}
\size(\bm{\tr}) \le C p(1 +\log(p)^2), 
\\
\depth(\bm{\tr}) \le C (1 +\log(p)^2),
\end{gathered}
\]
and such that the reconstruction error for $\cR: \R^p \to L^2(\T^d)$, $\cR(\alpha_1, \dots, \alpha_p) = \sum_{k=1}^p \alpha_k \tr_k$, can be bounded by
\begin{align}
\Err_{\cR}
\le
Cp^{-4/d}.
\end{align}
Furthermore, we have $\Lip(\cR),\, \Lip(\cP) \le 2$, where $\cP: L^2(\T^n) \to \R^p$ denotes the optimal projection \eqref{eq:opt-proj} associated with $\cR$.
\end{proposition}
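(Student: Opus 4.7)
The plan is to combine the regularity theory for the Allen–Cahn equation developed earlier in this section with the general smoothness-based reconstruction error estimate, Theorem \ref{thm:smoothness-NN} (which in turn follows by combining Proposition \ref{prop:rec-smoothness} for the Fourier reconstructor with Lemma \ref{lem:Fourier-trunk} that relates the trunk-net reconstruction to the Fourier reconstruction). The key point is that the law of $u\sim\mu$ is concentrated on functions with uniformly bounded $C^{4,\alpha}$-norm, so Theorem \ref{thm:ac-regularity} hands us a uniform $H^4$-bound on the pushforward $\G_\#\mu$, and at that point the proposition reduces to an immediate application of the abstract smoothness estimate with $s=4$, $n=d$.

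More precisely, I would first show that the decay assumption \eqref{eq:ac-coeff-decay} together with the normalization \eqref{eq:ac-boundedness} implies a deterministic bound
\[
\sup_{u\in\supp(\mu)} \Vert u\Vert_{C^{4,\alpha}(\T^d)} \le C_0 < \infty,
\]
for some $\alpha\in(0,1)$; this follows since for any multi-index $|\beta|\le 4$ one has $\partial^\beta u(x) = \sum_{k\in\Z^d} \alpha_k Y_k (ik)^\beta \fb_k(x)$, and the exponential decay $|\alpha_k|\lesssim e^{-\ell|k|_\infty}$ absorbs the polynomial factor $|k|^{|\beta|+\alpha}$ uniformly in $Y\in[-1,1]^{\Z^d}$. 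The $L^\infty$-bound $\Vert u\Vert_{L^\infty}\le 1$, needed so that Theorem \ref{thm:ac-boundedness} and Theorem \ref{thm:ac-regularity} apply, follows directly from \eqref{eq:ac-boundedness}.

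Next, I would invoke Theorem \ref{thm:ac-regularity} pointwise in $u\in\supp(\mu)$ to obtain
\[
\Vert \G(u)\Vert_{H^4(\T^d)} \le C\Vert \G(u)\Vert_{C^4(\T^d)} \le C\,\eta(C_0) =: M < \infty,
\]
uniformly for $u\in\supp(\mu)$, which in particular gives $\int_X \Vert\G(u)\Vert_{H^4}^2\,d\mu(u)\le M^2$. This is exactly the hypothesis required in Theorem \ref{thm:smoothness-NN} with $s=4$ and $n=d$. Applying that theorem yields a trunk net $\bm{\tr}:\T^d\to\R^p$ (with zero bias $\tr_0\equiv 0$) whose size and depth obey
\[
\size(\bm{\tr})\le C p(1+\log(p)^2),\qquad \depth(\bm{\tr})\le C(1+\log(p)^2),
\]
for a constant $C = C(d, M)$ independent of $p$, and such that the associated reconstruction $\cR(\alpha)=\sum_{k=1}^p\alpha_k\tr_k$ satisfies $\Err_{\cR}\le C p^{-4/d}$. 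The Lipschitz bounds $\Lip(\cR),\Lip(\cP)\le 2$ are part of the conclusion of Theorem \ref{thm:smoothness-NN} (inherited from Lemma \ref{lem:Fourier-trunk} / Lemma \ref{lem:Fourier-NN}), so no additional work is needed there.

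There is essentially no hard step: Theorems \ref{thm:ac-boundedness} and \ref{thm:ac-regularity} were set up precisely so that $\G$ smooths the data, and the general machinery of Section \ref{sec:over-rec-smooth} converts smoothness of the pushforward measure into algebraic rates for the trunk-net reconstruction. The only small point requiring care is verifying that the bound $\eta(C_0)$ is \emph{uniform} over $\supp(\mu)$, which is what allows the pointwise estimate \eqref{eq:allen-cahn-estimate} to upgrade to an $L^2(\mu)$-bound on $\Vert\G(u)\Vert_{H^4}$; this is immediate from the monotonicity of $\eta$ and the deterministic bound on $\Vert u\Vert_{C^{4,\alpha}}$ derived above.
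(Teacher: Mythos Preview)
Your proposal is correct and follows essentially the same approach as the paper: the paper also uses Theorem \ref{thm:ac-regularity} and the monotonicity of $\eta$ to get a uniform bound $\Vert \G(u)\Vert_{H^4}\le M$ on $\supp(\mu)$, and then applies the smoothness-based reconstruction estimate (Proposition \ref{prop:rec-smoothness} followed by Lemma \ref{lem:Fourier-NN}, which is exactly what is packaged in Theorem \ref{thm:smoothness-NN}). Your explicit verification of the uniform $C^{4,\alpha}$-bound on $\supp(\mu)$ from the exponential decay \eqref{eq:ac-coeff-decay} is a detail the paper leaves implicit, but the overall argument is the same.
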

\subsubsection{Bounds on the approximation error \eqref{eq:approximation}}
For the last two concrete examples, bounds on the approximation error \eqref{eq:approximation} leveraged the fact that the underlying operator $\G$ was \emph{holomorphic} in an appropriate sense. However, it is unclear if the operator $\G$ for the Allen-Cahn equation \eqref{eq:allen-cahn} is holomorphic. In fact, we have only provided that it is Lipschitz continuous.  Hence, the approximator network $\cA \approx \cP \circ \G \circ \cD: \R^m \to \R^p$ approximates a $m$-dimensional Lipschitz function. General neural network approximation results for such mappings \cite{Yarotsky2017} imply that for a fixed $M > 0$, there exists a neural network $\cA$ with $\size(\cA) = \mathcal{O}(p\epsilon^{-1/m})$, such that $\Vert \cA - \cP \circ \G \circ \cD \Vert_{L^\infty([-M,M]^m)} \le \epsilon$.

Recall that the total error \eqref{eq:approxerr} for the DeepONet is given by \eqref{eq:donetbd} with $\alpha =1$ as $\G$ is Lipschitz. In this case from Proposition \ref{prop:ac-encoding-err}, we have that 
\[
\Err_{\cE} \lesssim \exp(-cm),
\]
we require at least $m \sim \log(\epsilon^{-1})$ sensors to achieve an encoding error $\Err_{\cE} \le \epsilon$. Therefore, the general results of \cite{Yarotsky2017} would suggest that the required size of the approximator network $\cA$ scales at best like $\epsilon^{-|\log(\epsilon)|}$, where we note that the exponent $|\log(\epsilon)|\to \infty$ as $\epsilon \to 0$. Hence, by our Definition \ref{def:cod}, such a DeepONet would incur the curse of dimensionality.

Is it possible for us to break this curse of dimensionality for the Allen-Cahn equation? It turns out that the approach of a recent paper \cite{DRM1} might suggest a way around the obstacle discussed above. Following \cite{DRM1}, we will leverage the fact that neural networks can \emph{emulate} conventional numerical methods for approximating a PDE. To this end, we will consider the following finite difference scheme:
\subsubsection{A convergence finite difference scheme for the Allen-Cahn equation.}
In \cite{TangYang2016}, it has been shown that an implicit-explicit finite difference scheme of the following form
\[
\frac{U^{n+1} - U^n}{\Delta t}
=
D_{\Delta x} U^n 
+
f(U^n),
\]
converges to the exact solution as $\Delta t, \Delta h \to 0$. Here $U^n = (U^n_1,\dots, U^n_m)$, $m \sim (\Delta x)^{-d}$ are approximate values of the solution at time $t = t_n$, i.e. $U^n_i \approx u(x_i,t_n)$, with $x_i$, $i=1,\dots, m = (2N+1)^d$ an enumeration of a cartesian grid with grid size $\Delta x$ on $\T^d$. The values of $U^0$ at $t = 0$ are initialized as 
\[
U^0_i := u(x_i).
\]
The evaluation of the nonlinearity is carried out pointwise, $f(U^n) = (f(U^n_1), \dots, f(U^n_m))$. $D_{\Delta x}$ denotes the discrete matrix of the Laplace operator, whose one-dimensional analogue in the presence of periodic boundary conditions is given by 
\[
\Lambda_{\Delta x}
=
\frac{1}{\Delta x^2}
\begin{pmatrix}
-2 &  1     &        &         & 1  \\
1  & -2     & 1      &         &  \\
   & \ddots & \ddots & \ddots  &  \\
   &        & 1      & -2      & 1 \\
1   &        &       & 1      & -2
\end{pmatrix}_{N \times N}
\]
For $d=2$ dimensions, we can write 
\[
D_{\Delta x} = \Lambda_{\Delta x} \otimes I + I \otimes \Lambda_{\Delta x},
\]
where $I$ is the $m\times m$ unit-matrix $\otimes$ denotes the Kronecker product. For $d=3$, we have
\[
D_{\Delta x} = \Lambda_{\Delta x} \otimes I \otimes I + I \otimes \Lambda_{\Delta x} \otimes I + I \otimes I \otimes \Lambda_{\Delta x}.
\]

For our purposes, we simply note that the update rule of the numerical scheme of \cite{TangYang2016} can be written in the form 
\begin{align} \label{eq:allen-cahn-fd}
U^{n+1} 
=
R_{\Delta t, \Delta x} \left( U^n + \Delta t f(U^n) \right),
\quad
U^0_i = u(x_i), \; i=1,\dots, m.
\end{align}
where $R_{\Delta t, \Delta x} = (I - \Delta t D_{\Delta x})^{-1}$ is a $m\times m$ matrix. We furthermore note the following result of \cite{TangYang2016}:
\begin{theorem}[{\cite[Thm. 2.1]{TangYang2016}}]
\label{thm:ty}
Consider the Allen-Cahn problem \eqref{eq:allen-cahn} with periodic boundary conditions. If the initial value is bounded by $1$, i.e. $\max_{x \in \T^d} |u(x)| \le 1$, then the numerical solution of the fully discrete scheme \eqref{eq:allen-cahn-fd} is also bounded by $1$ in the sense that $\Vert U^n \Vert_{\ell^\infty} \le 1$ for all $n>0$, provided that the stepsize satisfies $0<\Delta t \le \frac12$.
\end{theorem}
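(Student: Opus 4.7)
The plan is to proceed by induction on the time step $n$, exploiting the fact that the scheme \eqref{eq:allen-cahn-fd} naturally splits into an explicit nonlinear step followed by an implicit diffusion step. Writing $V^n := U^n + \Delta t\, f(U^n)$, so that $U^{n+1} = R_{\Delta t,\Delta x} V^n$, the induction hypothesis $\Vert U^n \Vert_{\ell^\infty}\le 1$ will be propagated in two separate sub-steps: (i) $\Vert V^n \Vert_{\ell^\infty} \le 1$, and (ii) $\Vert R_{\Delta t,\Delta x} V^n \Vert_{\ell^\infty} \le \Vert V^n \Vert_{\ell^\infty}$. The base case is immediate, since $U^0_i = u(x_i)$ and $|u| \le 1$ on $\T^d$ by assumption.

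For step (i), I would analyze the scalar map $g(s) := s + \Delta t(s - s^3)$ on $[-1,1]$. Its derivative is $g'(s) = 1 + \Delta t - 3\Delta t\, s^2$, which vanishes only at $s^2 = (1+\Delta t)/(3\Delta t)$. The restriction $\Delta t \le 1/2$ forces $(1+\Delta t)/(3\Delta t) \ge 1$, so $g$ has no interior critical point in $(-1,1)$ and $g'(0) = 1+\Delta t > 0$ shows $g$ is monotonically increasing on $[-1,1]$. Since $g(\pm 1) = \pm 1$, this yields $|g(s)| \le 1$ for all $|s|\le 1$, and therefore, applying $g$ componentwise, $|V^n_i| = |g(U^n_i)|\le 1$ for every $i$.

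For step (ii), I would establish that $R_{\Delta t,\Delta x} = (I - \Delta t D_{\Delta x})^{-1}$ is row-stochastic (non-negative with row sums equal to $1$), which immediately gives $\Vert R_{\Delta t,\Delta x}\Vert_{\ell^\infty\to \ell^\infty}=1$. Non-negativity follows because $I - \Delta t D_{\Delta x}$ is an M-matrix: the diagonal entries equal $1 + 2d\Delta t/(\Delta x)^2 > 0$, the off-diagonal entries are $-\Delta t/(\Delta x)^2 \le 0$, and the matrix is strictly diagonally dominant in the relevant sense (it is invertible because $D_{\Delta x}$ is negative semi-definite, so all eigenvalues of $I-\Delta t D_{\Delta x}$ are $\ge 1$). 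The row-sum property is the observation that the discrete Laplacian annihilates constants, i.e.\ $D_{\Delta x}\mathbf{1} = 0$, hence $(I-\Delta t D_{\Delta x})\mathbf{1} = \mathbf{1}$, which upon applying $R_{\Delta t,\Delta x}$ gives $R_{\Delta t,\Delta x}\mathbf{1}=\mathbf{1}$.

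Combining (i) and (ii) closes the induction: $\Vert U^{n+1}\Vert_{\ell^\infty} = \Vert R_{\Delta t,\Delta x} V^n\Vert_{\ell^\infty} \le \Vert V^n\Vert_{\ell^\infty}\le 1$. The main (minor) obstacle is the elementary but sharp check that the cubic forward step preserves the unit interval precisely under the CFL-like constraint $\Delta t \le 1/2$; everything else is a standard discrete maximum principle argument for M-matrices whose inverses are row-stochastic. In particular, no smoothness of the initial data $u$ is needed beyond the pointwise $L^\infty$ bound, in line with the statement of \cite[Thm.~2.1]{TangYang2016}.
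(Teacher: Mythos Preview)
The paper does not supply its own proof of this theorem: it is quoted verbatim as \cite[Thm.~2.1]{TangYang2016} and used as a black box (the contractivity $\Vert R_{\Delta t,\Delta x}\Vert_{\ell^\infty\to\ell^\infty}\le 1$ is likewise invoked later, in the proof of Lemma~\ref{lem:approx-scheme-err}, with a reference to the same source). Your argument is correct and is precisely the natural two-step induction one expects for IMEX schemes of this form; in particular your analysis of the scalar map $g(s)=s+\Delta t(s-s^3)$ cleanly isolates the role of the constraint $\Delta t\le 1/2$, and the M-matrix/row-stochastic argument for $R_{\Delta t,\Delta x}$ is the standard route to the discrete maximum principle. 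There is nothing to compare against in the present paper, but your write-up would serve as a self-contained replacement for the citation.
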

We also prove in Appendix \ref{app:pf49}, the following convergence result for the scheme \eqref{eq:allen-cahn-fd},
\begin{theorem} \label{thm:ac-approx-err}
Consider the Allen-Cahn problem \eqref{eq:allen-cahn} with periodic boundary conditions. Assume that the solution $v \in C^{(2,4)}([0,T]\times\T^d)$, there exists a constant $C>0$ independent of $v$, $\Delta t$ and $\Delta x$, such that the error $E^n_i := | U^n_i - v(x_i,t_n) |$, $i=1,\dots, m$ is bounded by
\[
\Vert E^n \Vert_{\ell^\infty}
\le 
(\Delta t + \Delta x^2)\exp\left({C t_n \Vert v \Vert_{C^{(2,4)}([0,T] \times \T^d)} }\right).
\]
\end{theorem}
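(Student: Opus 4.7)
My plan is to prove convergence of the IMEX scheme \eqref{eq:allen-cahn-fd} by the classical \emph{consistency plus stability} argument, closed by discrete Gr\"onwall. Writing the scheme in one-step form $(I - \Delta t\, D_{\Delta x})\, U^{n+1} = U^n + \Delta t \, f(U^n)$, the central object will be the truncation error $\tau^n \in \R^m$, defined by inserting the exact solution:
\[
(I - \Delta t\, D_{\Delta x})\, v(\cdot, t_{n+1})\big|_{\text{grid}}
= v(\cdot, t_n)\big|_{\text{grid}} + \Delta t\, f(v(\cdot, t_n))\big|_{\text{grid}} + \Delta t\, \tau^n.
\]
I will show that $\|\tau^n\|_{\ell^\infty}$ is $O(\Delta t + \Delta x^2)$ (consistency), and that the discrete solution operator is $\ell^\infty$-contractive (stability); Gr\"onwall then does the rest.

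For the consistency step, I would Taylor-expand in time, using $v \in C^2_t$, to obtain $\Delta t^{-1}(v(x_i, t_{n+1}) - v(x_i, t_n)) = v_t(x_i, t_{n+1}) + O(\Delta t\, \|v_{tt}\|_\infty)$, and invoke the standard second-order central-difference estimate (which uses only $v \in C^4_x$) to obtain $[D_{\Delta x} v(\cdot, t_{n+1})]_i = \Delta v(x_i, t_{n+1}) + O(\Delta x^2 \, \|v\|_{C^4_x})$. Applying the PDE $v_t = \Delta v + f(v)$ at the point $(x_i, t_{n+1})$ cancels the leading terms and leaves a residual of the form $f(v(x_i, t_{n+1})) - f(v(x_i, t_n))$, which is controlled by $L \Delta t \, \|v_t\|_\infty$ with $L := \sup_{|z|\le 1}|f'(z)| \le 4$. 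Combining these contributions yields $\|\tau^n\|_{\ell^\infty} \le C(\Delta t + \Delta x^2)\|v\|_{C^{(2,4)}}$.

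For stability, the key observation is that $R_{\Delta t,\Delta x} := (I - \Delta t\, D_{\Delta x})^{-1}$ is $\ell^\infty$-contractive. Indeed, $I - \Delta t\, D_{\Delta x}$ is a symmetric M-matrix: strictly diagonally dominant, with positive diagonal $1 + 2d\Delta t/\Delta x^2$ and nonpositive off-diagonal entries $-\Delta t/\Delta x^2$, and moreover each row sums to exactly $1$. This forces $R_{\Delta t,\Delta x}$ to have nonnegative entries and to satisfy $R_{\Delta t,\Delta x}\, \mathbf{1} = \mathbf{1}$, so $\|R_{\Delta t,\Delta x}\|_{\ell^\infty \to \ell^\infty} = 1$. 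This is simply the discrete maximum principle for the implicit heat step.

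Subtracting the defining identities for $U^{n+1}$ and for $v(\cdot, t_{n+1})|_{\text{grid}}$, applying $R_{\Delta t,\Delta x}$, and taking $\ell^\infty$-norms then yields $\|E^{n+1}\|_{\ell^\infty} \le (1 + L\Delta t)\|E^n\|_{\ell^\infty} + \Delta t \|\tau^n\|_{\ell^\infty}$ for the error $E^n := U^n - v(\cdot, t_n)|_{\text{grid}}$. Since $E^0 = 0$, a discrete Gr\"onwall iteration gives $\|E^n\|_{\ell^\infty} \le (\Delta t + \Delta x^2) \exp(\tilde{C} t_n \|v\|_{C^{(2,4)}})$, which is the stated estimate. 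The one delicate point — the main obstacle — is ensuring that $f$ acts with a uniform Lipschitz constant along \emph{both} the exact and the numerical trajectories; this is precisely why the continuous $L^\infty$ bound on $v$ from Theorem \ref{thm:ac-boundedness} and the discrete $\ell^\infty$ bound on $U^n$ from Theorem \ref{thm:ty} (valid provided $\Delta t \le 1/2$) are indispensable.
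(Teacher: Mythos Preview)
Your proposal is correct and follows essentially the same approach as the paper: the paper's proof sketches the truncation error bound $|R(\Delta t,\Delta x)| \le C(\Delta t + \Delta x^2)\|v\|_{C^{(2,4)}}$ via Taylor expansion and then defers the stability/Gr\"onwall part entirely to \cite[Theorem 4.1]{TangYang2016}, whereas you spell out that argument explicitly (the $\ell^\infty$-contractivity of $R_{\Delta t,\Delta x}$ via the M-matrix structure, and the role of the discrete and continuous maximum principles in securing a uniform Lipschitz bound for $f$). Your final bound $C\|v\|_{C^{(2,4)}}\,t_n\,(\Delta t+\Delta x^2)\,e^{Lt_n}$ is readily absorbed into the stated form $(\Delta t+\Delta x^2)\exp(Ct_n\|v\|_{C^{(2,4)}})$ upon using $x\le e^x$ and $\|v\|_{C^{(2,4)}}\ge 1$ (the latter without loss of generality).
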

Next, we proceed to bound the approximation error \eqref{eq:approximation}.  To provide an upper bound on the size of the approximator network $\cA$, we next show that the numerical scheme \eqref{eq:allen-cahn-fd} can be efficiently approximated by a suitable neural network. By ``efficient'', we imply that the required size of the neural network $\cA$ increases at most polynomially with the number of sensor points $m$, rather than exponentially. We begin with the following observation, proved in Appendix \ref{app:pf410},

\begin{lemma} \label{lem:nn-nonlinearity}
Let $f(v) = v-v^3$ be the nonlinearity in the Allen-Cahn equation \eqref{eq:allen-cahn}. There exist constants $C,M>0$, such that for any $\epsilon \in (0,1)$, there exists a ReLU neural network $g_\epsilon: \R \to \R$, $\xi \mapsto g_\epsilon(\xi)$ with $\size(g_\epsilon) \le C(1+|\log(\epsilon)|)$, $\depth(g_\epsilon) \le C(1+|\log(\epsilon)|)$, and such that 
\[
\sup_{\eta \in [-1,1]} |f(\eta) - g_\epsilon(\eta)| < \epsilon,
\]
and $\Lip(g_\epsilon) \le M$.
\end{lemma}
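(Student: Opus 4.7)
The plan is to reduce the approximation of $f(v) = v - v^3$ to the well-known Yarotsky-type ReLU emulation of the squaring map $x \mapsto x^2$, and then handle the Lipschitz constraint by a preliminary clipping of the input. Concretely, I would first invoke Yarotsky's construction (cf.\ the proof of Theorem \ref{thm:yarotsky}, or the classical reference \cite{Yarotsky2017}) to obtain, for any $\delta \in (0,1)$, a ReLU network $\mathrm{Sq}_\delta: [-1,1] \to [0,1]$ satisfying $\sup_{x\in[-1,1]}|\mathrm{Sq}_\delta(x) - x^2| \le \delta$, with $\size(\mathrm{Sq}_\delta), \depth(\mathrm{Sq}_\delta) \le C_0(1+|\log\delta|)$. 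Yarotsky's sawtooth construction yields an approximation that is piecewise linear with slopes bounded by $2$ (since $(x^2)'=2x \in [-2,2]$), so $\Lip(\mathrm{Sq}_\delta) \le 2 + o(1)$ as $\delta \to 0$.

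Using the polarization identity $xy = \tfrac14[(x+y)^2 - (x-y)^2]$, the squaring network yields a multiplication network $\mathrm{Mult}_\delta(x,y)$ that approximates $xy$ uniformly on $[-1,1]^2$ to accuracy $O(\delta)$, again with $\size, \depth = O(1+|\log\delta|)$ and bounded Lipschitz constant. Composing, $\mathrm{Cube}_\delta(v) := \mathrm{Mult}_\delta(v, \mathrm{Sq}_\delta(v))$ approximates $v^3$ on $[-1,1]$ to accuracy $O(\delta)$, with the same log-size/log-depth bounds and with $\Lip(\mathrm{Cube}_\delta) \le M_0$ for an absolute constant $M_0$ (since on $[-1,1]$, $(v^3)' = 3v^2$ is bounded by $3$, and each building block has bounded Lipschitz constant).

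To enforce the Lipschitz bound uniformly on all of $\R$ (not just on $[-1,1]$), I would precompose with the clipping map $\mathrm{clip}(v) := \sigma(v+1) - \sigma(v-1) - 1 \in [-1,1]$, which is exactly a small ReLU network with $\Lip(\mathrm{clip}) \le 1$ and which is the identity on $[-1,1]$. Set
\[
g_\epsilon(v) := \mathrm{clip}(v) - \mathrm{Cube}_\delta(\mathrm{clip}(v)), \qquad \delta := \epsilon.
\]
Then $g_\epsilon$ agrees with $v - \mathrm{Cube}_\epsilon(v)$ on $[-1,1]$, giving the uniform error bound $\sup_{\eta \in [-1,1]}|f(\eta)-g_\epsilon(\eta)| \le \epsilon$, and on the complement the clipping ensures $g_\epsilon$ is Lipschitz with constant $\le 1 + M_0 =: M$.

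The size and depth accountings are immediate: $g_\epsilon$ is a composition and linear combination of $\mathrm{clip}$, $\mathrm{Sq}_\epsilon$, and $\mathrm{Mult}_\epsilon$, each of size and depth $O(1+|\log\epsilon|)$, so $\size(g_\epsilon), \depth(g_\epsilon) \le C(1+|\log\epsilon|)$ for a suitable $C>0$. I do not expect any substantial obstacle; the only delicate point is making the Lipschitz constant of the approximation \emph{uniform} in $\epsilon$, which is handled both by choosing Yarotsky's sawtooth construction (whose piecewise-linear slopes are controlled by the true derivatives) and by the input clipping to guarantee behavior outside $[-1,1]$.
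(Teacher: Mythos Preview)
Your proposal is correct and follows essentially the same approach as the paper: both build on Yarotsky's sawtooth approximation of $x\mapsto x^2$ (with its uniformly bounded Lipschitz constant), use polarization to obtain a multiplication network, and compose to approximate the cubic nonlinearity. The only cosmetic differences are that the paper factors $\eta^3-\eta = \eta(\eta^2-1)$ directly and applies a clipping $\max(-1,\min(1,\cdot))$ to the \emph{output} rather than to the input; your input-clipping variant is arguably cleaner for enforcing a global Lipschitz bound, while the paper's explicit computation $\Lip(f_m)\le 1+\sum_{s\ge 1}2^{-2s}\Lip(g_s)\le 2$ gives a sharper justification than your heuristic ``slopes controlled by the true derivatives.''
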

Thus, the above lemma shows that there exists a small neural network with ReLU activation function that approximates the nonlinearity in the Allen-Cahn equations to high accuracy. In fact, if we use a smooth, i.e. $C^3$ activation function, one can see from Remark \ref{rem:sact} in Appendix \ref{app:pf410} that an even smaller network (with size that does not need to increase with increasing accuracy) suffices to represent this nonlinearity accurately. However, we stick to ReLU activation functions here for definiteness. 
Given $\epsilon > 0$, let $g_\epsilon: \R \to \R$ be a ReLU neural network as in Lemma \ref{lem:nn-nonlinearity}. It is now clear that if $g_\epsilon(\eta)$ is represented by a (small) neural network, then there exists a larger neural network $\cN$, such that 
\[
\cN(\tilde{U}^0)
=
\tilde{U}^n,
\]
where $\tilde{U}^k$, $k=1,\dots, n$ is determined by the recursion relation 
\begin{align} \label{eq:approx-scheme}
\left\{
\begin{aligned}
\tilde{U}^{k+1}
&=
R_{\Delta x, \Delta t} 
\left(
\tilde{U}^k
+
\Delta t 
g_\epsilon(\tilde{U}^k)
\right)
\\
\tilde{U}^0 &\in \R^m.
\end{aligned}
\right.
\end{align}
More precisely, we have the following lemma, proved in Appendix \ref{app:pf411}, for the \emph{emulation} of the finite difference scheme \eqref{eq:approx-scheme} with a suitable ReLU neural network,
\begin{lemma} \label{lem:approx-scheme}
There exists a constant $C>0$, such that for any $m\in \N$, $\epsilon > 0$, there exists a neural network $\cN$ with $\size(\cN)\le Cn(m^2 + m |\log(\epsilon)|)$, and $\depth(\cN) \le C(1+n|\log\epsilon|)$, such that 
$\cN(\tilde{U}^0) = \tilde{U}^n$, maps any initial data $\tilde{U}^0\in \R^m$ to the solution $\tilde{U}^n$ of the recursion \eqref{eq:approx-scheme}.
\end{lemma}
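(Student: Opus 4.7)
The strategy is to construct a single-step sub-network $\cN_{\mathrm{step}}: \R^m \to \R^m$ that realizes the map $\tilde{U} \mapsto R_{\Delta x,\Delta t}(\tilde{U} + \Delta t\, g_\epsilon(\tilde{U}))$ exactly, and then to obtain $\cN$ by composing $n$ copies of $\cN_{\mathrm{step}}$ in series. Since the recursion \eqref{eq:approx-scheme} applies the same mapping at every timestep, this composition implements $\tilde{U}^0 \mapsto \tilde{U}^n$ by construction, so the entire proof reduces to building $\cN_{\mathrm{step}}$ with size $\le C(m^2 + m|\log\epsilon|)$ and depth $\le C(1+|\log\epsilon|)$.

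The building block for $\cN_{\mathrm{step}}$ is the scalar ReLU network $g_\epsilon: \R \to \R$ from Lemma \ref{lem:nn-nonlinearity}, which has $\size(g_\epsilon), \depth(g_\epsilon) \le C(1+|\log\epsilon|)$. Stacking $m$ independent copies of $g_\epsilon$ in parallel yields a sub-network $\R^m \to \R^m$, $\tilde{U} \mapsto g_\epsilon(\tilde{U}) = (g_\epsilon(\tilde{U}_1),\dots, g_\epsilon(\tilde{U}_m))$, of size $\le Cm(1+|\log\epsilon|)$ and unchanged depth $\le C(1+|\log\epsilon|)$. In parallel to this computation, I shuttle each component $\tilde{U}_i$ unchanged through the same depth using the standard ReLU identity representation $x = \sigma(x) - \sigma(-x)$, iterated across layers; bundling each identity block into the same layers used by $g_\epsilon$ keeps the per-component overhead at $O(1+|\log\epsilon|)$ weights, so the combined sub-network has output $(\tilde{U}, g_\epsilon(\tilde{U})) \in \R^{2m}$, depth $\le C(1+|\log\epsilon|)$, and size $\le Cm(1+|\log\epsilon|)$.

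A final affine layer $\R^{2m} \to \R^m$ applies the linear map $(y,z) \mapsto R_{\Delta x,\Delta t} (y + \Delta t\, z)$, whose matrix $[\,R_{\Delta x,\Delta t} \mid \Delta t\, R_{\Delta x,\Delta t}\,]$ is a dense $m\times 2m$ array contributing at most $2m^2$ weights. Adding this layer to the previous construction gives $\cN_{\mathrm{step}}$ with
\[
\size(\cN_{\mathrm{step}}) \le C(m^2 + m|\log\epsilon|), \qquad \depth(\cN_{\mathrm{step}}) \le C(1+|\log\epsilon|).
\]
Sequential composition of $n$ copies then yields $\cN$ satisfying $\cN(\tilde{U}^0) = \tilde{U}^n$, with $\size(\cN) \le Cn(m^2 + m|\log\epsilon|)$ as claimed, and with depth at most $Cn(1+|\log\epsilon|)$. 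In the regime of interest $\epsilon \lesssim e^{-1}$ this is bounded by $C(1+n|\log\epsilon|)$ after relabelling the constant; the trivial case $|\log\epsilon|<1$ can be handled separately by enlarging $C$.

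The main technical point is the careful bookkeeping in the second step: a naive implementation of the identity-propagation alongside $g_\epsilon$ could inflate the width (and thus the per-layer size) by a factor of $|\log\epsilon|$, producing a suboptimal size bound $O(m|\log\epsilon|^2)$ per step. Ensuring that each component's identity copy shares layers with its corresponding $g_\epsilon$ block, so that each of the $O(1+|\log\epsilon|)$ layers has width $O(m)$ and only $O(m)$ new weights, is what gives the sharp $O(m^2 + m|\log\epsilon|)$ bound per step. Everything else amounts to routine application of standard ReLU emulation techniques for affine maps and composition.
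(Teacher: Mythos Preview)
Your proposal is correct and follows essentially the same approach as the paper: build a single-step network $\cN_{\mathrm{step}}$ by parallelizing $m$ copies of $g_\epsilon$, propagating the identity alongside, appending the dense linear layer for $R_{\Delta x,\Delta t}$, and then composing $n$ copies. The paper's proof is slightly terser about the identity-propagation bookkeeping you highlight, but the construction and resulting size/depth bounds are the same.
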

Finally, we estimate the \emph{error} i.e. difference between the result $U^n$ of the exact update rule for the numerical scheme \eqref{eq:allen-cahn-fd}, and the approximate neural network version \eqref{eq:approx-scheme} $\tilde{U}^n$ in the following Lemma (proved in Appendix \ref{app:pf412}).

\begin{lemma} \label{lem:approx-scheme-err}
Let $U^n$, $\tilde{U}^n$ be obtained by \eqref{eq:allen-cahn-fd} and \eqref{eq:approx-scheme}, respectively, with initial data $U^0 = \tilde{U}^0$ such that $\Vert U^0\Vert_{\ell^\infty} \le 1$, and $\Lip(g_\epsilon) \le M$. Then
\[
\Vert 
U^n
-
\tilde{U}^n
\Vert_{\ell^\infty}
\le 
T e^{MT} \epsilon,
\]
where $T=n \, \Delta t$.
\end{lemma}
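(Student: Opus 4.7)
The plan is to run a standard discrete Gronwall argument on the difference $E^n := U^n - \tilde U^n$, using three ingredients: (i) the a priori bound $\|U^n\|_{\ell^\infty} \le 1$ from Theorem \ref{thm:ty}, (ii) the Lipschitz and approximation properties of $g_\epsilon$ from Lemma \ref{lem:nn-nonlinearity}, and (iii) the contractivity of $R_{\Delta t,\Delta x}$ on $\ell^\infty$. Subtracting the recursions \eqref{eq:allen-cahn-fd} and \eqref{eq:approx-scheme}, and using $U^0 = \tilde U^0$, gives
\[
E^{n+1} \;=\; R_{\Delta t,\Delta x}\bigl(E^n + \Delta t\,[f(U^n) - g_\epsilon(\tilde U^n)]\bigr), \qquad E^0 = 0.
\]
I would split the consistency/stability term as $f(U^n) - g_\epsilon(\tilde U^n) = [f(U^n) - g_\epsilon(U^n)] + [g_\epsilon(U^n) - g_\epsilon(\tilde U^n)]$. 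Since Theorem \ref{thm:ty} applies whenever $\Delta t \le 1/2$ and $\|U^0\|_{\ell^\infty}\le 1$, every component of $U^n$ lies in $[-1,1]$, so Lemma \ref{lem:nn-nonlinearity} yields $\|f(U^n) - g_\epsilon(U^n)\|_{\ell^\infty} < \epsilon$; the Lipschitz bound gives $\|g_\epsilon(U^n) - g_\epsilon(\tilde U^n)\|_{\ell^\infty} \le M\|E^n\|_{\ell^\infty}$.

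Next I need $\|R_{\Delta t,\Delta x}\|_{\ell^\infty \to \ell^\infty} \le 1$. This follows from a discrete maximum principle: $I - \Delta t D_{\Delta x}$ has strictly positive diagonal and non-positive off-diagonal entries, is strictly diagonally dominant (with constants preserved, i.e. $(I - \Delta t D_{\Delta x})\mathbf{1} = \mathbf{1}$ because $D_{\Delta x}$ annihilates constants on the periodic grid), so it is an M-matrix with non-negative inverse whose rows sum to $1$. Therefore $R_{\Delta t,\Delta x}$ is a non-negative operator with $R_{\Delta t,\Delta x}\mathbf{1} = \mathbf{1}$, yielding the claimed $\ell^\infty$-contractivity.

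Combining the three ingredients produces the one-step estimate
\[
\|E^{n+1}\|_{\ell^\infty} \;\le\; (1 + M\Delta t)\,\|E^n\|_{\ell^\infty} + \Delta t\,\epsilon.
\]
Iterating from $E^0 = 0$ gives $\|E^n\|_{\ell^\infty} \le \epsilon\,\Delta t\sum_{k=0}^{n-1}(1+M\Delta t)^k \le n\Delta t\,(1+M\Delta t)^{n-1}\epsilon \le T e^{MT}\epsilon$, since $(1+M\Delta t)^{n-1} \le e^{M(n-1)\Delta t} \le e^{MT}$ and $n\Delta t = T$. The only genuinely delicate point is the $\ell^\infty$-stability of $R_{\Delta t,\Delta x}$; everything else is a routine Gronwall-type bookkeeping. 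Note that no a priori bound on $\tilde U^n$ is needed, because the consistency error is evaluated at $U^n$ (which Theorem \ref{thm:ty} controls) and the rest is absorbed by the Lipschitz constant $M$.
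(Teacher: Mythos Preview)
Your proof is correct and follows essentially the same approach as the paper's: the same splitting $f(U^n)-g_\epsilon(\tilde U^n)=[f(U^n)-g_\epsilon(U^n)]+[g_\epsilon(U^n)-g_\epsilon(\tilde U^n)]$, the same use of Theorem~\ref{thm:ty} to keep $U^n$ in $[-1,1]$, the same $\ell^\infty$-contractivity of $R_{\Delta t,\Delta x}$, and the same one-step recursion $\|E^{k+1}\|_{\ell^\infty}\le(1+M\Delta t)\|E^k\|_{\ell^\infty}+\Delta t\,\epsilon$. The only cosmetic differences are that the paper cites \cite{TangYang2016} for the contractivity of $R_{\Delta t,\Delta x}$ (whereas you supply the M-matrix argument yourself), and the paper closes with a discrete Gronwall inequality rather than your direct geometric-sum iteration.
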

After having emulated the finite difference scheme \eqref{eq:allen-cahn-fd} with a ReLU neural network and estimated the error in doing so, we are in a position to state the bounds on the approximation error in the following proposition, proved in Appendix \ref{app:pf413},
\begin{proposition} \label{prop:ac-approx-err}
Let $\mu$ be given as the law of \eqref{eq:mu-allen-cahn}.
Let the encoder/decoder pair $\cE$, $\cD$ be given as in Proposition \ref{prop:ac-encoding-err} for some $m\in \N$. Let $\cR$ be given as in Proposition \ref{prop:ac-rec-err}, for $p \in \N$. There exists a constant $C>0$, independent of $m$ and $p$, such that there exists a neural network $\cA: \R^m \to \R^p$, such that 
\begin{gather*}
\size(\cA) \le C (1+m^{2+2/d} + mp),
\quad
\depth(\cA) \le C(1+m^{2/d} \log(m)),
\end{gather*}
and
\[
\Err_{\cA} \le C m^{-1/d}.
\]
\end{proposition}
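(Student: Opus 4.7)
The plan is to define $\cA$ as the composition of a ReLU neural network $\cN_{\mathrm{FD}}:\R^m\to\R^m$ that emulates the finite-difference scheme \eqref{eq:approx-scheme}, followed by a single affine layer $\cL:\R^m\to\R^p$ representing trigonometric interpolation onto Fourier modes $|k|_\infty\le N$ composed with the (affine) projection $\cP$. Concretely, with $N$ determined by $m=(2N+1)^d$, I would choose a time step $\Delta t$ with $n=\lceil T/\Delta t\rceil$ steps and a tolerance $\epsilon>0$, then invoke Lemma \ref{lem:approx-scheme} to obtain $\cN_{\mathrm{FD}}$ with $\size\le Cn(m^2+m|\log\epsilon|)$ and $\depth\le C(1+n|\log\epsilon|)$. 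Since $\cL$ is affine (its matrix is the product of the discrete inverse Fourier matrix and the projection of the Fourier basis via $\cP$), appending it contributes $O(mp)$ parameters and constant depth.

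For the error bound, I would use that $\bm u:=\cE(u)$ satisfies $U^0_i:=\bm u_i=u(x_i)\in[-1,1]$ by the support assumption on $\mu$, so that Theorem \ref{thm:ty} keeps the exact FD iterates $U^n$ in $[-1,1]^m$, and Theorems \ref{thm:ac-boundedness}--\ref{thm:ac-regularity} together with \eqref{eq:ac-coeff-decay}--\eqref{eq:ac-boundedness} give a \emph{uniform} bound on $\|v_u\|_{C^{(2,4)}([0,T]\times\T^d)}$ for $u\in\supp(\mu)$. Writing $\tilde U^n:=\cN_{\mathrm{FD}}(\bm u)$ and using $\Lip(\cP)\le 2$, the error is estimated as
\[
\|\cA(\bm u)-\cP\circ\G\circ\cD(\bm u)\|_{\ell^2}\le 2\,\|I_N\tilde U^n-v_{\cD(\bm u)}(T)\|_{L^2(\T^d)},
\]
where $I_N:\R^m\to L^2(\T^d)$ denotes trigonometric interpolation. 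I would then split
\[
I_N\tilde U^n-v_{\cD(\bm u)}(T)=I_N(\tilde U^n-U^n)+I_N(U^n-v_u(T)|_{\mathrm{grid}})+(I_N v_u(T)-v_u(T))+(v_u(T)-v_{\cD(\bm u)}(T)),
\]
and control the four terms via (a) the discrete isometry $\|I_N\bm w\|_{L^2}^2=(2\pi)^d m^{-1}\|\bm w\|_{\ell^2}^2$ combined with Lemma \ref{lem:approx-scheme-err} ($\lesssim\epsilon$), (b) the same isometry combined with Theorem \ref{thm:ac-approx-err} ($\lesssim\Delta t+\Delta x^2$), (c) spectral accuracy of $I_N$ on the $C^\infty$ function $v_u(T)$ (super-algebraically small), and (d) Lipschitz dependence of the solution map on the initial datum, combined with the encoding estimate $\|u-\cD(\bm u)\|_{L^2}\lesssim e^{-c\ell m^{1/d}}$ from Proposition \ref{prop:ac-encoding-err}.

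The parameters are then balanced by taking $\Delta x=2\pi/(2N+1)\sim m^{-1/d}$, $\Delta t\sim m^{-2/d}$ so that $n\sim m^{2/d}$, and $\epsilon\sim m^{-1}$. Lemma \ref{lem:approx-scheme} yields $\size(\cN_{\mathrm{FD}})\lesssim m^{2/d}(m^2+m\log m)\sim m^{2+2/d}$ and $\depth(\cN_{\mathrm{FD}})\lesssim m^{2/d}\log m$, matching the stated bounds after including the $O(mp)$ contribution from $\cL$. The FD error is $O(m^{-2/d})$, the scheme-to-network error is $O(m^{-1})$, and the interpolation/encoding remainders are super-algebraically small, so all four contributions are bounded uniformly in $u\in\supp(\mu)$ by $Cm^{-2/d}$, giving $\Err_{\cA}\le Cm^{-1/d}$ in $L^2(\cE_\#\mu)$ (with margin to spare).

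The main technical obstacle I anticipate is the mild $L^\infty$-overshoot of $\cD(\bm u)$ around the bound $1$: Corollary \ref{cor:ac-continuity} applies only for initial data in $\{\|\cdot\|_{L^\infty}\le 1\}$, whereas $\cD(\bm u)=P_N u$ can slightly exceed this. I would handle this by noting that $\|\cD(\bm u)\|_{L^\infty}\le 1+O(N^{-k})$ for any $k$ thanks to the exponential Fourier decay \eqref{eq:ac-coeff-decay}, and extending the Lipschitz estimate of Corollary \ref{cor:ac-continuity} to a slightly enlarged $L^\infty$-ball by the standard Grönwall argument, exploiting the locally quadratic modulus $|f(a)-f(b)|\le(1+a^2+ab+b^2)|a-b|$ of the cubic nonlinearity together with the uniform bound $\|v_u(T)\|_{L^\infty}\le 1$ and the $O(N^{-k})$ initial overshoot.
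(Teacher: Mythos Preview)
Your approach is correct and follows the same overall strategy as the paper: emulate the finite-difference scheme \eqref{eq:allen-cahn-fd} by a ReLU network (Lemma~\ref{lem:approx-scheme}), then compose with an affine map $\R^m\to\R^p$ obtained from an interpolation operator followed by $\cP$. The parameter choices $n\sim m^{2/d}$, $\Delta t\sim m^{-2/d}$ and the resulting size/depth bounds coincide.

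There are two tactical differences worth noting. First, the paper uses \emph{piecewise linear} interpolation for $\cL$, whereas you use trigonometric interpolation $I_N$; your choice exploits the discrete Parseval isometry and the $C^4$-regularity of $v_u(T)$ from Theorem~\ref{thm:ac-regularity}, and in fact yields a sharper bound $O(m^{-2/d})$ rather than the $O(m^{-1/d})$ coming from the linear-interpolation error $O(\Delta x)$ in the paper. Second, for the discrepancy term $v_u(T)-v_{\cD(\bm u)}(T)$ you invoke the operator-level Lipschitz estimate (Corollary~\ref{cor:ac-continuity}) together with the encoding error; the paper instead observes that $u$ and $\cD\circ\cE(u)$ share the same grid values (since $\cE\circ\cD=\Id$), so the \emph{same} FD iterate $U^n$ approximates both $\G(u)$ and $\G\circ\cD\circ\cE(u)$ at the nodes via Theorem~\ref{thm:ac-approx-err}, and the pointwise Lipschitz continuity in $x$ of both solutions then controls the off-grid difference. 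The paper's route avoids Corollary~\ref{cor:ac-continuity} but still tacitly requires regularity of $v_{\cD\circ\cE(u)}$; your explicit identification of the mild $L^\infty$-overshoot of $\cD(\bm u)$ and its harmless size $O(N^{-k})$ is a point the paper glosses over, and your proposed Gr\"onwall fix is the right way to close it in either approach.
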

\begin{remark} \label{rem:ac-higher-order}
Choosing the number of trunk nets $p \lesssim m^{1+2/d}$, it follows from the previous proposition that an approximation error $\cA$ of order $\epsilon$ can be achieved, with $m\sim \epsilon^{-d}$, i.e. a neural network of size 
\[
\size(\cA) =\mathcal{O}(\epsilon^{-2(d+1)}),
\quad
\depth(\cA) = \mathcal{O}(\epsilon^{-2} |\log(\epsilon)|).
\]
This is polynomial in $\epsilon^{-1}$ (recall that $d=2,3$ is a fixed constant independent of the accuracy $\epsilon$), and hence does not suffer from the curse of dimensionality for the infinite-dimensional approximation problem (cp. Definition \ref{def:cod}). We would nevertheless expect that sharper error estimates could be obtained by basing the neural network emulation result on higher-order finite difference methods. In the present work, our main objective is to show that neural networks can break the curse of dimensionality, rather than attempting to establish optimal complexity bounds.
\end{remark}
\subsubsection{Bounds on the DeepONet approximation error \eqref{eq:approxerr}}
Combining Propositions \ref{prop:ac-encoding-err}, \ref{prop:ac-rec-err}, \ref{prop:ac-approx-err}, we can now state the following theorem

\begin{theorem} \label{thm:ac-error}
Consider the DeepONet approximation problem for the Allen-Cahn equation \eqref{eq:allen-cahn}, where the initial data $u$ is distributed according to a probability measure $\mu \in \P(L^2(\T^d))$ is the law of the random field \eqref{eq:mu-allen-cahn}. There exist constants $C, \, c> 0$, such that for any $m, \, p \in \N$, there exists a DeepONet \eqref{eq:donet} with trunk net $\bm{\tr}$ and branch net $\bm{\br}$, such that 
\begin{gather*}
\size(\bm{\tr}) \le C(1 + p\log(p)^2),
\quad
\depth(\bm{\tr}) \le C(1+\log(p)^2),
\end{gather*}
and
\begin{gather*}
\size(\bm{\br}) \le C(1+m^{2+2/d}+pm), 
\\
\depth(\bm{\br}) \le C(1+ \log(m) \log\log(m)),
\end{gather*}
and such that the DeepONet approximation \eqref{eq:approxerr} is bounded by 
\begin{equation}
\label{eq:ac-err}
\Err 
\le 
C\exp(-c\ell m^{1/d}) + C p^{-4/d} + Cm^{-1/d}.
\end{equation}
\end{theorem}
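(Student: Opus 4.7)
The plan is to assemble the three component estimates (encoding, reconstruction, approximation) that were established in Propositions \ref{prop:ac-encoding-err}, \ref{prop:ac-rec-err}, and \ref{prop:ac-approx-err}, and to feed them into the general error decomposition of Theorem \ref{thm:err-upper-bound}. Specifically, given $m,p\in\N$, I would first invoke Proposition \ref{prop:ac-encoding-err} to obtain the point-evaluation encoder $\cE$ on a regular cartesian grid of $m=(2N+1)^d$ nodes on $\T^d$, together with the Fourier interpolation decoder $\cD$, satisfying $\Err_{\cE}\le C\exp(-c\ell m^{1/d})$. Next, I would apply Proposition \ref{prop:ac-rec-err} to obtain a trunk net $\bm{\tr}$ with $\size(\bm{\tr})\le Cp(1+\log(p)^2)$ and $\depth(\bm{\tr})\le C(1+\log(p)^2)$ whose induced reconstruction $\cR$ and optimal projection $\cP$ satisfy $\Err_{\cR}\le Cp^{-4/d}$ and $\Lip(\cR),\Lip(\cP)\le 2$. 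Finally, I would invoke Proposition \ref{prop:ac-approx-err} — which is stated precisely for this encoder/decoder and reconstruction/projection — to obtain an approximator ReLU network $\cA:\R^m\to\R^p$ with $\size(\cA)\le C(1+m^{2+2/d}+mp)$, $\depth(\cA)\le C(1+m^{2/d}\log(m))$, and $\Err_{\cA}\le Cm^{-1/d}$.

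With these ingredients in hand, I would define the DeepONet $\cN := \cR\circ\cA\circ\cE$, whose branch net is $\bm{\br}=\cA\circ\cE$. Since $\cE$ consists purely of point evaluations and contributes no tunable parameters, $\size(\bm{\br})$ and $\depth(\bm{\br})$ are controlled by $\size(\cA)$ and $\depth(\cA)$, respectively, matching the bounds claimed for $\bm{\br}$ in the theorem statement. To bound the total error, I would apply Theorem \ref{thm:err-upper-bound} with $\alpha=1$, using the Lipschitz continuity of $\G:L^2(\T^d)\cap B^\infty_1\to L^2(\T^d)$ established in Corollary \ref{cor:ac-continuity}, which gives $\Lip(\G)\le e^{4T}$. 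This yields
\[
\Err \le \Lip(\G)\,\Lip(\cR\circ\cP)\,\Err_{\cE} + \Lip(\cR)\,\Err_{\cA} + \Err_{\cR},
\]
and since $\Lip(\cR\circ\cP)\le\Lip(\cR)\Lip(\cP)\le 4$ and $\Lip(\G)\le e^{4T}$ are $\mathcal{O}(1)$, substitution of the three component bounds yields \eqref{eq:ac-err} after absorbing all prefactors into a single constant $C$.

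The proof is essentially a bookkeeping exercise, and there is no substantive obstacle: all nontrivial work has been discharged in the three preparatory propositions and in Corollary \ref{cor:ac-continuity}. The only subtlety worth verifying carefully is compatibility of the three constructions — the decoder $\cD$ and projector $\cP$ used to define $\Err_{\cE}$ and $\Err_{\cR}$ must be the same ones that enter the definition of $\Err_{\cA}$ in the diagram of Section \ref{sec:over-approx}. This compatibility is built into the statement of Proposition \ref{prop:ac-approx-err}, which explicitly refers to the encoder/decoder of Proposition \ref{prop:ac-encoding-err} and the reconstruction of Proposition \ref{prop:ac-rec-err}. A minor additional point is that the measure $\mu$ is supported in $\{\Vert u\Vert_{L^\infty}\le 1\}$ by virtue of \eqref{eq:ac-boundedness}, so $\G$ may be evaluated on $\supp(\mu)$ using its Lipschitz extension, ensuring that Theorem \ref{thm:err-upper-bound} applies with the Lipschitz constant $e^{4T}$ uniformly in $u\sim\mu$.
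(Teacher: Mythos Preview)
Your proposal is correct and matches the paper's approach exactly: the paper simply states that the theorem follows by ``Combining Propositions \ref{prop:ac-encoding-err}, \ref{prop:ac-rec-err}, \ref{prop:ac-approx-err}'', and you have spelled out precisely how this combination proceeds via Theorem \ref{thm:err-upper-bound} and Corollary \ref{cor:ac-continuity}. One small caveat: the depth bound $\depth(\cA)\le C(1+m^{2/d}\log(m))$ from Proposition \ref{prop:ac-approx-err} does not actually match the branch-net depth bound $C(1+\log(m)\log\log(m))$ stated in the theorem, so your claim that the bounds ``match'' glosses over what appears to be an inconsistency in the paper's theorem statement rather than a flaw in your argument.
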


\begin{remark}
The bound \eqref{eq:ac-err} guarantees that for $\epsilon > 0$, a DeepONet approximation error of $\Err \sim \epsilon$ can be achieved provided that $m \gtrsim \max(\ell^{-d} \log(\epsilon^{-1})^d, \epsilon^{-d})$ and $p \gtrsim \epsilon^{-d/4}$. Assuming that $m^{1/d}\ell \gg 1$ is satisfied, i.e., that the sensors can resolve the typical correlation length scale $\ell$ in the $d$-dimensional domain $D=\T^d$, this can be ensured provided that $m \gtrsim \epsilon^{-d}$ and $p \gtrsim \epsilon^{-d/4}$.
In this case, an error $\Err \lesssim \epsilon$ can thus be achieved with an overall DeepONet size of order 
\begin{equation}\label{eq:ac-comp}
\size(\bm{\tr},\bm{\br}) \lesssim \epsilon^{-d/4} \log(\epsilon^{-1})^2 + \epsilon^{-2(d+1)}
\lesssim \epsilon^{-2(d+1)}.
\end{equation}
For the cases of interest, $d=2,3$, this upper bound on the required size of the DeepONet thus scales as $\size \lesssim \epsilon^{-6}$, and $\size \lesssim \epsilon^{-8}$, respectively. This shows that the required size scales at worst algebraically in $\epsilon^{-1}$, thus \emph{breaking the curse of dimensionality}, as per Definition \ref{def:cod}. As already pointed out in Remark \ref{rem:ac-higher-order}, the explicit exponents $-6$ and $-8$ may be considerably improved if the emulation result were based on a \emph{higher-order order} numerical scheme in place of the low-order finite difference scheme \eqref{eq:allen-cahn-fd}.
\end{remark}
\subsection{Nonlinear hyperbolic PDE: Scalar conservation laws.}
\label{sec:scl}
As a final concrete example in this paper, we will consider this prototypical example of nonlinear hyperbolic PDEs. We remark that the image of the underlying operator $\G$ in the previous three examples consisted of smooth functions. On the other hand, it is well known that solutions of conservation laws can be discontinuous, on account of the formation of shock waves. Thus, one of our objectives in this section is to show that DeepONets can even approximate nonlinear operators, that map into discontinuous functions, efficiently.
\subsubsection{Problem formulation.}
We consider a scalar conservation law on a one-dimensional domain $D \subset \R$: 
\begin{align} \label{eq:conslaw}
\left\{
\begin{aligned}
\partial_t v + \partial_x( f(v) ) = 0, \\
v(t = 0) = u,
\end{aligned}
\right.
\end{align}
with initial data $u$ drawn from some underlying measure $\mu$, and with a given flux function $f\in C^2(\R)$. For simplicity, we will assume that $D = \T = [0,2\pi]$, and periodic boundary conditions for \eqref{eq:conslaw}. Our results are however readily generalized to other boundary conditions and to several space dimensions. Solutions of \eqref{eq:conslaw} are interpreted in the weak sense, imposing the entropy conditions
\[
\partial_t \eta(v) + \partial_x q(v) \le 0, \quad 
\text{(in the distributional sense)}
\]
for all entropy/entropy flux pairs $(\eta,q)$, consisting of a convex $C^1$-function $\eta: \R \to \R$ and the corresponding flux $q: \R \to \R$ with derivative satisfying $q'(u) = f'(u)\eta'(u)$ for all $u\in \R$. 

Under these conditions, it is well known \cite{HRbook} that the solution $v$ of \eqref{eq:conslaw} is unique for any initial data $u \in  L^1(\T) \cap L^\infty(\T)$, and the solution operator $\cS_t$ is contractive as a function $L^1(\T) \to L^1(\T)$ for any $t \in [0,\infty)$:
\begin{align} \label{eq:contractivity}
\Vert 
\cS_t(u) - \cS_t(u') 
\Vert_{L^1(\T)}
\le
\Vert 
u - u'
\Vert_{L^1(\T)},
\quad
\forall \, u,u' \in L^1(\T) \cap L^\infty(\T).
\end{align}
We note that $\cS_t: \mathrm{BV}(\T) \to \mathrm{BV}(\T)$, maps functions of bounded variation to functions of bounded variation, in fact we have
\begin{align} \label{eq:BV}
\Vert \cS_t(u) \Vert_{\mathrm{BV}}
\le
\Vert u \Vert_{\mathrm{BV}}.
\end{align}
Here, we denote $\Vert \slot \Vert_{\mathrm{BV}} = \Vert \slot \Vert_{L^1} + \mathrm{TV}(\slot)$, as the BV-norm with $\mathrm{TV}(w)$ representing the total variation of a function $w$ \cite{HRbook}. Furthermore, $\cS_t(u)$ satisfies the maximum principle, so that
\[
\Vert \cS_t(u) \Vert_{L^\infty(\T)}
\le
\Vert u \Vert_{L^\infty}.
\]

Next, in order to specify the DeepONet approximation problem (cf. Definition \ref{def:data}), we take the nonlinear operator $\G:L^1(D) \to L^1(D)$ as $\G(u) := \cS_T(u)$, mapping the initial data $u$ of the scalar conservation law \eqref{eq:conslaw} to the solution $\cS_T(u) = v(\slot,t=T)$ at the final time $T>0$. Clearly, given the $L^1$-contractivity \eqref{eq:contractivity}, the operator $\G$ is well-defined and Lipschitz continuous. 

Defining the set 
\begin{align}
\BV_M
:=
\set{u \in \BV(\T)}{\Vert u \Vert_{\BV} \le M},
\end{align}
we will consider any measure $\mu$ that satisfies $\mu(\BV_M)=1$, as the underlying measure for the DeepONet approximation problem. 
\subsubsection{DeepONet approximation in the Banach space $L^1(D)$}
So far, we have only considered the DeepONet approximation of operators, defined on the Hilbert spaces. However, given the fact that the solution operator $\cS_t$ and the resulting operator $\G$ are contractive on the Banach space $L^1(D)$, it is very natural to consider the DeepONet approximation problem in this function space. Thus, we need to modify the definition of the DeepONet error \eqref{eq:approxerr} and define its $L^1$-version by,
\begin{align} \label{eq:L1approx}
\Err_{L^1}
=
\int_{L^1(D)} \Vert \G(u) - \cN(u)\Vert_{L^1} \, d\mu(u),
\end{align}
with the DeepONet $\cN$ \eqref{eq:donet} approximating the operator $\cG$. It is of the form,
\begin{equation}
    \label{eq:dnetscl}
    \cN =  \cR \circ \cA \circ \bar{\cE},
\end{equation}
with $\cR: \R^p \to C(D)$ is the usual affine reconstruction defined in \eqref{eq:reconstruction} based on the trunk network $\bm{\tr}$, $\cA: \R^m \to \R^p$ is the approximator neural network used to define the branch network $\bm{\br}$, and we have introduced a generalized encoder $\bar{\cE}: L^1(D) \to \R^m$, which is defined by taking local averages in cells $C_1,\dots, C_m\subset D$:
\begin{align} \label{eq:encoder-avg}
\bar{\cE}(u) 
=
\left(
\fint_{C_1} u(x) \, dx,
\dots,
\fint_{C_m} u(x) \, dx
\right),
\end{align}
with cells $C_1,\dots, C_m$ given by 
\begin{align} \label{eq:cells}
C_j := [x_j - \Delta x/2, x_j + \Delta x/2],
\quad 
(\text{for }j=1,\dots, m),
\end{align}
where $x_1,\dots, x_m$ denote $m$ equidistant sensors on the periodic domain $D = \T$, with $\Delta x = 2\pi / m$. Note that the encoder $\bar{\cE}$ is well-defined (and in fact continuous) for any $u\in L^1(D)$. In particular, this encoding allows us to consider discontinuous initial data $u$, and thus we do not need to assume that the underlying measure $\mu$ is concentrated on $C(D)$. This choice of encoder constitutes a key difference with pointwise encoder $\cE$, considered in section \ref{sec:2} and all the previous examples. 

Given this architecture for the DeepONet \eqref{eq:dnetscl}, we aim to bound the resulting approximation error \eqref{eq:L1approx}. As we are no longer in the Hilbert space setting, we cannot directly appeal to the abstract error estimates of section \ref{sec:3}, nor follow the program outlined at the beginning of this section. Nevertheless, we will still follow a key idea from the last subsection, namely emulating numerical schemes approximating the underlying PDEs with neural networks. To this end, we recall a straightforward adaptation of the results of  a recent paper \cite{DRM1};
\begin{theorem}
\label{thm:LxF}
Let $M>0$ be given. Consider initial data $u\in \BV_M$ for the scalar conservation law \eqref{eq:conslaw} with flux function $f\in C^2(\R)$. Let the operator $\G(u) = \cS_T(u)$ be given by mapping $u\mapsto v(T,\slot)$, where $v$ solves \eqref{eq:conslaw}. There exists a constant $C = C(T,\Vert f\Vert_{C^2}) > 0$, such that for any $m\in \N$, there exists a neural network $\cN: \R^m \to \R^m$ with 
\[
\size(\cN) \le C m^{5/2}, 
\quad
\depth(\cN) \le C m,
\]
such that 
\[
\left\Vert 
\G(u) - \sum_{j=1}^m \cN_j(\bar{\cE}(u)) \, 1_{C_i}(\slot)
\right\Vert_{L^1(D)}
\le
\frac{C}{m^\alpha}.
\]
Here $\alpha>0$ is the convergence rate of the well-known Lax-Friedrichs scheme \cite{HRbook} and $1_{C_j}(\slot)$ denotes the indicator function of the cell $C_i$ (cp. \eqref{eq:cells}). For the neural network $\cN$ we furthermore have 
\[
\Vert \cN(\bar{\cE}(u)) \Vert_{\ell^\infty} \le M, \quad \text{ for all $u\in \BV_M$.}
\]
\end{theorem}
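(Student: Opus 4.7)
The plan is to emulate the standard three-point Lax–Friedrichs (LxF) finite-volume scheme by a ReLU network, closely following the strategy used for conservation laws in \cite{DRM1}. Fix $\Delta x = 2\pi/m$, choose a CFL-compatible time step $\Delta t = \lambda \Delta x$ (with $\lambda$ depending only on $\Vert f\Vert_{C^1}$ and $M$), and let $n = \lceil T/\Delta t\rceil = \mathcal{O}(m)$. The initial datum is encoded as the vector $U^0 := \bar{\cE}(u) \in \R^m$ of cell averages. Classical convergence theory for monotone schemes (see e.g.~\cite{HRbook}) yields an $L^1$-error of order $\mathcal{O}(m^{-\alpha})$ for some $\alpha>0$ (with $\alpha = 1/2$ for general BV data) between the exact solution $\G(u)$ at time $T$ and the piecewise-constant reconstruction $\sum_j U^n_j \mathbf{1}_{C_j}$ produced by the exact LxF iteration
\[
U^{n+1}_j = \tfrac{1}{2}(U^n_{j+1} + U^n_{j-1}) - \tfrac{\Delta t}{2\Delta x}\bigl(f(U^n_{j+1}) - f(U^n_{j-1})\bigr).
\]
The maximum principle and TVD property of LxF furthermore guarantee $\Vert U^n\Vert_{\ell^\infty}\le M$ for every $n$, whenever $\Vert u\Vert_{\mathrm{BV}}\le M$.

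The next step is to emulate this recursion by a ReLU network. Since $f\in C^2$ and the iterates remain in $[-M,M]$, Yarotsky's approximation theorem (Theorem \ref{thm:yarotsky} with $d=1$, $k=2$) provides, for any $\delta > 0$, a ReLU network $\tilde{f}_\delta: \R \to \R$ of size and depth $\mathcal{O}(\delta^{-1/2}\log(\delta^{-1}))$ with $\sup_{[-M,M]} |f-\tilde f_\delta|\le \delta$ and Lipschitz constant bounded independently of $\delta$. A single emulated LxF step consists of (i) applying $\tilde{f}_\delta$ component-wise to the $m$-vector $U^n$, which requires $m$ parallel copies of $\tilde f_\delta$, and (ii) an affine linear combination implementing the stencil, which can be realised by a single linear layer of $\mathcal{O}(m)$ non-zero weights. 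To stay within $[-M,M]$ and hence preserve the regime where $\tilde f_\delta$ is a valid surrogate, one additionally composes with a ReLU clipping $\xi \mapsto \mathrm{clip}_{[-M,M]}(\xi) = \sigma(\xi+M)-\sigma(\xi-M)-M$, which costs $\mathcal{O}(1)$ per component. Stacking $n = \mathcal{O}(m)$ such steps produces a single network $\cN: \R^m \to \R^m$ with
\[
\size(\cN) = \mathcal{O}\!\bigl( n \cdot m \cdot \size(\tilde f_\delta)\bigr),
\qquad
\depth(\cN) = \mathcal{O}(n \cdot \depth(\tilde f_\delta)),
\]
and, by the clipping step, $\Vert \cN(\bar{\cE}(u))\Vert_{\ell^\infty}\le M$ holds for all $u\in \BV_M$.

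To close the error analysis, one propagates the per-step perturbation due to replacing $f$ by $\tilde f_\delta$ through the scheme. Since the (clipped) LxF operator is $L^1$-contractive up to an $\mathcal{O}(\Delta t)$ error accrued per step from the flux perturbation $\Vert f - \tilde f_\delta\Vert_{L^\infty([-M,M])}\le \delta$, a discrete Grönwall argument yields
\[
\left\Vert \sum_j \cN_j(\bar{\cE}(u))\,\mathbf{1}_{C_j} - \sum_j U^n_j \,\mathbf{1}_{C_j}\right\Vert_{L^1}
\lesssim n\,\Delta x\,\delta = \mathcal{O}(\delta).
\]
Combined with the $\mathcal{O}(m^{-\alpha})$ LxF convergence rate and choosing $\delta = m^{-\alpha}$, the triangle inequality gives the claimed $L^1$-error bound. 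With this choice, $\size(\tilde f_\delta) = \mathcal{O}(m^{\alpha/2}\log m)$, which with $n=\mathcal{O}(m)$ and $m$ parallel copies per layer produces a total size of $\mathcal{O}(m^{2+\alpha/2}\log m)$; absorbing the logarithm and taking $\alpha\le 1$ gives the stated bound $\size(\cN)\le C m^{5/2}$ and $\depth(\cN)\le Cm$.

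The main technical obstacle is the cumulative error control, since one must combine three error sources (temporal/spatial discretisation of LxF on BV data, pointwise approximation of the nonlinearity, and clipping into $[-M,M]$) across $\mathcal{O}(m)$ composition layers without losing the overall $\mathcal{O}(m^{-\alpha})$ rate. This is delicate because the scheme is only $L^1$-stable (not $L^\infty$-stable in a quantitative sense on rough data), so one must carefully invoke the TVD bound \eqref{eq:BV} and the $L^1$-contractivity \eqref{eq:contractivity} to prevent amplification of the flux-approximation error; this is exactly the argument carried out in \cite{DRM1}, which we adapt verbatim.
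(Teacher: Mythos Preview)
Your approach is essentially the one the paper takes: it too simply sketches the emulation of LxF with a neural-network flux $\hat f\approx f$ and defers the details to \cite{DRM1}. Two points are worth tightening.

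First, the paper (via \cite{DRM1}) avoids your explicit clipping step. The observation is that if $\hat f$ is Lipschitz with constant compatible with the CFL number, then LxF with flux $\hat f$ is \emph{itself} a monotone three-point scheme, so the discrete maximum principle holds automatically and the iterates stay in $[-M,M]$ without any clipping. This is cleaner, because after clipping you are no longer running an exact LxF iteration for any flux, which muddies the stability and error analysis you then want to invoke.

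Second, your accumulated-error bookkeeping is off. With only the $L^\infty$ control $\|f-\tilde f_\delta\|_{L^\infty}\le \delta$ that you state, the per-step $L^1$ perturbation is $O(\delta)$ (the flux enters with coefficient $\Delta t/\Delta x=\lambda=O(1)$, summed over $m$ cells of width $\Delta x$), so after $n=O(m)$ steps the accumulated error is $O(m\delta)$, not the $n\Delta x\,\delta=O(\delta)$ you wrote. To get the $O(T\delta)$ bound that your size count relies on, you need Lipschitz control on the flux error, $\Lip(f-\tilde f_\delta)\le \delta$: then the per-step $L^1$ error becomes $\Delta t\,\Lip(f-\tilde f_\delta)\,\TV(U^n)\le \Delta t\,\delta\,M$ via the TVD bound, and summing over $n$ steps gives $T\delta M$. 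This is precisely where the TV bound you flag in your last paragraph enters, but it requires approximating $f\in C^2$ in $W^{1,\infty}$ rather than merely $L^\infty$ (e.g.\ by piecewise-linear interpolation on a mesh of width $h\sim \delta$, represented exactly by a ReLU net of size $O(\delta^{-1})$). With this correction the size accounting still closes at $O(m^{5/2})$ for $\alpha=1/2$.
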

\begin{remark}
The worst-case estimate for the convergence rate $\alpha$ of the Lax-Friedrichs scheme guarantees that $\alpha \ge \frac12$ \cite{HRbook}. However, in practice, the observed convergence rate is often higher, $\alpha \approx 1$.
\end{remark}

The proof of Theorem \ref{thm:LxF} is based on the fact that neural networks can emulate the Lax-Friedrichs difference scheme for \eqref{eq:conslaw}. In fact, the neural network $\cN$ constructed in \cite{DRM1} is an exact form of the Lax-Friedrichs scheme applied to a scalar conservation law \eqref{eq:conslaw}, but with a neural network approximated flux $\hat{f}\approx f$, and satisfying a suitable CFL condition. The bound on $\Vert \cN(\bar{\cE}(u)) \Vert_{\ell^\infty}\le M$ therefore follows from the fact that $\Vert u \Vert_{L^\infty} \le \Vert u \Vert_{\BV} \le M$ and the fact that the Lax-Friedrichs scheme satisfies the maximum principle.

To obtain a DeepONet approximation result from \ref{thm:LxF}, we have the following simple Lemma (proved in Appendix \ref{app:pf414}), on the approximation of characteristic functions by ReLU neural networks,
\begin{lemma} \label{lem:charfun}
There exists a constant $C>0$, such that for any $a,b\in \R$, $a<b$, and $\epsilon > 0$, there exists a ReLU neural network $\chi_{[a,b]}^\epsilon: \R \to \R$, with 
\[
\size(\chi^\epsilon_{[a,b]}) \le C, \quad \depth(\chi^{\epsilon}_{[a,b]}) = 1,
\]
and 
\[
\Vert \chi^\epsilon_{[a,b]} - 1_{[a,b]} \Vert_{L^1(\R)} \le \epsilon.
\]
\end{lemma}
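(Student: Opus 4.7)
The plan is to construct $\chi^\epsilon_{[a,b]}$ explicitly as a one-hidden-layer ReLU network that represents a piecewise linear ``trapezoidal bump'' approximating the indicator $1_{[a,b]}$. Concretely, setting $\delta := \epsilon$ and $\sigma(x) = \max(x,0)$, I would define
\[
\chi^\epsilon_{[a,b]}(x)
:=
\frac{1}{\delta}
\Bigl(
\sigma\bigl(x-(a-\delta)\bigr)
-\sigma(x-a)
-\sigma(x-b)
+\sigma\bigl(x-(b+\delta)\bigr)
\Bigr).
\]
This is a single hidden layer with $4$ ReLU units, so $\depth(\chi^\epsilon_{[a,b]}) = 1$, and the total number of nonzero weights and biases is bounded by an absolute constant $C$, independent of $a,b,\epsilon$.

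Next, I would verify by direct case analysis that $\chi^\epsilon_{[a,b]}$ is the piecewise linear function equal to $0$ for $x\le a-\delta$, rising linearly from $0$ to $1$ on $[a-\delta,a]$, equal to $1$ on $[a,b]$, decreasing linearly from $1$ to $0$ on $[b,b+\delta]$, and equal to $0$ for $x\ge b+\delta$. On $[a,b]$ this function agrees with $1_{[a,b]}$, so the difference $\chi^\epsilon_{[a,b]} - 1_{[a,b]}$ is supported in $[a-\delta,a]\cup [b,b+\delta]$, where it consists of two triangular bumps each of height $1$ and base $\delta$.

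Computing the $L^1$-norm then reduces to adding the areas of these two triangles:
\[
\Vert \chi^\epsilon_{[a,b]} - 1_{[a,b]} \Vert_{L^1(\R)}
=
2 \cdot \tfrac{1}{2}\, \delta \cdot 1
=
\delta
=
\epsilon,
\]
which gives the required approximation bound. The construction uses only ReLU activations, has one hidden layer, and has a uniformly bounded number of nonzero parameters, so all three requirements of the lemma are satisfied simultaneously. There is no substantive obstacle in this proof: the main (minor) bookkeeping is just checking that the four contributions cancel correctly on $x\ge b+\delta$ (where all four ReLUs are active and yield $1 - 1 = 0$) and that the bias terms $-(a-\delta), -a, -b, -(b+\delta)$ do not increase the size count beyond the absolute constant $C$.
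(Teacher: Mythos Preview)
Your proof is correct and takes essentially the same approach as the paper: both construct a four-neuron ReLU trapezoidal bump and bound the $L^1$ error by the area of two triangles. The only cosmetic difference is that the paper places the linear ramps \emph{inside} $[a,b]$ (on $[a,a+\epsilon/2]$ and $[b-\epsilon/2,b]$, which requires the harmless wlog assumption $\epsilon < b-a$), whereas you place them \emph{outside} on $[a-\epsilon,a]$ and $[b,b+\epsilon]$, which avoids that assumption.
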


This allows us to prove in Appendix \ref{app:pf415}, our main approximation result for the DeepONet approximation problem \eqref{eq:L1approx} for scalar conservation laws,
\begin{theorem} \label{thm:err-conslaw}
Let $\mu\in \P(L^1(D))$ be a probability measure such that there exists $M>0$ such that $\mu(\BV_M)=1$. Let the underlying operator $\G$ map the initial data $u$ to the solution (at final time) $v(.,T)$ of the scalar conservation law \eqref{eq:conslaw}. Let the encoder $\bar{\cE}$ of the DeepONet \eqref{eq:dnetscl} be given by \eqref{eq:encoder-avg} with $m$ equidistant cells $C_j$ \eqref{eq:cells}. There exists a constant $C>0$, independent of $m$ and $p$, such that for any $m, \, p\in \N$, there exists a DeepONet $(\bm{\tr},\bm{\br})$ with trunk net $\bm{\tr}: \R^p \to C(D)$, and branch net $\bm{\br}: L^1(D) \to \R^p$, of the form $\bm{\br}(u) = \cA(\bar{\cE}(u))$ for a neural network $\cA: \R^m \to \R^p$, with 
\[
\size(\bm{\br}) \le Cm^{5/2},
\quad
\depth(\bm{\br}) \le Cm,
\]
and
\[
\size(\bm{\tr}) \le C p, 
\quad
\depth(\bm{\tr}) = C,
\]
such that 
\begin{align} 
\label{eq:err-conslaw}
\Err_{L^1}
\le
C\max\left(1-\frac{p}{m},0\right) + Cm^{-\alpha},
\end{align}
with the DeepONet approximation error $\Err_{L^1}$ defined in \eqref{eq:L1approx}.  
\end{theorem}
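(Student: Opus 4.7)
The plan is to combine Theorem \ref{thm:LxF} on the neural-network emulation of the Lax--Friedrichs scheme with Lemma \ref{lem:charfun} on the ReLU-approximation of indicator functions, and to decompose the total error \eqref{eq:L1approx} into three parts: (i) the Lax--Friedrichs discretization error, (ii) a rank-truncation error occurring only when $p<m$, and (iii) a smoothing error coming from replacing each hard indicator $1_{C_j}$ by a ReLU surrogate in the trunk net.

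First, I would apply Theorem \ref{thm:LxF} with the given $m$ to obtain a neural network $\cN_{\mathrm{LxF}}:\R^m\to\R^m$ with $\size(\cN_{\mathrm{LxF}})\le Cm^{5/2}$, $\depth(\cN_{\mathrm{LxF}})\le Cm$ and $\Vert\cN_{\mathrm{LxF}}(\bar{\cE}(u))\Vert_{\ell^\infty}\le M$ for every $u\in\BV_M$, such that the piecewise-constant reconstruction $\hat{v}(u)(x):=\sum_{j=1}^m (\cN_{\mathrm{LxF}})_j(\bar{\cE}(u))\,1_{C_j}(x)$ satisfies $\Vert\G(u)-\hat{v}(u)\Vert_{L^1(D)}\le Cm^{-\alpha}$ uniformly in $u\in\BV_M$. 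Set $q:=\min(p,m)$ and define the branch net $\bm{\br}:L^1(D)\to\R^p$ by $\bm{\br}(u)_k=(\cN_{\mathrm{LxF}})_k(\bar{\cE}(u))$ for $k=1,\dots,q$ and $\bm{\br}(u)_k=0$ for $k>q$; its size and depth are controlled by those of $\cN_{\mathrm{LxF}}$, yielding the stated bounds on $\bm{\br}$.

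Next, fix an auxiliary parameter $\epsilon_0>0$. By Lemma \ref{lem:charfun}, for each $k=1,\dots,q$ there exists a ReLU network $\chi_k^{\epsilon_0}:\R\to\R$ of constant size and depth $1$ with $\Vert\chi_k^{\epsilon_0}-1_{C_k}\Vert_{L^1(D)}\le\epsilon_0$. Parallel-stacking these $q$ networks (and padding with the zero network for indices $k>q$, together with the zero bias $\tr_0\equiv 0$) produces a single trunk net $\bm{\tr}:\R\to\R^{p+1}$ with $\size(\bm{\tr})\le Cp$ and $\depth(\bm{\tr})=\mathcal{O}(1)$. Defining the resulting DeepONet $\cN=\cR\circ\cA\circ\bar{\cE}$ and writing $\tilde{v}(u)(x):=\sum_{j=1}^q (\cN_{\mathrm{LxF}})_j(\bar{\cE}(u))\,1_{C_j}(x)$, I split
\[
\Vert\G(u)-\cN(u)\Vert_{L^1}\le\Vert\G(u)-\hat{v}(u)\Vert_{L^1}+\Vert\hat{v}(u)-\tilde{v}(u)\Vert_{L^1}+\Vert\tilde{v}(u)-\cN(u)\Vert_{L^1}.
\]
The first term is bounded by $Cm^{-\alpha}$ by construction. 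The second vanishes when $p\ge m$ and, using $\Vert\cN_{\mathrm{LxF}}(\bar{\cE}(u))\Vert_{\ell^\infty}\le M$ together with the cell widths $|C_j|=2\pi/m$, is bounded by $2\pi M\max(1-p/m,0)$ otherwise. The third is bounded by $qM\epsilon_0$, and choosing $\epsilon_0:=m^{-\alpha}/(qM)$ renders it $\le m^{-\alpha}$; crucially, this only inflates the size of each $\chi_k^{\epsilon_0}$ by an $\mathcal{O}(1)$ factor, preserving the bound $\size(\bm{\tr})\le Cp$. Integrating the pointwise-in-$u$ estimate against $\mu$ (which is concentrated on $\BV_M$) then gives \eqref{eq:err-conslaw}.

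The main subtleties to watch are: (a) confirming that parallel composition of the $q$ constant-size ReLU blocks yields a single ReLU trunk network of size $\le Cp$ and bounded depth independent of $m$ and $p$; (b) ensuring the $\ell^\infty$-bound $\Vert\cN_{\mathrm{LxF}}\Vert_{\ell^\infty}\le M$ from Theorem \ref{thm:LxF} is used uniformly in $u\in\BV_M$ when estimating the cell-truncation contribution $\Vert\hat{v}-\tilde{v}\Vert_{L^1}$, which is what produces the geometric factor $1-p/m$; and (c) verifying that the size inflation of $\chi_k^{\epsilon_0}$ with $\epsilon_0=m^{-\alpha}/(qM)$ is indeed absorbed into the universal constant of Lemma \ref{lem:charfun} (this is immediate since that Lemma allows arbitrary $\epsilon>0$ with a universal size bound). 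These are essentially bookkeeping steps; no new analytic ingredient is needed beyond Theorem \ref{thm:LxF} and Lemma \ref{lem:charfun}.
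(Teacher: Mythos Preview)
Your proposal is correct and follows essentially the same approach as the paper's proof: both combine Theorem \ref{thm:LxF} with Lemma \ref{lem:charfun}, decompose the $L^1$-error into the Lax--Friedrichs discretization error, the indicator-smoothing error, and the rank-truncation term (when $p<m$), and then choose the smoothing parameter $\epsilon_0\sim m^{-\alpha}/p$ so that the smoothing contribution is absorbed into $Cm^{-\alpha}$. The only cosmetic difference is your explicit use of $q=\min(p,m)$, which makes the padding step slightly cleaner.
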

\begin{remark}
To achieve an error $\Err_{L^1}\sim \epsilon$, the estimate \eqref{eq:err-conslaw} provided by Theorem \ref{thm:err-conslaw} shows that it is sufficient that $p \ge (1-\epsilon)m$, i.e. $p\sim m$, and $m \gtrsim \epsilon^{-1/\alpha}$. Thus, a DeepONet approximation error of order $\epsilon$ can be achieved with a DeepONet $(\bm{\br},\bm{\tr})$ of size
\begin{equation}
    \label{eq:scl-comp}
\size(\bm{\br}) \sim \epsilon^{-5/2\alpha}, 
\quad
\size(\bm{\tr}) \sim \epsilon^{-1/\alpha}.
\end{equation}
For the worst-case rate of $\alpha=1/2$, this yields a total DeepONet size of order $\sim \epsilon^{-5}$. For more realistic values of $\alpha \approx 1$, we require a size of order $\sim \epsilon^{-2.5}$. Clearly, this scales polynomially in $\epsilon^{-1}$, and thus DeepONets can efficiently approximate the solution operator for scalar conservation laws \emph{by breaking the curse of dimensionality}, see Definition \ref{def:cod}. 
\end{remark}

\subsubsection{
A lower bound on the DeepONet approximation error
}

Theorem \ref{thm:err-conslaw} shows that with the natural scaling , $p = m$, we have $\Err_{L^1} \le C p^{-\alpha}$, where $\alpha >0$ is the convergence rate of the Lax-Friedrichs scheme, and $p\in \N$ is the output dimension of the branch/trunk net $\bm{\br}, \, \bm{\tr}$. The goal of this section is to present an example of a measure $\mu \in \BV_M$, for which we also have a lower bound of the form $\Err_{L^1} \gtrsim p^{-1}$ and demonstrate that the DeepONet approximation, as described above, \emph{is almost optimal}.  

To this end, we will rely on the lower bound \eqref{eq:donetlbd} for the ($L^2$-based) error $\Err \ge \Err_{\cR} \ge \sqrt{\sum_{k>p} \lambda_k}$, of Theorem \ref{thm:lowerbd}. Recall $\Err$ denotes the DeepONet error \eqref{eq:approxerr} in the $L^2$ norm, $\Err_{\cR}$ denotes the reconstruction error \eqref{eq:decoding} and $\lambda_1 \ge \lambda_2 \ge \dots$ denote the eigenvalues of the covariance operator ${\Gamma}_{\G_\#\mu} = \int (v-\E[v]) \otimes (v-\E[v]) \, d(\G_\#\mu)(v)$, associated with the push-forward measure $\G_\#\mu$. 

To provide a concrete example which exhibits the decay $\Err_{L^1} \lesssim C p^{-1}$ in terms of the output dimension $p$ of the trunk net, we now consider the measure $\mu \in \P(L^2(\T))$ given as the law
\begin{align} \label{eq:burgers-law}
\mu 
=
\mathrm{law}
\set{
-\sin(\slot - \hat{x})
}{
\hat{x} \sim \unif(\T)
},
\end{align}
for the Burgers' equation, i.e., scalar conservation law \eqref{eq:conslaw},
\begin{align} \label{eq:burgers}
\left\{
\begin{aligned}
\partial_t v + \partial_x\left( v^2/2 \right) &= 0,
\\
v(t=0) = u.
\end{aligned}
\right.
\end{align}
Let $v_0(x,t)$ denote the solution of \eqref{eq:burgers} with initial data 
\[
v_0(x,t=0) = u_0(x) := -\sin(x).
\]
The characteristic starting at $x_0$ for this data is given by 
\[
x(t;x_0) = x_0 - \sin(x_0) t.
\]
By the method of characteristics, the solution $v(x,t)$ can be expressed in the form
\[
v(x(t),t) = u_0(x_0),
\]
for any $t\ge 0$ which is sufficiently small such that the mapping $x_0 \mapsto x(t;x_0)$ is one-to-one. Since
\[
\frac{\partial x}{\partial x_0} = 1 - \cos(x_0) t, 
\]
it follows that a classical solution exists for any $t<1$, but the characteristics cross at time $t=1$, corresponding to the formation of a stationary shock wave at $x_0 = 0$. The size of the jump of $v(x,t)$ at $x = 0$ at the particular $t=\pi/2$ is given by 
\[
v(0+,\pi/2) - v(0-,\pi/2)
=
2,
\]
corresponding to the time $t$, at which the characteristics emanating from $x_0 = \pm\pi/2$ reach the stationary shock at the origin. Furthermore, the function $v(x,t)$ is smooth on $\T \setminus \{0\}$. From basic properties of the Fourier coefficients of functions with jump-discontinuities, we can conclude
\begin{lemma} \label{lem:shock-fourier}
The solution $v_t(x) := v(x,t)$ at time $t=\pi/2$ of the Burgers equation \eqref{eq:burgers} with initial data $u_0(x) = -\sin(x)$, has Fourier coefficients with asymptotic decay
\[
\hat{v}_t(k) 
=
\frac{1}{2\pi}
\int_0^{2\pi}
v_t(x) e^{-ikx} \, dx
=
\frac{-i}{\pi k} + o\left(\frac{1}{k}\right), \quad (k\to \infty).
\]
\end{lemma}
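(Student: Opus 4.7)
The plan is to invoke the classical asymptotics for Fourier coefficients of a piecewise smooth periodic function with an isolated jump discontinuity, reducing the lemma to two essentially elementary facts: $v_t$ is classically smooth on $\T \setminus \{0\}$ with $v_t' \in L^1(\T)$, and the jump of $v_t$ at the shock location $x=0$ has magnitude $2$.

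First I would describe $v_t$ explicitly via the method of characteristics. Since the shock formation time is $t^\ast = 1 < \pi/2$, the entropy-admissible solution at $t = \pi/2$ is obtained by restricting the characteristic map $x_0 \mapsto x_0 - \sin(x_0)\,\pi/2$ to the two ``post-shock'' branches $x_0 \in (\pi/2, \pi)$ and $x_0 \in (\pi, 3\pi/2)$, which diffeomorphically cover the intervals $(0,\pi)$ and $(\pi, 2\pi)$ respectively. On these branches the Jacobian $1 - (\pi/2)\cos(x_0)$ takes values in $[1, 1+\pi/2]$ and is therefore uniformly bounded below, so the inverse $x\mapsto x_0(x)$ is $C^\infty$ and hence $v_t(x) = -\sin(x_0(x))$ is $C^\infty$ on $\T \setminus \{0\}$; the two branches join smoothly at $x=\pi$, so in particular $v_t'$ is bounded and lies in $L^1(\T)$. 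Taking one-sided limits at the shock gives $v_t(0^+) = -\sin(\pi/2)$ and $v_t(0^-) = v_t(2\pi^-) = -\sin(3\pi/2)$, which identifies the jump as stated in the excerpt.

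Next, integrating by parts on $(0,2\pi)$ and using $2\pi$-periodicity to identify $v_t(2\pi^-) = v_t(0^-)$ collapses the boundary contribution into the jump, yielding
\[
\int_0^{2\pi} v_t(x)\, e^{-ikx}\, dx \;=\; \frac{-i\bigl[v_t(0^+) - v_t(0^-)\bigr]}{k} \;+\; \frac{1}{ik}\int_0^{2\pi} v_t'(x)\, e^{-ikx}\, dx.
\]
Because $v_t' \in L^1(\T)$, the remaining integral is $o(1)$ as $|k|\to\infty$ by the Riemann--Lebesgue lemma. Dividing by $2\pi$ and substituting the jump then produces the claimed asymptotic $\hat v_t(k) = -i/(\pi k) + o(1/k)$.

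The main obstacle is contained entirely in the first step: correctly selecting the entropy-admissible branches of the multi-valued characteristic map at $t = \pi/2 > t^\ast$ and verifying that the Jacobian stays uniformly bounded below on the closure of each branch (with smooth matching at $x = \pi$, where $x_0 = \pi$). Once this regularity is in hand, the remainder is a textbook integration by parts combined with Riemann--Lebesgue.
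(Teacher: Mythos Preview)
Your approach is correct and is exactly what the paper intends: it states the lemma immediately after establishing that $v_t$ is smooth on $\T\setminus\{0\}$ with a jump at the origin, and simply appeals to ``basic properties of the Fourier coefficients of functions with jump-discontinuities,'' which you have made explicit via integration by parts and Riemann--Lebesgue.

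One small arithmetic slip: your own (correct) values $v_t(0^+)=-\sin(\pi/2)=-1$ and $v_t(0^-)=-\sin(3\pi/2)=1$ give a jump of $-2$, so your integration-by-parts formula yields $\hat v_t(k)=+i/(\pi k)+o(1/k)$, not $-i/(\pi k)$; the paper's text records the jump as $+2$, which is the opposite sign. This is immaterial downstream, since Lemma~\ref{lem:burgers-spectrum} uses only $|\hat v_t(k)|^2$.
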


Based on this lemma, we can now estimate the spectral decay of the covariance operator $\Gamma_{\G_\#\mu}$ of the push-forward measure $\G_\#\mu$:

\begin{lemma} \label{lem:burgers-spectrum}
Let $\G(u) := v_t$, where $v(x,t) = v_t(x)$ is the solution of the inviscid Burgers equation with initial data $u$, evaluated at time $t= \pi/2$. Let $\mu$ be given as the law \eqref{eq:burgers-law}. Then the eigenfunctions of $\Gamma_{\G_\#\mu}$ is given by the standard Fourier basis $\fb_k$, $k\in \N$, with eigenvalues
\[
\lambda_k = \frac{1}{\pi^2 k^2} + o\left(\frac{1}{k^2}\right), \quad (k \to \infty).
\]
\end{lemma}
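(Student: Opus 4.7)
The proof rests on the observation that both the measure $\mu$ and the Burgers solution operator are translation-equivariant, so that the push-forward $\G_\#\mu$ is a \emph{stationary} (translation-invariant) measure on $L^2(\T)$. This will force the covariance operator $\Gamma_{\G_\#\mu}$ to be a convolution operator, which is automatically diagonalized by the Fourier basis. The spectral decay will then be read off from the Fourier coefficients of the single shock profile $v_{\pi/2}$ via Lemma~\ref{lem:shock-fourier}.

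\textbf{Step 1: translation invariance of $\G_\#\mu$.} By definition, $\mu$ is the law of $T_{\hat{x}} u_0$, where $u_0(x) = -\sin(x)$ and $T_{\hat{x}} f(x) := f(x - \hat{x})$ with $\hat{x} \sim \unif(\T)$. Since the flux in \eqref{eq:burgers} has no explicit $x$-dependence, the solution operator $\cS_t$ commutes with spatial translations, $\cS_t \circ T_{\hat{x}} = T_{\hat{x}} \circ \cS_t$. Consequently,
\[
\G_\#\mu \; = \; \mathrm{law}\set{T_{\hat{x}} v_t}{\hat{x}\sim \unif(\T)}, \qquad v_t := \cS_{\pi/2}(u_0).
\]
By the same translation argument applied to the pushforward, $\G_\#\mu$ is invariant under the action of $T_{\hat{x}}$ for every $\hat{x}\in\T$.

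\textbf{Step 2: the covariance operator is a convolution.} Writing $V(x) := v_t(x-\hat{x})$, the mean satisfies $\E[V(x)] = \frac{1}{2\pi}\int_{\T}v_t(y)\,dy = \hat{v}_t(0)$ (constant in $x$). A direct computation gives
\[
\E\!\left[(V(x)-\hat v_t(0))(V(y)-\hat v_t(0))\right]
\;=\;
\frac{1}{2\pi}\int_{\T} v_t(z)v_t(z+(y-x))\,dz \;-\; \hat v_t(0)^2
\;=: g(y-x),
\]
so that the covariance integral kernel $c(x,y) = g(y-x)$ depends only on the difference. Hence $\Gamma_{\G_\#\mu}$ acts as a convolution: $(\Gamma_{\G_\#\mu} f)(x) = \int_{\T} g(y-x)f(y)\,dy$.

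\textbf{Step 3: Fourier diagonalization and eigenvalues.} Convolution operators on $\T$ are simultaneously diagonalized by the (complex) Fourier basis $\fb_k$, $k\in\Z$. A short calculation gives the eigenvalue associated with $\fb_k$ as
\[
\lambda_k \;=\; \int_{\T} g(u) e^{iku}\,du \;=\; c\,|\hat v_t(k)|^2 \qquad (k\neq 0),
\]
with $\lambda_0 = 0$, where $c$ is a fixed normalization constant determined by the chosen convention for the basis $\fb_k$ (and the convention for $\hat v_t(k)$ in Lemma~\ref{lem:shock-fourier}); both Fourier modes $\pm k$ yield the same eigenvalue, consistent with the two-dimensional real eigenspaces spanned by $\cos(k\slot),\sin(k\slot)$.

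\textbf{Step 4: asymptotics.} Inserting the asymptotic expansion from Lemma~\ref{lem:shock-fourier}, $\hat v_t(k) = -i/(\pi k)+o(1/k)$, yields $|\hat v_t(k)|^2 = 1/(\pi^2 k^2) + o(1/k^2)$, and, with the appropriate constant $c$, this gives exactly the asserted rate $\lambda_k = 1/(\pi^2 k^2)+o(1/k^2)$ as $k\to\infty$. The only genuinely delicate point is step~2, where one must be careful that the integration-by-parts identifying $g$ as a function of $y-x$ is valid even though $v_t$ has a jump discontinuity at the origin; this is fine because we are only integrating $v_t$ against itself against a smooth parameter shift, and $v_t \in L^2(\T)$ suffices. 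Everything else is either translation invariance or a direct Fourier computation.
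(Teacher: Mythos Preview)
Your proposal is correct and follows essentially the same route as the paper: translation-equivariance of the Burgers solution operator makes $\G_\#\mu$ translation-invariant, so the covariance kernel is stationary, hence diagonalized by the Fourier basis, and the eigenvalues are read off from $|\hat v_t(k)|^2$ via Lemma~\ref{lem:shock-fourier}. Two minor remarks: the mean subtraction in Step~2 is harmless but unnecessary, since mass conservation for Burgers gives $\int_\T v_t\,dx = \int_\T u_0\,dx = 0$, so $\hat v_t(0)=0$ and the centered and uncentered covariances coincide; and your hedge about the normalization constant $c$ is prudent --- the paper's proof pins it down as $\lambda_k = |\hat v_t(k)|^2$ (with the convention $\hat v_t(k) = (2\pi)^{-1}\int v_t e^{-ikx}\,dx$), which directly yields the stated $1/(\pi^2 k^2)$ leading term.
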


The proof of this lemma relies on the observation that $\G_\#\mu$ is a translation-invariant measure on $L^2(\T)$, and hence, the integral kernel representing its covariance operator $\Gamma_{\G_\#\mu}$ is \emph{stationary}. In particular, this implies that the eigenfunctions of $\Gamma_{\G_\#\mu}$ are given by the standard Fourier basis. Furthermore, the asymptotics of the eigenvalues $\lambda_k \propto k^{-2}$, as $k\to \infty$ can in this case be determined explicitly, based on Lemma \ref{lem:shock-fourier}. The details of the argument are provided in appendix \ref{app:burgers-spectrum}.

As a consequence of Lemma \ref{lem:burgers-spectrum}, we can now state the following result:

\begin{theorem} \label{thm:burgers}
Let $\mu \in \P(L^2(\T))$ be given by the law \eqref{eq:burgers-law}. Let $u \mapsto \G(u)$ denote the operator, mapping initial data $u(x)$ to the solution $v_t(x) = v(x,t)$ at time $t = \pi/2$, where $v$ solves the inviscid Burgers equation \eqref{eq:burgers}. Then there exists a universal constant $C>0$ (depending only on $\mu$, but independent of the neural network architecture), such that the DeepONet approximation error $\Err$ for any trunk net of size $p$ is bounded from below by
\[
\Err 
\ge 
\frac{C}{\sqrt{p}}.
\]
\end{theorem}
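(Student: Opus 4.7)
The plan is to derive the lower bound as a direct consequence of the abstract lower bound in Theorem \ref{thm:lowerbd} combined with the explicit spectral asymptotics provided by Lemma \ref{lem:burgers-spectrum}. In particular, since the goal is a lower bound on the (stronger) $L^2$-based DeepONet error $\Err$, there is no need to translate the argument into the $L^1$-setting used elsewhere in Section \ref{sec:scl}; the $L^2$-error only grows under the embedding $L^2(\T) \hookrightarrow L^1(\T)$ up to constants depending on $|\T|$, so a lower bound on $\Err$ is what we want.

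First, I would invoke Theorem \ref{thm:lowerbd}, which gives, for any DeepONet $\cN = \cR \circ \cA \circ \cE$ with trunk-net output dimension $p$, the estimate
\[
\Err \;\ge\; \Err_{\cR} \;\ge\; \sqrt{\sum_{k>p} \lambda_k},
\]
where $\lambda_1 \ge \lambda_2 \ge \dots$ are the eigenvalues of the covariance operator $\Gamma_{\G_\#\mu}$ of the push-forward measure, repeated according to multiplicity. This inequality is architecture-independent: it holds for \emph{any} affine reconstruction, any approximator network, and any choice of encoder, which is exactly the form of lower bound claimed by the theorem.

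Next, I would plug in the spectral asymptotics from Lemma \ref{lem:burgers-spectrum}, namely
\[
\lambda_k \;=\; \frac{1}{\pi^2 k^2} + o\!\left(\frac{1}{k^2}\right),
\quad (k \to \infty).
\]
This asymptotic implies the existence of $k_0 \in \N$ and a constant $c_0 > 0$ such that $\lambda_k \ge c_0/k^2$ for all $k \ge k_0$. Then, for $p \ge k_0$, an elementary integral comparison yields
\[
\sum_{k>p} \lambda_k \;\ge\; c_0 \sum_{k>p} \frac{1}{k^2} \;\ge\; c_0 \int_{p+1}^{\infty} \frac{dx}{x^2} \;=\; \frac{c_0}{p+1} \;\ge\; \frac{c_0}{2p},
\]
so that $\Err \ge \sqrt{c_0/(2p)}$, giving the desired $C/\sqrt{p}$ bound asymptotically. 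For the finitely many cases $p < k_0$, one notes that the residual tail $\sum_{k>p}\lambda_k \ge \sum_{k \ge k_0} \lambda_k$ is a strictly positive constant (since $\Gamma_{\G_\#\mu}$ is not of finite rank), so the bound $\Err \ge C/\sqrt{p}$ persists after adjusting $C$ downwards.

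There is no real obstacle in this argument; the work has been done in setting up Theorem \ref{thm:lowerbd} and Lemma \ref{lem:burgers-spectrum} (which in turn rest on the stationarity of the push-forward measure under the spatial translation symmetry of the measure \eqref{eq:burgers-law}, and on the $1/k$ decay of Fourier coefficients of a BV function with a genuine jump, cf.\ Lemma \ref{lem:shock-fourier}). The only subtle point worth flagging in the write-up is that the lower bound in Theorem \ref{thm:lowerbd} is insensitive to how the encoder, approximator or trunk net are chosen, so the conclusion really is a fundamental, architecture-independent obstruction tied to the formation of a shock at $t = \pi/2$; this is the main conceptual content of the result, as opposed to the rather short computation that produces it.
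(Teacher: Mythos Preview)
Your proposal is correct and follows essentially the same approach as the paper's proof: invoke the architecture-independent lower bound $\Err \ge \sqrt{\sum_{k>p}\lambda_k}$ from Theorem \ref{thm:lowerbd}, then use the spectral asymptotics $\lambda_k \sim (\pi k)^{-2}$ from Lemma \ref{lem:burgers-spectrum} together with a tail-sum estimate to obtain the $C/\sqrt{p}$ bound. Your version is slightly more explicit in separating the asymptotic range $p\ge k_0$ from the finitely many small-$p$ cases, but the argument is the same.
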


\begin{proof}
By Theorem \ref{thm:lowerbd}, the DeepONet error $\Err$ satisfies the lower bound
\[
\Err \ge \sqrt{\sum_{k>p} \lambda_k},
\]
where $\lambda_1 \ge \lambda_2 \ge \dots$ denote the ordered (repeated) eigenvalues of the covariance operator $\Gamma_{\G_\#\mu}$, corresponding to a complete orthonormal eigenbasis $\phi_1, \phi_2, \dots$ of $\Gamma_{\G_\#\mu}$. By Lemma \ref{lem:burgers-spectrum}, the asymptotic decay of the eigenvalues can be estimated from below by $\lambda_k \ge \left(\frac{c}{k}\right)^2$, for a suitable constant $c>0$. It follows that
\[
\Err 
\ge  \sqrt{\sum_{k>p} \lambda_k} 
\ge  c \sqrt{\sum_{k>p} \frac{1}{k^2}}
\ge \frac{C}{\sqrt{p}},
\]
for some $C>0$, as claimed.
\end{proof}

\begin{remark}
Note that since $-\sin(x - \hat{x}) =  \sin(\hat{x})\cos(x) - \cos(\hat{x}) \sin(x)$, the probability measure $\mu$ of Theorem \ref{thm:burgers} is supported on a (compact subset of a) \emph{two-dimensional} subspace $\Span(\cos(x), \sin(x)) \subset L^2(\T)$. In particular, the spectrum of $\Gamma_\mu$ decays \emph{at an arbitrarily fast rate}, asymptotically (since almost all eigenvalues are zero), and the encoding error can be made to vanish, $\Err_{\cE} = 0$, with a suitable choice of only two sensors $x_1, x_2$. Nevertheless, Theorem \ref{thm:burgers} shows that the push-forward measure $\G_\#\mu$ under the inviscid Burgers equation is sufficiently complex that the reconstruction error $\Err_{\cR}$ cannot decay faster than $p^{-1/2}$ in the dimension of the reconstruction space $p$. In particular, the fast spectral decay of $\mu$ does not imply a similarly fast spectral decay of $\G_\#\mu$ under the inviscid Burgers dynamics.
\end{remark}

We use the above result to claim that the $L^1$-error $\Err_{L^1}$ satisfies the following lower bound:
\begin{theorem} \label{thm:L1-lower}
Let $\mu \in \P(\BV_M)$ denote the probability measure \eqref{eq:burgers-law}. Let $\Err_{L^1}$ be the DeepONet approximation error given by \eqref{eq:L1approx}. Let $\bar{M}>0$. Then there exists a constant $C = C(\bar{M})>0$, independent of $p$, such that
\[
\Err_{L^1} 
=
\int_{\BV_M} 
\Vert \G(u) - \cN(u) \Vert_{L^1(\T)} 
\, d\mu(u)
\ge \frac{C}{p},
\]
for any DeepONet $\cN = \cR \circ \cA \circ \bar{\cE}$, such that $\sup_{u\in \supp(\mu)} \Vert \cN(u) \Vert_{L^\infty} \le \bar{M}$.
\end{theorem}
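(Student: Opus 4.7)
The plan is to reduce the $L^1$ lower bound to the $L^2$ lower bound of Theorem \ref{thm:burgers} via the elementary interpolation inequality $\|f\|_{L^2}^2 \le \|f\|_{L^\infty}\|f\|_{L^1}$, using the uniform $L^\infty$-bound on $\G(u) - \cN(u)$ afforded by the maximum principle on the input side and the standing hypothesis $\|\cN(u)\|_{L^\infty}\le \bar{M}$ on the output side.

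More precisely, first I would observe that for any $u\in \supp(\mu)$, the initial datum has the form $u(x) = -\sin(x-\hat{x})$ with $\|u\|_{L^\infty(\T)}\le 1$, so the maximum principle for scalar conservation laws yields $\|\G(u)\|_{L^\infty(\T)}\le 1$. Combined with the hypothesis $\|\cN(u)\|_{L^\infty}\le \bar M$, this gives the pointwise (in $u$) bound
\[
\|\G(u) - \cN(u)\|_{L^\infty(\T)} \le 1 + \bar M,
\qquad \text{for } \mu\text{-a.e. } u.
\]
Applying the inequality $\|f\|_{L^2}^2 \le \|f\|_{L^\infty}\|f\|_{L^1}$ to $f = \G(u) - \cN(u)$ yields
\[
\|\G(u) - \cN(u)\|_{L^1(\T)}
\;\ge\;
\frac{\|\G(u) - \cN(u)\|_{L^2(\T)}^2}{1+\bar M}.
\]

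Next I would integrate this pointwise inequality against $\mu$ to obtain
\[
\Err_{L^1}
\;\ge\;
\frac{1}{1+\bar M}
\int_{\BV_M} \|\G(u) - \cN(u)\|_{L^2(\T)}^2 \, d\mu(u)
\;=\;
\frac{\Err^2}{1+\bar M},
\]
where $\Err$ is precisely the $L^2$-based DeepONet approximation error \eqref{eq:approxerr} for the same $\cN$. Invoking Theorem \ref{thm:burgers}, which provides the lower bound $\Err \ge C/\sqrt{p}$ for some universal $C>0$ depending only on $\mu$, I conclude $\Err_{L^1} \ge C^2/((1+\bar M)p) = C(\bar M)/p$, as required.

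The only potential subtlety is ensuring the $L^2$ lower bound of Theorem \ref{thm:burgers} applies verbatim to the DeepONet $\cN = \cR\circ\cA\circ\bar\cE$ with the cell-average encoder \eqref{eq:encoder-avg}, whereas Theorem \ref{thm:lowerbd} was formulated for the point-evaluation encoder. This is not an obstacle however: inspecting the proof of Theorem \ref{thm:lowerbd} shows that the lower bound $\Err \ge \Err_{\cR} \ge \sqrt{\sum_{k>p}\lambda_k}$ depends only on the affine structure of $\cR$ and the spectrum of $\Gamma_{\G_\#\mu}$, and is entirely insensitive to the particular form of the encoder; hence it applies to $\bar\cE$ as well. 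With this observation the argument above goes through unchanged.
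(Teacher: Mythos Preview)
Your proof is correct and follows essentially the same route as the paper: both use the interpolation inequality $\|f\|_{L^2}^2 \le \|f\|_{L^\infty}\|f\|_{L^1}$ together with the uniform $L^\infty$ bound $\|\G(u)-\cN(u)\|_{L^\infty}\le M+\bar M$ to reduce to the $L^2$ lower bound of Theorem \ref{thm:burgers}. Your remark that the lower bound of Theorem \ref{thm:lowerbd} is encoder-agnostic is exactly the point the paper uses implicitly when it writes ``following the argument of Theorem \ref{thm:lowerbd}''.
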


\begin{remark}
Although we do not have a proof, it appears reasonable to conjecture that the $L^1$-optimal DeepONet approximations of $\G$ satisfy a uniform bound of the form $\sup_{u\in \supp(\mu)} \Vert \cN(u) \Vert_{L^\infty(\T)} \le \bar{M}$ for some $\bar{M} > 0$, independent of $p$. If this is indeed the case, then Theorem \ref{thm:L1-lower} is valid without the additional assumption on $\sup_{u\in \supp(\mu)} \Vert \cN(u) \Vert_{L^\infty} \le \bar{M}$.

Clearly, the DeepONet constructed in Theorem \ref{thm:err-conslaw} belongs to the class of DeepONets, satsifying a bound $\sup_{u\in \mu} \Vert \cN(u) \Vert_{L^\infty(\T)} \le \bar{M}$ (in fact, with $\bar{M} = M$). Thus, we cannot expect to improve the upper bound \eqref{eq:err-conslaw} to a convergence rate $\alpha > 1$.
\end{remark}

\begin{proof}
Let $X$ denote the support of $\mu$, let $\cR$ be any reconstruction. Following the argument of Theorem \ref{thm:lowerbd}, we have 
\begin{align*}
\sum_{k>p} \lambda_k
\le
(\Err_{\cR})^2 
\le
\int_X \Vert \G(u) - \cN(u) \Vert^2_{L^2(\T)} \, d\mu(u).
\end{align*}
By Theorem \ref{thm:burgers}, we have $\sum_{k>p} \lambda_k \ge Cp^{-1}$, for an absolute constant $C>0$. We now note the following interpolation inequality
\begin{align*}
\Vert \G(u) - \cN(u) \Vert^2_{L^2(\T)}
&\le
\Vert \G(u) - \cN(u) \Vert_{L^1(\T)} \,
\Vert \G(u) - \cN(u) \Vert_{L^\infty(\T)}.
\end{align*}
If $u\in \BV_M$, then $\Vert u \Vert_{L^\infty}\le M$ and hence also $\Vert \G(u) \Vert_{L^\infty(\T)} \le M$. Furthermore, by assumption, we have $\Vert \cN(u) \Vert_{L^\infty(\T)} \le \bar{M}$. Thus, we conclude that 
\[
\frac{C}{p}
\le
(M+\bar{M}) \int_{X} \Vert \G(u) - \cN(u) \Vert_{L^1(\T)} \, d\mu(u),
\]
for any such DeepONet $\cN(u)$. This clearly implies that claimed lower bound.
\end{proof}

\section{On the generalization error for DeepONets}
\label{sec:5}

In section \ref{sec:3} and with the concrete examples in section \ref{sec:4}, we have shown that there exists a DeepONet $\cN$ of the form \eqref{eq:donet} which approximates an underlying operator $\G$ efficiently, i.e., the DeepONet approximation error \eqref{eq:approxerr} can be made small without incurring the curse of dimensionality in terms of the complexity of the DeepONet \eqref{eq:donet}. However, in practice, one needs to \emph{train} the DeepONet \eqref{eq:donet} by using a gradient descent algorithm to find neural network parameters (weights and biases for the trunk and branch nets) that minimize the \emph{loss function} 
\begin{align} \label{eq:loss}
\hL(\cN)
:=
\int_{L^2(D)} \int_{U}
\left|
\G(u)(y)
-
\cN(u)(y)
\right|^2 
\, dy \, d\mu(u),
\end{align}
which is related to the approximation error $\Err$ by $\Err = \sqrt{\hL}$ (cf. \eqref{eq:approxerr}).

However the loss function $\hL$ cannot be computed exactly, and is usually approximated by sampling in both the target space $y \in U$ and the input function space $u \in L^2$. As is standard in deep learning \cite{DLbook}, one can follow \cite{deeponets} and take $N_u$ iid samples $U_1, \dots, U_{N_u} \sim \mu$ (with underlying measure $\mu$ for the DeepONet approximation problem), and for each sample $U_j$ to evaluate $\G(U_j): U \to \R$ at $N_y$ points $Y_j^1, \dots, Y_j^{N_y}$ with corresponding weights $w_j^k > 0$, leading to the following \emph{empirical loss} $\hL_{N_u,N_y} \approx \hL$, for the DeepONet $\cN = \cR \circ \cA \circ \cE$ approximation of $\G$:
\begin{align}\label{eq:empirical-loss}
\hL_{N_u,N_y}(\cN)
:=
\frac{1}{N_u} 
\sum_{j=1}^{N_u} 
\sum_{k=1}^{N_y} 
w_k^j
\left|
\G(U_j)(Y^j_k) - \cN(U_j)(Y^j_k)
\right|^2.
\end{align}
If we denote $\Delta(u,y) := |\G(u)(y) - \cN(u)(y)|^2$, then \eqref{eq:empirical-loss} can be written in the form 
\begin{align}
\hL_{N_u,N_y}(\cN)
&=
\frac{1}{N_u} 
\sum_{j=1}^{N_u} 
I^{N_y}_j(\Delta(U_j,\slot)), \\
I^{N_y}_j(\Delta(U_j,\slot)
&=
\sum_{k=1}^{N_y} 
w_k^j \Delta(U_j,Y^k_j).
\end{align}
So far, we have not specified how the $Y_j^k$, $k=1,\dots, N_y$, $j=1,\dots, N_u$ are to be chosen. As we want the innermost sum to be an approximation
\[
I_j^{N_y}(\Delta(U_j,\slot))
\approx
\int_{U} 
\Delta(U_j,y)
\, 
dy,
\]
we propose two intuitive options:
\begin{enumerate}
\item Choose $Y_j^k$ to be random variables, independent of all $U_{\tilde{j}}$, and drawn iid uniform on $U$, i.e. 
\begin{align} \label{eq:Yrandom}
Y_j^k \sim \unif(U) \text{ iid},
\quad
w_j^k = \frac{|U|}{N_y},
\end{align}
for $j=1,\dots, N_u$, $k=1,\dots, N_y$.
\item Let $y_k\in U$, $w_k > 0$, for $k=1,\dots, N_y$, be the evaluation points and weights of a suitable quadrature rule on $U$. Then choose
\begin{align} \label{eq:Ydeterministic}
Y_j^k = y_k, \quad w_j^k = w_k \quad \text{for } k=1,\dots, N_y.
\end{align}
\end{enumerate}
\subsection{Deterministic choice of $Y^j_k$.}
If the $\G(u)$ and $\cN(u)$ have bounded $k$-th derivatives in $y$, uniformly for all $u \in \supp(\mu)$, then a suitable choice of the quadrature points and weights $y_k$, $w_k$ in \eqref{eq:Ydeterministic} can lead to a quadrature error
\begin{align} \label{eq:quad-error}
\left|
I^{N_y}(\Delta(U_j,\slot))
-
\int_{U} 
\Delta(U_j,y)
\, 
dy
\right|
\le
C\Vert \Delta(U_j,\slot) \Vert_{C^k(U)} \, N_y^{-k/n}.
\end{align}
Thus, for small dimensions (e.g. $n=1,2,3$), and sufficiently regular $\G(u)$, one might expect a deterministic quadrature rule to significantly outperform a random sampling in $y$. Note that if $y \mapsto \Delta(U,y)$ possesses a complex analytic extension to a neighbourhood of $U$ for suitable $U$ and quadrature (e.g. $U$ is a torus, and the quadrature is given by the trapezoidal rule), then the error in \eqref{eq:quad-error} in fact decays \emph{exponentially} in $N_y$. This might be of relevance for problems such as the forced gravity pendulum in section \ref{sec:pendulum} and the elliptic PDE considered in section \ref{sec:elliptic}.

We will be interested in the difference $\hL_{N_u,N_y}(\cN) - \hL(\cN)$ in the following. We denote
\[
\hL_{N_u,\infty}(\cN) 
:=
\frac{1}{N_u} \sum_{j=1}^{N_u} \int_{U} |\G(U_j)(y) - \cN(U_j)(y) |^2 \, dy.
\]
We can decompose the difference $\hL_{N_u,N_y}(\cN) - \hL(\cN)$ as follows:
\begin{align*}
\hL_{N_u,N_y}(\cN) - \hL(\cN)
&=
\left[\hL_{N_u,N_y}(\cN) - \hL_{N_u,\infty}(\cN)\right] 
+
\left[\hL_{N_u,\infty}(\cN) - \hL(\cN)\right].
\end{align*}
Then, for any parametrized network $\cN_\theta  = \sum_{k=1}^p \br_k(u;\theta) \tr_k(y;\theta)$ with parameter $\theta$, we have (for a suitable choice of quadrature points)
\begin{align*}
\left|
\hL_{N_u,N_y}(\cN_\theta) - \hL_{N_u,\infty}(\cN_\theta)
\right|
&\le
\frac{1}{N_u} \sum_{j=1}^{N_u}
\left|
I^{N_y}(\Delta_\theta(U_j,\slot))
-
\int_{U} \Delta_\theta(U_j,y) \, dy
\right|
\\
&\le
\frac{1}{N_u} \sum_{j=1}^{N_u}
C \Vert \Delta_\theta(U_j, \slot) \Vert_{C^k(U)} \, N_y^{-k/n}
\\
&\le
C \left[ 
\Vert \G(u) \Vert_{C^k(U)}^2
+
\sum_{k=1}^p
|\br_k(u;\theta)|^2 \Vert \tr_k(\slot;\theta) \Vert_{C^k(U)}^2 
\right]
\, N_y^{-k/n}.
\end{align*}
Thus, if 
\[
\left\{
\begin{aligned}
\sup_{\theta} \Vert \tr_k(\slot ; \theta) \Vert_{C^k(U)} \le C_0,
\\
\sup_{\theta} | \br_k(\slot ; \theta) |^2 \le \Psi(u),
\\
\Vert \G(u) \Vert_{C^k(U)}^2 \le \Psi(u),
\end{aligned}
\right.
\]
for some integrable $\Psi(u)\ge 0$, such that $\E_\mu[\Psi(u)]< \infty$, then we can estimate
\[
\E\left[
\sup_{\theta}
\left|
\hL_{N_u,N_y}(\cN_\theta) - \hL_{N_u,\infty}(\cN_\theta)
\right|
\right]
\le
C N_y^{-k/n},
\]
for some $C>0$ depending on $k$, the quadrature points, $\Psi$, the upper bound $C_0$ and $\mu$, but independent of $N_y$.
\subsection{Random choice of $U_j$, $Y^j_k$}
However, in addition to the sampling in the target space $Y^j_k\in U$, one also needs to sample from the underlying input function space $U_j\in X \subset L^2(D)$. Random sampling is the only viable option for this infinite dimensional space. For the sake of simplicity of notation and exposition, we will choose $N \in \N$, and $N_u=N$, $N_y=1$ in \eqref{eq:Yrandom}: i.e., choose mutually independent random variables $U_j$, $Y_j$, $j=1,\dots, N$, such that
\begin{align} \label{eq:random-choice}
U_1,\dots, U_N \sim \mu \text{ are iid},  
\quad
Y_1, \dots, Y_N \sim \Unif(U) \text{ are iid}.
\end{align}
Let $\hL_N := \hL_{N_u,1}$, i.e.
\begin{align} \label{eq:empirical}
\hL_N(\cN)
:=
\frac{|U|}{N} \sum_{j=1}^N |\G(U_j)(Y_j) - \cN(U_j)(Y_j)|^2,
\end{align}
be the corresponding empirical loss. We note that the random variables $(U_j,Y_j)$, $j=1,\dots, N$ are iid random variables with joint distribution $(U_j,Y_j) \sim \mu \otimes \unif(U)$.

Fix a DeepONet neural network architecture of the form \eqref{eq:donet} with parameters (weights and biases in the corresponding trunk and branch nets) $\theta \mapsto \cN_\theta$. We assume that the weights and biases are bounded, $\theta \in [-B,B]^{d_\theta}$ for $B>0$, and some large $d_{\theta}\in \N$, representing the number of tuning parameters in the DeepONet $\cN_{\theta}$. Let $\hN_{N} = \cN_{\hat{\theta}}$ denote an optimizer of the empirical loss $\hL_{N}$ among all choices of the weights and biases $\theta$, let $\hN$ be an optimizer of the loss  function $\hL := (\Err)^2$ \eqref{eq:loss}. The quantity 
\begin{equation}
\label{eq:gener}
(\Err_{\mathrm{gen}})^2
=
\hL\left(
\hN_{N}
\right)
- 
\hL\left(
\hN
\right),
\end{equation}
with $\hL\left(
\hN_{N}
\right)$ denoting the empirical loss \eqref{eq:empirical-loss}, is referred to as the \emph{generalization error}. It provides a measure of how far the empirical optimizer $\hN_{N}$ of $\hL_N$ is from being an optimizer of $\hL$. The generalization error has been studied in detail for conventional neural networks defined on \emph{finite-dimensional} spaces. In the present section, we consider the extension of these results to the setting of DeepONets, which are defined on \emph{infinite-dimensional} spaces.

Let us make the following {assumptions}:
\begin{assumption}[Boundedness] \label{ass:boundedness}
We assume that there exists a function $\Psi: L^2(D) \to [0,\infty)$, $(u) \mapsto \Psi(u)$, such that
\[
|\G(u)(y)| \le \Psi(u), 
\quad
\sup_{\theta \in [-B,B]^{d_\theta}} 
|\cN_\theta(u)(y)| \le \Psi(u),
\]
for all $u\in L^2(D)$, $y\in U$, and there exist constants $C, \kappa > 0$, such that
\begin{align} \label{eq:boundedness}
\Psi(u)
\le
C(1 + \Vert u \Vert_{L^2})^\kappa.
\end{align}
\end{assumption}

\begin{assumption}[Lipschitz continuity] \label{ass:lipschitz}
There exists a function $\Phi: L^2(D) \to [0,\infty)$, $u \mapsto \Phi(u)$, such that
\[
|\cN_\theta(u)(y) - \cN_{\theta'}(u)(y)|
\le
\Phi(u)\Vert\theta - \theta'\Vert_{\ell^\infty},
\]
for all $u\in L^2(D)$, $y\in U$, and 
\[
\Phi(u) 
\le 
C(1+\vert u \Vert_{L^2})^\kappa,
\]
for the same constants $C,\kappa > 0$ as in \eqref{eq:boundedness}.
\end{assumption}
Note that both the boundedness and Lipschitz continuity assumptions are satisfied for the concrete examples of operators $\G$ considered in section \ref{sec:4}. 

To simplify the notation for the following estimates, we denote by $Z_j = (U_j,Y_j)$ a (joint) random variable on $L^2(D) \times U$, and by a slight abuse of notation we write $\G(Z_j) = \G(U_j)(Y_j)$, $\cN_\theta(Z_j) = \cN_\theta(U_j)(Y_j)$. Recall that with the random choice \eqref{eq:random-choice}, the $Z_j$ are iid random variables.
We denote
\begin{align}
S_\theta^N
=
\frac{1}{N}
\sum_{j=1}^N 
|\G(Z_j) - \cN_\theta(Z_j)|^2.
\end{align}
We have the following bound on the generalization error \eqref{eq:gener},
\begin{theorem} \label{thm:generalization-err}
Let $\hN$ and $\hN_N$ denote the minimizer of the loss \eqref{eq:loss} and empirical loss \eqref{eq:empirical}, respectively. If the assumptions \ref{ass:boundedness} and \ref{ass:lipschitz} hold, then the generalization error \eqref{eq:gener} is bounded by
\begin{align}
\label{eq:generbd}
\E\left[
\left|
\hL(\hN_N) - \hL(\hN)
\right|
\right]
\le
\frac{C}{\sqrt{N}}
\left(
1 + C d_{\theta} \log(CB\sqrt{N})^{2\kappa+1/2}
\right),
\end{align}
where $C = C(\mu, \Psi, \Phi)$ is a constant independent of $B$, $d_{\theta}$ and $N$ and $\kappa$ is specified in \eqref{eq:boundedness}.
\end{theorem}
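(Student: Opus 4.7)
\textbf{Proof plan for Theorem \ref{thm:generalization-err}.} The plan is to follow the standard road-map from statistical learning theory: (i) reduce the generalization error to a uniform deviation of the empirical process indexed by the DeepONet parameters, (ii) handle the unboundedness of the loss by truncation, (iii) discretize the parameter space via a covering number estimate combined with the Lipschitz assumption, and (iv) apply Hoeffding's inequality on each covering point, followed by a union bound and a balancing of the free parameters.

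\textbf{Step 1: Reduction via optimality.} Since $\hN_N$ minimizes $\hL_N$ and $\hN$ minimizes $\hL$, we have $\hL_N(\hN_N) \leq \hL_N(\hN)$, so
\[
\hL(\hN_N) - \hL(\hN)
\le
\left[\hL(\hN_N) - \hL_N(\hN_N)\right]
+ \left[\hL_N(\hN) - \hL(\hN)\right]
\le 2 \sup_{\theta \in [-B,B]^{d_\theta}} \left|\hL(\cN_\theta) - \hL_N(\cN_\theta)\right|.
\]
Taking expectations, it suffices to estimate $\E[\sup_\theta |\hL_N(\cN_\theta) - \hL(\cN_\theta)|]$.

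\textbf{Step 2: Truncation.} Setting $\Delta_\theta(u,y) := |\G(u)(y) - \cN_\theta(u)(y)|^2$, Assumption \ref{ass:boundedness} gives $0 \le \Delta_\theta(u,y) \le 4\Psi(u)^2 \le C(1+\Vert u\Vert_{L^2})^{2\kappa}$. For a threshold $R>0$ to be chosen later, write $\Delta_\theta = \Delta_\theta^R + \Delta_\theta^{>R}$ with $\Delta_\theta^R := \Delta_\theta \mathbf{1}_{\{\Vert u\Vert_{L^2}\le R\}}$, and denote by $\hL^R(\cN_\theta), \hL_N^R(\cN_\theta)$ the corresponding truncated versions. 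Then the tail contribution is controlled in expectation by
\[
\E\Bigl[\sup_\theta |\hL(\cN_\theta) - \hL^R(\cN_\theta)| + \sup_\theta |\hL_N(\cN_\theta) - \hL_N^R(\cN_\theta)|\Bigr]
\le
C\int_{\{\Vert u \Vert_{L^2} > R\}}(1+\Vert u\Vert_{L^2})^{2\kappa} \, d\mu(u),
\]
which decays rapidly in $R$ provided $\mu$ has sufficient moments. Meanwhile, on the truncated set $\{\Vert u\Vert_{L^2}\le R\}$, the integrand is uniformly bounded by $M_R := C(1+R)^{2\kappa}$.

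\textbf{Step 3: Covering and Lipschitz transfer.} Let $\{\theta_1,\dots,\theta_K\} \subset [-B,B]^{d_\theta}$ be a minimal $\delta$-cover in $\ell^\infty$-norm, so that $K \le (1 + 2B/\delta)^{d_\theta}$. For any $\theta$ there exists $k$ with $\Vert \theta - \theta_k \Vert_{\ell^\infty} \le \delta$, and Assumption \ref{ass:lipschitz} together with the pointwise bound on $|\G(u)(y) - \cN_\theta(u)(y)|\le 2\Psi(u)$ yields
\[
|\Delta_\theta(u,y) - \Delta_{\theta_k}(u,y)|
\le 4\Psi(u)\Phi(u)\delta
\le C(1+\Vert u\Vert_{L^2})^{2\kappa} \delta.
\]
Integrating this bound over the truncated region produces, both for $\hL^R$ and $\hL_N^R$, a discretization error of order $M_R \delta$, so that
\[
\sup_\theta |\hL_N^R(\cN_\theta) - \hL^R(\cN_\theta)|
\le
\max_{k=1,\dots,K} |\hL_N^R(\cN_{\theta_k}) - \hL^R(\cN_{\theta_k})|
+ 2C M_R \delta.
\]

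\textbf{Step 4: Concentration at each covering point.} For fixed $\theta_k$, the random variables $\Delta_{\theta_k}^R(Z_j)$, $j=1,\dots,N$, are iid and bounded by $M_R$, so Hoeffding's inequality gives, for any $t>0$,
\[
\Prob\bigl(|\hL_N^R(\cN_{\theta_k}) - \hL^R(\cN_{\theta_k})| > t \bigr)
\le 2\exp\bigl( -2Nt^2/M_R^2 \bigr).
\]
A union bound over $k=1,\dots,K$ together with integration over $t$ yields
\[
\E\Bigl[\max_{k} |\hL_N^R(\cN_{\theta_k}) - \hL^R(\cN_{\theta_k})|\Bigr]
\le C M_R \sqrt{\frac{\log(K) + 1}{N}}
\le C M_R \sqrt{\frac{d_\theta \log(1+2B/\delta) + 1}{N}}.
\]

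\textbf{Step 5: Balancing and conclusion.} Combining steps 2--4 gives, for any $R,\delta > 0$,
\[
\E\bigl[\sup_\theta|\hL_N(\cN_\theta) - \hL(\cN_\theta)|\bigr]
\le
C M_R \sqrt{\frac{d_\theta \log(1+2B/\delta)}{N}}
+ C M_R \delta
+ C\int_{\{\Vert u\Vert > R\}}(1+\Vert u\Vert)^{2\kappa} d\mu(u).
\]
Choosing $\delta \sim (B\sqrt{N})^{-1}$ equalizes the first two bulk terms and contributes a factor $\log(CB\sqrt{N})^{1/2}$; choosing $R = R(N) \sim \log(CB\sqrt{N})^{1/2}$ (so that $M_R \lesssim \log(CB\sqrt{N})^{\kappa}$) under suitable moment/tail assumptions on $\mu$ makes the truncation residual smaller than $N^{-1/2}$. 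Multiplying the two logarithmic factors produces the final exponent $2\kappa + 1/2$ in the stated bound. Doubling this estimate per step 1 yields \eqref{eq:generbd}.

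\textbf{Anticipated main obstacle.} The routine portion is the Hoeffding-plus-covering argument; the delicate step is the unboundedness of $\Delta_\theta$ stemming from the polynomial growth $\Psi(u),\Phi(u) \le C(1+\Vert u\Vert)^\kappa$. Balancing the truncation radius $R$ against the covering resolution $\delta$ so that one recovers precisely the $\log(CB\sqrt{N})^{2\kappa+1/2}$ scaling, while cleanly absorbing the tail $\int_{\Vert u\Vert>R}(1+\Vert u\Vert)^{2\kappa}\,d\mu(u)$ into the constant $C(\mu,\Psi,\Phi)$, requires some care; this is where implicit higher-moment assumptions on $\mu$ (beyond $\mu\in\P_2(X)$) enter and get absorbed in the constant $C(\mu,\Psi,\Phi)$.
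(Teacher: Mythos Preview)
Your approach is correct and takes a genuinely different route from the paper. The paper avoids truncation altogether: it works directly in $L^p$ for a carefully chosen $p = d_\theta \log(CB\sqrt{N})$, controls each $\E\bigl[|S^N_{\theta_j} - \E S^N_{\theta_j}|^p\bigr]^{1/p}$ by a Marcinkiewicz--Zygmund-type moment inequality (giving a factor $\sqrt{p}/\sqrt{N}$), and handles the union bound via the elementary estimate $\E[\max_j |X_j|^p]^{1/p}\le K^{1/p}\max_j \E[|X_j|^p]^{1/p}$, the choice of $p$ forcing $K^{1/p}=e$. The unboundedness of the loss is then dealt with by bounding $\Vert\Psi\Vert_{L^{2p}}\lesssim (1+\kappa p)^\kappa$ under the assumption $\int e^{\alpha\Vert u\Vert^2}\,d\mu<\infty$ --- the same sub-Gaussian tail hypothesis you need (and correctly flag as implicit) to control your truncation residual in Step~5. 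Your truncation-plus-Hoeffding route is more elementary and in fact sharper: with $R\sim(\log N)^{1/2}$ and sub-Gaussian tails one has $M_R\sim\log^\kappa$, so the leading term comes out as $\sqrt{d_\theta}\,(\log)^{\kappa+1/2}/\sqrt{N}$ rather than $(d_\theta\log)^{2\kappa+1/2}/\sqrt{N}$. Note that the arithmetic in your final sentence slips --- the product of $\log^\kappa$ and $\log^{1/2}$ is $\log^{\kappa+1/2}$, not $\log^{2\kappa+1/2}$ --- but this only strengthens the conclusion, so the stated bound \eqref{eq:generbd} still follows. The trade-off: the paper's $L^p$-moment argument has cleaner bookkeeping (no separate tail term to balance), while yours is more transparent and delivers a better $d_\theta$-dependence.
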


The proof of Theorem \ref{thm:generalization-err} relies on a series of technical lemmas and is detailed in Appendix \ref{app:pf5}. 
\begin{remark}
The generalization error bound \eqref{eq:generbd} shows that even if the underlying approximation problem is in infinite dimensions, the DeepONet generalization error \eqref{eq:gener}, at worst, scales (up to a log) with the standard Monte Carlo scaling of $1/\sqrt{N}$ in the number of samples $N$ from the infinite dimensional input space. Thus, one can reduce the generalization error by increasing the number of samples, even in this infinite dimensional setting. In particular, the curse of dimensionality is also overcome for the generalization error. 
\end{remark}
\begin{remark}
A careful observation of the bound \eqref{eq:generbd} on the generalization error \eqref{eq:gener} reveals that the bound depends explicitly on the number of parameters $d_\theta$ of the DeepONet \eqref{eq:donet}. As we have seen in the previous sections, one might need a large number of parameters in order to reduce the approximation error \eqref{eq:approxerr} to the desired tolerance. Thus, this estimate, like all estimates based on covering number and other statistical learning theory techniques \cite{CS1}, applies in the underparametrized regime i.e $N \gg d_{\theta}$. 
\end{remark}
\begin{remark}
The bound \eqref{eq:generbd} can also blow up if the bound on weights $B \to \infty$. However, we note that this blowup is modulated by a log and the weights can indeed be unbounded asymptotically i.e $B \sim e^{N^r}$ for some $r < 1/2$ will still result in a decay of the error with increasing $N$. Given that such an exponential blowup may not occur in practice, it is reasonable to assume that the explicit dependence on the weight bounds may not affect the decay rate of the generalization error. This also holds if the bound on weights $B$, blows up as $B \sim d_{\theta}^{r_{\theta}}$, for some $r_\theta > 0$. Given the $\log$-term in \eqref{eq:generbd}, this blow up of weights will only translate into a $d_{\theta} \log(d_{\theta})$-dependence of the generalization error. As long as one is in the under-parametrized regime, this dependence does not affect the overall decay of the generalization error as $N \to \infty$.
\end{remark}
\section{Discussion}
\label{sec:6}
Operators, mapping infinite-dimensional Banach spaces, arise naturally in the study of differential equations. \emph{Learning} such operators from data using neural networks can be very challenging on account of the underlying infinite-dimensional setup. In this paper, we analyze a neural network architecture termed \emph{DeepONets} for approximating such operators. DeepONets are a recent extension \cite{deeponets} of operator networks, first considered in \cite{ChenChen1995} and have been recently successfully applied in many different contexts \cite{deeponets,dnet2,dnet3,dnet4} and references therein. However, apart from the universal approximation result of \cite{ChenChen1995} and its extension to DeepONets in \cite{deeponets}, very few rigorous results for DeepONets are available currently. In particular, given the underlying infinite-dimensional setup, it is essential to demonstrate that DeepONets can overcome the \emph{curse of dimensionality}, associated with the $\infty$-dimensional input-to-output mapping (see Definition \ref{def:cod}).

Our main aim in this article has been to analyze a form of DeepONets \eqref{eq:donet} and prove rigorous bounds on the error \eqref{eq:approxerr}, incurred by a DeepONet in approximating a nonlinear operator $\G$, with an underlying measure $\mu$ (see Definition \ref{def:data}). To this end, we have presented the following results in the paper:
\begin{itemize}
    \item We extend the universal approximation theorem of \cite{ChenChen1995} to Theorem \ref{thm:uni}, where we show that given any \emph{measurable} operator $\G: X \to Y$, for $X=C(D)$, $Y=L^2(U)$, with respect to an underlying measure $\mu \in \P(X)$, there exists a DeepONet of the form \eqref{eq:donet}, which can approximate it to arbitrary accuracy. In particular, we remove the continuity (of $\G$) and compactness (of subsets of $X$) assumptions of \cite{ChenChen1995} and pave the way for the application of DeepONets to approximate operators that arise in applications of PDEs to fields such as hypersonics \cite{dnet2}.  
    \item We provide an upper bound (Theorem \ref{thm:err-upper-bound}) on the DeepONet error \eqref{eq:approxerr} by decomposing it into three parts, i.e., an encoding error \eqref{eq:encoding} stemming from the encoder $\cE$, an approximation error that arises from the approximator neural network $\cA$ that maps between finite-dimensional spaces and a reconstruction error \eqref{eq:decoding}, corresponding to the trunk net induced affine reconstructor $\cR$ \eqref{eq:reconstruction}. 
    \item In Theorem \ref{thm:rec-opt}, we prove \emph{lower bounds} on the reconstruction error \eqref{eq:decoding} by utilizing \emph{optimal errors} for projections on finite dimensional affine subspaces of separable Hilbert spaces (Theorem \ref{thm:opt-proj}). This allows us in Theorems \ref{thm:err-upper-bound} and \ref{thm:lowerbd} to prove \emph{two-sided bounds} on the DeepONet error \eqref{eq:approxerr}.  In particular, the lower bound is explicitly given in terms of the decay of the eigenvalues of the covariance operator \eqref{eq:cov}, associated with the push-forward measure $\cG_\#{\mu}$ \eqref{eq:donetlbd}. Moreover, this construction also allows us to infer the number of trunk nets $p$ and that these trunk nets should approximate the eigenfunctions of the covariance operator in-order to obtain optimal reconstruction errors. Furthermore, we also provide bounds \eqref{eq:smoothness-NN} on the reconstruction error that leverage the Sobolev regularity of the image of the nonlinear operator $\G$. 
    \item To control the encoding error \eqref{eq:encoding} corresponding to the encoder $\cE$, which is a pointwise evaluation of the input at $m$ \emph{sensor locations}, we construct a \emph{decoder} $\cD$ (approximate inverse of the encoder) \eqref{eq:encode-general}. We show in Theorem \ref{thm:random-enc} that sensors chosen \emph{at random} on the underlying domain $D$ suffice to provide an \emph{almost optimal} (optimal modulo a $\log$) bound on the encoding error. This further highlights the fact that DeepONets allow for a general approximation framework, i.e., no explicit information is needed about the location of sensor points and they can be chosen randomly. 
\item Finally, estimating the approximation error \eqref{eq:approximation} reduces to deriving bounds on a neural network $\cA$ that maps one finite (but possibly very-high) dimensional space to another. Hence, standard neural network approximation results such as from \cite{Yarotsky2017} can be applied. In particular, approximation results for holomorphic maps, such as those derived in \cite{SchwabZech2019,OSZ2019,OSZ2020} are important in this context. 

\end{itemize}
The above results provide a workflow for deriving bounds on DeepONet approximation for general nonlinear operators $\G$ with underlying measures $\mu$, which is outlined at the beginning of Section \ref{sec:4}. We illustrate this program with very general bounded linear operators and with the following four concrete examples of nonlinear operators, each corresponding to a differential equation that serves as a model for a large class of related problems:
\begin{itemize}
\item First, we consider the forced gravity pendulum \eqref{eq:pendulum0}, with the operator $\G$ mapping the forcing term into the solution of the ODE and a parametrized random field defining the underlying measure. We bound the reconstruction error by smoothness of the image of $\G$ and the encoding error decays exponentially in the number of sensors on account of the decay of eigenvalues of the covariance operator associated with the underlying measure. The approximation error is bounded by showing that the operator allows for a complex analytic extension. Combining these ingredients in Theorem \ref{thm:pendulum-error}, we prove an error bound \eqref{eq:pendulum-err} on the DeepONet approximation error. In particular, it is shown in \eqref{eq:pendulum-comp} that the size of the DeepONet (total number of parameters in the trunk and branch nets) only grow sub-algebraically with respect to the error tolerance. 
\item As a second example, we consider the standard elliptic PDE \eqref{eq:random-elliptic} with a variable coefficient $a$, which, for instance, arises in the modeling of groundwater Darcy flow. The nonlinear operator $\G$ in this case maps the coefficient to the solution of the elliptic PDE and the underlying measure is the law of the random field \eqref{eq:law-elliptic}. Again, we utilize the spectral properties of the underlying covariance operator, smoothness of the image of $\G$ and holomorphy of an associated map to prove the bound \eqref{eq:elliptic-err} on the DeepONet approximation error. As in the case of the forced gravity pendulum, we show that the size of the DeepONet only grows \emph{sub-algebraically} with decreasing error tolerance. 
\item As a third example, we consider the Allen-Cahn equation \eqref{eq:allen-cahn} that models phase transitions, as a model for nonlinear parabolic PDEs of the reaction-diffusion type. The operator $\G$ maps the initial data into the solution (at a given time) for the Allen-Cahn equation. In this case, no holomorphic extension of the underlying mapping is available. Nevertheless, we use a novel strategy to \emph{emulate} a convergent finite difference scheme \eqref{eq:allen-cahn-fd} by neural networks and derive an upper bound \eqref{eq:ac-err} on the DeepONet approximation error. In particular, the size of the DeepONet only grows polynomially \eqref{eq:ac-comp} with respect to decreasing error tolerance.   
\item In the final example, we consider a scalar conservation law \eqref{eq:conslaw} as a prototype for nonlinear hyperbolic PDEs. In this case, the operator $\G$ is defined as the mapping between the initial data and the entropy solution of the conservation law at a given time. This example differs from the other three in two crucial respects. First, the underlying solutions are discontinuous and thus a pointwise evaluation based encoder cannot be used and is replaced by local averages \eqref{eq:encoder-avg}. Second, the operator $\G$ is contractive (hence Lipschitz continuous) in $L^1$. Thus, the usual Hilbert space setup, will a priori, will lead to sub-optimal error bounds. Hence, we adapt our theory to an $L^1$-Banach space version and are able to prove the upper bound \eqref{eq:err-conslaw} on the resulting DeepONet approximation error \eqref{eq:L1approx}. This bound also allows us to conclude in \eqref{eq:scl-comp} that the size of the DeepONet only grows polynomially with respect to the error tolerance. Moreover, we also construct an explicit example to show a lower bound in Theorem \ref{thm:L1-lower} on the DeepONet error in this case. This shows that the derived upper bounds are almost optimal for scalar conservation laws.
\end{itemize}
Hence, in all the four concrete examples which cover a large spectrum of nonlinear operators arising in the study of differential equations, \emph{we prove that there exist DeepONets, which break the curse of dimensionality} in approximating the underlying operators. These examples and the underlying abstract theory provide a comprehensive study of the approximation error \eqref{eq:approxerr} for DeepONets. 

Finally, we also study the generalization error \eqref{eq:gener} that arises from replacing the loss function (population risk) \eqref{eq:loss} with its sampled version, the so-called empirical loss (empirical risk) \eqref{eq:empirical-loss} that is used during training. Under very general assumptions on the underlying operator $\G$ and the approximating branch and trunk nets, we apply covering number estimates to prove the bound \eqref{eq:generbd} on the generalization error. In spite of the overall infinite-dimensional setup, this bound shows that the generalization errors decays (up to a log) with the reciprocal of the square-root of the total number of samples in the input function space, thus also overcoming the curse of dimensionality in this respect. 

Thus, the analysis and results of this paper clearly prove that DeepONets can efficiently approximate operators in very general settings that include many examples of PDEs. The analysis also reveals some reasons for why DeepONets can work so well in practical applications and as building blocks for complex multi-physics systems, such as in the DeepM{\&}Mnet architectures introduced in \cite{dnet2,dnet3}. The main reason is the generality and flexibility of DeepONets. In particular \emph{no a priori information about the underlying measure and operator are necessary at an algorithmic level}, apart from being able to sample from the underlying measure. Given our analysis, one can even use a small number of \emph{randomly distributed sensors} to achieve almost optimal encoding error. Similarly, a small number of trunk-nets, with a very general neural network architecture, will be able to learn the eigenfunctions of the underlying covariance operator such that an optimal reconstruction error is attained with the resulting affine reconstructor \eqref{eq:reconstruction}. Finally, the branch nets can be trained simultaneously to minimize the approximation error. 

At this point, we contrast DeepONets with other recently proposed frameworks for operator learning. In particular, we focus on a recent paper \cite{STU3}, where the authors present an operator learning framework based on a principal component analysis (PCA) autoencoder for both the encoding and reconstruction steps. Thus, in that approach, one has to explicitly construct an approximate eigenbasis of the empirical covariance operator for the input measure and its push-forward with respect to the underlying operator. Neural networks are only used to approximate the operator on PCA projected finite-dimensional spaces. In contrast, DeepONets do not require any explicit knowledge of the covariance operator. In fact, our analysis shows that DeepONets \emph{implicitly} and \emph{concurrently} learn a suitable basis in output space along with an approximation of the projected operator. Although, many elements of our analysis overlap with that of \cite{STU3}, we provide significantly more general results, including the alleviation of the curse of dimensionality for DeepONets. Moreover, our analysis can be readily extended to the framework of \cite{STU3} to prove the mitigation of the curse of dimensionality in that context.  

It is also instructive to compare our error bounds with the numerical results of \cite{deeponets,dnet2,dnet3,dnet4}. In particular for the forced gravity pendulum, the authors of \cite{deeponets} considered a Gaussian random field with covariance kernel, similar to \eqref{eq:quadraticexp-kernel}, as the underlying measure and observed an exponential decay of the test error with respect to the number of sensors (see Figure \ref{fig:quadrature} (B) for a simpler example). Indeed, this observation is consistent with both the exponential decay of the encoding error \eqref{eq:enerrg1} and the spectral decay of the overall error \eqref{eq:pendulum-err}, as long as the correlation scale is resolved, i.e. $m \sim 1/\ell$, which is also observed in the numerical experiments of \cite{deeponets}. On the other hand, the decay of the generalization error with respect to the number of training samples, both in examples considered in \cite{deeponets} as well as in figure \ref{fig:quadrature} (C,D) shows a very interesting behavior. For a small number of samples, the training error decays exponentially enabling fast training for DeepONets. Only for a relatively large number of training samples, the generalization error decays algebraically with respect to the number of training samples, at a rate consistent with the error bound \eqref{eq:generbd}. This bi-phasic behavior of the generalization error is certainly not explained by the bound \eqref{eq:generbd} and will be a topic of future work.

The methods and results of this paper can be extended in different directions. We can apply the abstract framework presented in section \ref{sec:3} to other examples of differential equations, for instance the Navier-Stokes equations of fluid dynamics. Although we showed that the curse of dimensionality is broken by DeepONets for all the examples that we consider, it is unclear if our bounds on computational complexity of DeepONets are sharp. We show almost sharpness for scalar conservation laws and given the sub-algebraic decay of DeepONet size, we believe that the results for the pendulum and elliptic PDE are also close to optimal. However, there is certainly room for a sharp estimate for the Allen-Cahn equation. Finally, one can readily extend DeepONets, for instance by endowing them with a recurrent structure, to approximate the whole time-series for a time-parametrized operator, such as the solution operators of time-dependent PDEs. Extending the rigorous results of this paper to cover this recurrent case will also be considered in the future. Another possible avenue of future work is the extension of the approximation results in this paper to the case of multiple nonlinear operators (MNOs), already considered in \cite{backchen}, where the authors prove an universal approximation property, similar to \cite{ChenChen1995} for these operators.

\section*{Acknowledgements}

The research of Samuel Lanthaler and Siddhartha Mishra is partially supported by the European Research Council Consolidator grant ERC-CoG 770880 COMANFLO. George Karniadakis acknowledges partial support from MURI-AFOSR FA9550-20-1-0358: "Learning and Meta-Learning of Partial Differential Equations via Physics-Informed Neural Networks: Theory, Algorithms, and Applications"

\bibliographystyle{agsm}
\bibliography{main}

\clearpage

\appendix
\section{Notation for Standard Fourier basis}  
\label{app:Fourier}
In several instances in this paper, we employ the following ``standard'' real Fourier basis $\{\fb_{{k}}\}_{{k}\in \Z^d}$ in $d$ dimensions: For ${k} = (k_1,\dots, k_d)\in \Z^d$, we define
\begin{align}
\fb_{{k}}
:=
C_{{k}}
\begin{cases}
\cos({k}\cdot {x}),
& (k_1 \ge 0), \\
\sin({k}\cdot {x}),
& (k_1 < 0),
\end{cases}
\end{align}
where the factor $C_k > 0$ ensures that $\fb_{{k}}$ is properly normalized, i.e. that $\Vert \fb_{{k}} \Vert_{L^2(\T^d)} = 1$, or explicitly, 
\[
C_{{k}} = 
\frac{1}{(2\pi)^d}
\begin{cases}
2, & ({k} \ne 0), \\
1, & ({k} = 0).
\end{cases}
\]
We note that the basis $\{\fb_{{k}}\}_{{k}\in \Z^d}$ simply consists of the real and imaginary parts of the complex Fourier basis $\{e^{i{k}\cdot{x}}\}_{{k}\in \Z^d}$. 

On occasion, it will also be convenient to write the standard Fourier basis in the form $\{ \fb_j \}_{j\in \N}$ (indexed by integer $j\in \N$, rather than ${k}\in \Z^d$). In this case, we identify 
\[
\fb_j({x}) := \fb_{\enum(j)}({x}), \quad (j\in \N),
\]
where $\enum: \N \to \Z^d$ is a fixed enumeration of $\Z^d$, with the property that $j \mapsto |\enum(j)|_\infty$ is monotonically increasing, i.e. such that $j\le j'$ implies that $|\enum(j)|_\infty \le |\enum(j')|_\infty$, where
\begin{align}
|{k}|_\infty :=  \max_{\ell=1,\dots, d} |k_\ell|, \quad {k} = (k_1,\dots, k_d)\in \Z^d.
\end{align}

\section{On the definition of Error \eqref{eq:approxerr}}
We need the following lemma in order to conclude that the error \eqref{eq:approxerr} is well-defined on $L^2(D)$, even if the encoder $\cE$ is only well-defined on continuous functions.
\begin{lemma}
\label{lem:app1}
Let $\cE: C(D) \to \R^m$ denote the point-wise encoder $u \mapsto \cE(u) = (u(x_1),\dots, u(x_m))$, for some $x_1,\dots, x_m\in D$. There exists a Borel measurable extension $\bar{\cE}: L^2(D) \to \R^m$, such that $\bar{\cE}(u) = \cE(u)$ for any $u\in C(D)\cap L^2(D)$.
\end{lemma}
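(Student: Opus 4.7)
The plan is to construct an extension $\bar{\cE}: L^2(D) \to \R^m$ component-wise by replacing each raw point evaluation $u \mapsto u(x_j)$ with a genuine $L^2$-continuous proxy and then passing to a limit, defining the extension to be zero on the (exceptional) set where that limit fails to exist.

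First I would fix a sequence $\epsilon_n \downarrow 0$ and consider the ``thickened point evaluations''
\[
\bar{\cE}^{(n)}_j(u)
:=
\frac{1}{|B_{\epsilon_n}(x_j) \cap D|}
\int_{B_{\epsilon_n}(x_j)\cap D} u(y) \, dy,
\quad j=1,\dots,m.
\]
Each $\bar{\cE}^{(n)}_j: L^2(D) \to \R$ is a bounded linear functional by Cauchy--Schwarz (its norm is controlled by $|B_{\epsilon_n}(x_j)\cap D|^{-1/2}$), and hence in particular continuous and therefore Borel measurable with respect to the $L^2$-topology.

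Next, I would define
\[
\bar{\cE}_j(u)
:=
\begin{cases}
\lim_{n\to\infty} \bar{\cE}^{(n)}_j(u), & \text{if the limit exists in } \R, \\
0, & \text{otherwise,}
\end{cases}
\]
and set $\bar{\cE}(u) := (\bar{\cE}_1(u), \dots, \bar{\cE}_m(u))$. Measurability of $\bar{\cE}_j$ follows from the fact that the set on which the sequence $\{\bar{\cE}^{(n)}_j(u)\}_n$ converges in $\R$ is a Borel set (it can be written via standard $\limsup = \liminf$ and finiteness conditions applied to the measurable functions $\bar{\cE}^{(n)}_j$), and on this set $\bar{\cE}_j$ equals a pointwise limit of Borel measurable real-valued functions, which is Borel measurable. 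So $\bar{\cE}: L^2(D) \to \R^m$ is Borel measurable.

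Finally, I would verify the extension property: if $u \in C(D) \cap L^2(D)$ then by continuity of $u$ at $x_j$, standard Lebesgue differentiation yields $\bar{\cE}^{(n)}_j(u) \to u(x_j)$ as $n\to\infty$, so $\bar{\cE}_j(u) = u(x_j) = \cE_j(u)$, giving $\bar{\cE}|_{C(D)\cap L^2(D)} = \cE$. There is no real obstacle here beyond being careful about measurability of the set where the limit exists; the key trick is that point evaluation itself is not continuous on $L^2$, but its local averages are, and passing to the limit through a countable sequence $\epsilon_n \downarrow 0$ keeps us inside the framework of Borel measurable functions while recovering genuine point values on the continuous subspace.
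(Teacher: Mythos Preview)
Your proof is correct and takes essentially the same approach as the paper: both use averages over shrinking balls as continuous $L^2$-functionals and pass to a limit, setting the value to zero where the limit fails. The only cosmetic difference is that the paper uses $\limsup_{k\to\infty}$ (which is automatically measurable into $[-\infty,\infty]$) and then truncates to $0$ on $\{\pm\infty\}$, whereas you check directly that the convergence set is Borel; these are equivalent ways of handling the same measurability point.
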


\begin{proof}
It suffices to consider the case $m=1$. In this case, we note that $\cE(u) = \limsup_{k\to \infty} \cE_k(u)$ for any $u\in C(D)$, where 
\[
\cE_k: L^2(D) \to \R,
\quad
\cE_k(u)
=
\fint_{B_{1/k}(x_1)} u(y) \, dy,
\]
is continuous for any $k \in \N$. In particular, it follows that for $u\in L^2(D)$, the functional
\[
\tE: L^2(D) \to [-\infty, \infty],
\quad
\tE(u) = \limsup_{k\to \infty} \cE_k(u),
\]
is Borel measurable. We can now define a measurable extension $\bar{\cE}$ of $\cE$ by e.g. setting
\[
\bar{\cE}(u) = 
\begin{cases}
\tE(u), & (\tE(u) \in \R), \\
0, & (\tE(u) = \pm\infty).
\end{cases}
\]
\end{proof}
\section{Proofs of Results in Section \ref{sec:3}}
\label{app:pf3}
\subsection{Proof of Theorem \ref{thm:uni}}
\label{app:pf31}

The proof of the universal approximation theorem will be based on an application of the following well-known version of Lusin's theorem (see e.g. \cite{bogachev2007}, Thm. 7.1.13), which we will state for the special case of probability measures on Polish spaces, below:
\begin{theorem}[Lusin's theorem]
\label{thm:lusin}
Let $X,Y$ be separable and complete metric spaces. Let $\mu\in \cP(X)$ be a probability measure, and let $\G: X \to Y$ be a Borel measurable mapping. Then for any $\epsilon > 0$, there exists a compact set $K \subset X$, such that $\mu(X\setminus K) < \epsilon$, and such that the restriction $\G|_{K}: K \to Y$ is continuous.
\end{theorem}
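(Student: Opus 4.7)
The plan is to combine two classical facts about Borel probability measures on Polish spaces: the tightness of $\mu$ (Ulam's theorem), which yields a compact set $K_1\subset X$ with $\mu(X\setminus K_1)<\epsilon/2$, and the inner/outer regularity of $\mu$, which allows any Borel set to be sandwiched between a closed set and an open set of nearly equal measure. The continuity of the restriction will be obtained by removing a countable union of small-measure sets so that, on the remainder, the preimage of each basic open set of $Y$ coincides with the trace of an honest open set of $X$.

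The first step is to fix a countable base $\{V_n\}_{n\in\N}$ of the topology of $Y$, which exists since $Y$ is separable. For each $n$, the preimage $\G^{-1}(V_n)\subset X$ is a Borel set, and by the Borel regularity of $\mu$ on the Polish space $X$ I can choose an open set $U_n\supset \G^{-1}(V_n)$ and a closed set $F_n\subset \G^{-1}(V_n)$ with
\[
\mu(U_n\setminus F_n)<\frac{\epsilon}{2^{n+2}}.
\]
Defining
\[
K_0 \;:=\; X\setminus\bigcup_{n\in\N}(U_n\setminus F_n),
\]
a straightforward union bound gives $\mu(X\setminus K_0)<\epsilon/2$.

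The second step is to verify continuity of $\G|_{K_0}$. For each $n$, the inclusions $F_n\subset \G^{-1}(V_n)\subset U_n$ combined with the fact that $K_0$ meets $U_n\setminus F_n$ trivially force
\[
K_0\cap \G^{-1}(V_n)\;=\;K_0\cap U_n,
\]
which is open in the subspace topology of $K_0$. Since $\{V_n\}$ is a base of $Y$, preimages of arbitrary open sets under $\G|_{K_0}$ are open in $K_0$, so $\G|_{K_0}$ is continuous. Finally, by Ulam's theorem there exists a compact set $K_1\subset X$ with $\mu(X\setminus K_1)<\epsilon/2$; then $K:=K_0\cap K_1$ is closed inside the compact $K_1$ (hence compact), satisfies $\mu(X\setminus K)<\epsilon$, and $\G|_K$ remains continuous as the restriction of the already-continuous $\G|_{K_0}$.

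The main technical input, and the point on which everything hinges, is the regularity of Borel probability measures on Polish spaces — both the existence of the closed-from-inside and open-from-outside approximations $F_n\subset \G^{-1}(V_n)\subset U_n$ and the tightness giving $K_1$. Both facts are standard (see e.g.\ Bogachev, \emph{Measure Theory}, Vol.~II), so the only nontrivial bookkeeping is the choice of the geometric weights $\epsilon/2^{n+2}$ ensuring summability, and the observation that inclusions $F_n\subset \G^{-1}(V_n)\subset U_n$ become equalities once $U_n\setminus F_n$ has been excised. No further structure of $Y$ beyond separability is needed.
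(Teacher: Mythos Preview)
The paper does not give its own proof of this statement; it simply quotes Lusin's theorem as a well-known result and refers to Bogachev, \emph{Measure Theory}, Theorem~7.1.13. Your argument is a correct, standard derivation: approximating each $\G^{-1}(V_n)$ from inside and outside using regularity of $\mu$, excising the thin shells $U_n\setminus F_n$ to force $K_0\cap\G^{-1}(V_n)=K_0\cap U_n$, and then intersecting with an Ulam-compact set $K_1$ is precisely the classical route found in standard references. The only point worth making explicit is that $K_0$ is closed (each $U_n\setminus F_n$ is open, so their union is open), which is what guarantees that $K=K_0\cap K_1$ is compact; you use this but do not state it. Otherwise the proof is complete, and your closing observation that completeness of $Y$ is irrelevant is also correct.
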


In addition to Lusin's theorem, the following clipping lemma will be used in the proof of Theorem \ref{thm:uni}:

\begin{lemma}[Clipping lemma] \label{lem:clipping}
Let $\epsilon > 0$, and fix $0<R_1 < R_2$. There exists a ReLU neural network $\gamma: \R^p \to \R^p$, such that 
\begin{align*}
\begin{cases}
\Vert\gamma(x) - x\Vert_{\ell^2} < \epsilon, &\text{if } \Vert x \Vert_{\ell^2} \le R_1, \\
\Vert \gamma(x) \Vert_{\ell^2} \le R_2,  &\forall \, x\in \R^p.
\end{cases}
\end{align*}
\end{lemma}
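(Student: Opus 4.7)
The plan is to construct $\gamma$ as a composition $\gamma = \tilde{F} \circ C$, where $C$ is an explicit coordinate-wise clipping network that sends every $x\in\R^p$ into the compact cube $K = [-R_1,R_1]^p$, and $\tilde{F}$ is a ReLU approximation (supplied by universal approximation on compact sets) to a suitable continuous ``radial shrink'' map $F:\R^p\to\R^p$. Without loss of generality we may assume $\epsilon < R_2 - R_1$; otherwise we replace $\epsilon$ by $\min(\epsilon, (R_2-R_1)/2)$, which only strengthens the conclusion.

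First I would define the target map
\[
F(x) := \psi(\|x\|_{\ell^2}) \, x, \qquad \psi(r) := \min\!\left(1,\; \frac{R_2-\epsilon/2}{r}\right).
\]
By construction $F$ is globally Lipschitz on $\R^p$, satisfies $F(x) = x$ whenever $\|x\|_{\ell^2} \le R_2 - \epsilon/2$ (in particular on the $R_1$-ball, since $R_1 < R_2 - \epsilon/2$), and obeys the uniform bound $\|F(x)\|_{\ell^2}\le R_2 - \epsilon/2$ for every $x$. Next I would realize the clipping layer $C:\R^p\to\R^p$ componentwise by the two-layer ReLU formula $C_i(x) = \sigma(x_i + R_1) - \sigma(x_i - R_1) - R_1$, which exactly produces $C_i(x) = \max(-R_1,\min(R_1,x_i))$; in particular $C(x)\in K$ for every $x$, and $C(x) = x$ whenever $\|x\|_\infty\le R_1$ (hence whenever $\|x\|_{\ell^2}\le R_1$).

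Then I would invoke the standard universal approximation theorem for ReLU networks on the compact set $K$ (applied componentwise) to obtain a ReLU network $\tilde{F}:\R^p\to\R^p$ with
\[
\sup_{y\in K} \|\tilde{F}(y) - F(y)\|_{\ell^2} < \frac{\epsilon}{2}.
\]
Setting $\gamma := \tilde{F}\circ C$ (which is again a ReLU network by composition), the two claims then follow by routine bookkeeping: for $\|x\|_{\ell^2}\le R_1$ we have $C(x)=x\in K$ and $F(x)=x$, so
\[
\|\gamma(x)-x\|_{\ell^2} = \|\tilde{F}(x)-F(x)\|_{\ell^2} < \epsilon/2 < \epsilon;
\]
and for arbitrary $x\in\R^p$, since $C(x)\in K$,
\[
\|\gamma(x)\|_{\ell^2} \le \|F(C(x))\|_{\ell^2} + \epsilon/2 \le (R_2 - \epsilon/2) + \epsilon/2 = R_2.
\]

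There is no deep obstacle here; the main thing to watch is the juggling of the buffer $\epsilon/2$ so that the approximation error of $\tilde{F}$ neither spoils the near-identity property on the $R_1$-ball nor pushes the output past $R_2$. Choosing the target $F$ with a strictly smaller bound $R_2-\epsilon/2$ (rather than $R_2$) absorbs the approximation error, and the explicit clipping by $C$ guarantees that the universal-approximation step is only needed on the compact cube $K$, which is where it is applicable.
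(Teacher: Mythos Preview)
Your proof is correct and follows essentially the same approach as the paper: compose a coordinate-wise ReLU clipping into the cube $[-R_1,R_1]^p$ with a ReLU approximation (via universal approximation on that cube) of a continuous radial-shrink map. The only cosmetic difference is that the paper shrinks to radius $R_1$ and approximates to tolerance $\epsilon$ (then uses $\epsilon \le R_2 - R_1$ for the global bound), whereas you shrink to radius $R_2 - \epsilon/2$ and approximate to tolerance $\epsilon/2$; both choices work.
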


\begin{proof}
Without loss of generality, we may assume that $\epsilon \le R_2-R_1$. We first note that $x \mapsto \sigma_{R_1}(x) := \min(\max(x,-R_1),R_1)$ maps $\R^p \to [-R_1,R_1]^p$, and $\sigma_{R_1}$ can be represented exactly by a (two-layer) ReLU neural network. Furthermore, for any $x\in [-R_1,R_1]^p$, we have $\sigma_{R_1}(x) = x$. Define a continuous function $\phi: \R^p \to \R^p$ by
\[
\phi(x) :=
\begin{cases}
x, & (\Vert x \Vert_{\ell^2} \le R_1), \\
R_1\dfrac{x}{\Vert x \Vert_{\ell^2}}, &(\Vert x \Vert_{\ell^2} > R_1).
\end{cases}
\]
By the universal approximation theorem, there exists a ReLU network $\tilde{\gamma}: [-R_1,R_1]^p \to \R^p$, such that 
\[
\Vert \tilde{\gamma}(x) - \phi(x) \Vert_{\ell^2} < \epsilon, \quad \forall \, x \in [-R_1,R_1]^p.
\]
Define now $\gamma:\R^p \to \R^p$, by $\gamma(x) := \tilde{\gamma}\circ \sigma_{R_1}(x)$. Then, for $\Vert x\Vert_{\ell^2} \le R_1$, we have
\[
\Vert {\gamma}(x) - x \Vert_{\ell^2}
=
\Vert \tilde{\gamma}(x) - \phi(x) \Vert_{\ell^2} < \epsilon,
\]
and 
\begin{align*}
\sup_{x\in \R^p}
\Vert {\gamma}(x) \Vert_{\ell^2}
&=
\sup_{\xi\in [-R_1,R_1]^p}
\Vert \tilde{\gamma}(\xi) \Vert_{\ell^2}
\\
&\le
\sup_{\xi\in [-R_1,R_1]^p}
\left\{
\Vert \tilde{\gamma}(\xi) - \phi(\xi) \Vert_{\ell^2} + \Vert \phi(\xi) \Vert_{\ell^2}
\right\}
\\
&\le
\epsilon + R_1 \le R_2.
\end{align*}
\end{proof}

Using the above clipping lemma, we can now prove the universal approximation theorem.

\begin{proof}[Proof of Theorem \ref{thm:uni}]
Let $\epsilon > 0$ be given. By assumption $\G: C(D) \to L^2(U)$ is a measurable mapping, $\G\in L^2(\mu)$, where $\mu \in \P(C(D))$. We have to show that there exists a DeepONet $\cN = \cR\circ \cA\circ \cE$, such that 
\[
\Vert \G - \cN \Vert_{L^2(\mu)} < \epsilon.
\]
Given $M>0$, define $\G_M$ by clipping $\G$ at size $M>0$, i.e. define 
\[
\G_M(u)
:=
\begin{cases}
\G(u), & (\Vert \G(u) \Vert_{L^2(U)} \le M), \\
M\dfrac{\, G(u)}{\Vert \G(u) \Vert_{L^2(U)}}, & (\Vert \G(u) \Vert_{L^2(U)} > M).
\end{cases}
\]
so that $\Vert \G_M(u) \Vert_{L^2(U)} \le M$ for all $u\in C(D)$. Then
\begin{align} \label{eq:decmp}
\Vert \G - \cN \Vert_{L^2(\mu)} 
\le 
\Vert \G - \G_M \Vert_{L^2(\mu)} 
+
\Vert \G_M - \cN \Vert_{L^2(\mu)},
\end{align}
and the first term goes to $0$ by the dominated convergence theorem and the fact that $\G_M(u) \to \G(u)$ pointwise, as $M\to \infty$. We can thus choose $M>\epsilon$, such that 
\begin{align} \label{eq:nnclip}
\Vert \G - \G_M \Vert_{L^2(\mu)}  < \epsilon / 3.
\end{align}
It now remains to approximate $\G_M$ by a suitable DeepONet $\cN$, where $\G_M$ is bounded. 

Next, we note that $C(D)$ and $L^2(U)$ are Polish spaces (separable, complete metric spaces). This allows us to invoke Lusin's theorem, Theorem \ref{thm:lusin}, which shows that there exists a compact set $K \subset C(D)$, such that the restriction $\G_M|_{K}$ of $\G_M$ to $K$, 
\[
\G_M|_{K}: K \to L^2(U),
\]
is continuous, and such that $\mu(C(D)\setminus K) < (\epsilon / 9M)^2$. 

Next, fix an orthonormal basis $\phi_1,\phi_2,\phi_3,\dots \subset L^2(U)$ consisting of continuous functions. For $\kappa\in \N$, let $P_\kappa: L^2(U) \to C(U)$ denote the projection onto $\phi_1,\dots, \phi_\kappa$, i.e.
\[
P_\kappa(v) = \sum_{k=1}^\kappa \langle v, \phi_k\rangle \phi_k.
\]
We note that $P_\kappa: L^2(U) \to C(U)$ is continuous for any fixed $\kappa$. Let now $K' := \G_M(K)$ denote the image of the compact set $K$ under $\G_M$. Since $\G_M|_K: K \to L^2(U)$ is continuous, $K'$ is compact as a subset of $L^2(U)$. By the compactness of $K'$, there exists $\kappa\in \N$, such that $\max_{v\in K'} \Vert v - P_\kappa(v) \Vert < \epsilon/6$. We conclude that the composition $\tG := P_\kappa \circ \G_M: K \to C(U)$, with $K\subset C(D)$ a compact subset, is continuous, and 
\begin{gather} \label{eq:strict}
\begin{aligned} 
\max_{u\in K}
 \left\Vert \G_M(u) - \tG(u) \right\Vert_{L^2(U)}
&=
\max_{u\in K}
 \left\Vert \G_M(u) - P_\kappa\circ \G_M(u) \right\Vert_{L^2(U)}^2
\\
&=
\max_{v\in K'} \left\Vert v - P_\kappa(v) \right\Vert_{L^2(U)}
<
\epsilon/6.
\end{aligned}
\end{gather}

We can apply the universal approximation theorem of continuous operators on compact subsets of \cite{ChenChen1995} to the continuous mapping $\tG: K \subset C(D) \to C(U)$ to conclude that there exists an operator network $\tN$ with a single hidden layer in the approximator network and a single hidden layer in the trunk network (and with $\tr_0 \equiv 0$), such that
\begin{align*} 
\sup_{u\in K}
\Vert
\tG(u)
- 
\tN(u)
\Vert_{L^2(U)}
< \epsilon / 12.
\end{align*}
Note that this implies in particular that 
\begin{align*}
\Vert \tN(u) \Vert_{L^2(U)} 
&\le 
\Vert \G_M(u) \Vert_{L^2(U)} 
+
\Vert \G_M(u) - \tG(u) \Vert_{L^2(U)}
+
\Vert \tG(u) - \tN(u) \Vert_{L^2(U)} 
\\
&\le 
M + \epsilon/6 + \epsilon / 6
< 2M,
\end{align*}
for all $u\in K$. We note that after suitably modifying\footnote{If $A\in \R^{p\times p}$ is an invertible matrix, then the transformed trunk and branch nets $\hat{\tau} := A\cdot \tau$ and $\hat{\beta} := A^{-T}\cdot \beta$ represent the same DeepONet, i.e. we have $\sum_{k=1}^p \hat{\beta}_k(u) \hat{\tau}_k(y) = \sum_{k=1}^p {\beta}_k(u) {\tau}_k(y)$ for all $u,y$.} the (linear) output layers of the branch and trunk nets of $\tN$, we can write $\tN$ in the form  
\[
\tN(u)(y) = \sum_{k=1}^p \beta_k(u) \tau_k(y),
\]
with \emph{orthonormal} trunk net functions $\{\tau_1,\dots, \tau_p\}\subset L^2(U)$. In particular, we then have
\[
\Vert \tN(u) \Vert_{L^2} = \Vert \beta(u) \Vert_{\ell^2}, \quad \forall \, u\in C(D),
\]
and
\[
\Vert \beta(u) \Vert_{\ell^2} \le M + \epsilon/3 < 2M, \quad \forall \, u\in K.
\]
Applying Lemma \ref{lem:clipping} with $R_1 = M+\epsilon/3$, $R_2 = 2M$, we conclude that there exists a ReLU neural network $\gamma: \R^p \to \R^p$, such that
\[
\begin{cases}
\Vert\gamma(x) - x\Vert_{\ell^2} < \epsilon/12, &\text{if } \Vert x \Vert_{\ell^2} \le R_1, \\
\Vert \gamma(x) \Vert_{\ell^2} \le 2M,  &\forall \, x\in \R^p.
\end{cases}
\]
We now define a ``clipped'' DeepONet $\cN: C(D) \to L^2(U)$, by
\[
\cN(u) := \sum_{k=1}^p \gamma_k(\beta(u)) \tau_k(y).
\]
Then $\cN(u)$ satisfies
\begin{gather} \label{eq:clipped}
\begin{aligned} 
\max_{u\in K}
\Vert \cN(u) - \tG(u) \Vert_{L^2}
&\le
\max_{u\in K}
\Vert \cN(u) - \tN(u) \Vert_{L^2}
+
\max_{u\in K}
\Vert \tN(u) - \tG(u) \Vert_{L^2}
\\
&=
\max_{u\in K}
\Vert \gamma(\beta(u)) - \beta(u) \Vert_{\ell^2} 
+
\max_{u\in K}
\Vert \tN(u) - \tG(u) \Vert_{L^2}
\\
&\le
\max_{\Vert x \Vert_{\ell^2} \le R_1}
\Vert \gamma(x) - x \Vert_{\ell^2} 
+
\max_{u\in K}
\Vert \tN(u) - \tG(u) \Vert_{L^2}
\\
&\le
\epsilon/12
+
\epsilon/12 
= \epsilon/6,
\end{aligned}
\end{gather}
and $\cN(u)$ is bounded from above by
\[
\Vert \cN(u) \Vert_{L^2}
=
\Vert \gamma(\beta(u)) \Vert_{\ell^2}
\le
2M,
\quad \forall\, u\in C(D).
\]
It follows that for this clipped DeepONet $\cN$, we have
\begin{align}
\Vert \G_M - \cN \Vert_{L^2(\mu)}
&\le 
\Vert \G_M - \cN \Vert_{L^2(\mu; K)} 
+ \Vert \G_M \Vert_{L^2(\mu; X\setminus K)} 
+ \Vert \cN \Vert_{L^2(\mu; X\setminus K)}
\notag
\\
&\le 
\Vert \G_M - \cN \Vert_{L^2(\mu; K)} 
+ 3M \mu(X\setminus K)^{1/2}
\notag
\\
&\le
\Vert \G_M - \cN \Vert_{L^2(\mu; K)} + \epsilon/3,
\label{eq:cpctapprox}
\end{align}
and, using \eqref{eq:strict} and \eqref{eq:clipped}, we find
\begin{gather} \label{eq:cpctclipped} 
\begin{aligned} 
\Vert \G_M - \cN \Vert_{L^2(\mu; K)}
&\le
\max_{u\in K}
\left\{
\Vert \G_M(u) - \tG(u) \Vert_{L^2(U)}
+
\Vert \tG(u) - \cN(u) \Vert_{L^2(U)}
\right\}
\\
&< 
\epsilon/3.
\end{aligned}
\end{gather}
Hence, by \eqref{eq:cpctapprox} and \eqref{eq:cpctclipped}, we have
\begin{align}\label{eq:nnapprox}
\Vert \G_M - \cN \Vert_{L^2(\mu)} < 2\epsilon / 3.
\end{align}
Combining \eqref{eq:nnapprox} and \eqref{eq:nnclip}, we conclude that the DeepONet $\cN$ satisfies
\[
\Vert \G - \cN \Vert_{L^2(\mu)}
< 
2\epsilon/3 + \epsilon /3 = \epsilon.
\]

\end{proof}

\subsection{Proof of Theorem \ref{thm:err-upper-bound}}
\label{app:errdec}
\begin{proof}
The upper estimate follows by a suitable decomposition of the difference. We write
\begin{align*}
\cN - \G = \cR \circ \cA \circ \cE - \G
&=
\left[\cR \circ \cA \circ \cE - \cR \circ \cP \circ \G\right] + 
\left[\cR \circ \cP \circ \G - \G\right]
\\
&=
\left[\cR \circ \cA \circ \cE - \cR \circ \cP \circ \G \circ \cD \circ \cE\right] \\
&\quad + 
\left[\cR \circ \cP \circ \G \circ \cD \circ \cE - \cR \circ \cP \circ \G\right]  \\
&\quad +
\left[\cR \circ \cP \circ \G - \G\right]
\\
&=: 
T_1 + T_2 + T_3.
\end{align*}
We can now estimate the norm of each of the three terms, as follows:
\begin{align*}
\Vert T_1 \Vert_{L^2(\mu)}
&=
\left(
\int_X \Vert \cR \circ \cA \circ \cE - \cR \circ \cP \circ \G \circ \cD \circ \cE \Vert^2_{L^2(U)} \, d\mu
\right)^{1/2}
\\
&\le
\Lip(\cR)
\left(
\int_X \Vert \cA \circ \cE - \cP \circ \G \circ \cD \circ \cE \Vert^2_{\ell^2(\R^p)} \, d\mu
\right)^{1/2}
\\
&=
\Lip(\cR)
\left(
\int_{\R^m} \Vert \cA - \cP \circ \G \circ \cD \Vert^2_{\ell(\R^p)} \, d(\cE_\#\mu)
\right)^{1/2}
\\
&=
\Lip(\cR) \Vert \cA - \cP \circ \G \circ \cD \Vert_{L^2(\cE_\#\mu)}.
\end{align*}
In the second line above, we denote
\[
\Lip(\cR) 
= 
\Lip\left(
\cR: 
(\R^p, \Vert \slot \Vert_{\ell^2(\R^p)}) 
\to 
(L^2(U), \Vert \slot \Vert_{L^2(U)})
\right).
\]

For the second term, we obtain
\begin{align*}
\Vert T_2 \Vert_{L^2(\mu)}
&=
\left(
\int_X 
\Vert 
\cR \circ \cP \circ \G \circ \cD \circ \cE - \cR \circ \cP \circ \G
\Vert_{L^2(U)}^2
\, d\mu
\right)^{1/2}
\\
&\le
\Lip(\cR \circ \cP)
\left(
\int_X 
\Vert 
\G \circ \cD \circ \cE - \G
\Vert_{L^2(U)}^2
\, d\mu
\right)^{1/2}
\\
&\le
\Lip(\cR \circ \cP) \Lip_\alpha(\cG)
\left(
\int_X 
\Vert 
\cD \circ \cE - \Id
\Vert_{X}^{2\alpha}
\, d\mu
\right)^{1/2},
\end{align*}
where 
\begin{align*}
\Lip(\cR\circ \cP) &= \Lip(\cR \circ\cP: (L^2(U), \Vert \slot \Vert_{L^2(U)}) \to (L^2(U), \Vert \slot \Vert_{L^2(U)})), \\
\Lip_\alpha(\G) &= \Lip_\alpha(\G: (A\subset X,\Vert \slot \Vert) \to (L^2(U), \Vert \slot \Vert_{L^2(U)})),
\end{align*}
and $\alpha \in (0,1]$. Since $\alpha \in (0,1]$, we can estimate the last term by Jensen's inequality to obtain:
\begin{align*}
\Vert T_2\Vert_{L^2(\mu)}
&\le
\Lip(\cR \circ \cP) \Lip_\alpha(\cG)
\left(
\int_X 
\Vert 
\cD \circ \cE - \Id
\Vert_{X}^{2\alpha}
\, d\mu
\right)^{1/2}
\\
&\le
\Lip(\cR \circ \cP) \Lip_\alpha(\cG)
\left(
\int_X 
\Vert 
\cD \circ \cE - \Id
\Vert_{X}^{2}
\, d\mu
\right)^{\alpha/2}
\\
&=
\Lip(\cR \circ \cP) \Lip_\alpha(\G) \Vert \cD \circ \cE - \Id \Vert_{L^2(\mu)}^\alpha.
\end{align*}

Finally for the third term, we have (by the definition of the push-forward)
\begin{align*}
\Vert T_3 \Vert_{L^2(\mu)}
&=
\left(
\int_X
\Vert \cR \circ \cP \circ \G - \G \Vert^2_{L^2(U)} \, d\mu
\right)^{1/2}
\\
&=
\left(
\int_{L^2(U)}
\Vert \cR \circ \cP - \Id \Vert^2_{L^2(U)} \, d(\G_\#\mu)
\right)^{1/2}
\\
&= 
\Vert \cR \circ \cP - \Id \Vert_{L^2(\G_\#\mu)}.
\end{align*}
\end{proof}

\subsection{Proof of Theorem \ref{thm:opt-linear}}
\label{app:pf32}
The proof of Theorem \ref{thm:opt-linear} is a consequence of the following series of lemmas. 
\begin{lemma} \label{lem:Eproj-equiv}
Given a separable Hilbert space $H$ and a $p$-dimensional subspace $\hat{V}\subset H$, we have
\begin{align} \label{eq:linear}
\Err_{\Proj}(\hat{V})
=
\int_H \Vert v \Vert^2 \, d\mu(v) 
-
\sum_{i=1}^p
\int_H \left|\langle v, \hat{v}_i \rangle\right|^2 \, d\mu(v),
\end{align}
where $\hat{v}_i$, $i=1,\dots, p$ is any orthonormal basis of $\hat{V}$. In particular, minimizing $\Err_{\Proj}$ is equivalent to maximizing
\begin{align} \label{eq:tomaximize}
\sum_{i=1}^p \int_X \left|\langle v, \hat{v}_i \rangle\right|^2 \, d\mu(v)
=
\sum_{i=1}^p \left[\left|\langle \E[v], \hat{v}_i \rangle \right|^2 +  \langle \hat{v}_i, \Gamma \hat{v}_i \rangle \right],
\end{align}
where
\[
\Gamma = 
\int_X 
(v-\E[v]) \otimes (v-\E[v]) \, d\mu(v).
\]
\end{lemma}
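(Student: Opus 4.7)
The plan is to proceed by a direct Hilbert-space computation, reducing everything to the standard fact that best approximation in a closed linear subspace is given by orthogonal projection.

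First, I would fix $v \in H$ and observe that since $\hat{V} = \mathrm{span}(\hat{v}_1,\dots,\hat{v}_p)$ is a finite-dimensional (hence closed) subspace with the given orthonormal basis, the unique minimizer of $\hat{v}\mapsto \|v-\hat{v}\|^2$ over $\hat{v}\in \hat{V}$ is the orthogonal projection $P_{\hat{V}} v = \sum_{i=1}^p \langle v,\hat{v}_i\rangle \hat{v}_i$. Pythagoras then gives
\[
\inf_{\hat{v}\in \hat{V}}\|v-\hat{v}\|^2 = \|v\|^2 - \|P_{\hat{V}}v\|^2 = \|v\|^2 - \sum_{i=1}^p |\langle v,\hat{v}_i\rangle|^2.
\]
Integrating this pointwise identity against $\mu$ and applying Fubini (justified by the finite second moment assumption, $\int \|v\|^2 d\mu < \infty$, so all integrands are in $L^1(\mu)$) yields \eqref{eq:linear}.

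For the characterization \eqref{eq:tomaximize}, since $\int\|v\|^2\,d\mu$ does not depend on $\hat{V}$, minimizing $\Err_{\Proj}(\hat{V})$ is equivalent to maximizing $\sum_{i=1}^p \int |\langle v,\hat{v}_i\rangle|^2\,d\mu(v)$. Then I would split $v = (v-\E[v]) + \E[v]$ inside the inner product and expand the square:
\[
|\langle v,\hat{v}_i\rangle|^2 = |\langle v-\E[v],\hat{v}_i\rangle|^2 + 2\langle v-\E[v],\hat{v}_i\rangle\langle\E[v],\hat{v}_i\rangle + |\langle \E[v],\hat{v}_i\rangle|^2.
\]
Integrating, the middle cross term vanishes because $\int \langle v-\E[v],\hat{v}_i\rangle\,d\mu(v) = 0$, the last term is constant in $v$, and the first term equals $\langle \hat{v}_i,\Gamma\hat{v}_i\rangle$ by the very definition of the (uncentered applied to centered data) covariance operator $\Gamma = \int (v-\E[v])\otimes(v-\E[v])\,d\mu$.

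I do not expect a real obstacle here: the only subtlety is to be careful about the definition of $\Gamma$ as a bounded self-adjoint operator given only a finite second moment on $\mu$, which is standard, and to note that the identity \eqref{eq:linear} is manifestly independent of the choice of orthonormal basis $\{\hat{v}_i\}$ (a change of basis preserves $\sum_i |\langle v,\hat{v}_i\rangle|^2 = \|P_{\hat{V}} v\|^2$), which makes the stated equivalence in \eqref{eq:tomaximize} well-posed as an intrinsic property of $\hat{V}$ rather than of a chosen basis.
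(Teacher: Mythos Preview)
Your proposal is correct and follows essentially the same approach as the paper: the paper also computes the pointwise infimum via orthogonal projection, applies Pythagoras to obtain $\|v\|^2 = \inf_{\hat v\in\hat V}\|v-\hat v\|^2 + \sum_i|\langle v,\hat v_i\rangle|^2$, integrates, and then derives \eqref{eq:tomaximize} by the same centering-and-expanding argument with the cross term vanishing. Your added remarks on Fubini and basis-independence are fine but not needed beyond what the paper does.
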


\begin{proof}
Let $\hat{v}_i$, $i=1,\dots,p$ be any orthonormal basis of $\hat{V}$. Then 
\begin{align*}
\inf_{\hat{v} \in \hat{V}} \Vert v - \hat{v} \Vert^2
&=
\left\Vert 
v - \sum_{k=1}^p \langle v, \hat{v}_i \rangle \, \hat{v}_i
\right\Vert^2,
\end{align*}
and since $v-\sum_k  \langle v, \hat{v}_i \rangle \, \hat{v}_i$ is perpendicular to all $\hat{v}_i$, we have
\begin{align*}
\Vert v \Vert^2
&=
\left\Vert 
v - \sum_{k=1}^p \langle v, \hat{v}_i \rangle \, \hat{v}_i
\right\Vert^2
+ 
\left\Vert 
\sum_{k=1}^p \langle v, \hat{v}_i \rangle \, \hat{v}_i
\right\Vert^2
\\
&=
\inf_{\hat{v} \in \hat{V}} \Vert v - \hat{v} \Vert^2
+
\sum_{k=1}^p \left|\langle v, \hat{v}_i \rangle \right|^2.
\end{align*}
This is equivalent to \eqref{eq:linear}. Furthermore, for any $i=1, \dots, p$, we have
\begin{align*}
\int_H |\langle v, \hat{v}_i \rangle |^2 \, d\mu(v)
&=
\int_H |\langle v-\E[v], \hat{v}_i \rangle + \langle \E[v], \hat{v}_i \rangle |^2 \, d\mu(v)
\\
&= 
\int_H 
\left\{
|\langle v-\E[v], \hat{v}_i \rangle|^2 + 2\langle v-\E[v], \hat{v}_i \rangle \langle \E[v],\hat{v}_i\rangle  + |\langle \E[v], \hat{v}_i \rangle |^2
\right\}
\, d\mu(v)
\\
&= 
\int_H
\left\{
|\langle v-\E[v], \hat{v}_i \rangle|^2 + |\langle \E[v], \hat{v}_i \rangle |^2
\right\}
\, d\mu(v)
\\
&=
\langle \hat{v}_i, \Gamma \hat{v}_i \rangle + |\langle \E[v], \hat{v}_i \rangle |^2.
\end{align*}
\end{proof}
\begin{lemma} \label{lem:covop-eigendecomp}
The covariance operator $\Gamma$ of a probability measure $\nu \in \P_2(H)$ is a compact, self-adjoint operator. In particular, there exists a discrete set of (unique) eigenvalues $\overline{\lambda}_1 > \overline{\lambda}_2 > \dots$, and orthogonal subspaces $E_1, E_2, \dots$, such that 
\[
H = \bigoplus_{j=1}^\infty E_j,
\quad
\text{and}
\quad
\Gamma = \sum_{j=1}^\infty \overline{\lambda}_j P_{E_j}.
\]
Here $P_{E}: H \to E$ denotes the orthogonal projection onto the subspace $E\subset H$.
\end{lemma}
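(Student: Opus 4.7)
The plan is to verify the three properties of $\Gamma$ in turn (well-definedness and boundedness, self-adjointness, compactness) and then invoke the spectral theorem for compact self-adjoint operators. Throughout, I will work with the centered moments, writing $\tilde{v} := v - \E[v]$ and noting that $\int_H \Vert \tilde{v} \Vert^2 \, d\nu(v) \le \int_H \Vert v \Vert^2 \, d\nu(v) < \infty$ since $\nu \in \P_2(H)$.

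First, to define $\Gamma$ rigorously as a bounded operator on $H$, I would interpret $(\tilde{v}\otimes \tilde{v})$ as the rank-one operator $u \mapsto \langle u,\tilde{v}\rangle \tilde{v}$, and define $\Gamma$ weakly via the bilinear form
\[
\langle u, \Gamma w \rangle
:=
\int_H \langle u, \tilde{v} \rangle \langle \tilde{v}, w \rangle \, d\nu(v),
\quad u,w \in H.
\]
Cauchy--Schwarz yields $|\langle u, \Gamma w \rangle| \le \Vert u \Vert \Vert w \Vert \int_H \Vert \tilde{v}\Vert^2 \, d\nu(v)$, so the form is bounded and by Riesz representation $\Gamma$ is a bounded operator with $\Vert \Gamma \Vert \le \int_H \Vert \tilde{v}\Vert^2 \, d\nu(v)$. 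Symmetry of the above bilinear form in $(u,w)$ shows that $\Gamma$ is self-adjoint, and positivity (take $u=w$) shows $\Gamma \ge 0$.

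The main step is to show that $\Gamma$ is compact; in fact I will show it is trace-class. Fix any orthonormal basis $\{e_k\}_{k\in \N}$ of the separable Hilbert space $H$. By monotone convergence and Parseval,
\[
\sum_{k=1}^\infty \langle e_k, \Gamma e_k \rangle
=
\sum_{k=1}^\infty \int_H |\langle e_k, \tilde{v}\rangle|^2 \, d\nu(v)
=
\int_H \Vert \tilde{v} \Vert^2 \, d\nu(v)
< \infty,
\]
which shows $\Gamma$ is trace-class and hence compact.

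Finally, by the spectral theorem for compact self-adjoint operators on a separable Hilbert space, $\Gamma$ admits a countable set of real eigenvalues whose only possible accumulation point is $0$. Listing the distinct nonzero eigenvalues in decreasing order as $\overline{\lambda}_1 > \overline{\lambda}_2 > \dots \ge 0$, and letting $E_j := \ker(\Gamma - \overline{\lambda}_j \Id)$ (including $\ker\Gamma$ as the eigenspace associated to $\overline{\lambda}=0$ when it is nontrivial), each $E_j$ is finite-dimensional for $\overline{\lambda}_j \ne 0$, the $E_j$ are pairwise orthogonal by self-adjointness, and we obtain the orthogonal decomposition $H = \bigoplus_j E_j$ together with the representation $\Gamma = \sum_j \overline{\lambda}_j P_{E_j}$, with convergence in operator norm. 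The only subtlety is bookkeeping for the possibly infinite-dimensional kernel of $\Gamma$, which contributes the zero eigenvalue to the sum but does not affect any of the subsequent arguments in the paper.
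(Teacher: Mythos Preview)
Your proof is correct and in fact more detailed than the paper's own treatment: the paper simply cites \cite[Appendix E,F]{PinelisMolzon2016} without giving any argument. Your sketch reproduces the standard proof that would be found in such a reference (weak definition of $\Gamma$, self-adjointness and positivity from the symmetry of the bilinear form, trace-class via Parseval and the second-moment assumption, then the spectral theorem), so there is no substantive difference in approach to compare.
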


\begin{proof}
The proof can e.g. be found in \cite[Appendix E,F]{PinelisMolzon2016}.
\end{proof}

As an immediate corollary, we also obtain

\begin{lemma}
For any $\nu \in \P_2(H)$, the operator 
\[
\overline{\Gamma} 
=
\int_{H} (v\otimes v) \, d\nu(v),
\]
is a self-adjoint, compact operator, and hence it also possesses an eigendecomposition as in Lemma \ref{lem:covop-eigendecomp}.
\end{lemma}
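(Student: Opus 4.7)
The plan is to reduce the claim for $\overline{\Gamma}$ to the analogous statement for the (mean-subtracted) covariance operator $\Gamma$ from Lemma~\ref{lem:covop-eigendecomp}, which is already established. The key algebraic identity is
\[
\overline{\Gamma}
= \int_H (v \otimes v)\, d\nu(v)
= \int_H (v - \E[v]) \otimes (v - \E[v])\, d\nu(v) + \E[v] \otimes \E[v]
= \Gamma + \E[v] \otimes \E[v],
\]
which holds because the cross terms $\int (v - \E[v]) \otimes \E[v]\, d\nu$ and $\int \E[v] \otimes (v-\E[v])\, d\nu$ vanish by definition of the mean. This rewrites $\overline{\Gamma}$ as the sum of the covariance operator $\Gamma$ and a single rank-one operator.

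From this decomposition, self-adjointness of $\overline{\Gamma}$ is immediate: $\Gamma$ is self-adjoint by the previous lemma, and $\E[v] \otimes \E[v]$ is manifestly self-adjoint as a symmetric tensor product. For compactness, I would observe that $\Gamma$ is compact (again by Lemma~\ref{lem:covop-eigendecomp}) and $\E[v] \otimes \E[v]$ is a finite-rank (rank at most one) operator, hence compact; the sum of two compact operators is compact. One should briefly check that $\E[v]$ is a well-defined element of $H$ so that the rank-one operator makes sense, which follows from the assumption $\nu \in \P_2(H)$ via the Bochner integral (the bound $\Vert \E[v] \Vert_H \le \int_H \Vert v \Vert_H\, d\nu(v) \le (\int \Vert v \Vert_H^2 d\nu)^{1/2} < \infty$).

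Having established that $\overline{\Gamma}$ is self-adjoint and compact, the spectral theorem for compact self-adjoint operators on a separable Hilbert space (which is precisely the content invoked in the proof of Lemma~\ref{lem:covop-eigendecomp}, cf.\ the cited reference) yields the desired eigendecomposition $\overline{\Gamma} = \sum_j \overline{\lambda}_j P_{E_j}$ with $H = \bigoplus_j E_j$. No step here is a real obstacle; the only minor subtlety is justifying the algebraic identity and the integrability of $\E[v]$, both of which follow routinely from the assumption $\nu \in \P_2(H)$.
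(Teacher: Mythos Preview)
Your proof is correct and follows essentially the same approach as the paper: decompose $\overline{\Gamma} = \Gamma + \E[v]\otimes\E[v]$, then use that $\Gamma$ is compact self-adjoint (from Lemma~\ref{lem:covop-eigendecomp}) and $\E[v]\otimes\E[v]$ is a self-adjoint finite-rank operator. The paper's proof is slightly terser but identical in structure.
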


\begin{proof}
We can write $\overline{\Gamma} = \Gamma + \E[v] \otimes \E[v]$, where $\E[v] := \int_H v \, d\nu(v)$ is the expected value under $\nu$. It is now immediate that $\overline{\Gamma}$ is self-adjoint, since both $\Gamma$ and $\E[v]\otimes \E[v]$ are self-adjoint. Furthermore, $\overline{\Gamma}$ is also compact, since $\overline{\Gamma} = \Gamma + \E[v] \otimes \E[v]$ is the sum of a compact operator $\Gamma$ and the finite-rank operator $\E[v]\otimes \E[v]$.
\end{proof}

The next lemma shows that the quantity \eqref{eq:tomaximize} to be maximized can be written in an equivalent form, which only involves the orthogonal projection onto $V_p$.

\begin{lemma} \label{lem:Eproj-equiv2}
Let $\overline{\Gamma} = \Gamma + \E[v]\otimes \E[v]$. Let $\phi_k$, $(k\in \N)$, be an orthonormal basis of eigenvectors of $\overline{\Gamma}$ with corresponding eigenvalues $\lambda_k$. Let $\hat{v}_1, \dots, \hat{v}_p$ be any orthonormal basis of $V_p$. Then 
\begin{align} \label{eq:Eproj-equiv2}
\sum_{j=1}^p \langle \hat{v}_i, \overline{\Gamma} \hat{v}_i \rangle 
=
\sum_{k=1}^\infty \lambda_k \Vert P_{V_p} \phi_k \Vert^2.
\end{align}
Here $P_{V_p}: H \to V_p$ denotes the orthogonal projection onto $V_p$.
\end{lemma}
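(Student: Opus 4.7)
The plan is to expand everything in the eigenbasis $\{\phi_k\}_{k\in\N}$ and reduce the identity \eqref{eq:Eproj-equiv2} to a double sum that can be reorganized by Fubini--Tonelli (applied to series of non-negative terms). Since $\overline{\Gamma}$ is self-adjoint and compact by the previous lemma, and $\overline{\Gamma} = \Gamma + \E[v]\otimes \E[v]$ is a sum of positive operators, its eigenvalues satisfy $\lambda_k \ge 0$, which will justify all exchanges of summation.

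First, I would expand each orthonormal basis vector $\hat{v}_i$ of $V_p$ in the eigenbasis of $\overline{\Gamma}$, writing
\[
\hat{v}_i = \sum_{k=1}^\infty \langle \hat{v}_i, \phi_k \rangle \phi_k, \quad i=1,\dots,p.
\]
Applying the spectral decomposition $\overline{\Gamma}\phi_k = \lambda_k \phi_k$ and using continuity of $\overline{\Gamma}$, this yields $\overline{\Gamma}\hat{v}_i = \sum_{k=1}^\infty \lambda_k \langle \hat{v}_i, \phi_k\rangle \phi_k$, and hence by Parseval
\[
\langle \hat{v}_i, \overline{\Gamma}\hat{v}_i \rangle
= \sum_{k=1}^\infty \lambda_k |\langle \hat{v}_i, \phi_k\rangle|^2.
\]

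Next, I would sum over $i = 1,\dots,p$ and interchange the two (countable) summations. Since $\lambda_k \ge 0$ and $|\langle\hat{v}_i,\phi_k\rangle|^2 \ge 0$, Tonelli's theorem gives
\[
\sum_{i=1}^p \langle \hat{v}_i, \overline{\Gamma}\hat{v}_i \rangle
= \sum_{k=1}^\infty \lambda_k \left( \sum_{i=1}^p |\langle \phi_k, \hat{v}_i\rangle|^2 \right).
\]
Finally, the inner finite sum is exactly $\Vert P_{V_p}\phi_k \Vert^2$, because $\{\hat{v}_1,\dots,\hat{v}_p\}$ is an orthonormal basis of $V_p$, so that $P_{V_p}\phi_k = \sum_{i=1}^p \langle \phi_k, \hat{v}_i\rangle \hat{v}_i$ and orthonormality gives $\Vert P_{V_p}\phi_k\Vert^2 = \sum_{i=1}^p |\langle\phi_k,\hat{v}_i\rangle|^2$. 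Substituting yields \eqref{eq:Eproj-equiv2}.

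This argument is essentially mechanical once the spectral expansion is written down; there is no real obstacle, and the only subtle point is the justification for exchanging the two sums, which is handled by the non-negativity of $\lambda_k$ and Tonelli's theorem (or, equivalently, by noting that $\overline{\Gamma}$ is trace class so the double series converges absolutely).
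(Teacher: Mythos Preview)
Your proof is correct and follows essentially the same approach as the paper: expand the orthonormal basis vectors of $V_p$ in the eigenbasis $\{\phi_k\}$, use the spectral decomposition $\overline{\Gamma}\phi_k = \lambda_k\phi_k$, swap the order of summation, and identify the inner sum as $\Vert P_{V_p}\phi_k\Vert^2$. If anything, you are slightly more careful than the paper in explicitly justifying the interchange of sums via non-negativity.
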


\begin{proof}
Let $u_1, \dots, u_p \in V_p$ be an orthonormal basis $V_p$. Let $\lambda_1 \ge \lambda_2 \ge \dots$ denote the eigenvalues of $\overline{\Gamma}$ with corresponding orthonormal eigenbasis $\phi_1, \phi_2, \dots$. Then
\[
u_j = \sum_{k=1}^\infty \langle u_j, \phi_k \rangle \phi_k,
\]
and hence
\begin{align*}
\sum_{j=1}^p \left\langle u_j, \overline{\Gamma} u_j\right\rangle
&=
\sum_{j=1}^p \left\langle u_j, \overline{\Gamma}  
\sum_{k=1}^\infty \langle u_j, \phi_k \rangle\phi_k\right\rangle
\\
&=
\sum_{j=1}^p \sum_{k=1}^\infty 
\langle u_j, \phi_k \rangle 
\left\langle u_j, \overline{\Gamma}  \phi_k\right\rangle
\\
&=
\sum_{j=1}^p \sum_{k=1}^\infty 
\lambda_k \left|\langle u_j, \phi_k \rangle \right|^2
\\
&=
 \sum_{k=1}^\infty 
\lambda_k \left( \sum_{j=1}^p \left|\langle u_j, \phi_k \rangle \right|^2 \right)
\\
&=
 \sum_{k=1}^\infty 
\lambda_k \left\Vert P_{V_p} \phi_k \right\Vert^2.
\end{align*}
\end{proof}

\begin{lemma} \label{lem:proj-minimizer}
If $V_p$ is a minimizing subspace, i.e. a subspace which minimizes $\Err_{\Proj}$, then 
\begin{align} \label{eq:proj-minimizer}
\bigoplus_{j=1}^{n-1} E_j
\subset
V_p 
\subset 
\bigoplus_{j=1}^n E_j,
\end{align}
where $n$ is chosen such that 
\[
\dim\Big(
\bigoplus_{j=1}^{n-1} E_j
\Big)
=
\sum_{j=1}^{n-1} \dim(E_j)
< 
p
\le 
\sum_{j=1}^n \dim(E_j)
=
\dim\Big(
\bigoplus_{j=1}^n E_j
\Big)
.
\]
Here, the $E_j$ denote the eigenspaces of
\[
\overline{\Gamma} = \Gamma + \E[v]\otimes\E[v] = \sum_{j=1}^\infty \overline{\lambda}_j P_{E_j},
\]
corresponding to the (distinct) eigenvalues $\overline{\lambda}_1 > \overline{\lambda}_2 > \overline{\lambda}_3 > \dots$ of $\overline{\Gamma}$.
\end{lemma}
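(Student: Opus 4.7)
\emph{Reduction to a linear program over eigenspaces.} My plan is to combine Lemmas~\ref{lem:Eproj-equiv} and \ref{lem:Eproj-equiv2} to rewrite the quantity to be minimized as a linear functional of projection traces, one per eigenspace, and then solve the resulting linear program. By Lemma~\ref{lem:Eproj-equiv}, minimizing $\Err_{\Proj}(V_p)$ is equivalent to maximizing $\sum_{i=1}^p \langle \hat v_i,\overline{\Gamma}\hat v_i\rangle$ for any orthonormal basis $\hat v_1,\dots,\hat v_p$ of $V_p$ (using that $\overline{\Gamma}=\Gamma+\E[v]\otimes\E[v]$), and by Lemma~\ref{lem:Eproj-equiv2} this equals $\sum_{k=1}^\infty \lambda_k\Vert P_{V_p}\phi_k\Vert^2$. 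I would then group terms by eigenspace: choosing an orthonormal eigenbasis $\{\phi_k\}$ adapted to the decomposition $H=\bigoplus_j E_j$ (so that $\{\phi_k\}_{k\in I_j}$ is an orthonormal basis of $E_j$), set
\[
\alpha_j := \sum_{k\in I_j}\Vert P_{V_p}\phi_k\Vert^2 = \Tr(P_{E_j}P_{V_p}P_{E_j}),
\]
which is independent of the chosen basis within $E_j$. This yields the reformulation $F(V_p):=\sum_{i=1}^p \langle \hat v_i,\overline{\Gamma}\hat v_i\rangle = \sum_{j=1}^\infty \overline{\lambda}_j\,\alpha_j$.

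\emph{Solving the linear program.} The weights $\alpha_j$ satisfy $0\le \alpha_j\le n_j$ (since $P_{E_j}P_{V_p}P_{E_j}$ is a self-adjoint contraction on $E_j$) and $\sum_j\alpha_j=\Tr(P_{V_p})=p$. I would then show by a standard exchange argument that, because $\overline{\lambda}_1>\overline{\lambda}_2>\dots$ are strictly decreasing, the unique maximizer of $F$ subject to these constraints is
\[
\alpha_j=n_j\ (j<n),\qquad \alpha_n=p-\textstyle\sum_{j<n}n_j,\qquad \alpha_j=0\ (j>n).
\]
The exchange step is direct: if $\alpha_{j_0}<n_{j_0}$ for some $j_0<n$, the mass constraint forces some $\alpha_{k_0}>0$ with $k_0\ge n$, and shifting a small amount $\epsilon$ of mass from $k_0$ to $j_0$ preserves feasibility and increases $F$ by $(\overline{\lambda}_{j_0}-\overline{\lambda}_{k_0})\epsilon>0$; an analogous argument rules out $\alpha_{k_0}>0$ for $k_0>n$ whenever the optimal $\alpha_n<n_n$.

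\emph{From trace weights back to subspaces.} To conclude, the extremal values must be translated into geometric containment. Since $\Vert P_{V_p}\phi_k\Vert\le\Vert\phi_k\Vert=1$ with equality exactly when $\phi_k\in V_p$, the identity $\alpha_j=n_j$ forces $\phi_k\in V_p$ for every $k\in I_j$, i.e.\ $E_j\subset V_p$; similarly $\alpha_j=0$ forces $P_{V_p}\phi_k=0$ on $E_j$, i.e.\ $E_j\perp V_p$. Applying these equivalences for $j<n$ and $j>n$ respectively yields $\bigoplus_{j=1}^{n-1}E_j\subset V_p$ and $V_p\subset \bigl(\bigoplus_{j>n}E_j\bigr)^{\perp}=\bigoplus_{j=1}^n E_j$, which is the desired sandwich. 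I expect the main (mild) obstacle to be a careful justification of the exchange argument showing that the LP optimum is \emph{uniquely} attained as described above (so that any minimizer $V_p$, not just one specific choice, must satisfy the sandwich); once that is in place, the passage from trace extremality to geometric containment follows from the standard equality cases of orthogonal projections.
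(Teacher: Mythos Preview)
Your proposal is correct and follows essentially the same route as the paper: both reduce via Lemmas~\ref{lem:Eproj-equiv} and~\ref{lem:Eproj-equiv2} to maximizing $\sum_k \lambda_k\Vert P_{V_p}\phi_k\Vert^2$ under the constraints $\alpha_k\in[0,1]$, $\sum_k\alpha_k=p$, identify the optimizer, and read off the geometric containment from the equality cases. The only cosmetic difference is that the paper parametrizes by individual eigenfunctions (weights $\alpha_k=\Vert P_{V_p}\phi_k\Vert^2\in[0,1]$) and solves the LP by the explicit algebraic bound $\sum_k\lambda_k\alpha_k-\sum_{k\le p}\lambda_k=\sum_{k>p}\lambda_k\alpha_k-\sum_{k\le p}\lambda_k(1-\alpha_k)\le 0$ with pivot $\lambda_p$, whereas you group by eigenspace (weights $\alpha_j\in[0,n_j]$) and use a mass-exchange argument; your grouping makes the uniqueness step you flagged slightly cleaner since the $\overline{\lambda}_j$ are strictly decreasing.
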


\begin{proof}
Let $V_p\subset H$ be a minimizer of $\Err_{\Proj}$ with respect to $\nu$ of dimension $\dim(V_p) = p$. For each eigenspace $E_j$ of $\overline{\Gamma}$, denote 
\[
p_j = \dim(E_j),
\]
and define $p_0 := 0$. Now, we choose an orthonormal eigenbasis $\phi_1, \phi_2, \dots$ of $\overline{\Gamma}$, with corresponding eigenvalues $\lambda_1 \ge \lambda_2 \ge \dots$, such that 
\[
E_j = \Span(\phi_{p_{j-1}+1}, \dots, \phi_{p_j}), \quad (j\in \N).
\]
Note that the eigenvalues thus satisfy
\[
\underbrace{
\lambda_1 = \dots = \lambda_{p_1}
}_{=\overline{\lambda}_1}
>
\underbrace{
\lambda_{p_1 + 1} = \dots = \lambda_{p_2}
}_{=\overline{\lambda}_2}
> 
\dots
\]
By Lemma \ref{lem:Eproj-equiv} and \ref{lem:Eproj-equiv2}, $V_p$ is a maximizer of the mapping
\begin{align*}
\hat{V} 
\mapsto 
\sum_{k=1}^\infty 
\lambda_k \left\Vert P_{\hat{V}} \phi_k \right\Vert^2,
\end{align*}
among all $p$-dimensional subspaces $\hat{V}\subset X$. Given a $p$-dimensional subspace $\hat{V}\subset X$, let $u_1, \dots, u_p$ be an orthonormal basis of $\hat{V}$. We note that
\begin{align*}
 \sum_{k=1}^\infty 
\left\Vert P_{\hat{V}} \phi_k \right\Vert^2
= 
 \sum_{k=1}^\infty 
\sum_{j=1}^p \left|\langle u_j, \phi_k \rangle\right|^2
=
\sum_{j=1}^p \Vert u_j \Vert^2
= p,
\end{align*}
And for all $k\in \N$, we have
\[
0 \le \left\Vert P_{\hat{V}} \phi_k \right\Vert^2 \le 1.
\]
Thus, for any $p$-dimensional subspace, the coefficients
\[
\alpha_k = \alpha_k(\hat{V}) := \Vert P_{\hat{V}} \phi_k \Vert^2, \quad (k \in \N),
\]
belong to the set
\[
\mathcal{A}_p :=
\set{
\alpha = (\alpha_k)_{k\in \N} 
}{
\alpha_k \in [0,1] \, \forall \, k\in \N, \; \sum_{k=1}^\infty \alpha_k = p
}.
\]
And we are interested in the maximizer $\hat{V} = V_p$ of
\[
(\alpha_k(\hat{V}))_{k\in \N} 
\mapsto 
\sum_{k=1}^\infty \lambda_k \alpha_k(\hat{V}).
\]

We now make the following claim:

\begin{claim}
For any $(\alpha_k)_{k\in \N} \in \mathcal{A}_p$, we have
\begin{align} \label{eq:claim-upperbound}
\sum_{k=1}^\infty \lambda_k \alpha_k \le \sum_{k=1}^p \lambda_k,
\end{align}
with equality, if and only if 
\[
\alpha_k
=
\begin{cases}
1, & \left( \text{if } k \le \sum_{j=1}^{n-1} p_j \right), \\
0, & \left( \text{if } k > \sum_{j=1}^n p_j \right),
\end{cases}
\]
where $n$ is chosen such that 
\[
\sum_{j=1}^{n-1} p_j \le p < \sum_{j=1}^n p_j.
\]
\end{claim}
Before proving the above claim, we show that it implies \eqref{eq:proj-minimizer}. Indeed, for the subspace $\hat{V} = \Span(\phi_1, \dots, \phi_p)$, we clearly have
\[
\alpha_k(\hat{V}) = \Vert P_{\hat{V}} \phi_k \Vert^2
=
\begin{cases}
1, & (k\le p), \\
0, & (k > p),
\end{cases}
\]
and hence
\[
\sum_{k=1}^\infty \lambda_k \alpha_k(\hat{V}) = \sum_{k=1}^p \lambda_k,
\]
achieves the upper bound in \eqref{eq:claim-upperbound}. If $V_p$ is another optimizer, then we must thus also have 
\[
\sum_{k=1}^\infty  \lambda_k \alpha_k(V_p) = \sum_{k=1}^p \lambda_k.
\]
The claim then implies that
\[
\Vert P_{V_p} \phi_k \Vert^2 = 
\begin{cases}
1, & \left( \text{if } k \le \sum_{j=1}^{n-1} p_j \right), \\
0, & \left( \text{if } k > \sum_{j=1}^n p_j \right).
\end{cases}
\]
The latter is equivalent to the statement that
\[
\phi_k \in V_p, \; \text{for } k \le \sum_{j=1}^{n-1} p_j,
\quad
\text{and}
\quad
\phi_k \perp V_p, \;\text{for } k > \sum_{j=1}^n p_j,
\]
i.e. that
\[
\bigoplus_{j=1}^{n-1} E_j \subset V_p, 
\quad
\text{and}
\quad 
\bigoplus_{j=n+1}^\infty E_j \perp V_p.
\]
Thus, assuming the claim \eqref{eq:claim-upperbound}, it follows that for any optimal subspace $V_p$, we must have
\[
\bigoplus_{j=1}^{n-1} E_j \subset V_p \subset \bigoplus_{j=1}^{n} E_j.
\]

We finally need to \textbf{prove the claim}: To prove the inequality \eqref{eq:claim-upperbound}, we simply note that
\begin{align*}
\sum_{k=1}^\infty \lambda_k\alpha_k 
-
\sum_{k=1}^p \lambda_k
&=
\sum_{k=1}^\infty \lambda_k\alpha_k 
-
\sum_{k=1}^p \lambda_k \alpha_k 
- 
\sum_{k=1}^p \lambda_k (1-\alpha_k)
\\
&=
\sum_{k>p} \lambda_k \alpha_k - \sum_{k=1}^p \lambda_k (1-\alpha_k).
\end{align*}
Since the sequence of eigenvalues $\lambda_k$ is monotonically decreasing and $\alpha_k \ge 0$, $1-\alpha_k \ge 0$, we have
\[
\sum_{k>p} \lambda_k \alpha_k \le \sum_{k>p} \lambda_p \alpha_k,
\quad
\text{and}
\quad
\sum_{k=1}^p \lambda_k (1-\alpha_k)
\ge 
\sum_{k=1}^p \lambda_p (1-\alpha_k),
\]
and hence
\begin{align*}
\sum_{k>p} \lambda_k \alpha_k - \sum_{k=1}^p \lambda_k (1-\alpha_k)
&\le 
\sum_{k>p} \lambda_p \alpha_k - \sum_{k=1}^p \lambda_p (1-\alpha_k)
\\
&=
\sum_{k>p} \lambda_p \alpha_k - p \lambda_p  + \sum_{k=1}^p \lambda_p\alpha_k
\\
&=
\lambda_p \left(\sum_{k=1}^\infty \alpha_k - p\right)
\\
&= 0.
\end{align*}
The last line follows from the fact that $\sum_{k=1}^\infty \alpha_k = p$, for any $(\alpha_k)_{k\in \N} \in \mathcal{A}_p$. We thus conclude that
\[
\sum_{k=1}^\infty \lambda_k \alpha_k \le \sum_{k=1}^p \lambda_k,
\]
for all $(\alpha_k)_{k\in \N} \in \mathcal{A}_p$. Furthermore, if $\sum_{k=1}^\infty \lambda_k \alpha_k = \sum_{k=1}^p \lambda_k$, then in all the above estimates, we must have equality. In particular, we must have
\[
\sum_{k>p} \lambda_k \alpha_k = \sum_{k>p} \lambda_p \alpha_k,
\quad
\sum_{k=1}^p \lambda_k (1-\alpha_k) = \sum_{k=1}^p \lambda_p (1-\alpha_k).
\]
The first equality is only possible, if $\alpha_k = 0$, for any $k\in \N$, such that $\lambda_k < \lambda_p$, i.e. we must have
\[
\alpha_k = 0,
\quad \text{if } 
k > \sum_{j=1}^n p_j.
\]
The second equality is only possible, if $\alpha_k = 1$, for any $k\in \N$, such that $\lambda_k > \lambda_p$, i.e. we must have
\[
\alpha_k = 1,
\quad \text{if }
k \le \sum_{j=1}^{n-1} p_j.
\]
This concludes the proof.
\end{proof}

\begin{remark} \label{rem:proj-minimizer}
It is also straight-forward to check that if $\hat{V}\subset H$ is a $p$-dimensional subspace, such that 
\[
\bigoplus_{k=1}^{n-1} E_k 
\subset 
\hat{V}
\subset 
\bigoplus_{k=1}^{n} E_k,
\]
where
\[
\sum_{k=1}^{n-1} \dim(E_k) < p \le \sum_{k=1}^n \dim(E_k),
\]
then 
\[
\sum_{k = 1}^\infty \lambda_k \Vert P_{\hat{V}} \phi_k \Vert^2 = \sum_{k=1}^p \lambda_k,
\]
i.e. $\hat{V}$ is optimal for $\Err_\Proj(\hat{V};\nu)$, in this case.
\end{remark}

From the last lemma, we immediately have the following corollary.

\begin{corollary} \label{cor:proj-minimizer}
For any $n\in \N$, the minimizing subspace $V_p$ of $\Err_{\Proj}(V_p;\nu)$ of dimension
\[
p = \sum_{k=1}^n \dim(E_k),
\]
is unique.
\end{corollary}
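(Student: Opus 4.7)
The plan is to derive the corollary as a direct consequence of Lemma~\ref{lem:proj-minimizer}, exploiting the fact that in the special case $p = \sum_{k=1}^n \dim(E_k)$, the two sandwiching subspaces in the characterization of optimal subspaces coincide in dimension with $V_p$ itself.

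More concretely, I would first observe that by the hypothesis $p = \sum_{k=1}^n \dim(E_k)$, the strict/non-strict inequalities
\[
\sum_{k=1}^{n-1} \dim(E_k) < p \le \sum_{k=1}^n \dim(E_k)
\]
are satisfied, with equality on the right. Hence Lemma~\ref{lem:proj-minimizer} applies and asserts that any minimizing $p$-dimensional subspace $V_p$ must satisfy the inclusion
\[
\bigoplus_{k=1}^{n-1} E_k \subset V_p \subset \bigoplus_{k=1}^n E_k.
\]

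Next I would use a dimension count: the outer subspace $\bigoplus_{k=1}^n E_k$ has dimension exactly $\sum_{k=1}^n \dim(E_k) = p$, which coincides with $\dim(V_p)$. Since $V_p$ is a $p$-dimensional subspace contained in a $p$-dimensional space, it must equal it, so
\[
V_p = \bigoplus_{k=1}^n E_k.
\]
This identifies $V_p$ uniquely as the direct sum of the first $n$ eigenspaces of $\overline{\Gamma}$, proving uniqueness. Conversely, by Remark~\ref{rem:proj-minimizer}, this subspace is indeed a minimizer, so it is \emph{the} minimizer in the class of $p$-dimensional subspaces. There is no real obstacle here since all the work has been done in Lemma~\ref{lem:proj-minimizer}; the corollary is essentially a dimension-counting remark that singles out those ``gap'' values of $p$ at which the non-uniqueness inherent in the inclusion chain collapses.
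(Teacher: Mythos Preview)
Your proposal is correct and follows essentially the same approach as the paper: invoke Lemma~\ref{lem:proj-minimizer} to obtain $V_p \subset \bigoplus_{k=1}^n E_k$, then observe that both sides have dimension $p$, forcing equality. The paper's version is slightly terser (it only uses the upper inclusion and omits the converse existence check via Remark~\ref{rem:proj-minimizer}), but the argument is the same.
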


\begin{proof}
By Lemma \ref{lem:proj-minimizer}, if $V_p$ is any minimizer of $\Err_{\Proj}$, then 
\[
V_p \subset \bigoplus_{k=1}^n E_k.
\]
Since $\dim(V_p) = p = \sum_{k=1}^n \dim(E_k)$, by assumption, it follows that $V_p = \bigoplus_{k=1}^n E_k$ is uniquely determined by the eigenspaces $E_1, \dots, E_n$.
\end{proof}

\subsubsection{Proof of Theorem \ref{thm:opt-linear}}

\begin{proof}
The existence and characterization of optimal subspaces $\hat{V}$ is a consequence of Lemma \ref{lem:proj-minimizer} and Remark \ref{rem:proj-minimizer}. Uniqueness of the optimal subspace $V_p\subset H$ for $p_n = \sum_{k=1}^n \dim(E_k)$ is proved in Corollary \ref{cor:proj-minimizer}. Finally, the identity for the projection error $\Err_{\proj}(\hat{V}) = \sum_{k>p} \lambda_k$ for the optimal subspace $V_p = \Span(\phi_1,\dots, \phi_p)$, with $\phi_k$ denote the eigenfunctions of the uncentered covariance operator $\bar{\Gamma}$ corresponding to decreasing eigenvalues $\lambda_1\ge \lambda_2 \ge \dots$, follows from 
\begin{align*}
\Err_{\proj}(\hat{V}) 
&\explain{=}{\eqref{eq:linear}} 
\int_H \Vert v \Vert^2 \, d\mu(u) - \sum_{i=1}^p \int_H |\langle v, \phi_i \rangle |^2 \, d\mu(v)
\\
&=
\Tr(\bar{\Gamma}) - \sum_{i=1}^p \langle \phi_i, \bar{\Gamma} \phi_i \rangle 
\\
&\explain{=}{\eqref{eq:Eproj-equiv2}}
\sum_{k=1}^\infty \lambda_k - \sum_{k=1}^\infty \lambda_k \Vert P_{V_p} \phi_k \Vert^2
\\
&=
\sum_{k=1}^\infty \lambda_k - \sum_{k=1}^p \lambda_k
=
\sum_{k>p} \lambda_k.
\end{align*}
\end{proof}

\subsubsection{Proof of Theorem \ref{thm:opt-proj}}
\label{app:pf-opt-proj}

\begin{proof}
Since $V_0$ is a finite-dimensional affine subspace, the existence and uniqueness of
\[
\hat{v}_0 \in \argmin_{\hat{v}\in \hat{V}_0} \Vert \E[v] - \hat{v} \Vert^2,
\]
is straight-forward. We also note that $\E[v] - \hat{v}_0 \perp \hat{V}$. We can now write
\begin{align*}
\Err_{\Proj}(\hat{V}_0)
&=
\int_H 
\inf_{\hat{v}\in \hat{V}_0}
\Vert 
{v} - \hat{v}
\Vert^2
\, 
d\nu(v)
\\
&=
\int_H 
\inf_{\hat{v}\in \hat{V}}
\Vert 
{v} - \hat{v}_0 - \hat{v}
\Vert^2
\, 
d\nu(v)
\\
&=
\int_H 
\inf_{\hat{v}\in \hat{V}}
\Vert 
[{v}- \E[v] - \hat{v}] -  [\E[v] - \hat{v}_0]
\Vert^2
\, 
d\nu(v)
\\
&=
\int_H 
\inf_{\hat{v}\in \hat{V}}
\left\{
\Vert 
\E[v] - \hat{v}_0
\Vert^2
-
2\langle
{v}- \E[v] - \hat{v}
,
\E[v] - \hat{v}_0
\rangle
\right. \\
&\hspace{2cm} \left.
+
\Vert 
{v}- \E[v] - \hat{v}
\Vert^2
\right\}
\, 
d\nu(v)
\end{align*}
Since $\E[v] - \hat{v}_0 \perp \hat{v}$ for all $\hat{v} \in \hat{V}$, we have
\[
2\langle
{v}- \E[v] - \hat{v}
,
\E[v] - \hat{v}_0
\rangle
=
2\langle
{v}- \E[v]
,
\E[v] - \hat{v}_0
\rangle,
\]
and thus
\begin{align*}
\Err_{\Proj}(\hat{V}_0)
&=
\int_H 
\inf_{\hat{v}\in \hat{V}}
\left\{
\Vert 
\E[v] - \hat{v}_0
\Vert^2
-
2\langle
{v}- \E[v]
,
\E[v] - \hat{v}_0
\rangle
\right. \\
&\hspace{2cm} \left.
+
\Vert 
{v}- \E[v] - \hat{v}
\Vert^2
\right\}
\, 
d\nu(v)
\\
&=
\int_H 
\Vert 
\E[v] - \hat{v}_0
\Vert^2
\, d\nu(v)
\\
&\quad
-
2\int_H
\langle
{v}- \E[v]
,
\E[v] - \hat{v}_0
\rangle
\, d\nu(v)
 \\
&\quad
+
\int_H
\inf_{\hat{v}\in \hat{V}}
\Vert 
{v}- \E[v] - \hat{v}
\Vert^2
\, 
d\nu(v).
\end{align*}
The first term is independent of $v$, the second term averages to zero. Hence, we finally obtain
\[
\Err_{\Proj}(\hat{V}_0)
=
\Vert \E[v] - \hat{v}_0 \Vert^2
+
\int_H
\inf_{\hat{v}\in \hat{V}}
\Vert 
{v}- \E[v] - \hat{v}
\Vert^2
\, 
d\nu(v).
\]
Note now if $\hat{V}_0$ is an affine subspace with associated vector space $\hat{V}$ for which the first term is not equal to $0$, then the affine subspace $\hat{V}_0' := \E[v] + \hat{V}$ satisfies
\begin{align*}
\Err_{\Proj}(\hat{V}_0') 
&=
\int_H \inf_{\hat{v}\in \hat{V}} \Vert v - \E[v] - \hat{v}\Vert^2 \, d\nu(v)
\\
&<
\Vert \E[v] - \hat{v}_0 \Vert^2
+
\int_H \inf_{\hat{v}\in \hat{V}} \Vert v - \E[v] - \hat{v}\Vert^2 \, d\nu(v)
\\
&=
\Err_{\Proj}(\hat{V}_0).
\end{align*}
Thus, if $\hat{V}_0$ is a minimizer among affine subspaces, then we must have $\hat{v}_0 = \E[v]$. Next, define a measure $\overline{\nu} \in \P(H)$ by
\[
\int_H \Phi(v) \, d\overline{\nu}(v) := \int_H \Phi(v - \E[v]) \, d\nu(v),
\quad
\forall \, \Phi \in L^\infty(H).
\]
Then $\int_H v \, d\overline{\nu}(v) = 0$, and
\[
\int_H \inf_{\hat{v}\in \hat{V}} \Vert v - \E[v] - \hat{v}\Vert^2 \, d\nu(v)
=
\int_H \inf_{\hat{v}\in \hat{V}} \Vert v - \hat{v}\Vert^2 \, d\overline{\nu}(v),
\]
is the projection error $\Err_{\Proj}(\hat{V};\overline{\nu})$ of the $p$-dimensional vector space $\hat{V}$ with respect to the measure $\overline{\nu}$. Since $\hat{V}_0$ is a minimizer, we must have that $\hat{V}$ is a minimizer of $\Err_{\Proj}(\hat{V})$ among $p$-dimensional subspaces. By Lemma \ref{lem:proj-minimizer}, it follows that
\[
\bigoplus_{j=1}^{n-1} E_j \subset \hat{V} \subset \bigoplus_{j=1}^n E_j,
\]
where $E_j$ denote the eigenspaces of the covariance operator of $\overline{\nu}$, given by 
\[
\int_H v\otimes v \, d\overline{\nu}(v)
=
\int_H (v-\E[v])\otimes (v-\E[v]) \, d\nu(v)
=
\Gamma,
\]
and corresponding to distinct eigenvalues $\overline{\lambda}_1 > \overline{\lambda}_2 > \dots$ of ${\Gamma}$, as claimed. This concludes the proof of this theorem.
\end{proof}

\subsection{Proof of Lemma \ref{lem:opt-proj}}
\label{app:pf34}
\begin{proof}
Let $V = \Span(\tr_1,\dots, \tr_p)$. We first note that since we have a direct sum $L^2(U) = V^\perp \oplus V$, we can decompose any $u \in L^2(U)$ uniquely as 
\[
u
=
u^\perp 
+
\sum_{k=1}^p \alpha_k \tr_k,
\]
for coefficients $\alpha_1,\dots, \alpha_p\in \R$. Let $\tr^\ast_1,\dots, \tr^\ast_p \in \Span(\tr_1,\dots, \tr_p)$ denote the dual basis of $\tr_1,\dots, \tr_p$. Taking the inner product of the last identity with $\tr_\ell^\ast$, it follows that
\[
\langle \tr_\ell^\ast, u \rangle
=
\langle \tr_\ell^\ast, u^\perp \rangle
+
\sum_{k=1}^p \alpha_k \langle \tr_\ell^\ast, \tr_k \rangle
=
\alpha_\ell,
\]
where we have used that $u^\perp \perp V \ni \tr_\ell^\ast$, and $\langle \tr_\ell^\ast, \tr_k\rangle = \delta_{k\ell}$. Applying this identity, we now note that
\begin{align*}
\cR \circ \cP(u)
&=
\tr_0 
+ 
\sum_{k=1}^p
[\cP(u)]_k \tr_{k}
=
\tr_0^\perp
+ 
\sum_{k=1}^p
\big(
[\cP(u)]_k + \langle \tr_k^\ast, \tr_0 \rangle 
\big)  \tr_{k},
\end{align*}
and 
\begin{align*}
u = u^\perp + \sum_{k=1}^p \langle \tr_k^\ast, u\rangle \tr_k.
\end{align*}
Hence
\begin{align*}
\cR \circ \cP(u) - u 
&=
\tr_0^\perp - u^\perp
+
\sum_{k=1}^p
\big(
[\cP(u)]_k + \langle \tr_k^\ast, \tr_0 - u \rangle
\big) \tr_k.
\end{align*}
The norm of the last term is clearly minimized if the sum over $k=1,\dots, p$ vanishes. This is the case, provided that
\[
[\cP(u)]_k =  \langle \tr_k^\ast, u - \tr_0 \rangle,
\quad
(k=1,\dots, p).
\]
The claim follows.
\end{proof}
\subsection{Proof of Proposition \ref{prop:err-rec-approx}}
\label{app:pf35}
\begin{proof}
Denote $\hat{V}_0 := \im(\cR)$ the affine image of $\cR = \cR_{\bm{\tr}}$. Taking into account Theorem \ref{thm:opt-proj}, we obtain
\begin{align*}
(\Err_{\cR})^2
&=
\int_{Y} \Vert \cR \circ \cP - \Id \Vert^2_{L^2_y} \, d\nu
\\
&=
\inf_{\hat{v} \in \hat{V}_0} 
\Vert \E_\nu[v] - \hat{v} \Vert_{L^2_y}^2 
+ 
\sum_{k=1}^\infty \lambda_k \Vert \cR \circ \cP \phi_k - \phi_k \Vert^2_{L^2_y}
\\
&\le 
\Vert \E_\nu[v] - \tr_0 \Vert_{L^2_y}^2 
+ 
\sum_{k\le p} \lambda_k \Vert \tr_k - \phi_k \Vert^2_{L^2_y}
+ 
\sum_{k>p} \lambda_k \Vert \cR \circ \cP \phi_k - \phi_k \Vert^2_{L^2_y}
\\
&= 
\Vert \hat{\tr}_0 - \tr_0 \Vert_{L^2_y}^2 
+ 
\sum_{k=1}^p \lambda_k \Vert \tr_k - \hat{\tr}_k \Vert^2_{L^2_y}
+ 
\sum_{k>p} \lambda_k \Vert \cR \circ \cP \phi_k - \phi_k \Vert^2_{L^2_y}
\\
&\le 
\Vert \hat{\tr}_0 - \tr_0 \Vert_{L^2_y}^2 
+ 
\left(\sum_{k=1}^\infty \lambda_k \right) \sup_{k=1,\dots, p}\Vert \tr_k - \hat{\tr}_k \Vert^2_{L^2_y}
+ 
\sum_{k>p} \lambda_k
\\
&\le
\left[1 + \Tr(\Gamma_\nu)\right] \sup_{k=0,1,\dots, p}\Vert \tr_k - \hat{\tr}_k \Vert^2_{L^2_y}
+
\sum_{k>p} \lambda_k.
\end{align*}
Taking square roots of both sides, the claimed estimate \eqref{eq:err-rec-approx} now follows from the trivial inequality $\sqrt{a+b} \le \sqrt{a} + \sqrt{b}$ for $a,b\ge 0$.
\end{proof}
\subsection{Proof of Lemma \ref{lem:err-rec-comp}}
\label{app:pf306}
\begin{proof}
By the triangle inequality, we have
\begin{align*}
\Err_{\cR} 
&\le 
\Err_{\tR}
+
\left(
\int_{L^2}
\Vert \cR\circ \cP(u) - \tR\circ\tP(u) \Vert^2_{L^2}
\, d\nu(u)
\right)^{1/2}.
\end{align*}
Note that by the assumed orthonormality of the $\tilde{\tr}_k$, the projection can be written as follows (cp. \eqref{eq:RP-opt})
\[
\tR \circ \tP(u) 
=
\sum_{k=1}^p \langle \tilde{\tr}_k, u\rangle \tilde{\tr}_k.
\]
Furthermore, in terms of the dual basis $\tr_1^\ast, \dots, \tr_p^\ast \in Y$, we have 
\[
\cR \circ \cP(u) 
=
\sum_{k=1}^p \langle \tr_k^\ast, u\rangle {\tr}_k.
\]
In terms of this expansion we can again use the triangle inequality to obtain
\begin{gather} \label{eq:triangle}
\begin{aligned}
\left(
\int_{L^2}
\right.
&\Vert \cR\circ \cP(u) - \tR\circ\tP(u) \Vert^2_{L^2}
\left.
\, d\nu(u)
\right)^{1/2}
\\
&\le
\sum_{k=1}^p
\left(
\int_{L^2}
\Vert \langle \tr_k^\ast, u\rangle \tr_k - \langle \tilde{\tr}_k, u\rangle \tilde{\tr}_k \Vert^2_{L^2}
\, d\nu(u)
\right)^{1/2}
\end{aligned}
\end{gather}
Since 
\begin{gather} \label{eq:est-yktk}
\begin{aligned} 
\Vert \langle \tr_k^\ast, u \rangle \tr_k - \langle \tilde{\tr}_k, u \rangle \tilde{\tr}_k \Vert_{L^2}
&\le
\Vert \langle \tr_k^\ast, u \rangle (\tr_k - \tilde{\tr}_k) \Vert_{L^2}
+\Vert \langle \tr_k^\ast-\tilde{\tr}_k, u \rangle \tilde{\tr}_k \Vert_{L^2}
\\
&\le 
\Vert \tr_k^\ast \Vert_{L^2} \Vert u \Vert_{L^2} \Vert \tr_k - \tilde{\tr}_k \Vert_{L^2}
+
\Vert \tr_k^\ast - \tilde{\tr}_k \Vert_{L^2} \Vert u \Vert_{L^2} \Vert \tilde{\tr}_k \Vert_{L^2},
\end{aligned}
\end{gather}
we next wish to establish that under the assumptions of this lemma, we can bound
\[
\Vert \tr_k^\ast - \tilde{\tr}_k \Vert_{L^2}
\le
C \sqrt{p} \max_{j=1,\dots, p}\Vert \tr_j - \tilde{\tr}_j \Vert_{L^2},
\]
for some absolute constant $C>0$, independent of $k$ and $p$. To see this, we note that for any $k,j=1,\dots, p$, we have $\langle \tr_k^\ast, \tr_j\rangle = \delta_{kj}$, and hence
\[
\langle (\tr_k^\ast - \tilde{\tr}_k), \tilde{\tr}_j \rangle
=
\langle \tr_k^\ast , \tilde{\tr}_j \rangle - \delta_{kj}
=
\langle \tr_k^\ast , \tilde{\tr}_j \rangle - \langle \tr_k^\ast, \tr_j \rangle
=
\langle \tr_k^\ast , (\tilde{\tr}_j - \tr_j) \rangle.
\]
If $u\in Y$ is arbitrary, then we obtain from the above identity
\begin{align}
|\langle (\tr_k^\ast - \tilde{\tr}_k), u\rangle|
&=
\left|\sum_{j=1}^p \langle \tilde{\tr}_j, u\rangle \langle (\tr_k^\ast - \tilde{\tr}_k), \tilde{\tr}_j\rangle\right|
\notag\\
&=
\left|\sum_{j=1}^p \langle \tilde{\tr}_j, u\rangle \langle \tr_k^\ast, \tilde{\tr}_j - \tr_j\rangle\right|
\notag\\
&\le
\left(\sum_{j=1}^p |\langle \tilde{\tr}_j, u\rangle|^2 \right)^{1/2} 
\left(\sum_{j=1}^p |\langle \tr_k^\ast, \tilde{\tr}_j - \tr_j\rangle|^2 \right)^{1/2}
\notag\\
&\le
\Vert u \Vert_{L^2} \, \Vert \tr_k^\ast \Vert_{L^2} \, \sqrt{p} \max_{j=1,\dots, p} \Vert \tr_j - \tilde{\tr}_j \Vert_{L^2}.
\label{eq:ymtt}
\end{align}
By assumption, we have that 
\[
\sqrt{p} \max_{j=1,\dots, p} \Vert \tr_j - \tilde{\tr}_j \Vert_{L^2}
\le \epsilon \le \frac12
.
\]
It follows that for any $k=1,\dots, p$, we have
\[
\Vert \tr_k^\ast - \tilde{\tr}_k \Vert_{L^2}
=
\sup_{\Vert u \Vert \le 1} |\langle \tr_k^\ast - \tilde{\tr}_k, u \rangle|
\le
\epsilon\, \Vert \tr_k^\ast \Vert_{L^2} 
\le
\epsilon\, \Vert \tr_k^\ast - \tilde{\tr}_k \Vert_{L^2} +\epsilon\, \Vert \tilde{\tr}_k \Vert_{L^2},
\]
and hence
\[
\Vert \tr_k^\ast - \tilde{\tr}_k \Vert_{L^2}
\le
\frac{\epsilon}{1-\epsilon} \Vert \tilde{\tr}_k \Vert_{L^2}
=
\frac{\epsilon}{1-\epsilon}
\le
2\epsilon.
\]
Note also that this implies 
\begin{align} \label{eq:ynorm}
\Vert \tr_k^\ast \Vert_{L^2}
\le 
\Vert \tr_k^\ast - \tilde{\tr}_k\Vert_{L^2} + \Vert \tilde{\tr}_k \Vert_{L^2}
\le
2\epsilon + 1 \le 2.
\end{align}
It now follows from \eqref{eq:est-yktk}, that 
\begin{align*}
\Vert \langle \tr_k^\ast, u \rangle \tr_k - \langle \tilde{\tr}_k, u \rangle \tilde{\tr}_k \Vert_{L^2}
&\le
2\Vert u \Vert_{L^2}
\left(
(\sqrt{p} + 1) \max_{j=1,\dots, p} \Vert \tr_j - \tilde{\tr}_j \Vert_{L^2}.
\right)
\\
&\le
4\Vert u \Vert_{L^2} \sqrt{p} \, \max_{j=1,\dots, p} \Vert \tr_j - \tilde{\tr}_j \Vert_{L^2}.
\end{align*}
Substitution in \eqref{eq:triangle}, finally yields
\[
\Err_{\cR} 
\le
\Err_{\tR}
+
4 \left(\int_{L^2} \Vert u \Vert^2_{L^2} \, d\nu(u)\right)^{1/2} \, p^{3/2} \, \max_{j=1,\dots, p} \Vert \tr_j - \tilde{\tr}_j \Vert_{L^2}.
\]
Letting 
\[
C := 4 \left(\int_{L^2} \Vert u \Vert^2_{L^2} \, d\nu(u)\right)^{1/2}, 
\]
we conclude that 
\[
\Err_{\cR} 
\le
\Err_{\tR}
+
Cp^{3/2} \, \max_{j=1,\dots, p} \Vert \tr_j - \tilde{\tr}_j \Vert_{L^2}
\le
\Err_{\tR}
+
C\epsilon,
\]
as claimed.

To prove the Lipschitz bound on $\cP: L^2(U) \to \R^p$, let $u,u' \in L^2(U)$ be given, and denote $w = u-u'$. Then
\begin{align*}
\Vert \cP (u) - \cP(u') \Vert_{\ell^2}
&=
\left(\sum_{k=1}^p |\langle \tr_k^\ast, w \rangle|^2 \right)^{1/2}
\\
&\le 
\left(\sum_{k=1}^p |\langle \tr_k^\ast-\tilde{\tr}_k, w \rangle|^2 \right)^{1/2}
+
\left(\sum_{k=1}^p |\langle \tilde{\tr}_k, w \rangle|^2 \right)^{1/2}
\end{align*}
By \eqref{eq:ymtt} and \eqref{eq:ynorm}, we can bound each term in the first sum by
\[
|\langle \tr_k^\ast-\tilde{\tr}_k, w \rangle|
\le
2 \sqrt{p} \max_{j=1,\dots, p} \Vert \tr_j - \tilde{\tr}_j\Vert_{L^2} \, \Vert u \Vert_{L^2}.
\]
Hence,
\[
\left(\sum_{k=1}^p |\langle \tr_k^\ast-\tilde{\tr}_k, w \rangle|^2 \right)^{1/2}
\le 
2 p \max_{j=1,\dots, p} \Vert \tr_j - \tilde{\tr}_j\Vert_{L^2} \Vert u \Vert_{L^2}.
\]
As the $\tilde{\tr}_k$ are orthonormal by assumption, the second sum can be estimated simply by
\[
\left(\sum_{k=1}^p |\langle \tilde{\tr}_k, w \rangle|^2 \right)^{1/2}
\le 
\Vert w \Vert_{L^2}.
\]
Recalling that $w = u-u'$, it follows that
\begin{align*}
\Vert \cP(u)-\cP(u') \Vert_{\ell^2}
&\le 
\left(
1 + 2 p \max_{j=1,\dots, p} \Vert \tr_j - \tilde{\tr}_j \Vert_{L^2}
\right)
\,
\Vert u - u'\Vert_{L^2}.
\\
&\le 
\left(
1 + 2\epsilon
\right)
\,
\Vert u - u'\Vert_{L^2}
\le 
2\Vert u - u'\Vert_{L^2}.
\end{align*}
As $u,u'\in L^2(U)$ were arbitrary, the claimed estimate for $\Lip(\cP)$ follows. Similarly, we can estimate for the reconstruction:
\begin{align*}
\Vert \cR(\alpha) - \cR(\alpha') \Vert_{L^2}
&=
\left\Vert
\sum_{k=1}^p (\alpha_k - \alpha_k') \tr_k
\right\Vert_{L^2}
\\
&\le
\left\Vert
\sum_{k=1}^p (\alpha_k - \alpha_k') \tilde{\tr}_k 
\right\Vert_{L^2}
+
\left\Vert
\sum_{k=1}^p (\alpha_k - \alpha_k') (\tr_k - \tilde{\tr}_k)
\right\Vert_{L^2}
\\
&\le
\Vert \alpha - \alpha' \Vert_{\ell^2}
+
\sum_{k=1}^p |\alpha_k - \alpha_k'| \Vert \tr_k - \tilde{\tr_k}\Vert_{L^2}
\\
&\le
\left(
1
+
 p^{1/2} \max_{k=1,\dots, p}\Vert \tr_k - \tilde{\tr_k}\Vert_{L^2}
 \right)
 \Vert \alpha - \alpha' \Vert_{\ell^2}
 \\
 &\le
\left(
1
+
 \epsilon
 \right)
 \Vert \alpha - \alpha' \Vert_{\ell^2} 
 \le
 2\Vert \alpha - \alpha' \Vert_{\ell^2}.
\end{align*}
Thus, $\Lip(\cR)\le 2$.
\end{proof}

\subsection{Proof of Lemma \ref{lem:Fourier-NN}}
\label{app:pf36}
\begin{proof}
We note that each element in the (real) trigonometric basis $\fb_1,\dots, \fb_p$ can be expressed in the form 
\[
\fb_j({x}) 
=
\cos(\kappa\cdot {x}),
\quad
\text{or}
\quad
\fb_j({x}) = \sin(\kappa\cdot {x}),
\]
for $\kappa = \kappa(j)\in \Z^n$ with $|{k}|_\infty \le N$, where $N$ is chosen as the smallest natural number such that $p \le (2N+1)^n$. It follows from 
\[
|\kappa\cdot {x}|
\le n |\kappa|_{\infty} |{x}|_{\infty}
\le 
2\pi n N, 
\]
that if $\cC_\epsilon, \cS_\epsilon: \R \to \R$ are neural networks, such that 
\begin{align} \label{eq:cos-global}
\sup_{\xi \in [0,2\pi n N]} | \cC_\epsilon(\xi) - \cos(\xi) |, \; |\cS_\epsilon(\xi) - \sin(\xi) | \le \epsilon,
\end{align}
then the map
\[
{x} \mapsto ((\kappa(j)\cdot {x}))_{j=1,\dots, p}
\mapsto (\cC_\epsilon(\kappa(j)\cdot {x}), \cS_\epsilon(\kappa(j)\cdot {x}))_{j=1,\dots, p}
\]
can be represented by a neural network $\cT$ with a size bounded by
\[
\begin{gathered}
\size(\cT) = \mathcal{O}( p \, \size(\cC_\epsilon) + p\, \size(\cS_\epsilon)), \\
\depth(\cN) = \mathcal{O}\left(\max\left(\depth(\cC_\epsilon),\depth(\cS_\epsilon)\right)\right).
\end{gathered}
\]
By {\cite[Theorem III.9]{EPGB2020}}, there exist $\cC_\epsilon$, $\cS_\epsilon$, satisfying 
\eqref{eq:cos-global}, with 
\[
\begin{gathered}
\size(\cC_\epsilon), \size(\cS_\epsilon) = \mathcal{O}(\log(\epsilon^{-1})^2 + \log(N)), \\
\depth(\cC_\epsilon), \depth(\cS_\epsilon) = \mathcal{O}(\log(\epsilon^{-1})^2 + \log(N)).
\end{gathered}
\]
Finally, we note that $\log(N) \sim 1/n\log(p) = \mathcal{O}(\log(p))$. We thus conclude that for any $\epsilon > 0$, there exists a neural network $\cT_\epsilon = (\cT_{\epsilon,1},\dots, \cT_{\epsilon,p})$, such that 
\[
\Vert \cT_{\epsilon,j} - \fb_j \Vert_{L^\infty([0,2\pi]^n)} \le \epsilon,
\]
for all $j=1,\dots, p$, and 
\[
\begin{gathered}
\size(\cT_\epsilon) = \mathcal{O}( p \, (\log(p) + \log(\epsilon^{-1})^2), \\
\depth(\cT_\epsilon) = \mathcal{O}(\log(p) + \log(\epsilon^{-1})^2).
\end{gathered}
\]
To satisfy the estimate \eqref{eq:Fourier-NN}, we set $\bm{\tr} = \cT_{\epsilon/p^{3/2}}$, for which we have 
\[
\begin{gathered}
\size(\bm{\tr}) = \mathcal{O}( p \, \log(\epsilon^{-1}p)^2), \\
\depth(\bm{\tr}) = \mathcal{O}(\log(\epsilon^{-1}p)^2).
\end{gathered}
\]
and
\[
\max_{j=1,\dots, p} 
\Vert \tr_{j} - \fb_j \Vert_{L^\infty([0,2\pi]^n)} \le \epsilon.
\]
\end{proof}
\subsection{Proof of Proposition \ref{prop:cexample}}
\label{app:pf307}
\begin{proof}
Proposition \ref{prop:cexample} follows form the following two claims:

\begin{claim}
If $\mu$ is a non-degenerate Gaussian, then there exists a Lipschitz continuous mapping $P: X \to [0,1]$, such that $P_\#\mu = dx$ is the uniform measure on $[0,1]$.
\end{claim}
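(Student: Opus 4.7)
The strategy is to reduce to a one-dimensional problem by projecting onto a single direction in which the Gaussian $\mu$ has nonzero variance, and then to push forward the resulting univariate Gaussian to the uniform distribution via its cumulative distribution function. Non-degeneracy of $\mu$ means the covariance operator $\Gamma_\mu$ has no zero eigenvalue; in particular, there exists a unit vector $e \in X$ (for instance, the first eigenvector $\phi_1$ of $\Gamma_\mu$) such that $\langle \Gamma_\mu e, e \rangle = \sigma^2 > 0$.

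First, I will define the continuous linear functional $L: X \to \R$, $L(u) := \langle u, e \rangle$. By Cauchy--Schwarz, $L$ is $1$-Lipschitz. Since $\mu$ is Gaussian with mean $\bar{u} \in X$ and covariance $\Gamma_\mu$, the push-forward $L_\#\mu$ is the one-dimensional Gaussian $\mathcal{N}(m, \sigma^2)$ with $m = \langle \bar{u}, e\rangle$ and $\sigma^2 > 0$. Let $\Phi_{m,\sigma^2}: \R \to [0,1]$ denote its cumulative distribution function. Then I set
\[
P(u) := \Phi_{m,\sigma^2}(L(u)) = \Phi_{m,\sigma^2}(\langle u, e\rangle).
\]

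Second, I will verify the two required properties of $P$. Lipschitz continuity follows from the composition: $L$ is $1$-Lipschitz, and $\Phi_{m,\sigma^2}$ is differentiable with derivative equal to the Gaussian density, which is bounded by $(2\pi\sigma^2)^{-1/2}$; hence $\Phi_{m,\sigma^2}$ is Lipschitz with constant $(2\pi\sigma^2)^{-1/2}$, and $P$ is Lipschitz with the same constant. For the push-forward, I will use that if $Z\sim \mathcal{N}(m,\sigma^2)$ then $\Phi_{m,\sigma^2}(Z)$ is uniform on $[0,1]$; applied here this gives $P_\#\mu = (\Phi_{m,\sigma^2})_\# (L_\#\mu) = (\Phi_{m,\sigma^2})_\#\mathcal{N}(m,\sigma^2) = dx$ on $[0,1]$.

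There is essentially no obstacle: the whole argument is standard once the one-dimensional reduction via a nondegenerate direction is identified. The only thing to be careful about is to use \emph{non-degeneracy} to guarantee $\sigma^2 > 0$, which is what ensures both that the pushforward $L_\#\mu$ is genuinely Gaussian (not a Dirac mass) and that $\Phi_{m,\sigma^2}$ has bounded derivative, so $P$ is Lipschitz rather than merely continuous.
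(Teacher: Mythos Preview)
Your proposal is correct and follows essentially the same approach as the paper: project onto a one-dimensional direction in which the Gaussian has positive variance via a bounded linear functional, then compose with the (Lipschitz) Gaussian CDF to obtain the uniform distribution on $[0,1]$. The paper's only cosmetic difference is that it first normalizes to $\mathcal{N}(0,1)$ via an affine map and then applies the standard normal CDF, whereas you apply $\Phi_{m,\sigma^2}$ directly; these are of course equivalent.
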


\begin{claim}
If $Y$ is an infinite-dimensional Hilbert space with orthonormal basis $\{e_k\}_{k\in \N}$, and $\gamma_k\ge 0$, $k\in\N$, a sequence, such that 
\[
\sum_{k=1}^\infty k^2 \gamma_k < \infty,
\]
then there exists a Lipschitz continuous mapping $G: [0,1] \to Y$, such that the covariance operator of $G_\# dx$ is given by
\[
\Gamma_{G_\# dx}
=
\sum_{k=1}^\infty \gamma_k (e_k \otimes e_k).
\]
\end{claim}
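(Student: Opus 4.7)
My plan follows the decomposition into the two claims stated at the end of the excerpt: combine a Lipschitz ``probability integral transform'' $P: X\to[0,1]$ with $P_\#\mu=dx$, and a Lipschitz ``synthesis map'' $G:[0,1]\to L^2(U)$ whose push-forward has the prescribed spectrum, and set $\G := G\circ P$. Since Lipschitz functions are closed under composition, $\G$ is Lipschitz; since push-forwards compose contravariantly, $\G_\#\mu = G_\#(P_\#\mu) = G_\# dx$, whose covariance is what Claim~2 delivers. Thus Proposition \ref{prop:cexample} reduces entirely to the two claims.

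For Claim~1, I would exploit non-degeneracy to pick a continuous linear functional $\ell\in X^\ast$ such that $\ell_\#\mu = \cN(0,\sigma^2)$ with $\sigma>0$ (e.g.\ $\ell = \langle \slot, \phi_1\rangle/\sqrt{\lambda_1}$ in terms of an eigenvector of $\Gamma_\mu$). The map $\ell$ is Lipschitz with constant $\|\ell\|$. Post-composing with the standard Gaussian c.d.f.\ $\Phi_\sigma(t) := \Prob(\cN(0,\sigma^2) \le t)$ yields a map $P := \Phi_\sigma\circ\ell: X\to[0,1]$ whose push-forward is uniform on $[0,1]$ by the probability integral transform. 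Lipschitz continuity of $P$ is immediate because $\Phi_\sigma'(t)=(2\pi\sigma^2)^{-1/2}e^{-t^2/(2\sigma^2)}$ is bounded on $\R$, so $\Lip(\Phi_\sigma)<\infty$.

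For Claim~2, choose any orthonormal basis $\{e_k\}_{k\in\N}$ of $Y$ and the orthonormal system $\{\psi_k(x)\}_{k\ge 1}$ in $L^2([0,1])$ given by $\psi_k(x) := \sqrt{2}\cos(k\pi x)$, which is mean-zero: $\int_0^1 \psi_k(x)\,dx=0$. Define
\[
G:[0,1]\to Y,\qquad G(x) := \sum_{k=1}^\infty \sqrt{\gamma_k}\,\psi_k(x)\,e_k.
\]
The assumed convergence $\sum k^2\gamma_k <\infty$ ensures the series converges in $Y$ pointwise, with mean zero and with
\[
\Gamma_{G_\# dx} = \sum_{k,\ell}\sqrt{\gamma_k\gamma_\ell}\left(\int_0^1 \psi_k\psi_\ell\,dx\right)(e_k\otimes e_\ell) = \sum_{k=1}^\infty \gamma_k (e_k\otimes e_k),
\]
by orthonormality of the $\psi_k$. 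Lipschitz continuity follows from
\[
\|G(x)-G(y)\|_Y^2 = \sum_{k=1}^\infty \gamma_k|\psi_k(x)-\psi_k(y)|^2 \le \left(\sum_{k=1}^\infty \gamma_k\|\psi_k'\|_{L^\infty}^2\right)|x-y|^2 = 2\pi^2\left(\sum_{k=1}^\infty k^2\gamma_k\right)|x-y|^2,
\]
which is finite by hypothesis. This provides the desired spectrum with an a priori bounded Lipschitz constant.

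The only genuine subtlety I foresee is a bookkeeping one: making sure that the enumeration of the prescribed eigenvalue sequence $(\gamma_k)_{k\in\N}$ matches that of the eigenvalues of $\Gamma_{\G_\#\mu}$ repeated according to multiplicity (i.e.\ that a zero $\gamma_k$ is permitted and correctly interpreted as $\phi_k\notin \mathrm{range}(\Gamma_{\G_\#\mu})$). Because $Y=L^2(U)$ is separable, any choice of orthonormal basis $\{e_k\}$ works, so no approximation argument is needed; the construction is completely explicit, and the moment condition $\sum k^2\gamma_k<\infty$ is exactly what makes the synthesis map Lipschitz via termwise differentiation of the cosine series.
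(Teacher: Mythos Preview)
Your proof is correct and follows essentially the same approach as the paper: both construct $G$ as a cosine series $G(x)=\sum_k \sqrt{\gamma_k}\,\psi_k(x)\,e_k$ with a mean-zero orthonormal system $\{\psi_k\}$ on $[0,1]$ (the paper uses $\sqrt{2}\cos(2\pi kx)$, you use $\sqrt{2}\cos(k\pi x)$), then read off the covariance from orthonormality and verify Lipschitz continuity via $\sum_k \gamma_k\|\psi_k'\|_{L^\infty}^2 \lesssim \sum_k k^2\gamma_k < \infty$. Your treatment of Claim~1 via a one-dimensional projection followed by the Gaussian c.d.f.\ also matches the paper's argument.
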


The sought-after map $\G$ can then be defined as $\G = G \circ P: X \to Y$, where $P: X \to [0,1]$ and $G: [0,1]\to Y$ are defined as in the above claims.

To prove the \textbf{first claim}, we simply note that since $\mu$ is a non-degenerate Gaussian measure, there exists a one-dimensional projection $L: X \to \R$, such that $L_\#\mu = \Normal(m,\sigma^2)$ is a non-degenerate Gaussian with mean $m\in \R$, and variance $\sigma^2 > 0$. Upon performing a translation by $\mu$ and scaling by $1/\sigma$, we obtain an affine mapping $\tilde{L}: X \to \R$, such that $\tilde{L}_\#\mu = \Normal(0,1)$. We finally note that the error function 
\[
\mathrm{erf}(x) := \frac{1}{\sqrt{2\pi}}\int_{-\infty}^x e^{-u^2/2} \, du,
\]
maps the standard Gaussian distribution to the uniform distribution on $[0,1]$. And hence, we have $(\mathrm{erf}\circ \tilde{L})_\#\mu = dx$ on $[0,1]$.

To prove the \textbf{second claim}, we show that the function
\[
G: [0,1] \to Y,
\quad
G(x) := \sum_{k=1}^\infty \sqrt{2\gamma_k} \cos(2\pi kx) e_k,
\]
possesses the desired properties: We first note that 
\[
\Vert \partial_x G(x) \Vert_Y^2
=
(2\pi)^2 \sum_{k=1}^\infty 2\gamma_k k^2 |\sin(2\pi kx)|^2 
\le
(2\pi)^2 \sum_{k=1}^\infty 2\gamma_k k^2 < +\infty,
\]
is uniformly bounded in $x$ by assumption on $\gamma_k$. In particular, it follows that $x\mapsto G(x)$ is Lipschitz continuous. Furthermore, we have 
\begin{align*}
\int_Y u\otimes u \, d(G_\#dx)
&=
\sum_{k,k'=1}^\infty
2\sqrt{\gamma_k \gamma_{k'}}
\int_0^1 \cos(2\pi kx) \cos(2\pi k'x) (e_k\otimes e_{k'}) \, dx
\\
&=
\sum_{k=1}^\infty \gamma_k (e_k \otimes e_k).
\end{align*}
\end{proof}
\subsection{Proof of Proposition \ref{prop:lin}}
\label{app:pf37a}
\begin{proof}
Let $v_1, \dots, v_p \in X$ denote pairwise orthogonal eigenvectors corresponding to the first $p$ eigenvalues $\lambda_1, \dots, \lambda_p$ of the covariance operator $\Gamma_{\mu}$. 
It follows from Theorem \ref{thm:opt-proj}, that the $p$-dimensional affine subspace $V_0$, given by $V_0 = v_0 + V$, where $v_0 = \E_{\mu}[u]$ and $V = \Span(v_1,\dots, v_p)$, satisfies
\[
\int_X \inf_{v\in V_0} \Vert v - u \Vert^2 \, d\mu(u)
=
\sum_{j>p} \lambda_j.
\]
Let $W_0 := \G(V_0)$. Since $\G: X \to Y$ is linear, $W_0$ is an affine subspace of dimension at most $p$. If $\dim(W_0) < p$, we extend $W_0$ to an affine subspace $W \subset Y$, such that $W_0\subset W$ and $\dim(W) = p$, else set $W := W_0$. Then, irrespective of the choice of the extension, we have
\begin{align*}
\int_Y \inf_{w\in W} \Vert w - u \Vert_Y^2 \, d(\G_\#\mu(u))
&\le
\int_Y \inf_{w\in W_0} \Vert w - u \Vert_Y^2 \, d(\G_\#\mu(u))
\\
&=
\int_X \inf_{w \in W_0} \Vert w - \G(u) \Vert_Y^2 \, d\mu(u)
\\
&=
\int_X \inf_{v \in V_0} \Vert \G(v) - \G(u) \Vert_Y^2 \, d\mu(u)
\\
&\le
\int_X \inf_{v \in V_0} \Vert \G \Vert^2 \Vert v - u \Vert_Y^2 \, d\mu(u)
\\
&=
\Vert \G \Vert^2 \, \sum_{j > p} \lambda_j,
\end{align*}
as claimed. Finally, we note that if $(\cR,\cP)$ are chosen such that $\cR\circ \cP: Y\to Y$ is the orthogonal projection onto $W$, then 
\[
(\Err_{\cR} )^2
=
\int_Y \Vert \cR\circ \cP(u) - u \Vert^2 \, d(\G_\#\mu(u))
=
\int_Y \inf_{w\in W} \Vert w - u \Vert^2 \, d(\G_\#\mu(u)).
\]
\end{proof}
\subsection{Proof of Theorem \ref{thm:enlb}}
\label{app:pf37}
\begin{proof}
Since $\cD: \R^m \to X$ is assumed to be linear, there exist $\psi_1, \dots, \psi_m \in X$, such that 
\[
\cD(u_1, \dots, u_m) = \sum_{k=1}^m u_k \psi_k.
\]
Let $U := \Span(\psi_1, \dots, \psi_m) \subset X$. Since $U$ is a subspace of $X$ of dimension at most $m$, we have
\begin{align*}
\int_X 
\Vert \cD \circ \cE - \Id \Vert_{L^2_x}^2 
\, 
d\mu
&\ge 
\int_X 
\inf_{\hat{u} \in U} \Vert u - \hat{u} \Vert_{L^2_x}^2 
\, 
d\mu(u)
\\
&\ge 
\inf_{
{\substack{\hat{U} \subset X; \\ \dim(U) = m}}
}
\int_X 
\inf_{\hat{u} \in \hat{U}} \Vert u - \hat{u} \Vert_{L^2_x}^2
\, 
d\mu(u).
\end{align*}
By Theorem \ref{thm:opt-linear}, the infimum on the last line is attained for the subspace $\hat{U}$ spanned by the orthonormal eigenfunctions $\phi_1, \dots, \phi_m$ of the uncentered covariance operator $\Gamma_\mu$ of $\mu$. For this space, we have
\begin{align*}
\int_X 
\inf_{\hat{u} \in \hat{U}} \Vert u - \hat{u} \Vert_{L^2_x}^2
\, 
d\mu(u)
&=
\int_X \Vert u \Vert^2 \, d\mu(u)
-
\sum_{k=1}^m
\int_X 
|\langle u, \phi_k \rangle|^2
\, 
d\mu(u)
\\
&=
\Tr(\Gamma)
-
\sum_{k=1}^m
\langle \phi_k, \Gamma \phi_k \rangle
\\
&=
\sum_{k=1}^\infty \lambda_k
-
\sum_{k=1}^m \lambda_k
\\
&=
\sum_{k>m} \lambda_k.
\end{align*}
Thus, we conclude that 
\[
\int_X 
\Vert \cD \circ \cE - \Id \Vert_{L^2_x}^2 
\, 
d\mu
\ge 
\sum_{k>m} \lambda_k.
\]
\end{proof}

\subsection{Proof of Lemma \ref{lem:aliasing-general}}
\label{app:pf38}
\begin{proof}
Since $\cD \circ \cE$ is the identity on $\Span(\phi_1, \dots, \phi_m)$, we have 
\[
\cD \circ \cE - \Id 
=
\cD \circ \cE\circ P_m^\perp - P_m^\perp.
\]
Furthermore, since $\im(\cD)\subset \Span(\phi_1,\dots, \phi_m)$, we also have $\cD \circ \cE(P_m^\perp u) \perp  P_m^\perp u$, for all $u\in L^2_x$. Hence
\begin{align*}
(\Err_\cE)^2 
&=
\int_X \Vert \cD \circ \cE - \Id \Vert_{L^2_x}^2 \, d\mu(u)
\\
&=
\int_X \Vert \cD \circ \cE(P_m^\perp u) - P_m^\perp u \Vert_{L^2_x}^2 \, d\mu(u)
\\
&=
\underbrace{
\int_X \Vert \cD \circ \cE(P_m^\perp u) \Vert^2_{L^2_x} \, d\mu(u)
}_{
\displaystyle
=(\Err_{\mathrm{aliasing}})^2
}
+
\underbrace{
\int_X \Vert P_m^\perp u \Vert_{L^2_x}^2 \, d\mu(u)
}_{
\displaystyle
=(\Err_{\perp})^2
}
.
\end{align*}
The aliasing error 
\[
(\Err_{\mathrm{aliasing}})^2
=
\int_X \Vert \cD \circ \cE (P^\perp_m u) \Vert_{L^2_x}^2 \, d\mu(u),
\]
can be re-written as follows\footnote{we assume $\int_X u \, d\mu(u) = 0$, but this is not essential here}: Let $\phi_1, \phi_2, \dots$ denote the eigenfunctions of the covariance operator of $\mu$, $\Gamma \phi_k = \lambda_k \phi_k$, where $\lambda_1 \ge \lambda_2 \ge \dots$. First, we note that there exist random variables $Z_\ell$ (not necessarily iid), with
\[
\E[Z_\ell Z_{\ell'}] = \delta_{\ell\ell'},
\]
such that the random variable 
\[
\sum_{\ell=1}^\infty \sqrt{\lambda_k} \, Z_k \phi_k 
\sim
\mu
\]
is distributed according to $\mu$. Then, we have
\begin{align*}
(\Err_\perp)^2
&=
\E\left[
\left\Vert
\sum_{\ell=1}^\infty \sqrt{\lambda_\ell} Z_\ell P^\perp_m\phi_\ell
\right\Vert^2
\right]
=
\E\left[
\left\Vert
\sum_{\ell>m} \sqrt{\lambda_\ell} Z_\ell \phi_\ell
\right\Vert^2
\right]
\\
&=
\sum_{\ell,\ell'>m}
\sqrt{\lambda_\ell \lambda_{\ell'}}
\E\left[
Z_\ell Z_{\ell'} 
\right]
\langle \phi_\ell,\phi_{\ell'}\rangle
=
\sum_{\ell ,\ell'>m}
\sqrt{\lambda_\ell \lambda_{\ell'}}
\,\delta_{\ell,\ell'}
\\
&=
\sum_{\ell>m}
\lambda_\ell.
\end{align*}
With $\Phi_M := (\phi_i(X_j)) \in \R^{m\times M}$ a matrix such that $\det(\Phi_M \Phi_M^T) \ne 0$, the aliasing error is given by
\begin{align*}
\int_X \Vert \cD \circ \cE (P^\perp_m u) \Vert_{L^2_x}^2 \, d\mu(u)
&=
\E\left[
\left\Vert 
\sum_{k=1}^m \phi_k \sum_{j=1}^m [\Phi^{\dagger}]_{kj} \sum_{\ell>m} \sqrt{\lambda_\ell} \, Z_\ell \phi_\ell(X_j)
\right\Vert^2
\right]
\\
&=
\sum_{k=1}^m
\Vert \phi_k \Vert^2
\sum_{j,j'=1}^m
[\Phi^{\dagger}]_{kj}[\Phi^{\dagger}]_{kj'} \quad \times 
\\
&\qquad 
\sum_{\ell,\ell'>m} \sqrt{\lambda_\ell \lambda_{\ell'}} \,\E\left[Z_\ell Z_{\ell'}\right]
\phi_\ell(X_j) \phi_{\ell'}(X_j)
\\
&=
\sum_{k=1}^m
\Vert \phi_k \Vert^2
\sum_{j,j'=1}^m
[\Phi^{\dagger}]_{kj}[\Phi^{\dagger}]_{kj'} 
\sum_{\ell>m} \lambda_{\ell} \phi_\ell(X_j) \phi_{\ell}(X_j)
\\
&=
\sum_{\ell>m} \lambda_{\ell}
\sum_{k=1}^m
\Bigg(
\sum_{j=1}^m
[\Phi^{\dagger}]_{kj}
 \phi_\ell(X_j) 
 \Bigg)^2
\end{align*}
Denote $\phi_\ell(\bm{X}) := (\phi_\ell(X_1),\dots, \phi_\ell(X_m))$. Then we can write the last line equivalently in the form 
\[
(\Err_{\mathrm{aliasing}})^2 
=
\int_X \Vert \cD \circ \cE (P^\perp_m u) \Vert_{L^2_x}^2 \, d\mu(u)
=
\sum_{\ell>m} \lambda_{\ell} \Vert \Phi^{\dagger} \phi_\ell(\bm{X}) \Vert^2_{\ell^2},
\]
as claimed.
\end{proof}
\subsection{Proof of Lemma \ref{lem:singb}}
\label{app:pf39}
\begin{proof}

The minimal singular value of $A_M = \frac{|D|}M \Phi_M \Phi_M^T$ is given by 
\begin{align*}
\sigma_{\mathrm{min}}
\left(
A_M
\right)
&=
\inf_{\Vert v \Vert_{\ell^2}=1}
\langle v, A_M v\rangle
\\
&=
\inf_{\Vert v \Vert_{\ell^2}=1}
\left[
\langle v, \bm{1} v\rangle
-
\langle v, (\bm{1} - A_M) v\rangle
\right]
\\
&=
1
-
\sup_{\Vert v \Vert_{\ell^2}=1}
\langle v, (\bm{1} - A_M) v\rangle
\\
&=
1 - \sigma_{\mathrm{max}}\left( \bm{1}-A_M \right).
\end{align*}
The maximal singular value (=spectral norm) of $\bm{1}-A_M \in \R^{m\times m}$ can be estimated from above by 
\[
\sigma_{\mathrm{max}}\left( \bm{1}-A_M \right)
\le
\Vert
\bm{1}-A_M
\Vert_{\mathrm{F}},
\]
where we denote, for any matrix $A = (a_{ij})$, by $\Vert A \Vert_{\mathrm{F}}$ the Frobenius norm, 
\[
\Vert A \Vert_{\mathrm{F}} = 
\left(
\sum_{i,j=1}^m |a_{ij}|^2
\right)^{1/2}.
\]
Note that for all $i,j \in \{1,\dots, m\}$, we have that the $(i,j)$ entry of $\bm{1}-A_M$ is given by
\[
\delta_{ij} - \frac{|D|}M \sum_{k=1}^m \phi_i(X_k) \phi_j(X_k).
\]
With $Y_{k} = - |D|\phi_i(X_k) \phi_j(X_k)$, this can be written in the form 
\[
\frac{1}M \sum_{k=1}^m Y_k - \E[Y_k],
\]
where the $Y_k$ are iid random variables bounded by $|D|\omega_m^2$. It follows from Hoeffding's inequality that for any $\delta > 0$, we have
\[
\Prob\left[
\left|
\frac{1}M \sum_{k=1}^m Y_k - \E[Y_k]
\right|
\ge \delta
\right]
\le 
2 \exp\left(-\frac{2M^2 \delta^2}{|D|^2\omega_m^4} \right).
\]
Equivalently, for any $i,j\in \{1,\dots, m\}$ we have:
\[
\Prob 
\left[
|
\left[ 
\bm{1} - A_M
\right]_{ij}
|
\ge \delta
\right]
\le
2
\exp\left(
-\frac{2M^2 \delta^2}{|D|^2\omega_m^4}
\right).
\]
It now follows that
\begin{align*}
\Prob\left[
\Vert \bm{1} - A_M \Vert_F
\ge \delta
\right]
&=
\Prob\left[
\sum_{i,j=1}^m | (\bm{1} - A_M)_{ij} |^2
\ge \delta^2
\right]
\\
&\le
\Prob\left[
\max_{i,j=1,\dots,m} | (\bm{1} - A_M)_{ij} |
\ge \frac{\delta}{m}
\right]
\\
&\le
m^2 \max_{i,j=1,\dots,m}
\Prob\left[
 | (\bm{1} - A_M)_{ij} |
\ge \frac{\delta}{m}
\right]
\\
&\le
2m^2 \exp
\left(
-2
\left(
\frac{M\delta}{|D|\omega_m^2 m} 
\right)^2
\right)
\end{align*}
Choosing $\delta = 1/\sqrt{2}$, we have
\[
\Prob
\left[
\sigma_{\mathrm{max}}\left(
\bm{1}-A_M
\right)
\ge 1/\sqrt{2}
\right]
\le
2m^2 \exp
\left(
-
\left(
\frac{M}{|D|\omega_m^2 m} 
\right)^2
\right),
\]
and thus, from $\sigma_{\mathrm{min}}(A_M) = 1 - \sigma_{\mathrm{max}}(\bm{1}-A_M)$, also
\[
\Prob
\left[
\sigma_{\mathrm{min}}\left(
A_M
\right)
< 1 - 1/\sqrt{2}
\right]
\le
2m^2 \exp
\left(
-
\left(
\frac{M}{|D|\omega_m^2 m} 
\right)^2
\right).
\]

\end{proof}

\subsection{Proof of Theorem \ref{thm:random-enc}}
\label{app:pf301}
\begin{proof}
Let 
\[
C
=
\frac{\sqrt{2} \, \max(|D|,1)}{\sqrt{2}-1} \sup_{\ell \in \N} (1+\Vert \phi_\ell \Vert_{L^\infty}^2).
\] 
By assumption, we have $C < \infty$, and we also note that $C\ge 1$. Let $(\Omega, \P)$ be the probability space $\Omega = \prod_{\ell=1}^\infty D$, with probability measure $\P = \prod_{\ell=1}^\infty \unif(D)$, such that the iid random variables $X_1, X_2,\dots$ are given by projection onto the corresponding factor
\[
X_\ell: \Omega \to D,
\quad 
X_\ell(\omega) = \omega_\ell, 
\] 
where $\omega = (\omega_1,\omega_2, \dots, ) \in \Omega$.

By Lemma \ref{lem:aliasing-gen}, if we choose $M(m) = \lceil C \kappa m\log(m) \rceil$, with $\kappa = 4$, then we have
\begin{align} \label{eq:interm-1}
\Prob\left[
\set{
\omega 
}{
\Err_{\cE}(X_1(\omega), \dots, X_M(\omega)) \le C \textstyle \sum_{\ell > m} \lambda_\ell
}
\right]
\ge 
1 - 2m^{-2}.
\end{align}
where $\Err_{\cE} = \Err_{\cE}(X_1,\dots, X_M)$ is the random encoding error based on the sensors $X_1,\dots, X_M$. We note that asymptotically as $m\to \infty$, we have
\[
M
\sim
C \kappa m \log(m)
\gtrsim C \kappa m,
\]
and hence $\log(M) \gtrsim \log(C\kappa) + \log(m) \ge \log(m)$, where we used that $C > 1$ and $\kappa = 4$, so that $\log(C\kappa) \ge 0$ in the last estimate. In particular, this implies that
\[
M/\log(M)
\lesssim
\left[
C \kappa m \log(m)
\right]
/
\log(m)
=
C \kappa m.
\]
It follows that, by possibly enlarging the constant $C>1$, we have
\[
\sum_{\ell > M/C\log(M)} \lambda_\ell
\ge 
\sum_{\ell > m} \lambda_\ell,
\]
and using also \eqref{eq:interm-1}, we conclude that for sufficiently large $C$, we have
\[
\Prob\left[
\set{
\omega
}{
\Err_{\cE}(X_1,\dots, X_M) \le C \textstyle\sum_{\ell > M/C\log(M)} \lambda_\ell
}
\right]
\ge 1 - C \frac{\log(M)^2}{M^2},
\]
for all $M\in \N$. In particular, the probability that 
\[
\Err_{\cE}(X_1,\dots, X_M) > C \sum_{\ell > M/C\log(M)} \lambda_\ell,
\]
for infinitely many $M$, can be bounded by 
\begin{align*}
\Prob
&\Big[
\Err_{\cE}
(X_1,\dots, X_M)
> C\textstyle\sum_{\ell > M/C\log(M)} \lambda_\ell
\text{ infinitely often}
\Big]
\\
&\le 
\limsup_{M_0\to \infty}
\Prob\left[
\bigcup_{M > M_0}
\set{
\omega
}{
\Err_{\cE}(X_1(\omega),\dots, X_M(\omega)) > C \sum_{\ell > M/C\log(M)} \lambda_\ell
}
\right]
\\
&\le 
\limsup_{M_0\to \infty}
\sum_{M>M_0}
\Prob\left[
\set{
\omega
}{
\Err_{\cE}(X_1(\omega),\dots, X_M(\omega)) > C \sum_{\ell > M/C\log(M)} \lambda_\ell
}
\right]
\\
&\le 
C \limsup_{M_0\to \infty}
\sum_{M>M_0} \frac{\log(M)^2}{M^2}
\\
&= 0.
\end{align*}
Thus, for almost all $\omega \in \Omega$, we have
\[
\Err_{\cE}(X_1(\omega), \dots, X_M(\omega))
\le 
C \sum_{\ell > M/C\log(M)} \lambda_\ell,
\]
for all sufficiently large $M>0$.
\end{proof}

\subsection{Proof of Lemma \ref{lem:encoding-quadraticexp}}
\label{app:pf310}
\begin{proof}
The starting point is the claim that if the covariance operator $\Gamma$ be given by \eqref{eq:quadraticexp-kernel}, the eigenfunctions and eigenvalues $(\phi_k, \lambda_k)$ of $\Gamma$ are given by 
\begin{align} \label{eq:EVquadraticexp1}
\phi_k(x) = e^{-ikx}, \quad \lambda_k = \sqrt{2\pi} \ell \, e^{-(\ell k)^2/2}, \quad (k\in \Z).
\end{align}

To see this, we note that  
\begin{align*}
\int_0^{2\pi} 
k_p(x,x') \phi_k(x') 
\, dx'
&=
\sum_{h \in 2\pi \Z}
\int_0^{2\pi} 
e^{-(x-x'-h)^2/(2\ell^2)} e^{-ikx'} 
\, dx'
\\
&=
\int_{-\infty}^\infty
e^{-(x-x')^2/(2\ell^2)} e^{-ikx'} 
\, dx'
\\
&= 
\sqrt{2\pi} \ell \, e^{-(\ell k)^2} \, e^{-ikx},
\end{align*}
where we used that the Fourier transform on the real line $\R$ 
\[
\mathcal{F}[u](k) = \int_{-\infty}^\infty u(x') e^{-ikx'} \, dx',
\]
satisfies
\[
\mathcal{F}[u(\slot - x)][k] = \mathcal{F}[u](k) \, e^{-ikx},
\]
and we recall that for $\alpha > 0$, the Fourier transform of a Gaussian is
\[
\mathcal{F}[\exp(-\alpha x^2)](k) = \sqrt{\frac{\pi}{\alpha}} \exp(-k^2/4\alpha).
\]

For simplicity, assume $m = 2 K + 1$ for $K\in \mathbb{N}$. Recall that  the decoder $\cD$ for encoder $\cE$ is the discrete Fourier transform \eqref{eq:fdecode1}, it is straightforward to check that $\cE \circ \cD = \Id$, so $(\cE,\cD)$ is an admissible encoder/decoder pair, and by the definition \eqref{eq:encoding} of $\Err_\cE$:
\[
(\Err_{\cE})^2 \le \int_X \left\Vert \cD \circ \cE - \Id \right\Vert_{L^2_x}^2 \, d\mu(u).
\]
Let $P_K: L^2_x \to L^2_x$ denote the orthogonal projection onto $\Span(e^{ikx}; \, |k|\le K)$, and denote by $P_K^\perp: L^2_x \to L^2_x$ the orthogonal projection onto the orthogonal complement, so that $\Id = P_K + P_K^\perp$. 
We note that $\cD \circ \cE$ is \emph{linear} and we have $(\cD \circ \cE)(e^{ikx}) = e^{ikx}$ for all $|k|\le K$ (where $m = 2K + 1$). From this, it follows that 
\begin{align*}
\cD \circ \cE - \Id
&=
\left[\cD \circ \cE \circ P_K - P_K\right]
+
\left[\cD \circ \cE \circ P_K^\perp - P_K^\perp\right]
\\
&=
\cD \circ \cE \circ P_K^\perp - P_K^\perp
\\
&=
P_K \circ \cD \circ \cE \circ P_K^\perp - P_K^\perp
,
\end{align*}
where we used that $\cD = P_K \circ \cD$. The two terms on the last line are obviously perpendicular to each other. Hence
\begin{align} \label{eq:quadraticexp-errordecomp}
\Vert \cD \circ \cE - \Id \Vert_{L^2_x}^2
=
\Vert \cD \circ \cE \circ P_K^\perp \Vert_{L^2_x}^2
+
\Vert P_K^\perp \Vert_{L^2_x}^2.
\end{align}
We note that the non-vanishing of the first term in \ref{eq:quadraticexp-errordecomp} is due to \emph{aliasing}, i.e. the fact that for any point on the grid $x_j$, $j=1, \dots, m$, we have
\[
e^{ikx_j} 
=
e^{i(k+qm)x_j},
\quad \text{for all } q \in \Z.
\]
Therefore, the higher-order modes $e^{ikx}$ for $|k| > K$ map under $\cE$ to 
\[
\cE(e^{ikx}) = \cE(e^{i k_0 x}),
\]
where $k_0\in \{-K,\dots, K\}$ is the unique value such that there exists $q\in \Z$ with $k_0 = k + qm$. 

As the functions $x \mapsto e^{ikx}$, $k\in \Z$, are the eigenfunctions of the covariance operator of $\mu$. Let $\lambda_k$, $k\in \Z$, denote the corresponding eigenvalues. By the Karhunen-Loeve expansion for the Gaussian measure $\mu$, we can now write
\begin{align*}
\int_{X} 
\Vert \cD \circ \cE - \Id \Vert_{L^2_x}^2
\, d\mu
&=
\E\left[
\Vert \cD \circ \cE(X) - X \Vert_{L^2_x}^2
\right]
\\
&=
\E\left[
\Vert \cD \circ \cE \left( P_K^\perp X \right) 
\Vert_{L^2_x}^2
\right]
+
\E\left[
\Vert P_K^\perp (X) 
\Vert_{L^2_x}^2
\right]
,
\end{align*}
where 
\[
X
=
\sum_{k\in \Z} 
\sqrt{\lambda_k} X_k e^{ikx},
\]
and the $X_k \sim \Normal(0,1)$ are iid Gaussian random variables with unit variance. Due to aliasing, we have
\[
\cD \circ \cE\left(
P_K^\perp X
\right)
=
\sum_{k_0 = -K}^K
\left(
\sum_{q\in \Z \setminus \{0\}} \sqrt{\lambda_{k_0 + qm}} X_{k_0 + qm}
\right) e^{ik_0 x}.
\]
And 
\begin{align*}
\E\left[
\Vert \cD \circ \cE \left( P_K^\perp X \right) 
\Vert_{L^2_x}^2
\right]
&=
\E\left[
\sum_{k_0 = -K}^K
\left(
\sum_{q\in \Z \setminus \{0\}} \sqrt{\lambda_{k_0 + qm}} X_{k_0 + qm}
\right)^2
\right]
\\
&=
\sum_{k_0 = -K}^K
\E\left[
\sum_{q,q'\in \Z \setminus \{0\}} \sqrt{\lambda_{k_0 + qm}}\sqrt{\lambda_{k_0 + q'm}} X_{k_0 + qm}  X_{k_0 + q'm}
\right]
\\
&=
\sum_{k_0 = -K}^K
\sum_{q,q'\in \Z \setminus \{0\}} \sqrt{\lambda_{k_0 + qm}}\sqrt{\lambda_{k_0 + q'm}} 
\E\left[
X_{k_0 + qm}  X_{k_0 + q'm}
\right].
\end{align*}
As the $X_k$ are iid with zero mean and unit variance, we have
\[
\E[X_k X_{k'}] = \delta_{k,k'} = \begin{cases} 1, & k=k' \\ 0, & k\ne k' \end{cases}.
\]
Thus,
\begin{align*}
\E\left[
\Vert \cD \circ \cE \left( P_K^\perp X \right) 
\Vert_{L^2_x}^2
\right]
&=
\sum_{k_0 = -K}^K
\sum_{q,q'\in \Z \setminus \{0\}} \sqrt{\lambda_{k_0 + qm}}\sqrt{\lambda_{k_0 + q'm}} \delta_{q,q'}
\\
&=
\sum_{k_0 = -K}^K
\sum_{q\in \Z \setminus \{0\}} \lambda_{k_0 + qm}
\\
&=
\sum_{|k| > K}
\lambda_{k}.
\end{align*}
On the other hand, it is easy to see that
\[
\E\left[
\Vert  P_K^\perp X 
\Vert_{L^2_x}^2
\right]
=
\sum_{|k| > K} \lambda_k.
\]
We thus conclude that
\begin{align*}
\int_X \Vert \cD \circ \cE - \Id \Vert_{L^2_x}^2 \, d\mu
&=
\E\left[
\Vert \cD \circ \cE \left( P_K^\perp X \right) 
\Vert_{L^2_x}^2
\right]
+
\E\left[
\Vert P_K^\perp (X) 
\Vert_{L^2_x}^2
\right]
\\
&=
2\sum_{|k|>K} \lambda_k.
\end{align*}
Recalling that by \eqref{eq:EVquadraticexp1}, we have
\[
\lambda_k = \sqrt{2\pi} \ell \, e^{-(\ell k)^2/2},
\]
we finally obtain
\begin{align*}
2\sum_{|k|>K} \lambda_k
&=
2\sqrt{2\pi}  \sum_{|k|>K} \ell \, e^{-(\ell k)^2/2}
\\
&\le 
2\pi\sqrt{2} \frac{2}{\sqrt{\pi}} \int_K^{\infty} e^{-(\ell x)^2/2} \, \ell \, dx
\\
&=
4\pi \left[ \frac{2}{\sqrt{\pi}} \int_K^{\infty} e^{-(\ell x/\sqrt{2})^2} \, d\left( \ell x / \sqrt{2}\right) \right]
\\
&=
4\pi \, \erfc\left( \frac{\ell K}{\sqrt{2}} \right).
\end{align*}

\end{proof}

\subsection{Proof of Lemma \ref{lem:311}}
\label{app:pf311}

\begin{proof}
By Lemma \ref{lem:encoding-quadraticexp}, the eigenfunctions of the covariance operator are $\phi_k = \fb_k$, the standard Fourier basis. By Theorem \ref{thm:random-enc}, there exists a constant $C\ge 1$, depending on $|D|$ and $\sup_{k\in \N} \Vert \phi_k \Vert_{L^\infty} \le 2(2\pi)^{-d}$, such that with probability $1$ in the random sensors $X_1,X_2,X_3,\dots \in D$, we have for almost all $M\in \N$:
\[
\Err_{\cE} 
\le 
C \sqrt{\sum_{k> M/C\log(M)} \lambda_k}.
\]
By Lemma \ref{lem:encoding-quadraticexp} (cp. \eqref{eq:enerrg}, \eqref{eq:enerrg1}), we have
\[
\sqrt{\sum_{k> M/C\log(M)} \lambda_k}
\lesssim
\exp\left(-\tilde{\gamma} \frac{M^2\ell^2}{C^2\log(M)^2} \right), \quad \forall \tilde{\gamma} < \frac 1{16}.
\]
The implied constant here only depends on the value of $\tilde{\gamma}$ and on $|D|$. Thus, choosing e.g. $\tilde{\gamma} = 1/20$ and noting that $|D|=2\pi$ is a fixed constant, we conclude that 
\[
\Err_{\cE}
\le
C \exp\left(-\gamma \frac{M^2\ell^2}{\log(M)^2} \right),
\]
for $\gamma = 1/(20 C^2)$, where the constant $C>0$ is independent of $\ell$ and $M$.
\end{proof}

\subsection{Proof of Proposition \ref{prop:ps-spectral}}
\label{app:pf312}
\begin{proof}
Fix $u = u(\slot; Y)$ for some $Y \in [-1,1]^\cJ$. Let $\hat{Y} = \hat{Y}(\cE(u))$ be given by \eqref{eq:Yhat}. Let $\tilde{Y}$ be given by \eqref{eq:Ytilde}, such that
\[
\hat{Y}_j = \shrink(\tilde{Y}_j), \quad
\forall \, j\in \cJ = \Z^d.
\]
We note that since $Y_j \in [-1,1]$ for all $j\in \cJ$, we have
\begin{align*}
\left|
Y_j - \hat{Y}_j
\right|
=
\left|
Y_j - \shrink(\tilde{Y}_j)
\right|
\le
\left|
Y_j - \tilde{Y}_j
\right|,
\quad
\forall \, j\in \cJ = \Z^d.
\end{align*}
But, by definition of $\tilde{Y}_j$, we have that
\[
u \mapsto 
u(\slot; \tilde{Y}(\cE(u)))
=
\sum_{j\in \Z^d} \tilde{Y}_j(\cE(u)) \alpha_j \fb_j
=
\sum_{j\in \Z^d} \hat{u}_j \fb_j,
\]
is the pseudo-spectral Fourier projection of $u$ onto $\{\fb_j\}_{j\in \cK_N}$. In particular, it follows from standard estimates for the pseudo-spectral projection that for $u \in H^s(\T^d)$, with $s > n/2$, we have
\[
\Big
\Vert 
u(\slot; Y) - u(\slot;\tilde{Y}(\cE(u))
\Big\Vert_{L^2}
\le 
C
\Vert u \Vert_{H^s} \, N^{-s},
\]
for some $ C = C(s) > 0$,
and hence also
\begin{align*}
\left\Vert u(\slot; Y) - u(\slot;\hat{Y}) \right\Vert_{L^2}^2
&=
\sum_{j\in \Z^d} \left|Y_j - \hat{Y}_j\right|^2 \alpha_j^2 
\\
&\le 
\sum_{j\in \Z^d} \left|Y_j - \tilde{Y}_j\right|^2 \alpha_j^2
\\
&=
\Big
\Vert 
u(\slot; Y) - u(\slot;\tilde{Y}
\Big\Vert_{L^2}^2
\\
&\le 
C
\Vert u \Vert_{H^s}
\, 
N^{-s}.
\end{align*}
\end{proof}

\subsection{Proof of Theorem \ref{thm:holomorphic-encoding}}
\label{app:pf313}
\begin{proof}
We note that since $m = (2N+1)^d$, we have  $N \gtrsim m^{1/d}$. Thus, it suffices to show that $\Err_{\cE} \le C \exp(-c N)$ for some constants $c,C$ independent of $N$. We now note that if $Y = (Y_j)_{j\in \cJ}$ are distributed according to the probability measure $\rho \in \cP([-1,1]^\cJ)$, and if $\hat{Y} = \hat{Y}(\cE(u)$ and $\tilde{Y} = \tilde{Y}(\cE(u))$ are given by \eqref{eq:Yhat}, \eqref{eq:Ytilde}, respectively, then
\begin{align*}
(\Err_{\cE})^2
&=
\int_{L^2(D)}
\Vert \cD \circ \cE(u) - u \Vert_{L^2}^2 
\, d\mu(u)
\\
&=
\int_{L^2(D)}
\Vert u(\slot; \hat{Y}) - u(\slot; Y) \Vert_{L^2}^2 
\, d\rho(Y)
\end{align*}
As in the proof of Proposition \ref{prop:ps-spectral} in appendix \ref{app:pf312}, we see that -- due to the exponential decay $\lesssim \exp(-|k|\ell)$ of the Fourier coefficients of $u(\slot; Y)$ -- there exist $C,c>0$, independent of $N$ and $Y\in [-1,1]^\cJ$, such that the last term can be estimated by
\begin{align*}
&\le 
\int_{L^2(D)} 
\left[C \exp(-c\ell N)\right]^2\, d\rho(Y)
\\
&=
\left[C \exp(-c\ell N)\right]^2.
\end{align*}
This shows that $\Err_{\cE} \le C \exp(-c\ell N)$. And we recall that $N \lesssim m^{1/d}$, by definition of $m = (2N+1)^d$.

Finally, if 
\[
\cF(\bm{y}) = \G(u(\slot; \bm{y})),
\]
then for any $\bm{u} = (u_{i})_{i\in \cI_N}$, we have by the definition of $\cD(\bm{u}) = u(\slot; \hat{Y}(\bm{u}))$ (cp. \eqref{eq:holomorphic-decoder})
\[
\G \circ \cD( \bm{u} )
=
\G\left( u(\slot, \hat{Y}(\bm{u}))\right)
=
\cF(\hat{Y}(\bm{u})).
\]
\end{proof}

\subsection{Proof of Corollary \ref{cor:holomorphic-NN}}
\label{app:pf314}
\begin{proof}
Let $\hN$ be a neural network satisfying the estimates of Proposition \ref{prop:be-expansion}, with $N:= m$. Then 
\[
\cN := \cP \circ \hN 
=
\sum_{\bm{\nu}\in \Lambda_{N}}
\underbrace{(\cP c_{\bm{\nu}})}_{\in \R^p} \hN_{\bm{\nu}}(y_{\kappa(1)}, \dots, y_{\kappa(N)})
\]
is a linear combination of the neural network output of $\hN$, with coefficients in $\R^p$. In particular, by adding a linear output layer of size $\mathcal{O}(pN)$ to the network $\hN$, we can represent $\cN$ as a neural network with 
\[
\size(\cN) \le \size(\hN) + pN,
\quad
\depth(\cN) \le \depth(\hN) + 1.
\]
The claimed bounds on the size of $\cN$ thus readily follow from the corresponding bounds for $\hN$. Furthermore, we have for any $\bm{y} \in [-1,1]^\cJ$:
\begin{align*}
\left\Vert
\cP \circ \cF(\bm{y})
-
\cN(y_{\kappa(1)}, \dots, y_{\kappa(N)}
\right\Vert_{\ell^2}
&=
\left\Vert
\cP \circ \cF(\bm{y})
-
\cP \circ \hN(y_{\kappa(1)}, \dots, y_{\kappa(N)}
\right\Vert_{\ell^2}
\\
&\le 
\Vert \cP \Vert
\left\Vert
\cF(\bm{y})
-
\hN(y_{\kappa(1)}, \dots, y_{\kappa(N)}
\right\Vert_{V}.
\end{align*}
The claimed error estimate thus follows from the error estimate in Proposition \ref{prop:be-expansion}.
\end{proof}
\subsection{Proof of Theorem \ref{thm:holomorphic-approx}}
\label{app:pf315}
\begin{proof}
The proof of this theorem requires the following simple Lemma on Neural Network Calculus,
\begin{lemma} \label{lem:composition-NN}
Let $\cN_1: \R^{n_0} \to \R^{n_1}$, $\cN_2: \R^{n_1} \to \R^{n_2}$ be two neural networks. Then the composition $\cN = \cN_2 \circ \cN_1: \R^{n_0} \to \R^{n_2}$ can be represented by a neural network $\cN$ with 
\[
\size(\cN) = \size(\cN_1) + \size(\cN_2),
\quad
\depth(\cN) = \depth(\cN_1) + \depth(\cN_2).
\]
\end{lemma}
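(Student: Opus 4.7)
The plan is to construct $\cN := \cN_2 \circ \cN_1$ explicitly in the layered form \eqref{eq:ann1} by concatenating the layers of $\cN_1$ and $\cN_2$ and fusing the two adjacent affine maps at the interface. Writing each $\cN_i$, $i=1,2$, in the form $\cN_i = C^i_{K_i} \circ \sigma \circ C^i_{K_i-1} \circ \sigma \circ \cdots \circ \sigma \circ C^i_1$ with $K_i = \depth(\cN_i)+1$ and $C^i_k(z) = W^i_k z + b^i_k$, formal composition yields
$$\cN_2 \circ \cN_1 = C^2_{K_2} \circ \sigma \circ \cdots \circ \sigma \circ C^2_1 \circ C^1_{K_1} \circ \sigma \circ \cdots \circ \sigma \circ C^1_1.$$
The only non-standard feature of the right-hand side is the absence of a ReLU between $C^2_1$ and $C^1_{K_1}$. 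Since the composition of two affine maps is again affine, I would set $\tilde{C}(z) := W^2_1(W^1_{K_1} z + b^1_{K_1}) + b^2_1$ and replace $C^2_1 \circ C^1_{K_1}$ by $\tilde{C}$.

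The result is a ReLU network of the form \eqref{eq:ann1} with $K_1+K_2-1$ affine maps separated by $K_1+K_2-2$ ReLU activations. Hence $\depth(\cN) = K_1+K_2-2 = \depth(\cN_1)+\depth(\cN_2)$, which gives the depth claim. For the size, the affine maps $C^1_1,\ldots,C^1_{K_1-1}$ and $C^2_2,\ldots,C^2_{K_2}$ are inherited verbatim from $\cN_1$ and $\cN_2$, and the total number of non-zero parameters they contribute equals $\size(\cN_1) + \size(\cN_2)$ minus the non-zero parameters of $C^1_{K_1}$ and $C^2_1$. The merged affine map $\tilde{C}$ accounts for the remainder, and one can always parameterize $\tilde{C}$ so that its non-zero parameter count does not exceed that of $(W^1_{K_1},b^1_{K_1})$ together with that of $(W^2_1,b^2_1)$, giving $\size(\cN) \le \size(\cN_1) + \size(\cN_2)$.

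The single mildly delicate point is the size bookkeeping at the merged interface, since the sparse matrix product $W^2_1 W^1_{K_1}$ may in principle have a denser non-zero pattern than either factor. This is handled by retaining $C^2_1$ and $C^1_{K_1}$ as two consecutive affine sub-layers (without an intervening activation) inside the final parameterization of $\tilde{C}$; this preserves both the original non-zero parameters and the depth count exactly. No conceptual obstacle arises beyond this bookkeeping, and all other aspects of the proof follow directly from the definitions of $\size$ and $\depth$ in \eqref{eq:nom}.
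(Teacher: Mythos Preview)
The paper does not actually prove this lemma: it is stated inside the proof of Theorem~\ref{thm:holomorphic-approx} as a ``simple Lemma on Neural Network Calculus'' and then used without further argument. Your construction by concatenating the layers and fusing the two adjacent affine maps at the interface is the standard one, and it gives the depth identity $\depth(\cN)=\depth(\cN_1)+\depth(\cN_2)$ exactly as you compute.

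The one genuine subtlety is precisely the one you flag. Within the strict layered form \eqref{eq:ann1}--\eqref{eq:C}, each layer is parameterized by a \emph{single} weight matrix and bias; the merged map $\tilde C$ has weight $W^2_1 W^1_{K_1}$, whose non-zero pattern can be strictly denser than the union of the two factors (so the exact equality in the lemma need not hold under that parameterization). Your fix---retaining $C^1_{K_1}$ and $C^2_1$ as a factored pair---is the standard device in the neural-network calculus literature and preserves both counts, at the cost of slightly extending the notion of ``layer'' beyond \eqref{eq:C}. Since every downstream use of this lemma in the paper only needs an upper bound $\size(\cN)\le \size(\cN_1)+\size(\cN_2)$ (and in fact only up to constants), this is harmless; but it is worth noting that the paper's stated \emph{equality} is to be read in this relaxed sense.
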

\subsubsection{Proof of the Theorem \ref{thm:holomorphic-approx}}
From the network size bounds of Corollary \ref{cor:holomorphic-NN}, $\cN$ is a neural network of size 
\[
\begin{gathered}
\size(\cN) \le C(1+mp\log(m)\log\log(m))), 
\\
\depth(\cN) \le C(1+\log(m) \log\log(m))).
\end{gathered}
\]
with a constant $C>0$, independent of $m$, $p$. By Lemma \ref{lem:Yhat-NN}, the map $\bm{u}\mapsto \hat{Y}_{\kappa}(\bm{u})$ can be represented by a neural network of size
\[
\size(\hat{Y}_{\kappa}) \le C(1+m\log(m)), 
\quad
\depth(\hat{Y}_{\kappa}) \le C(1+\log(m)),
\]
for a constant $C>0$, independent of $m$. Since the definition of $\hat{Y}_{\kappa}$ does not involve the projection $\cP$ at all, the constant is also independent of $p$. By the composition Lemma \ref{lem:composition-NN} it follows that $\bm{u} \mapsto \cA(\bm{u}) = (\cN \circ \hat{Y}_\kappa)(\bm{u})$ can be represented by a neural network with
\[
\size(\cA) = \size(\cN) + \size(\hat{Y}_{\kappa}), 
\, 
\depth(\cA) = \depth(\cN) + \depth(\hat{Y}_{\kappa}).
\]
We also note that the following estimate for the approximation error $\Err_{\cA}$: 
\begin{align*}
(\Err_{\cA})^2
&=
\int_{L^2(D)}
\Vert 
\cP \circ \cG \circ \cD (\bm{u}) 
-
\cA(\bm{u})
\Vert^2_{\ell^2} \, d(\cE_\#\mu)(\bm{u})
\\
&\le
\sup_{\bm{u} \in \supp(\mu)}
\Vert 
\cP \circ \cG \circ \cD (\bm{u}) 
-
\cA(\bm{u})
\Vert^2_{\ell^2}
\\
&=
\sup_{\bm{u} \in \supp(\mu)}
\Vert 
\cP \circ \cF (\hat{Y}(\bm{u})) 
-
\cN(\hat{Y}(\bm{u}))
\Vert^2_{\ell^2}
\end{align*}
By Corollary \ref{cor:holomorphic-NN} we can further estimate the last term by
\begin{align*}
&\le
\sup_{\bm{y}\in [-1,1]^\cJ}
\Vert \cP\circ\cF(\bm{y}) - \cN(y_{\kappa(1)}, \dots, y_{\kappa(m)})\Vert_{\ell^2}^2
\\
&\le
C \Vert \cP \Vert \,  m^{-2s},
\quad
\text{where } s := \frac1q - 1 > 0,
\end{align*}
provided that $\alpha_k \in \ell^q(\Z^d)$ with $q\in (0,1)$. But by the exponential decay assumption \eqref{eq:law-decay}, we have $\alpha_k \in \ell^q(\Z^d)$ for any $q\in (0,1)$. The claim follows.
\end{proof}

\section{DeepONet approximation of linear operators}
\label{app:deeponet-linear}
In this section, we will illustrate the ability of a DeepONet to approximate bounded linear operators efficienty. A simple numerical example of the DeepONet approximation of a linear functional $\G: C([0,1]^2) \to \R$ is presented in Figure \ref{fig:quadrature}, below. 

\begin{figure}[H]

\begin{subfigure}{.45\textwidth}
\includegraphics[width=\textwidth]{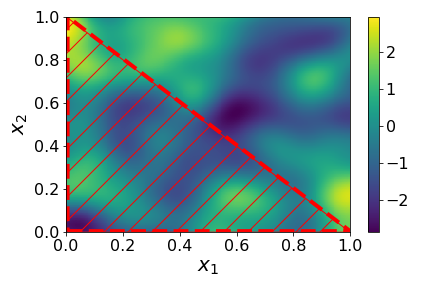}
\caption{}
\label{fig:1a}
\end{subfigure}
\begin{subfigure}{.45\textwidth}
\includegraphics[width=\textwidth]{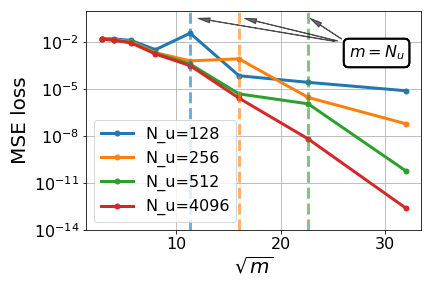}
\caption{}
\label{fig:1b}
\end{subfigure}
\\
\begin{subfigure}{.45\textwidth}
\includegraphics[width=\textwidth]{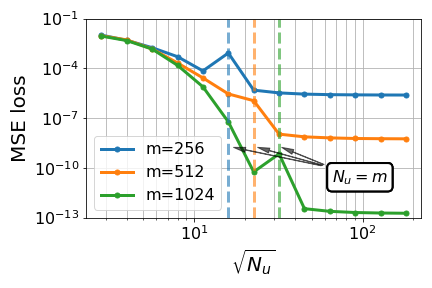}
\caption{}
\label{fig:1c}
\end{subfigure}
\begin{subfigure}{.45\textwidth}
\includegraphics[width=\textwidth]{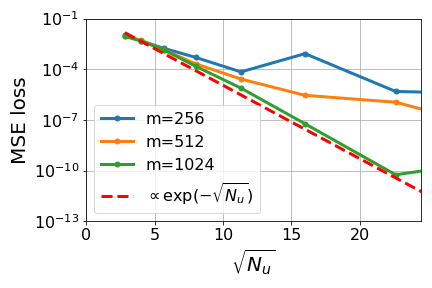}
\caption{}
\label{fig:1d}
\end{subfigure}

\caption{Illustration of a DeepONet (\eqref{eq:bnet1}-\eqref{eq:donet1}, with $p=1$ and $\sigma(x)=x$) approximating the operator $\G: C([0,1]^2) \to \R$, defined as $\G(u) := \int_{\Delta} u(x) \, dx$, with integration domain $\Delta$ (shown in red in (A)), with underlying measure being the law of a Gaussian random field with covariance kernel $k(x,y) = \exp(-|x-y|^2/2\ell^2)$, with $\ell=0.1$. $m$ sensor points $x_1,\dots, x_m$ are drawn at random from the uniform distribution on $[0,1]^2$ and the DeepONet is trained by minimizing the mean square loss function, with respect to $N_u$ samples, drawn from the underlying measure. (A) Illustration of a typical sample drawn from the Gaussian random field. (B) Convergence of the test mean squared error (computed with respect to $10'240$ test samples) as $m\to \infty$, for different numbers of training samples $N_u$. We observe a clear exponential decay of error wrt $m$ as well as ``resonances'' at $m = N_u$ (double descent). (C) Convergence of the test MSE as $N_u \to \infty$, for different numbers of random sensor points $x_1,\dots, x_m$. (D) semilog plot of the MSE loss as $N_u\to \infty$ where one observes that the test error decays as $\exp(-\sqrt{N_u})$ when $N_u \leq m$. Note that similar behavior of the error is seen for more complicated operators in \cite{deeponets}.}
\label{fig:quadrature}
\end{figure}

Of particular interest is the observed exponential decay in the DeepONet approximation error as the number of sensors $m \to \infty$, which has also been observed for other (linear and even non-linear) problems in \cite{deeponets}. Can the theoretical framework developed in the present work explain such behaviour? To answer this question, a general error estimate for the DeepONet approximation of linear operators is derived in Theorem \ref{thm:deeponet-linear}. The estimate is then applied to a prototypical elliptic PDE (cf. example \ref{ex:linear}). In the following, we consider the following setup:

\begin{setup} \label{setup:linear}
We consider data for the DeepONet approximation problem $\mu$, $\G$ (cp. Definition \ref{def:data}), where 
\begin{itemize}
\item $\G: L^2(D) \to L^2(U)$ is a bounded linear mapping,
\item $\mu\in \P_2(L^2(D))$ is a probability measure with mean $0$, and with uniformly bounded eigenfunctions of the covariance operator $\Gamma_\mu$,
\item for $m\in \N$, the sensors $x_1,\dots, x_m \sim \Unif(D)$ are drawn iid random, 
\item for $p\in \N$, we denote by $\hat{\tr}_k$, $k=0,\dots, p$, the optimal choice for an affine reconstruction $\cR_{\hat{\bm{\tr}}} = \cR_{\mathrm{opt}}$ for the push-forward measure $\G_\#\mu$, as in Theorem \ref{thm:rec-opt}; i.e., in the present case, $\hat{\tr}_0 \equiv 0$, and $\hat{\tr}_k$, $k=1,\dots, p$, are the first $p$ eigenfunctions of the covariance operator $\Gamma_{\G_\#\mu}$.
\end{itemize}
\end{setup}

Under the above assumptions, we then have

\begin{theorem} \label{thm:deeponet-linear}
Consider the setup \ref{setup:linear}. Let $m,p\in \N$ denote the number of sensors and the output dimension of the branch/trunk nets $(\bm{\br},\bm{\tr})$, respectively. Let $\bm{\tr}$ be a trunk-net approximation of $\hat{\bm{\tr}}$, such that the associated reconstruction $\cR = \cR_{\bm{\tr}}$ and projection $\cP$ satisfy $\Lip(\cR),\, \Lip(\cR\circ \cP) \le 2$.  For $m\in \N$, we define the (random) encoder $\cE: \, u \mapsto (u(x_1),\dots, u(x_m))$. Then, with probability $1$ in the choice of the random sensor points, there exists a constant $C>0$, depending only on $\mu$ and the measure of the domain $|D|$, such that for any $m,p\in \N$ there exists a shallow ReLU approximator net $\cA: \R^m \to \R^p$ with 
\[
\size({\cA}) \le 2(2+m)p,
\quad
\depth(\cA) \le 1,
\]
and such that the DeepONet $\cN = \cR_{\bm{\tr}} \circ \cA \circ \cE$, with branch net $\bm{\br} = \cA\circ \cE$ and trunk net $\bm{\tr}$ satisfies the following asymptotic DeepONet approximation error estimate
\begin{align}\label{eq:lin-bound}
\Err 
\le
C \sqrt{1+\Vert \G\Vert^2} 
\left\{
\max_{k=0,\dots, p} \Vert \tr_k - \hat{\tr}_k \Vert_{L^2(U)}
+
\sqrt{
\sum_{\ell > p} \lambda_\ell
}
+
\sqrt{
\sum_{\ell > \frac{m}{C\log(m)}} \lambda_\ell
}
\right\},
\end{align}
for almost all $m$, as $m\to \infty$.
\end{theorem}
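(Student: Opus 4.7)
The plan is to combine the abstract error decomposition of Theorem~\ref{thm:err-upper-bound} with three ingredients specific to the linear setting: (i) the random-sensor encoding estimate of Theorem~\ref{thm:random-enc}, (ii) the reconstruction comparison estimate of Proposition~\ref{prop:err-rec-approx}, and (iii) an \emph{exact} representation of the projected operator $\cP\circ\G\circ\cD:\R^m\to\R^p$ by a shallow ReLU network. Since $\G$ is Lipschitz with $\Lip(\G)=\Vert\G\Vert$, we invoke \eqref{eq:donetbd} with $\alpha=1$ to obtain
\begin{equation*}
\Err \;\le\; 2\Vert\G\Vert\,\Err_{\cE} + 2\Err_{\cA} + \Err_{\cR},
\end{equation*}
using the assumed bounds $\Lip(\cR),\Lip(\cR\circ\cP)\le 2$. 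This reduces the theorem to estimating each of the three contributions.

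First I would treat the approximation error. Because $\G$, $\cD$ and the linear part of $\cP$ are all linear (the decoder $\cD$ from \eqref{eq:encode-general} is linear, and $\cP$ in \eqref{eq:oproj} is affine), the composition $\cP\circ\G\circ\cD:\R^m\to\R^p$ has the form $\bm{u}\mapsto A\bm{u}+\bm{c}$ for some $A\in\R^{p\times m}$ and $\bm{c}\in\R^p$. Using the identity $z=\sigma(z)-\sigma(-z)$ componentwise, this affine map can be represented \emph{exactly} by a single-hidden-layer ReLU network with hidden width $2p$: the first affine layer $C_1$ has weight matrix $[A;-A]\in\R^{2p\times m}$ and bias $[\bm{c};-\bm{c}]\in\R^{2p}$ (contributing at most $2pm+2p$ non-zero parameters), and the output layer $C_2$ uses the sparse weight matrix $[I_p,-I_p]$ with $2p$ non-zero entries and zero bias. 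Summing these gives $\size(\cA)\le 2p(m+1)+2p = 2(m+2)p$ and $\depth(\cA)=1$, matching the stated complexity bound, and yields $\Err_{\cA}=0$.

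Next I would invoke Theorem~\ref{thm:random-enc}, whose hypothesis on the uniform boundedness of the eigenbasis of $\overline{\Gamma}=\Gamma_\mu$ is exactly the standing assumption in Setup~\ref{setup:linear}; with probability one in the iid random sensors $x_1,\dots,x_m\sim\mathrm{Unif}(D)$, we obtain for almost all $m$
\begin{equation*}
\Err_{\cE}\;\le\;C\,\sqrt{\sum_{\ell>m/(C\log m)}\lambda_\ell}.
\end{equation*}
For the reconstruction error I would apply Proposition~\ref{prop:err-rec-approx} with $\nu=\G_\#\mu$, noting that $\hat\tr_0\equiv 0$ (the push-forward is centered) and $\hat\tr_1,\dots,\hat\tr_p$ are the top-$p$ eigenfunctions of $\Gamma_{\G_\#\mu}$, giving
\begin{equation*}
\Err_{\cR}\;\le\;\sqrt{1+\Tr(\Gamma_{\G_\#\mu})}\,\max_{k=0,\dots,p}\Vert\tr_k-\hat\tr_k\Vert_{L^2(U)}\;+\;\sqrt{\textstyle\sum_{k>p}\lambda_k'},
\end{equation*}
where $\lambda_k'$ are the eigenvalues of $\Gamma_{\G_\#\mu}$. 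The key linear-algebra observation is that $\Gamma_{\G_\#\mu}=\G\Gamma_\mu\G^\ast$, which yields both $\Tr(\Gamma_{\G_\#\mu})\le\Vert\G\Vert^2\Tr(\Gamma_\mu)$ and, via Proposition~\ref{prop:lin} applied to $\cR_{\mathrm{opt}}$, the tail bound $\sum_{k>p}\lambda_k'\le\Vert\G\Vert^2\sum_{k>p}\lambda_k$.

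Finally I would simply absorb everything into the common prefactor. Using $\sqrt{1+\Vert\G\Vert^2\Tr(\Gamma_\mu)}\le\sqrt{1+\Vert\G\Vert^2}\sqrt{\max(1,\Tr(\Gamma_\mu))}$ and bounding $\Vert\G\Vert\le\sqrt{1+\Vert\G\Vert^2}$, the three contributions combine to yield the claimed estimate \eqref{eq:lin-bound} with a constant $C$ depending only on $\Tr(\Gamma_\mu)$, $\sup_\ell\Vert\phi_\ell\Vert_{L^\infty}$ and $\vert D\vert$. The only genuine technical point is the exact-representation bookkeeping in step~2: verifying that the sparse output layer truly gives the sharp $2(m+2)p$ count rather than the naive $2p(m+1)+2p^2$ count; everything else is a routine assembly of results already established in Section~\ref{sec:3}.
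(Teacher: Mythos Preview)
Your proposal is correct and follows essentially the same approach as the paper: the error decomposition from Theorem~\ref{thm:err-upper-bound}, exact affine approximation via the ReLU identity $z=\sigma(z)-\sigma(-z)$ to get $\Err_{\cA}=0$, Theorem~\ref{thm:random-enc} for the encoding term, and Proposition~\ref{prop:err-rec-approx} combined with Proposition~\ref{prop:lin} for the reconstruction term. Your explicit parameter count for the shallow ReLU net is slightly more detailed than the paper's, but otherwise the arguments coincide.
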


\begin{proof}
By the DeepONet error decomposition of Theorem \ref{thm:err-upper-bound} and the assumed Lipschitz bounds $\Lip(\cR),\, \Lip(\cR \circ \cP) \le 2$, we have
\[
\Err \le 2\Vert \G \Vert  \Err_{\cE} + 2 \Err_{\cA} + \Err_{\cR}.
\]
We first observe that for any choice of the (affine) encoder/decoder and reconstruction/projection pairs $(\cE,\cD)$, $(\cR,\cP)$, and for a linear mapping $\G$ there exists an \emph{exact, affine approximator} $\cA: \R^m \to \R^p$, such that $\cA(\bm{u}) = \cP \circ \G \circ \cD(\bm{u})$ for all $\bm{u} \in \R^m$. Furthermore, $\cA$ can be represented by a shallow ReLU neural net of the claimed size, on account of the fact that $Ax + b = \sigma(Ax+b) - \sigma(-(Ax+b))$ has an exact representation for the ReLU activation function $\sigma(x) = \max(x,0)$. Thus, the approximation error $\Err_{\cA}$ can be made to vanish in this case, $\Err_{\cA} = 0$.

Under the assumptions of this theorem, the random encoding error has been estimated in Theorem \ref{thm:random-enc}, where it is shown that there exists a constant $C\ge 1$, depending only on the uniform upper bound of the eigenfunctions of $\Gamma_\mu$, and on $|D|$, such that (for almost all $m\in \N$):
\begin{align} \label{eq:Ee}
\Err_{\cE} \le C \sqrt{\sum_{\ell > m/C\log(m)} \lambda_\ell},
\end{align}
as $m\to \infty$, with probability $1$ in the iid random sensors $x_1,x_2,\dots\sim\unif(D)$. Here, $C = C(|D|,\mu)$ is a constant.

Finally, we estimate the reconstruction error $\Err_{\cR}$: By Proposition \ref{prop:err-rec-approx}, we have
\[
\Err_{\cR} 
\le
\sqrt{1+\Tr(\Gamma_{\G_\#\mu})}
\max_{k=0,\dots, p} \Vert \hat{\tr}_k - \tr_k \Vert_{L^2(U)}
+
\sqrt{
\sum_{k>p} \lambda_{k}^{\G_\#\mu}
},
\]
where $\lambda^{\G_\#\mu}_1\ge \lambda^{\G_\#\mu}_2 \ge \dots$ denote the eigenvalues of the covariance operator $\Gamma_{\G_\#\mu}$ of the push-forward measure $\G_\#\mu$. By Proposition \ref{prop:lin}, we can estimate
\[
\sum_{k>p} \lambda_k^{\G_\#\mu}
\le
\Vert \G\Vert^2 \sum_{k>p} \lambda_k,
\]
in terms of the eigenvalues $\lambda_1\ge \lambda_2 \ge \dots$, of the covariance operator $\Gamma_\mu$ of $\mu$, and in particular $\Tr(\Gamma_{\G_\#\mu}) \le\Vert \G\Vert^2 \Tr(\Gamma_\mu) \le C \Vert \G\Vert^2$, where $C = C(\mu)$ depends only on $\mu$. It thus, follows that there exists a constant $C = C(\mu)$, such that
\begin{align}\label{eq:Er}
\Err_{\cR}
\le
C \sqrt{1+\Vert \G\Vert^2}
\left\{
\max_{k=0,\dots, p} \Vert \hat{\tr}_k - \tr_k \Vert_{L^2(U)}
+
\sqrt{\sum_{\ell>p} \lambda_\ell}
\right\}.
\end{align}
Combining \eqref{eq:Ee} and \eqref{eq:Er} with the error decomposition, and taking into account that $\Err_{\cA} = 0$, yields the desired result.
\end{proof}

\begin{remark}
From the proof of Theorem \ref{thm:deeponet-linear}, it is clear that to achieve the error bound \eqref{eq:lin-bound}, we can replace the shallow ReLU approximator net $\cA$, by a single-layer affine approximator $\cA: \R^m \to \R^p$, $\cA(\bm{u}) := A\cdot \bm{u} + b$, where $A \in \R^{p\times m}$, $b\in \R^m$. This clearly corresponds to the use of the linear activation function $\sigma(x) = x$ in the branch net. In contrast, the activation function in the trunk net must be kept non-linear to be able to approximate the optimal trunk net $\hat{\bm{\tr}}$.
\end{remark}

\begin{example} \label{ex:linear}
To illustrate Theorem \ref{thm:deeponet-linear}, we consider the following example of a linear operator $\G: L^2(\T^d) \to L^2(\T^d)$, $f\mapsto v$, mapping the source term $f$ to the solution of the PDE $\Delta v = f - \fint_{\T^d} f \, dx$, with periodic boundary conditions and imposing $\fint_{\T^d} v \, dx = 0$. It is well-known that this operator is bounded; in fact, we have $\Vert \G \Vert \le 1$. We fix the initial measure $\mu \in \P(L^2(\T^d))$ as a Gaussian random field, with Karhunen-Loeve expansion 
\[
f = \sum_{k\in \Z^d} \alpha_{k} X_k \fb_k,
\]
where $|\alpha_k|\le \exp(-\ell |k|)$ have exponential decay with typical length scale $\ell > 0$, $\{\fb_k\}_{k\in \Z^d}$ is the standard Fourier basis on $\T^d$, and the $X_k\sim \cN(0,1)$ are iid Gaussian random variables. In this case, the eigenfunctions of the associated covariance operator $\Gamma_\mu$ of $\mu$ are given by the $\fb_{k}$, with corresponding eigenvalues $\alpha_k$, $k\in \Z^d$. In particular, it follows that there exists constants $C, c>0$, depending only on $d$ and $\ell$, such that the last two terms in \eqref{eq:lin-bound} can be bounded from above by 
\[
\le C\exp\left(-c\, p^{1/d}\right) + 
C \exp\left(-\frac{c\,m^{1/d}}{\log(m)^{1/d}}\right).
\]
In particular, Theorem \ref{thm:deeponet-linear} implies that for any fixed $\sigma > 0$, and with $p \sim \log(\epsilon^{-1})^d$, $m \sim \log(\epsilon^{-1})^{d(1+\sigma)}$, we can achieve an overall DeepONet approximation error 
\[
\Err \lesssim \epsilon + \max_{j=0,\dots, p} \Vert \fb_j - \tr_j \Vert_{L^2(U)},
\]
where we have taken into account that the Fourier basis $\fb_j$ is an eigenbasis also for the push-forward measure $\G_\#\mu$. Furthermore, it follows from Lemma \ref{lem:Fourier-NN}, that for any fixed $\sigma > 0$, there exists a trunk net $\bm{\tr}$ with asymptotic size (as $\epsilon \to 0$)
\begin{align} \label{eq:lin-tr}
\size(\bm{\tr}) \lesssim \log(\epsilon^{-1})^{d+2+\sigma},
\quad
\depth(\bm{\tr}) \lesssim \log(\epsilon^{-1})^{2+\sigma},
\end{align}
and such that $\max_{k=0,\dots, p} \Vert \fb_j - \tr_j\Vert_{L^2(U)} < \epsilon$. Thus, for the present example, an overall DeepONet error $\Err\lesssim \epsilon$ can be achieved with a DeepONet $(\bm{\br},\bm{\tr})$ with trunk net $\bm{\tr}$ satisfying the size bounds \eqref{eq:lin-tr}, and a branch net $\bm{\br}$ of size
\begin{align} \label{eq:lin-br}
\size(\bm{\br}) \lesssim \log(\epsilon^{-1})^{2d+\sigma},
\quad
\depth(\bm{\br}) \lesssim 1,
\end{align}
for any fixed $\sigma > 0$. The implied constants here depend on the length scale $\ell>0$, the dimension $d$ and the additional parameter $\sigma > 0$, which was introduced to avoid the appearance of multiple logarithms. 
\end{example}

\section{Proofs of Results in Section \ref{sec:4}}
\subsection{Proof of Lemma \ref{lem:pend1}}
\label{app:pf41}
\begin{proof}
Let $v,v'$ solve \eqref{eq:pendulum} with forcing $u,u'$, respectively. Then we can write
\begin{align}\label{eq:difference}
\frac{d(v-v')}{dt}
=
G(v,v') (v-v') + (u-u'),
\end{align}
where
\[
G(v,v') = \int_{0}^{1} g^{(1)}(s v + (1-s) v') \, ds,
\]
so that $|G(v,v')| \le \Vert g^{(1)} \Vert_{L^\infty}$ for all $v,v'$. It follows readily from \eqref{eq:difference} that 
\[
\frac{d}{dt} |v-v'|^2 
\le
C |v-v'|^2 + |u-u'|^2,
\]
for some $C>0$ depending only on $\Vert g^{(1)} \Vert_{L^\infty}$. Gronwall's inequality then implies that 
\[
|v-v'|^2(t) 
\le
\int_0^t |u-u'|^2 \, ds \, e^{Ct}
\le
\Vert u -u'\Vert^2_{L^2([0,T])} \, e^{CT}.
\]
The claim follows by integration over $t\in [0,T]$.
\end{proof}
\subsection{Proof of Lemma \ref{lem:pend2}}
\label{app:pf42}
\begin{proof}
We first show that there exists a constant $C>0$, depending only on the final time $T$, such that 
\[
\Vert v \Vert_{L^\infty}
\le 
C \Vert u \Vert_{L^2}.
\]
To this end, we simply note that integrating \eqref{eq:pendulum} from $0$ to $T$ and taking into account that $v(0) = 0$, we have
\begin{align*}
|v(t)|
&\le
\int_0^t |g(v(s))| \, ds + \int_0^t |u(s)| \, ds
\\
&\le
\Vert g^{(1)} \Vert_{L^\infty} \int_0^t |v(s)| \, ds + \sqrt{T} \Vert u \Vert_{L^2([0,T])}
\\
&\le
C_1 \int_0^t |v(s)| \, ds + \sqrt{T} \Vert u \Vert_{L^2([0,T])}.
\end{align*}
Gronwall's inequality implies that
\[
|v(t)| \le \sqrt{T} \Vert u \Vert_{L^2([0,T])} e^{C_1 T}, \quad \forall \, t \in [0,T].
\]
Thus, $\Vert v \Vert_{L^\infty([0,T])} \le C \Vert u \Vert_{L^2([0,T]}$, where $C = \sqrt{T} \exp(C_1 T)$ depends only on $T$. For general $k\in \N$, we take $k$ derivatives of \eqref{eq:pendulum} to find:
\begin{align} \label{eq:higher-deriv}
\frac{d}{dt} v^{(k)} 
=
g^{(k)}(v) \, v^{(k)} + P_k\left(\{g^{(\ell)}\}_{\ell=1}^{k-1}, \{v^{\ell}\}_{\ell=1}^{k-1}\right) + u^{(k)}.
\end{align}
Here, $P_k = P_k\left(\{g^{(\ell)}\}_{\ell=1}^{k-1},\{v^{\ell}\}_{\ell=1}^{k-1}\right)$ is a polynomial of the following form
\[
P_k\left(\{g^{(\ell)}\}_{\ell=1}^{k-1}, \{v^{\ell}\}_{\ell=1}^{k-1}\right)
= 
\sum_{\ell=1}^{k-1} g^{(\ell)}(v)
\left(
\sum_{\bm{\gamma}} C_{\ell,\bm{\gamma}}  \prod_{j=1}^{N_{\bm{\gamma}}} v^{(\gamma_j)}
\right).
\]
The sum in the parentheses is over the (finite) set of $\bm{\gamma} = (\gamma_1, \dots, \gamma_{N_{\bm{\gamma}}})$, $\gamma_j \in \N$, satisfying
\[
\sum_{j=1}^{N_{\bm{\gamma}}}
\gamma_{j} = k,
\quad
1 \le \gamma_{j} < k, \quad \forall \, j=1, \dots, N_{\bm{\gamma}}.
\]
The coefficients $C_{\ell,\bm{\gamma}}$ are combinatorial coefficients that depend only on $k$, and which can in principle be determined for any given $k$. We will prove the claimed estimate on $\Vert v^{(k)} \Vert_{L^\infty}$ by induction on $k=1,2, \dots$. We will first estimate the size of the polynomial $P_k$: To this end, note that for $k=1$, the sum defining $P_k$ is necessarily empty and thus, we have $P_k \equiv 0$. For $k>1$, we assume that the claimed inequality for $\Vert v^{(k)} \Vert_{L^\infty}$ has already been proven to hold for derivatives $v^{(\gamma_j)}$ of order $\gamma_j \le k-1$. In this case, we can estimate
\[
|P_k|
\le
\sum_{\ell=1}^{k-1} \Vert g^{(\ell)}\Vert_{L^\infty} 
\left(
\sum_{\bm{\gamma}} |C_{\ell,\bm{\gamma}}|  \prod_{j=1}^{N_{\bm{\gamma}}} \Vert v^{(\gamma_j)} \Vert_{L^\infty}
\right),
\]
and 
\begin{align*}
\prod_{j=1}^{N_{\bm{\gamma}}} \Vert v^{(\gamma_j)} \Vert_{L^\infty}
&\le
\prod_{j=1}^{N_{\bm{\gamma}}} A_{\gamma_j} \left(1 + \Vert u \Vert_{H^{\gamma_j}}\right)^{\gamma_j}
\\
&\le
\prod_{j=1}^{N_{\bm{\gamma}}} A_{\gamma_j}  \left(1 + \Vert u \Vert_{H^{k}}\right)^{\gamma_j}
\\
&=
\left(\prod_{j=1}^{N_{\bm{\gamma}}} A_{\gamma_j} \right) \left( 1 + \Vert u \Vert_{H^{k}}\right)^{k}.
\end{align*}
Thus, taking into account that the derivatives $g^{(\ell)}(v)$ of order $\ell \le k-1$ are assumed to be uniformly bounded, $|g^{(\ell)}(v)| \le C_k$, we can now estimate
\[
|P_k| \le C (1 + \Vert u \Vert_{H^k}^k), \quad (k>1),
\]
where $C = C(k,g)>0$ is a constant depending only on $k$ and $g$. As pointed out above, this inequality for $P_k$ holds trivially also for $k=1$, since $P_k \equiv 0$, in this case. Inserting the above estimate in \eqref{eq:higher-deriv}, and integrating over $[0,t]$, we obtain
\begin{align*}
|v^{(k)}(t)|
&\le
\Vert g^{(k)} \Vert_{L^\infty} \int_0^t |v^{(k)}(s)|\, ds
+
C (1 + \Vert u \Vert_{H^k})^k
+
\int_0^T |u^{(k)}(s)| \, ds
\\
&\le
C_k  \int_0^t |v^{(k)}(s)|\, ds
+
C (1 + \Vert u \Vert_{H^k})^k
+
\sqrt{T} \Vert u \Vert_{H^{k}}.
\end{align*}
Increasing the constant $C$ if necessary, we can absorb the last term in the second term and conclude that there exists a constant $C = C(k,g,T) > 0$, such that 
\[
|v^{(k)}(t)|
\le
C_k \int_0^t |v^{(k)}(s)|\, ds
+
C (1 + \Vert u \Vert_{H^k})^k.
\]
Gronwall's inequality now yields
\[
|v^{(k)}(t)|
\le
Ce^{C_kT} (1 + \Vert u \Vert_{H^k})^k,
\]
for all $t \in [0,T]$. Hence, for $A_k = Ce^{C_kT}$, we have
\[
\Vert v^{(k)} \Vert_{L^\infty([0,T])}
\le
A_k (1 + \Vert u \Vert_{H^k})^k,
\]
where $A_k$ is independent of $u$.
\end{proof}

\subsection{Proof of Lemma \ref{lem:Legendre-rec}}
\label{app:pf43}

\begin{proof}
The Legendre polynomials form an orthonormal basis of $L^2([0,T])$. For $p\in \N$, let $P_p: L^2([0,T]) \to L^2([0,T])$ denote the orthogonal projection onto the span of the first $p$ Legendre polynomials, $\Span(\tilde{\tr}_1,\dots, \tilde{\tr}_p)$. By \cite[Theorem 2.3]{CanutoQuarteroni}, for any $k\in \N$, there exists a constant $C = C(T,k)>0$, such that
\[
\Vert u - P_p u \Vert_{L^2([0,T])} 
\le
C p^{-k} \Vert u \Vert_{H^k([0,T])}.
\]
Thus, if $\cR = \cR_{\tilde{\bm{\tr}}}$ is the reconstruction with trunk net $\tilde{\bm{\tr}} = (0,\tilde{\tr}_1, \dots, \tilde{\tr}_p)$, and if $\cP$ denotes the corresponding optimal projection \eqref{eq:opt-proj}, then $P_p = \cR \circ \cP$, and 
\begin{align*}
\Err_{\cR}
&=
\left(
\int_{X} 
\Vert \cR \circ \cP(v) - v \Vert_{L^2([0,T])}^2 
\, d(\G_\#\mu)(v)
\right)^{1/2}
\\
&\le
\frac{C}{p^k}
\left(
\int_{X} 
\Vert v \Vert_{H^k([0,T])}^2 
\, d(\G_\#\mu)(v)
\right)^{1/2}
\\
&=
\frac{C}{p^k}
\left(
\int_{X} 
\Vert \G(u) \Vert_{H^k([0,T])}^2 
\, d\mu(u)
\right)^{1/2}
\end{align*}
By Lemma \ref{lem:pend2}, we can furthermore estimate $\Vert v \Vert_{H^k} \le C(1+\Vert u \Vert_{H^k})^k$, for some constant $C>0$, depending on $T$ and $k$. Hence, we find
\begin{align*}
\Err_{\cR} 
&\le
\frac{C}{p^k}
\left(
\int_{X} 
\left( 1 + \Vert u \Vert_{H^k([0,T])}\right)^{2k} 
\, d\mu(u)
\right)^{1/2},
\end{align*}
where $C = C(k,T)>0$ is a constant independent of $p$.
\end{proof}

\subsection{Proof of Lemma \ref{lem:pendulum-impart}}
\label{app:pf44}
\begin{proof}
We have by \eqref{eq:pendulum-cplx-estimate}
\[
|v_i(t)| \le \alpha + \int_0^t |v_i(s)| \beta(s) \, ds,
\]
where $\alpha = \Vert \Im(u)\Vert_{L^\infty} T$, and $\beta(s) = \gamma (1+\exp(|v_i(s)|))$. By Gronwall's inequality, it follows that
\begin{align*}
|v_i(t)|
&\le
\alpha \exp\left(\int_0^t \beta(s)\, ds\right)
\\
&=
\Vert \Im(u) \Vert_{L^\infty} T \exp\left(\int_0^t \gamma \left(1+e^{|v_i(s)|}\right) \, ds\right).
\end{align*}
Let 
\[
\delta := \frac{1}{2T\exp(\gamma(1+e)T)}.
\]
We claim that if $\Vert \Im(u) \Vert_{L^\infty} \le \delta$, then $|v_i(t)| < 1$ for all $t\in [0,T]$. Suppose this was not the case. If there exists $t\in [0,T]$ such that $|v_i(t)| \ge 1$, then the set
\[
B := 
\set{
t \in [0,T]
}{
|v_i(t)| \ge 1
}
\]
is nonempty. Let $t_0$ be given by
\[
t_0 
=
\inf B.
\]
By the continuity of $t\mapsto v_i(t)$, $B$ is a closed set. In particular, this implies that $t_0 \in B$. Since $v_i(0) = 0$, we must have $t_0 > 0$, and $|v_i(t)|<1$ for all $t\in [0,t_0)$.
But then, 
\begin{align*}
|v_i(t_0)|
&\le
\Vert \Im(u)\Vert_{L^\infty} T \exp\left( \int_0^{t_0}  \gamma\left(1+e^{|v_i(s)|}\right) \, ds\right)
\\
&\le
\Vert \Im(u)\Vert_{L^\infty} \, T \exp(\gamma(1+e)T)
\\
&\le
\delta \, T \exp(\gamma(1+e)T)
\\
&\le \frac{1}{2} < 1,
\end{align*}
leads to a contradiction to the assumption that $|v_i(t_0)| \ge 1$. Thus, we conclude that the set $B$ must in fact be empty, i.e. that
\[
|v_i(s)| < 1, \quad \text{for all } s \in [0,T].
\]
\end{proof}
\subsection{Proof of Lemma \ref{lem:pendulum-cplx-existence}}
\label{app:pf45}
\begin{proof}
We argue by contradiction. Suppose the claim is not true. Then there exists $u: [0,T]\to \C$, $u\in L^\infty([0,T])$, and $0 < T_1 < T$ such that the solution\footnote{We note that short-time existence follows from the fact that the right-hand side is locally Lipschitz continuous in $v$, so that $T_1 > 0$.} of \eqref{eq:pendulum-cplx} is defined on $[0,T_1)$, but $\lim_{t\nearrow T_1} |v(t)| = \infty$. Since the right-hand side of \eqref{eq:pendulum-cplx} is uniformly Lipschitz continuous in $v_r$, this can only be the case, if $\lim_{t\nearrow T_1} |v_i(t)| = \infty$. In particular, there exists $T_0 < T_1$, such that $|v_i(T_0)| > 1$. But then, this would imply the existence of a solution of \eqref{eq:pendulum-cplx}, which is defined on $[0,T_0]$, for which $\sup_{s\in [0,T]} |\Im(u(s))| \le \delta$, and such that we have $\sup_{s\in [0,T_0]} |v_i(s)| \ge |v_i(T_0)| > 1$. This is clearly in contradiction with Lemma \ref{lem:pendulum-impart}. By contradiction, it thus follows that we must have $T_0 = T$, and hence the solution of \eqref{eq:pendulum-cplx} exists on $[0,T]$ for any $u\in L^\infty([0,T])$, with $\Vert \Im(u) \Vert_{L^\infty([0,T])} \le \delta$. Furthermore, it follows from Lemma \ref{lem:pendulum-impart} that $\sup_{s \in [0,T]} |v_i(s)|\le 1$ in this case.
\end{proof}

\subsection{Proof of Lemma \ref{lem:pendulum-differentiable}}
\label{app:pf46}
\begin{proof}
Fix $z_k \in E_{\rho_k}$ for $k\ne j$. For $z_j \in E_{\rho_j}$, and by slight abuse of notation, let us denote the forcing by $u(t,z_j) = u(t,\bm{z})$, and the corresponding solution of the pendulum equations \eqref{eq:pendulum-cplx} by $v(t,z_j) = \cF(\bm{z})(t)$, where $\bm{z} = (z_\ell)_{\ell \in \N}$. Then the claim is that $z_j \mapsto v(t,z_j)$ is complex-differentiable in $z_j \in E_{\rho_j}$. We note that $v(t,z_j)$ is the unique fixed point of
\[
v(t,z_j)
=
\int_0^t
G(s,z_j,v(s,z_j)) 
\, ds,
\]
where $G(t,z_j,v):= g(v) + u(t,z_j)$.  To show boundedness, we note that by \eqref{eq:pendulum-adm}, the assumed $(\bm{b},\delta)$-admissibility of $\bm{\rho}$ and Lemma \ref{lem:pendulum-cplx-existence}, the solution $t \mapsto v(t,z_j)$ exists for any $z_j\in E_{\rho_j}$, and that $\sup_{t\in [0,T]}|\Im(v(t,z_j))|\le 1$. But then, also the real part $v_r(t) := \Re(v(t,z_j))$ is bounded, because from \eqref{eq:pendulum-cplx}, we conclude that
\begin{align*}
\left|\frac{dv_r(t)}{dt}\right|
&\le
|g_r(v_r(t),v_i(t))| + |\Re(U(t))|
\\
&\le
|v_r(t)| + k \cosh(|v_i(t)|) + |\Re(U(t))|
\\
&\le
|v_r(t)| + k \cosh(1) + \Vert\Re(U)\Vert_{L^\infty([0,T])},
\end{align*}
which implies by Gronwall's inequality that
\[
|v_r(t)| \le \left( k \cosh(1) + \Vert\Re(U)\Vert_{L^\infty([0,T])}\right) Te^{T},
\]
for all $t\in [0,T]$. We furthermore note that 
\begin{align*}
\Vert \Re(U) \Vert_{L^\infty([0,T])}
&\le 
\sum_{k=1}^\infty
|\Re(z_k)| \alpha_k \Vert \psi_k \Vert_{L^\infty}
=
\sum_{k=1}^\infty
|\Re(z_k)| b_k
\\
&\le
\sum_{k=1}^\infty
\rho_k b_k
=
\sum_{k=1}^\infty
b_k
+
\sum_{k=1}^\infty
(\rho_k-1) b_k
\\
&\le 
\Vert \bm{b} \Vert_{\ell^1(\N)} + \delta < \infty,
\end{align*}
is uniformly bounded for all $\bm{z} \in E_{\bm{\rho}}$ for any admissible $\bm{\rho}$. This shows that 
\begin{align*}
\sup_{\bm{z}\in E_{\bm{\rho}}} \Vert v \Vert_{L^2_{\C}([0,T])}
&\le
\sup_{\bm{z}\in E_{\bm{\rho}}} \sqrt{T} \Vert v \Vert_{L^\infty([0,T])} 
\\
&\le
\sup_{\bm{z}\in E_{\bm{\rho}}} \sqrt{T} \Vert v_r \Vert_{L^\infty([0,T])}
+
\sup_{\bm{z}\in E_{\bm{\rho}}} \sqrt{T}\Vert v_i \Vert_{L^\infty([0,T])}
\\
&\le
T\sqrt{T}e^T\left(\gamma\cosh(1) + \Vert \bm{b} \Vert_{\ell^1(\N)} + \delta \right)
+
\sqrt{T}
\\
&=: C,
\end{align*}
is uniformly bounded for all $(\bm{b},\e)$-admissible $\bm{\rho}$.

Finally, we prove the holomorphy of $z_j \mapsto v(t,z_j)$: By definition, $z_j \mapsto u(t,z_j)$ is an affine function. Hence $z_j\mapsto G(t,z_j,v)$ is also an affine function of $z_j$, and in particular differentiable in $z_j$. Since $v(t,z_j)$ exists and is bounded for all $z_j\in E_{\rho_j}$, it then follows from the general theory of parametric ODEs that $\partial_{z_j} v(t,z_j)$ exists and that
\[
\partial_{z_j} v(t,z_j)
=
\int_0^t
\left\{
\partial_{z_j} G(s,z_j,v(s,z_j)) 
+
\partial_{v} G(s,z_j,v(s,z_j)) \partial_{z_j} v(s,z_j)
\right\}
\, ds.
\]
This implies that $z_j \mapsto v(t,z_j)$ is a holomorphic mapping.
\end{proof}
\subsection{Proof of Lemma \ref{lem:elliptic1}}
\label{app:pf47}
\begin{proof}
The difference $w = u - u'$ is a solution of the equation
\begin{align*}
\nabla \cdot \left( a \nabla w \right)
&= \nabla \cdot \left( a \nabla u \right) - \nabla \cdot \left( a \nabla u' \right)
\\
&= f - \nabla \cdot \left( a \nabla u' \right)
\\
&= \nabla \cdot \left( a' \nabla u' \right) - \nabla \cdot \left( a \nabla u' \right)
\\
&=
\nabla \cdot \left( (a'-a) \nabla u' \right).
\end{align*}
By elliptic theory, we thus have
\begin{align*}
\Vert u - u '\Vert_{L^2(D)}
&= \Vert w \Vert_{L^2(D)}
\le \Vert \nabla \cdot \left( (a'-a) \nabla u' \right) \Vert_{H^{-1}(D)}
\\
&\le \Vert (a'-a) \nabla u' \Vert_{L^2(D)}
\le \Vert a'-a \Vert_{L^\infty(D)} \Vert \nabla u' \Vert_{L^2(D)}
\\
&\le \Vert a'-a \Vert_{L^\infty(D)} \Vert u' \Vert_{H^1_0(D)}
\le \Vert a'-a \Vert_{L^\infty(D)} C\Vert f \Vert_{L^2(D)}.
\end{align*}
\end{proof}
\subsection{Proof of Lemma \ref{lem:elliptic2}}
\label{app:pf48}
\begin{proof}
It is well-known that if $u$ is a solution of \eqref{eq:random-elliptic}, with smooth coefficient $a(x)$ and right-hand side $f\in H^{k}$, then $u\in H^{k+1}$. The main point of this lemma is the explicit dependence on the norm of $a$, which will be required to estimate the reconstruction error. Let $\bm{k} = (k_1,\dots, k_m)\in \N_0^m$ denote any multi-index. Then by differentiation of \eqref{eq:random-elliptic}, we find
\begin{align} \label{eq:leibnitz}
\nabla \cdot \left(a(x) \nabla \partial_x^{\bm{k}} u \right)
=
-\nabla \cdot \left(
\sum_{\bm{\ell} < \bm{k}} 
{\bm{k} \choose \bm{\ell}}
\partial_x^{\bm{k-\ell}} a(x) \nabla \partial_x^{\bm{\ell}} u 
\right)
+
\partial_x^{\bm{k}} f.
\end{align}
where $\ell$ runs over all indices $\bm{\ell} = (\ell_1,\dots, \ell_m)\in \N_0$, such that $\ell_j \le k_j$ for all $j=1,\dots, m$, with a strict inequality for at least one $j$. We also write 
\[
{\bm{k} \choose \bm{\ell}}
:=
\prod_{j=1}^m {k_j \choose \ell_j}.
\]
Integrating \eqref{eq:leibnitz} against $\partial_x^{\bm{k}} u$, with $k := |\bm{k}| = k_1 + \dots + k_m$, it follows that
\begin{align*}
\lambda \Vert \nabla\partial_x^{\bm{k}} u\Vert^2_{L^2_x}
&\le
\int_{\T^n} a(x) |\nabla \partial_x^{\bm{k}}(x)|^2 \, dx
\\
&\le
C_k \sum_{\bm{\ell}<\bm{k}}
\int_{\T^n} \left|\partial_x^{\bm{k-\ell}}a(x) \right| \left|\nabla\partial_x^{\bm{\ell}} u(x)\right| \left|\nabla\partial_x^{\bm{k}} u(x)\right| \, dx
\\
&\qquad
+
\int_{\T^n} |\partial_x^{\bm{k}} f| |\partial_x^{\bm{k}} u|\, dx
\\
&\le
C_k \Vert a \Vert_{C^k} \Vert u \Vert_{H^{k}} \Vert \nabla\partial_x^{\bm{k}} u \Vert_{L^2}
+
\Vert \partial_x^{\bm{k}} f \Vert_{L^2} \Vert u \Vert_{H^k}.
\end{align*}
Using the inequality $ab \le 2^{-1} \epsilon a^2 + (2\epsilon)^{-1} b^2$ for $a,b,\epsilon > 0$, we find
\begin{align*}
\lambda \Vert \nabla\partial_x^{\bm{k}} u\Vert^2_{L^2_x}
&\le
\frac{C_k^2\Vert a \Vert_{C^k}^2}{2\lambda}\Vert u \Vert_{H^k}^2 + \frac{\lambda}2 \Vert \nabla \partial_x^{\bm{k}} u \Vert_{L^2}^2 + \frac{\lambda}{2} \Vert \partial_x^{\bm{k}} f \Vert_{L^2}^2 + \frac{1}{2\lambda}\Vert u \Vert_{H^k}^2,
\end{align*}
and hence
\[
\Vert \nabla\partial_x^{\bm{k}} u\Vert^2_{L^2_x}
\le 
(C_k^2\Vert a \Vert_{C^k}^2+1)\frac{\Vert u \Vert_{H^k}^2}{\lambda^2} + \Vert \partial_x^{\bm{k}} f \Vert_{L^2}^2.
\]
Summing the last estimate over all $|\bm{k}|=k$, and increasing the constant $C_k$, if necessary, we obtain
\[
\Vert u\Vert^2_{H^{k+1}}
\le 
C_k(\Vert a \Vert_{C^k}^2+1)\Vert u \Vert_{H^k}^2 + \Vert f \Vert_{H^k}^2,
\]
where the new constant $C_k = C_k(k,\lambda)$ now depends on both $k$ and $\lambda$. Repeating the same argument for any $\ell \le k$, we also find
\begin{align*}
\Vert u\Vert^2_{H^{\ell+1}}
\le 
C_\ell(\Vert a \Vert_{C^\ell}^2+1)\Vert u \Vert_{H^\ell}^2 + \Vert f \Vert_{H^\ell}^2 \\
\le
C' (\Vert a \Vert_{C^k}^2+1)\Vert u \Vert_{H^\ell}^2 + \Vert f \Vert_{H^k}^2, 
\end{align*}
with $C' := \max_{\ell \le k} C_\ell$ a fixed constant depending only on $k$ and $\lambda$. Writing the last inequality in the form
\[
\Vert u\Vert^2_{H^{\ell+1}} \le A \Vert u \Vert^2_{H^\ell} + \Vert f \Vert_{L^2}^2,
\]
it follows by induction on $\ell=0,\dots, k$, that
\[
\Vert u \Vert^2_{H^{k+1}}
\le
A^{k} \Vert u \Vert_{H^1}^2
+
\Vert f \Vert_{L^2}^2 \sum_{\ell=0}^{k-1} A^\ell.
\]
For $k=0$, we have the well-known bound $\Vert u \Vert_{H^1}^2 \le C'' \Vert f\Vert_{L^2}^2$ with $C'' = C''(\lambda)$. We may wlog assume $C''\ge 1$. In particular, we then conclude that 
\[
\Vert u \Vert^2_{H^{k+1}}
\le
C''
\Vert f \Vert_{L^2}^2
\sum_{\ell=0}^{k} A^\ell,
\]
where $A := C'(\Vert a \Vert_{C^k} + 1)$. Finally, for fixed $k$, we note that since $C' = C'(k, \lambda)$, $C''= C''(\lambda)$, there exists a constant $C$ depending only on $k$ and $\lambda$, but independent of $\Vert a \Vert_{C^k}$, such that 
\[
C'' \sum_{\ell=0}^k A^\ell
=
C'' \, C' \sum_{\ell=0}^k (1+ \Vert a \Vert_{C^k}^2)^\ell
\le
C (1 + \Vert a \Vert^{2k}_{C^k}).
\]
For such $C = C(k,\lambda) > 0$, we conclude that
\[
\Vert u \Vert^2_{H^{k+1}} 
\le
C \Vert f \Vert_{H^k}^2 (1+\Vert a\Vert_{C^k}^{2k}),
\]
as claimed.
\end{proof}
\subsection{Proof of Theorem \ref{thm:ac-regularity}}

To prove Theorem \ref{thm:ac-regularity}, we recall some notions and results from the theory of second order parabolic equations (cp. \cite{lieberman1996}). First, let $U = [0,T]\times \T^d$ denote the domain of the solution of the parabolic equation \eqref{eq:allen-cahn}. We denote by (cp. \cite[p. 46]{lieberman1996})
\[
\cC^{k,\alpha}(U)
:=
\set{
v \in C(U)
}{
|v|_{(k,\alpha)} < \infty
},
\quad 
k\in \N, \; \alpha \in (0,1),
\]
the parabolic H\"older space on $U$, where 
\[
|v|_{(k,\alpha)}
:= 
\sum_{\beta + 2j \le k}
\sup_{U} |\partial_x^\beta \partial^j_t v|
+
[v]_{(k,\alpha)},
\]
and
\[
[v]_{(k,\alpha)}
:=
\sum_{\beta + 2j = k} [\partial^\beta_x \partial^j_t v]_{\alpha},
\]
and where the parabolic H\"older semi-norm $[w]_{\alpha}$ of a function $w$ is defined by
\[
[w]_{\alpha}
:=
\sup_{(t,x)\in U}
\sup_{(t',x')\ne (t,x)}
\frac{|w(t,x) - w(t',x')|}{\left(|x-x'| + |t-t'|^{1/2} \right)^\alpha}.
\]

We then have the following Schauder estimate:
\begin{lemma}[Schauder estimate] 
\label{lem:schauder}
Let $\alpha \in (0,1)$. For any $k\in \N$, there exists a constant $C>0$, such that if $v$ is a solution to 
\[
\left\{
\begin{aligned}
\partial_t v - \Delta v &= f,
\\
v(t=0) &= 0,
\end{aligned}
\right.
\]
with $f\in \cC^{(k,\alpha)}(U)$, then 
\[
| v |_{(k+2,\alpha)}
\le
C
| f |_{(k,\alpha)} + \Vert u\Vert_{C^{k+2,\alpha}(\T^d)}.
\]
\end{lemma}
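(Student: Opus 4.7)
The plan is to reduce this to the classical parabolic Schauder estimate on a cylinder with smooth (or periodic) lateral boundary, and then bootstrap in $k$ by differentiating the equation. Note that, as stated, the right-hand side contains an initial-data term $\Vert u \Vert_{C^{k+2,\alpha}(\T^d)}$, so I interpret the statement as allowing (possibly nonzero) initial data $v(t=0)=u$ and will prove it in this more general form; the stated case of zero initial data is then a special case.

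First I would reduce to two simpler sub-problems by writing $v = v_1 + v_2$, where $v_1$ solves the homogeneous heat equation $\partial_t v_1 - \Delta v_1 = 0$ with $v_1(0) = u$, and $v_2$ solves the inhomogeneous heat equation with zero initial data $\partial_t v_2 - \Delta v_2 = f$, $v_2(0)=0$. For $v_1$, smoothing properties of the heat semigroup on $\T^d$ immediately give $|v_1|_{(k+2,\alpha)} \le C\Vert u \Vert_{C^{k+2,\alpha}(\T^d)}$; this can be proved either via explicit Fourier-series estimates for $e^{t\Delta}$ acting on $C^{k+2,\alpha}(\T^d)$, or by applying Theorem 4.9 of Lieberman \cite{lieberman1996} on the cylinder $U=[0,T]\times \T^d$ together with the periodic lateral boundary. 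For $v_2$, I will establish the base case $k=0$ estimate
\[
|v_2|_{(2,\alpha)} \le C |f|_{(0,\alpha)}
\]
by citing the standard interior/initial Schauder estimate for second-order parabolic operators with constant coefficients (again Lieberman, Theorem 4.9, adapted to the torus, where the absence of a spatial boundary removes the need for compatibility conditions with lateral boundary data).

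The bootstrap step from $k=0$ to general $k\in \N$ proceeds by differentiation. For any multi-index $\beta$ and any $j\in \N_0$ with $\beta+2j\le k$, the function $w := \partial^\beta_x\partial^j_t v_2$ satisfies $\partial_t w - \Delta w = \partial^\beta_x\partial^j_t f$ with initial data at $t=0$ that can be computed recursively in terms of spatial derivatives of $u$ and $f$ at $t=0$ (using the equation $\partial_t v = \Delta v + f$ to trade time derivatives for spatial derivatives). Applying the base case to each such $w$, and carefully controlling the induced initial data by $\Vert u \Vert_{C^{k+2,\alpha}(\T^d)}$ and $|f|_{(k,\alpha)}$, yields a bound on each seminorm entering the definition of $|v|_{(k+2,\alpha)}$. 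Summing over $\beta+2j\le k+2$ gives the claim.

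The main obstacle I anticipate is the careful bookkeeping in the bootstrap: one must verify that the initial data generated at each differentiation step is controlled in the right H\"older norm and that the recursive substitutions $\partial_t v = \Delta v + f$ do not produce terms requiring more regularity than is assumed. Because we work on the flat torus $\T^d$ and impose no lateral boundary conditions, there are no compatibility conditions on $u$ to worry about (in contrast with the Dirichlet problem on a domain with boundary), so the argument is essentially a direct application of the $k=0$ Schauder estimate after differentiation. The resulting constant $C$ depends only on $k$, $\alpha$, $T$ and the dimension $d$, as required.
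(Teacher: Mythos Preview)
Your proposal is correct and follows essentially the same strategy as the paper: establish the base case $k=0$ by citing a Schauder estimate from Lieberman (the paper uses Theorem~4.28; you use Theorem~4.9), then bootstrap to general $k$ by differentiating the constant-coefficient equation. Two minor differences are worth noting: the paper does not split $v=v_1+v_2$ but handles initial data and source together via the cited theorem, and the paper explicitly uses finite-difference quotients (and passes to the limit) rather than differentiating directly, which sidesteps the a~priori regularity needed to justify taking derivatives and the recursive initial-data bookkeeping you flag as the main obstacle.
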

\begin{proof}
For $k=0$, this follows e.g. from \cite[Theorem 4.28]{lieberman1996}. For $k>0$, we note that the coefficients in equation $\partial_t v - \Delta v = f$ are constant, and hence we can apply the base-case to partial derivatives of the equation (more precisely, finite-difference approximations thereof, and take the limit).
\end{proof}

We also note the following strong $L^p$ estimate for parabolic equations:
\begin{lemma} \label{lem:strongLp}
Let $p\in [2,\infty)$. There exists a constant $C>0$, such that if $v \in L^\infty([0,T]\times \T^d)$ is a weak solution of $\partial_t v - \Delta v = f$, for $f\in L^\infty(\T^d)$, and with initial data $v(t=0) = u$, then
\[
\Vert v \Vert_{W^{1,p}([0,T]\times \T^d)}
\le
C\left(
\Vert f \Vert_{L^p} 
+ 
\Vert u \Vert_{W^{2,p}(\T^d)}
\right).
\]
\end{lemma}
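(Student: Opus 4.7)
The plan is to decompose $v = v_1 + v_2$, where $v_1$ evolves the initial data $u$ under the heat semigroup (with zero right-hand side) and $v_2$ solves $\partial_t v_2 - \Delta v_2 = f$ with zero initial data. Then the estimate will follow from combining standard semigroup smoothing for $v_1$ with the classical strong-$L^p$ parabolic regularity for $v_2$ (the latter being the only nontrivial ingredient, invoked from the reference \cite{lieberman1996}).

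For $v_1$, I would use the fact that the heat semigroup $e^{t\Delta}$ on $\T^d$ is a contraction on $W^{k,p}(\T^d)$ for every $k\ge 0$, $p\in[1,\infty]$ (this is immediate from Fourier multiplier bounds, or from heat-kernel estimates and Young's inequality on the torus). Consequently
\[
\sup_{t\in[0,T]} \Vert v_1(t)\Vert_{W^{2,p}(\T^d)} \le \Vert u \Vert_{W^{2,p}(\T^d)},
\quad
\sup_{t\in[0,T]} \Vert \partial_t v_1(t)\Vert_{L^p(\T^d)} = \sup_{t\in[0,T]}\Vert \Delta v_1(t)\Vert_{L^p(\T^d)} \le \Vert u\Vert_{W^{2,p}(\T^d)}.
\]
Integrating in $t$ over $[0,T]$ yields $\Vert v_1 \Vert_{W^{1,p}([0,T]\times\T^d)} \lesssim (1+T)^{1/p}\, \Vert u\Vert_{W^{2,p}(\T^d)}$.

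For $v_2$, I would invoke the standard $L^p$-maximal parabolic regularity estimate (see, e.g., \cite[Chap.~VII]{lieberman1996}), which gives a constant $C = C(p,d,T)$ such that
\[
\Vert \partial_t v_2 \Vert_{L^p([0,T]\times\T^d)} + \Vert D^2 v_2 \Vert_{L^p([0,T]\times\T^d)} \le C \Vert f \Vert_{L^p([0,T]\times\T^d)}.
\]
Since the lower order norms $\Vert v_2\Vert_{L^p}$ and $\Vert \nabla v_2\Vert_{L^p}$ can in turn be controlled by $\Vert \partial_t v_2\Vert_{L^p} + \Vert D^2 v_2\Vert_{L^p}$ on a bounded cylinder (for instance via the Duhamel formula $v_2(t) = \int_0^t e^{(t-s)\Delta} f(s)\,ds$ together with the uniform boundedness of $e^{s\Delta}$ on $L^p$ and Young's inequality in $t$, which even gives $\Vert v_2\Vert_{L^\infty([0,T];L^p)} + \Vert \nabla v_2\Vert_{L^p([0,T];L^p)} \lesssim \Vert f \Vert_{L^p}$), we obtain $\Vert v_2 \Vert_{W^{1,p}([0,T]\times\T^d)} \le C \Vert f\Vert_{L^p([0,T]\times\T^d)}$.

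Combining the two estimates via the triangle inequality yields the claim. The main (and essentially only) obstacle is the strong $L^p$ parabolic regularity bound for $v_2$; this is a Calder\'on--Zygmund-type result whose proof I would not reproduce but simply cite from \cite{lieberman1996}. Note that the lemma as stated would most naturally read with $f\in L^p([0,T]\times\T^d)$ on the right-hand side (reflecting its intended use in \ref{thm:ac-regularity} where $f$ is a nonlinear function of the time-dependent solution $v$); the argument above is written under that natural reading, and it specializes to the time-independent case $f\in L^\infty(\T^d)\subset L^p([0,T]\times\T^d)$ without change.
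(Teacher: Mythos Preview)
Your proposal is correct and rests on the same key ingredient as the paper's proof: the strong $L^p$ parabolic estimate from \cite{lieberman1996}. The paper's proof is in fact even shorter than yours --- it consists of a single sentence citing Theorem~7.32 of \cite{lieberman1996}, which already handles nonzero initial data and forcing simultaneously, so no decomposition $v=v_1+v_2$ is needed; your more explicit splitting into a homogeneous semigroup part and an inhomogeneous zero-data part is a perfectly valid alternative route to the same conclusion, and your observation about the natural reading of $f\in L^p([0,T]\times\T^d)$ is accurate.
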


\begin{proof}
Theorem 7.32 of \cite{lieberman1996} provides a sharper estimate, from which the claim readily follows.
\end{proof}

\begin{corollary} \label{cor:strongLp}
Let $d \in \{2,3\}$. There exists a constant $C>0$, and $\alpha \in (0,1)$, such that if $f\in L^\infty$, and $u \in C^1([0,T]\times\T^d)$, and if $v$ solves $\partial_t v - \Delta v = f$, with initial data $v(t=0) = u$, then
\[
\Vert v \Vert_{(0,\alpha)}
\le
C
\left(
\Vert f \Vert_{L^\infty}
+
\Vert u \Vert_{C^2(\T^d)}
\right).
\]
\end{corollary}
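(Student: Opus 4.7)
The plan is to derive the parabolic H\"older bound from the strong $L^p$-estimate of Lemma \ref{lem:strongLp} via a standard Sobolev embedding, with a mild reparametrization of the time-distance to convert the Euclidean H\"older seminorm to its parabolic counterpart.

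First, I fix $p \in (d+1,\infty)$ (for instance $p = d+2$, which works for both $d=2$ and $d=3$). Since $|\T^d| < \infty$ and $T<\infty$, the $L^\infty$-datum $f$ and the $C^2$-datum $u$ both lie in the corresponding $L^p$- and $W^{2,p}$-spaces, with crude estimates
\[
\Vert f\Vert_{L^p([0,T]\times\T^d)} \le (T|\T^d|)^{1/p} \Vert f\Vert_{L^\infty},
\qquad
\Vert u\Vert_{W^{2,p}(\T^d)} \le |\T^d|^{1/p}\Vert u\Vert_{C^2(\T^d)}.
\]
Lemma \ref{lem:strongLp} then yields a bound $\Vert v\Vert_{W^{1,p}([0,T]\times\T^d)} \le C(\Vert f\Vert_{L^\infty} + \Vert u\Vert_{C^2(\T^d)})$, with $C$ depending only on $p$, $T$ and $|\T^d|$.

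Next, since the parabolic domain $[0,T]\times\T^d$ has Lipschitz boundary and total dimension $d+1 < p$, the classical Morrey embedding gives $W^{1,p}([0,T]\times\T^d)\hookrightarrow C^{0,\beta}([0,T]\times\T^d)$ with $\beta = 1-(d+1)/p \in (0,1)$ (measured in the Euclidean metric on $[0,T]\times\T^d$). In particular,
\[
\Vert v\Vert_{L^\infty} + \sup_{(t,x)\neq(t',x')} \frac{|v(t,x)-v(t',x')|}{(|t-t'|+|x-x'|)^\beta} \le C\, \Vert v\Vert_{W^{1,p}}.
\]
To pass from this Euclidean H\"older bound to the parabolic H\"older seminorm $[v]_\alpha$ defining $\Vert v\Vert_{(0,\alpha)}$, I use the elementary inequality, valid for $|t-t'|\le T$,
\[
|t-t'| + |x-x'| \le (1+T^{1/2})\bigl(|t-t'|^{1/2} + |x-x'|\bigr),
\]
whence
\[
(|t-t'|+|x-x'|)^\beta \le (1+T^{1/2})^\beta (|t-t'|^{1/2}+|x-x'|)^\beta.
\]
This immediately converts the Euclidean H\"older bound into a parabolic H\"older bound with the same exponent $\alpha := \beta = 1-(d+1)/p \in (0,1)$.

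Combining these steps, one obtains
\[
\Vert v\Vert_{(0,\alpha)} = \Vert v\Vert_{L^\infty} + [v]_\alpha \le C\Vert v\Vert_{W^{1,p}} \le C(\Vert f\Vert_{L^\infty}+\Vert u\Vert_{C^2(\T^d)}),
\]
for a constant $C$ depending only on $d$, $T$ and $|\T^d|$. No step presents a real obstacle; the only thing to be careful about is the conversion between Euclidean and parabolic H\"older scales, which forces $\alpha \le \beta = 1-(d+1)/p$, and the need to pick $p>d+1$ (hence $p>3$ for $d=2$ and $p>4$ for $d=3$, explaining the restriction to low dimensions $d\in\{2,3\}$ that is invoked).
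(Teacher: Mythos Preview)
Your proof is correct and follows essentially the same approach as the paper: apply the strong $L^p$ estimate of Lemma~\ref{lem:strongLp} with some $p>d+1$, then invoke the Sobolev (Morrey) embedding $W^{1,p}\hookrightarrow C^{0,\alpha}$ with $\alpha = 1-(d+1)/p$. The paper's proof is a one-line sketch of precisely this; you have simply supplied the intermediate details (the trivial $L^\infty\to L^p$ and $C^2\to W^{2,p}$ bounds, and the conversion from Euclidean to parabolic H\"older scaling), none of which the paper makes explicit.
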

\begin{proof}
This follows directly from Lemma \ref{lem:strongLp} and the fact that by Sobolev embedding, we have $W^{1,p} \embeds C^\alpha$, for $\alpha \le 1 - (d+1)/p$ for $p > d+1$.
\end{proof}

\begin{proof}[Proof of Theorem \ref{thm:ac-regularity}]
The claim follows from a bootstrap argument: From the a priori estimate of Theorem \ref{thm:ac-boundedness} for the Allen-Cahn equation, we know that for initial data $\Vert u \Vert_{L^\infty} \le 1$, we have $\Vert v(t) \Vert_{L^\infty}\le 1$ for all $t\in [0,T]$. In particular, it follows that $|f| = |f(v)| \le C$ is bounded in $L^\infty$. Thus, $v$ solves 
\begin{gather} \label{eq:bootstrap}
\left\{
\begin{aligned}
\partial_t v - \Delta v &= f, 
\\
v(t=0) &= u,
\end{aligned}
\right.
\end{gather}
with source term $f\in L^\infty$. By the strong $L^p$ estimate (cp. Corollary \ref{cor:strongLp}), and the assumed smoothness of $u\in C^{4,\alpha}$, it follows that $v\in \cC^{\alpha}([0,T]\times \T^d)$ for some $\alpha \in (0,1)$. In turn this implies the following chain of improved regularity based on the Schauder estimate of Lemma \ref{lem:schauder}, and using also the fact that $u \in C^{k,\alpha}(\T^d)$ for all $k\le 4$:
\begin{align*}
v \in \cC^{\alpha}(U)
&\implies f(v) \in \cC^{\alpha} (U)
\implies v \in \cC^{2,\alpha}(U)
\\
&\implies f(v) \in \cC^{2,\alpha} (U)
\implies v \in \cC^{4,\alpha}(U).
\end{align*}
Thus, we conclude that if $u\in C^{4,\alpha}(\T^d)$, then we must have $v \in \cC^{4,\alpha}(U)$. Furthermore, it follows from the estimates of Corollary \ref{cor:strongLp}, Lemma \ref{lem:schauder}, that there in fact exists a constant $\sigma = \sigma(\Vert u \Vert_{C^{4,\alpha}}) > 0$, depending on the $C^{4,\alpha}$-norm of the initial data $u$, such that
\[
| v |_{(4,\alpha)}
\le
\sigma(\Vert u \Vert_{C^{4,\alpha}(\T^d)}).
\]
Clearly, we have $\Vert v \Vert_{C^{(4,2)}(U)} \le |v|_{(4,\alpha)}$ for any $\alpha > 0$. The claimed estimate thus follows.
\end{proof}

\subsection{Proof of Corollary \ref{cor:ac-continuity}}
\begin{proof}
Since we have $\Vert v(t)\Vert_{L^\infty(\T^d)},\, \Vert v'(t) \Vert_{L^\infty(\T^d)}$ for all $t \in [0,T]$, the difference $w = v-v'$, solves the following equation
\[
\partial_t w = \Delta w + F(v,v') w,
\]
where $F(a,b) := 1-a^2 - ab - b^2$. It follows from the uniform boundedness of $v,v'$ that 
$|F(v,v')| \le 4$ is uniformly bounded. Multiplying the equation by $w$ and integrating over $x$, we obtain
\[
\frac{d}{dt} \int_{\T^d} w^2 \, dx
\le
8 \int_{\T^d} w^2 \, dx.
\]
And hence, by Gronwall's inequality, we must have $ \Vert w(t) \Vert_{L^2}^2 \le \Vert w(0) \Vert_{L^2}^2 e^{8t}$ for all $t\in [0,T]$. We conclude that
\[
\Vert v(T) - v'(T) \Vert_{L^2(\T^d)}
\le
e^{4T}
\Vert u - u' \Vert_{L^2(\T^d)}.
\]
\end{proof}

\subsection{Proof of Theorem \ref{thm:ac-approx-err}}
\label{app:pf49}
\begin{proof}
Our main observation is that the local truncation error for the scheme \eqref{eq:allen-cahn-fd}, given by
\[
T^n_j := 
(I-\Delta t D_{\Delta x}) v(t_n,x_j) - (v(t_n,x_j) + \Delta t \left(v(t_{n-1},x_j) - v(t_{n-1},x_j)^3\right),
\]
has a Taylor expansion 
\begin{align*}
T^n_j 
&:= 
\Delta t
\left(
\frac{\partial v(t_{n-1},x_j)}{\partial t}
- 
\Delta v(t_{n-1},x_j)
-(v(t_{n-1},x_j) - v(t_{n-1},x_j)^3)
\right) \\
&\qquad + \Delta t \, R(\Delta t ,\Delta x),
\end{align*}
where similar to \cite{TangYang2016}, the remainder term $R(\Delta t, \Delta x)$ can be estimated by 
\begin{align*}
|R(\Delta t, \Delta x)|
&\le
C
\left(
\Delta t\, \left\Vert \frac{\partial v}{\partial t}\right\Vert_{L^\infty} 
+ 
\Delta t\, \left\Vert \frac{\partial^2 v}{\partial t^2}\right\Vert_{L^\infty}
+
\Delta x^2 \, \sum_{k=1}^d \left\Vert \frac{\partial^4 v}{\partial x_k^4}\right\Vert_{L^\infty}
\right)
\\
&\le
C \left(\Delta t + \Delta x^2\right) \Vert v \Vert_{C^{(2,4)}([0,T]\times \T^d)},
\end{align*}
where $\Vert v \Vert_{C^{(2,4)}([0,T]\times \T^d)}$ is defined by \eqref{eq:mixed-derivatives}.
We note that in contrast to our estimate, the remainder term was bounded in \cite{TangYang2016} by the larger norm $\Vert v \Vert_{C^2([0,T];C^4(\T^d))}$. In view of the available a priori estimates for parabolic equations, the parabolic norm $\Vert v \Vert_{C^{(2,4)}([0,T]\times \T^d)}$ appears better adapted to the problem, and provides a less restrictive convergence result for the scheme. The remainder of the proof is the same as in \cite[Theorem 4.1]{TangYang2016}. 
\end{proof}

\subsection{Proof of Lemma \ref{lem:nn-nonlinearity}}
\label{app:pf410}
\begin{proof}
By a result of Yarotsky \cite[Prop. 2, 3]{Yarotsky2017}, there exists a constant $C' > 0$, such that for any $\epsilon > 0$, there exists a ReLU network $\tilde{\times}: [-2,2] \to \R$, such that $\size(\tilde{\times}) \le C'(|\log(\epsilon)| + 1)$, and 
\[
\sup_{\xi,\eta\in [-2,2]} |\tilde{\times}(\xi,\eta) - \xi \eta| < \epsilon/2.
\]
In fact, the mapping constructed in \cite{Yarotsky2017} is based on the identity $\xi \eta = \frac12 \left((\xi+\eta)^2 - \xi^2 - \eta^2\right)$, and finding a suitable neural network approximation $f_m(x) \approx x^2$, of the form \cite[paragraph above eq. (3)]{Yarotsky2017} $f_m(x) = x - \sum_{s=1}^m 2^{-2s} g_s(x)$, with $m = \mathcal{O}(|\log(\epsilon)|)$ and with $g_s$ a $s$-fold iteration of the following sawtooch function $g(x)$:
\[
g(x) =
\begin{cases}
2x, &x < \frac12, \\
2(1-x), &x \ge \frac12,
\end{cases}
\quad
g_s(x) = \underbrace{(g\circ \dots \circ g)}_{s \text{ times}}(x).
\]
It is then immediate that $\Lip(g_s) \le \Lip(g)^s \le 2^s$, and hence $\Lip(f_m) \le 1 + \sum_{s=1}^m 2^{-2s} \Lip(g_s) \le 1 + \sum_{s=1}^m 2^{-s} \le 2$. It is then readily seen that there exists a constant $M>0$, independent of $\epsilon$, such that 
\[
\Lip(\tilde{\times}: [-2,2]^2 \to ) \le M.
\]
in addition to the approximation property
\[
\sup_{\xi,\eta \in [-1,1]^2}
|
\tilde{\times}(\xi,\eta) - \xi \eta 
|
< \epsilon / 2.
\]

In particular, with the mapping constructed above, we then have $\sup_{\eta\in [-1,1]} |\tilde{\times}(\eta,\eta) - \eta^2| < \epsilon$, and from the assumption that $\epsilon < 1$, it follows that $\tilde{\times}(\eta,\eta)-1 \in [-2,2]$ for all $\eta \in [-1,1]$. Observing that $\xi^3 - \xi = \xi(\xi^2 - 1)$, we find for any $\eta \in [-1,1]$
\begin{align*}
\left|
\tilde{\times}\left(\eta,(\tilde{\times}(\eta,\eta) - 1)\right)
-
(\eta^3 - \eta)
\right|
&\le
\left|
\tilde{\times}\left(\eta,(\tilde{\times}(\eta,\eta) - 1)\right)
-
\eta (\tilde{\times}(\eta,\eta) - 1)
\right|
\\
&\quad
+
\left|
\eta (\tilde{\times}(\eta,\eta) - 1)
-
\eta(\eta^2 - 1)
\right|
\\
&\le
\sup_{\xi \in [-2,2]}
\left|
\tilde{\times}\left(\eta,\xi\right)
-
\eta \xi
\right|
\\
&\quad
+
|\eta|
\left|
\tilde{\times}(\eta,\eta) 
-
\eta^2
\right|
\\
&\le
2\sup_{\xi,\eta\in[-1,1]} \left|
\tilde{\times}\left(\eta,\xi\right)
-
\eta \xi
\right|
\\
&<
\epsilon.
\end{align*}
To finish the proof, we note that if $(\xi,\eta) \mapsto \tilde{\times}(\xi,\eta)$ is represented by a ReLU neural network of size $\le C'(|\log(\epsilon)| + 1)$, then the function
\[
g_\epsilon(\eta) := 
\max\left(-1,
\min\left(1,
\tilde{\times}\left(\eta,(\tilde{\times}(\eta,\eta) - 1)\right)
\right)
\right)
\]
can be represented by a ReLU neural network of size $\le C(|\log(\epsilon)| + 1)$, where $C$ is a constant multiple of $C'$, and we have $g_\epsilon(\eta) \in [-1,1]$ for all $\eta \in \R$. Furthermore, since $\eta^3 - \eta \in [-1,1]$ for all $\eta \in [-1,1]$, it also follows from the above estimate that 
\[
\sup_{\eta \in [-1,1]} |g_\epsilon(\eta) - (\eta^3-\eta)| \le \epsilon.
\]
\end{proof}
\begin{remark}
\label{rem:sact}
If we consider neural networks $g$ with \emph{any smooth} (e.g. $\sigma \in C^3(\R)$) non-linear activation function $\sigma: \R \to \R$, then the previous approximation result can be considerably improved \cite[Prop. 3.4]{Pinkus1999}: Indeed, by assumption on $\sigma$ there exists a point $x_0\in \R$, such that $\sigma''(x_0)\ne 0$. Then for $h>0$, $\eta \in [-1,1]$, we have by Taylor expansion
\[
\frac{\sigma(x_0+\eta h)-2\sigma(x_0)+\sigma(x_0-\eta h)}{\sigma''(x_0) h^2}
= \eta^2 + \mathcal{O}(h),
\]
as $h\to 0$, uniformly in $\eta \in [-1,1]$. In particular, it follows that for smooth $\sigma$ there exists a neural network architecture \emph{of fixed size}, such that and for any $h>0$, there is neural network $g: [-1,1] \to \R$, $\eta \mapsto g(\eta)$, such that $|g(\eta) - \eta^2| \le Ch$ for all $\eta \in [-1,1]$. As a consequence, using the representation \cite{Yarotsky2017}
\[
xy = \frac12( (x+y)^2 - x^2 - y^2 ),
\]
it follows that there exists a constant $C>0$, such that for any $\epsilon > 0$, there exists a neural network $\times_\epsilon: [-1,1]^2 \to \R$ of size $\size(\times_\epsilon) \le C$, such that 
\[
\sup_{x,y\in [-1,1]} |\times_\epsilon(x,y) - xy| < \epsilon.
\]
But then, arguing as in the proof of Lemma \ref{lem:nn-nonlinearity}, we find that for any $\epsilon > 0$, the function $g_\epsilon(x) = \times_\epsilon(\times_\epsilon(x,x),x) - x$ can be represented by a neural network, with a neural network size, $\size(g_\epsilon) \le C'$, which is uniformly bounded in $\epsilon>0$, and such that $\sup_{\eta\in [-1,1]} |g_\epsilon(\eta) - (\eta^3 - \eta)| < \epsilon$.
\end{remark}
\subsection{Proof of Lemma \ref{lem:approx-scheme}}
\label{app:pf411}
\begin{proof}
By Lemma \ref{lem:nn-nonlinearity}, the non-linearity in \eqref{eq:approx-scheme} can be represented by a neural network with size bounded by
\[
\begin{gathered}
\size(g_\epsilon) \le C(1+|\log(\epsilon)|),
\quad
\depth(g_\epsilon) \le C(1+|\log(\epsilon)|).
\end{gathered}
\]
It follows that the mapping
\[
\left(
\tilde{U}^k_1,\dots, \tilde{U}^k_m
\right)
\mapsto 
\left(
g_\epsilon\left(\tilde{U}^k_1\right),\dots, g_\epsilon\left(\tilde{U}^k_m\right)
\right)
=:
G_\epsilon(\tilde{U}^k),
\]
can be represented by a neural network $G_\epsilon: \R^m \to \R^m$, with 
\[
\begin{gathered}
\size(G_\epsilon) = \mathcal{O}(m|\log(\epsilon)|),
\quad
\depth(G_\epsilon) = \mathcal{O}(|\log(\epsilon)|).
\end{gathered}
\]
Since the identity mapping $\tilde{U}^k \mapsto \tilde{U}^k$ can be represented by a ReLU network with $\size = \mathcal{O}(m)$, $\depth = \mathcal{O}(1)$, there exists a neural network with $\size = \mathcal{O}(m|\log(\epsilon)|)$, $\depth = \mathcal{O}(|\log\epsilon|)$, which represents
\[
\tilde{U}^k
\mapsto 
\tilde{U}^k
+
\Delta t 
G_\epsilon(\tilde{U}^k).
\]
Finally, we note that 
\[
\tilde{U}^k
+
\Delta t 
G_\epsilon(\tilde{U}^k)
\mapsto 
R_{\Delta x, \Delta t} (\tilde{U}^k
+
\Delta t 
G_\epsilon(\tilde{U}^k)),
\]
is simply a matrix-vector multiplication of a \emph{fixed} matrix $R_{\Delta x, \Delta t} = (I - \Delta t \, D_{\Delta x})^{-1} \in \R^{m\times m}$ (independent of $\tilde{U}^k$), with a vector in $\R^m$. This operation can be represented by a ReLU neural network layer with $\size = \mathcal{O}(m^2)$, $\depth = \mathcal{O}(1)$. We conclude that the composition
\[
\tilde{U}^k
\mapsto \tilde{U}^k + \Delta t \, G_\epsilon(\tilde{U}^k)
\mapsto R_{\Delta x, \Delta t} \left(\tilde{U}^k + \Delta t \, G_\epsilon(\tilde{U}^k)\right)
= \tilde{U}^{k+1},
\]
can be represented by a neural network $\hN: \R^m \to \R^m$ with 
\begin{gather*}
\size(\hN) = \mathcal{O}(m^2 + m |\log\epsilon|), 
\quad
\depth(\hN) = \mathcal{O}(|\log\epsilon|).
\end{gather*}

Since $\tilde{U}^0 \mapsto \tilde{U}^1 \mapsto \dots \mapsto \tilde{U}^n$ involves the composition of $n$ such steps, we conclude that $\tilde{U}^0 \mapsto \tilde{U}^n$ can be represented by a neural network $\cN = \hN \circ \hN \circ \dots \circ \hN: \R^m \to \R^m$ ($n$ iterations) with
\begin{gather*}
\size(\cN) = \mathcal{O}(n(m^2 + m |\log\epsilon|)), \quad
\depth(\cN) = \mathcal{O}(n|\log(\epsilon)|).
\end{gather*}

\end{proof}

\subsection{Proof of Lemma \ref{lem:approx-scheme-err}}
\label{app:pf412}
\begin{proof}
We note that
\begin{align*}
U^{k+1} - \tilde{U}^{k+1}
&=
R_{\Delta x, \Delta t} \left(
U^k - \tilde{U}^k + \Delta t \left[ g_\epsilon (U^k) - g_\epsilon(\tilde{U}^k) \right]
\right)
\\
&\quad 
+
\Delta t R_{\Delta x, \Delta t} \left(
 \left[ g_\epsilon (U^k) - f({U}^k) \right]
\right).
\end{align*}
We note that $\Vert U^k\Vert_{\ell^\infty} \le 1$ for all $k>0$, by Theorem \ref{thm:ty}. Furthermore, it has been shown in \cite{TangYang2016} that $\Vert R_{\Delta x, \Delta t} \Vert_{\ell^\infty \to \ell^\infty} \le 1$. It follows that
\[
\left\Vert
R_{\Delta x, \Delta t} \left(
 \left[ g_\epsilon (U^k) - f({U}^k) \right]
\right)
\right\Vert_{\ell^\infty}
\le
\sup_{\eta \in [-1,1]} |g_\epsilon(\eta) - f(\eta)|
\le \epsilon.
\]
Denote now $E^k = \Vert U^k - \tilde{U}^k\Vert_{\ell^\infty}$, for $k=0,\dots, n$. Then 
\begin{align*}
E^{k+1}
&\le
\Vert R_{\Delta t, \Delta x} \Vert_{\ell^\infty\to \ell^\infty}
\left(
E^k + \Delta t \, \Lip(g_\epsilon)  E^k
\right)
+
\Delta t \, \epsilon
\\
&\le
\left(1 + \Delta t \, \Lip(g_\epsilon) \right) E^k
+
\Delta t \, \epsilon.
\end{align*}
Summing over $k=1,\dots,\ell$, and taking into account that $E^0=0$, we find for any $\ell \in \{1,\dots, n\}$ that
\begin{align*}
E^\ell
&=
E^0
+
\sum_{k=0}^{\ell-1} [E^{k+1} - E^{k}]
\\
&\le
T \epsilon
+
\Delta t
\, 
\sum_{k=1}^{\ell-1} \Lip(g_\epsilon) E^k.
\end{align*}
By the Gronwall inequality, it follows that
\[
E^n \le \epsilon\, T \exp(\Lip(g_\epsilon) T).
\]
The result follows from $\Lip(g_\epsilon) \le M$.
\end{proof}
\subsection{Proof of Proposition \ref{prop:ac-approx-err}}
\label{app:pf413}
\begin{proof}
By definition, the approximator neural network $\cA$ should provide an approximation $\cA \approx \cP \circ \G \circ \cD$. Our goal is to construct $\cA$ based on the neural network approximation \eqref{eq:approx-scheme} of the convergent numerical scheme \eqref{eq:allen-cahn-fd}. To this end, we first note that by Lemmas \ref{lem:approx-scheme}, \ref{lem:approx-scheme-err}, there exists a constant $C = C(\sup_{u\in \supp(\mu)}\Vert u \Vert_{C^{(2,4)}}, T) >0$, independent of $m$, $n$ and $\epsilon$, and a neural network $\cN$ with 
\[
\size(\cN) \le C(1+n(m^2 + m|\log(\epsilon)|)),
\quad
\depth(\cN) \le C(1+n|\log(\epsilon)|),
\]
such that for any $u \in \supp(\mu)$, we have 
\[
\max_{j=1,\dots, m}
|
\cN_j(\bm{u})
-
v(x_j,T)
|
\le
C (\epsilon + \Delta x^2 + T/n),
\]
where $\bm{u} = (u(x_1),\dots, u(x_m)) = \cE(u)$.
We note that for the present choice of the $x_j$ as nodes on an equidistant grid, we have $\Delta x \sim m^{-2/d}$. Thus, we choose $n = \lceil T m^{2/d} \rceil$, $\epsilon = m^{-2/d}$ to conclude that there exists a neural network $\cN: \R^m \to \R^m$, such that 
\begin{align} \label{eq:size-hN}
\size(\cN) \le C(1+m^{2+2/d}),
\quad
\depth(\cN) \le C(1+m^{2/d}\log(m)),
\end{align}
and 
\begin{align} \label{eq:err-hN}
\max_{j=1,\dots, m}
|
\cN_j(\bm{u})
-
v(x_j,T)
|
\le
C m^{-2/d}.
\end{align}
Given the grid points $x_j$, we note that we can define a linear mapping 
\[
\cL: \R^m \to C(D), 
\quad
\bm{v} = (v_1,\dots, v_m)
\mapsto \cL(\bm{v})(x),
\]
by employing local linear interpolation of the values $v_j$ on the mesh $x_j$, i.e. such that $\cL(\bm{v})(x_j) = v_j$. Given the projection operator $\cP$, we define a linear mapping $\hL: \R^m\to \R^p$ by $\hL(\bm{v}) := \cP \circ \cL(\bm{v})$. With these definitions, we now set $\cA := \hL \circ \hN = \cP \circ \cL \circ \hN$. We note that since $\hL = \cP \circ \cL$ is a linear mapping $\R^m \to \R^p$, and since $\hN$ is a neural network with size bounded by \eqref{eq:size-hN}, we can represent $\cA$ as a neural network with
\begin{gather*}
\size(\cA)
= \mathcal{O}(m^{2+2/d} + mp), 
\\
\depth(\cA)
=\mathcal{O}(m^{2/d}\log(m) + 1) = \mathcal{O}(m^{2/d}\log(m)).
\end{gather*}
Furthermore, for this choice of $\cA$, we have 
\begin{align*}
(\Err_{\cA})^2
&=
\int_{\R^m} \Vert \cA(\bm{u}) - \cP \circ \cG \circ \cD(\bm{u}) \Vert^2_{\ell^2} \, d(\cE_\#\mu)(\bm{u})
\\
&=
\int_{\supp(\mu)} \Vert \cA\circ \cE(u) - \cP \circ \cG \circ \cD\circ \cE(u) \Vert^2_{\ell^2} \, d\mu(u)
\\
&=
\int_{\supp(\mu)} 
\Vert 
\cP \circ \cL \circ \hN \circ \cE(u) - \cP \circ \cG \circ \cD\circ \cE(u) 
\Vert^2_{\ell^2} \, d\mu(u)
\end{align*}

Furthermore, we can estimate the integrand for any $u\in \supp(\mu)$:
\begin{align*}
\Vert 
\cP \circ \cL \circ \hN \circ \cE(u) &- \cP \circ \cG \circ \cD\circ \cE(u) 
\Vert_{\ell^2} 
\\
&\le
\Vert \cP \Vert_{L^2(U) \to \ell^2(\R^p)}
\Vert 
\cL \circ \hN \circ \cE(u) - \cG \circ \cD\circ \cE(u) 
\Vert_{L^2(U)} 
\\
&\le 
\Vert 
\cL \circ \hN \circ \cE(u) - \cG \circ \cD\circ \cE(u) 
\Vert_{L^2(U)}.
\end{align*}
If we denote by $x \mapsto v(x,T) = \G(u)$ the solution at $t = T$ of the Allen-Cahn equation \eqref{eq:allen-cahn} with initial data $u$, and $\bm{u}=(u(x_1),\dots, u(x_m)) = \cE(u)$, then we have 
\begin{align}
\Vert 
\cL \circ \hN \circ \cE(u) &- \cG \circ \cD\circ \cE(u) 
\Vert_{L^2(U)}
\notag
\\
&\le
\Vert 
\cL \circ \hN (\bm{u}) - \cG(u) 
\Vert_{L^2(U)}
+
\Vert 
\cG(u) - \cG\circ \cD\circ \cE(u) 
\Vert_{L^2(U)}
\notag
\\
&\le
C
\Vert 
\cL \circ \hN (\bm{u}) - \cG(u) 
\Vert_{L^\infty(U)}
+
C
\Vert 
\cG(u) - \cG\circ \cD\circ \cE(u) 
\Vert_{L^\infty(U)}
. \label{eq:dec}
\end{align}
To estimate the first term, we note that since $\cL \circ \hN(\bm{u})$ and $v(\slot, T) = \G(u)$ are Lipschitz continuous functions with Lipschitz constant that can be bounded by $\Vert u \Vert_{C^{4,\alpha}}$ for a fixed $\alpha \in (0,1)$, as in Theorem \ref{thm:ac-regularity}, we have
\begin{align*}
\Vert 
\cL \circ \hN (\bm{u}) - \cG(u) 
\Vert_{L^\infty(U)}
&\le
C(\Vert u \Vert_{C^{4,\alpha}}) \Delta x 
\\
&\qquad + \max_{j=1,\dots, m} |\cL\circ\hN(\bm{u})(x_j) - \cG(u)(x_j)|
\\
&=C(\Vert u \Vert_{C^{4,\alpha}}) \Delta x + \max_{j=1,\dots, m} |\hN_j(\bm{u}) - v(x_j,T)|
\\
&\le
C m^{-1/d}.
\end{align*}
In the last step, we used the fact that $\Delta x \lesssim m^{-1/d}$ and \eqref{eq:err-hN}. 

To estimate the other second term in \eqref{eq:dec}, we note that the numerical scheme \eqref{eq:allen-cahn-fd} applied to $u$ and $\cD \circ \cE(u)$ starts from the same discrete initial data $\cE(u) = (u(x_1),\dots, u(x_m))$, since $\cE\circ \cD = \Id$, by assumption. It then follows from the error estimate of Theorem \ref{thm:ac-approx-err}, and the fact that the Lipschitz constant of $\G(u)$ and $\G \circ \cD\circ \cE(u)$ are bounded in terms of $\Vert u \Vert_{C^{4,\alpha}}$, that 
\begin{align*}
\Vert \G(u) - \G\circ \cD \circ \cE(u) \Vert_{L^\infty}
&\le
C m^{-1/d} + \max_{j=1,\dots,m} | \G(u)(x_j) - \G\circ \cD \circ \cE(u)(x_j) |
\\
&\le
C m^{-1/d} + C \Delta x^2
\le
C m^{-1/d}.
\end{align*}
The constant $C=C(u)>0$ depends on $u$ only through $\Vert u \Vert_{C^{4,\alpha}}$. In particular, there exists a constant $C'>0$, such that $C(\Vert u \Vert_{C^{4,\alpha}} \le C'$ for all $u\in \supp(\mu)$. Combining the above estimates, we can now estimate
\[
\Err_{\cA}
\le
C' m^{-1/d}.
\]
To conclude, we have shown that there exists an approximator network $\cA: \R^m \to \R^p$, for $p=m$, such that 
\begin{gather*}
\size(\cA)
= \mathcal{O}(m^{2+2/d} + mp),
\quad
\depth(\cA)
= \mathcal{O}(m^{2/d}\log(m)),
\end{gather*}
and $\Err_{\cA} = \mathcal{O}(m^{-1/d})$.
\end{proof}

\subsection{Proof of Lemma \ref{lem:charfun}}
\label{app:pf414}
\begin{proof}
Let $\sigma(x) = \max(x,0)$ be the ReLU activation function. We assume wlog that $\epsilon < b-a$ (otherwise decrease $\epsilon$). We note that
\[
\chi^\epsilon_{[a,b]}(x)
:=
\frac{\sigma(x-a) - \sigma(x-a-\epsilon/2) + \sigma( x - b) - \sigma(x-b-\epsilon/2)}{\epsilon/2},
\]
is continuous,  satisfies
\[
\chi^\epsilon_{[a,b]}(x)
=
\begin{cases}
0, & x\notin [a,b], \\
1, & x\in [a+\epsilon/2,b-\epsilon/2], \\
\end{cases}
\]
and is linear on $[a,a+\epsilon/2]$ and $[b-\epsilon/2,b]$. In particular, it follows that
\[
\Vert \chi^\epsilon_{[a,b]} - 1_{[a,b]}\Vert_{L^1(\R)}
\le
|[a,a+\epsilon/2]| + |[b-\epsilon/2,b]| = \epsilon.
\]
Furthermore, $\chi^\epsilon_{[a,b]}$ is represented by the same neural network architecture for any choice of $a,b,\epsilon$.
\end{proof}

\subsection{Proof of Theorem \ref{thm:err-conslaw}}
\label{app:pf415}
\begin{proof}
By Theorem \ref{thm:LxF}, there exists a constant $C>0$, and a neural network $\cN: \R^m \to \R^m$, with 
\begin{align} \label{eq:L1-branch0}
\size(\cN) \le C m^{-5/2}, \quad
\depth(\cN) \le C m,
\end{align}
such that for any $u\in \BV_M$, we have
\[
\left\Vert 
\G(u) - \sum_{j=1}^m \cN_j(\bar{\cE}(u)) \, 1_{C_j}(\slot)
\right\Vert_{L^1(D)}
\le
\frac{C}{m^\alpha}.
\]
We now define $\cA: \R^m \to \R^p$ by $\cA(\bm{u}) := (\cN_1(\bm{u}), \dots, \cN_p(\bm{u}))$, where we formally set $\cN_j \equiv 0$, if $j>m$. For each $j=1,\dots, m$, let $\tr_j(y) := \chi^{\epsilon}_{C_j}$, where $\chi^{\epsilon}_{C_j}$ is a neural network approximation of $1_{C_j}$ as in Lemma \ref{lem:charfun}, i.e. such that 
\begin{align} \label{eq:L1-trunk0}
\size(\tr_j) \le C, 
\quad
\depth(\tr_j) = 1,
\end{align}
such that 
\[
\Vert \tr_j - 1_{C_j} \Vert_{L^1} \le \epsilon, \text{for $j=1,\dots, m$.}
\]
For $j>m$, we define $\tr_j \equiv 0$, corresponding to an approximation of $C_j := \emptyset$ for $j>m$. Note that we clearly obtain from \eqref{eq:L1-trunk0} for the trunk net $\bm{\tr} = (\tr_1,\dots, \tr_p)$:
\begin{align} \label{eq:L1-trunk}
\size(\bm{\tr}) \le Cp, 
\quad
\depth(\bm{\tr}) = 1.
\end{align}
Similarly, we define for $j=1,\dots, p$ the branch net $\bm{\br} = (\br_1,\dots, \br_p)$ by:
\[
\br_j(u) := 
\begin{cases}
\cN_j(\bar{\cE}(u)), & (j\le m) \\
0, &(j>m).
\end{cases}
\]
By \eqref{eq:L1-branch0}, we note that
\begin{align} \label{eq:L1-branch}
\size(\bm{\br}) \le C m^{-5/2}, \quad
\depth(\bm{\br}) \le C m,
\end{align}

Then, clearly we have 
\begin{align*}
\left\Vert 
\G(u) - \sum_{j=1}^p \br_j(u) \tr_j
\right\Vert_{L^1}
&\le
\left\Vert 
\G(u) - \sum_{j=1}^m \cN_j(\bar{\cE}(u)) 1_{C_j}
\right\Vert_{L^1}
\\
&\qquad 
+
\sum_{j=1}^{p}
|\cN_j(\bar{\cE}(u))|
\left\Vert 
 1_{C_j} - \tr_j 
\right\Vert_{L^1}
\\
&\qquad
+
\sum_{j=p+1}^m
|\cN_j(\bar{\cE}(u))|
\left\Vert 
 1_{C_j}
\right\Vert_{L^1}
\end{align*}
The first term can be estimated by $Cm^{-\alpha}$. Each term in the sum for $j=1,\dots, p$ can be estimated by 
\[
\Vert 1_{C_j} - \tr_j \Vert_{L^1}
=
\left.
\begin{cases}
\Vert 1_{C_j} - \tr_j \Vert_{L^1}, & (j=1,\dots, m), \\
0, & (j > m)
\end{cases}
\right\}
\le \epsilon,
\]
by choice of the $\tr_j$. We also note that $|\cN_j(\bar{\cE}(u))|\le M$ for all $u\in \BV_M$ (cp. Theorem \ref{thm:LxF}). Hence, we have 
\[
\sum_{j=1}^{p}
|\cN_j(\bar{\cE}(u))|
\left\Vert 
 1_{C_j} - \tr_j 
\right\Vert_{L^1}
\le
Mp \epsilon.
\]
Finally, the last sum over $j=p+1,\dots, m$, if non-empty, can be estimated by
\begin{align*}
\sum_{j=p+1}^m
|\cN_j(\bar{\cE}(u))|
\left\Vert 
 1_{C_j}
\right\Vert_{L^1}
&\le
\sum_{j=p+1}^m \frac{2\pi M}{m}
\\
&=
\max(m-p,0) \frac{2\pi M}{m}
\\
&=
2\pi M \, \max\left(1-\frac pm,0\right).
\end{align*}
In particular, we conclude that there exists a constant $C>0$, independent of $p$ and $m$, such that for any $\epsilon > 0$, there exists a DeepONet $(\bm{\br},\bm{\tr})$ with size bounded by \eqref{eq:L1-branch} and \eqref{eq:L1-trunk}, such that 
\[
\left\Vert 
\G(u) - \sum_{j=1}^p \br_j(u) \tr_j
\right\Vert_{L^1}
\le
Cm^{-\alpha} + C p \epsilon + C \max\left( 1 - \frac{p}{m}, 0\right),
\quad
\forall u \in \BV_M.
\]
Since $\epsilon >0$ is arbitrary, we may set $\epsilon = m^{-\alpha} p^{-1}$, and absorb the second term in the first. Integrating against $\mu$ with $\mu(\BV_M)=1$, we find that 
\[
\int_{L^1(D)}
\left\Vert 
\G(u) - \cR \circ \cA \circ \bar{\cE}(u)
\right\Vert_{L^1}
\, d\mu(u)
\le
C m^{-\alpha} + C \max\left(1-\frac{p}{m},0\right),
\]
where we recall that, by definition, we have
\[
\cR \circ \cA \circ \bar{\cE}(u) \equiv \sum_{j=1}^p \br_j(u) \tr_j.
\]
The claimed estimate on $\Err_{L^1}$ thus follows with trunk and branch net complexity bounds \eqref{eq:L1-trunk} and \eqref{eq:L1-branch}.
\end{proof}

\subsection{Proof of Lemma \ref{lem:burgers-spectrum}} \label{app:burgers-spectrum}

\begin{proof}
We first note that the covariance operator $\Gamma_{\G_\#\mu}$ can be represented in the form 
\[
(\Gamma_{\G_\#\mu}u)(x)
=
\int_0^{2\pi} k(x,x') u(x') \, dx',
\]
where 
\begin{align*}
k(x,x') 
&= 
\int_{L^2(\T)} u(x)u(x') \, d(\G_\#\mu)(u)
\\
&=
\int_{L^2(\T)} \G(u)(x)\G(u)(x') \, d\mu(u).
\end{align*}
Next, we note that for any functional $\cF \in L^1(\mu)$, we have
\[
\int_{L^2(\T)} \cF(u) \, d\mu(u)
=
\frac{1}{2\pi}\int_0^{2\pi} \cF(u(\slot - \hat{x})) \, d\hat{x},
\]
and the solution with initial data $u_0(x-\hat{x})$ at $t=\pi/2$ is given by $\cG(u(\slot - \hat{x})) = v_t(x-\hat{x})$. It follows that 
\begin{align*}
\int_{L^2(\T)} \G(u)(x)\G(u)(x') \, d\mu(u)
&=
\frac{1}{2\pi}\int_0^{2\pi} \G(u(\slot - \hat{x}))(x)\G(u(\slot - \hat{x}))(x') \, d\hat{x}
\\
&=
\frac{1}{2\pi}\int_{0}^{2\pi} v_t(x-\hat{x}) v_t(x'-\hat{x}) \, d\hat{x}.
\end{align*}
By a change of variables, we thus find
\begin{align*}
k(x,x') 
&=
\frac{1}{2\pi}\int_{0}^{2\pi} v_t(x-\hat{x}) v_t(x'-\hat{x}) \, d\hat{x}
\\
&=
\frac{1}{2\pi}\int_{0}^{2\pi} v_t(x-x'+\xi) v_t(\xi) \, d\xi
\\
&=
g(x-x'),
\end{align*}
where
\[
g(x) := \frac{1}{2\pi} \int_0^{2\pi} v_t(x+\xi) v_t(\xi) \, d\xi, 
\]
is written as a convolution. In particular, $k(x,x') =g(x-x')$ is a \emph{stationary} kernel. From the stationarity of $k(x,x')$, it follows that the eigenfunctions of $k(x,x')$ are given by the Fourier basis $\{\fb_k\}_{k\in \Z}$, with corresponding eigenvalues
\[
\lambda_k = (2\pi) \hat{g}(k),
\]
where $\hat{g}(k)$ denotes the $k$-th Fourier coefficient of $g$. Finally, we note that 
\[
\lambda_k 
=
(2\pi)\hat{g}(k)
= |\hat{v}_t(k)|^2 = \frac{1}{\pi^2 k^2} + o\left(\frac{1}{k^2}\right).
\]
\end{proof}

\section{Proof of Theorem \ref{thm:generalization-err}}
\label{app:pf5}

We note that, since $\hN$ is a minimizer of $\hL$, and $\hN_{N}$ is a minimizer of $\hL_{N}$, we have the following well-known bound:
\begin{align*}
\left|\hL\left(
\hN_{N}
\right)
-
\hL\left(
\hN
\right)
\right|
&=
\hL\left(
\hN_{N}
\right)
-
\hL\left(
\hN
\right)
\\
&\le 
\hL\left(
\hN_{N}
\right)
-
\hL_{N}\left(
\hN_{N}
\right)
\\
&\qquad
+
\hL_{N}\left(
\hN
\right)
-
\hL\left(
\hN
\right)
\\
&\le
2\sup_{\theta} 
\left|
\hL_{N}\left(
\cN_\theta
\right)
-
\hL\left(
\cN_\theta
\right)
\right|,
\end{align*}
where the supremum is taken over all admissible $\theta \in [-B,B]^{d_\theta}$. Starting from this bound, the proof of Theorem \ref{thm:generalization-err} relies on the following lemmas, which follow very closely the argument in \cite[Chapter 5.3]{WeltiThesis} (see also \cite{CS1,berner2020analysis}).
\begin{lemma} \label{lem:gen-lip}
Under assumptions \ref{ass:boundedness} and \ref{ass:lipschitz}, we have
\[
\left|
S^N_\theta - S^N_{\theta'}
\right|
\le
\frac{4}{N}
\left(
\sum_{j=1}^N \Psi(Z_j) \Phi(Z_j)
\right)
\Vert\theta - \theta'\Vert_{\ell^\infty}.
\]
\end{lemma}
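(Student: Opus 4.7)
The plan is to exploit the elementary factorization $a^{2}-b^{2}=(a-b)(a+b)$ applied term-by-term in the sum defining $S^{N}_{\theta}-S^{N}_{\theta'}$, using the two assumptions to control each factor separately. Specifically, I would set $a_{j}:=|\G(Z_{j})-\cN_{\theta}(Z_{j})|$ and $b_{j}:=|\G(Z_{j})-\cN_{\theta'}(Z_{j})|$, so that
\[
S^{N}_{\theta}-S^{N}_{\theta'}=\frac{1}{N}\sum_{j=1}^{N}(a_{j}^{2}-b_{j}^{2})=\frac{1}{N}\sum_{j=1}^{N}(a_{j}-b_{j})(a_{j}+b_{j}),
\]
and then estimate $|a_{j}-b_{j}|$ and $a_{j}+b_{j}$ in turn.

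For the difference factor, the reverse triangle inequality gives $|a_{j}-b_{j}|\le |\cN_{\theta}(Z_{j})-\cN_{\theta'}(Z_{j})|$, and Assumption~\ref{ass:lipschitz} (applied at the point $Z_{j}=(U_{j},Y_{j})$, recalling the slight abuse of notation $\cN_{\theta}(Z_{j})=\cN_{\theta}(U_{j})(Y_{j})$) bounds this by $\Phi(U_{j})\|\theta-\theta'\|_{\ell^{\infty}}$, which in the shorthand is $\Phi(Z_{j})\|\theta-\theta'\|_{\ell^{\infty}}$. For the sum factor, Assumption~\ref{ass:boundedness} yields $a_{j}\le |\G(Z_{j})|+|\cN_{\theta}(Z_{j})|\le 2\Psi(Z_{j})$ and similarly $b_{j}\le 2\Psi(Z_{j})$, so $a_{j}+b_{j}\le 4\Psi(Z_{j})$.

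Combining these two bounds termwise and applying the triangle inequality to the averaged sum gives
\[
|S^{N}_{\theta}-S^{N}_{\theta'}|\le\frac{1}{N}\sum_{j=1}^{N}4\Psi(Z_{j})\Phi(Z_{j})\|\theta-\theta'\|_{\ell^{\infty}},
\]
which is exactly the claim. There is no real obstacle here: the proof is essentially a one-line calculation once the factorization is in place, and the only thing to be careful about is the identification of $\cN_{\theta}(Z_{j})$ with $\cN_{\theta}(U_{j})(Y_{j})$ so that Assumptions~\ref{ass:boundedness} and \ref{ass:lipschitz} can be applied pointwise in $y$ at $y=Y_{j}$.
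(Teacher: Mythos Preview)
Your proof is correct and follows essentially the same approach as the paper: both use the factorization $a^{2}-b^{2}=(a-b)(a+b)$ termwise, bound the difference factor via the reverse triangle inequality and Assumption~\ref{ass:lipschitz}, and bound the sum factor by $4\Psi(Z_j)$ via the triangle inequality and Assumption~\ref{ass:boundedness}.
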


\begin{proof}
We have 
\begin{align*}
|S^N_\theta - S^N_{\theta'}|
&\le
\frac1N \sum_{j=1}^N 
\left| 
|\G(Z_j) - \cN_\theta(Z_j)|^2
-
|\G(Z_j) - \cN_{\theta'}(Z_j)|^2
\right|
\\
&\le 
\frac1N \sum_{j=1}^N 
\left( 
2|\G(Z_j)| + |\cN_\theta(Z_j)| + |\cN_{\theta'}(Z_j)|
\right)
|\cN_{\theta}(Z_j) - \cN_{\theta'}(Z_j)|
\\
&\le
\frac1N \sum_{j=1}^N 4 |\Psi(Z_j)| |\Phi(Z_j)| \Vert \theta - \theta'\Vert_{\ell^\infty}
\\
&=
\frac4N \left(\sum_{j=1}^N |\Psi(Z_j)| |\Phi(Z_j)| \right) \Vert \theta - \theta'\Vert_{\ell^\infty},
\end{align*}
as claimed.
\end{proof}

\begin{lemma} \label{lem:gen-sup}
If $\theta_1, \dots, \theta_K$ are such that for all $\theta \in [-B,B]^d$, there exists $j$ with $\Vert\theta - \theta_j\Vert_{\ell^\infty} \le \epsilon$, then 
\begin{align*}
\E\left[
\sup_{\theta\in [-B,B]^d}
\left|
S^N_\theta - \E[S^N_\theta]
\right|^p
\right]^{1/p}
\le
8\epsilon \E\left[\left|\Psi \Phi \right|^p\right]^{1/p}
+
\E\left[
\max_{j=1,\dots, K} 
\left|S^N_{\theta_j} - \E[S^N_{\theta_j}]\right|^p
\right]^{1/p}.
\end{align*}
\end{lemma}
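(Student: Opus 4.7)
The plan is to execute a standard covering/discretization argument: bound the supremum over the continuous parameter set $[-B,B]^{d_\theta}$ by the maximum over the finite set $\{\theta_1,\dots,\theta_K\}$ plus an oscillation term that is controlled by the Lipschitz estimate of Lemma \ref{lem:gen-lip}, and then pass to $L^p$ norms via Minkowski's inequality.

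More concretely, for each $\theta\in[-B,B]^{d_\theta}$ I would choose an index $j(\theta)\in\{1,\dots,K\}$ with $\Vert\theta-\theta_{j(\theta)}\Vert_{\ell^\infty}\le\epsilon$. A triangle inequality decomposition
\[
\left|S^N_\theta-\E[S^N_\theta]\right|
\le \left|S^N_\theta-S^N_{\theta_{j(\theta)}}\right|
 +\left|S^N_{\theta_{j(\theta)}}-\E[S^N_{\theta_{j(\theta)}}]\right|
 +\left|\E[S^N_{\theta_{j(\theta)}}]-\E[S^N_\theta]\right|
\]
then separates the pathwise oscillation, the fluctuation at the covering points, and the expectation-level oscillation. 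By Lemma \ref{lem:gen-lip}, the first term is bounded uniformly in $\theta$ by $\tfrac{4\epsilon}{N}\sum_{j=1}^N\Psi(Z_j)\Phi(Z_j)$, and since expectation commutes with this uniform bound, Jensen's inequality gives $\left|\E[S^N_\theta]-\E[S^N_{\theta_{j(\theta)}}]\right|\le 4\epsilon\,\E[\Psi(Z)\Phi(Z)]$ using that the $Z_j$ are iid. Taking the supremum over $\theta$ and then the $L^p(\Prob)$ norm, and applying Minkowski's inequality to the sum $\tfrac1N\sum_{j=1}^N\Psi(Z_j)\Phi(Z_j)$ together with the identical distribution of the summands, yields
\[
\E\!\left[\sup_\theta|S^N_\theta-\E[S^N_\theta]|^p\right]^{1/p}
\le 4\epsilon\,\E[|\Psi\Phi|^p]^{1/p}
+\E\!\left[\max_{j\le K}|S^N_{\theta_j}-\E[S^N_{\theta_j}]|^p\right]^{1/p}
+4\epsilon\,\E[\Psi\Phi].
\]
Finally, one more application of Jensen's inequality ($p\ge 1$) to absorb $\E[\Psi\Phi]\le \E[|\Psi\Phi|^p]^{1/p}$ merges the first and third summands into $8\epsilon\,\E[|\Psi\Phi|^p]^{1/p}$, producing the stated bound.

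I do not anticipate any real obstacle here, since all ingredients are already in place: Lemma \ref{lem:gen-lip} supplies the required $\ell^\infty$-Lipschitz control with multiplicative constant $4$, and the iid structure of $(Z_j)$ together with Minkowski's and Jensen's inequalities handle the $L^p$ bookkeeping in a routine way. The only mild subtlety is tracking the numerical constants to ensure that the Lipschitz term for $S^N_\theta$ (in $L^p$) and the Lipschitz term for $\E[S^N_\theta]$ combine cleanly into the single factor $8\epsilon$ that appears in the statement; this is achieved by the Jensen step $\E[\Psi\Phi]\le \E[|\Psi\Phi|^p]^{1/p}$.
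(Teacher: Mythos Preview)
Your proposal is correct and follows essentially the same covering argument as the paper: the same three-term triangle inequality decomposition via $\theta_{j(\theta)}$, the same use of Lemma \ref{lem:gen-lip} to control the oscillation terms, and the same Minkowski/iid bookkeeping to arrive at $8\epsilon\,\E[|\Psi\Phi|^p]^{1/p}$. If anything, your treatment of the third (deterministic) term via Jensen is slightly more explicit than the paper's, which simply lumps both Lipschitz contributions together before taking the $L^p$ norm; the outcome is identical.
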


\begin{proof}
Fix $\epsilon > 0$. Define a mapping $j: [-B,B]^d \to \N$, by $j(\theta) = \min \set{j\in \{1,\dots, K\}}{|\theta - \theta_j| \le \epsilon}$. Then, we have 
\begin{align*}
\E\left[
\sup_{\theta\in [-B,B]^d} |S^N_\theta - \E[S^N_\theta]|^p
\right]^{1/p}
&\le
\E\left[
\Bigg(
\sup_{\theta\in [-B,B]^d} 
\left|S^N_\theta - S^N_{\theta_{j(\theta)}} \right|
+
\left|S^N_{j(\theta)} - \E[S^N_{j(\theta)}]\right|
\right.
\\
&\qquad\quad
\left. 
+
\left|\E[S^N_{\theta_{j(\theta)}}] - \E[S^N_\theta]\right|
\Bigg)^p
\right]^{1/p}
\\
&\le 
\E\left[
\Bigg(
\max_{j=1,\dots, K} 
\left|S^N_{j(\theta)} - \E[S^N_{j(\theta)}]\right|
\right.
\\
&\qquad\quad
\left.
+
\frac{8\epsilon }{N}
\left(
\sum_{j=1}^N |\Psi(Z_j)||\Phi(Z_j)|
\right)
\Bigg)^p
\right]^{1/p}
\\
&\le 
\E\left[
\max_{j=1,\dots, K} 
\left|S^N_{\theta_j} - \E[S^N_{\theta_j}]\right|^p
\right]^{1/p}
\\
&\qquad \quad
+\frac{8\epsilon}{N} \sum_{j=1}^N \E\left[|\Psi(Z_j) \Phi(Z_j)|^p\right]^{1/p}
\\
&=
\E\left[
\max_{j=1,\dots, K} 
\left|S^N_{\theta_j} - \E[S^N_{\theta_j}]\right|^p
\right]^{1/p}
\\
&\quad\qquad
+
8\epsilon \E\left[|\Psi(Z_1) \Phi(Z_1)|^p\right]^{1/p},
\end{align*}
as claimed.
\end{proof}

\begin{lemma} \label{lem:gen-Kest}
Let $K\in \N$ and let $\theta_1,\dots, \theta_K \in [-B,B]^{d_\theta}$ be given. Then, for any $p\ge 1$, we have
\begin{align*}
\E\left[
\max_{j=1,\dots, K} 
\left|
S^N_{\theta_j} - \E[S^N_{\theta_j}]
\right|^p
\right]^{1/p}
\le
K^{1/p} 
\max_{j=1,\dots, K} 
\E\left[
\left|
S^N_{\theta_j} 
-
\E[S^N_{\theta_j}]
\right|^p
\right]^{1/p}.
\end{align*}
\end{lemma}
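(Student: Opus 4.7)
The plan is to proceed via a straightforward union-bound style argument in $L^p$. The key observation is that for any finite collection of nonnegative real numbers $a_1,\dots,a_K\ge 0$, one has the pointwise bound
\[
\max_{j=1,\dots,K} a_j \le \sum_{j=1}^K a_j,
\]
which, applied to $a_j = |S^N_{\theta_j} - \E[S^N_{\theta_j}]|^p$, immediately gives
\[
\max_{j=1,\dots,K} \bigl|S^N_{\theta_j} - \E[S^N_{\theta_j}]\bigr|^p \le \sum_{j=1}^K \bigl|S^N_{\theta_j} - \E[S^N_{\theta_j}]\bigr|^p.
\]

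Taking expectations on both sides and applying linearity of expectation yields
\[
\E\!\left[\max_{j=1,\dots,K} \bigl|S^N_{\theta_j} - \E[S^N_{\theta_j}]\bigr|^p\right] \le \sum_{j=1}^K \E\!\left[\bigl|S^N_{\theta_j} - \E[S^N_{\theta_j}]\bigr|^p\right] \le K \max_{j=1,\dots,K} \E\!\left[\bigl|S^N_{\theta_j} - \E[S^N_{\theta_j}]\bigr|^p\right].
\]
Taking the $p$-th root of both sides (the map $x\mapsto x^{1/p}$ is monotone on $[0,\infty)$) gives precisely the claimed inequality
\[
\E\!\left[\max_{j=1,\dots,K} \bigl|S^N_{\theta_j} - \E[S^N_{\theta_j}]\bigr|^p\right]^{1/p} \le K^{1/p} \max_{j=1,\dots,K} \E\!\left[\bigl|S^N_{\theta_j} - \E[S^N_{\theta_j}]\bigr|^p\right]^{1/p}.
\]

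There is no real obstacle here: the lemma is a purely deterministic/measure-theoretic fact about $L^p$ norms of maxima over finite families, combined with linearity of expectation. The only mild point to note is that the bound $\max_j a_j \le \sum_j a_j$ requires the $a_j$ to be nonnegative, which is automatic since we are dealing with $p$-th powers of absolute values. The factor $K^{1/p}$ reflects the standard $L^p$-union bound and is sharp in general (e.g., attained when all $\theta_j$ yield identically distributed summands). This lemma will then be combined with Lemmas \ref{lem:gen-lip} and \ref{lem:gen-sup}, together with a covering number estimate of $[-B,B]^{d_\theta}$ at scale $\epsilon$ by $K = \lceil B/\epsilon \rceil^{d_\theta}$ points, to control the uniform deviation and ultimately yield Theorem \ref{thm:generalization-err}.
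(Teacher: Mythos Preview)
Your proof is correct and follows exactly the same approach as the paper: bound $\max_j |X_j|^p \le \sum_j |X_j|^p$ pointwise, take expectations, bound the sum by $K$ times the maximum term, and take the $p$-th root. The paper's proof is just a one-line version of what you wrote.
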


\begin{proof}
This follows readily from the fact that for any measurable $X_1,\dots, X_K$, we have
\begin{align*}
\E
\left[\max_{j=1,\dots, K} |X_j|^p \right]
&\le
\E\left[ \sum_{j=1}^K |X_j|^p \right]
=
\sum_{j=1}^K \E[|X_j|^p]
\le 
K \max_{j=1,\dots, K} \E[|X_j|^p].
\end{align*}
\end{proof}

\begin{lemma} \label{lem:gen-MCest}
Let $2 \le p < \infty$. For any $\theta \in [-B,B]^{d_\theta}$, we have
\begin{align*}
\E\left[
\left|
S^N_\theta - \E[S^N_\theta]
\right|^p
\right]^{1/p}
\le
\frac{16 \sqrt{p-1} \E\left[|\Psi|^{2p}\right]^{1/p}}{\sqrt{N}}.
\end{align*}
\end{lemma}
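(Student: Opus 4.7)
The plan is to write $S^N_\theta - \E[S^N_\theta]$ as a normalized sum of iid centered random variables, and then estimate its $L^p$ norm via a standard symmetrization plus Khintchine--Kahane argument. Concretely, define $V_j := |\G(Z_j) - \cN_\theta(Z_j)|^2$ and $W_j := V_j - \E[V_j]$, so that
\[
S^N_\theta - \E[S^N_\theta]
=
\frac{1}{N} \sum_{j=1}^N W_j,
\]
and the $W_j$ are iid with mean zero. By Assumption \ref{ass:boundedness}, we have the pointwise bound
\[
V_j \le \left( |\G(Z_j)| + |\cN_\theta(Z_j)| \right)^2 \le 4 \Psi(U_j)^2,
\]
and therefore $\E[V_j^p]^{1/p} \le 4 \E[\Psi^{2p}]^{1/p}$.

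First I would introduce an independent copy $(V_j')_{j=1}^N$ of $(V_j)_{j=1}^N$ and apply the standard symmetrization inequality
\[
\E\left[\left| \textstyle\sum_{j=1}^N W_j \right|^p\right]^{1/p}
\le
2 \, \E\left[\left| \textstyle\sum_{j=1}^N \epsilon_j V_j \right|^p\right]^{1/p},
\]
where $\epsilon_1,\dots,\epsilon_N$ are iid Rademacher signs independent of everything else. Conditioning on $(V_j)$ and invoking Khintchine--Kahane (with Haagerup's sharp constant $\sqrt{p-1}$ valid for $p \ge 2$), I obtain
\[
\E_\epsilon\!\left[\left| \textstyle\sum_{j=1}^N \epsilon_j V_j \right|^p\right]^{1/p}
\le
\sqrt{p-1} \left( \textstyle\sum_{j=1}^N V_j^2 \right)^{1/2}.
\]
Taking the outer expectation and applying Minkowski's inequality in $L^{p/2}$ (which is a norm since $p \ge 2$) to the sum $\sum V_j^2$ then yields
\[
\E\!\left[ \left( \textstyle\sum_{j=1}^N V_j^2 \right)^{p/2} \right]^{2/p}
\le
\sum_{j=1}^N \E[V_j^p]^{2/p}
=
N \, \E[V_1^p]^{2/p}.
\]

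Chaining these inequalities and substituting the bound $\E[V_1^p]^{1/p} \le 4 \E[\Psi^{2p}]^{1/p}$ produces
\[
\E\!\left[\left| \textstyle\sum_{j=1}^N W_j \right|^p\right]^{1/p}
\le
2 \sqrt{p-1} \, \sqrt{N} \, \E[V_1^p]^{1/p}
\le
8 \sqrt{p-1} \, \sqrt{N} \, \E[\Psi^{2p}]^{1/p}.
\]
Dividing by $N$ gives the claimed estimate, with a constant of $8$ (the $16$ in the statement simply allows some slack; one could also take a slightly less sharp symmetrization constant). The only delicate point is the use of the Khintchine--Kahane inequality with the optimal $\sqrt{p-1}$ constant, since it is this step that is responsible for the claimed dependence on $p$; the rest of the argument is a routine combination of symmetrization, Minkowski's inequality, and the elementary pointwise bound $V_j \le 4\Psi(U_j)^2$ provided by Assumption \ref{ass:boundedness}.
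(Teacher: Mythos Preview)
Your proof is correct and follows essentially the same approach as the paper. The only difference is cosmetic: the paper invokes \cite[Corollary 5.18]{WeltiThesis} as a black box to obtain the bound $\E[|S^N_\theta - \E[S^N_\theta]|^p]^{1/p} \le \tfrac{2\sqrt{p-1}}{\sqrt{N}} \max_j \E[|X_j - \E[X_j]|^p]^{1/p}$ and then plugs in $|X_j - \E[X_j]| \le 8\Psi^2$, whereas you supply a self-contained derivation of that Marcinkiewicz--Zygmund-type inequality via symmetrization plus Khintchine and Minkowski in $L^{p/2}$, arriving at the slightly sharper constant $8$.
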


\begin{proof}
It follows from \cite[Corollary 5.18]{WeltiThesis}, with $S^N_\theta = \frac{1}{N} \sum_{j=1}^N X_j$, $X_j := |\G(Z_j) - \cN_\theta(Z_j)|^2$, that 
\[
\E\left[
|S^N_\theta - \E[S^N_\theta]|^p
\right]^{1/p}
\le 
\frac{2\sqrt{p-1}}{\sqrt{N}}
\left[
\max_{j=1,\dots, N} \E[|X_j - \E[X_j]|^p]^{1/p}.
\right]
\]
But by the boundedness assumption \ref{ass:boundedness}, we have
\[
X_j
\le
2|\G(Z_j)|^2 + 2|\cN_\theta(Z_j)|^2
\le
4 |\Psi(Z_j)|^2.
\]
Hence
\begin{align*}
\E[|X_j - \E[X_j]|^{p}]^{1/p} 
&\le
8\E[|\Psi(Z_j)|^{2p}]^{1/p} 
=
8\E[|\Psi|^{2p}]^{1/p},
\end{align*}
where the last equality follows from the fact that the $Z_j$ are iid. Hence
\[
\E\left[
\left|
S^N_\theta - \E\left[S^N_\theta\right]
\right|^p
\right]^{1/p}
\le
\frac{16\sqrt{p-1} \E\left[|\Psi|^{2p}\right]^{1/p}}{\sqrt{N}}.
\]
\end{proof}

\begin{lemma} \label{lem:gen-est-unbalanced}
Let $\epsilon > 0$ be given. Let $p\ge 2$. Then we have
\begin{align*}
\E\left[
\sup_{\theta \in [-B,B]^d}
\left|
S^N_\theta - \E[S^N_\theta]
\right|^p
\right]^{1/p}
\le
16\Vert \Psi \Vert_{L^{2p}} 
\left(
\epsilon \Vert \Phi\Vert_{L^{2p}}
+
\frac{\sqrt{p} \, \Vert \Psi \Vert_{L^{2p}} 
}{\sqrt{N}}
K(\epsilon)^{1/p}
\right),
\end{align*}
where $K(\epsilon)$ denotes the $\epsilon$-covering number of $[-B,B]^{d_\theta}$.
\end{lemma}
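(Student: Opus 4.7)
The plan is to combine the three preceding lemmas (\ref{lem:gen-sup}, \ref{lem:gen-Kest}, \ref{lem:gen-MCest}) in a standard chaining-style argument, with H\"older's inequality to repackage the constants into $L^{2p}$-norms of $\Psi$ and $\Phi$.

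First, fix an $\epsilon$-covering of $[-B,B]^{d_\theta}$ (with respect to $\Vert \slot \Vert_{\ell^\infty}$), consisting of points $\theta_1,\dots, \theta_{K(\epsilon)}$, and apply Lemma \ref{lem:gen-sup} to obtain
\[
\E\left[\sup_{\theta} |S^N_\theta - \E[S^N_\theta]|^p\right]^{1/p}
\le 8\epsilon\, \E\left[|\Psi\Phi|^p\right]^{1/p}
+ \E\left[\max_{j=1,\dots, K(\epsilon)} |S^N_{\theta_j} - \E[S^N_{\theta_j}]|^p\right]^{1/p}.
\]
For the first term, the Cauchy--Schwarz (or H\"older with conjugate exponents $(2,2)$) inequality gives
\[
\E[|\Psi\Phi|^p]^{1/p} \le \left(\E[|\Psi|^{2p}]\E[|\Phi|^{2p}]\right)^{1/(2p)} = \Vert \Psi \Vert_{L^{2p}} \Vert \Phi \Vert_{L^{2p}}.
\]

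Next, Lemma \ref{lem:gen-Kest} reduces the max over the $\epsilon$-net to a single-$\theta$ estimate at the cost of a factor $K(\epsilon)^{1/p}$, and Lemma \ref{lem:gen-MCest} bounds each single-$\theta$ term by $16\sqrt{p-1}\,\E[|\Psi|^{2p}]^{1/p}/\sqrt{N} = 16\sqrt{p-1}\,\Vert \Psi\Vert_{L^{2p}}^2/\sqrt{N}$. Assembling these pieces yields
\[
\E\left[\sup_{\theta} |S^N_\theta - \E[S^N_\theta]|^p\right]^{1/p}
\le 8\epsilon \Vert \Psi\Vert_{L^{2p}}\Vert \Phi\Vert_{L^{2p}}
+ K(\epsilon)^{1/p} \cdot \frac{16\sqrt{p-1}\,\Vert \Psi\Vert_{L^{2p}}^{2}}{\sqrt{N}}.
\]

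Finally, using the trivial bounds $8 \le 16$ and $\sqrt{p-1}\le \sqrt{p}$ and factoring out $16\Vert \Psi \Vert_{L^{2p}}$ gives exactly the stated inequality. There is no real obstacle here: once the right covering-number estimate is set up, each of the three preceding lemmas slots in mechanically, and the only routine care needed is the H\"older step that converts $\E[|\Psi\Phi|^p]^{1/p}$ and $\E[|\Psi|^{2p}]^{1/p}$ into the target $L^{2p}$-norms so that the final bound is homogeneous in $\Vert \Psi \Vert_{L^{2p}}$.
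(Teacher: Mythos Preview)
Your proof is correct and follows essentially the same route as the paper: apply Lemma~\ref{lem:gen-sup} with an $\epsilon$-net of size $K(\epsilon)$, bound the first term via Cauchy--Schwarz to get $\Vert\Psi\Vert_{L^{2p}}\Vert\Phi\Vert_{L^{2p}}$, and combine Lemmas~\ref{lem:gen-Kest} and~\ref{lem:gen-MCest} for the second term, then absorb $8\le 16$ and $\sqrt{p-1}\le\sqrt{p}$.
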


\begin{proof}
Denote $K := K(\epsilon)$ the covering number of $[-B,B]^{d_\theta}$. Then, by the definition of a covering number, there exist $\theta_1,\dots, \theta_K$, such that for any $\theta \in [-B,B]^{d_\theta}$, there exists $j\in \{1,\dots, K\}$, such that $|\theta - \theta_j|\le \epsilon$. By Lemma \ref{lem:gen-sup}, we have
\begin{align*}
\E\left[
\sup_{\theta \in [-B,B]^{d_\theta}} 
\left|
S^N_\theta - \E\left[S^N_\theta\right]
\right|^p
\right]^{1/p}
\le
8\epsilon \E[\left|\Psi \Phi \right|^p]^{1/p}
+
\E\left[
\max_{j=1,\dots, K}
\left|
S^N_{\theta_j} - \E\left[
S^N_{\theta_j}
\right]
\right|^p
\right]^{1/p}.
\end{align*}
We estimate the first term by
\[
8\epsilon \E[\left|\Psi \Phi \right|^p]^{1/p}
\le
8\epsilon \E[\left|\Psi\right|^{2p}]^{1/2p} \E[\left|\Phi \right|^{2p}]^{1/2p}
= 8\epsilon \Vert \Psi \Vert_{L^{2p}} \Vert \Phi \Vert_{L^{2p}}.
\]
By Lemma \ref{lem:gen-Kest} and Lemma \ref{lem:gen-MCest}, we can estimate the last term 
\begin{align*}
\E\left[
\max_{j=1,\dots, K}
\left|
S^N_{\theta_j} - \E\left[
S^N_{\theta_j}
\right]
\right|^p
\right]^{1/p}
&\le
K^{1/p} \max_{j=1,\dots, K} \E\left[
\left|
S^N_{\theta_j}
-
\E\left[
S^N_{\theta_j}
\right]
\right|^p
\right]^{1/p}
\\
&\le
\frac{16 K^{1/p} \sqrt{p}\, \E\left[|\Psi|^{2p}\right]^{1/p}}{\sqrt{N}}
\\
&=
\frac{16 K^{1/p} \sqrt{p}\, \Vert \Psi \Vert^2_{L^{2p}}}{\sqrt{N}}.
\end{align*}
Substitution of these upper bounds now yields
\[
\E\left[
\max_{j=1,\dots, K}
\left|
S^N_{\theta_j} - \E\left[
S^N_{\theta_j}
\right]
\right|^p
\right]^{1/p}
\le
8 \epsilon \Vert \Psi \Vert_{L^{2p}} \Vert \Phi \Vert_{L^{2p}}
+
\frac{16 K^{1/p} \sqrt{p}\, \Vert \Psi \Vert^2_{L^{2p}}}{\sqrt{N}}.
\]
The claimed bound follows.
\end{proof}

We also remark the following well-known fact:
\begin{lemma} \label{lem:gen-covering}
The covering number of $[-B,B]^d$ satisfies
\begin{align*}
K(\epsilon)
\le
\left(
\frac{CB}{\epsilon}
\right)^{d},
\end{align*}
for some constant $C>0$, independent of $\epsilon$, $B$ and $d$.
\end{lemma}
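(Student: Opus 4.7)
The plan is to construct an explicit $\epsilon$-net for $[-B,B]^d$ with respect to the $\ell^\infty$-metric (which is the metric used in Lemma \ref{lem:gen-sup} and Lemma \ref{lem:gen-est-unbalanced}) via a Cartesian product of one-dimensional nets. This reduces the $d$-dimensional covering problem to a one-dimensional counting problem and yields the stated bound with an essentially optimal constant.

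First I would treat the one-dimensional case. Let $N := \lceil B/\epsilon \rceil$ and define the equispaced points $t_k := -B + (2k-1)\epsilon$ for $k = 1,\dots, N$. A short verification shows that for every $t \in [-B,B]$ there exists $k\in\{1,\dots,N\}$ with $|t - t_k|\le \epsilon$, so the finite set $\{t_1,\dots,t_N\}$ is an $\epsilon$-net of $[-B,B]$ with respect to the usual distance on $\R$, and its cardinality satisfies $N \le B/\epsilon + 1 \le 2B/\epsilon$ whenever $\epsilon \le B$ (the case $\epsilon > B$ being trivial, since a single point suffices).

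Next I would lift this to dimension $d_\theta$ by taking the Cartesian product $\Lambda := \{t_1,\dots,t_N\}^{d_\theta} \subset [-B,B]^{d_\theta}$. For any $\theta = (\theta_1,\dots,\theta_{d_\theta}) \in [-B,B]^{d_\theta}$, pick indices $k_i$ with $|\theta_i - t_{k_i}|\le \epsilon$ and set $\theta^\ast := (t_{k_1},\dots,t_{k_{d_\theta}}) \in \Lambda$. Then by definition of the $\ell^\infty$-norm,
\[
\Vert \theta - \theta^\ast \Vert_{\ell^\infty} = \max_{i=1,\dots,d_\theta} |\theta_i - t_{k_i}| \le \epsilon,
\]
so $\Lambda$ is an $\epsilon$-net and
\[
K(\epsilon) \le |\Lambda| = N^{d_\theta} \le \left(\frac{2B}{\epsilon}\right)^{d_\theta}.
\]
Setting $C := 2$ (or adjusting to absorb the edge case $\epsilon > B$) gives the claimed inequality.

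There is no real obstacle here: the argument is completely elementary, the only subtlety being the choice of one-dimensional grid so that every point of $[-B,B]$ is within distance $\epsilon$ of some grid point, which dictates the spacing $2\epsilon$ and the offset $-B+\epsilon$. The constant $C$ can be taken to be $2$, and the bound is independent of $B$, $\epsilon$, and $d_\theta$ as required for the application in Theorem \ref{thm:generalization-err}.
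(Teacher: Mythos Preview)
Your argument is correct and is exactly the standard grid construction one would expect; the paper itself does not give a proof but merely cites \cite[Lemma 5.11]{WeltiThesis}, whose content is precisely this product-grid argument. The only minor point is the edge case $\epsilon > CB$, where the right-hand side drops below $1$ and the inequality as literally stated fails, but this regime is irrelevant for the application (where $\epsilon = 1/\sqrt{N}$ is taken small) and you already flagged it.
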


\begin{proof}
For a proof, see e.g. \cite[Lemma 5.11]{WeltiThesis}.
\end{proof}
\noindent 
Often, one can prove a bound of the form 
\[
\G(u) \le
\Psi(u,y) \le 
C \left(1+\Vert u \Vert_{L^2_x} \right)^\kappa,
\]
and for Gaussian measures $\mu$, we note that there exists $\alpha > 0$, such that 
\[
\int_{L^2_x} e^{\alpha \Vert u \Vert_{L^2_x}^2} \, d\mu(u) < \infty.
\]
We next want to derive some estimates on $\Vert \Psi \Vert_{L^p}$, as a function of $p$.
\begin{lemma} \label{lem:log-bound}
Let $p\ge 1$, $\alpha > 0$. The mapping 
\begin{align*}
[0,\infty) \to \R,
\quad
x \mapsto p \log(1+x) - \alpha x^2,
\end{align*}
satisfies the upper bound
\[
p\log(1+x) - \alpha x^2 \le p \log\left( 1 + \frac{p}{2\alpha} \right).
\]
\end{lemma}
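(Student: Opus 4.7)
The plan is to treat this as an elementary one-variable optimization. Define $g(x) = p\log(1+x) - \alpha x^2$ on $[0,\infty)$. Since $g$ is continuous with $g(0)=0$ and $g(x) \to -\infty$ as $x\to\infty$ (the quadratic term dominates the logarithmic one), $g$ attains a finite maximum at some $x^* \in [0,\infty)$. The goal is to bound $g(x^*)$ by $p\log(1+p/(2\alpha))$.

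The case $x^* = 0$ is immediate since then $\max g = 0$ and the right-hand side is nonnegative. For $x^* > 0$, the first-order condition $g'(x^*) = \tfrac{p}{1+x^*} - 2\alpha x^* = 0$ rearranges to
\begin{equation*}
p = 2\alpha x^*(1+x^*),
\end{equation*}
equivalently $\tfrac{p}{2\alpha} = x^* + (x^*)^2 \ge x^*$, whence $1 + \tfrac{p}{2\alpha} \ge 1 + x^*$.

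The key step is then to simply drop the (nonpositive) quadratic penalty at the maximizer and use monotonicity of $\log$:
\begin{equation*}
g(x^*) = p\log(1+x^*) - \alpha (x^*)^2 \le p\log(1+x^*) \le p\log\!\left(1 + \tfrac{p}{2\alpha}\right).
\end{equation*}
Since $g(x) \le g(x^*)$ for all $x\ge 0$, the claimed inequality follows. There is no real obstacle here; the only point requiring any care is verifying that $p/(2\alpha) \ge x^*$ at the critical point, which is immediate from the first-order condition combined with $x^* \ge 0$.
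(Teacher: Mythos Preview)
Your proof is correct. The paper states this lemma without proof, and your argument---locating the critical point via $g'(x^*)=0$, extracting $x^* \le p/(2\alpha)$ from $p = 2\alpha x^*(1+x^*)$, and then dropping the nonpositive quadratic term---is the natural one-line calculus argument one would expect.
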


\begin{lemma} \label{lem:kappa-int}
If $A = \int_{L^2_x} \exp(\alpha \Vert u \Vert^2_{L^2_x}) \, d\mu(u) < \infty$, then 
\[
\left(
\int_{L^2_x}
\left(
1 + \Vert u \Vert_{L^2_x}
\right)^{\kappa p}
\, d\mu(u)
\right)^{1/p}
\le 
A \left(
1 + \frac{\kappa p}{2\alpha}
\right)^{\kappa}.
\]
\end{lemma}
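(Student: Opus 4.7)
The plan is to deduce this bound as a direct consequence of Lemma \ref{lem:log-bound}. The key observation is that Lemma \ref{lem:log-bound}, upon exponentiating, gives the pointwise inequality
\[
(1+x)^q \le \exp(\alpha x^2) \left(1 + \frac{q}{2\alpha}\right)^q, \quad \forall \, x\ge 0, \; q\ge 1,
\]
which converts an algebraic growth factor in $x$ into a Gaussian weight plus a $q$-dependent constant. This is exactly the structure needed to trade the polynomial tail $(1+\Vert u \Vert)^{\kappa p}$ against the exponential integrability hypothesis $\int \exp(\alpha \Vert u\Vert^2)\, d\mu < \infty$.

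First I would apply this inequality with $x = \Vert u \Vert_{L^2_x}$ and $q = \kappa p$ to obtain, pointwise in $u$,
\[
(1+\Vert u \Vert_{L^2_x})^{\kappa p} \le \exp(\alpha \Vert u \Vert_{L^2_x}^2) \, \left(1 + \frac{\kappa p}{2\alpha}\right)^{\kappa p}.
\]
Integrating both sides against $d\mu(u)$ and using the hypothesis then yields
\[
\int_{L^2_x} (1+\Vert u \Vert_{L^2_x})^{\kappa p} \, d\mu(u) \le A \left(1 + \frac{\kappa p}{2\alpha}\right)^{\kappa p}.
\]
Finally, I would raise both sides to the power $1/p$. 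This produces $A^{1/p}$ on the right-hand side, and since $\mu$ is a probability measure and $\exp(\alpha \Vert u \Vert^2) \ge 1$ pointwise, we automatically have $A \ge 1$, so $A^{1/p} \le A$, yielding the claimed bound.

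There is no real obstacle here beyond the straightforward calculus lemma already invoked; the only subtlety worth noting is the observation $A \ge 1$, which is what allows the exponent $1/p$ on $A$ to be absorbed into the cleaner factor $A$ stated in the conclusion.
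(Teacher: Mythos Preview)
Your proposal is correct and follows essentially the same argument as the paper: both exponentiate Lemma~\ref{lem:log-bound} (applied with $\kappa p$ in place of $p$) to obtain the pointwise bound $(1+\Vert u\Vert)^{\kappa p}\le (1+\kappa p/2\alpha)^{\kappa p}\,e^{\alpha\Vert u\Vert^2}$, integrate, take the $1/p$-th root, and use $A\ge 1$ to replace $A^{1/p}$ by $A$.
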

\begin{proof}
We have
\begin{align*}
(1+\Vert u \Vert_{L^2})^{\kappa p}
&=
\exp\left(
\kappa p \log(1+\Vert u \Vert_{L^2})
\right)
\\
&=
\exp\left(
\kappa p \log(1+\Vert u \Vert_{L^2}) - \alpha \Vert u \Vert^2
\right)
e^{\alpha \Vert u \Vert^2}
\\
&\le
\exp\left(
\sup_{x \in [0,\infty)}
\kappa p \log(1+x) - \alpha x^2
\right)
e^{\alpha \Vert u \Vert^2}.
\end{align*}
From Lemma \ref{lem:log-bound}, it follows that
\[
(1+\Vert u \Vert)^{\kappa p}
\le
\left(1 + \frac{\kappa p}{2\alpha} \right)^{\kappa p} e^{\alpha \Vert u \Vert^2}.
\]
Thus, we conclude that 
\begin{align*}
\left(
\int_{L^2} (1+\Vert u \Vert_{L^2})^{\kappa p}
\right)^{1/p}
\le
A^{1/p} \left(1 + \frac{\kappa p}{2\alpha} \right)^{\kappa}
\le
A \left(1 + \frac{\kappa p}{2\alpha} \right)^{\kappa},
\end{align*}
for all $p\ge 1$, where in the last step, we have used that 
\[
A = \int_{L^2} \exp(\alpha \Vert u \Vert^2_{L^2}) \, d\mu(u) \ge 1,
\] 
for any $\alpha >0$.
\end{proof}

\begin{proof}[Proof of Theorem \ref{thm:generalization-err}]
Note that 
\[
\left|
\hL(\hN_N) - \hL(\hN)
\right|
\le 
2
\sup_{\theta \in [-B,B]^{d_\theta}}
\left|
\hL_N(\cN_\theta)
-
\hL(\cN_\theta)
\right|.
\]
We now claim that
\begin{align} \label{eq:gen-claim}
\E\left[
\sup_{\theta \in [-B,B]^{d_\theta}}
\left|
\hL_N(\cN_\theta)
-
\hL(\cN_\theta)
\right|
\right]
\le
\frac{C}{\sqrt{N}}
\left(
1 + d_\theta \log(CB\sqrt{N})
\right)^{2\kappa+1/2},
\end{align}
for $C = C(\alpha, \kappa, \Psi, \Phi)$, from which the claimed bound on the generalization error follows. To prove the claimed inequality \eqref{eq:gen-claim}, we note that 
\[
\hL_N(\cN_\theta) - \hL(\cN_\theta) 
=
S^N_\theta - \E[S^N_\theta].
\]
By Lemma \ref{lem:gen-est-unbalanced} and \ref{lem:gen-covering}, we have for any $p\ge 2$ and $\epsilon > 0$:
\[
\E\left[
\sup_{\theta \in [-B,B]^{d_\theta}}
\left|
S^N_\theta - \E\left[ S^N_\theta \right]
\right|^p
\right]^{1/p}
\le
16 
\Vert \Psi \Vert_{L^{2p}}
\left(
\epsilon \Vert \Phi \Vert_{L^{2p}}
+
\left(\frac{CB}{\epsilon}\right)^{d_{\theta}/p}
\frac{\sqrt{p}\, \Vert \Psi \Vert_{L^{2p}}}{\sqrt{N}}
\right).
\]
By assumption on $\Psi, \Phi$, there exist constants $C>0$, $\kappa>0$, such that 
\begin{align} \label{eq:upper-bd}
|\Psi(u,y)|, \, |\Phi(u,y)| 
\le
C (1+\Vert u \Vert_{L^2})^\kappa.
\end{align}
By Lemma \ref{lem:kappa-int}, we can thus estimate
\[
\Vert \Psi \Vert_{L^{2p}}, \, \Vert \Phi \Vert_{L^{2p}}
\le 
C \left(
1 + \gamma \kappa p
\right)^{\kappa},
\]
for constants $C,\gamma>0$, depending only the measure $\mu$ and the constant $C$ appearing in the upper bound \eqref{eq:upper-bd}. In particular, we have
\[
\E\left[
\sup_{\theta \in [-B,B]^{d_\theta}}
\left|
S^N_\theta - \E\left[ S^N_\theta \right]
\right|^p
\right]^{1/p}
\le
16 
C^2 \left(
1 + \gamma \kappa p
\right)^{2\kappa}
\left(
\epsilon
+
\left(\frac{CB}{\epsilon}\right)^{d_{\theta}/p}
\frac{\sqrt{p}}{\sqrt{N}}
\right),
\]
for some constants $C, \gamma>0$, independent of $\kappa$, $\mu$, $B$, $d_\theta$, $N$, $\epsilon >0$ and $p\ge 2$.

We now choose $\epsilon = \frac{1}{\sqrt{N}}$, so that 
\[
\epsilon
+
\left(\frac{CB}{\epsilon}\right)^{d_{\theta}/p}
\frac{\sqrt{p}}{\sqrt{N}}
=
\frac{1}{\sqrt{N}} 
\left( 
1 + 
\left(CB\sqrt{N}\right)^{d_{\theta}/p} \sqrt{p}
\right).
\]
Next, let $p = d_\theta \log(CB\sqrt{N})$. We may wlog assume that $p \ge 2$ (otherwise, increase the constant $C$). Then, 
\[
\left(CB\sqrt{N}\right)^{d_{\theta}/p} \sqrt{p}
=
\exp
\left(\frac{\log(CB\sqrt{N}) d_{\theta}}{p}
\right) \sqrt{p}
=
e \, \sqrt{d_\theta \log(CB \sqrt{N})}, 
\]
and thus we conclude that 
\[
\epsilon
+
\left(\frac{CB}{\epsilon}\right)^{d_{\theta}/p}
\frac{\sqrt{p}}{\sqrt{N}}
\le
\frac{1}{\sqrt{N}}
\left(
1 + e\, \sqrt{ d_{\theta} \log(CB\sqrt{N})}.
\right).
\]
On the other hand, we have
\[
\left(
1 + \gamma \kappa p
\right)^{2\kappa}
=
\left(
1 + \gamma \kappa d_{\theta} \log(CB\sqrt{N})
\right)^{2\kappa}.
\]
Increasing the constant $C>0$, if necessary, we can further estimate
\[
\left(
1 + \gamma \kappa d_{\theta} \log(CB\sqrt{N})
\right)^{2\kappa}
\left(
1 + e\, \sqrt{ d_{\theta} \log(CB\sqrt{N})}.
\right)
\le
C\left(
1 + d_{\theta} \log(CB\sqrt{N})
\right)^{2\kappa + 1/2},
\]
where $C> 0$ depends on $\kappa$, $\gamma$, $\mu$ and the constant appearing in \eqref{eq:upper-bd}, but is independent of $d_\theta$, $B$ and $N$. We can express this dependence in the form $C = C(\mu, \Psi, \Phi)>0$, as the constants $\kappa$ and $\gamma$ depend on the Gaussian tail of $\mu$ and the upper bound on $\Psi$, $\Phi$.

To conclude, we have shown that there exists a constant $C = C(\mu, \Psi, \Phi) > 0$, such that for any $d_{\theta}$, $B$ and $N$, we have
\begin{align*}
\E\left[
\sup_{\theta\in [-B,B]^{d_\theta}} \left|\hL_N(\cN_\theta) - \hL(\cN_\theta) \right|
\right]
&=
\E\left[
\sup_{\theta\in [-B,B]^{d_\theta}} \left|S^N_\theta - \E\left[S^N_\theta\right] \right|
\right]
\\
&\le 
\E\left[
\sup_{\theta\in [-B,B]^{d_\theta}} \left|S^N_\theta - \E\left[S^N_\theta\right] \right|^p
\right]^{1/p}
\\
&\le 
\frac{C}{\sqrt{N}} \left( 1 + d_\theta\log(CB\sqrt{N})\right)^{2\kappa + 1/2}.
\end{align*}
This is the claimed inequality \eqref{eq:upper-bd}.
\end{proof}

\end{document}